    \renewcommand{\bibname}{References}
\newtheorem{thm}{Theorem}[section]
\newtheorem{thmc}{Theorem}[chapter]
\newtheorem{lem}[thm]{Lemma}
\newtheorem{lemc}[thmc]{Lemma}
\newtheorem{cor}[thm]{Corollary}
\newtheorem{corc}[thmc]{Corollary}
\newtheorem{pro}[thm]{Proposition}
\theoremstyle{remark}
\newtheorem*{remark}{Remark}
\newtheorem*{remark1}{Remark 1}
\newtheorem*{remark2}{Remark 2}
\theoremstyle{definition}
\newtheorem{defi}[thm]{Definition}
    \numberwithin{equation}{section}
    \numberwithin{figure}{section}
\newcommand{\mF}{\mathcal{F}}
\newcommand{\mS}{\mathcal{S}}
\newcommand{\Hi}{\mathcal{H}}
\newcommand{\mC}{\mathcal{C}}
\newcommand{\mB}{\mathcal{B}}
\newcommand{\mP}{\mathcal{P}}
\newcommand{\Rde}{\mathbb{R}_{\varepsilon}^d}
\newcommand{\Rdp}{\mathbb{R}_+^d}
\newcommand{\Rdpm}{\mathbb{R}_+^{d-1}}
\newcommand{\Rlp}{\mathbb{R}_+^l}
\newcommand{\la}{\lambda}
\newcommand{\M}{\mathcal{M}}
\newcommand{\F}{\mathcal{F}}
\newcommand{\mL}{\mathcal{L}}
\newcommand{\pp}{p\rightarrow p}
\newcommand{\TT}{t}
\newcommand{\ta}{\tau}
\newcommand{\Ija}{I_v^{\alpha,\beta}}
\newcommand{\Cze}{\log_+(|v|)(1+|v|)^{2\alpha+2\beta+\frac{11}{2}}}
\newcommand{\Pj}{\mathcal{P}}
\newcommand{\Ha}{\mathcal{H}}
\newcommand{\Hla}{\mathcal{H}_{\alpha}}
\newcommand{\Hlap}{\mathcal{H}_{\alpha+1}}
\newcommand{\Ree}{\textrm{Re}}
\newcommand{\Imm}{\textrm{Im}}
\newcommand{\T}{\mathcal{T}}
\newcommand{\ki}{\|\kappa\|_{\infty}}
\newcommand{\Leb}{\Lambda}
\newcommand{\mW}{\mathcal{W}}
\newcommand{\mM}{\mathcal{M}}
\newcommand{\mD}{\mathcal{D}}
\newcommand{\ph}{\varphi}
\newcommand{\tm}{\widetilde{m}}
\newcommand{\maxlogv}{\log_+(|v|)}
\newcommand{\nH}{\tilde{H}}
\newcommand{\bnH}{\tilde{\bf{H}}}
\newcommand{\pst}{\phi_p^{\ast}}
\newcommand{\nL}{\tilde{L}}
\newcommand{\bnL}{\tilde{\bf{L}}}
    \newcommand{\n}{\vspace{12pt}} 
    \newcommand{\newchapter}[3] 
	{                           
        \chapter[#2]{#3}
        \chaptermark{#1}
        \thispagestyle{myheadings}
	}
\DeclareMathOperator{\Real}{Re}
\DeclareMathOperator{\Ima}{Im}
\DeclareMathOperator{\Arg}{Arg}
\DeclareMathOperator{\Dom}{Dom}
\DeclareMathOperator{\Ran}{Ran}
\DeclareMathOperator{\supp}{supp}
\begin{document}


    \pagenumbering{roman}
    \pagestyle{plain}

    %
    %


     ~\vspace{-0.75in} 
    \begin{center}

\begin{small}
    \MakeUppercase{Scuola Normale Superiore, Pisa}\\ \vspace{0.1in}
    \MakeUppercase{Uniwersytet Wroc\l awski, Instytut Matematyczny}\\
\end{small}
    \vspace{1in}
      \begin{Huge}
           Multivariate Spectral Multipliers
        \end{Huge}\\\n
        PhD thesis by\\\n
        \begin{Large}  { B\l a\.{z}ej Wr\'obel}\end{Large}\\

        \rule{4in}{1pt}\\
        Advisors:\\
        \begin{Large}~prof.\ Fulvio Ricci (Scuola Normale Superiore, Pisa)\\ \vspace{0.1in}
        ~prof.\ dr hab.\ Krzysztof Stempak (Politechnika Wroc\l awska)\end{Large}
        \\\n\n

    \end{center}

    \newpage

    %
    %


    ~\\[7.75in] 
    \centerline{
                \copyright\ B\l a\.{z}ej Wr\'obel,
                            2014. All rights reserved.
               }
    \thispagestyle{empty}
    \addtocounter{page}{-1}

    \newpage

    %
    %



    \doublespacing

    %
    %

    \tableofcontents

\section*{List of symbols}
\begin{tabular}{rl}
        $\mathbb{Z},  \mathbb{Q},\mathbb{R}, \mathbb{C}$ & the sets of integer, rational, real and complex numbers\\
        $\mathbb{N}$ & the set of positive integers\\
        $\mathbb{N}_0$ & the set of non-negative integers\\
        $\mathbb{N}^d$ & the $d$-fold Cartesian product of $\mathbb{N}$\\
        $\mathbb{N}^d_0$ & the $d$-fold Cartesian product of $\mathbb{N}_0$\\
        $\mathbb{R}_+$ & the set of positive real numbers\\
        $\mathbb{R}_+^d$ & the $d$-fold Cartesian product of $\mathbb{R}_+$\\
        $\mathbb{T}$ & the $1$-dimensional torus\\
        $\mathbb{T}^d$ & the $d$-dimensional torus\\
         $\Arg(z)$ & the principal argument of $z\in\mathbb{C}$\\
          $\partial^{n}_jf$ & the $n$-th partial derivative of $f(x_1,\ldots,x_d)$ with respect to $x_j$\\
          $\partial^{\alpha}f$ & $\partial^{\alpha_1}_{1}\cdots \partial^{\alpha_d}_d f$\\
          $\nabla f$ & the gradient of $f,$ i.e.\ the vector $(\partial_1 f,\ldots, \partial_d f)$\\
          $f*g$ & the convolution of $f$ and $g$\\
    $G_1\cong G_2$ & a group isomorphism between $G_1$ and $G_2$ \\
$V^c$ & the complement of a set $V\subseteq X$ i.e.\ the set $X\setminus V$ \\
         $\mB(X)$ & the $\sigma$-algebra of all Borel sets in $X$\\
         $l^p(X)$ & the space of $p$-th power summable sequences on $X$\\
        $L^p(X,\nu)$ & the Lebesgue space over the measure space $(X,\nu)$\\
        $L^{p,\infty}(X,\nu)$ & the Lorentz space $L^{p,\infty}$ over $(X,\nu),$ by convention $L^{\infty,\infty}=L^{\infty}$\\
        $C_c(X)$ & the space of continuous compactly supported functions on $X$\\
        $C^1(X)$ & the space of functions with continuous first derivative\\
        $\mS(\mathbb{R}^d)$ & the Schwartz space on $\mathbb{R}^d$\\
        $C_c^{\infty}(X)$ & the space of smooth compactly supported functions on $X$\\
        $H^{\infty}(U)$ & the space of bounded holomorphic functions on an open $U\subset \mathbb{C}^d$
        \end{tabular}
    \newpage

    %
    %

    %

    ~\vspace{-1in} 
    \begin{flushright}
        \singlespacing
        B\l a\.{z}ej Wr\'obel\\
        ~2014
    \end{flushright}

    \centerline{\large Multivariate spectral multipliers}

    \centerline{\textbf{\underline{Abstract}}}

    This thesis is devoted to the study of multivariate (joint) spectral multipliers for systems of strongly commuting non-negative self-adjoint operators, $L_1,\ldots,L_d,$ on $L^2(X,\nu),$ where $(X,\nu)$ is a measure space. By strong commutativity we mean that the operators $L_r,$ $r=1,\ldots,d,$ admit a joint spectral resolution $E(\la).$ In that case, for a bounded function $m\colon [0,\infty)^d\to \mathbb{C},$ the multiplier operator $m(L)$ is defined on $L^2(X,\nu)$ by
    $$m(L)=\int_{[0,\infty)^d}m(\la)dE(\la).$$

    By spectral theory, $m(L)$ is then bounded on $L^2(X,\nu).$ The purpose of the dissertation is to investigate under which assumptions on the multiplier function $m$ it is possible to extend $m(L)$ to a bounded operator on $L^p(X,\nu),$ $1<p<\infty.$

    The crucial assumption we make is the $L^p(X,\nu),$ $1\leq p\leq \infty,$ contractivity of the heat semigroups corresponding to the operators $L_r,$ $r=1,\ldots,d.$ Under this assumption we generalize the results of \cite{cit:Me} to systems of strongly commuting operators. As an application we derive various multivariate multiplier theorems for particular systems of operators acting on separate variables. These include e.g.\ Ornstein-Uhlenbeck, Hermite, Laguerre, Bessel, Jacobi, and Dunkl operators. In some particular cases, we obtain presumably sharp results. Additionally, we demonstrate how a (bounded) holomorphic functional calculus for a pair of commuting operators, is useful in the study of dimension free boundedness of various Riesz transforms.



    \newpage

    %
    %

    \chapter*{\vspace{-1.5in}Acknowledgments}

First of all I would like to express my sincerest gratitude to my advisors: prof.\ Fulvio Ricci and prof.\ Krzysztof Stempak, for all their help and encouragement during my PhD studies and the preparation of the thesis.

I am extremely grateful to prof.\ Stempak, for his thoughtful guidance, invaluable advice, and countless helpful suggestions throughout my master and PhD studies. I also thank him for the freedom I have been given with regard to my research.

My deepest thanks go to prof.\ Ricci, for posing and discussing with me many interesting problems, for numerous conversations about the dissertation, and for his wise counsel. I am also grateful for all the kind hospitality he has shown me during the time I spent in Italy.

I am very much indebted to prof.\ Jacek Dziuba\'nski, for being my research supervisor at the first year of the PhD, and for valuable conversations, advice, and cooperation.

 I would also like to thank dr Marcin Preisner, for fruitful collaboration, as well as Gian Maria Dall'Ara, prof.\ Pawe{\l } G\l owacki, prof.\ Stefano Meda, dr hab.\ Adam Nowak, Tomasz Z.\ Szarek, Dario Trevisan, dr hab.\ Micha{\l } Wojciechowski, and dr hab.\ Jacek Zienkiewicz, for several useful discussions. I am grateful to prof.\ Leszek Skrzypczak, for his hospitality during the semester I spent in Pozna\'n, to my Italian peers, for their friendliness during my time in Pisa, to prof.\ Piotr Biler, prof.\ Ewa Damek, and Mrs.\ Elisabetta Terzuoli, for their help in preparing the co-tutelle agreement, and to dr Mariusz Mirek for a certain idea.

 Last, yet by no means least, special thanks go to my family. I am most grateful to my parents Irena and Tadeusz for giving me great opportunities to study and for their continuous support. Finally, I thank my beloved Ma\l gosia, for her patience during the (often to long) times of my absence.\vspace{0.01in}
 \begin{spacing}{0.2}
\begin{center}   \rule{400pt}{1pt}\end{center}
\end{spacing}
I gratefully acknowledge the support from the PhD programme '\'Srodowiskowe Studia Doktoranckie z Nauk Matematycznych' funded by the European Social Fund, and the National Science Center (NCN) 'Preludium' grant 2011/01/N/ST1/01785.

    \newpage

    %
    %

    \pagestyle{fancy}
    \pagenumbering{arabic}

    %
    %

    \newchapter{Introduction}{Introduction}{Introduction}
    \label{chap:Intro}


        %

        \section{Motivation and informal overview}
        \label{chap:Intro,sec:Motivation}
        The classical examples of multiplier operators are the ones connected with the Fourier series and the Fourier transform. In the context of the Fourier transform, an operator $T_m$ is called a multiplier operator (or multiplier for short), if it is of the form $$\mathcal{F}(T_m f)=m\,\mathcal{F}f;\qquad \mathcal{F}f(x)=\int_{\mathbb{R}^d} e^{-i \langle x, y\rangle}f(y)\,dy.$$ Initially, we only assume that the function $m$ in the definition (which is called a multiplier function or simply a multiplier) is measurable and bounded. Due to Plancherel's identity this implies of course the boundedness of $T_m$ on $L^2(\mathbb{R}^d,dx).$ Much more complicated is one of the classical problems of harmonic analysis to study the boundedness of $T_m$ on $L^p(\mathbb{R}^d,dx),$ $1\leq p\leq \infty,$ $p\neq 2.$ In fact, except for the cases $p=1$ and $p=2,$ even in this classical setting, there is no complete description of the multiplier spaces $$\mM_p=\{m\colon m \textrm{ is bounded and Borel measurable, } T_m\textrm{ is bounded on $L^p(\mathbb{R}^d,dx)$}\}.$$

        On the other hand there are many results which give sufficient conditions on $m,$ under which $m\in\mM_p.$ These are called multiplier theorems. The two most famous multiplier theorems are associated with the names of H\"ormander and Marcinkiewicz.

        We first recall the H\"{o}rmander multiplier theorem \cite[Theorem 2.5]{Horm1} (see also \cite[Theorem 7.9.5, p.243]{Horm2}).
        \begin{thm}
        \label{thm:Hormmult}
        Assume $m\colon\mathbb{R}^d\to \mathbb{C}$ is a bounded Borel measurable function such that for every $\gamma$ satisfying $|\gamma|\leq \lfloor d/2\rfloor+1$ we have
        \begin{equation}\label{chap:Intro,sec:Motivation,eq:Horm}
\sup_{0<R<\infty}R^{-d}\int_{R\leq|\xi|\leq 2R}|R^{|\gamma|}\partial^{\gamma}m(\xi)|^2\,d\xi<\infty.
\end{equation}
Then the operator $T_m$ is of weak type $(1,1)$ and bounded on $L^p(\mathbb{R}^d,dx),$ $1<p<\infty.$
\end{thm}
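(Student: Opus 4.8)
The plan is to deduce the theorem from Calderón--Zygmund theory. Since $m$ is bounded, Plancherel's identity immediately gives that $T_m$ is bounded on $L^2(\mathbb{R}^d,dx)$, so it suffices to show that the convolution kernel $K=\mathcal{F}^{-1}m$ (a priori a tempered distribution) agrees away from the origin with a locally integrable function satisfying the H\"ormander integral condition
$$\sup_{y\neq 0}\int_{|x|\ge 2|y|}|K(x-y)-K(x)|\,dx<\infty.$$
The weak type $(1,1)$ bound and the $L^p$ bounds for $1<p<\infty$ then follow from the standard machinery: the Calderón--Zygmund decomposition handles the range $1<p\le 2$, and duality handles $2\le p<\infty$, since $\bar m$ satisfies the same hypothesis and $T_m^*=T_{\bar m}$.

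First I would fix a Littlewood--Paley partition of unity: a function $\psi\in C_c^{\infty}(\mathbb{R}^d)$ supported in $\{1/2\le|\xi|\le 2\}$ with $\sum_{j\in\mathbb{Z}}\psi(2^{-j}\xi)=1$ for $\xi\neq 0$, and write $m_j(\xi)=m(\xi)\psi(2^{-j}\xi)$, $K_j=\mathcal{F}^{-1}m_j$. Each $m_j$ is compactly supported and, by \eqref{chap:Intro,sec:Motivation,eq:Horm} together with the Leibniz rule, satisfies $\int|\partial^{\gamma}m_j(\xi)|^2\,d\xi\lesssim 2^{j(d-2|\gamma|)}$ for all $|\gamma|\le s:=\lfloor d/2\rfloor+1$. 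Via Plancherel this becomes the moment bound $\int|x^{\gamma}K_j(x)|^2\,dx\lesssim 2^{j(d-2|\gamma|)}$, hence $\int(1+2^j|x|)^{2s}|K_j(x)|^2\,dx\lesssim 2^{jd}$; carrying an extra factor $\xi_k$ (and using $|\xi_k|\lesssim 2^j$ on the support) gives $\int(1+2^j|x|)^{2s}|\nabla K_j(x)|^2\,dx\lesssim 2^{j(d+2)}$. Applying Cauchy--Schwarz against $(1+2^j|x|)^{-s}$, and using that $2s>d$ so that $\int_{|x|\ge\rho}(1+2^j|x|)^{-2s}\,dx\lesssim 2^{-jd}(1+2^j\rho)^{d-2s}$, one obtains the uniform estimates
$$\int_{|x|\ge\rho}|K_j(x)|\,dx\lesssim(1+2^j\rho)^{-1/2},\qquad \|\nabla K_j\|_{L^1}\lesssim 2^j$$
for every $\rho\ge 0$ (in particular $\|K_j\|_{L^1}\lesssim 1$ when $\rho=0$).

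To verify the H\"ormander condition I would sum the pieces, splitting at $2^j\sim 1/|y|$. For $2^j\le 1/|y|$, the mean value theorem and the gradient bound give $\int_{|x|\ge 2|y|}|K_j(x-y)-K_j(x)|\,dx\le|y|\,\|\nabla K_j\|_{L^1}\lesssim|y|2^j$, and $\sum_{2^j\le 1/|y|}|y|2^j\lesssim 1$. For $2^j>1/|y|$, estimate the term by $\int_{|x|\ge 2|y|}|K_j(x-y)|\,dx+\int_{|x|\ge 2|y|}|K_j(x)|\,dx$; since $|x|\ge 2|y|$ forces $|x-y|\ge|y|$, both integrals are $\lesssim(1+2^j|y|)^{-1/2}\lesssim(2^j|y|)^{-1/2}$, and $\sum_{2^j>1/|y|}(2^j|y|)^{-1/2}\lesssim 1$ as a convergent geometric series. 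Adding the two ranges yields the desired uniform bound. The interchange of the sum with the integral is legitimate because one first proves everything (the $L^2$ bound, weak $(1,1)$, and $L^p$) for the truncations $\sum_{|j|\le N}m_j$ with constants independent of $N$, and then lets $N\to\infty$, using that $\sum_{|j|\le N}m_j\to m$ boundedly (so $T_{m_N}f\to T_mf$ in $L^2$) together with Fatou's lemma.

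The main obstacle is the bookkeeping in the previous paragraph: obtaining genuine decay, i.e.\ a negative power of $2^j|y|$, in the tail estimate for $K_j$, which is exactly where the Sobolev order $\lfloor d/2\rfloor+1$ enters (being the least integer exceeding $d/2$, it makes $\int_{|u|\ge R}(1+|u|)^{-2s}\,du$ decay in $R$), followed by checking that both the small-$j$ and the large-$j$ series converge uniformly in $y$. Everything else --- the $L^2$ boundedness, the reduction to the kernel condition, and the passage from weak $(1,1)$ to $L^p$ --- is the standard Calderón--Zygmund, interpolation and duality package.
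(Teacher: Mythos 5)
The paper does not prove Theorem~\ref{thm:Hormmult}; it cites H\"ormander's original work \cite{Horm1}, \cite{Horm2}. Your argument is the standard Littlewood--Paley / Calder\'on--Zygmund proof (essentially H\"ormander's own, as also presented in Stein's \textit{Singular Integrals}), and it is correct: the Leibniz step giving $\int|\partial^{\gamma}m_j|^2\lesssim 2^{j(d-2|\gamma|)}$, the Plancherel passage to weighted $L^2$ bounds on $K_j$, the Cauchy--Schwarz step against $(1+2^j|x|)^{-s}$ using $2s=2(\lfloor d/2\rfloor+1)>d$, and the split of the dyadic sum at $2^j\sim|y|^{-1}$ (mean value theorem for the low range, tail decay for the high range) are all sound. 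The one point worth making explicit is that you never need the full series $\sum_j K_j$ to converge pointwise; you establish the H\"ormander integral condition for the truncations $\sum_{|j|\le N}K_j$ with $N$-independent constants and pass to the limit via the $L^2$ convergence $T_{m_N}f\to T_m f$ and Fatou, which is exactly what you indicate. The use of $L^2$-boundedness plus the kernel condition to get weak $(1,1)$, Marcinkiewicz interpolation for $1<p<2$, and duality via $T_m^{*}=T_{\bar m}$ for $2<p<\infty$ completes the package correctly.
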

\begin{remark}
Condition \eqref{chap:Intro,sec:Motivation,eq:Horm} is implied by the Mikhlin (Mikhlin-H\"ormander) condition
\begin{equation*}
\sup_{\xi\in\mathbb{R}^d}|\la|^{|\gamma|}|\partial^{\gamma}m(\xi)|<\infty,\qquad |\gamma|\leq \lceil d/2\rceil.
\end{equation*}
\end{remark}
Observe that \eqref{chap:Intro,sec:Motivation,eq:Horm} is rotation invariant. Thus, one can guess that the multiplier functions admitted by the H\"ormander multiplier theorem should be somewhat close to being radial. Indeed, it is clear that we can take $m(\xi)$ to be e.g.\
\begin{equation}
\label{chap:Intro,sec:Motivation,eq:exaHorm}
\begin{split}
&  e^{-t|\xi|^2},\,t>0\textrm{ (heat semigroup)},\qquad |\xi|^{iv},\,v\in\mathbb{R}\textrm{ (imaginary powers)},\\
 &\frac{\xi_r}{|\xi|},\,r=1,\ldots,d \textrm{ (Riesz transforms)},\qquad \chi_{|\xi|\leq 1}(1-|\xi|^2)^{\delta} \textrm{ (Bochner-Riesz means)};\end{split}\end{equation} in the case of Bochner-Riesz means we take $\delta>0$ large enough. Moreover, Theorem \ref{thm:Hormmult} also gives a weak type $(1,1)$ result. On the other hand, the H\"ormander multiplier theorem does not apply if the function $m$ is a product function or a function which depends only on some of the variables $\xi_r,$ $r=1,\ldots,d.$

The other famous multiplier theorem is associated with the name of Marcinkiewicz. In modern terminology, the Marcinkiewicz multiplier theorem for the Fourier transform may be stated in the following way.
 \begin{thm}
        \label{thm:Marmult}
Assume  $m\colon\mathbb{R}^d\to \mathbb{C}$ is a bounded Borel measurable function such that for $\gamma_r=0,1,$ $r=1,\ldots,d,$ we have
\begin{equation}\label{chap:Intro,sec:Motivation,eq:Marcon}
(R_1\cdots R_d)^{-1}\int_{R_1<|\xi_1|<2R_1}\ldots\int_{R_d<|\xi_d|<2R_d}|R_1\xi_1|^{2\gamma_1}\cdots |R_d\xi_d|^{2\gamma_d}|\partial^{\gamma}m(\xi)|^2\,d\xi\leq C_{\gamma},
\end{equation}
uniformly in $R=(R_1,\ldots,R_d)\in \Rdp.$ Then $T_m$ is bounded on $L^p(\mathbb{R}^d,dx),$ $1<p<\infty.$
\end{thm}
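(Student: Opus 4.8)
The plan is to run the classical Littlewood--Paley argument of Marcinkiewicz and Stein in its product ("rectangular") form. One first reduces the assertion (that $T_m$ is bounded on $L^p$, i.e.\ $m\in\mM_p$, for all $1<p<\infty$) to the a priori estimate $\|T_mf\|_{L^p}\le C\|f\|_{L^p}$ for $f$ in a convenient dense class, say $f\in\mS(\mathbb R^d)$ with $\widehat f$ compactly supported away from the coordinate hyperplanes, and then extends by density. Decompose frequency space into dyadic rectangles $R=\rho_1\times\cdots\times\rho_d$, where each $\rho_j$ is one of the intervals $\pm[2^{k_j},2^{k_j+1})$, $k_j\in\mathbb Z$, and let $S_R$ be the rough Fourier projection $\widehat{S_Rf}=\mathbf 1_R\widehat f$. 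The starting point is the \emph{product Littlewood--Paley inequality}
\begin{equation*}
\Big\|\big(\textstyle\sum_R|S_Rf|^2\big)^{1/2}\Big\|_{L^p}\ \approx\ \|f\|_{L^p},\qquad 1<p<\infty,
\end{equation*}
together with its diagonal vector-valued companion $\big\|(\sum_R|S_Rg_R|^2)^{1/2}\big\|_{L^p}\lesssim\big\|(\sum_R|g_R|^2)^{1/2}\big\|_{L^p}$; both follow by iterating, one variable at a time, the one--dimensional Littlewood--Paley inequality and its $\ell^2$-valued version (the one--variable projections being Calder\'on--Zygmund operators valued in $\ell^2$) together with Fubini's theorem.

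Since each $S_R$ commutes with $T_m$, the first inequality gives $\|T_mf\|_{L^p}\approx\big\|(\sum_R|S_RT_mf|^2)^{1/2}\big\|_{L^p}$, so everything reduces to bounding this square function by $C\|f\|_{L^p}$. Fix $R$ and, for concreteness, take $R=\prod_j[a_j,2a_j)$ with $a_j=2^{k_j}$ in the positive octant. Expand the restriction $m|_R$, periodized with period $a_j$ in the $j$-th variable, in a multiple Fourier series $m(\xi)=\sum_{n\in\mathbb Z^d}c^R_n\,e^{\,i\langle b^R_n,\xi\rangle}$, where $b^R_n=(2\pi n_1/a_1,\ldots,2\pi n_d/a_d)$. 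By Parseval's identity, the hypothesis \eqref{chap:Intro,sec:Motivation,eq:Marcon}, summed over the finitely many $\gamma\in\{0,1\}^d$, translates into $\sum_n\prod_j(1+n_j^2)\,|c^R_n|^2\le C$ uniformly in $R$, whence $\sum_n|c^R_n|\le C$ uniformly as well (by Cauchy--Schwarz, using $\sum_{n\in\mathbb Z^d}\prod_j(1+n_j^2)^{-1}<\infty$). Since $\widehat f$ is supported in $R$ after applying $S_R$, we obtain the pointwise identity $S_RT_mf(x)=\sum_n c^R_n\,(S_Rf)(x+b^R_n)$, and hence, by Cauchy--Schwarz in $n$, $|S_RT_mf(x)|^2\le C\sum_n|c^R_n|\,|(S_Rf)(x+b^R_n)|^2$.

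It remains to sum in $R$. Each $S_Rf$ is band-limited to $R$, so by the Plancherel--P\'olya / Fefferman--Stein maximal estimate for band-limited functions the shifted values $|(S_Rf)(x+b^R_n)|$ are controlled by a power of $|n|$ times a suitable maximal average of $S_Rf$ at the scale of $R$; inserting this bound, summing over $n$ against the coefficient estimates, summing over $R$, and finally invoking the Fefferman--Stein vector-valued inequality for the strong maximal operator on $L^p(\ell^2)$, $1<p<\infty$, together with one more application of the product Littlewood--Paley inequality, one arrives at $\|T_mf\|_{L^p}\lesssim\|f\|_{L^p}$.

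I expect the genuine difficulty to sit in this last step. The shifts $b^R_n$ are unbounded---they grow linearly in $|n|$ measured in units of the natural length scale of $R$---whereas the control on the coefficients is only $\ell^1$ (with a small margin in $d=1$, essentially none for $d\ge2$), so the naive bound $\sum_n|c^R_n|\,(1+|n|)^N=\infty$ has to be circumvented: one must balance the polynomial loss in $|n|$ against the decay in the band-limited maximal estimate (trading the decay exponent $N$ against the auxiliary exponent $s$ in $[\mathcal M(|\cdot|^s)]^{1/s}$) and keep all constants independent of $R$, which typically forces a split around $p=2$ with a duality argument for $1<p<2$. Secondary points are the $2^d$ sign octants, the boundary terms of the periodic expansion, and the passage between rough and smooth Littlewood--Paley projections---here one checks that the portions of $m$ that actually enter depend on $m$ only on $R$ and its $O(1)$ dyadic neighbours, so that condition \eqref{chap:Intro,sec:Motivation,eq:Marcon}, posed one block at a time, still applies after summing boundedly many blocks. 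A streamlined alternative is induction on the dimension: the one--dimensional Marcinkiewicz theorem holds for functions valued in an arbitrary Hilbert space (in particular $\ell^2$), and one peels off the variables one at a time, applying the $(d-1)$-dimensional statement uniformly in the frozen frequency variable. Note finally that, in contrast with Theorem~\ref{thm:Hormmult}, no weak-type $(1,1)$ conclusion is available here, consistent with the range $1<p<\infty$ in the statement.
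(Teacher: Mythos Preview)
The paper does not give a proof of Theorem~\ref{thm:Marmult}: it is quoted in the introduction as a classical result serving as motivation. The only thing the thesis does with it later (Section~\ref{chap:Ck,sec:ExOp}) is to observe that its own general machinery --- the polynomial bound \eqref{chap:Ck,sec:ExOp,eq:pollLap} for imaginary powers of $-\partial_r^2$ together with Theorem~\ref{thm:genMarCk} and the boundedness of the Hilbert transform --- recovers Theorem~\ref{thm:Marmult}, \emph{though without the sharp smoothness threshold}. So there is no ``paper's own proof'' to compare against in the strict sense; the relevant comparison is with that indirect route via the joint functional calculus, which is entirely different from what you sketch.

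On your sketch itself: the product Littlewood--Paley reduction and the Fourier expansion of $m$ on each dyadic rectangle are fine, and your coefficient bound $\sum_n\prod_j(1+n_j^2)|c_n^R|^2\le C$ is correct. The problem is exactly where you say it is, and your proposed cure does not work. The Peetre/Plancherel--P\'olya control of $(S_Rf)(x+b_n^R)$ by $[M^{(1)}\!\cdots M^{(d)}(|S_Rf|^s)(x)]^{1/s}$ costs a factor $\prod_j(1+|n_j|)^{1/s}$; feeding this into your squared estimate you need $\sum_n|c_n^R|\prod_j(1+|n_j|)^{2/s}<\infty$, which via Cauchy--Schwarz against your coefficient bound forces $4/s-2<-1$, i.e.\ $s>4$. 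On the other hand, the Fefferman--Stein inequality for the iterated maximal function on $L^p(\ell^2)$ requires $s<\min(p,2)\le 2$. These are incompatible for every $p$, so the ``balancing the exponent $s$'' argument cannot close, and duality does not help either (the obstruction is symmetric in $p\leftrightarrow p'$). This is a genuine gap, not a technicality.

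Your ``streamlined alternative'' at the end --- peel off one variable at a time, using the one--dimensional Marcinkiewicz theorem in its Hilbert-space-valued form and the condition \eqref{chap:Intro,sec:Motivation,eq:Marcon} uniformly in the frozen frequency variables --- is the standard correct argument and is what you should promote to the main line. It also makes transparent why only $\gamma_r\in\{0,1\}$ are needed and why no weak type $(1,1)$ comes out.
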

\begin{remark}As with the H\"ormander multiplier theorem, condition \eqref{chap:Intro,sec:Motivation,eq:Marcon} is implied by the Mikhlin (Mikhlin-Marcinkiewicz) condition
\begin{equation*}
\sup_{\xi\in\mathbb{R}^d}|\xi_1|^{\gamma_1}\cdots|\xi_d|^{\gamma_d}|\partial^{\gamma}m(\xi)|<\infty,\qquad \gamma_r=0,1,\quad r=1,\ldots,d.
\end{equation*}
\end{remark}
Note that, for $d>1,$ neither Theorem \ref{thm:Hormmult} is stronger than Theorem \ref{thm:Marmult} nor vice versa. Indeed, on the one hand, the H\"ormander multiplier is better suited for radial multipliers, in which case it may require less smoothness than the Marcinkiewicz multiplier theorem. For example, for some values of $\delta>0,$ Theorem \ref{thm:Hormmult} is applicable to the Bochner-Riesz means given in \eqref{chap:Intro,sec:Motivation,eq:exaHorm}, whereas Theorem \ref{thm:Marmult} is not. On the other hand, the Marcinkiewicz multiplier theorem is valid for product multipliers, which is not true in general for the H\"ormander multiplier theorem. For instance, the product imaginary powers $m_{u}(\xi)=|\xi_1|^{iu_1}\cdots|\xi_d|^{iu_d},$ $u\in \mathbb{R}^d,$ satisfy \eqref{chap:Intro,sec:Motivation,eq:Marcon} but do not satisfy \eqref{chap:Intro,sec:Motivation,eq:Horm}. Moreover, Theorem \ref{thm:Marmult} is applicable in all the other cases listed in \eqref{chap:Intro,sec:Motivation,eq:exaHorm}, however, it does not give weak type $(1,1)$ results.

Let us now look at the Fourier multipliers from a slightly different perspective. If $m\colon \mathbb{R}^d\to\mathbb{C}$ is a Borel measurable function, the formal identity
$$i\partial_r f=\mF^{-1}(\xi_r\, \mF) f,\qquad r=1,\ldots,d,$$
leads to
$$m(i\partial)f=m(i\partial_1,\ldots,i\partial_d)f=\mF^{-1}(m\mF) f.$$
The above says that Fourier multipliers are precisely the 'joint' multipliers for the system of partial derivatives $i\partial=(i\partial_1,\ldots,i\partial_d).$ Put in other words, Fourier multipliers  provide a way to define a joint functional calculus for the system $i\partial.$ Thus, both Theorems \ref{thm:Hormmult} and \ref{thm:Marmult} give in fact sufficient conditions for the $L^p(\mathbb{R}^d,dx)$ boudedness of joint multipliers for the system $i\partial.$ Note that $i\partial$ consists of symmetric (in fact essentially self-adjoint) and pairwise commuting operators in $L^2(\mathbb{R}^d,dx).$

At this point a question arises: can we replace the system of partial derivatives by some other system of commuting operators? An attempt to answer this question is the main theme of this dissertation. We study joint (in other words: multivariate) multipliers for other systems of pairwise (strongly) commuting, self-adjoint operators on some space $L^2(X,\nu).$ Due to the use of the technique of imaginary powers, we often assume that the considered operators are non-negative.

If $L=(L_1,\ldots,L_d)$ is a system of self-adjoint non-negative operators on $L^2(X,\nu),$ such that their spectral projections commute pairwise, then the multivariate spectral theorem postulates the existence of the joint spectral measure $E$ for the system $L,$ defined by the identity
$$L_rf=\int_{[0,\infty)^d}\la_r dE(\la),\qquad \la=(\la_1,\ldots,\la_d),$$
for each $r=1,\ldots,d.$ Then, for a bounded Borel measurable function $m$ on $[0,\infty)^d,$ we define the multiplier operator $m(L)$ on $L^2(X,\nu)$ by
$$m(L)=m(L_1,\ldots,L_d)=\int_{[0,\infty)^d}m(\la) dE(\la).$$
From the multivariate spectral theorem it follows that this operator is bounded on $L^2(X,\nu).$ The problems we study in this dissertation concern the boundedness of $m(L)$ on some other $L^p(X,\nu)$ spaces, $1<p<\infty.$ We are trying to understand, under which assumptions on $L,$ can one obtain Marcinkiewicz type or H\"ormander type multiplier theorems.

Throughout a most part of the thesis we work in a rather broad generality. We are pursuing multiplier theorems that can be proved by only assuming something on the operators
$L_r,$ $r=1,\ldots,d,$ without reference to the particular geometric structure of the space $(X,\nu).$ Whenever we work in a general setting (e.g.\ in Chapter \ref{Chap:General}, Section \ref{chap:Ck,sec:genMar}, Section \ref{chap:Hinf,sec:genMarHinf} and Section \ref{chap:CkHinf,sec:genMarHinfCk}), the conditions we assume on the multiplier function $m$ and the multiplier theorems we obtain, shall be mostly of Marcinkiewicz type. The H\"ormander type multiplier theorems we obtain,  are placed in the context of some specific operators, see Section \ref{chap:Ck,sec:HorHan} and Section \ref{chap:CkHinf,sec:OA}.

The results we present in this dissertation can be also seen from the following perspective. We study the relation between functional calculi for each of the operators $L_r,$ $r=1,\ldots,d,$ and a joint functional calculus for the whole system $L.$ As we shall see, from this point of view, Marcinkiewicz type conditions seem more natural than H\"ormander type.

        \section{Summary of known results}
        \label{chap:Intro,sec:Summary}
        This section is an overview of existing multiplier results for systems of commuting operators.

        Before proceeding further a comment on the spectral multiplier problem for a single self-adjoint operator is in order. This topic has been already explored in a great variety of contexts, see e.g.\ \cite{s1}, \cite{s2}, \cite{s3}, \cite{CRW}, \cite{Hanonsemi}, \cite{s4}, \cite{s5}, \cite{s6}, \cite{s7}, \cite{Horm1}, \cite{vectvalMM}, \cite{cit:Me}, \cite{Mikhlin}, \cite{topics}, and the references within; we shall not give a detailed historical exposition. Let us only briefly describe these results for a single operator which pertain to the material presented in our thesis.

        Results contained in \cite{topics} are among the first general multiplier theorems. Under the assumption that the heat semigroup of a general non-negative self adjoint operator $L$ is Markovian, Stein proved that Laplace transform type multipliers defined by $m(\la)=\la\int_0^{\infty}e^{-\la t}\kappa(t)\,dt,$ were $\|\kappa\|_{L^{\infty}}<\infty,$ give rise to operators $m(L),$ which are bounded on all $L^p,$ $1<p<\infty.$ This theorem was then improved by Coifman, Rochberg and Weiss, see \cite{CRW}. Their approach relies on the so called transference methods and allows one to reduce the assumption in Stein's theorem to the requirement that $L$ generates a (not necessarily Markovian) contraction semigroup on all $L^p,$ $1\leq p\leq \infty.$ Note that Laplace transform type multipliers are holomorphic functions in the right complex half-plane $S_{\pi/2},$ that are uniformly bounded in every proper subsector $S_{\varphi}$ of $S_{\pi/2}.$

        In \cite{Hanonsemi}, by enhancing the transference methods, Cowling proved that, in order to obtain the boundedness of $m(L)$ on $L^p,$ for some $1<p<\infty,$ it is enough to assume that $m$ is holomorphic and uniformly bounded in a certain proper subsector $S_{\varphi_p}\subset S_{\pi/2},$ enclosing the positive real half-line. Moreover, the sector $S_{\varphi_p}$ approaches $(0,\infty)$, as $p\to 2.$ Using Mellin transform techniques, in \cite{cit:Me} Meda was able to further generalize Cowling's theorem. In particular, assuming a polynomial growth in $v$ for the $L^p,$ $1<p<\infty,$ norms of the imaginary power operators $L^{iv},$ he proved a general Marcinkiewicz type multiplier theorem, see \cite[Theorem 4]{cit:Me}.

        Many of the results and techniques presented in this thesis, see e.g.\ Chapter \ref{Chap:General} and Section \ref{chap:Ck,sec:genMar}, are multi-dimensional generalizations of Meda's \cite{cit:Me}.

        As for the joint spectral multipliers for a system of commuting self-adjoint operators there are relatively fewer results. Below we describe some of them in more details.

        The case of the system of partial derivatives on $L^2(\mathbb{R}^d,\,dx)$ has been already presented as a model example in the previous section. We should however remember that one of the first multivariate multiplier results, the Marcinkiewicz multiplier theorem, was about double Fourier series. From our point of view this result concerns the boundedness of joint spectral multipliers for the system of partial derivatives on $L^2([0,2\pi)^{2},dx).$ Indeed, for a bounded double sequence $\{m(j)\}_{j\in\mathbb{Z}^2},$ the multiplier operator for the Fourier series given by
        \begin{equation}
        \label{chap:Intro,sec:Summary,eq:Foursermult}
        \T_m(f)(x)=\frac{1}{4\pi^2}\sum_{j\in\mathbb{Z}^2}m(j)\langle f, e^{-i \langle j , \cdot \rangle}\rangle_{L^2([0,2\pi)^{2},\,dy)} e^{i \langle j , x \rangle},\qquad f\in L^2([0,2\pi)^{2},dx),\end{equation}
        coincides with $m(i\partial_1,i\partial_2)$ as an operator on $L^2([0,2\pi)^2,\,dx).$

        The Marcinkiewicz multiplier theorem, see \cite[Th\'eor\`{e}me 2]{Marorg}, in its original form (yet written to fit into modern terminology) states the following.

        \begin{thm}
        \label{thm:Marmultser}
Let  $\{m(j)\}_{j\in\mathbb{Z}^2}$ be a bounded complex valued double sequence. For $k\in\mathbb{N}_0^d$ set $I_k=I^1_{k_1}\times I^2_{k_2},$ where
$I^r_{k_r}=\{j_r\in\mathbb{Z}\colon 2^{k_r-1}\leq |j_r|<2^{k_r}\},$ $r=1,2.$ If
\begin{align}
&\sup_{k\in\mathbb{N}_0^2}\sum_{j\in I_{k}}|m(j)-m(j_1+1,j_2)-m(j_1,j_2+1)+m(j+{\bf 1})|<\infty, \label{chap:Intro,sec:Summary,eq:conMar1}\\
&\sup_{k_1\in\mathbb{N}_0,j_2\in\mathbb{Z}}\sum_{j_1\in I^1_{k_1}}|m(j_1+1,j_2)-m(j)|<\infty,\label{chap:Intro,sec:Summary,eq:conMar2}\\ &\sup_{k_2\in\mathbb{N}_0,j_1\in\mathbb{Z}}\sum_{j_2\in I^2_{k_2}}|m(j_1,j_2+1)-m(j)|<\infty,\label{chap:Intro,sec:Summary,eq:conMar3}
\end{align}
then $\T_m$ given by \eqref{chap:Intro,sec:Summary,eq:Foursermult} is bounded on all $L^p([0,2\pi)^2,dx)$ spaces, $1<p<\infty.$
\end{thm}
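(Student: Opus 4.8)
The statement is the original two-parameter Marcinkiewicz multiplier theorem, and the plan is to prove it by the classical route for Marcinkiewicz-type results: reduce the $L^p$ bound for $\T_m$ to a comparison of two-parameter Littlewood--Paley square functions, and treat the multiplier block by dyadic block using summation by parts. For $r=1,2$ and $k\in\mathbb{N}_0$ let $\Delta^r_k$ be the Fourier projection on $L^2([0,2\pi)^2,dx)$ onto the $r$-th-variable dyadic block $I^r_k$, and let $P^r$ be the projection onto $\{j_r=0\}$, so that $\sum_k\Delta^r_k+P^r=\mathrm{Id}$ acts on each variable. The first step is to record the one-parameter Littlewood--Paley inequality on $\mathbb{T}$ --- a consequence of the M.~Riesz theorem on the $L^p$-boundedness, $1<p<\infty$, of the conjugate function --- and then, by tensoring it with itself (using the Fefferman--Stein vector-valued extension), to obtain the two-parameter square-function equivalence
$$\|g\|_{L^p}\ \approx\ \Big\|\Big(\sum_{k_1,k_2}|\Delta^1_{k_1}\Delta^2_{k_2}g|^2\Big)^{1/2}\Big\|_{L^p}\ +\ (\text{terms with }P^1\text{ or }P^2),\qquad 1<p<\infty,$$
with constants independent of $g$. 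Applying this to $g=\T_m f$ reduces the theorem to estimating the square function of $\T_m f$ by that of $f$.

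The second step is the block analysis. Fix $k=(k_1,k_2)\in\mathbb{N}_0^2$ and note $\Delta^1_{k_1}\Delta^2_{k_2}\T_m f=\T_{m\chi_{I_k}}(\Delta^1_{k_1}\Delta^2_{k_2}f)$, so only $m$ restricted to $I_k$ matters. On each of the four sign-components of $I_k$ I would perform Abel summation separately in $j_1$ and in $j_2$: this rewrites $m\,\chi_{I_k}$ as a linear combination of indicator functions of sub-rectangles of $I_k$, whose coefficients are exactly the mixed second differences $m(j)-m(j_1+1,j_2)-m(j_1,j_2+1)+m(j+{\bf 1})$ (indexed over the interior of $I_k$), the one-variable first differences $m(j_1+1,j_2)-m(j)$ and $m(j_1,j_2+1)-m(j)$ along the edges, and one corner value of $m$. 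Hence $\Delta^1_{k_1}\Delta^2_{k_2}\T_m f$ becomes a combination of rectangular Dirichlet partial-sum projections $S^1_{a_1}S^2_{a_2}$ applied to $\Delta^1_{k_1}\Delta^2_{k_2}f$, and --- this is the point of the three hypotheses --- the total mass of the coefficients is bounded uniformly in $k$ by $\|m\|_{\infty}$ together with \eqref{chap:Intro,sec:Summary,eq:conMar1}, \eqref{chap:Intro,sec:Summary,eq:conMar2} and \eqref{chap:Intro,sec:Summary,eq:conMar3}; note that the supremum over the free variable in \eqref{chap:Intro,sec:Summary,eq:conMar2}--\eqref{chap:Intro,sec:Summary,eq:conMar3} is precisely what controls the edge terms.

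The third step assembles the pieces. Using that each $S^1_{a_1}S^2_{a_2}$ is $L^p$-bounded uniformly in $(a_1,a_2)$ and dominating the supremum over the truncation parameters inside each block by the bi-parameter maximal partial-sum operator, the block bound of the second step gives a pointwise estimate
$$\Big(\sum_{k_1,k_2}|\Delta^1_{k_1}\Delta^2_{k_2}\T_m f|^2\Big)^{1/2}\ \lesssim\ \Big(\sum_{k_1,k_2}\big(\sup_{a_1,a_2}|S^1_{a_1}S^2_{a_2}\Delta^1_{k_1}\Delta^2_{k_2}f|\big)^2\Big)^{1/2},$$
and the $L^p$ norm of the right-hand side is controlled, via the Fefferman--Stein vector-valued maximal inequality (in its strong, two-parameter form) followed by the square-function equivalence of the first step, by $\|f\|_{L^p}$. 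The lower-order contributions involving $P^1$ or $P^2$ (the rows $\{j_1=0\}$ and columns $\{j_2=0\}$ of the frequency lattice) are handled by the same argument run in one parameter only, using this time the one-dimensional Marcinkiewicz condition contained in \eqref{chap:Intro,sec:Summary,eq:conMar2}--\eqref{chap:Intro,sec:Summary,eq:conMar3}. Combining everything yields $\|\T_m f\|_{L^p}\lesssim\|f\|_{L^p}$ for $1<p<\infty$.

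I expect the main obstacle to be the passage from one- to two-parameter theory: one must have the iterated (product) Littlewood--Paley inequality and the strong, vector-valued Fefferman--Stein maximal inequality at hand, and one must be scrupulous that the Abel summation uses only differences of $m$ taken \emph{within a single dyadic block} --- the hypotheses \eqref{chap:Intro,sec:Summary,eq:conMar1}--\eqref{chap:Intro,sec:Summary,eq:conMar3} are genuinely weaker than global bounded variation, so any argument that implicitly summed differences across blocks would be invalid. A secondary, essentially bookkeeping, difficulty is treating the four sign-quadrants of $\mathbb{Z}^2$ together with the degenerate rows, columns, and the blocks containing $|j_r|=1$, where the dyadic partition needs minor ad hoc adjustments; I would also note that the tempting shortcut of transferring Theorem~\ref{thm:Marmult} from $\mathbb{R}^2$ is not immediate here, since one first has to extend the given sequence $\{m(j)\}$ to a function on $\mathbb{R}^2$ satisfying the continuous Marcinkiewicz condition.
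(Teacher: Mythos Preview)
The paper does not prove this theorem. Theorem~\ref{thm:Marmultser} is stated in the introductory survey (Section~1.2) as Marcinkiewicz's original 1939 result, with a citation to \cite{Marorg} and an explanatory remark, but no proof; the thesis's own contributions begin in Chapter~\ref{Chap:General}. So there is no paper proof to compare against, and what follows is an assessment of your argument on its own merits.

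Your overall architecture --- product Littlewood--Paley to reduce to blocks, two-fold Abel summation on each dyadic rectangle to rewrite $\Delta^1_{k_1}\Delta^2_{k_2}\T_m f$ as a superposition of rectangular partial-sum projections with total coefficient mass controlled by \eqref{chap:Intro,sec:Summary,eq:conMar1}--\eqref{chap:Intro,sec:Summary,eq:conMar3}, then reassembly --- is the standard and correct route. The bookkeeping remarks at the end (sign quadrants, the rows $\{j_r=0\}$, edge blocks) are also on point.

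Step~3, however, has a genuine gap. You dominate $|\Delta^1_{k_1}\Delta^2_{k_2}\T_m f|$ pointwise by $\sup_{a_1,a_2}|S^1_{a_1}S^2_{a_2}\Delta^1_{k_1}\Delta^2_{k_2}f|$ and then invoke ``the Fefferman--Stein vector-valued maximal inequality'' to control the $L^p(\ell^2)$ norm of these suprema. That inequality concerns the Hardy--Littlewood maximal function, whereas $\sup_a|S_a g|$ is a Carleson-type maximal partial-sum operator; there is no bound $\sup_a|S_a g|\lesssim Mg$, even for $g$ frequency-localized to a single dyadic block (the relevant Dirichlet-type kernels have $L^1$ norm $\approx\log 2^{k_r}$, not $O(1)$). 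The fix is to avoid the pointwise supremum entirely. After Abel summation you have $\Delta^1_{k_1}\Delta^2_{k_2}\T_m f=\sum_j c_j^{(k)}S_{R_j^{(k)}}\Delta^1_{k_1}\Delta^2_{k_2}f$ with $\sum_j|c_j^{(k)}|\le C$; combine Cauchy--Schwarz in $j$ with the $\ell^2$-vector-valued bound
\[
\Big\|\Big(\sum_k|S_{J_k}g_k|^2\Big)^{1/2}\Big\|_p\le C_p\,\Big\|\Big(\sum_k|g_k|^2\Big)^{1/2}\Big\|_p,\qquad 1<p<\infty,
\]
valid for \emph{arbitrary} rectangles $J_k$. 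This inequality is what actually does the work: write each $S_{J_k}$ as a combination of modulated conjugate-function operators and use the $\ell^2$-valued boundedness of the conjugate function (via Khintchine/randomization, as in Lemma~\ref{lem:randkhin} of the thesis). With this in place of your Fefferman--Stein step, the rest of your argument goes through.
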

\begin{remark}Observe that the conditions \eqref{chap:Intro,sec:Summary,eq:conMar1}, \eqref{chap:Intro,sec:Summary,eq:conMar2} and \eqref{chap:Intro,sec:Summary,eq:conMar3} are discrete $l^1$ versions of \eqref{chap:Intro,sec:Motivation,eq:Marcon} for $\partial_1\partial_2 m,$ $\partial_1 m$ and $\partial_2 m,$ respectively. In fact these conditions are equivalent to the following: there exists an extension $\tilde{m}$ of $m$ to $\mathbb{R}^2_+,$ such that $\tilde{m}$ satisfies the $L^1$ variant of \eqref{chap:Intro,sec:Motivation,eq:Marcon} (i.e.\ with $2\gamma_r$ and $|\partial^{\gamma}\tilde{m}(\xi)|^2$ replaced by $\gamma_r$ and $|\partial^{\gamma}\tilde{m}(\xi)|,$ respectively).
\end{remark}

One of the first general cases of commuting operators, investigated in the context of a joint functional calculus, was that of sectorial operators (see \cite[Definition 1.1]{LanLanMer}). In \cite{Al} and \cite{AlFrMc} Albrecht et al.\ studied the existence of an $H^{\infty}$  joint functional calculus for a pair $L=(L_1,L_2)$ of commuting sectorial operators defined on a Banach space $B$. From their results it follows that, if each $L_r,$ $r=1,2,$ has an $H^{\infty}$ functional calculus in a certain sector (containing the positive real half-line) in the complex half-plane\footnote{That is, $\|m(L_r)\|\leq C \|m\|_{H^{\infty}(S_{\varphi_r})},$ where $S_{\varphi_r},$ $r=1,2,$ are these sectors, $\|\cdot\|$ is the operator norm on $B$, while $\|m\|_{H^{\infty}(U)}$ denotes the supremum norm on the space of holomorphic functions on $U$.}, then the system $L$ has an $H^{\infty}$ joint functional calculus in a product of slightly bigger sectors. This fact, for $B=L^p,$ $1<p<\infty,$ is employed in several places in the thesis, see e.g.\ Chapter \ref{chap:Riesz}. For some other results concerning holomorphic functional calculus for a pair of sectorial operators see \cite{LanLanMer}.

Marcinkiewicz type multivariate multiplier theorems for specific commuting operators (i.e sublaplacians and central derivatives) on the Heisenberg (and related) groups were investigated in \cite{Fr1}, \cite{Fr2}, \cite{Fr3}, \cite{Mu:RiSt1} and \cite{Mu:RiSt2} among others.

In \cite{Sik} Sikora proved a H\"ormander type multiplier theorem for a pair of non-negative self-adjoint operators $L_r$ acting on $L^2(X_r,\mu_r),$ $r=1,2,$ i.e.\ on separate variables\footnote{Then, the tensor product operators $L_1\otimes I$ and $I\otimes L_2$ commute strongly on $L^2(X_1\times X_2,\mu_1\otimes \mu_2)$}. In that article he assumes that the kernels of the heat semigroup operators $e^{-t_rL_r},$ $t_r>0,$ $r=1,2,$ satisfy certain Gaussian bounds and that the underlying measures $\mu_r$ are doubling. Theorem \ref{thm:genMarCk} from Section \ref{chap:Ck,sec:genMar} is, to some extent, an answer to a question posed in \cite[Remark 4]{Sik}.

The PhD thesis of Martini, \cite{Martini_Phd} (see also \cite{Martini_JFA} and \cite{Martini_Annales}), is a treatise of the subject of joint spectral multipliers for general Lie groups of polynomial growth. He proves various Marcinkiewicz type and H\"ormander type multiplier theorems, mostly with sharp smoothness thresholds.

         \section{Setting}
        \label{chap:Intro,sec:Setting} Let us now define rigorously the main objects of our interest. We study joint (or multivariate) spectral multipliers for systems of strongly commuting non-negative self-adjoint operators $L=(L_1,\ldots,L_d),$ $d\geq 1,$ on $L^2(X,\nu)$ for some $\sigma$-finite measure space $(X,\nu).$ Our methods are also applicable for $d=1,$ in which case we merely consider a single operator. By strong commutativity we mean that the spectral projections $E_{L_r}$ of the operators $L_r,$ $r=1,\ldots,d,$ commute pairwise. In this case one can define the joint spectral measure $E(\la)$ on $[0,\infty)^d,$ which, for every $r=1,\ldots,d,$ satisfies
        $$L_r=\int_{[0,\infty)^d} \la_r\, dE(\la)=\int_{[0,\infty)}\la_r\, dE_{L_r}(\la_r),\qquad \la=(\la_1,\ldots,\la_d).$$ Then, for a Borel measurable function $m,$ we define the spectral multiplier for the system $L$ by
        \begin{equation}
        \label{chap:Intro,sec:Setting,eq:mdef}
        m(L)=m(L_1,\ldots,L_d)=\int_{[0,\infty)^d} m(\la)dE(\la)=\int_{\sigma(L)}m(\la)dE(\la),
        \end{equation}
        on the natural domain
        \begin{equation}
        \label{chap:Intro,sec:Setting,eq:mdom}
        \Dom(m(L))=\bigg\{f\in L^2(X,\nu)\colon \int_{[0,\infty)^d}|m(\la)|^2dE_{f,f}(\la)<\infty\bigg\}.\end{equation}
        The symbol $\sigma(L)$ in \eqref{chap:Intro,sec:Setting,eq:mdef} denotes the joint spectrum of the system $L,$ i.e.\ the support of the joint spectral measure $E(\la).$ The reader is kindly referred to Appendix \ref{chap:App,sec:joint} for more details on the joint spectral measure and integration, and the multivariate spectral theorem.

        The particular choices of $m,$ $m=m_{r,t_r},$ $t_r>0,$ $r=1,\ldots,d,$  with $m_{r,t_r}(\la)=e^{-t_r\la_r},$ lead to the corresponding heat semigroups $\{\exp(-t_rL_r)\}_{t_r>0},$ $r=1,\ldots,d,$ being contractions on $L^2(X,\nu).$

        In this thesis we always assume that the heat semigroups for the operators $L_r,$ $r=1,\ldots,d,$ satisfy the contraction property
        \begin{equation*}
        \tag{CTR}
        \label{chap:Intro,sec:Setting,eq:contra}
        \|\exp(-t_rL_r)f\|_{L^p(X,\nu)}\leq \|f\|_{L^p(X,\nu)},\qquad f\in L^2(X,\nu)\cap L^p(X,\nu),
        \end{equation*}
        for all $t_r>0$ and $p\in[1,\infty].$ In many places, what we really need is the fact that each of the operators $L_r,$ $r=1,\ldots,d,$ has an $H^{\infty}$ functional calculus in every proper subsector of the right half-plane $S_{\pi/2}.$ This property is shared by the operators satisfying \eqref{chap:Intro,sec:Setting,eq:contra}, see \cite[Theorem 3]{Hanonsemi}. Restricting to operators generating $L^p$ contraction semigroups makes the presentation clearer. Moreover, all the exemplary operators we consider do satisfy \eqref{chap:Intro,sec:Setting,eq:contra}.

        The other assumption we often impose is the atomlessness condition \begin{equation*}\tag{ATL}\label{chap:Intro,sec:Setting,eq:noatomatzero}E_{L_r}(\{0\})=0,\qquad r=1,\ldots,d.\end{equation*}
        In particular, we always impose \eqref{chap:Intro,sec:Setting,eq:noatomatzero} when considering general operators, for instance in Chapter \ref{Chap:General}, Section \ref{chap:Ck,sec:genMar}, Section \ref{chap:Hinf,sec:genMarHinf} and Section \ref{chap:CkHinf,sec:genMarHinfCk}. Note that, assuming \eqref{chap:Intro,sec:Setting,eq:noatomatzero}, we can rewrite \eqref{chap:Intro,sec:Setting,eq:mdef} as $m(L)=\int_{\Rdp}m(\la)dE(\la),$ where $\Rdp=(0,\infty)^d.$ The assumption \eqref{chap:Intro,sec:Setting,eq:noatomatzero} is a purely technical one, making it easier to state our main results.

        \section{Notation and terminology}
        \label{chap:Intro,sec:Notation}
        For a vector of angles $\varphi=(\varphi_1,\ldots,\varphi_d)\in (0,\pi/2]^d,$ $r=1,\ldots,d,$ denote by ${\bf S}_{\varphi}$ the polysector (contained in the product of the right complex half-planes) $${\bf S}_{\varphi}=\{z\in \mathbb{C}^d\colon z_r\neq0,\quad |\Arg(z_r)|<\varphi_r,\quad r=1,\ldots,d\}.$$ If, instead of a vector of angles, we just consider a single angle $\varphi\in(0,\pi/2),$ then, slightly abusing the notation, we continue writing ${\bf S}_{\varphi}$ instead of ${\bf S}_{(\varphi,\ldots,\varphi)}.$ In particular, ${\bf S}_{\pi/2}$ means the $d$-fold product of the right half-planes. In the case when only one complex variable is considered we write $S_{\varphi},$ $S_{\pi/2}$ instead of ${\bf S}_{\varphi},$ ${\bf S}_{\pi/2},$ respectively. For an arbitrary $V\subseteq \mathbb{C}^d$ the symbol $\overline{V}$ denotes the closure of $V.$

         If $U$ is an open subset of $\mathbb{C}^d,$ the symbol $H^{\infty}(U)$ stands for the vector space of bounded functions on $U$ which are holomorphic. We equip this space with the supremum norm.

        If $\gamma$ and $\rho$ are real vectors (e.g.\ multi-indices), by $\gamma<\rho$ ($\gamma\leq \rho$) we mean that $\gamma_r<\rho_r$ ($\gamma_r\leq\rho_r$), for $r=1,\ldots,d.$ For any real number $x$ the symbol ${\bf x}$ denotes the vector $(x,\ldots,x)\in \mathbb{R}^d.$

        For two vectors $z,w\in \mathbb{C}^d$ we set $z^{w}=z_1^{w_1}\cdots z_d^{w_d},$ whenever it makes sense. In particular, for $\la=(\la_1,\ldots,\la_d)\in\Rdp$ and $u=(u_1,\ldots,u_d)\in\mathbb{R}^d,$ by $\la^{iu}$ we mean $\la_1^{iu_1}\cdots\la_d^{iu_d};$ similarly, for $N=(N_1,\ldots,N_d)\in\mathbb{N}^d,$ by $\la^N$ we mean $\la_1^{N_1}\cdots \la_d^{N_d}.$ This notation is also used for operators, i.e.\, for $u\in\mathbb{R}^d$ and $N\in\mathbb{N}^d$ we set
        $$L^{iu}=L_1^{iu_1}\cdots L_d^{iu_d},\qquad L^N=L_1^{N_1}\cdots L_d^{N_d}.$$
        Note that, due to the assumption on the strong commutativity, the order of the operators in the right hand sides of the above equalities is irrelevant.

        By $\langle z, w \rangle,$ $z,w\in\mathbb{C}^d$ we mean the usual inner product on $\mathbb{C}^d.$ Additionally, if instead of $w\in\mathbb{C}^d$ we take a vector of self-adjoint operators $L=(L_1,\ldots,L_d),$ then, by $\langle z, L \rangle$ we mean $\sum_{r=1}^d z_r L_r.$

        The symbol $\frac{d\la}{\la}$ (in some places we write $\frac{dt}{t}$ instead) stands for the product Haar measure on $(\Rdp,\cdot),$ i.e.\ $$\frac{d\la}{\la}=\frac{d\la_1}{\la_1}\cdots\frac{d\la_d}{\la_d}.$$ For a function $m\in L^1(\Rdp,\, \frac{d\la}{\la}),$ we define its $d$-dimensional Mellin transform by $$\M(m)(u)=\int_{\Rdp}\la^{-iu}\,m(\la)\,\frac{d\la}{\la},\qquad u\in\mathbb{R}^d.$$ In Appendix \ref{chap:App,sec:Mel} we list properties of the Mellin transform needed in our thesis.

        Throughout the thesis we use the variable constant convention, i.e.\ the constants (such as $C,$ $C_p$ or $C(p),$ etc.) may vary from one occurrence to another. In most cases we shall however keep track of the parameters on which the constant depends, (e.g.\ $C$ denotes a universal constant, while $C_p$ and $C(p)$ denote constants which may also depend on $p$).

        Additionally, the symbol $a\lesssim b$ means that $a\leq C b,$ with a constant $C$ independent of significant quantities. We also write $a\approx b,$ whenever $a\leq C b$ and $b \leq C a.$ Note that using the latter symbols results in hiding the track of the parameters on which the constant $C$ depends.

        Let $B_1,B_2$ be Banach spaces and let $F$ be a dense subspace of $B_1.$ We say that a linear operator $T\colon F\to B_2$ is bounded, if it has a (unique) bounded extension to $B_1.$

        We shall need the following definition.
        \begin{defi}
\label{defi:Marcon}
We say that $m\colon\Rdp\to \mathbb{C}$ satisfies a Marcinkiewicz condition of order $\rho=(\rho_1,\ldots,\rho_d)\in \mathbb{N}_0^d,$ if $m$ is a bounded function having partial derivatives up to order $\rho$ \footnote{i.e.\ $\partial^{\gamma}(m)$ exist for $\gamma=(\gamma_1,\ldots,\gamma_d)\leq \rho$} and for all multi-indices $\gamma=(\gamma_1,\ldots,\gamma_d)\leq\rho$
\begin{equation*}\tag{M} \label{chap:Intro,sec:Notation,eq:Marcon}
\|m\|_{(\gamma)}:=\bigg(\sup_{R_1,\ldots,R_d>0}\int_{R_1<\la_1<2R_1}\ldots\int_{R_d<\la_d<2R_d}|\la^{\gamma}\partial^{\gamma}m(\la)|^2\,\frac{d\la}{\la}\bigg)^{1/2}<\infty.
\end{equation*}
\end{defi}
If $m$ satisfies a Marcinkiewicz condition of order $\rho,$ then we set
\begin{equation*}
\|m\|_{Mar,\rho}:=\sup_{\gamma\leq \rho}\|m\|_{(\gamma)}.
\end{equation*}

        Throughout the thesis we employ the following useful terminology. We say that a single operator $L$ has a Marcinkiewicz functional calculus\footnote{In the single operator case it might seem better to use the term 'H\"ormander functional calculus', cf.\ \cite[Section 2]{cit:Me}. We use the name of Marcinkiewicz to accord with the naming of the multi-dimensional condition.} of order $\rho>0$, whenever the following holds: if the multiplier function $m$ satisfies the one-dimensional (i.e.\ with $d=1$) Marcinkiewicz condition \eqref{chap:Intro,sec:Notation,eq:Marcon} of order $\rho,$ then the multiplier operator $m(L)$ is bounded on $L^p(X,\nu),$ $1<p<\infty,$ and
         $\|m(L)\|_{L^p(X,\nu)\to L^p(X,\nu)}\leq C_{p}\|m\|_{Mar,\rho}.$ Additionally, we say that $L$ has an $H^{\infty}$ functional calculus, whenever we have the following: for each $1<p<\infty,$ there is a sector $S_{\varphi_p},$ $\varphi_p<\pi/2,$ such that, if $m$ is a bounded holomorphic function on $S_{\varphi_p},$ then $\|m(L)\|_{L^p(X,\nu)\to L^p(X,\nu)}\leq C_{p}\|m\|_{H^{\infty}(S_{\varphi_p})}.$

        An analogous terminology is used when considering a system of operators $L=(L_1,\ldots,L_d)$ instead of a single operator. To say that the system $L$ has a Marcinkiewicz joint functional calculus of order $\rho=(\rho_1,\ldots,\rho_d)\in \Rdp$ we require the following condition to be true: if the multiplier function $m$ satisfies the $d$-dimensional Marcinkiewicz condition \eqref{chap:Intro,sec:Notation,eq:Marcon} of order $\rho=(\rho_1,\ldots,\rho_d),$ then the multiplier operator $m(L)$ is bounded on $L^p(X,\nu),$ $1<p<\infty,$ and
         $\|m(L)\|_{L^p(X,\nu)\to L^p(X,\nu)}\leq C_{p}\|m\|_{Mar,\rho}.$ Similarly, we say that $L$ has an $H^{\infty}$ joint functional calculus, whenever the following holds: for each $1<p<\infty$ there is a polysector ${\bf S}_{\varphi_p},$ $\varphi_p<\pi/2,$ such that if $m$ is a bounded holomorphic function in several variables on ${\bf S}_{\varphi_p},$ then $\|m(L)\|_{L^p(X,\nu)\to L^p(X,\nu)}\leq C_{p}\|m\|_{H^{\infty}({\bf S}_{\varphi_p})}.$

Note that every system which has a Marcinkiewicz joint functional calculus of finite order, also has an $H^{\infty}$ joint functional calculus. Indeed, by the (multivariate) Cauchy's integral formula, every function which is holomorphic in a polysector ${\bf S}_{\varphi},$ $0<\varphi<\pi/2,$ satisfies the Marcinkiewicz condition \eqref{chap:Intro,sec:Notation,eq:Marcon} of arbitrary order $\rho\in \mathbb{N}_0^d.$ Moreover, we have $\|m\|_{Mar,\rho}\leq C_{\rho}\|m\|_{H^{\infty}({\bf S}_{\varphi})}.$

        From \cite[Theorem 3]{Hanonsemi} it follows that each $L_r,$ $r=1,\ldots,d,$ satisfying \eqref{chap:Intro,sec:Setting,eq:contra}, has an $H^{\infty}$ functional calculus. However, there are examples of operators which obey \eqref{chap:Intro,sec:Setting,eq:contra} yet do not have a Marcinkiewicz functional calculus. Indeed, the $H^{\infty}$ functional calculus is the strongest possible for the Ornstein-Uhlenbeck and Laguerre operators (connected with Laguerre polynomial expansions), see Section \ref{chap:Hinf,sec:ExOp}. Thus, general operators $L_r,$ $r=1,\ldots,d,$ for which \eqref{chap:Intro,sec:Setting,eq:contra} and \eqref{chap:Intro,sec:Setting,eq:noatomatzero} hold may:
         \begin{enumerate}[I)]
        \item \label{caseI} all have a Marcinkiewicz functional calculus (this case is studied in Chapter \ref{Chap:Ck}),
        \item \label{caseII} all have only an $H^{\infty}$ functional calculus (this case is studied in Chapter \ref{Chap:Hinf}),
        \item \label{caseIII} some of them have Marcinkiewicz while others $H^{\infty}$ functional calculi (this case is studied in Chapter \ref{Chap:CkHinf}).
        \end{enumerate}
         When considering general operators which may only satisfy II) we use the letter $L.$ When considering general operators which additionally have a Marcinkiewicz functional calculus we use the letter $A.$

        In many instances in the thesis the measure space $(X,\nu)$ is in fact a product space $(X_1\times\cdots \times X_d,\nu_1\otimes \cdots\otimes \nu_d).$ Assume that $T$ is a self-adjoint or bounded operator on $L^2(X_{r},\nu_{r}),$ for some $r=1,\ldots,d.$ Then, one can define the tensor product operators
        \begin{equation}
        \label{chap:Intro,sec:Notation,eq:tensnot}
        T\otimes I_{(r)}=I_{L^2(X_1,\nu_1)}\otimes \cdots \otimes I_{L^2(X_{r-1},\nu_{r-1})}\otimes T\otimes I_{L^2(X_{r+1},\nu_{r+1})} \otimes \cdots \otimes I_{L^2(X_d,\nu_d)},
        \end{equation}
        as, respectively, self-adjoint or bounded operators on $L^2(X,\nu).$ If $T$ is bounded on $L^2(X_r,\nu_r),$ then, for $f\in L^2(X,\nu),$ the operator $(T\otimes I_{(r)})f$ may be regarded as acting only on the $x_r$-th variable of $f$ with the other variables being fixed. Moreover, when $T$ is also bounded on $L^p(X_r,\nu_r),$ for some $1\leq p<\infty,$ then, by Fubini's theorem $T\otimes I_{(r)}$ is a bounded operator on $L^p(X,\nu)$ and
        \begin{equation}
        \label{chap:Intro,sec:Notation,eq:tensnormeq}
        \|T\|_{L^p(X_r,\nu_r)\to L^p(X_r,\nu_r)}=\|T\otimes I_{(r)}\|_{L^p(X,\nu)\to L^p(X,\nu)}.
        \end{equation}
        When we are in the product setting we often do not distinguish between $T$ and $T\otimes I_{(r)}.$

        Assume that the system $L=(L_1,\ldots,L_d)$ consists of self-adjoint non-negative operators acting on separate variables, that is, each $L_r$ acts on $\Dom(L_r)\subset L^2(X_r,\nu_r),$ $r=1,\ldots,d.$ Then the operators $L_r\otimes I_{(r)},$ $r=1,\ldots,d,$ are non-negative self-adjoint and strongly commuting on $L^2(X,\nu).$ Moreover, $E_{L_r}\otimes I_{(r)}$ is the spectral measure of the operator $L_r\otimes I_{(r)},$ $r=1,\ldots,d.$ From this observation it follows that, if, for some $r=1,\ldots,d,$ the operator $L_r$ satisfies the atomlessness condition \eqref{chap:Intro,sec:Setting,eq:noatomatzero}, then the same is true for $L_r\otimes I_{(r)}.$ Additionally, using \eqref{chap:Intro,sec:Notation,eq:tensnormeq} it is not hard to deduce that, if the operator $L_r$ satisfies the contractivity condition \eqref{chap:Intro,sec:Setting,eq:contra} (with respect to $L^p(X_r,\nu_r)$), then $L_r\otimes I_{(r)}$ satisfies \eqref{chap:Intro,sec:Setting,eq:contra} as well (with respect to $L^p(X,\nu)$).

        We kindly refer the reader to consult Appendix \ref{chap:App,sec:tens}, for all unexplained statements from the previous two paragraphs, and more details on tensor products of operators.

\section{Outline of the thesis}
\label{chap:Intro,sec:outline}
The non-introductory part of the dissertation is divided into five chapters.

In Chapter \ref{Chap:General} we prove a general multivariate multiplier theorem for strongly commuting operators. This theorem is then used to obtain the main results of Chapters \ref{Chap:Ck}, \ref{Chap:Hinf}, and \ref{Chap:CkHinf}.

The subdivision of the material in these chapters corresponds to the three cases I), II), and III) above, respectively. Each of Chapters \ref{Chap:Ck}, \ref{Chap:Hinf}, and \ref{Chap:CkHinf}, begins with a formulation of the following principle: a joint multiplier theorem can be deduced from separate multiplier theorems. The instances of this principle are the general Marcinkiewicz type multiplier theorems in each of the contexts I), II), and III). These theorems are proved by using the results of Chapter \ref{Chap:General} and the complexity of the proofs increases with the number of the chapter.

In Chapter \ref{Chap:Ck} we investigate systems of operators which satisfy I). After giving general results, we discuss: polynomial growth of the $L^p$ norms of imaginary powers (in particular for the Jacobi operator), H\"ormander type multipliers for the multi-dimensional Hankel transform, and Marcinkiewicz type multipliers for the Dunkl transform (in the product setting). Note that in the latter two topics we obtain sharper results then those given by the general theory. Moreover, in the case of the Hankel transform we also prove weak type $(1,1)$ results.

Chapter \ref{Chap:Hinf} is devoted to studying systems of operators satisfying II). We prove a general holomorphic Marcinkiewicz type multiplier theorem and apply it to systems of Hermite and Laguerre operators (connected with Laguerre polynomial expansions). Additionally, we prove a general holomorphic extension theorem for multipliers connected with operators that do not have a Marcinkiewicz functional calculus.

Chapter \ref{chap:Riesz} contains some applications of the $H^{\infty}$ joint functional calculus from Chapter \ref{Chap:Hinf} to the study of dimension free boundedness of certain Riesz transforms.

In Chapter \ref{Chap:CkHinf} we consider systems of operators satisfying III). The dominant part of this chapter is devoted to obtaining weak type $(1,1)$ results for a pair of operators consisting of the $d$-dimensional Ornstein-Uhlenbeck operator and an operator whose heat kernel satisfies certain Gaussian bounds.

Part of the material included in the thesis has already been published in \cite{cit:ja}, \cite{jaOU}, \cite{jaDun}, and in the joint paper \cite{dpw} with Dziuba\'nski and Preisner.
    %
    %
\newpage
    \newchapter{A general multivariate multiplier theorem}{A general multivariate multiplier theorem}{A general multivariate multiplier theorem}
    \label{Chap:General}
\numberwithin{equation}{chapter}
We present a complete proof of a certain multiplier theorem, which is a multivariate extension of Meda's \cite[Theorem 1]{cit:Me}.  Theorem \ref{thm:gen} is among our basic tools in proving the results of Chapters \ref{Chap:Ck}, \ref{Chap:Hinf} and \ref{Chap:CkHinf}. The theorem we present here is an enhanced version of our previous result \cite[Theorem 2.2]{cit:ja}. The novelty lies in the fact that we only assume strong commutativity, and no product structure or discrete spectrum assumptions are needed, contrary to \cite{cit:ja}.

Throughout this chapter we consider a system $L=(L_1,\ldots,L_d)$ of strongly commuting, non-negative, self-adjoint operators on some space $L^2(X,\nu)$ as in Section \ref{chap:Intro,sec:Setting}. We impose that the system $L$ satisfies the contractivity assumption \eqref{chap:Intro,sec:Setting,eq:contra} and the atomlessness condition \eqref{chap:Intro,sec:Setting,eq:noatomatzero}. In this chapter, for the sake of brevity we write $L^p$ and $\|\cdot\|_p$ instead of $L^p(X,\nu)$ and $\|\cdot\|_{L^p(X,\nu)},$ $1\leq p\leq\infty.$ For a linear operator $T$ the symbol $\|T\|_{p\to p}$ shall denote the norm of $T$ acting on $L^p.$

Consider a bounded Borel measurable function $m$ on $\Rdp.$ We associate with $m$ the multivariate multiplier operator $m(L)$ via \eqref{chap:Intro,sec:Setting,eq:mdef}. For $N=(N_1,\ldots,N_d)\in \mathbb{N}^d,$ $\TT=(t_1,\ldots,t_d)\in \Rdp$ and $\la=(\la_1,\ldots,\la_d)\in \Rdp,$ let
\begin{equation*}
m_{N,\TT}(\la)=\la^N\TT^N \exp\big(-2^{-1}\langle t,\la \rangle\big)\,m(\la).
\end{equation*}
The main theorem of this chapter is the following.
\begin{thmc}
\label{thm:gen}
Let $1<p<\infty$ be given. Assume $m\colon (0,\infty)^{d}\to \mathbb{C}$ is a bounded Borel measurable function such that for some $N \in \mathbb{N}^d$
\begin{equation}
\label{chap:General,eq:thm:gen}
m(L,N,p):=\int_{\mathbb{R}^{d}}\|L^{iu}\|_{\pp}\,\sup_{\TT\in \Rdp}|\M(m_{N,\TT})(u)|\,du<\infty.
\end{equation}
Then the multiplier operator $m(L)$ is bounded on $L^p$ and $$\|m(L)\|_{p\to p}\leq C_{p,d,N}\,m(L,N,p).$$
\end{thmc}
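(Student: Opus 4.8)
The plan is to reduce the multivariate multiplier operator $m(L)$ to an integral superposition of imaginary powers $L^{iu}$ via the Mellin inversion formula, and then to estimate the $L^p\to L^p$ norm by the $L^1(\mathbb{R}^d)$ norm of $\|L^{iu}\|_{p\to p}$ times a sup over $\TT$ of the Mellin transform of $m_{N,\TT}$. First I would fix $\TT\in\Rdp$ and work with the auxiliary function $m_{N,\TT}(\la)=\la^N\TT^N\exp(-2^{-1}\langle t,\la\rangle)m(\la)$. Since $m$ is bounded and the factor $\la^N\exp(-2^{-1}\langle t,\la\rangle)$ decays and vanishes at $0$, the function $m_{N,\TT}$ lies in $L^1(\Rdp,\tfrac{d\la}{\la})$ (at least after a routine approximation argument), so its Mellin transform $\M(m_{N,\TT})(u)$ makes sense and Mellin inversion applies: for $\la\in\Rdp$,
\begin{equation*}
m_{N,\TT}(\la)=\frac{1}{(2\pi)^d}\int_{\mathbb{R}^d}\la^{iu}\,\M(m_{N,\TT})(u)\,du.
\end{equation*}
The key identity to exploit is that $m(\la)=\la^{-N}\TT^{-N}\exp(2^{-1}\langle t,\la\rangle)\,m_{N,\TT}(\la)$, so that, applying this at the spectral level,
\begin{equation*}
L^N\exp(-2^{-1}\langle t,L\rangle)\,m(L)=\TT^{-N}\,m_{N,\TT}(L)=\frac{\TT^{-N}}{(2\pi)^d}\int_{\mathbb{R}^d}L^{iu}\,\M(m_{N,\TT})(u)\,du.
\end{equation*}
Here I would take $\TT=(t,\ldots,t)$ or, more conveniently, keep $\TT$ free and later integrate against a suitable measure in $\TT$ — the point being that the operator $L^N\exp(-2^{-1}\langle t,L\rangle)$ on the left is, by the $H^\infty$ calculus coming from \eqref{chap:Intro,sec:Setting,eq:contra}, invertible "from the left" in the sense that one can recover $m(L)$ by integrating against $\TT^{N-\mathbf 1}\,e^{-2^{-1}\langle t,L\rangle}$ (or a similar reproducing family) in $\TT$, since $\int_0^\infty t_r^{N_r-1}e^{-t_r\la_r}\la_r^{N_r}\,dt_r=\Gamma(N_r)$ is a constant.

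Concretely, the reproduction step is: writing $\la^N\TT^N e^{-\langle t,\la\rangle}$ and integrating $\frac{d\TT}{\TT}$ against $e^{+2^{-1}\langle t,\la\rangle}$ appropriately — more precisely, using
\begin{equation*}
\int_{\Rdp}\TT^{N}e^{-\langle t,\la\rangle}\,\frac{d\TT}{\TT}=\Gamma(N_1)\cdots\Gamma(N_d)\,\la^{-N},
\end{equation*}
one gets $m(L)=\big(\prod_r\Gamma(N_r)\big)^{-1}\int_{\Rdp}\big(\prod_r t_r^{N_r}\big)L^N e^{-\langle t,L\rangle}m(L)\,\frac{d\TT}{\TT}$, and then substituting the Mellin-inversion formula for each $\TT$ and using Fubini/Minkowski's integral inequality for operator-valued integrals gives
\begin{equation*}
\|m(L)\|_{p\to p}\le C_{N}\int_{\mathbb{R}^d}\|L^{iu}\|_{p\to p}\sup_{\TT\in\Rdp}|\M(m_{N,\TT})(u)|\,du=C_{N}\,m(L,N,p),
\end{equation*}
after checking that the remaining factors ($\TT^{-N}e^{2^{-1}\langle t,\la\rangle}$ weights, times $\TT^{N}e^{-\langle t,\la\rangle}$) combine to an absolutely convergent $\frac{d\TT}{\TT}$-integral with total mass $\lesssim_N 1$, independently of $u$. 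The boundedness of $L^{iu}$ on $L^p$ for a.e.\ $u$, and the measurability/growth of $u\mapsto\|L^{iu}\|_{p\to p}$, follow from the fact that each $L_r$ generates an $L^p$-contraction semigroup and hence has bounded imaginary powers by \cite[Theorem 3]{Hanonsemi}, combined with strong commutativity so that $L^{iu}=L_1^{iu_1}\cdots L_d^{iu_d}$ is a genuine bounded operator.

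The main obstacle I anticipate is the careful justification of the operator-valued Mellin inversion and the interchange of the (spectral) integral defining $m(L)$ with the $\mathbb{R}^d$-integral over $u$ and the $\Rdp$-integral over $\TT$: one must argue on a dense subspace (e.g.\ the range of $E$ over bounded sets, or functions $f$ with $L^k f$ controlled) where all integrals converge absolutely in $L^p$, verify the Bochner-integrability of $u\mapsto L^{iu}\M(m_{N,\TT})(u)f$, and only then pass to the norm estimate by Minkowski's integral inequality. A secondary technical point is confirming that $\sup_\TT|\M(m_{N,\TT})(u)|$ is the right quantity — i.e.\ that the $\TT$-dependence genuinely factors out as claimed and the supremum (rather than some $\TT$-integral of $|\M(m_{N,\TT})(u)|$) suffices; this is exactly why the reproducing identity above is set up with an extra $\TT^{N}e^{-\langle t,\la\rangle}$ whose $\frac{d\TT}{\TT}$-mass is finite, allowing one to pull $|\M(m_{N,\TT})(u)|$ out at its supremal value. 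Everything else — boundedness of $m_{N,\TT}$ in $L^1(\frac{d\la}{\la})$, Fubini, the Gamma-function computations — is routine and I would not grind through it.
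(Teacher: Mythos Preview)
Your reproducing identity
\[
m(L)=\Big(\prod_r\Gamma(N_r)\Big)^{-1}\int_{\Rdp} t^N L^N e^{-\langle t,L\rangle}m(L)\,\frac{dt}{t}
=\Big(\prod_r\Gamma(N_r)\Big)^{-1}\int_{\Rdp} e^{-\tfrac12\langle t,L\rangle}\,m_{N,t}(L)\,\frac{dt}{t}
\]
is correct at the spectral level, and so is the Mellin inversion $m_{N,t}(L)=(2\pi)^{-d}\int_{\mathbb R^d}\M(m_{N,t})(u)L^{iu}\,du$. The gap is in the step where you ``pull out $\sup_t|\M(m_{N,t})(u)|$'' and claim the leftover $\frac{dt}{t}$-integral has finite mass. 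It does not: once you apply Minkowski in $t$ (and use only that $e^{-\tfrac12\langle t,L\rangle}$ is a contraction), what remains is $\int_{\Rdp}|\M(m_{N,t})(u)|\,\frac{dt}{t}$, and this typically diverges. Take $m\equiv 1$: a change of variable gives $\M(m_{N,t})(u)=t^{iu}\prod_r 2^{N_r-iu_r}\Gamma(N_r-iu_r)$, so $|\M(m_{N,t})(u)|$ is \emph{independent of $t$} and $\int_{\Rdp}|\M(m_{N,t})(u)|\,\frac{dt}{t}=\infty$. The ``remaining factors'' you mention combine to $e^{-\tfrac12\langle t,\la\rangle}$, whose $\frac{dt}{t}$-integral is likewise divergent at $t\to 0$ for every $\la$. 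So the scheme cannot close with a triangle/Minkowski inequality in $t$.

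What the paper does instead is to replace the $L^1\big(\Rdp,\frac{dt}{t}\big)$ structure by an $L^2$ one, via the Littlewood--Paley $g$-function
\[
g_N(f)=\bigg(\int_{\Rdp}\big|t^N L^N e^{-\langle t,L\rangle}f\big|^2\,\frac{dt}{t}\bigg)^{1/2},
\]
for which the two-sided bound $\|g_N(f)\|_p\approx_{p,d,N}\|f\|_p$ is established separately (Theorem~\ref{thm:gfun}, using Cowling's $H^\infty$ calculus). One then writes $t^{N+\mathbf 1}L^{N+\mathbf 1}e^{-\langle t,L\rangle}m(L)f=(2\pi)^{-d}\int_{\mathbb R^d}\M(m_{N,t})(u)\,t^{\mathbf 1}L^{\mathbf 1}e^{-\tfrac12\langle t,L\rangle}(L^{iu}f)\,du$, applies Minkowski \emph{only in $u$}, and uses the pointwise bound $|\M(m_{N,t})(u)|\le\sup_s|\M(m_{N,s})(u)|$ inside the $L^2\big(\frac{dt}{t}\big)$-norm. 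The surviving $t$-integral is then exactly $2^d g_{\mathbf 1}(L^{iu}f)$, which is finite in $L^p$ by the $g$-function theorem again. The $L^2$ (rather than $L^1$) structure in $t$ is precisely what makes the supremum in \eqref{chap:General,eq:thm:gen} the right object; your approach is missing this ingredient.
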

\begin{remark1} A comment on the measurability of the integrand in \eqref{chap:General,eq:thm:gen} is in order. Since $m$ is bounded, for each fixed $N\in\mathbb{N}^d$ and $u\in \mathbb{R}^d,$ $|\M(m_{N,\TT})(u)|$ is continuous in $t\in \Rdp.$ Hence, the supremum in \eqref{chap:General,eq:thm:gen} is a Borel measurable function of $u\in\mathbb{R}^d,$ for a full proof see Appendix \ref{chap:App,sec:Mel,sub:rem}. Moreover, it can be shown that the mapping $\mathbb{R}^d\ni u\to L^{iu}$ is continuous in the strong operator topology on $L^p,$ \label{chap:General,pag:strong} see Appendix \ref{chap:App,sec:joint,subsec:strongmeasLiu}. Thus, in order to be completely rigorous with regard to the measurability of the integrand in \eqref{chap:General,eq:thm:gen}, we should assume
 $$\int_{\mathbb{R}^{d}}\|L^{iu}f\|_{p}\,\sup_{\TT\in \Rdp}|\M(m_{N,\TT})(u)|\,du\leq C_{p,N,d}\|f\|_p,\qquad f\in L^2\cap L^p.$$
\end{remark1}
\begin{remark2}
The proof of Theorem \ref{thm:gen} we present here is modeled over the original proof of \cite[Theorem 1]{cit:Me} for the one-operator case. In \cite[Theorem 2.1]{Hanonultracon} the authors gave a simpler proof of \cite[Theorem 1]{cit:Me}. However, a closer look at their method reveals that it does not carry over to our multivariate case. The reason is that we initially do not know whether multivariate multipliers of Laplace transform type $\Rdp\ni\la\mapsto \la^{\bf 1}\int_{\Rdp}\exp(-\langle t,\la\rangle)\, \kappa(t)\,dt,$
with $\kappa$ being a bounded function on $\Rdp$ that may not have a product form,  produce bounded multiplier operators on $L^p.$
\end{remark2}
The key ingredient in the proof of Theorem \ref{thm:gen} is the $L^p$ boundedness of some auxiliary $g$-function $g_N.$ For any fixed $N\in\mathbb{N}^d$ let
\begin{equation}
g_N(f)=\left(\int_{\Rdp}\bigg|t^N L^N \exp({-\langle t, L\rangle}) f \bigg|^2\,\frac{dt}{t}\right)^{1/2},\qquad f\in L^2. \label{chap:General,eq:gnsecformula}
\end{equation}
Note that for each fixed $f\in L^2$ we have
\begin{equation}\label{chap:General,eq:Frechet}\partial_{t_1}^{N_1}\cdots \partial_{t_d}^{N_d}\exp({-\langle t, L\rangle}) f=(-1)^{|N|}L^N\exp({-\langle t, L\rangle}) f,\end{equation}
where the left hand side of \eqref{chap:General,eq:Frechet} is the Fr\'echet (partial) derivative in $L^2$ of the function $\Rdp\ni t\mapsto \exp({-\langle t, L\rangle})f;$ to justify \eqref{chap:General,eq:Frechet} we just use the multivariate spectral theorem. Thus we may also express $g_N$ as
\begin{equation}
 \label{chap:General,eq:gnfirformula} g_N(f)=\left(\int_{\Rdp}\bigg|t^N \bigg(\partial_{t_1}^{N_1}\cdots \partial_{t_d}^{N_d} \exp({-\langle t, L\rangle}) f\bigg) \bigg|^2\,\frac{dt}{t}\right)^{1/2}
\end{equation}
In many particular cases \eqref{chap:General,eq:gnfirformula} can be also understood pointwise.

It is not hard to see that $g_N$ is, up to a multiplicative constant, a well defined isometry on $L^2.$ Indeed, using Fubini's theorem (the first, third and fourth equality below) and the multivariate spectral theorem (the second and fourth equality below) we obtain
\begin{align}
\label{chap:General,eq:gnL2isometry}
\begin{split}
\|g_N(f)\|_{2}^2&=\int_{\Rdp}\int_{X}\bigg|t^N L^N \exp({-\langle t, L\rangle})f(x)\bigg|^2\,d\nu(x)\,\frac{dt}{t}\\
&=\int_{\Rdp}\int_{\sigma(L)}\bigg|t^N \la^{N}\exp({-\langle t, \la\rangle})\bigg|^2\, dE^{L}_{f,f}(\la)\,\frac{dt}{t}\\
&=\int_{\sigma(L)}\int_{\Rdp}\bigg|t^N \la^{N}\exp({-\langle t, \la\rangle})\bigg|^2\,\frac{dt}{t}\, dE^{L}_{f,f}(\la)=C_{N}\|f\|_{2}.
\end{split}
\end{align}

Moreover, for fixed $\la \in \Rdp,$ we have
\begin{equation*}
\M\big((\cdot)^N \la^{N}\exp({-\langle \cdot, \la\rangle})\big)(u)=\Gamma(N_1-iu_1)\cdots\Gamma(N_d-iu_d)\la^{iu},\qquad u\in\mathbb{R}^d.
\end{equation*}
Thus, using the multivariate spectral theorem and Plancherel's formula for the Mellin transform we rewrite $g_N(f)$ as
\begin{equation}
\label{chap:General,eq:gnformula}
g_N(f)=\frac{1}{(2\pi)^{d/2}}\bigg(\int_{\mathbb{R}^d}\big|\Gamma(N_1-iu_1)\cdots\Gamma(N_d-iu_d)L^{iu}f\big|^2\,du\bigg)^{1/2}.
\end{equation}

The following theorem expresses the crucial $L^p$ norm preserving property of $g_N.$
\begin{thmc}
\label{thm:gfun}
Let $1<p<\infty.$ Then, there exists a constant $C_{p,d,N}>0$ such that
\begin{equation}
\label{chap:General,eq:gfun}
(C_{p,d,N})^{-1}\|f\|_p\leq \|g_N(f)\|_p\leq C_{p,d,N} \|f\|_p,\qquad f\in L^p \cap L^2.
\end{equation}
\end{thmc}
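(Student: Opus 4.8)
The plan is to prove the two-sided inequality \eqref{chap:General,eq:gfun} by the classical Littlewood--Paley--Stein method, iterating a one-variable argument over the $d$ directions. The starting observation is that, thanks to strong commutativity, the semigroup factors as $\exp(-\langle t, L\rangle) = \exp(-t_1 L_1)\circ\cdots\circ\exp(-t_d L_d)$, and each factor $\exp(-t_r L_r)$ is an $L^p$ contraction by \eqref{chap:Intro,sec:Setting,eq:contra}. Hence the natural route is: first establish, for a \emph{single} non-negative self-adjoint operator $A$ generating an $L^p$-contraction semigroup, that the $N$-th order vertical square function $g^A_N(h) = \big(\int_0^\infty |t^N A^N e^{-tA} h|^2\,\tfrac{dt}{t}\big)^{1/2}$ satisfies $\|g^A_N(h)\|_p \lesssim_{p,N} \|h\|_p$ on $L^p \cap L^2$; then bootstrap to the $d$-parameter $g_N$ by applying this successively in each variable, keeping the remaining variables frozen and invoking Fubini together with \eqref{chap:Intro,sec:Notation,eq:tensnormeq}-type reasoning. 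This gives the upper bound in \eqref{chap:General,eq:gfun}. The lower bound then follows from the upper bound by the standard polarization/duality trick: using the $L^2$ isometry \eqref{chap:General,eq:gnL2isometry}, one writes $\|f\|_2^2 = C_N^{-1}\langle g_N(f), g_N(\text{something})\rangle$-type identities, or more precisely pairs $\langle f, h\rangle$ against an integral in $t$ of $\langle t^N L^N e^{-\langle t,L\rangle}f,\, t^N L^N e^{-\langle t,L\rangle}h\rangle$ (valid on $L^2$ by \eqref{chap:General,eq:gnL2isometry}) and estimates by Cauchy--Schwarz in $t$ and $x$, then Hölder in $x$, by $\|g_N(f)\|_p\,\|g_N(h)\|_{p'}$; taking the supremum over $\|h\|_{p'}\leq 1$ in $L^{p'}\cap L^2$ and using the upper bound for $g_N$ on $L^{p'}$ yields $\|f\|_p \lesssim \|g_N(f)\|_p$.

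For the single-operator upper bound I would proceed in two stages. For $N=1$, the square function $g^A_1$ is exactly the Littlewood--Paley--Stein $g$-function associated with a symmetric contraction semigroup (after checking that the contractivity on all $L^p$, $1\le p\le\infty$, lets us run Stein's maximal-theorem machinery, or alternatively via the subordination formula and the Hopf--Dunford--Schwartz maximal ergodic theorem); Stein's classical result in \emph{Topics in harmonic analysis} gives $\|g^A_1(h)\|_p \lesssim_p \|h\|_p$. To pass to general $N$ one notes the identity $t^N A^N e^{-tA} = c_N\, (tA e^{-tA/N})^{\star N}$ in the sense of composing the operator $s \mapsto (sA)e^{-sA}$ — more cleanly, one writes $t^N A^N e^{-tA}h$ as an average over dilations of $sA e^{-sA}$ applied to itself and controls $g^A_N$ by $g^A_1$ composed with $g^A_1$ (iterated $N$ times) using Minkowski's integral inequality; each application of $g^A_1$ costs a factor $C_p$, so $\|g^A_N(h)\|_p \lesssim_{p,N}\|h\|_p$. (This "telescoping" is exactly how higher-order $g$-functions are handled in Meda's one-dimensional argument, which Remark~2 tells us we are modelling.)

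The multivariate assembly is then essentially formal but must be done carefully. Write $g_N(f)^2 = \int_{\mathbb{R}_+^{d-1}} \big(\int_0^\infty |t_d^{N_d} L_d^{N_d} e^{-t_d L_d}\, \Phi_{t'}(f)|^2\,\tfrac{dt_d}{t_d}\big)\,\tfrac{dt'}{t'}$ where $t' = (t_1,\dots,t_{d-1})$ and $\Phi_{t'} = \prod_{r<d} t_r^{N_r} L_r^{N_r} e^{-t_r L_r}$ commutes with everything in the $t_d$-integral; applying the single-operator bound for $A = L_d$ pointwise in $t'$ and then integrating in $t'$, then iterating downward in $r = d-1, d-2, \dots, 1$, collapses $\|g_N(f)\|_p$ to $\|f\|_p$ at the cost of $\prod_r C_{p,N_r}$. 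The main obstacle I anticipate is not any single estimate but the bookkeeping needed to make the "apply $g^{L_r}_1$ to an operator acting in another variable" step rigorous: one must know that the vector-valued (Hilbert-space-valued) extension of the single-operator $g$-function bound holds, i.e. that $g^{A}_1$ is bounded on $L^p(X; \mathcal{H})$ for $\mathcal{H}$ a Hilbert space, when $A$ acts only on $X$ and trivially on the $\mathcal{H}$-factor encoding the other $t$-variables. This vector-valued upgrade of Stein's theorem is standard (the contraction property is inherited by the tensor-extended operators, and Stein's proof is Hilbert-space coefficient-friendly, or one cites Fendler's dilation theorem plus the vector-valued Littlewood--Paley inequality for the Poisson semigroup), but it is the point where the argument needs a genuine external input rather than a direct computation, and it is where I would spend most of the care in writing the full proof.
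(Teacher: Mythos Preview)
Your reduction of the lower bound to the upper bound via polarization and the $L^2$ isometry \eqref{chap:General,eq:gnL2isometry} is correct and matches the paper. The gap is in the upper bound.

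The only standing hypothesis is \eqref{chap:Intro,sec:Setting,eq:contra}: each $e^{-tL_r}$ is a contraction on every $L^p$. Positivity preservation and the Markov condition $T_t 1=1$ are \emph{not} assumed. Stein's $g$-function theorem in \cite{topics} is stated for symmetric diffusion (sub-Markovian) semigroups, and his argument uses positivity essentially, through the Hopf--Dunford--Schwartz maximal ergodic theorem for positive contractions; so neither ``Stein's classical result'' nor your Hopf--Dunford--Schwartz alternative is directly available here. Fendler's dilation does point in a workable direction, but turning it into the scalar and then Hilbert-valued $g_N^A$ bound under \eqref{chap:Intro,sec:Setting,eq:contra} alone is substantial, not a routine citation. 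Your $N=1\to N$ telescoping is also shakier than claimed: writing $t^NA^Ne^{-tA}$ as a self-composition of $sAe^{-sA}$ forces the inner and outer time variables to coincide, so there is no clean iteration of $g_1^A$ without already having the vector-valued machinery in hand.

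The paper's route is different and tailored precisely to \eqref{chap:Intro,sec:Setting,eq:contra}. Starting from the Mellin representation \eqref{chap:General,eq:gnformula}, $g_N(f)$ is the $L^2(du)$ norm of $\prod_r\Gamma(N_r-iu_r)\,L^{iu}f$. A smooth dyadic partition $\{h_k\}_{k\in\mathbb{Z}^d}$ in $u$, Parseval on each block, and the randomization Lemma~\ref{lem:randkhin} reduce the upper bound to controlling $\|m_{k_1}^{a_1}(L_1)\cdots m_{k_d}^{a_d}(L_d)\|_{p\to p}$ uniformly over the coefficients $\{a_{j_r}\}$. Each single-variable symbol $m_{k_r}^{a_r}$ is shown to lie in $H^\infty(S_\varphi)$ with norm $\lesssim e^{-c_\varphi|k_r|}$, and Cowling's theorem \cite[Theorem~3]{Hanonsemi}---which needs exactly \eqref{chap:Intro,sec:Setting,eq:contra} and nothing more---bounds each factor; the sum over $k$ converges geometrically. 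This handles all $N$ simultaneously and never invokes positivity. The paper does remark that an alternative derivation via \cite{AlFrMc} exists, which is morally the vector-valued iteration you outline, but with the correct one-variable input being the $H^\infty$ calculus of each $L_r$ (equivalently, Meda's single-operator argument \cite{Me_gfun}) rather than Stein's theorem.
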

\begin{remark} The formulae  for $g_N(f)$, \eqref{chap:General,eq:gnsecformula} and \eqref{chap:General,eq:gnformula}, are easily extendable beyond $N\in\mathbb{N}^d,$ to complex parameters $\alpha=(\alpha_1,\ldots,\alpha_d)$ with $\Real(\alpha_r)>0,$ $r=1,\ldots,d.$ Then, by using methods similar to those employed in this thesis, it can be proved that the generalized $g$-function $g_{\alpha}$ also satisfies \eqref{chap:General,eq:gfun}.\end{remark}

Theorem \ref{thm:gfun} can be derived from results of \cite{AlFrMc}. The proof we present here is more direct. It follows the scheme originated in \cite{Me_gfun} and generalized in \cite{cit:ja}. Here we also need the following multi-dimensional randomization lemma (cf. \cite[Lemma 5.2]{AlFrMc} and \cite[Lemma 1.3]{Me_gfun}). The proof of Lemma \ref{lem:randkhin} is based on a standard application of Khintchine's inequality and Minkowski's integral inequality, hence we omit it.
\begin{lemc}
\label{lem:randkhin}
Fix $1<p<\infty.$ Then, for each sequence $\{f_{j}\}_{j\in\mathbb{Z}^d},$ of functions in $L^p,$ we have
$$\bigg\|\bigg(\sum_{j\in\mathbb{Z}^d}|f_j|^2\bigg)^{1/2}\bigg\|_p\leq C_p \sup_{|a_{j_1}|\leq 1,\ldots,|a_{j_d}|\leq 1} \bigg\|\sum_{j\in\mathbb{Z}^d}a_{j_1}\cdots a_{j_d} f_j\bigg\|_p.$$
\end{lemc}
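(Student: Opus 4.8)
The plan is to prove the estimate by the standard randomization argument. Let $\{\epsilon^{(1)}_k\}_{k\in\mathbb{Z}},\ldots,\{\epsilon^{(d)}_k\}_{k\in\mathbb{Z}}$ be $d$ mutually independent Rademacher sequences, living on the product probability space $(\Omega,\mathbb{P})=\prod_{r=1}^{d}(\Omega_r,\mathbb{P}_r)$. The products $\varepsilon_j(\omega):=\prod_{r=1}^{d}\epsilon^{(r)}_{j_r}(\omega_r)$, indexed by $j=(j_1,\ldots,j_d)\in\mathbb{Z}^d$, form an orthonormal system in $L^2(\Omega)$, since $\mathbb{E}[\varepsilon_j\varepsilon_k]=\prod_{r=1}^{d}\mathbb{E}_{\Omega_r}[\epsilon^{(r)}_{j_r}\epsilon^{(r)}_{k_r}]=\prod_{r=1}^{d}\delta_{j_r,k_r}=\delta_{j,k}$. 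The idea is to realize an (on-average) near-extremal choice of the deterministic coefficients $a_{j_1}\cdots a_{j_d}$ on the right-hand side by these random products of signs.

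The first step is a multi-parameter Khintchine inequality: for every finitely supported scalar sequence $(c_j)_{j\in\mathbb{Z}^d}$,
$$c_{p,d}\Big(\sum_{j}|c_j|^2\Big)^{1/2}\le\Big(\int_{\Omega}\Big|\sum_{j}\varepsilon_j(\omega)\,c_j\Big|^p\,d\mathbb{P}(\omega)\Big)^{1/p}\le C_{p,d}\Big(\sum_{j}|c_j|^2\Big)^{1/2}.$$
The upper bound is obtained by applying the one-dimensional Khintchine inequality successively in the variables $\omega_1,\omega_2,\ldots,\omega_d$; each application replaces the inner $L^p(\Omega_r)$-norm by an $\ell^2$-norm in the index $j_r$, and Minkowski's integral inequality is used to move that $\ell^2$-norm past the remaining $L^p$-integrations --- this is the only place where the product structure requires care, and precisely why Minkowski's inequality enters alongside Khintchine's. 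The lower bound is immediate for $p\ge2$ from $\|\cdot\|_{L^p(\Omega)}\ge\|\cdot\|_{L^2(\Omega)}$ combined with the orthonormality of $\{\varepsilon_j\}$, and for $1<p<2$ it follows from the upper bound (say with exponent $4$) via the log-convexity of the $L^q(\Omega)$-norms. Only the lower inequality is needed below.

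The lemma then follows in two mechanical moves. Applying the lower estimate pointwise with $c_j=f_j(x)$, which is legitimate for $\nu$-a.e.\ $x$ after reducing to finitely many indices $j$ and passing to the limit by monotone convergence, and raising to the $p$-th power gives
$$\Big(\sum_{j}|f_j(x)|^2\Big)^{p/2}\le c_{p,d}^{-p}\int_{\Omega}\Big|\sum_{j}\varepsilon_j(\omega)\,f_j(x)\Big|^p\,d\mathbb{P}(\omega).$$
Integrating in $x$ over $(X,\nu)$ and interchanging the order of integration (Tonelli, all integrands being nonnegative) yields
$$\Big\|\Big(\sum_{j}|f_j|^2\Big)^{1/2}\Big\|_p^p\le c_{p,d}^{-p}\int_{\Omega}\Big\|\sum_{j}\varepsilon_j(\omega)\,f_j\Big\|_p^p\,d\mathbb{P}(\omega).$$
For each fixed $\omega$ the numbers $a^{(r)}_k:=\epsilon^{(r)}_k(\omega_r)$ satisfy $|a^{(r)}_k|\le 1$, so the inner norm is bounded by $\sup_{|a_{j_1}|\le1,\ldots,|a_{j_d}|\le1}\big\|\sum_{j}a_{j_1}\cdots a_{j_d}f_j\big\|_p$; since $\mathbb{P}(\Omega)=1$, taking $p$-th roots gives the claim with $C_p=c_{p,d}^{-1}$.

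The only substantive point is the multi-parameter Khintchine inequality of the first step, and within it the bookkeeping needed to iterate the scalar inequality through the $d$ probability spaces --- handled by Minkowski's integral inequality --- while the passage to the stated supremum over deterministic bounded sequences and the use of Fubini's theorem are entirely routine. For complex-valued $f_j$ one either splits into real and imaginary parts or invokes the complex form of Khintchine's inequality, with no change to the argument.
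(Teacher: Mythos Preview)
Your proof is correct and follows precisely the approach the paper indicates: the paper omits the proof, noting only that it is ``a standard application of Khintchine's inequality and Minkowski's integral inequality,'' and your argument carries out exactly this programme --- iterating the one-dimensional Khintchine inequality across the $d$ probability spaces with Minkowski's integral inequality, then averaging over the random signs and bounding by the supremum over deterministic coefficients. The only cosmetic point is that your constant $c_{p,d}^{-1}$ depends on $d$ whereas the paper writes $C_p$, but this is harmless since in the application the dimension $d$ is fixed.
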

\begin{proof}[Proof of Theorem \ref{thm:gfun}]
Using \eqref{chap:General,eq:gnL2isometry} and a polarization argument we see that it suffices to prove the right hand side of \eqref{chap:General,eq:gfun}. Recall that $f\in L^2,$ so that (by Fubini's theorem and the multivariate spectral theorem) all the formulae appearing in the proof make sense $\nu$-a.e. Let $N\in \mathbb{N}^d$ be fixed. Take a smooth function $h$ on $\mathbb{R},$ supported in $[-\pi,\pi],$ and such that
\begin{equation} \label{chap:General,eq:resid}\sum_{l\in\mathbb{Z}} h(x-\pi l)=1,\qquad x\in\mathbb{R}.\end{equation}
Then, for $k=(k_1,\ldots,k_d)\in\mathbb{Z}^d,$ we set $$h_{k}(x)=h_{k_1}(x_1)\cdots h_{k_d}(x_d)=h(x_1-\pi k_1)\cdots h(x_d-\pi k_d),\qquad x=(x_1,\ldots,x_d)\in\mathbb{R}^d.$$

Next, for each $j\in\mathbb{Z}^d$ we define the functions $b_{j,k}$ on ${\bf S}_{\pi/2}$
by
\begin{align*}
b_{j,k}(z)&=\int_{\mathbb{R}^d}h_{k}(u)\Gamma(N_1-iu_1)\cdots\Gamma(N_d-iu_d)e^{-i\langle j ,u\rangle} z^{iu}\,du\\&=\prod_{r=1}^d\int_{\mathbb{R}}h_{k_r}(u_r)\Gamma(N_r-iu_r)e^{-ij_r u_r}z_r^{iu_r}\,du_r\equiv\prod_{r=1}^{d}b_{j_r,k_r}(z_r).
\end{align*}
It is not hard to see that the functions $b_{j,k}$ are bounded on $[0,\infty)^d,$ thus it makes sense to consider $b_{j,k}(L).$ Moreover, due to the product structure of $b_{j,k},$ we have
\begin{equation}
\label{chap:General,eq:prodbjk}
b_{j,k}(L)=b_{j_1,k_1}(L_1)\cdots b_{j_d,k_d}(L_d),\qquad j,k\in\mathbb{Z}^d.
\end{equation}

From Parseval's formula for the multiple Fourier series it follows that, for each $k\in\mathbb{Z}^d,$ the set $\{(2\pi)^{-d/2}e^{-i\langle j, u-\pi k\rangle }\}_{j\in\mathbb{Z}^d}$ forms an orthonormal basis in the space $L^2(Q_k,du),$ with $Q_k=[k_1\pi-\pi,k_1\pi+\pi]\times\cdots\times [k_d\pi-\pi,k_d\pi+\pi].$ Therefore, since $\supp(h_k)\subset Q_k$ and $|e^{i \pi \langle j, k\rangle}|=1,$ for each fixed $z \in {\bf S}_{\pi/2}$ we have
\begin{equation}
\label{chap:General,eq:Pars}
\begin{split}
&\int_{\mathbb{R}^d}\left|h_k(u)\Gamma(N_1-iu_1)\cdots\Gamma(N_d-iu_d)z^{iu}\right|^2\,du\\
&=\frac{1}{(2\pi)^{d/2}}\sum_{j\in\mathbb{Z}^d} \int_{\mathbb{R}^d}\left|h_k(u)\Gamma(N_1-iu_1)\cdots\Gamma(N_d-iu_d)e^{-i\langle j, u\rangle}z^{iu}\right|^2\,du.
\end{split}
\end{equation}

Now, using \eqref{chap:General,eq:gnformula}, \eqref{chap:General,eq:resid} and Minkowski's integral inequality, followed by the multivariate spectral theorem and \eqref{chap:General,eq:Pars}, we obtain
\begin{align*}
g_N(f)(x)&\leq (2\pi)^{-d/2} \sum_{k\in\mathbb{Z}^d}\left[\int_{\mathbb{R}^d}\left|h_k(u)\Gamma(N_1-iu_1)\cdots\Gamma(N_d-iu_d)L^{iu}f(x)\right|^2\,du\right]^{1/2}\\
&=(2\pi)^{-d}\sum_{k\in\mathbb{Z}^d}\left[\sum_{j\in\mathbb{Z}^d}\left|\int_{\mathbb{R}^d}h_k(u)\Gamma(N_1-iu_1)\cdots\Gamma(N_d-iu_d)e^{-i\langle j ,u\rangle}L^{iu}f(x)\,du\right|^2\right]^{1/2}\\
&=(2\pi)^{-d}\sum_{k\in\mathbb{Z}^d}\bigg(\sum_{j\in\mathbb{Z}^d}|b_{j,k}(L)(f)(x)|^2\bigg)^{1/2},
\end{align*}
and consequently,
\begin{equation*}
\|g_N(f)\|_p\leq (2\pi)^{-d}\sum_{k\in\mathbb{Z}^d}\bigg\|\bigg(\sum_{j\in\mathbb{Z}^d}|b_{j,k}(L)f|^2\bigg)^{1/2}\bigg\|_p.
\end{equation*}
Then from Lemma \ref{lem:randkhin} and \eqref{chap:General,eq:prodbjk} it follows that
\begin{align}
\nonumber
(2\pi)^{d}\|g_N(f)\|_p&\leq \sum_{k\in\mathbb{Z}^d}\sup_{|a_{j_1}|\leq 1,\ldots,|a_{j_d}|\leq 1}\bigg\|\left(\sum_{j_1\in\mathbb{Z}}a_{j_1} b_{j_1,k_1}(L_1)\right)\cdots\left(\sum_{j_d\in\mathbb{Z}}a_{j_d} b_{j_d,k_d}(L_d)\right)f\bigg\|_p\\ \label{chap:General,eq:ineqaim}
&:= \sum_{k\in\mathbb{Z}^d}\sup_{|a_{j_1}|\leq 1,\ldots,|a_{j_d}|\leq 1}\bigg\|m^{a_1}_{k_1}(L_1)\cdots m^{a_d}_{k_d}(L_d)f\bigg\|_p.
\end{align}

We shall now focus on estimating the $L^p$ operator norm of each of the operators $m^{a_r}_{k_r}(L_r),$ $r=1,\ldots,d,$ defined in \eqref{chap:General,eq:ineqaim}.
We claim that each of the functions $$m_{r}(z_r):=m^{a_r}_{k_r}(z_r)=\sum_{j_r\in\mathbb{Z}}a_{j_r} b_{j_r,k_r}(z_r)$$ is holomorphic on the right half-plane $S_{\pi/2}$ and has the following property: for every angle $0<\varphi<\pi/2$ there exist $C_{\varphi}>0,$ $c_{\varphi}>0,$ independent of $a_r$ and $k_r,$ such that
\begin{equation}
\label{chap:General,eq:claim}
\|m_r\|_{H^{\infty}(S_{\varphi})}\leq C_{\varphi}e^{-c_{\varphi}|k_r|}.
\end{equation}

If the claim is true, then coming back to \eqref{chap:General,eq:ineqaim} and using \cite[Theorem 3]{Hanonsemi} we finish the proof of Theorem \ref{thm:gfun}. Note that the bound in \cite[Theorem 3]{Hanonsemi} does not depend on the considered operator.

Thus we focus on proving \eqref{chap:General,eq:claim}. Since this inequality is one-dimensional, till the end of the proof of the claim we consider $j,k\in\mathbb{Z},$ $z\in S_{\pi/2},$ $u\in\mathbb{R}$ and $N\in\mathbb{N}.$ We use the following estimate, valid for each $\varepsilon>0,$
\begin{equation}
\label{chap:General,eq:Gammaest}
|\Gamma(N-iu)|+|\Gamma'(N-iu)|+|\Gamma''(N-iu)|\leq C_{\varepsilon,N}e^{-|u|(\pi/2-\varepsilon)}, \qquad u\in\mathbb{R}.
\end{equation}
Observe that $z=e^{w}\in S_{\varphi}$ if and only if $|\Ima(w)|<\varphi.$ In view of \eqref{chap:General,eq:Gammaest} and the fact that $\supp h_{k}\subseteq [k\pi-\pi,k\pi+\pi],$ for $\varepsilon$ small enough (say, $\varepsilon<\pi/2-|\varphi|$), we have
\begin{equation}
\label{chap:General,eq:claimp1}
|b_{j,k}(e^{w})|\leq \int_{\mathbb{R}}|h_{k}(u)|e^{-|u|(\pi/2-\varepsilon-|\varphi|)}\,du\leq C_{\varphi}e^{-c_{\varphi}|k|}.
\end{equation}
Moreover, using integration by parts in
$$b_{j,k}(e^{w})=\int_{\mathbb{R}}\left[h_{k}(u)\Gamma(N-iu)\right]e^{iu(w-j)}\,du,\qquad |\Ima(w)|<\varphi,$$
together with \eqref{chap:General,eq:Gammaest} and the fact that $h_{k},$ $h'_{k}$ and $h''_k$ are all supported in $[k\pi-\pi,k\pi+\pi],$ we see that
\begin{equation}
\label{chap:General,eq:claimp2}
|b_{j,k}(e^{w})|\leq C_{\varphi} \frac{e^{-c_{\varphi}|k|}}{|w-j|^2}.
\end{equation}
Hence, combining \eqref{chap:General,eq:claimp1} and \eqref{chap:General,eq:claimp2}, for $|\Ima(w)|<\varphi$ we arrive at
\begin{align*}
|m_k(e^{w})|=\bigg|\sum_{|j-w|<1/2}a_j b_{j,k}(e^{w})+\sum_{|j-w|\geq 1/2}a_j b_{j,k}(e^{w})\bigg|\leq C_{\varphi} e^{-c_{\varphi}|k|}.
\end{align*}
The proof of \eqref{chap:General,eq:claim} is finished, thus also the proof of Theorem \ref{thm:gfun} is completed.
\end{proof}

We are now ready to prove Theorem \ref{thm:gen}.
\begin{proof}[Proof of Theorem \ref{thm:gen}]
Take $f\in L^2\cap L^p.$
From the inversion formula for the Mellin transform and the multivariate spectral theorem we see that
\begin{equation}
\label{chap:General,eq:auxeq0}
t^{N}L^N\exp(-2^{-1}\langle t, L\rangle)m(L)f=\frac{1}{(2\pi)^d}\int_{\mathbb{R}^d}\M(m_{N,t})(u)L^{iu}f\,du.
\end{equation}

Consequently, since $t^{\bf 1} L^{\bf 1}\exp(-2^{-1}\langle t, L\rangle)$ is bounded on $L^2,$ we have
\begin{equation}
\label{chap:General,eq:auxeq}
t^{N+{\bf 1}}L^{N+{\bf 1}}\exp(-\langle t, L\rangle)m(L)f=\frac{1}{(2\pi)^d}\int_{\mathbb{R}^d}\M(m_{N,t})(u)t^{\bf 1}L^{\bf 1}\exp\big(-\frac12\langle t, L\rangle\big)(L^{iu}f)\,du.
\end{equation}
Note that, for each fixed $t\in\Rdp,$ both the integrals in \eqref{chap:General,eq:auxeq0} and \eqref{chap:General,eq:auxeq} can be considered as Bochner integrals of (continuous) functions taking values in $L^2.$

Then, at least formally, from Theorem \ref{thm:gfun} followed by \eqref{chap:General,eq:auxeq}, we obtain
\begin{align*}
&(C_{p,d,N+{\bf 1}})^{-1}\|m(L)f\|_p\leq \|g_{N+{\bf 1}}(m(L)(f))\|_p\\
&=\bigg\|\bigg(\int_{\Rdp}\bigg|\frac{1}{(2\pi)^d}\int_{\mathbb{R}^d}\M(m_{N,t})(u)tL\exp(-2^{-1}\langle t, L\rangle)(L^{iu}f)\,du\bigg|^2\,\frac{dt}{t}\bigg)^{1/2}\bigg\|_p.
\end{align*}
Hence, using Minkowski's integral inequality, it follows that $\|m(L)f\|_p$ is bounded by
\begin{equation*} (2\pi)^{-d}\,C_{p,d,N+{\bf 1}}\int_{\mathbb{R}^d}\sup_{\TT\in\Rdp}|\M(m_{N,\TT})(u)| \bigg\|\bigg(\int_{\Rdp}\bigg|tL\exp(-2^{-1}\langle t, L\rangle)(L^{iu}f)\bigg|^2\,\frac{dt}{t}\bigg)^{1/2}\bigg\|_p\,du.
\end{equation*}
Now, observing that
$$\left(\int_{\Rdp}\bigg|tL\exp(-2^{-1}\langle t, L\rangle)(L^{iu}f)\bigg|^2\,\frac{dt}{t}\right)^{1/2}=2^{d}g_{{\bf 1}}(L^{iu}f)$$
and using once again Theorem \ref{thm:gfun} (this time with $N={\bf 1}$), we arrive at
\begin{align}
\nonumber
\|m(L)f\|_p&\leq \pi^{-d}\,C_{p,d,N+{\bf 1}}\int_{\mathbb{R}^d}\|g_{{\bf 1}}(L^{iu}f)\|_p\,\sup_{\TT\in \Rdp}|\M(m_{N,\TT})(u)|\,du \\ \label{chap:General,eq:lastexp}
&\leq \pi^{-d}\,C_{p,d,N+{\bf 1}}C_{p,d,{\bf 1}}\int_{\mathbb{R}^d}\|L^{iu}\|_{p\to p}\,\sup_{\TT\in\Rdp}|\M(m_{N,\TT})(u)|\,du\,\|f\|_p.
\end{align}
Thus, the proof of Theorem \ref{thm:gen} is finished, provided we justify the formal steps above.

We proceed almost exactly as in \cite[p. 642]{cit:Me}. By \eqref{chap:General,eq:thm:gen} the last expression in \eqref{chap:General,eq:lastexp} converges. Therefore
$$\int_{\mathbb{R}^d}\M(m_{N,t})(u)t^{\bf 1}L^{\bf 1}\exp(-2^{-1}\langle t, L\rangle)(L^{iu}f)\,du$$
converges as an $L^p((X,\nu);L^2(\Rdp,\frac{dt}{t}))$-valued Bochner integral, provided the map
$$\mathbb{R}^d \ni u\mapsto \M(m_{N,t})(u)t^{\bf 1}L^{\bf 1}\exp(-2^{-1}\langle t, L\rangle)(L^{iu}f)$$
is measurable. In fact, using an argument similar to the one from \cite[p. 642]{cit:Me} (recall that also in our case $u\to L^{iu}$ is strongly continuous on $L^p$), we show that this map is continuous, hence concluding the proof of Theorem \ref{thm:gen}.
\end{proof}




    %
    %

    \newchapter{Systems of operators having Marcinkiewicz functional calculus}{Systems of operators having Marcinkiewicz functional calculus}{Systems of operators having Marcinkiewicz functional calculus}
    \numberwithin{equation}{section}
    \label{Chap:Ck}

In this chapter we consider systems of operators which have a finite order Marcinkiewicz functional calculus.

We start with a short overview. In Section \ref{chap:Ck,sec:genMar} we prove a general Marcinkiewicz type multiplier theorem, see Theorem \ref{thm:genMarCk}. Here we use Theorem \ref{thm:gen}. Section \ref{chap:Ck,sec:ExOp} provides a list of operators which have polynomially growing norms of imaginary powers. Then in Section \ref{chap:Ck,sec:BoundSome,subsec:Jac} we show how to obtain polynomial bounds for the imaginary powers of the Jacobi operator. The last two sections of Chapter \ref{Chap:Ck} do not rely on Theorem \ref{thm:genMarCk} and can be read independently. In Section \ref{chap:Ck,sec:HorHan} we present a multivariate H\"ormander type multiplier theorem for the Hankel transform. Finally, in Section \ref{chap:Ck,sec:MarDun} we provide a multivariate Marcinkiewicz type multiplier theorem for the Dunkl transform corresponding to the reflection group $G$ isomorphic to $\mathbb{Z}_2^d.$ In particular, as a corollary, we obtain analogous results for the multivariate Hankel transform.



        \section[A Marcinkiewicz type multiplier theorem]{A Marcinkiewicz type multiplier theorem}
        \label{chap:Ck,sec:genMar}

Consider a general system of self-adjoint, non-negative, strongly commuting operators $A=(A_1,\ldots,A_d),$ on some space $L^2(X,\nu).$ Recall that we impose that the operators $A_r,$ $r=1,\ldots,d,$ satisfy the assumptions of Section \ref{chap:Intro,sec:Setting}, in particular \eqref{chap:Intro,sec:Setting,eq:contra} and \eqref{chap:Intro,sec:Setting,eq:noatomatzero}. We keep the brief notation of Chapter \ref{Chap:General}, i.e.\ we write $L^p,$ $\|\cdot\|_p,$ and $\|\cdot\|_{p\to p}$  instead of $L^p(X,\nu),$ $\|\cdot\|_{L^p(X,\nu)},$ and $\|\cdot\|_{L^p(X,\nu)\to L^p(X,\nu)},$ $1\leq p\leq \infty,$ respectively.

Throughout the present section, we additionally assume that all the operators $A_r,$ $r=1,\ldots,d,$ have a Marcinkiewicz functional calculus. More precisely, we impose that there is a vector of positive real numbers $\sigma=(\sigma_1,\ldots,\sigma_d),$ such that for every $1<p<\infty$ and $r=1,\ldots,d,$
\begin{equation}
\label{chap:Ck,sec:genMar,eq:polynomial}
\|A_r^{iv}\|_{p\to p}\leq \mC(p,A_r) (1+|v|)^{\sigma_r|1/p-1/2|},\qquad v\in\mathbb{R}.
\end{equation}
In view of \cite[Theorem 4]{cit:Me}, for each $r=1,\ldots,d,$ the condition \eqref{chap:Ck,sec:genMar,eq:polynomial} implies that $A_r$ has a Marcinkiewicz functional calculus of every order $\rho_r>\sigma_r/2+1.$ Conversely, if $A_r$ has a Marcinkiewicz functional calculus of order $\rho_r$ then \eqref{chap:Ck,sec:genMar,eq:polynomial} holds with any $\sigma_r>2\rho_r.$

Observe also that \eqref{chap:Ck,sec:genMar,eq:polynomial} implies that for every $1<p<\infty$
\begin{equation}
\label{chap:Ck,sec:genMar,eq:polynomialfull}
\|A^{iu}\|_{p\to p}\leq \mC(p,A) \prod_{r=1}^d(1+|u_r|)^{\sigma_r|1/p-1/2|},\qquad u=(u_1,\ldots,u_d)\in\mathbb{R}^d.
\end{equation}
Note that the constants $\mC(p,A_r)$ in \eqref{chap:Ck,sec:genMar,eq:polynomial} and $\mC(p,A)$ in \eqref{chap:Ck,sec:genMar,eq:polynomialfull} are written in the calligraphic font, so that we can keep track of them in Theorem \ref{thm:genMarCk}.

The main theorem of this section, Theorem \ref{thm:genMarCk}, is a multivariate generalization of Meda's \cite[Theorem 4]{cit:Me}. It is also an enhancement of our previous result, \cite[Theorem 4.1]{cit:ja}. As in Theorems \ref{thm:gen} and \ref{thm:gfun} from Chapter \ref{Chap:General}, the novelty of Theorem \ref{thm:genMarCk} in comparison with our previous result lies in the fact that we only assume strong commutativity. Recall that the operators $A_r,$ $r=1,\ldots,d,$ satisfy all the assumptions of Section \ref{chap:Intro,sec:Setting}, $m(A)$ is defined by \eqref{chap:Intro,sec:Setting,eq:mdef}, while $\sigma=(\sigma_1,\ldots,\sigma_d)$ appears in \eqref{chap:Ck,sec:genMar,eq:polynomialfull}.
\begin{thm}
\label{thm:genMarCk}
Fix $1<p<\infty$ and assume that $m$ satisfies the Marcinkiewicz condition \eqref{chap:Intro,sec:Notation,eq:Marcon} of order $\rho> |1/p -1/2|\,\sigma+{\bf 1}.$ Then $m(A)$ is bounded on $L^p$ and $$\|m(A)\|_{p\to p}\leq C_{p,d}\,\mC(p,A)\, \|m\|_{Mar,\rho}.$$ In particular, if $\rho> \sigma/2+{\bf 1},$ then $m(A)$ is bounded on all $L^p$ spaces, $1<p<\infty.$
\end{thm}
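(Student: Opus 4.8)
The plan is to derive Theorem~\ref{thm:genMarCk} from the general multivariate multiplier theorem, Theorem~\ref{thm:gen}, applied with $N={\bf 1}$. Since $m$ satisfying the Marcinkiewicz condition \eqref{chap:Intro,sec:Notation,eq:Marcon} is in particular bounded and Borel measurable, Theorem~\ref{thm:gen} reduces everything to showing that
$$m(A,{\bf 1},p)=\int_{\mathbb R^{d}}\|A^{iu}\|_{\pp}\,\sup_{\TT\in \Rdp}|\M(m_{{\bf 1},\TT})(u)|\,du$$
is finite and $\leq C_{p,d}\,\mC(p,A)\,\|m\|_{Mar,\rho}$. Inserting the polynomial bound \eqref{chap:Ck,sec:genMar,eq:polynomialfull}, namely $\|A^{iu}\|_{\pp}\lesssim \mC(p,A)\prod_{r=1}^{d}(1+|u_r|)^{\sigma_r|1/p-1/2|}$, the whole problem collapses to the single pointwise estimate
$$\sup_{\TT\in\Rdp}|\M(m_{{\bf 1},\TT})(u)|\le C_{d,\rho}\,\|m\|_{Mar,\rho}\prod_{r=1}^{d}(1+|u_r|)^{-\rho_r},\qquad u\in\mathbb R^{d}.$$
Granting this, $m(A,{\bf 1},p)\lesssim \mC(p,A)\,\|m\|_{Mar,\rho}\int_{\mathbb R^{d}}\prod_{r=1}^d(1+|u_r|)^{\sigma_r|1/p-1/2|-\rho_r}\,du$, and the last integral converges exactly because $\rho_r-\sigma_r|1/p-1/2|>1$ for each $r$, which is the hypothesis $\rho> |1/p -1/2|\,\sigma+{\bf 1}$; Theorem~\ref{thm:gen} then yields $\|m(A)\|_{\pp}\le C_{p,d}\,\mC(p,A)\,\|m\|_{Mar,\rho}$. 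For the final assertion, note that $|1/p-1/2|<1/2$ for every $p\in(1,\infty)$, so a fixed $\rho> \sigma/2+{\bf 1}$ satisfies $\rho>|1/p-1/2|\,\sigma+{\bf 1}$ for all such $p$, and the first part applies for each of them.

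To prove the key estimate I would follow the one-operator argument of \cite{cit:Me}. After the substitution $s_r=\log\la_r$ the Mellin transform becomes a Fourier transform and multiplication by $iu_r$ corresponds to the operator $\la_r\partial_{\la_r}$; hence, for any multi-index $\gamma$ with $\gamma_r\in\{0,\rho_r\}$, integration by parts gives
$$\Bigl|\,\prod_{r=1}^{d}(iu_r)^{\gamma_r}\,\M(m_{{\bf 1},\TT})(u)\Bigr|\le\int_{\Rdp}\Bigl|\,\prod_{r=1}^{d}(\la_r\partial_{\la_r})^{\gamma_r}m_{{\bf 1},\TT}(\la)\Bigr|\,\frac{d\la}{\la}.$$
(The manipulation is legitimate once one first assumes $m\in C^{\infty}_c(\Rdp)$; the general case follows by approximation, the final bound depending only on $\|m\|_{Mar,\rho}$.) Expanding $m_{{\bf 1},\TT}(\la)=\prod_r\la_r t_r e^{-t_r\la_r/2}\cdot m(\la)$ by the Leibniz rule, every resulting term is a product of a factor $\prod_r\Psi_r(t_r\la_r)$, where $\Psi_r(s)=s(1+s^{\rho_r})e^{-s/2}$ is bounded, belongs to $L^{2}((0,\infty),\tfrac{ds}{s})$ and vanishes like $s$ as $s\to 0^{+}$, times a factor $|\la^{\delta}\partial^{\delta}m(\la)|$ with $\delta\le\gamma\le\rho$.

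The heart of the matter is a dyadic decomposition $\Rdp=\bigcup_{k\in\mathbb Z^{d}}Q_k$, $Q_k=\prod_{r=1}^d[2^{k_r},2^{k_r+1})$. On each $Q_k$, Cauchy--Schwarz bounds $\int_{Q_k}\prod_r\Psi_r(t_r\la_r)\,|\la^{\delta}\partial^{\delta}m(\la)|\,\tfrac{d\la}{\la}$ by $a_k(\TT)^{1/2}\,\|m\|_{(\delta)}$, where $a_k(\TT)=\prod_{r=1}^{d}\int_{t_r2^{k_r}}^{t_r2^{k_r+1}}\Psi_r(s)^{2}\,\tfrac{ds}{s}$ and $\|m\|_{(\delta)}\le\|m\|_{Mar,\rho}$ by Definition~\ref{defi:Marcon} (this uses $\delta\le\rho$). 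Summing over $k$ one obtains $\bigl|\prod_r(iu_r)^{\gamma_r}\M(m_{{\bf 1},\TT})(u)\bigr|\lesssim\|m\|_{Mar,\rho}\sum_{k\in\mathbb Z^{d}}a_k(\TT)^{1/2}$, and since $a_k(\TT)$ factors over $r$, this sum equals $\prod_{r=1}^{d}\sum_{k_r\in\mathbb Z}\bigl(\int_{t_r2^{k_r}}^{t_r2^{k_r+1}}\Psi_r(s)^{2}\,\tfrac{ds}{s}\bigr)^{1/2}$, which is bounded uniformly in $\TT\in\Rdp$ because $\Psi_r(s)^{2}$ decays like $s^{2}$ at the origin and exponentially at infinity, so the square roots of its dyadic pieces with respect to $\tfrac{ds}{s}$ form a series convergent uniformly in the dilation parameter $t_r$. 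Running $\gamma$ over all $2^{d}$ choices $\gamma_r\in\{0,\rho_r\}$ and using $\prod_r(1+|u_r|)^{\rho_r}\approx\sum_{\gamma_r\in\{0,\rho_r\}}\prod_r|u_r|^{\gamma_r}$ turns these into the key estimate.

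The step I expect to be the genuine obstacle is precisely this uniform-in-$\TT$ bound for $\sum_{k_r}\bigl(\int_{t_r2^{k_r}}^{t_r2^{k_r+1}}\Psi_r(s)^{2}\,\tfrac{ds}{s}\bigr)^{1/2}$: the Marcinkiewicz condition \eqref{chap:Intro,sec:Notation,eq:Marcon} controls only the $\ell^{2}$-sum of the dyadic pieces of $|\la^{\delta}\partial^{\delta}m|^{2}$, so the $\ell^{1}$-summability one needs after Cauchy--Schwarz must be bought entirely from the decay of the weight $\Psi_r$ at $0$ and at $\infty$, and it is this that forces the choice $N={\bf 1}$ (the factor $\la^{{\bf 1}}$) in $m_{{\bf 1},\TT}$. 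A secondary, purely technical, point is justifying the integration by parts and the Mellin inversion for a general $m$ obeying only \eqref{chap:Intro,sec:Notation,eq:Marcon}, which is handled by the smooth-compactly-supported approximation mentioned above.
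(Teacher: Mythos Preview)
Your plan is correct and shares the skeleton of the paper's proof: both apply Theorem~\ref{thm:gen}, reduce to the pointwise decay estimate $\sup_{\TT}|\M(m_{N,\TT})(u)|\lesssim\|m\|_{Mar,\rho}\prod_r(1+|u_r|)^{-\rho_r}$, obtain that decay by integrating by parts $\rho_r$ times in each variable, expand by Leibniz, and control the resulting pieces through the Marcinkiewicz condition via a dyadic decomposition and Cauchy--Schwarz.

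The implementations differ in two linked respects. First, the paper chooses $N>\rho$ rather than $N={\bf 1}$ and performs the \emph{ordinary} integration by parts, integrating $\la_r^{N_r-iu_r-1}$ and differentiating the rest; this directly produces the decay factor $c_{N,\rho,u}=\prod_r[(N_r-iu_r)\cdots(N_r-iu_r+\rho_r-1)]^{-1}$, whereas you obtain the same $(1+|u_r|)^{-\rho_r}$ via the Mellin relation $\M(D_r f)=iu_r\M(f)$. Second, and more to the point, the paper inserts the dyadic partition of unity $\Psi_j(\la)=\prod_r\psi(2^{j_r}\la_r)$ \emph{before} integrating by parts, so every function being differentiated has compact support in $\Rdp$; all boundary terms vanish automatically for any $m$ possessing merely the $\rho$ classical derivatives assumed in Definition~\ref{defi:Marcon}, and no approximation is needed. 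Your route through a $C^\infty_c$ density argument is legitimate (mollify in log-coordinates, cut off, and pass to the limit either in the Mellin estimate or in the operator norm), but it is an extra step the paper's organization sidesteps. Beyond that the two arguments are the same: your uniform-in-$\TT$ sum $\sum_{k_r}\bigl(\int_{t_r2^{k_r}}^{t_r2^{k_r+1}}\Psi_r(s)^2\,\tfrac{ds}{s}\bigr)^{1/2}$ plays exactly the role of the paper's summable weights $p_j$, and in both cases the uniformity comes from the factor $s_r=t_r\la_r$ vanishing at $0$ and the exponential $e^{-s_r/2}$ damping at $\infty$.
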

Before proving Theorem \ref{thm:genMarCk} let us first state and prove a seemingly interesting corollary. Recall that the phrase 'has a Marcinkiewicz (joint) functional calculus' was explained in Section \ref{chap:Intro,sec:Notation}. The corollary below implies that a system $A=(A_1,\ldots,A_d)$ has a Marcinkiewicz joint functional calculus of a finite order if and only if each $A_r,$ $r=1,\ldots,d,$ has a Marcinkiewicz functional calculus of a finite order.
\begin{cor}
\label{cor:genMarCkequiv}
We have the following:
\begin{itemize}
\item[(i)] If, for each $r=1,\ldots,d,$ the operator $A_r$ has a Marcinkiewicz functional calculus of order $\rho_r,$ then the system $A=(A_1,\ldots,A_d)$ has a Marcinkiewicz joint functional calculus of every order greater than $\rho+{\bf 1}.$
\item[(ii)] If the system $A=(A_1,\ldots,A_d)$ has a Marcinkiewicz joint functional calculus of order $\rho,$ then, for each $r=1,\ldots,d,$ the operator $A_r$ has a Marcinkiewicz functional calculus of order $\rho_r.$
\end{itemize}
\end{cor}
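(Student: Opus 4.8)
The plan is to prove Corollary~\ref{cor:genMarCkequiv} by reducing both implications to results already established, namely Theorem~\ref{thm:genMarCk} for part~(i) and the tensoring/product observations from Section~\ref{chap:Intro,sec:Notation} for part~(ii).

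\textbf{Part (i).} Suppose each $A_r$ has a Marcinkiewicz functional calculus of order $\rho_r$. By the second sentence following \eqref{chap:Ck,sec:genMar,eq:polynomial}, this implies that \eqref{chap:Ck,sec:genMar,eq:polynomial} holds for $A_r$ with any $\sigma_r>2\rho_r$. Consequently \eqref{chap:Ck,sec:genMar,eq:polynomialfull} holds for the system $A$ with this $\sigma$. Now fix $1<p<\infty$ and let $\rho'$ be any multi-index with $\rho'>\rho+{\bf 1}$. Since $|1/p-1/2|<1/2$, for $\sigma_r$ chosen sufficiently close to $2\rho_r$ we have $|1/p-1/2|\,\sigma_r + 1 < \rho_r + 1 \le \rho'_r$ (here one must be mildly careful: $\sigma$ is chosen after $\rho'$ but independently of $p$, so pick $\sigma_r\in(2\rho_r, 2\rho'_r-2)$, which is possible precisely because $\rho'_r>\rho_r+1$, and then $|1/p-1/2|\sigma_r+1 < \sigma_r/2+1 < \rho'_r$). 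Thus the hypothesis $\rho'> |1/p-1/2|\,\sigma+{\bf 1}$ of Theorem~\ref{thm:genMarCk} is satisfied, so $m(A)$ is bounded on $L^p$ with norm controlled by $C_{p,d}\,\mathcal{C}(p,A)\,\|m\|_{Mar,\rho'}$ whenever $m$ satisfies the Marcinkiewicz condition of order $\rho'$. As this holds for every $1<p<\infty$, the system $A$ has a Marcinkiewicz joint functional calculus of order $\rho'$, proving (i).

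\textbf{Part (ii).} Suppose the system $A=(A_1,\ldots,A_d)$ has a Marcinkiewicz joint functional calculus of order $\rho=(\rho_1,\ldots,\rho_d)$. Fix $r$; we must show $A_r$ has a one-dimensional Marcinkiewicz functional calculus of order $\rho_r$. Given a one-variable function $m_r\colon(0,\infty)\to\mathbb{C}$ satisfying the $1$-dimensional Marcinkiewicz condition of order $\rho_r$, form the $d$-variable function $\tilde m(\la)=m_r(\la_r)$, i.e.\ $\tilde m$ depends only on the $r$-th coordinate. One checks directly from Definition~\ref{defi:Marcon} that $\tilde m$ satisfies the $d$-dimensional Marcinkiewicz condition of order $\rho$: for a multi-index $\gamma\le\rho$, $\partial^\gamma\tilde m\equiv 0$ unless $\gamma$ is supported on the $r$-th coordinate, and for $\gamma=\gamma_r e_r$ the integral in \eqref{chap:Intro,sec:Notation,eq:Marcon} factors, the integrals over $\la_s$, $s\neq r$, each contributing $\int_{R_s}^{2R_s}\frac{d\la_s}{\la_s}=\log 2$, leaving exactly $(\log 2)^{d-1}$ times the one-dimensional quantity $\|m_r\|_{(\gamma_r)}^2$; hence $\|\tilde m\|_{Mar,\rho}=(\log2)^{(d-1)/2}\|m_r\|_{Mar,\rho_r}$ (up to the harmless reindexing of the sup over $\gamma$). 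By hypothesis $\tilde m(A)$ is then bounded on $L^p$ for all $1<p<\infty$, with norm $\le C_{p,d}\|\tilde m\|_{Mar,\rho}$. It remains to identify $\tilde m(A)$ with $m_r(A_r)$: this follows from the multivariate spectral theorem, since $\tilde m(A)=\int_{\Rdp}m_r(\la_r)\,dE(\la)=\int_{(0,\infty)}m_r(\la_r)\,dE_{A_r}(\la_r)=m_r(A_r)$, using that $E_{A_r}$ is the $r$-th marginal of the joint spectral measure $E$ (cf.\ Appendix~\ref{chap:App,sec:joint}). Therefore $\|m_r(A_r)\|_{p\to p}\le C_{p,d}(\log2)^{(d-1)/2}\|m_r\|_{Mar,\rho_r}$ for all $1<p<\infty$, which is exactly the assertion that $A_r$ has a Marcinkiewicz functional calculus of order $\rho_r$.

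\textbf{Main obstacle.} The genuinely substantive input is Theorem~\ref{thm:genMarCk}, which is invoked as a black box in part~(i); within the corollary itself the only delicate point is the bookkeeping in part~(i) to arrange $|1/p-1/2|\,\sigma + {\bf 1}<\rho'$ uniformly in $p$ while $\sigma$ depends on the given $\rho$ but not on $p$ — this is exactly why the statement loses a unit, i.e.\ one only gets orders $>\rho+{\bf 1}$ rather than $\rho$. Part~(ii) is essentially formal once one trusts the compatibility of the joint and marginal spectral measures from the appendix.
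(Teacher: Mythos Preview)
Your proof is correct and follows essentially the same approach as the paper's: for (i) you pass from the Marcinkiewicz calculus hypothesis to the polynomial imaginary-power bound \eqref{chap:Ck,sec:genMar,eq:polynomial} and then invoke Theorem~\ref{thm:genMarCk}, and for (ii) you specialize to multipliers depending on a single coordinate. Your treatment is in fact more careful than the paper's sketch in one respect: you correctly choose $\sigma_r\in(2\rho_r,2\rho'_r-2)$ rather than $\sigma_r=2\rho_r$ (the latter being what the paper's sketch writes, though the preceding text only guarantees $\sigma_r>2\rho_r$), and you spell out the norm computation and the identification $\tilde m(A)=m_r(A_r)$ in part~(ii).
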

\begin{proof}[Proof (sketch)]
To prove item (i), note that having a Marcinkiewicz functional calculus of order $\rho_r$ implies satisfying \eqref{chap:Ck,sec:genMar,eq:polynomial} with $\sigma_r=2\rho_r$. This observation, together with Theorem \ref{thm:genMarCk} implies the desired conclusion. The proof of item (ii) is even more straightforward, we just need to consider a function $m$ which depends only on some of the variables $\la_r,$ $r=1,\ldots,d.$
\end{proof}
\begin{proof}[Proof of Theorem \ref{thm:genMarCk}]
The proof mimics the proofs of \cite[Theorem 4]{cit:Me} and \cite[Theorem 4.1]{cit:ja}. However, for the sake of completeness, we present it in details. We shall prove that $m$ satisfies \eqref{chap:General,eq:thm:gen}. Then, the desired conclusion follows from Theorem \ref{thm:gen}.

Let $N\in \mathbb{N}^d,$ $N>\rho,$ and $\psi$ be a nonnegative, $C^{\infty}$ function supported in $[1/2,2]$ and such that $$\sum_{k=-\infty}^{\infty}\psi(2^k v )=1,\qquad v>0.$$ Then, for $\Psi_{j}(\la)=\psi(2^{j_1}\la_1)\cdots \psi(2^{j_d}\la_d),$ $$\sum_{j\in\mathbb{Z}^d}\Psi_{j}(\la)=1, \qquad \la\in\mathbb{R}^d_+.$$

Set $$c_{N_r,\rho_r,u_r}=\frac{(-1)^{\rho_r}}{(N_r-iu_r)\cdots (N_r-iu_r+\rho_r-1)}\qquad \textrm{and}\qquad c_{N,\rho,u}=\prod_{r=1}^d c_{N_r,\rho_r,u_r}.$$ Integrating by parts $\rho_r$ times in the $r$-th variable, $r=1,\ldots,d,$ we see that
\begin{align*}
\M(m_{N,t})(u)=c_{N,\rho,u}t^{iu}\sum_{j}\int_{\Rdp}\la^{N+\rho-iu}\partial^{\rho}\bigg(e^{-2^{-1}\langle {\bf 1},\la\rangle }m(\la_1/t_1,\ldots,\la_d/t_d)\Psi_j(\la)\bigg)\,\frac{d\la}{\la}.
\end{align*}
Leibniz's rule allows us to express the derivative $\partial^{\rho}$ as a weighted sum of derivatives of the form
\begin{align*}E^{j}_{\gamma,\delta,t}(\la)&= e^{-2^{-1}\langle {\bf 1},\la\rangle}t^{-\gamma}(\partial^{\gamma}m)(\la_1/t_1,\ldots,\la_d/t_d)2^{\langle j,\delta\rangle}
\prod_{r=1}^d\bigg(\frac{d^{\delta_r}}{d\la_r^{\delta_r}}\psi\bigg)(2^{j_r}\la_r),\end{align*}
where $\gamma=(\gamma_1,\ldots,\gamma_d)$ and $\delta=(\delta_1,\ldots,\delta_d)$ are multi-indices such that $\gamma+\delta\leq\rho.$ Proceeding further as in the proof of \cite[Theorem 4]{cit:Me}, we denote
$$I_{j,N,\gamma,\delta}(t,u)\equiv\int_{\Rdp}\la^{N+\rho-iu}E^{j}_{\gamma,\delta,t}(\la)\,\frac{d\la}{\la}.$$
Define $p_j=p_{j_1}\cdots p_{j_d}$ with $p_{j_r},$ $r=1,\ldots,d,$ given by
$$p_{j_r}=\begin{cases}
 2^{-j_r\rho_r}, &\mbox{if $j_r>0$}, \\
  2^{-j_r(N_r+\rho_r)}\exp(-2^{-j_r-1}), &\mbox{if $j_r\leq0.$}
       \end{cases} $$
We claim that
\begin{equation}
\label{chap:Ck,sec:genMar,eq:claim}
|I_{j,N,\gamma,\delta}(t,u)|\leq C_{N,\gamma,\delta}\,\|m\|_{Mar,\rho}\, p_{j}, \qquad j\in \mathbb{Z}^d,
\end{equation}
uniformly in $t\in\Rdp$ and $u\in\mathbb{R}^d.$

Assuming \eqref{chap:Ck,sec:genMar,eq:claim}, we obtain
\begin{align}
\nonumber
\sup_{t\in \Rdp}|\M(m_{N,t})(u)|&\leq C_N\prod_{r=1}^{d}(1+|u_r|)^{-\rho_r}\sum_{\gamma+\delta\leq \rho}C_{\gamma,\delta,\rho}\,\sum_{j\in \mathbb{Z}^d}\sup_{t\in\Rdp}|I_{j,N,\gamma,\delta}(t,u)|\\
&\leq C_{N,\rho}\|m\|_{Mar,\rho} \prod_{r=1}^{d}(1+|u_r|)^{-\rho_r}, \label{chap:Ck,sec:genMar,eq:estiHinf}
\end{align}
and consequently, with the aid of \eqref{chap:Ck,sec:genMar,eq:polynomialfull},
\begin{align*}
&\int_{\mathbb{R}^{d}}\|A^{iu}\|_{\pp}\,\sup_{\TT\in \Rdp}|\M(m_{N,\TT})(u)|\,du\\
&\leq C_{N,\rho}\,\mC(p,A) \|m\|_{Mar,\rho}\int_{\mathbb{R}^d}\prod_{r=1}^d (1+|u_r|)^{-\rho_r+\sigma_r\,|1/p -1/2|}\,du \leq  C_{\rho}\,\mC(p,A)\|m\|_{Mar,\rho}.
\end{align*}
Thus, to finish the proof of Theorem \ref{thm:genMarCk} it remains to show \eqref{chap:Ck,sec:genMar,eq:claim}.

From the change of variable $2^{j}\la\to \la$ it follows that
\begin{align*}
&|I_{j,N,\gamma,\delta}(t)|=2^{-\langle j,N+\rho-\gamma-\delta\rangle }\left|\int_{[1/2,2]^{d}}\la^{N+\rho-\gamma-iu}\exp(-2^{-1}\langle {\bf 2}^{-j},\la\rangle)\right. \\
&\times\left.\left(\frac{\la_1}{2^{j_1}t_1},\ldots,\frac{\la_d}{2^{j_d}t_d}\right)^{\gamma}\partial^{\gamma}(m)\left(\frac{\la_1}{2^{j_1}t_1},\ldots,\frac{\la_d}{2^{j_d}t_d}\right)\partial^{\delta}(\Psi)(\la)\,\frac{d\la}{\la}\right|.
\end{align*}
Thus, applying Schwarz's inequality we obtain
\begin{equation}
\label{chap:Ck,sec:genMar,eq:estI}
\begin{split}
&|I_{j,N,\gamma,\delta}(t)|\leq C_{\Psi} 2^{-\langle j,N+\rho-\gamma-\delta\rangle} \left(\int_{[1/2,2]^d}\left|\la^{N+\rho-\gamma}\exp(-2^{-1}\langle{\bf 2}^{-j},\la\rangle)\right|^2\,\frac{d\la}{\la}\right)^{1/2}\\
&\times \left(\int_{[1/2,2]^d}\left|\left(\frac{\la_1}{2^{j_1}t_1},\ldots,\frac{\la_d}{2^{j_d}t_d}\right)^{\gamma}\partial^{\gamma}(m)\left(\frac{\la_1}{2^{j_1}t_1},\ldots,\frac{\la_d}{2^{j_d}t_d}\right)\right|^2\,\frac{d\la}{\la} \right)^{1/2}.
\end{split}
\end{equation}
Moreover, it is not hard to see that
\begin{align}
\nonumber
&\left(\int_{[1/2,2]^d}\left|\la^{N+\rho-\gamma}\exp(-2^{-1}\langle{\bf 2}^{-j},\la\rangle)\right|^2\,\frac{d\la}{\la}\right)^{1/2}\\ \nonumber
&=\left(\prod_{r=1}^{d}\int_{[1/2,2]}\left|\la_r^{N_r+\rho_r-\gamma_r}\exp(-2^{-j_r-1}\la_r)\right|^2\,\frac{d\la_r}{\la_r}\right)^{1/2}\\
\label{chap:Ck,sec:genMar,eq:estint}
&\leq C_{N,\rho,\gamma}\prod_{r=1}^{d} \begin{cases}
 1, &\mbox{if $j_r>0$}, \\
  \exp(-2^{-j_r-1}), &\mbox{if $j_r\leq0.$}
       \end{cases}
\end{align}

Now, coming back to \eqref{chap:Ck,sec:genMar,eq:estI}, we use the assumption that $m$ satisfies the Marcinkiewicz condition of order $\rho$ together with \eqref{chap:Ck,sec:genMar,eq:estint} (recall that $\gamma+\delta\leq \rho< N$) to obtain \eqref{chap:Ck,sec:genMar,eq:claim}. The proof of Theorem \ref{thm:genMarCk} is thus finished.
\end{proof}

        \section[Examples of systems with Marcinkiewicz joint functional calculus]{Examples of systems with Marcinkiewicz joint functional calculus}
        \label{chap:Ck,sec:ExOp}
        We shall now focus on particular examples of systems of operators, which have a Marcinkiewicz functional calculus. The present section is not completely formal and can rather be regarded as a collection of these systems.

        In view of Theorem \ref{thm:genMarCk} and Corollary \ref{cor:genMarCkequiv} a system $A=(A_1,\ldots,A_d)$ has a Marcinkiewicz functional calculus if and only if the imaginary powers $A_r^{iu_r},$ $u_r\in\mathbb{R},$ $r=1,\ldots,d,$ are growing at most polynomially in $L^p$ norm, i.e.\ satisfy \eqref{chap:Ck,sec:genMar,eq:polynomial} for every $1<p<\infty.$ Thus we start by seeking operators which have polynomially growing imaginary powers.

        The classical example of this type is the Laplacian, $-\Delta=-\sum_{r=1}^d\partial_r^2.$ It is well known that the imaginary powers $(-\Delta)^{iv},$ $v\in\mathbb{R},$ are Calder\'on-Zygmund operators. Moreover, from the Calder\'on-Zygmund theory, it is not hard to deduce that
        $$\|(-\Delta)^{iv}\|_{L^p(\mathbb{R}^d,dx)\to L^p(\mathbb{R}^d,dx)}\leq C_p (1+|v|)^{d/2+1},\qquad v\in\mathbb{R}.$$ In fact, by varying the level of the Calder\'on-Zygmund decomposition and optimizing (see \cite{SikWr}), it is possible to improve the above inequality, obtaining
        \begin{equation}
        \label{chap:Ck,sec:ExOp,eq:pollLap}
        \|(-\Delta)^{iv}\|_{L^p(\mathbb{R}^d,dx)\to L^p(\mathbb{R}^d,dx)}\leq C_p (1+|v|)^{d/2},\qquad v\in\mathbb{R}.
        \end{equation}
        Using \eqref{chap:Ck,sec:ExOp,eq:pollLap} (for $d=1$), together with the boundedness of the Hilbert transform and Theorem \ref{thm:genMarCk}, we can reobtain Theorem \ref{thm:Marmult} (though without the sharp smoothness threshold).

        In \cite{SikWr} the authors proved that a bound similar to \eqref{chap:Ck,sec:ExOp,eq:pollLap} remains true in a much broader generality. Namely, assume that $(X,\mu,\zeta)$ is a space of homogenous type, with $\zeta$ being a (quasi) metric, and let $A$ be a self-adjoint non-negative operator on $L^2(X,\mu).$ Then, from \cite[Theorems 1 and 2]{SikWr} it follows that under certain assumption on the Gaussian bounds for the heat semigroup $\{e^{-tA}\}_{t>0},$ for every $1<p<\infty$ we have
        \begin{equation}\label{chap:Ck,sec:ExOp,eq:pollSik}\|A^{iv}\|_{L^p(X,\mu)\to L^p(X,\mu)}\leq C (1+|v|)^{d|1/p-1/2|}, \qquad v\in\mathbb{R}.\end{equation} Here we also need to assume that the measure of a ball $B_{\zeta}(x,R),$ $R>0,$ is proportional to $R^d,$ uniformly with respect to $x\in X.$

        Using the Feynman-Kac formula it can be shown that the Gaussian bounds needed in \cite[Theorem 2]{SikWr} are valid for the heat kernels of Schr\"odinger operators with non-negative potentials. Thus, from \eqref{chap:Ck,sec:ExOp,eq:pollSik} it follows that the imaginary powers $(-\Delta+V)^{iv},$ $v\in\mathbb{R},$ with $V\geq 0,$ satisfy
        \begin{equation}\label{chap:Ck,sec:ExOp,eq:pollSchr}\|(-\Delta+V)^{iv}\|_{L^p(\mathbb{R}^d,dx)\to L^p(\mathbb{R}^d,dx)}\leq (1+|v|)^{d|1/p-1/2|},\qquad v\in\mathbb{R}.\end{equation}
        From \eqref{chap:Ck,sec:ExOp,eq:pollSchr} and Theorem \ref{thm:genMarCk} we obtain multiplier theorems for systems of one-dimensional Schr\"odinger operators on $L^2(\mathbb{R}^d,dx),$ given by $A_r=-\partial_r^2+V_r(x_r),$ $V_r\geq 0,$ $r=1,\ldots,d.$ In particular, this applies to the system of harmonic oscillators $-\partial_r^2+x_r^2,$ $r=1,\ldots,d,$ thus giving a multivariate generalization of Thangavelu's multiplier theorem in the framework of Hermite expansions, \cite[Theorem 1]{Thanherm}.

        Another consequence of Theorem \ref{thm:genMarCk} and \cite[Theorem 2]{SikWr} is a multivariate multiplier theorem for Laguerre expansions of Hermite type. The relevant operators in this setting are self-adjoint extensions of the commuting operators \begin{equation}\label{chap:Ck,sec:ExOp,eq:LagHermop}-\partial_r^2+x_r^2+(\alpha_r^2-1/4)x_r^{-2},\qquad r=1,\ldots,d,\end{equation}
        defined on $C_{c}^{\infty}(\Rdp)\subseteq L^2(\Rdp,dx);$ here we assume $\alpha=(\alpha_1,\ldots,\alpha_d)\in[-1/2,\infty)^d.$ Note that if $\alpha_r\in (-1/2,1/2),$ $r=1,\ldots,d,$ the potential term in \eqref{chap:Ck,sec:ExOp,eq:LagHermop} is not positive. However, the relevant gaussian bounds for the heat semigroup are still valid, see \cite[Proposition 2.1]{NSRieszLagHerm}, thus it is possible to apply \cite[Theorem 2]{SikWr}.

        In some cases, even though the results of \cite{SikWr} are not applicable, it is possible to obtain polynomial bounds for the $L^p$ norms of the imaginary powers of certain operators simply by enhancing existing estimates. This method is used in Section \ref{chap:Ck,sec:BoundSome,subsec:Jac} to study the imaginary powers of the Jacobi operator.

        A different way to obtain multivariate multiplier theorems is to use directly Corollary \ref{cor:genMarCkequiv}. For example, from Corollary \ref{cor:genMarCkequiv} together with
        \cite[Theorem 1.1]{GosSt1}
        we obtain
        a multivariate Marcinkiewicz type multiplier theorem for the Hankel transform (this setting is connected with the system of Bessel operators).
        Similarly, using Corollary \ref{cor:genMarCkequiv} and
        \cite[Theorem 3.1]{Sifi1},
        we prove
        a multivariate Marcinkiewicz type multiplier theorem for the Dunkl transform in the simplest setting of a reflection group $G$ isomorphic to $\mathbb{Z}_2^d.$
        The two just announced
        multiplier results are not sharp. We shall see how to improve them in Section~\ref{chap:Ck,sec:MarDun}.

        \section[$L^p$ bounds of the imaginary powers of the Jacobi operator]{$L^p$ bounds of the imaginary powers of the Jacobi operator}
        \label{chap:Ck,sec:BoundSome,subsec:Jac}
        We prove polynomial bounds for the norms on $L^p$ of the imaginary powers of the Jacobi operator. This is achieved by improving some estimates from \cite{NSj1}. Since, for the most part, we use the notation from \cite{NSj1}, we refer the reader to the latter article for all symbols that are not properly explained here. Combining the bounds from this section with Theorem \ref{thm:genMarCk} immediately leads to a multivariate multiplier theorem for the expansions in Jacobi trigonometric polynomials. The results presented in this section are contained in \cite[Section 4]{cit:ja}.

        Analogous methods were also used to prove a multivariate multiplier theorem for the system of Dunkl harmonic oscillators, see \cite[Section 4]{jaDun}. Since the ideas of the proofs of Proposition \ref{prop:Jaa} and
        \cite[Lemma 4.2]{jaDun} are similar, and the proof of the latter is more  technical, we decided not to include the results for the system of Dunkl harmonic oscillators in this thesis.

          For parameters $\alpha,\beta>-1,$ the Jacobi operator is defined on $C_c^{\infty}(0,\pi)$ by $$\mathcal{J}^{\alpha,\beta}=-\frac{d}{d\theta^2}-\frac{\alpha-\beta+(\alpha+\beta+1)\cos \theta}{\sin \theta}\frac{d}{d\theta}+\left(\frac{\alpha+\beta+1}{2}\right)^2.$$ Then $\mathcal{J}^{\alpha,\beta}$ is symmetric on $L^2((0,\pi),\mu_{\alpha,\beta}),$ with the measure $$d\mu_{\alpha,\beta}(\theta)=\bigg(\sin \frac{\theta}{2}\bigg)^{2\alpha+1}\bigg(\cos \frac{\theta}{2}\bigg)^{2\beta+1}\,d\theta.$$
          Note that the results of \cite{SikWr} cannot be applied for the Jacobi operator, as the measure $\mu_{\alpha,\beta}$ does not satisfy the assumptions of \cite[Theorem 2]{SikWr}.

         Till the end of this section we assume that $\alpha+\beta>-1.$ Then the operator $\mathcal{J}^{\alpha,\beta}$ has a natural self-adjoint positive extension given in terms of Jacobi trigonometric polynomials. Indeed, let $\{\Pj_k^{\alpha,\beta}\}_{k\in\mathbb{N}_0}$ be the system of $L^2((0,\pi),\mu_{\alpha,\beta})$-normalized Jacobi trigonometric polynomials, as defined in \cite[pp.\ 4-5]{NSj0}. Then, for each $k\in\mathbb{N}_0,$
        \begin{equation}
        \label{chap:Ck,sec:BoundSome,subsec:Jac,eq:eigen}
        \mathcal{J}^{\alpha,\beta}\Pj_k^{\alpha,\beta}=\la_{k}^{\alpha,\beta}\Pj_k^{\alpha,\beta},\qquad \la_{k}^{\alpha,\beta}=\big(k+\frac{\alpha+\beta+1}{2}\big)^2.
        \end{equation}

        The above equation allows us to define, for a bounded function $m$ on $\{\la_{k}^{\alpha,\beta}\colon k\in \mathbb{N}_0\},$ the multiplier operator $m(\mathcal{J}^{\alpha,\beta})$ in terms of the discrete spectral series
        \begin{equation}\label{chap:Ck,sec:BoundSome,subsec:Jac,eq:discspect} m(\mathcal{J}^{\alpha,\beta})f=\sum_{k\in\mathbb{N}_0}m(\la_{k}^{\alpha,\beta})\langle f,\Pj_k^{\alpha,\beta}\rangle_{L^2((0,\pi),\mu_{\alpha,\beta})} \Pj_k^{\alpha,\beta},\qquad f\in L^2((0,\pi),\mu_{\alpha,\beta}).\end{equation}
        In particular, putting $m_v(\la)=\la^{iv},$ $v\in\mathbb{R},$ in \eqref{chap:Ck,sec:BoundSome,subsec:Jac,eq:discspect}, we obtain the imaginary powers \begin{equation}\label{chap:Ck,sec:BoundSome,subsec:Jac,eq:imapow} I^{\alpha,\beta}_{v}f:=\sum_{k\in\mathbb{N}_0}\bigg(k+\frac{\alpha+\beta+1}{2}\bigg)^{2iv}\langle f,\Pj_k^{\alpha,\beta}\rangle_{L^2((0,\pi),\mu_{\alpha,\beta})} \Pj_k^{\alpha,\beta},\qquad f\in L^2((0,\pi),\mu_{\alpha,\beta}).\end{equation}

        In this case, for quite a long time, one of the few information on the Jacobi-heat semigroup $\{\exp(-t\mathcal{J}^{\alpha,\beta})\}_{t>0},$ was the fact that it is positivity preserving and contractive on all $L^p((0,\pi),\mu_{\alpha,\beta}),$ $1\leq p\leq \infty,$ i.e.\ it satisfies \eqref{chap:Intro,sec:Setting,eq:contra}. There were no useful estimates for the kernel of the Jacobi-heat semigroup; in fact Gaussian bounds for this kernel were obtained only recently, see \cite{CouKerPetr1} and \cite{NSj1}. However, as far as the imaginary powers are concerned, it is perfectly enough to use the Jacobi-Poisson semigroup $\{\exp(-t(\mathcal{J}^{\alpha,\beta})^{1/2}\}_{t>0}$ instead of the Jacobi-heat semigroup. Indeed, for $v\in\mathbb{R},$ we formally have
        $$(\mathcal{J}^{\alpha,\beta})^{-iv}=\frac{1}{\Gamma(iv)}\int_0^{\infty}\exp\big(-t\mathcal{J}^{\alpha,\beta}\big)t^{iv}\frac{dt}{t}
        =\frac{1}{\Gamma(2iv)}\int_0^{\infty}\exp\big(-t(\mathcal{J}^{\alpha,\beta})^{1/2}\big)t^{2iv}\frac{dt}{t}.$$
        Due to the non linearity of the eigenvalues in \eqref{chap:Ck,sec:BoundSome,subsec:Jac,eq:eigen}, in the case of the Jacobi operator it is easier to work with the Poisson semigroup than with the heat semigroup. Note however, that in the case of the Dunkl harmonic oscillator it is better to use its heat semigroup instead.

        For $\alpha=\beta=(d-3)/2,$ $d\geq 2,$ using the connection between the Jacobi (Gegenbauer) polynomials and the spherical Laplacian on the $d$-dimensional unit sphere, we can obtain polynomial bounds for the norms of the Jacobi imaginary power operators, see the remarks at the end of \cite{SikWr}. Denote $\log_+ (x)=1+\max(\log(x),0),$ $x>0.$ In the general case, we have the following.

\begin{pro}
\label{prop:Jaa}
Assume that $\alpha\geq -1/2,$ $\beta\geq -1/2,$ and $\alpha+\beta>-1.$ Let $\Ija,$ $v\in\mathbb{R},$ $v\neq0,$  be the imaginary powers of $\mathcal{J}^{\alpha,\beta},$ given by \eqref{chap:Ck,sec:BoundSome,subsec:Jac,eq:imapow}. Then, for every $1<p<\infty,$
\begin{align*}&\|\Ija\|_{L^p((0,\pi),\mu_{\alpha,\beta})\to L^p((0,\pi),\mu_{\alpha,\beta})}\leq C_{\alpha,\beta,p} (\maxlogv)^{|2/p-1|}(1+|v|)^{(4\alpha+4\beta+11)|1/p-1/2|},\end{align*}
uniformly in $v\in\mathbb{R}.$
\end{pro}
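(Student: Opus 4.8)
The plan is to bound the $L^p$ operator norm of $\Ija$ by controlling the norms of suitable smooth pieces of the imaginary power multiplier, following the strategy of \cite{NSj1} but keeping explicit track of the dependence on $v$. First, I would use the subordination formula
$$(\mathcal{J}^{\alpha,\beta})^{-iv}=\frac{1}{\Gamma(2iv)}\int_0^{\infty}\exp\big(-t(\mathcal{J}^{\alpha,\beta})^{1/2}\big)t^{2iv}\,\frac{dt}{t}$$
to reduce everything to the Jacobi--Poisson semigroup, for which Gaussian-type bounds on the kernel (and its $t$-derivatives) are available from \cite{NSj1}. This represents $I^{\alpha,\beta}_v$ as an integral operator and I would split the integral dyadically in $t$, say into the regions $t\sim 2^{-k}$, obtaining operators $T_k$ with kernels supported (essentially) where $t\approx 2^{-k}$; the factor $1/|\Gamma(2iv)|\approx e^{\pi|v|/2}|v|^{1/2}$ must be compensated by decay in $k$ coming from the oscillatory factor $t^{2iv}$ together with integration by parts.

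The core estimate is then a Calder\'on--Zygmund / weighted $L^2$ analysis of each dyadic piece. For the $L^2$ bound one uses that $\Ija$ is bounded by $1$ on $L^2((0,\pi),\mu_{\alpha,\beta})$ by spectral theory, but to sum the dyadic pieces with the exponential loss from $\Gamma(2iv)$ one needs $L^2\to L^2$ bounds for $T_k$ that decay like $e^{-c\,2^{-k}|v|}$ (for the range where $2^{-k}\gtrsim |v|^{-1}$, from the non-stationary phase in $t$) and are polynomially controlled otherwise; this is where the Poisson kernel derivative estimates enter. For the endpoint-type (weak $(1,1)$ or $H^1$--$L^1$, or rather $L^1$-type kernel) bound one needs the standard gradient estimate on the kernel of $T_k$ of Calder\'on--Zygmund type, i.e.\ $|\partial_\theta K_k(\theta,\theta')|\lesssim |K_k(\theta,\theta')|/|\theta-\theta'|$ on the support, again with explicit constants in $v$ and $k$. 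Summing over $k$ the two families of bounds, optimizing the split point $k_0\sim \log_2|v|$ between ``use $L^2$'' and ``use $L^1$/CZ'', and interpolating, produces the $(1+|v|)^{(4\alpha+4\beta+11)|1/p-1/2|}$ growth; the precise exponent $4\alpha+4\beta+11$ comes from the homogeneity dimension of $\mu_{\alpha,\beta}$ near the endpoints $\theta=0,\pi$ (which behaves like $\theta^{2\alpha+1}$, $(\pi-\theta)^{2\beta+1}$) entering the doubling constant, plus the extra powers from the subordination factor $|v|^{1/2}$ and from the derivative counts in the integration by parts. The logarithmic factor $(\maxlogv)^{|2/p-1|}$ arises from the borderline behaviour at $p=1$: the $L^1$-type estimate is not quite bounded but loses a $\log_+|v|$, which by interpolation with the clean $L^2$ bound becomes $(\log_+|v|)^{|2/p-1|}$.

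Concretely the steps, in order, are: (1) write $\Ija$ via Poisson subordination and record the constant $1/|\Gamma(2iv)|$; (2) import from \cite{NSj1} the pointwise bounds for the Jacobi--Poisson kernel $P_t^{\alpha,\beta}(\theta,\theta')$ and its $\partial_t$, $\partial_\theta$ derivatives, in the form adapted to the space of homogeneous type $((0,\pi),\mu_{\alpha,\beta},|\cdot|)$; (3) perform the dyadic decomposition in $t$ and, via one integration by parts in $t$ exploiting $\partial_t t^{2iv}=2iv\,t^{2iv-1}$, extract the factor $(1+2^{-k}|v|)^{-1}$ (iterating gives any negative power, but one integration by parts suffices after trading against the trivial bound); (4) prove the $L^2$ bound $\|T_k\|_{2\to2}\lesssim \min(1, (2^{-k}|v|)^{-1})$ using the spectral picture or a $TT^*$ argument on the kernel; (5) prove the Calder\'on--Zygmund kernel bounds for $T_k$ with the $v$-explicit constant and deduce a weak-$(1,1)$-type bound $\|T_k\|_{1\to1,\infty}\lesssim (1+|v|)^{A}\log_+|v|$ for an appropriate $A$; (6) sum over $k\in\mathbb{Z}$ splitting at $k_0=\lfloor\log_2|v|\rfloor$, combine with the $\Gamma(2iv)$ factor, and interpolate (Marcinkiewicz/Stein) between $p=2$ and the endpoint to land on the stated exponent and the $(\log_+|v|)^{|2/p-1|}$ factor; by duality one gets the full range $1<p<\infty$.

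\textbf{Main obstacle.} The hard part will be step (5) together with the optimization in (6): one must obtain the Calder\'on--Zygmund-type kernel bounds for the dyadic pieces $T_k$ \emph{with completely explicit polynomial dependence on $v$}, because the standard references only claim qualitative $L^p$-boundedness of $\Ija$ (or boundedness with an unspecified constant), whereas here the whole point is the sharp power $4\alpha+4\beta+11$. Extracting that exponent requires carefully bookkeeping how many derivatives of the Poisson kernel are used, how the measure's local dimension (worst at the endpoints) multiplies through the doubling constant, and how the $\sqrt{|v|}$ from $1/\Gamma(2iv)$ interacts with the gain from the phase; a naive application of the CZ machinery loses too much, so some argument of the type ``vary the level of the Calder\'on--Zygmund decomposition and optimize'' (as in \cite{SikWr}) is needed to get down to $|1/p-1/2|$ in the exponent rather than $|1/p-1/2|$ times something larger, and to isolate the single logarithm rather than a power of a logarithm.
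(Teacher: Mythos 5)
There is a genuine gap in the plan to compensate the blowup of $1/\Gamma(2iv)$ by dyadic decomposition plus integration by parts. (A minor slip first: from $|\Gamma(iy)|^2=\pi/(y\sinh\pi y)$ one gets $1/|\Gamma(2iv)|\sim c\,|v|^{1/2}e^{\pi|v|}$, so the relevant exponential is $e^{\pi|v|}$, not $e^{\pi|v|/2}$.) More importantly, the decay $e^{-c\,2^{-k}|v|}$ you claim for the $L^2$ norms of the dyadic pieces $T_k$ does not follow from ``non-stationary phase'' or from integration by parts in $t$: after the substitution $s=\log t$ the subordination integral becomes a Fourier-type integral $\int g(s)e^{2ivs}\,ds$ whose phase has constant derivative $2v$ and no stationary points, so $N$-fold integration by parts only produces the algebraic gain $|v|^{-N}\|g^{(N)}\|_{L^1}$. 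No real-variable integration-by-parts scheme, however iterated, nor any dyadic splitting of the $t$-axis, turns this into exponential decay in $|v|$. The $e^{\pi|v|}$ from $1/\Gamma(2iv)$ therefore survives and steps (4)--(6) of your outline do not close.

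The paper's actual proof obtains the needed exponential decay from \emph{analyticity}, not smoothness. Starting from the explicit Calder\'on--Zygmund kernel representation of $\Ija$ from \cite{NSj0}, it rotates the $t$-contour to the ray $\{te^{i\phi}:t\ge 0\}$. On this ray one has $|t^{2iv}|=e^{-2v\phi}$, an honest exponential factor that compensates $e^{\pi|v|}$, at the cost of a polynomial blowup $(\cos\phi)^{-2\alpha-2\beta-6}\log(1/\cos\phi)$ coming from the kernel estimates near the boundary of holomorphy. Choosing $\phi=\arctan v$ gives $\pi-2\phi=O(1/v)$ so the residual exponential $e^{v(\pi-2\arctan v)}$ is bounded, $\cos\phi\approx 1/v$, and the Calder\'on--Zygmund smoothness constant satisfies $D_v\lesssim\log(v)(1+v)^{2\alpha+2\beta+11/2}$. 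From there your interpolation step (which you identified correctly) gives the stated $L^p$ bound with exponent $(4\alpha+4\beta+11)|1/p-1/2|$ and the factor $(\log_+|v|)^{|2/p-1|}$. Your scheme could be repaired by applying contour rotation to each $T_k$ instead of integration by parts, but then the dyadic decomposition is superfluous and you recover essentially the paper's argument.
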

\begin{proof} To prove the proposition we will use the subtle estimates obtained by Nowak and Sj\"{o}gren in \cite{NSj0} and some complex variable techniques as in \cite{funccalOu}.

In \cite{NSj0} the authors proved that the operator $\Ija$ is a Calder\'{o}n-Zygmund operator associated with the kernel
\begin{equation}\label{chap:Ck,sec:BoundSome,subsec:Jac,eq:calzygker} K_{v}^{\alpha,\beta}(\theta,\varphi)=c_{\alpha,\beta}\frac{1}{\Gamma(2iv)}\int_0^{\infty}t^{2iv-1}\sinh \frac{t}{2}\iint\,\frac{d\Pi_{\alpha}(s_1)d\Pi_{\beta}(s_2)}{(\cosh \frac{t}{2}-1+q)^{\alpha+\beta+2}}\,dt,\end{equation}
see \cite[(12) p.\ 9 and Proposition 4.1]{NSj0}. Here $q:=q(\theta,\varphi,s_1,s_2)$ with $$q(\theta,\varphi,s_1,s_2)=1-s_1\sin\frac{\theta}{2}\sin\frac{\varphi}{2}-s_2\cos\frac{\theta}{2}\cos\frac{\varphi}{2},$$ for $\theta,\varphi\in (0,\pi)$ and $s_1,s_2\in [-1,1]$; while $\Pi_{\alpha}$ denotes the probability measure
 $$d\Pi_{\alpha}(u)=\frac{\Gamma(\alpha+1)}{\sqrt{\pi}\Gamma(\alpha+1/2)}(1-u^2)^{\alpha-1/2},\qquad u\in[-1,1],$$
 if $\alpha>-1/2,$ or the sum of Dirac deltas $(\delta_{-1}+\delta_{1})/2,$ if $\alpha=-1/2.$ For further reference, we note that $0\leq q\leq2$ (see \cite[(21) p.\ 21]{NSj0}) and $|\partial_{\theta}q|\leq 2.$

Thus, by referring to the Calder\'{o}n-Zygmund theory and the Marcinkiewicz interpolation theorem we see that it suffices to show that the Calder\'{o}n-Zygmund constant $D_{v}$ in the smoothness condition for the kernel $K_{v}^{\alpha,\beta}(\theta,\varphi),$ is less than a constant times $\Cze.$ Since the function $v\rightarrow \frac{1}{\Gamma(2iv)}$ is continuous, looking closely at the proof of the smoothness bound from \cite[p.\ 25]{NSj0} we see that it suffices to focus on $|v|>2.$ For symmetry reasons we may consider only the derivatives with respect to the variable $\theta.$ From \eqref{chap:Ck,sec:BoundSome,subsec:Jac,eq:calzygker} it follows that
\begin{align*} \Gamma(2iv)\frac{\partial}{\partial\theta}\Ija=\int_0^{\infty}t^{2iv-1}\sinh \frac{t}{2}\iint\,d\Pi_{\alpha}(s_1)d\Pi_{\beta}(s_2)\frac{\partial_{\theta}q}{(\cosh \frac{t}{2}-1+q)^{\alpha+\beta+3}}\,dt\equiv J_{v}.\end{align*}

Assume $v>1$ and set $\frac{\pi}{3}<\phi<\frac{\pi}{2}.$ We consider the function $$h(z):=z^{2iv-1}\sinh \frac{z}{2}\iint\,d\Pi_{\alpha}(s_1)d\Pi_{\beta}(s_2)\frac{\partial_{\theta}q}{(\cosh \frac{z}{2}-1+q)^{\alpha+\beta+3}},$$ which is holomorphic in the right half-plane. Then we change the contour of integration in the integral formula for $J_{v}$ to $(e^{i\phi}0,e^{i\phi}\infty)$ (the other integrals can be easily seen to vanish), where for an angle $\phi,$ by $(0e^{i\phi},\infty e^{i\phi})$ we mean the ray $\{te^{i\phi}\,:\,t\geq0\}.$ Of course $h$ is not holomorphic at zero, but this difficulty can be easily overcome by a limiting process. Using the parametrization $\gamma(t)=te^{i\phi},$ $\gamma'(t)=e^{i\phi},$ we get
\begin{align*} |J_{v}|&\leq e^{-2v\phi}\int_0^{\infty}t^{-1}\big|\sinh \frac{\gamma(t)}{2}\big|\iint\,d\Pi_{\alpha}(s_1)d\Pi_{\beta}(s_2)\frac{|\partial_{\theta}q|}{|\cosh \frac{\gamma(t)}{2}-1+q|^{\alpha+\beta+3}}\,dt\\&= e^{-2v\phi} \bigg(\int_0^1\ldots+\int_1^{\frac{2}{\cos \phi}}\ldots+\int_{\frac{2}{\cos \phi}}^{\infty}\bigg)\equiv e^{-2v\phi} (J_1+J_2+J_3).\end{align*}
In the reasoning below the symbol $\lesssim$ indicates that the constant in the inequality is independent of both $v$ and $\phi;$ it may however depend on $\alpha$ or $\beta.$

We start by estimating $J_1.$ Let
\begin{align}
\label{chap:Ck,sec:BoundSome,subsec:Jac,eq:cosh}
A(t)=\cosh\frac{\gamma(t)}{2}=\cosh \frac{t\cos\phi}{2}\cos\frac{t\sin\phi}{2}+i\sinh\frac{t\cos\phi}{2}\sin\frac{t\sin\phi}{2},
\end{align}
and let $B(t)=A(t)-1+q.$ Clearly, since $\sin\phi>1/2$ and $\sinh x>\frac{1}{2}x,$ $\sin x>\frac{1}{2}x,$ for $0<x\leq 1,$ we see that $\Imm(B(t))\geq (\cos \phi)\, t^{2}/4,$ for $0<t\leq1.$ Consequently, if $t^2/16> \frac{1}{2}q,$ then $\cos \phi\,(t^2+q)\lesssim|B(t)|.$ On the other hand, if $t^2/16\leq \frac{1}{2}q,$ then since $1-x^2\leq\cos x$ and $(\sin \phi)^2\geq 1/2,$ we see that $\Ree(B(t))\geq -t^2/16+q \geq \frac{1}{2}q.$ Hence, in either case $\cos \phi\,(t^2+q)\lesssim|B(t)|.$ From the latter inequality, together with the bound $|\gamma(t)\sinh(\gamma(t)/2)|\lesssim1,$ $0<t<1,$ we get \begin{equation}\label{chap:Ck,sec:BoundSome,subsec:Jac,eq:io}J_1\lesssim (\cos\phi)^{-\alpha-\beta-3} \int_0^{1}\iint\,d\Pi_{\alpha}(s_1)d\Pi_{\beta}(s_2)\frac{\partial_{\theta}q}{(t^{2}+q)^{\alpha+\beta+3}}\,dt.\end{equation}

Now we pass to the estimation of $J_2.$ Let $\delta^2=\cosh(\cos\phi/2).$ From \eqref{chap:Ck,sec:BoundSome,subsec:Jac,eq:cosh}, the inequality $|-1+q|\leq1$ and the fact that $1+(\cos\phi/2)^2/2\lesssim \delta^2\approx 1,$ we see that if $\cos(t\sin\phi/2)\geq\frac{1}{\delta},$ then $|\textrm{Re}(B(t))|\geq C (\cos(\phi))^2;$ whereas if $\cos(t\sin\phi/2)<\frac{1}{\delta},$ then $|\textrm{Im}(B(t)|\geq C (\cos(\phi))^2.$ In either case, $|B(t)|\geq C(\cos\phi)^2,$ so that since $|\sinh(\gamma(t)/2)|\lesssim 1,$ for $t\in(1,\frac{2}{\cos\phi}),$ and $q,$ $\partial_{\theta}q$ are bounded we obtain \begin{equation}\label{chap:Ck,sec:BoundSome,subsec:Jac,eq:it}J_2\lesssim -\log(\cos\phi)(\cos\phi)^{-2\alpha-2\beta-6}\iint\,\frac{d\Pi_{\alpha}(s_1)d\Pi_{\beta}(s_2)}{q^{\alpha+\beta+2}}.\end{equation}
We now focus on $J_3.$ Since, $|A(t)|^2=(\cosh(t\cos\phi/2))^2-(\sin(t\sin\phi/2))^2$ we see that for $t\geq\frac{2}{\cos\phi},$ $|B(t)|\geq C e^{t\cos \phi/2}.$ Similarly, for such $t$ we have $$|\sinh(\gamma(t)/2)|^2=(\sinh(t\cos\phi/2))^2+(\sin(t\sin\phi/2))^2\lesssim e^{t\cos\phi}.$$ Combining the latter two inequalities together with the boundedness of $q,$ $\partial_{\theta}q,$ and the change of variable $t\cos\phi\rightarrow t,$ we obtain
\begin{equation}\label{chap:Ck,sec:BoundSome,subsec:Jac,eq:ir}J_3\lesssim \int_1^{\infty}e^{-t/2(\alpha+\beta+2)}\iint\,\frac{d\Pi_{\alpha}(s_1)d\Pi_{\beta}(s_2)}{q^{\alpha+\beta+2}}\,dt.\end{equation}
\indent From the estimates \eqref{chap:Ck,sec:BoundSome,subsec:Jac,eq:io}, \eqref{chap:Ck,sec:BoundSome,subsec:Jac,eq:it} and \eqref{chap:Ck,sec:BoundSome,subsec:Jac,eq:ir}, proceeding as in \cite[Section 4]{NSj0} (in the case of the bound \eqref{chap:Ck,sec:BoundSome,subsec:Jac,eq:io} we need to refer to \cite[Lemma 4.5]{NSj0}), for $\frac{\pi}{4}<\phi<\frac{\pi}{2},$ we derive the bound independent of $v>1,$
\begin{equation*} |J_{v}|\lesssim -\log(\cos\phi)(\cos\phi)^{-2\alpha-2\beta-6}\iint\,\frac{d\Pi_{\alpha}(s_1)d\Pi_{\beta}(s_2)}{q^{\alpha+\beta+2}}.\end{equation*}
Now, from \cite[Lemma 4.3]{NSj0} it follows that the Calder\'{o}n-Zygmund constant $D_{v}$ in the smoothness condition for the kernel $K_{v}^{\alpha,\beta}$ satisfies \begin{equation}\label{chap:Ck,sec:BoundSome,subsec:Jac,eq:CZbound} D_{v}\lesssim -\frac{e^{-2v\phi}}{\Gamma(2iv)}\log(\cos\phi)(\cos\phi)^{-2\alpha-2\beta-6}.\end{equation}
Since $(1+v)^{1/2}e^{-\pi v}\lesssim|\Gamma(2iv)|$ (see for instance \cite[Chapter 1]{2}), taking $\phi=\arctan(v)$ and using \eqref{chap:Ck,sec:BoundSome,subsec:Jac,eq:CZbound} we finally arrive at $$D_{v}\lesssim \log (v)(1+v)^{2\alpha+2\beta+\frac{11}{2}}\exp\left(v(\pi-2\arctan v)\right),$$
which easily produces the desire bound in the case $v>0.$ The reasoning in the case $v<-2$ is analogous, we need only to replace the ray $(0e^{i\phi},\infty e^{i\phi})$ by $(0e^{-i\phi},\infty e^{-i\phi}).$\end{proof}

In the end of this section we briefly introduce the multivariate multiplier theorem for the Jacobi expansions which is a consequence of Proposition \ref{prop:Jaa}. Let $$\Pj_k^{\alpha,\beta}(\theta)=\Pj_{k_1}^{\alpha_1,\beta_1}(\theta_1)\ldots\Pj_{k_d}^{\alpha_d,\beta_d}(\theta_d),\qquad \theta=(\theta_1,\ldots,\theta_d)\in (0,\pi)^d,\qquad k\in\mathbb{N}^d_0,$$ with $\alpha=(\alpha_1,\ldots,\alpha_d),$ $\beta=(\beta_1,\ldots,\beta_d),$ $\alpha+\beta>-\bf{1},$ be the system of $d$-dimensional Jacobi trigonometric polynomials. Then $\{\Pj^{\alpha,\beta}_k\}_{k\in\mathbb{N}^d_0}$ forms an orthonormal basis in $L^2:=L^2(((0,\pi)^d,d\mu_{\alpha,\beta})),$ with $d\mu_{\alpha,\beta}(\theta):=d\mu_{\alpha_1,\beta_1}(\theta_1)\times\cdots\times d\mu_{\alpha_d,\beta_d}(\theta_d).$ For a bounded function $m$ on $\Rdp$ we define the multiplier operator $m(\mathcal{J}^{\alpha,\beta})$ of the system of the one-dimensional Jacobi operators $\mathcal{J}^{\alpha,\beta}=(\mathcal{J}^{\alpha_1,\beta_1},\ldots,\mathcal{J}^{\alpha_d,\beta_d})$ by
\begin{equation}
\label{chap:Ck,sec:BoundSome,subsec:Jac,eq:multJac}
m(\mathcal{J}^{\alpha,\beta})f=\sum_{k\in\mathbb{N}^d_0}m(\la_{k_1}^{\alpha_1,\beta_1},\ldots,\la_{k_d}^{\alpha_d,\beta_d})\langle f, \Pj_{k}^{\alpha,\beta}\rangle_{L^2}\Pj_k^{\alpha,\beta},\qquad f\in L^2.
\end{equation}

Using Proposition \ref{prop:Jaa} together with Theorem \ref{thm:genMarCk} we get the following.

\begin{cor}
\label{cor:Corjaa}
Let $\alpha+\beta>-\bf{1}.$ Assume that $m$ satisfies the Marcinkiewicz condition \eqref{chap:Intro,sec:Notation,eq:Marcon} of order $\rho>|1/p-1/2|\,(4\alpha+4\beta+\bf{11})+\bf{1},$ with $1<p<\infty.$ Then the multiplier operator $m(\mathcal{J}^{\alpha,\beta})$ given by \eqref{chap:Ck,sec:BoundSome,subsec:Jac,eq:multJac} is bounded on $L^p((0,\pi)^d,d\mu_{\alpha,\beta}).$ In particular, if $\rho>2\alpha+2\beta+\bf{\frac{13}{2}},$ then $m(\mathcal{J}^{\alpha,\beta})$ is bounded on all $L^p((0,\pi)^d,d\mu_{\alpha,\beta})$ spaces, $1<p<\infty.$
\end{cor}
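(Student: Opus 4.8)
The plan is to read Corollary~\ref{cor:Corjaa} off Theorem~\ref{thm:genMarCk}, feeding in the one-dimensional bounds of Proposition~\ref{prop:Jaa}. First I would fix the functional-analytic setup. Each $\mathcal{J}^{\alpha_r,\beta_r}$ acts on its own variable, so, after the standard tensoring with identities described in Section~\ref{chap:Intro,sec:Notation}, the system $\mathcal{J}^{\alpha,\beta}=(\mathcal{J}^{\alpha_1,\beta_1},\ldots,\mathcal{J}^{\alpha_d,\beta_d})$ is a system of strongly commuting, non-negative, self-adjoint operators on $L^2((0,\pi)^d,\mu_{\alpha,\beta})$. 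Assumption \eqref{chap:Intro,sec:Setting,eq:contra} holds because the one-dimensional Jacobi--heat semigroups are positivity preserving and $L^p$-contractive, and this passes to the tensored operators; assumption \eqref{chap:Intro,sec:Setting,eq:noatomatzero} is automatic, since by \eqref{chap:Ck,sec:BoundSome,subsec:Jac,eq:eigen} the bottom eigenvalue of $\mathcal{J}^{\alpha_r,\beta_r}$ equals $((\alpha_r+\beta_r+1)/2)^2>0$, so each spectrum is bounded away from $0$. Finally, as $k\mapsto\lambda_k^{\alpha_r,\beta_r}$ is strictly increasing, the joint spectral measure of the tensored system is the atomic measure on $\{(\lambda_{k_1}^{\alpha_1,\beta_1},\ldots,\lambda_{k_d}^{\alpha_d,\beta_d}):k\in\mathbb{N}_0^d\}$ with atoms the one-dimensional projections onto $\mathbb{C}\,\mathcal{P}_k^{\alpha,\beta}$, so the operator $m(\mathcal{J}^{\alpha,\beta})$ from \eqref{chap:Intro,sec:Setting,eq:mdef} agrees with the one given by the series \eqref{chap:Ck,sec:BoundSome,subsec:Jac,eq:multJac}.

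Next I would apply Proposition~\ref{prop:Jaa} (with the standing restriction $\alpha_r\geq -1/2$, $\beta_r\geq -1/2$ of that proposition), which gives, for every $1<p<\infty$,
$$\|(\mathcal{J}^{\alpha_r,\beta_r})^{iv}\|_{L^p\to L^p}\leq C_{\alpha_r,\beta_r,p}\,(\log_+|v|)^{|2/p-1|}(1+|v|)^{(4\alpha_r+4\beta_r+11)|1/p-1/2|},\qquad v\in\mathbb{R},\ r=1,\ldots,d.$$
Apart from the logarithmic factor, this is precisely the polynomial-growth hypothesis \eqref{chap:Ck,sec:genMar,eq:polynomial} of Theorem~\ref{thm:genMarCk} with $\sigma_r=4\alpha_r+4\beta_r+11$. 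The cleanest way to dispose of the log is to note that the only place the exponents enter the proof of Theorem~\ref{thm:genMarCk} is the final integral $\int_{\mathbb{R}^d}\prod_{r=1}^d(1+|u_r|)^{-\rho_r+\sigma_r|1/p-1/2|}\,du$, which remains convergent after inserting the extra factor $\prod_{r=1}^d(\log_+|u_r|)^{|2/p-1|}$ exactly because the hypothesis $\rho_r>\sigma_r|1/p-1/2|+1$ is strict; alternatively, one absorbs $(\log_+|v|)^{|2/p-1|}\leq C_\epsilon(1+|v|)^\epsilon$ into a slightly larger exponent $\sigma_r+\epsilon$ and chooses $\epsilon>0$ small enough that the strict inequality $\rho>|1/p-1/2|(4\alpha+4\beta+\mathbf{11})+\mathbf{1}$ is preserved with $\sigma$ replaced by $\sigma+\boldsymbol{\epsilon}$. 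Either way, Theorem~\ref{thm:genMarCk} yields the boundedness of $m(\mathcal{J}^{\alpha,\beta})$ on $L^p((0,\pi)^d,\mu_{\alpha,\beta})$.

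For the \emph{in particular} assertion, observe that $|1/p-1/2|<1/2$ for every $1<p<\infty$, so for each $r$
$$|1/p-1/2|\,(4\alpha_r+4\beta_r+11)+1<\tfrac12(4\alpha_r+4\beta_r+11)+1=2\alpha_r+2\beta_r+\tfrac{13}{2}.$$
Hence the componentwise bound $\rho_r>2\alpha_r+2\beta_r+\tfrac{13}{2}$ (that is, $\rho>2\alpha+2\beta+\mathbf{13/2}$) forces $\rho_r>|1/p-1/2|(4\alpha_r+4\beta_r+11)+1$ simultaneously for all $p\in(1,\infty)$, so the first part of the corollary applies for each such $p$.

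Since the substance of the argument is already contained in Proposition~\ref{prop:Jaa} and Theorem~\ref{thm:genMarCk}, I do not expect a genuine obstacle; the only point demanding a little attention is the bookkeeping around the logarithmic factor, i.e.\ confirming that it does not raise the smoothness threshold above the value stated in the corollary — which is handled by the convergence remark (or the $\epsilon$-absorption) above.
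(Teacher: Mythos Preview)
Your proposal is correct and follows exactly the route the paper indicates: the paper's own ``proof'' consists of the single sentence that the corollary follows from Proposition~\ref{prop:Jaa} together with Theorem~\ref{thm:genMarCk}, and you have supplied the details of that derivation. In particular, your handling of the extraneous logarithmic factor via an $\epsilon$-absorption (exploiting the strictness of the inequality on $\rho$) is precisely the point the paper leaves implicit, and your verification of the ``in particular'' clause is the intended one.
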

\begin{remark}
Note that the joint spectrum $\sigma(\mathcal{J}^{\alpha,\beta})$ of the system $\mathcal{J}^{\alpha,\beta}$ equals the discrete set $\{(\la_{k_1}^{\alpha_1,\beta_1},\ldots,\la_{k_d}^{\alpha_d,\beta_d})\colon k\in \mathbb{N}_0^{d}\}.$ Thus, in order to be rigorous, in Corollary \ref{cor:Corjaa} we should assume that $m$ is initially defined on $\sigma(\mathcal{J}^{\alpha,\beta}),$ and extends to a function defined on all of $\Rdp,$ that satisfies an appropriate Marcinkiewicz condition.
\end{remark}

        \section[H\"ormander type multipliers for the Hankel transform]{H\"ormander type multipliers for the Hankel transform}
        \label{chap:Ck,sec:HorHan}

        We prove a multivariate H\"ormander type multiplier theorem for the multi-dimensional Hankel transform. The present section together with Section \ref{chap:CkHinf,sec:OA} are the only ones in the thesis in which weak type (1,1) results are given. Note that we heavily exploit here the translation and dilation structure connected with the Hankel transform setting. The results presented in this section constitute a part of our joint article with Dziuba\'nski and Preisner, \cite{dpw}. The paper contains also $H^1$ results, however for the sake of homogeneity of the exposition we decided not to include them in the thesis.

        Throughout this section for a multi-index $\alpha =(\alpha_1,...,\alpha_d)$, $\alpha_r>-1\slash 2$, we consider the measure space $X=(\Rdp, d\nu_{\alpha})$, where  $$d\nu_{\alpha}(x)=d\nu_{\alpha_1}(x_1)\cdots d\nu_{\alpha_d}(x_d),\qquad d\nu_{\alpha_r}(x_r)=x_r^{2\alpha_r}\,dx_r, \quad r=1,\ldots,d.$$ The space $X$ equipped with the Euclidean distance is a space of homogeneous type. For the sake of brevity we write $L^p$ and $\|\cdot\|_p$ instead of $L^p(X)$ and $\|\cdot\|_{L^p(X)},$ $1\leq p\leq\infty.$ If $T$ is a linear operator, the symbol $\|T\|_{p\to p}$ denotes the norm of $T$ acting on $L^p.$

For a  function $f\in L^1,$ the multi-dimensional (modified) Hankel transform  is defined by
\begin{equation}
\label{chap:Ck,sec:HorHan,eq:HanTrandef}
\Ha_{\alpha}(f)(x)=\int_{\Rdp} f(\la) E_{x}(\la)\,d\nu_{\alpha}(\la),
\end{equation}
where $$E_{x}(\la)=\prod_{r=1}^d (x_r \la_r)^{-\alpha_r+1/2}J_{\alpha_r-1/2}(x_r \la_r)=\prod_{r=1}^dE_{x_r}(\la_r).$$ Here $J_{s}$ is the Bessel function of the first kind of order $s>-1,$ see \cite[Chapter 5]{2}. For the sake of brevity we usually write $\Ha$ instead of $\Hla$ and $\nu$ instead of $\nu_{\alpha}.$  The system $\{E_{x}\}_{x\in\Rdp}$ consists of the eigenvectors of the Bessel operator
\begin{equation*}
L_B=-\Delta-\sum_{r=1}^d \frac{2\alpha_r}{\la_r}\frac{\partial}{\partial \la_r};
\end{equation*}
that is, $L_B(E_{x})=|x|^2E_{x}.$ Also, the functions $E_{x_r},$ $r=1,\ldots,d$, are eigenfunctions of the one-dimensional Bessel operators
$$L_{r}=-\frac{\partial^2}{\partial {\la_r}^2}-\frac{2\alpha_r}{\la_r}\frac{\partial}{\partial \la_r},$$
namely, $L_{r}(E_{x_r})=x_r^2 E_{x_r}.$

It is known that $\Ha$ is an isometry on $L^2$ that satisfies $\Ha^{-1}=\Ha$ (see, e.g., \cite[Chapter 8]{T}).  Moreover, for  $f\in L^2,$ we have \begin{equation}\label{chap:Ck,sec:HorHan,eq:diago}L_r(f)=\Ha(\la_r^2\Ha f).\end{equation}

For $y\in X$ let $\tau^{y}$ be the $d$-dimensional generalized Hankel translation given by
$$ \Ha (\tau^yf)(x)=E_y(x)\Ha f(x).$$
Clearly,
$\tau^y f (x)=\tau^{y_1}\cdots\tau^{y_d}f (x)$, where for each  $r=1,\ldots,d$, the operator  $\tau^{y_r}$  is the one-dimensional Hankel translation acting on a function $f$ as a function of the  $x_r$  variable with the other variables fixed.  It is also known that, if $\alpha_r>0,$ for $r=1,\ldots,d,$ then $\tau^y$ is a contraction on all $L^p$ spaces, $1\leq p\leq\infty,$ and that
$$ \tau^yf(x)=\tau^xf(y).$$
For two reasonable functions $f$ and $g$ define their Hankel convolution as
$$f \natural g(x)= \int_{X}\, \tau^{x}f(y) g(y)\,d\nu(y).$$
It is not hard to check that $f\natural g=g\natural f$ and
\begin{equation}\label{chap:Ck,sec:HorHan,eq:convo}\Ha(f\natural g)(x)=\Ha f(x)\,\Ha g(x).\end{equation}
As a consequence of the contractivity of $\tau^{y},$ for $\alpha\in \Rdp,$ we also have
\begin{equation}
\label{chap:Ck,sec:HorHan,eq:young}
\|f\natural g\|_{1}\leq \|f\|_{1} \|g\|_1, \qquad f\in L^1,\quad g\in L^1.
\end{equation}

For details concerning translation, convolution, and transform in the Hankel setting we refer the reader to, e.g., \cite{Ha}, \cite{T}, and \cite{W}.

For a function $f\in L^1$ and $t>0,$ let $f_t$ denote the $L^1$-dilation of $f$ given by $$(f_t)(x)={t}^{Q}f(tx),$$ where $Q=\sum_{r=1}^d (2\alpha_r+1)$. Then we have:
\begin{equation}
\Ha (f_t)(x)=\Ha f(t^{-1} x),\qquad \label{chap:Ck,sec:HorHan,eq:dil+tra}\tau^{y}(f_t)(x)=(\tau^{ty}f)_t(x).
\end{equation}
Notice that $Q$ represents the dimension of $X$ at infinity, that is, $\nu(B(x,R))\sim R^Q$ for large $R.$

 Let $m\colon \Rdp \to\mathbb{C}$ be a bounded measurable function. Define the multiplier operator $\mathcal T_m$ by
\begin{equation}
\label{chap:Ck,sec:HorHan,eq:spHan}
\mathcal T_m^{\Hi}(f)=\mathcal T_m(f)=\Ha(m\Ha f).
\end{equation}
Clearly, $\mathcal T_m$ is bounded on $L^2.$ Also note that if $m(\la_1,\ldots,\la_d)=n(\la_1^2,\ldots,\la_d^2),$ for some bounded, measurable function $n$ on $\mathbb{R}^d,$ then from \eqref{chap:Ck,sec:HorHan,eq:diago} it can be deduced that the Hankel multiplier operator defined by \eqref{chap:Ck,sec:HorHan,eq:spHan} coincides with the joint spectral multiplier operator of the system $(L_1,\ldots,L_d)$ given by $n(L_1,\ldots,L_d).$ The smoothness requirements on $m$ that guarantee the boundedness of $\mathcal T_m$ on $L^p$ will be stated in terms of appropriate Sobolev space norms.

For $z\in \mathbb C$, $\text{Re}\, z>0$, let
$$ G_z(x)=\Gamma \big(z\slash 2\big)^{-1} \int_0^\infty (4\pi t)^{-d\slash 2 }e^{-|x|^2\slash 4t} e^{-t}t^{z\slash 2}\frac{dt}{t}$$
be the kernels of the Bessel potentials. Then
\begin{equation}\label{chap:Ck,sec:HorHan,eq:potential}\| G_z\|_{L^1(\mathbb R^d)}\leq \Gamma (\text{Re} \, z\slash 2)|\Gamma (z\slash 2)|^{-1} \ \ \text{ and}\ \  \mathcal F G_z(\xi)=(1+|\xi|^2)^{-z\slash 2},
\end{equation}
where $\mathcal F G_z(\xi)=\int_{\mathbb{R}^d}G_z(x)e^{-i<x,\xi>}\,dx$ is the Fourier transform.

By definition, a function $f\in W_2^s(\mathbb R^d)$, $s>0$,  if and only if there exists a function $h\in L^2(\mathbb R^d)$ such that $f=h* G_s$, and $\| f\|_{W^s_2(\mathbb R^d) } = \| h\|_{L^2(\mathbb R^d)}.$ For $s>0$ denote  $w^s(x)=(1+|x|)^{s},$ $x\in\mathbb{R}^d.$ Then the space $W_2^s(\mathbb{R}^d)$ can be equivalently characterized as the space of those functions $f$ in $L^2(\mathbb{R}^d,dx)$ such that $w^s\mF(f)\in L^2(\mathbb{R}^d,dx).$ Moreover, we have $\|f\|_{W^s_2}\approx\|w^s\mF(f)\|_{L^2(\mathbb{R}^d,dx)}.$

 Similarly, a function $f$ belongs to the potential space $\mathcal L_s^\infty(\mathbb R^d)$, $s>0$,  if there is a function $h\in L^\infty(\mathbb R^d)$ such that $f=h* G_s$ (see \cite[Chapter V]{singular}). Then $\| f\|_{\mathcal L^\infty_s (\mathbb R^d)}=\| h\|_{L^\infty (\mathbb R^d)}$.

Denote $A_{R_1,R_2} =\{x\in\mathbb{R}^d\,:\, R_1\leq|x|\leq R_2\}.$ The main result of this section are Theorems \ref{thm:HorHan} and \ref{thm:HorHanw}. In the statement of these theorems by $\psi$ we denote a $C_c^{\infty}(A_{1/2,2})$ function such that
\begin{equation}\label{chap:Ck,sec:HorHan,eq:suma}
\sum_{j\in\mathbb{Z}} \psi(2^{-j}\la)=1, \qquad \lambda\in \mathbb R^d\backslash \{0\}.
\end{equation}
\begin{thm}
\label{thm:HorHan} Assume that $\alpha_r\geq 1\slash 2$ for $r=1,...,d$.
 Let
 \begin{equation}\label{chap:Ck,sec:HorHan,eq:ppp}
 m(\la)=n(\la_1^2,\ldots,\la_d^2),
  \end{equation}
  where $n$ is a bounded function on $\mathbb{R}^d$ such that, for certain real number $\beta > Q/2$ and for some (equivalently, for every) non-zero radial function $\eta\in C_c^{\infty}(A_{1/2,2}),$ we have
\begin{equation}
\label{chap:Ck,sec:HorHan,eq:conmulto}
\sup_{j\in\mathbb{Z}}\|\eta(\cdot)n(2^j\cdot)\|_{W^{\beta}_2(\mathbb R^d)}<\infty.
\end{equation}
Then the multiplier operator $\mathcal T_m^{\Hi}$ defined by \eqref{chap:Ck,sec:HorHan,eq:spHan} is a Calder\'{o}n-Zygmund operator associated with the kernel
$$K(x,y)=\sum_{j\in\mathbb{Z}}\tau^y \Ha(\psi(2^{-j}(\la_1^2,\cdots,\la_d^2)) m(\la)) (x).$$
As a consequence $\mathcal T_m^{\Hi}$ extends to the bounded operator from $L^1$ to $L^{1,\infty}(X)$ and from $L^p$ to itself for  $1<p<\infty$.
\end{thm}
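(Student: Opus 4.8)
The plan is to show that $\mathcal T_m^{\Hi}$ is a Calderón--Zygmund operator on the space of homogeneous type $X=(\Rdp,d\nu_{\alpha})$ equipped with the Euclidean distance; since $\mathcal T_m^{\Hi}$ is already known to be bounded on $L^2$, the weak type $(1,1)$ bound and the $L^p$ bounds, $1<p<\infty$, then follow from the general Calderón--Zygmund theory on spaces of homogeneous type. Formally $\mathcal T_m^{\Hi}(f)=f\,\natural\,\Ha(m)$ by \eqref{chap:Ck,sec:HorHan,eq:convo}, so the associated kernel should be $K(x,y)=\tau^y\Ha(m)(x)$. To make this rigorous and to produce the required estimates, I would decompose $m=\sum_{j\in\mathbb Z}m_j$ with $m_j(\la)=\psi(2^{-j}(\la_1^2,\dots,\la_d^2))\,m(\la)$, set $\Phi_j=\Ha(m_j)$ and $K_j(x,y)=\tau^y\Phi_j(x)$, and prove that $\sum_j K_j$ converges off the diagonal to a kernel obeying the standard size and regularity estimates, with all constants independent of the decomposition.

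The main device is scaling. By the Hankel dilation identity \eqref{chap:Ck,sec:HorHan,eq:dil+tra} one has $\Phi_j(x)=2^{jQ/2}\,\Phi^{(j)}(2^{j/2}x)$, where $\Phi^{(j)}=\Ha(m^{(j)})$ and $m^{(j)}(\sigma)=\psi(\sigma_1^2,\dots,\sigma_d^2)\,n(2^j\sigma_1^2,\dots,2^j\sigma_d^2)$. All the functions $m^{(j)}$ are supported in one fixed compact annulus bounded away from the coordinate axes and from the origin, on which the componentwise squaring map is a smooth diffeomorphism, so hypothesis \eqref{chap:Ck,sec:HorHan,eq:conmulto} transfers to give $\sup_{j}\|m^{(j)}\|_{W_2^{\beta}(\mathbb R^d)}\lesssim 1$. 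The key uniform kernel estimate is then
\begin{equation*}
\int_X |\Phi^{(j)}(x)|\,(1+|x|)^{\beta}\,d\nu(x)\lesssim \|m^{(j)}\|_{W_2^{\beta}(\mathbb R^d)}\lesssim 1 ,
\end{equation*}
obtained by Cauchy--Schwarz: $(1+|x|)^{-\beta}\in L^2(X)$ precisely because $2\beta>Q$ (this is where $\beta>Q/2$ enters), while $\|(1+|x|)^{\beta}\Phi^{(j)}\|_{L^2(X)}\lesssim\|m^{(j)}\|_{W_2^{\beta}}$ follows from Plancherel for the Hankel transform ($\Ha$ is an $L^2(X)$-isometry by \eqref{chap:Ck,sec:HorHan,eq:diago}) together with the fact that, $m^{(j)}$ being supported in a fixed annulus, the weighted $L^2(X)$-norm of $\Ha(m^{(j)})$ is controlled by a Euclidean Sobolev norm of $m^{(j)}$ of the same order. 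The same argument, applied after an integration by parts in the Hankel integral (which on the fixed annulus supporting $m^{(j)}$ produces again a multiplier of the same Sobolev class up to a bounded factor), gives the companion first-order estimate $\int_X|\nabla_x\Phi^{(j)}(x)|\,(1+|x|)^{\beta}\,d\nu(x)\lesssim 1$.

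With these in hand the Calderón--Zygmund estimates are routine. For the size condition one sums $|K(x,y)|\le\sum_j|\tau^y\Phi_j(x)|$ using the $L^1$-contractivity of $\tau^y$ (valid since $\alpha_r\ge 1/2>0$), the scaling $\Phi_j(x)=2^{jQ/2}\Phi^{(j)}(2^{j/2}x)$, and the displayed decay, arriving at $|K(x,y)|\lesssim \nu(B(x,|x-y|))^{-1}$. For the regularity (Hörmander) condition $\int_{|x-y|>2|y-y'|}|K(x,y)-K(x,y')|\,d\nu(x)\lesssim 1$ one splits the $j$-sum at the scale $2^{-j/2}\approx|y-y'|$: for $2^{-j/2}\gtrsim|y-y'|$ a mean value estimate combined with the gradient bound gains a factor $\approx 2^{j/2}|y-y'|$, making the series geometric; for $2^{-j/2}\lesssim|y-y'|$ one uses the size estimate directly, the weight $(1+|x|)^{\beta}$ with $\beta>Q/2$ again yielding a summable geometric series. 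Here one also uses elementary regularity of $y\mapsto\tau^yg$ for fixed smooth, rapidly decaying $g$, together with the symmetry $\tau^yg(x)=\tau^xg(y)$.

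The main obstacle is the second step: passing from the Euclidean Sobolev control \eqref{chap:Ck,sec:HorHan,eq:conmulto} of the (squared-variable, dyadically localized) multiplier to the weighted $L^1$ decay of the Hankel kernel $\Phi^{(j)}$, i.e.\ proving the displayed weighted Plancherel estimate and its first-order analogue with constants uniform in $j$. This is where the specific structure of the Hankel transform — the Bessel-function asymptotics, the behaviour of the translation $\tau^y$, and the comparison between Hankel-type and ordinary Sobolev norms on a fixed annulus — has to be handled with care, and it is exactly there that the hypotheses $\beta>Q/2$ and $\alpha_r\ge 1/2$ are exploited.
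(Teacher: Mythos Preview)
Your overall architecture---dyadic decomposition of $m$, rescaling each piece to unit scale, proving a weighted $L^1$ bound on the rescaled Hankel kernels $\Phi^{(j)}$ via Cauchy--Schwarz against $(1+|x|)^{-\beta}\in L^2(X)$, and then summing to get the H\"ormander integral condition---matches the paper's.  But two of your key reductions are incorrect as stated.

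First, the claim that the $m^{(j)}$ are ``supported in one fixed compact annulus bounded away from the coordinate axes'' is false for $d>1$.  The support of $m^{(j)}$ in the $\sigma$-variable is $\{\sigma\in\Rdp:\,1/2\le\sigma_1^2+\cdots+\sigma_d^2\le 2\}$, which meets every coordinate hyperplane; the componentwise squaring map is \emph{not} a diffeomorphism there, and the Sobolev norm of $m^{(j)}$ in $\sigma$ is \emph{not} controlled by the Sobolev norm of $n(2^j\cdot)\psi$ in the squared variables.  (The paper remarks explicitly that these two Sobolev norms are incomparable under \eqref{chap:Ck,sec:HorHan,eq:ppp}.)  The paper therefore never transfers regularity to $m^{(j)}$; it keeps everything in terms of $\|n\|_{W_2^\beta}$ and proves directly that $\|\Ha(m)\,w^s\|_{2}\lesssim\|n\|_{W_2^{s+\varepsilon}}$ for $m(\lambda)=n(\lambda_1^2,\dots,\lambda_d^2)$ with $\supp n\subset A_{1/2,2}$ (Lemma~\ref{lem:linterr}).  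That lemma is proved by Stein-type complex interpolation between unweighted Plancherel and a cruder bound $\|\Ha(m)\,w^s\|_2\lesssim\|n\|_{W_2^{s+d/2+\varepsilon}}$ obtained via an explicit Gaussian representation (Lemma~\ref{lem:Lporr}).  The hypothesis $\alpha_r\ge 1/2$ enters \emph{here}, not in the $L^1$-contractivity of $\tau^y$ (which holds for all $\alpha_r>0$): on the line $\Real z=0$ of the interpolation one needs $\|M_z\|_{L^2(X)}\lesssim\|N_z\|_{L^2(\mathbb R^d)}$, and after the change of variables $\lambda_r\mapsto\lambda_r^2$ this requires the Jacobian factor $\prod\lambda_r^{\alpha_r-1/2}$ to be bounded on $A_{1/4,4}$, i.e.\ $\alpha_r\ge 1/2$.

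Second, the paper does not prove a pointwise size bound, and for the smoothness in $y$ it does \emph{not} differentiate the Hankel kernel.  Your ``integration by parts in the Hankel integral'' would cost an extra derivative on the multiplier that you do not have.  Instead, for the small-$j$ (large-scale) range the paper writes $\tilde m_j(\lambda)=\tilde\theta_j(\lambda)\,e^{-|\lambda|^2}$, so that $\tau^y\Ha(\tilde m_j)=\Ha(\tilde\theta_j)\,\natural\,T_1(\cdot,2^{j/2}y)$, and the required Lipschitz regularity in $y$ comes from the heat kernel $T_1$ (Lemma~\ref{lem:Lneaa}) together with Young's inequality \eqref{chap:Ck,sec:HorHan,eq:young}.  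For the large-$j$ range one uses an off-diagonal decay for the translation $\tau^y$ (Lemma~\ref{lem:Lfarr}) rather than a pointwise size bound.  The outcome is the H\"ormander integral condition \eqref{chap:Ck,sec:HorHan,eq:horm}, which suffices for the Calder\'on--Zygmund conclusion.
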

\begin{thm}\label{thm:HorHanw}
 If we relax the conditions on $\alpha_r$ assuming only that $\alpha_r> 0$, then the conclusion of
 Theorem \ref{thm:HorHan} holds provided there is $\beta >Q\slash 2 $ such that
 \begin{equation}\label{chap:Ck,sec:HorHan,eq:conmulto22}
\sup_{j\in\mathbb{Z}}\|\eta(\cdot)n(2^j\cdot)\|_{\mathcal L_{\beta}^\infty(\mathbb R^d)}<\infty.
\end{equation}
\end{thm}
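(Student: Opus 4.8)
The plan is to run the argument of Theorem~\ref{thm:HorHan}, replacing each step that used the $L^2$ theory (and hence $\alpha_r\ge1/2$) by an $L^1/L^\infty$ substitute available for all $\alpha_r>0$; the cost of this is exactly that \eqref{chap:Ck,sec:HorHan,eq:conmulto} must be strengthened to \eqref{chap:Ck,sec:HorHan,eq:conmulto22}. Since $\Ha$ is an $L^2(X)$-isometry and $m$ is bounded, $\mathcal T_m^{\Hi}$ is bounded on $L^2(X)$, so by Calder\'on--Zygmund theory on the space of homogeneous type $(X,d\nu,|\cdot|)$ it suffices to check that the Hankel-convolution kernel $\Phi=\Ha(m)$ of $\mathcal T_m^{\Hi}$ produces a standard kernel $K(x,y)=\tau^y\Phi(x)$, i.e.\ a size bound $|K(x,y)|\lesssim\nu(B(x,|x-y|))^{-1}$ and a H\"ormander-type regularity bound. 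Using \eqref{chap:Ck,sec:HorHan,eq:suma} with $\xi=(\la_1^2,\dots,\la_d^2)$ I would decompose $m=\sum_{j\in\mathbb Z}m_j$, $m_j(\la)=\psi(2^{-j}(\la_1^2,\dots,\la_d^2))m(\la)$, set $\Phi_j=\Ha(m_j)$, so $K=\sum_j\tau^y\Phi_j$; the dilation identity from \eqref{chap:Ck,sec:HorHan,eq:dil+tra} gives $\Phi_j(x)=2^{jQ/2}\Psi_j(2^{j/2}x)$ with $\Psi_j=\Ha(\mu_j)$ and $\mu_j(\sigma)=\psi(\sigma_1^2,\dots,\sigma_d^2)\,n(2^j(\sigma_1^2,\dots,\sigma_d^2))$ supported in a fixed annulus, and since $d\nu$ is homogeneous of degree $Q$ all the weighted $L^1(d\nu)$ quantities for $\Phi_j$ at scale $2^{-j/2}$ become $j$-independent quantities for $\Psi_j$.

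The heart of the matter is the uniform estimate
\begin{equation*}
\int_X|\Ha(\mu_j)(x)|\,(1+|x|)^{\varepsilon}\,d\nu(x)\ \le\ C_{\varepsilon}\,\|\mu_j\|_{\mathcal L_{\beta}^{\infty}(\mathbb R^d)}\ \le\ C_{\varepsilon}\,\sup_{k\in\mathbb Z}\|\eta\,n(2^k\cdot)\|_{\mathcal L_{\beta}^{\infty}(\mathbb R^d)},\qquad 0<\varepsilon<\beta-Q/2,
\end{equation*}
together with its analogue carrying one extra power $2^{j/2}$ (after rescaling, $\mu_j$ is replaced by $\sigma_r\mu_j$), needed for the regularity of $K$. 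The first inequality comes from the fact that multiplication by the fixed cut-off $\sigma\mapsto\psi(\sigma_1^2,\dots,\sigma_d^2)$ and composition with the squaring map preserve $\mathcal L_{\beta}^{\infty}$, with constants independent of $j$. For the displayed inequality I would use the subordination $G_\beta=\Gamma(\beta/2)^{-1}\int_0^{\infty}p_t\,e^{-t}t^{\beta/2}\,\frac{dt}{t}$, $p_t(x)=(4\pi t)^{-d/2}e^{-|x|^2/4t}$, implicit in \eqref{chap:Ck,sec:HorHan,eq:potential}: writing $\eta\,n(2^j\cdot)=h_j*G_\beta$ with $\|h_j\|_{\infty}\le\|\eta\,n(2^j\cdot)\|_{\mathcal L_{\beta}^{\infty}}$ and recalling that $\psi(\sigma^2)$ is supported where $\eta(\sigma^2)=1$, one gets $\mu_j=\Gamma(\beta/2)^{-1}\int_0^{\infty}F_{j,t}\,e^{-t}t^{\beta/2}\,\frac{dt}{t}$ with $F_{j,t}(\sigma)=\psi(\sigma_1^2,\dots,\sigma_d^2)\,(h_j*p_t)(\sigma_1^2,\dots,\sigma_d^2)$; this $F_{j,t}$ is $C^{\infty}$, supported in a fixed compact set, even in each $\sigma_r$, and obeys $\|F_{j,t}\|_{W_2^s(\mathbb R^d)}\le C_s\,t^{-s/2}\|h_j\|_{\infty}$ for $t\le1$ and $\le C_s\|h_j\|_{\infty}$ for $t\ge1$. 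For each $t$ the $L^2$-isometry of $\Ha$, the identity $\Ha((1+L_B)^{\theta/2}F)(x)=(1+|x|^2)^{\theta/2}\Ha(F)(x)$, the comparison of the $L_B$-Sobolev scale with the Euclidean one on such even, compactly supported functions (valid for all $\alpha_r>-1/2$, since $\partial_{\sigma_r}F_{j,t}$ vanishes at $\sigma_r=0$ and so the singular term $\sigma_r^{-1}\partial_{\sigma_r}$ is harmless), and Cauchy--Schwarz with $(1+|x|)^{-a}\in L^2(d\nu)$ for $a>Q/2$ give
\begin{equation*}
\int_X|\Ha(F_{j,t})(x)|\,(1+|x|)^{\varepsilon}\,d\nu(x)\ \lesssim\ \|F_{j,t}\|_{W_2^{\varepsilon+a}(\mathbb R^d)}\ \lesssim\ t^{-(\varepsilon+a)/2}\|h_j\|_{\infty}\qquad(t\le1),
\end{equation*}
and the resulting $t$-integral $\int_0^{\infty}e^{-t}t^{\beta/2}t^{-(\varepsilon+a)/2}\,\frac{dt}{t}$ converges precisely when $\varepsilon+a<\beta$, which is possible as soon as $\varepsilon<\beta-Q/2$.

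Granting these bounds, the size estimate and the H\"ormander regularity estimate for $K$ follow as in Theorem~\ref{thm:HorHan}: for the regularity one writes $\Ha(\tau^y\Phi_j-\tau^{y'}\Phi_j)=(E_y-E_{y'})m_j$, uses $|E_y(x)-E_{y'}(x)|\lesssim\min(1,|x|\,|y-y'|)$ and the uniform action of multiplication by $E_y$ on the relevant rescaled $\mathcal L_{\beta}^{\infty}$, so that the weighted bound above applied to the perturbed symbol yields $\int_X|\tau^y\Phi_j(x)-\tau^{y'}\Phi_j(x)|\,(1+2^{j/2}|x-y|)^{\varepsilon}\,d\nu(x)\lesssim\min(1,2^{j/2}|y-y'|)$, and then summing in $j$ (splitting at $2^{j/2}\sim|y-y'|^{-1}$) closes the integral H\"ormander condition. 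The only structural input here is that $\tau^y$ is an $L^1(d\nu)$-contraction with kernel supported in $\prod_r(|x_r-y_r|,\,x_r+y_r)$, which holds for all $\alpha_r>0$; this is precisely the point at which the hypothesis $\alpha_r\ge1/2$ of Theorem~\ref{thm:HorHan} — needed there only to run the sharper $W_2^{\beta}$/Plancherel version, where one uses that the Bessel translation kernels are square-integrable — gets relaxed. I expect the main obstacle to be the weighted $L^1$ estimate of the second paragraph: arranging the subordination so that the $L^2$-isometry of $\Ha$ applies to smooth, compactly supported, even approximants with the sharp $t^{-(\varepsilon+a)/2}$ blow-up, and verifying that the fixed cut-offs, the squaring map $\sigma\mapsto(\sigma_1^2,\dots,\sigma_d^2)$, and multiplication by the translation characters $E_y$ all preserve $\mathcal L_{\beta}^{\infty}$ with bounds uniform in $j$ and $y$.
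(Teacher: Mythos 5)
Your overall strategy differs from the paper's in an interesting way. The paper proves Theorem~\ref{thm:HorHanw} essentially in one sentence: re-run the Calder\'on--Zygmund argument of Theorem~\ref{thm:HorHan} verbatim, but replace Lemma~\ref{lem:linterr} (which requires $\alpha_r\geq 1/2$) by Lemma~\ref{lem:Linterrw}. The proof of Lemma~\ref{lem:Linterrw} is by Stein complex interpolation of a holomorphic family $z\mapsto N_z=\eta_0\cdot h*G_{s'z+\varepsilon/2}$ between the $L^\infty$ endpoint at $\Ree\,z=0$ (possible because $\|M_z\|_2\lesssim\|N_z\|_\infty$, side-stepping the change-of-variable obstruction that makes $\|M_z\|_2\lesssim\|N_z\|_{L^2(dx)}$ fail for $\alpha_r<1/2$) and the Sobolev endpoint Lemma~\ref{lem:Lporr} at $\Ree\,z=1$. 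You instead propose a real-variable substitute: expand $G_\beta$ as $\Gamma(\beta/2)^{-1}\int_0^\infty p_t\,e^{-t}t^{\beta/2}\,\frac{dt}{t}$, reduce to smooth $t$-approximants $F_{j,t}$, and obtain the $t^{-\theta/2}$ scaling from the $C^k$ smoothness of $h_j*p_t$. This avoids the Phragm\'en--Lindel\"of machinery entirely and is a genuinely distinct route. You also recast the H\"ormander regularity estimate of the CZ kernel in terms of a mean-value bound for the characters $E_y$ acting on the rescaled symbol, whereas the paper keeps the factorization $\tilde m_j=\tilde\theta_j\,e^{-|\lambda|^2}$ and uses the $L^1$ Lipschitz estimate for the Gaussian heat kernel in Lemma~\ref{lem:Lneaa}.

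One step in your key chain is, however, incorrect as written. The inequality
\begin{equation*}
\int_X|\Ha(F_{j,t})(x)|\,(1+|x|)^{\varepsilon}\,d\nu(x)\ \lesssim\ \|F_{j,t}\|_{W_2^{\varepsilon+a}(\mathbb R^d)},\qquad a>Q/2,
\end{equation*}
is precisely the conclusion of Lemma~\ref{lem:linterr} (combined with Cauchy--Schwarz with $w^{-a}\in L^2(\nu)$), and the paper explicitly notes that this Sobolev-to-weighted-$L^2$ comparison breaks down for $\alpha_r<1/2$ (when $d\geq 2$, $\supp F$ can meet the hyperplanes $\{\sigma_r=0\}$, and the change-of-variables weight $\sigma_r^{\alpha_r-1/2}$ becomes unbounded). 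Your remark that $\sigma_r^{-1}\partial_{\sigma_r}F_{j,t}$ is harmless by evenness (Hadamard's lemma) resolves the \emph{integer}-order comparison $\|(1+L_B)^N F\|_{L^2(\nu)}\lesssim\|F\|_{C^{2N}}$, but going from there to a non-integer $W_2^{\varepsilon+a}(dx)$ bound with $a>Q/2$ would again require an interpolation argument, which is what you were trying to avoid. The remedy is to drop the intermediate Sobolev norm and exploit the $t$-scaling directly: for $F=F_{j,t}$ and any integer $N$ one has, via Hadamard's lemma, the estimate $\|w^{2N}\Ha(F)\|_{L^2(\nu)}\approx\|(1+L_B)^N F\|_{L^2(\nu)}\lesssim\|F\|_{C^{2N}}\lesssim t^{-N}\|h_j\|_\infty$ (compact support of fixed size, $\nu$ locally finite since $\alpha_r>-1/2$), and combined with $\|\Ha(F)\|_{L^2(\nu)}\lesssim\|F\|_{L^\infty}\lesssim\|h_j\|_\infty$, log-convexity in $\theta$ of $\|w^\theta\Ha(F)\|_{L^2(\nu)}$ (a one-line H\"older) gives $\|w^\theta\Ha(F_{j,t})\|_{L^2(\nu)}\lesssim t^{-\theta/2}\|h_j\|_\infty$ for all $0\leq\theta\leq 2N$, which is exactly what the $t$-integral needs. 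So the end point of your argument is right, but the specific intermediate inequality through $W_2^{\varepsilon+a}(\mathbb R^d)$ is not the one to use.

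For the regularity estimate, your proposed scheme (multiply the rescaled symbol by $E_y-E_{y'}$ and appeal to a ``uniform action of multiplication by $E_y$ on $\mathcal L_\beta^\infty$'') is more delicate than you make it sound: controlling that action requires derivatives of $E_y$ up to order comparable to $\beta$, and these grow in $|y|$, so it is not clear the bound is uniform after the relevant rescalings. The paper's route here is both cleaner and available without any change under the hypothesis $\alpha_r>0$: Lemmata~\ref{lem:Lfarr} and~\ref{lem:Lneaa} are independent of the $\alpha_r\geq 1/2$ restriction, so the entire CZ part of the proof of Theorem~\ref{thm:HorHan} can simply be reused. There is no need to re-derive it with the mean-value bound on $E_y$.
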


The weak type $(1,1)$ estimate under assumption \eqref{chap:Ck,sec:HorHan,eq:conmulto22} could be proved by applying a general multiplier theorem of Sikora \cite[Theorem 2.1]{Sik}. However, in the case of the  Hankel transform Theorem \ref{thm:HorHanw} has a simpler proof based on Lemmata \ref{lem:Lporr} and \ref{lem:Linterrw}.

Hankel multipliers, mostly in one variable, attracted attention of many authors, see, e.g.\ \cite{BX}, \cite{dphanhar}, \cite{GS}, \cite{GT}, \cite{GosSt1}, and references therein.

In \cite{B} the authors considered multidimensional Hankel multipliers $m$ of Laplace transform type, that is,
 $$ m(\la)=|\la|^2\int_0^{\infty} e^{-t|\la|^2}\kappa(t)\, dt,\qquad \la \in \Rdp,$$
 where $\kappa \in L^{\infty} (0,\infty)$ (see \cite{topics}). To see that $m(\lambda)$ satisfies the assumptions of Theorems \ref{thm:HorHan} and \ref{thm:HorHanw} we set
  \begin{equation*} n(\lambda)=\Xi(\lambda) (\lambda_1+\ldots+\lambda_d) \int_0^\infty e^{-t(\lambda_1+...+\lambda_d)}\phi (t)\, dt,\end{equation*} where $\Xi \in C^\infty (\mathbb R^d\setminus \{0\})$, $\Xi (t\lambda)=\Xi (\lambda)$ for $t>0$, $\Xi (\lambda )=1$ for $\lambda \in \Rdp$, $\Xi (\lambda)=0$ for $\lambda_1+...+\lambda_d<|\lambda |\slash d.$ Then, clearly $m$ and $n$ are related by \eqref{chap:Ck,sec:HorHan,eq:ppp}. Moreover, we easily check that $n(\lambda)$ satisfies the Mikhlin condition, that is, $|\lambda|^{|\gamma|}|\partial^{\gamma}n(\lambda)|\leq C_{\gamma},$ for $\la \in \mathbb{R}^d$ and all multi-indices $\gamma$. Hence, (\ref{chap:Ck,sec:HorHan,eq:conmulto}) and (\ref{chap:Ck,sec:HorHan,eq:conmulto22}) hold with every $\beta>0$.

 A typical example of  multipliers of Laplace transform type are the imaginary powers $m_u(y)=|y|^{2iu},$ $u\in \mathbb{R},$ which correspond to $\kappa_u(t)=(\Gamma(1-iu))^{-1}t^{-iu}.$ In this case the resulting operators $\mathcal T_{m_u}$ coincide  with $(L_B)^{iu}.$
It is worth to remark that thanks to Theorem \ref{thm:HorHanw} we can prove substantially better bounds on $L^p(X),$ $1<p<\infty,$ than
 \begin{equation*}
 \|L^{iu}\|_{L^p(X)\to L^p(X)}\leq C_p e^{\pi|u||\frac{1}{2}-\frac{1}{p}|}\|f\|_{L^p(X)},\qquad f\in L^p(X),\quad u\in\mathbb{R},
 \end{equation*}
 which were obtained in \cite[Corollary 1.2]{B}. Namely, for arbitrary small $\varepsilon>0$ we have
 \begin{equation}
 \label{chap:Ck,sec:HorHan,eq:ourima}
 \|L^{iu}\|_{L^p(X)\to L^p(X)}\leq C_{p,\varepsilon} (1+|u|)^{(Q+2\varepsilon)|\frac{1}{2}-\frac{1}{p}|}\|f\|_{L^p(X)},\qquad f\in L^p(X),\quad u\in\mathbb{R}.
 \end{equation}
The proof of \eqref{chap:Ck,sec:HorHan,eq:ourima} can be found in \cite[Appendix]{dpw}.

It is perhaps worth to point out that in $d=1$ the assumptions \eqref{chap:Ck,sec:HorHan,eq:conmulto} and \eqref{chap:Ck,sec:HorHan,eq:conmulto22} could be given in terms of  function $m$ instead of $n$. However, in the multivariate case we assume
in \eqref{chap:Ck,sec:HorHan,eq:conmulto} a $W_2^\beta$-Sobolev regularity of a function $n(\lambda)$  which is related with $m(\lambda)$  by \eqref{chap:Ck,sec:HorHan,eq:ppp}.
It can be shown, see \cite[Appendix]{dpw}, that even for the classical Fourier multipliers supported in $A_{1\slash 2,2}$ the Sobolev norms of $n(\lambda)$ and $m(\lambda)$ are not comparable when we apply the change of variables \eqref{chap:Ck,sec:HorHan,eq:ppp}.

We start with proving some basic estimates. Throughout this section the symbol $\lesssim$ means that the constant in the inequality is independent of significant quantities; however it may depend on $s,$ $\beta,$ $\varepsilon,$ $d$ or $\alpha.$ Recall that $w^s(x)=(1+|x|)^{s}.$
\begin{lem}
\label{lem:Lporr}
 For every $s,\varepsilon>0$ there exists a constant $C(s,\varepsilon,d,\alpha)$ such that   if $m(\la)=n(\la_1^2,...,\la_d^2)$, $\supp \,n\subseteq A_{1/4,4}$, then
\begin{equation}\label{chap:Ck,sec:HorHan,eq:eq21}
\|\Ha (m) w^s\|_{2} \leq C(s,\varepsilon,d,\alpha) \|n\|_{W^{s+d\slash 2+\varepsilon}_2(\mathbb R^d)}.
\end{equation}
\end{lem}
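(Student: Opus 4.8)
The plan is to combine Plancherel's theorem for $\Ha$ on $L^2 = L^2(\Rdp, d\nu_\alpha)$ with the intertwining relation \eqref{chap:Ck,sec:HorHan,eq:diago} between $\Ha$ and the Bessel operators, and then to transfer the resulting estimate to a statement about $n$ by the substitution $y = \la^2$. By a routine mollification (keeping the supports inside a fixed annulus $A_{1/8,8}$, and using that $W_2^{s+d/2+\varepsilon}(\mathbb{R}^d) \hookrightarrow L^\infty$ so that one can pass to the limit in $L^2$ and extract an a.e.\ convergent subsequence) it suffices to prove \eqref{chap:Ck,sec:HorHan,eq:eq21} for $n \in C_c^\infty$ supported in $A_{1/8,8}$; then $m(\la) = n(\la_1^2,\dots,\la_d^2)$ is smooth, compactly supported, and even in each variable, hence lies in the domain of every power of $L_B$. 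From $L_B E_x = |x|^2 E_x$ together with the self-adjointness of $L_B$ (here the evenness of $m$ in each $\la_r$ is what kills the boundary contribution at the coordinate hyperplanes) one gets $\Ha(L_B m)(x) = |x|^2 \Ha(m)(x)$, hence, iterating, $\Ha\big((I+L_B)^k m\big)(x) = (1+|x|^2)^k \Ha(m)(x)$ for all $k \in \mathbb{N}_0$. Since $\Ha$ is an $L^2$ isometry, this reduces the problem to showing
\[
\big\|(I+L_B)^k m\big\|_2 \lesssim \|n\|_{W_2^{2k+d/2+\varepsilon}(\mathbb{R}^d)}, \qquad k \in \mathbb{N}_0 .
\]

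\textbf{The integer estimate.} For $k=0$ this is just $\|\Ha(m)\|_2 = \|m\|_2$, and the change of variables $y = \la^2$ (under which $d\nu_{\alpha_r}(\la_r) = \tfrac12 y_r^{\alpha_r - 1/2}\,dy_r$) together with $\int_{A_{1/8,8}\cap\Rdp}\prod_r y_r^{\alpha_r-1/2}\,dy < \infty$ (the weight is integrable since $\alpha_r - 1/2 > -1$, the region bounded) gives $\|m\|_2 \lesssim \|n\|_\infty \lesssim \|n\|_{W_2^{d/2+\varepsilon}}$ by the Sobolev embedding $W_2^{d/2+\varepsilon}(\mathbb{R}^d)\hookrightarrow L^\infty(\mathbb{R}^d)$. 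For $k \ge 1$ I would compute $(I+L_B)^k m$ explicitly. A direct calculation shows that $L_r$ applied to $n(\la^2)$ equals $\big[-4 y_r\,\partial_{y_r}^2 n - (2+4\alpha_r)\,\partial_{y_r} n\big]\big|_{y=\la^2}$; expanding $(I+L_B)^k = (I+\sum_r L_r)^k$ by the multinomial theorem then yields $(I+L_B)^k m(\la) = \sum_{|\gamma|\le 2k} c_\gamma(\la^2)\,(\partial^\gamma n)(\la^2)$ with polynomial coefficients $c_\gamma$, which are bounded on the compact set $\supp n$. Hence, again via $y = \la^2$ and the finiteness of the weighted measure of $A_{1/8,8}\cap\Rdp$,
\[
\big\|(I+L_B)^k m\big\|_2 \lesssim \sum_{|\gamma|\le 2k} \|\partial^\gamma n\|_\infty \lesssim \|n\|_{W_2^{2k+d/2+\varepsilon}(\mathbb{R}^d)},
\]
using $\|\partial^\gamma n\|_\infty \lesssim \|\partial^\gamma n\|_{W_2^{d/2+\varepsilon}} \le \|n\|_{W_2^{|\gamma|+d/2+\varepsilon}}$. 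Together with the $k=0$ case this gives $\big\|(1+|x|^2)^k\Ha(m)\big\|_2 \lesssim \|n\|_{W_2^{2k+d/2+\varepsilon}}$ for every $k \in \mathbb{N}_0$.

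\textbf{Interpolation and conclusion.} Fix $s>0$, choose $k \in \mathbb{N}$ with $k \ge s/2$, and set $\theta = s/(2k) \in (0,1]$. The map $V\colon n \mapsto \Ha(n(\,\cdot\,^2))$ is bounded $W_2^{d/2+\varepsilon}(\mathbb{R}^d)\to L^2(d\nu_\alpha)$ and $W_2^{2k+d/2+\varepsilon}(\mathbb{R}^d)\to L^2\big((1+|x|^2)^{2k}\,d\nu_\alpha\big)$ by the previous step. Complex interpolation of the single operator $V$ between these two couples, using the standard identifications $\big[W_2^{\sigma_0},W_2^{\sigma_1}\big]_\theta = W_2^{(1-\theta)\sigma_0+\theta\sigma_1}$ and $\big[L^2(w_0),L^2(w_1)\big]_\theta = L^2(w_0^{1-\theta}w_1^\theta)$, gives $V\colon W_2^{s+d/2+\varepsilon}(\mathbb{R}^d) \to L^2\big((1+|x|^2)^{s}\,d\nu_\alpha\big)$, that is, $\big\|(1+|x|^2)^{s/2}\Ha(m)\big\|_2 \lesssim \|n\|_{W_2^{s+d/2+\varepsilon}}$. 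Since $w^s(x) = (1+|x|)^s \approx (1+|x|^2)^{s/2}$, this is exactly \eqref{chap:Ck,sec:HorHan,eq:eq21}.

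\textbf{Main obstacle.} The genuinely technical point is the last step: one must be careful to invoke the correct complex-interpolation identities for the Sobolev scale and for the power-weighted $L^2$ scale and to check that $V$ is well defined and bounded on a common dense subspace (e.g.\ $\mathcal{S}(\mathbb{R}^d)$) so that the single-operator interpolation theorem applies; alternatively the fractional power $(I+L_B)^{s/2}$ could be handled directly via a subordination formula, trading interpolation for an explicit integral estimate. The only other point needing a little attention is the intertwining identity $\Ha\big((I+L_B)^k m\big) = (1+|x|^2)^k\Ha(m)$ — specifically the vanishing of the boundary terms at $\{\la_r = 0\}$ in the integration by parts, which is where the evenness of $m$ in each variable (and the local integrability of $\la_r^{2\alpha_r}$) is used — but this is standard in the Bessel setting.
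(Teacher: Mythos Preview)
Your argument is correct, but the route is genuinely different from the paper's. The paper writes $m(\la)=g(\la_1^2,\dots,\la_d^2)\,e^{-|\la|^2}$ with $g(\la)=n(\la)e^{\la_1+\dots+\la_d}$, applies Fourier inversion to $g$, and exchanges the order of integration to obtain
\[
\Ha(m)(x)=(2\pi)^{-d}\int_{\mathbb{R}^d}\mathcal F(g)(y)\,\Ha(e_{\mathbf 1-iy})(x)\,dy.
\]
The Hankel transform of the complex Gaussian $e_{\mathbf 1-iy}$ is computed explicitly by analytic continuation of the real heat kernel, giving $\|w^s\Ha(e_{\mathbf 1-iy})\|_2\lesssim(1+|y|)^s$ directly for every real $s>0$; Minkowski and Cauchy--Schwarz then finish the job without any interpolation. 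Your approach instead exploits the intertwining $\Ha L_B=|x|^2\Ha$, proves the estimate at integer levels via an explicit differential calculus on $n$, and then interpolates. What your method buys is that it avoids any special-function computation (no Bessel asymptotics, no analytic continuation of heat kernels) and is closer in spirit to a pure functional-calculus argument; what the paper's method buys is a one-shot estimate valid for all $s>0$, deferring complex interpolation to the next lemma (where it is used to sharpen $s+d/2+\varepsilon$ to $s+\varepsilon$). Both approaches are clean; yours is perhaps the more robust template if one wanted to replace $L_B$ by another operator whose heat kernel is not known in closed form.
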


\begin{proof} Since $m(\la)= g(\la_1^2,...,\la_d^2) e^{-|\la|^2},$ with $g(\la)= n(\la)e^{\la_1+...+\la_d},$ using the Fourier inversion formula for $g$, we get
\begin{equation*}\begin{split} (2\pi)^d\, m(\la)&=e^{-|\la|^2} \int_{\mathbb R^d}\mathcal{F}(g)(y)e^{i y_1\la_1^2+...+iy_d\la_d^2}\,dy=\int_{\mathbb R^d} \mathcal{F}(g)(y)e^{(-1+iy_1)\la_1^2+...+(-1+iy_d)\la_d^2}\,dy.
     \end{split}\end{equation*}
 Applying the Hankel transform and changing the order of integration, we obtain
\begin{equation}\label{chap:Ck,sec:HorHan,eq:inte}\Ha (m)(x)=(2\pi)^{-d}\int_{\mathbb R^d} \mathcal{F}(g)(y)\Ha (e_{\textbf{1}-iy})(x)\,dy,\end{equation}
where for $z=(z_1,\ldots,z_d)\in\mathbb{C}^d,$ $e_z(\la)=\prod_{r=1}^d e_{z_r}(\la_r)$ with $e_{z_r}(\la_r)=e^{-z_r\la_r^2}.$

Clearly, $$\Ha(e_{\textbf{1}-iy})(x)=\prod_{r=1}^{d}\Ha_r(e_{1-iy_r})(x_r),$$ with $\Ha_r$ denoting the one-dimensional Hankel transform acting on the $r$-th variable.
 It is well known that for $t>0,$ $\Ha_r(e_{t})(x_r)=C_{\alpha_r}t^{-(2\alpha_r+1)/2}\exp{(-x_r^2\slash 4t)},$ see \cite[p.\ 132]{2}. Moreover, for  fixed $x_r$, the functions $$z_r\mapsto \Ha_r(e_{z_r})(x_r)\qquad \textrm{and} \qquad z_r\mapsto C_{\alpha_r}z_r^{-(2\alpha_r+1)/2}\exp{\left(-\frac{x_r^2}{4{z_r}}\right)}$$ are holomorphic on $\{z_r\in\mathbb{C}\,:\, \textrm{Re} \, z_r>0\}$ (provided we choose   an appropriate holomorphic branch of the  power function $z_r^{-(2\alpha_r+1)/2}$). Hence, by the uniqueness of the holomorphic extension, we obtain $$\Ha_r(e_{1-iy_r})(x_r)=C_{\alpha_r} (1-iy_r)^{-(2\alpha_r+1)/2}\exp{\bigg(-\frac{x_r^2}{4(1-iy_r)}\bigg)}.$$

Since $\Real \big[x_r^2\slash 4(1-iy_r)\big]=x_r^2\slash 4(1+y_r^2),$ the change of variable $x_r=(1+y_r^2)^{1/2}u_r$  leads to \begin{equation}\label{chap:Ck,sec:HorHan,eq:onprod}\int_{(0,\infty)}|x_r^s\Ha(e_{1-iy_r})(x_r)|^2\,d\nu_r(x_r)\lesssim (1+y_r^2)^{s},\qquad s\geq0.\end{equation}
Now, observing that $(1+|x|)^{2s}\approx 1+x_1^{2s}+\ldots +x_d^{2s}$ and using \eqref{chap:Ck,sec:HorHan,eq:onprod} we arrive at
$$\|w^s(\cdot)\Ha(e_{\textbf{1}-iy})(\cdot )\|_{L^2}\lesssim \sum_{r=1}^d (1+y_r^2)^{s/2}\approx (1+|y|)^{s}.$$
The latter bound together with \eqref{chap:Ck,sec:HorHan,eq:inte}, Minkowski's integral inequality, and the Schwarz  inequality give
\begin{align*}&\|\Ha (m) w^s\|_{2}\lesssim \int_{\mathbb{R}^d}|\mathcal{F}(g)(y)|(1+|y|)^{s}\,dy \\&\lesssim \bigg( \int_{\mathbb{R}^d}|\mathcal{F}(g)(y)|^2(1+|y|)^{2s+d+2\varepsilon}\,dy\bigg)^{1/2}\bigg(\int_{\mathbb{R}^d}(1+|y|)^{-d-2\varepsilon}\,dy\bigg)^{1/2}\lesssim \|g\|_{W^{s+d/2+\varepsilon}_2(\mathbb R^d)}\end{align*}
for any fixed $\varepsilon>0.$
Since $g(\la)=n(\la)e^{\la_1+...+\la_d}=n(\la) (e^{\lambda_1+...+\la_d}\eta_0(\la)),$ for some $\eta_0\in C_c^{\infty}(A_{1/8,8}),$ we see that $\|g\|_{W^{s+d/2+\varepsilon}_2(\mathbb R^d)}\leq C_{\eta_0} \|n\|_{W^{s+d/2+\varepsilon}_2(\mathbb R^d)},$ which implies \eqref{chap:Ck,sec:HorHan,eq:eq21}. \end{proof}

Using ideas of Mauceri and Meda \cite{vectvalMM} combined with  the fact that the Hankel transform is  an $L^2$-isometry we can improve Lemma \ref{lem:Lporr} in the following way.
\begin{lem}
\label{lem:linterr}
Assume that $\alpha_r\geq 1\slash 2$ for $r=1,...,d$. Then  for every  $s,\varepsilon>0,$ there is a constant $C(s,\varepsilon,d,\alpha)$ such that
if  $m(\la)=n(\la_1^2, ...,\la_d^2)$,  $\supp \, n\subseteq A_{1/2,2}$, then
\begin{equation*}
\|\Ha (m) w^s\|_{2} \leq C(s,\varepsilon,d,\alpha) \|n\|_{W_2^{s+\varepsilon}(\mathbb R^d)}.
\end{equation*}
\end{lem}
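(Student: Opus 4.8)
The plan is to combine the $L^2(\nu)$-isometry of $\Ha$ with the intertwining relation $\Ha(|x|^2 g)=L_B(\Ha g)$ and Stein interpolation of an analytic family, so as to estimate the weighted norm $\|\Ha(m)w^s\|_2$ \emph{directly} by a Sobolev norm of $n$, thereby eliminating the loss of $d/2$ derivatives coming from the crude Cauchy--Schwarz step in the proof of Lemma~\ref{lem:Lporr}. First I would record the three elementary facts that drive the argument. Writing $\la^2$ for $(\la_1^2,\dots,\la_d^2)$: (i) from $\Ha^{-1}=\Ha$, the fact that $\Ha$ is an $L^2(\nu)$-isometry, and the eigenrelation $L_{B,\la}E_x(\la)=|x|^2E_x(\la)$, one gets $\|\,|x|^{2j}\Ha(q)\|_2=\|L_B^j q\|_2$ for every $j\in\mathbb N_0$ and every sufficiently decaying $q$; (ii) a direct computation in the variables $\mu_r=\la_r^2$ shows that if $q(\la)=\phi(\la^2)$ then $L_B^j q=(\mathcal L^j\phi)(\la^2)$, where $\mathcal L=\sum_{r=1}^d\bigl(-4\mu_r\partial_{\mu_r}^2-(4\alpha_r+2)\partial_{\mu_r}\bigr)$ is a second order operator with polynomial coefficients; (iii) the change of variables $\mu_r=\la_r^2$ turns $d\nu_\alpha(\la)$ into $2^{-d}\prod_r\mu_r^{\alpha_r-1/2}\,d\mu$, and this is where $\alpha_r\ge1/2$ is used: on a fixed bounded set the density $\prod_r\mu_r^{\alpha_r-1/2}$ is bounded, so $\|\psi(\la^2)\|_{L^2(\nu)}\lesssim\|\psi\|_{L^2(dx)}$ whenever $\psi$ is supported in a fixed compact set.

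Fix a cutoff $\eta\in C_c^\infty(A_{1/4,4})$ with $\eta\equiv1$ on $A_{1/2,2}$, fix a large integer $M$ with $2M\ge s$, and for $z$ in the strip $0\le\operatorname{Re}z\le2M$ define operators on $L^2(\mathbb R^d)$ by
\[
U_z h=(1+|x|^2)^{z/2}\,\Ha\bigl(\eta(\la^2)\,(G_z*h)(\la^2)\bigr),
\]
where $G_z*h$ is understood as the Fourier multiplier operator with symbol $(1+|\xi|^2)^{-z/2}$. This family is linear in $h$, analytic in $z$, and of admissible growth. On the line $\operatorname{Re}z=0$ the weight $(1+|x|^2)^{iy/2}$ and the symbol $(1+|\xi|^2)^{-iy/2}$ have modulus one, so by the $L^2(\nu)$-isometry together with (iii) one gets $\|U_{iy}\|_{L^2\to L^2(\nu)}\lesssim1$ uniformly in $y$. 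On the line $\operatorname{Re}z=2M$ I would expand $(1+|x|^2)^M=\sum_{j\le M}\binom Mj|x|^{2j}$, use (i) to replace each term by $\|L_B^j(\eta(\la^2)(G_{2M+iy}*h)(\la^2))\|_{L^2(\nu)}$, then use (ii), (iii) and the standard fact that multiplication by $\eta$ is bounded on every $W^{2j}_2$ to bound this by $\lesssim\|\mathcal L^j(\eta\,(G_{2M+iy}*h))\|_{L^2(dx)}\lesssim\|G_{2M+iy}*h\|_{W^{2M}_2}=\|h\|_2$, again uniformly in $y$. Stein's interpolation theorem for analytic families then yields $\|U_\theta\|_{L^2\to L^2(\nu)}\lesssim_M1$ for all $0\le\theta\le2M$.

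To finish, given $n$ with $\supp n\subseteq A_{1/2,2}$ and $\|n\|_{W^{s+\varepsilon}_2}<\infty$, write $n=G_s*h$ with $\|h\|_2=\|n\|_{W^s_2}\le\|n\|_{W^{s+\varepsilon}_2}$. Since $\eta\equiv1$ on $\supp n$ we have $\eta(\la^2)(G_s*h)(\la^2)=n(\la^2)=m(\la)$, hence $U_s h=(1+|x|^2)^{s/2}\Ha(m)$, and the interpolated bound at $\theta=s$ gives $\|(1+|x|^2)^{s/2}\Ha(m)\|_{L^2(\nu)}\lesssim\|h\|_2\lesssim\|n\|_{W^{s+\varepsilon}_2}$; since $(1+|x|^2)^{s/2}\approx w^s$ this is exactly the asserted inequality (in fact the argument even gives $W^s_2$ in place of $W^{s+\varepsilon}_2$).

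The genuinely new point compared with Lemma~\ref{lem:Lporr} — and the main obstacle — is that one must keep the \emph{support} of the spectral symbol under control all the way through the interpolation, so that the density $\prod_r\mu_r^{\alpha_r-1/2}$ never contributes more than a constant; this is precisely why the cutoff $\eta$ is built into the definition of $U_z$ rather than applied afterward, and it is precisely why the hypothesis $\alpha_r\ge1/2$ is indispensable here. The remaining verifications (analyticity and admissible growth of $\{U_z\}$, boundedness of multiplication by $\eta$ on $W^{2j}_2$, and the identity $L_B^j[\phi(\la^2)]=(\mathcal L^j\phi)(\la^2)$) are routine and I would only indicate them briefly.
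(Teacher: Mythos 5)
Your proof is correct, but it is a genuinely different route from the paper's. Both proofs are complex-interpolation arguments that share the same "trivial" left endpoint at $\Re z=0$ (the Hankel $L^2(\nu)$-isometry plus the change of variables $\mu_r=\la_r^2$, with $\alpha_r\ge 1/2$ guaranteeing that the density $\prod_r\mu_r^{\alpha_r-1/2}$ is bounded on the compact spectral support), but they diverge at the other endpoint. The paper interpolates across the unit strip and at $\Re z=1$ invokes Lemma~\ref{lem:Lporr}, which carries a loss of $d/2+\varepsilon'$ derivatives; the auxiliary parameters $s'=(s+\varepsilon)(6+d)/(2\varepsilon)$ and $\theta=2\varepsilon/(6+d)$ are then tuned so that after interpolation the weight exponent $(s'-3-d/2)\theta$ lands exactly on $s$, at the price of the $\varepsilon$ appearing in $\|n\|_{W_2^{s+\varepsilon}}$. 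You instead interpolate across the strip $0\le\Re z\le 2M$ with $2M\ge s$, and at $\Re z=2M$ you estimate the weighted norm directly by expanding $(1+|x|^2)^M$, using the intertwining $\Ha(L_B^j q)=|x|^{2j}\Ha(q)$, and computing that $L_B[\phi(\la^2)]=(\mathcal L\phi)(\la^2)$ with $\mathcal L=\sum_r\bigl(-4\mu_r\partial_{\mu_r}^2-(4\alpha_r+2)\partial_{\mu_r}\bigr)$ a second-order operator with polynomial coefficients; on the fixed compact support this gives $\|\mathcal L^j(\eta\phi)\|_{L^2}\lesssim\|\phi\|_{W^{2j}_2}\lesssim\|G_{2M+iy}*h\|_{W^{2M}_2}=\|h\|_2$ with no loss. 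The upshot is that your argument avoids Lemma~\ref{lem:Lporr} entirely and yields the slightly sharper bound $\|\Ha(m)w^s\|_2\lesssim\|n\|_{W_2^s}$, which of course implies the stated inequality. The price is that the right-endpoint bound now hinges on the intertwining of $\Ha$ with powers of $L_B$ and on viewing $\mathcal L$ as a degenerate-elliptic operator in $\mu$, which demands the (routine but nontrivial) verifications you flag — that the spectral power $L_B^j$ applied to a compactly supported smooth symbol agrees with the iterated differential operator, and that the chain-rule computation is valid up to the boundary $\la_r=0$ (which it is, since the $\la_r^{-1}$ singularity is cancelled by the chain-rule factor $2\la_r$). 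Both routes use $\alpha_r\ge 1/2$ in the same essential place.
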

\begin{proof}
Let $h\in L^2(\mathbb R^d)$ be such that $n=h* G_{s+\varepsilon}$. Set $s'=(s+\varepsilon)(6+d)\slash 2\varepsilon$,
\newline $\theta=2\varepsilon \slash  (6+d),$ so that $(s'-3-d/2)\theta=s.$
Define $n_z$ by
 \begin{equation*}
\mathcal{F}(n_{z})(\xi)=\mathcal F h(\xi)(1+|\xi|^2)^{-s'z\slash 2}, \ \ 0\leq  \text{Re}\, z\leq 1.
\end{equation*}
 Clearly, $n_z=h* G_{s'z}$,  $\text{Re}\, z>0$, and $n=n_\theta$.
 Let $\eta_0$ be a $C_{c}^{\infty}$ function supported in $A_{1/4,4},$ equal to $1$ on $A_{1/2,2},$ and let $N_z(\la)=n_z(\la)\eta_0(\la)$. Then $\supp \, N_z\subseteq A_{1/4,4}$ and $\mathcal{F}(N_z)=\mathcal{F}(n_z)* \mathcal{F}(\eta_0).$
  Define
  $$m_z(\la)=n_z(\la_1^2,...,\la_d^2)\quad \textrm{ and } \quad M_z(\la)=N_z(\la_1^2,...,\la_d^2).$$
Since $\alpha_r\geq 1\slash 2$  for every $r=1,...,d$, we have that $M_z \in L^2$ and $\|M_z\|_{2} \leq C_{\alpha} \|N_z\|_{L^2(\mathbb R^d,dx)}.$

Let $g$ be an arbitrary $C_c^{\infty}(X)$ function with $\|g\|_{2}=1.$ Set
\begin{equation}\label{chap:Ck,sec:HorHan,eq:functionF}F(z)=\int_{X} \Ha(M_z)(x)(1+|x|)^{(s'-3-d\slash 2)z}g(x)\,d\nu(x).
\end{equation}
Then $F$ is holomorphic in the strip $\Sigma=\{z: 0<\textrm{Re}\, z<1\}$
and also continuous and bounded on its closure $\bar \Sigma$.
Using Plancherel's formula and the facts that $\supp \,N_z\subseteq A_{1/4,4}$ and $\mathcal{F}(\eta_0)\in\mathcal{S}(\mathbb{R}^d),$ for $\textrm{Re} \,z=0,$ we get
\begin{align*}
|F(z)|\leq \|\Ha(M_z)\|_{2}=\|M_z\|_{2}\leq C_{\alpha} \|N_z\|_{L^2(\mathbb{R}^d,dx)}\approx \|\mathcal{F}N_z\|_{L^2(\mathbb{R}^d,d\xi)}\leq C_{\eta_0,s',\theta} \|n\|_{W_2^{s+\varepsilon}(\mathbb R^d)}.
\end{align*}
If $\textrm{Re}\, z=1$, then  applying in addition Lemma \ref{lem:Lporr} we obtain
\begin{align*}
|F(z)|& \leq \|\Ha(M_z)w^{s'-3-d\slash 2}\|_{2}  \lesssim\|N_z\|_{W_2^{s'}(\mathbb R^d)}\\
&\lesssim C_{\eta_0}\|n_z\|_{W_2^{s'}(\mathbb R^d)}=C\| h\|_{L^2(\mathbb R^d)} = C \|n\|_{W_2^{s+\varepsilon}(\mathbb R^d)}.
\end{align*}
From the Phragm\'{e}n-Lindel\"{o}f principle we get  $|F(\theta)|\lesssim\|n\|_{W^{s+\varepsilon}_2(\mathbb R^d)}.$ Taking the supremum over all such $g$ we arrive at
$$\|\Ha(M_{\theta})w^{(s'-3-d/2)\theta}\|_{2}\lesssim \|n\|_{W_2^{s+\varepsilon}(\mathbb R^d)}.$$
Recall that $n=n_\theta=N_\theta$, so that also $m=m_\theta = M_{\theta},$ hence, in view of $(s'-3-d/2)\theta=s$, we get the desired conclusion. \end{proof}

Notice, that the assumption $\alpha_r\geq 1/2$ was crucial to get $\|M_z\|_{2} \leq C_{\alpha} \|N_z\|_{L^2(\mathbb R^d,dx)}$
in the proof of Lemma \ref{lem:linterr}. This was due to the fact that, in the case $d>2,$ $\lambda\in \supp \,N_z \subseteq A_{1/4,4}$ no longer implies that each $\lambda_r\neq0.$ In the full range of $\alpha$'s we can overcome this difficulty, but with different Sobolev condition.

\begin{lem}
\label{lem:Linterrw}
If we relax the conditions on $\alpha_r$ in Lemma \ref{lem:linterr} by assuming that $\alpha_r > -\frac{1}{2}$, then
\begin{equation*}
\|\Ha (m) w^s\|_{2} \leq C(s,\varepsilon,d,\alpha) \|n\|_{\mathcal L^\infty_{s+\varepsilon}(\mathbb R^d)}.
\end{equation*}
\end{lem}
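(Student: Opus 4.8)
The plan is to run, with two modifications, the Stein interpolation argument used to prove Lemma~\ref{lem:linterr}. The first modification is to replace the inequality $\|M_z\|_2\le C_\alpha\|N_z\|_{L^2(\mathbb R^d)}$ (which needed $\alpha_r\ge 1/2$) by the cruder bound $\|M_z\|_{L^2(X)}\le C_\alpha\|N_z\|_{L^\infty(\mathbb R^d)}$, valid for $\alpha_r>-1/2$: indeed, if $M_z(\la)=N_z(\la_1^2,\dots,\la_d^2)$ with $\supp N_z\subseteq A_{1/4,4}$, then $M_z$ is supported in the fixed box $[0,2]^d$, whose $\nu_\alpha$-mass equals $\prod_r 2^{2\alpha_r+1}/(2\alpha_r+1)<\infty$. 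The second modification is that, since the representing function $h$ with $n=h*G_{s+\varepsilon}$, $\|h\|_\infty=\|n\|_{\mathcal L^\infty_{s+\varepsilon}}$, lies only in $L^\infty$ (so $\widehat h\notin L^2$), the line $\Real z=0$ is no longer admissible as an interpolation endpoint; the strip must be shifted to $\{\eta_1\le\Real z\le 1\}$ with $\eta_1>0$ small, and the weight must be adjusted so as to have modulus $1$ on the line $\Real z=\eta_1$.

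Concretely, fix $\eta_0\in C_c^\infty(\mathbb R^d)$ equal to $1$ on $A_{1/2,2}$ and supported in $A_{1/4,4}$, choose a large parameter $s'$ (constrained below), put $\theta=(s+\varepsilon)/s'\in(0,1)$, pick $\eta_1\in(0,\theta)$ small, set $c=s/(\theta-\eta_1)$, and define $n_z=h*G_{s'z}$, $N_z=n_z\eta_0$, $M_z(\la)=N_z(\la_1^2,\dots,\la_d^2)$; then $n_\theta=n$ and, since $\eta_0\equiv 1$ on $\supp n$, also $M_\theta(\la)=m(\la)$. For $g\in C_c^\infty(X)$ with $\|g\|_{L^2(X)}=1$ define
$$F(z)=\int_X\Ha(M_z)(x)\,(1+|x|)^{c(z-\eta_1)}\,g(x)\,d\nu(x),$$
holomorphic in the open strip and continuous on its closure. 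On $\Real z=\eta_1$ the weight has modulus $1$, so $|F|\le\|M_z\|_{L^2(X)}\le C_\alpha\|N_z\|_\infty\le C_\alpha\|\eta_0\|_\infty\|h\|_\infty\,\|G_{s'z}\|_{L^1}$, and by \eqref{chap:Ck,sec:HorHan,eq:potential}, $\|G_{s'z}\|_{L^1}\le\Gamma(s'\eta_1/2)\,|\Gamma(s'z/2)|^{-1}$, which is finite with an $e^{c_1|\Ima z|}$ bound by Stirling's formula. On $\Real z=1$ the weight equals $w^{c(1-\eta_1)}$, so by Lemma~\ref{lem:Lporr}, $|F|\le\|\Ha(M_z)\,w^{c(1-\eta_1)}\|_{L^2(X)}\le C\|N_z\|_{W_2^{c(1-\eta_1)+d/2+\varepsilon_2}}$ for any $\varepsilon_2>0$.

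The main point is to bound this last $W_2$-norm by $\|h\|_\infty$. On $\Real z=1$ one has $n_z=h*G_{s'(1+iy)}$ with $\Real(s'(1+iy))=s'$, and a routine estimate from the integral representation of $G_w$ gives $G_{s'(1+iy)}\in W^{k,1}(\mathbb R^d)$ with $\|G_{s'(1+iy)}\|_{W^{k,1}}\le C_{k,d}\,|\Gamma(s'(1+iy)/2)|^{-1}$ for every integer $k$ once $s'$ is large compared to $k$ (again with controlled exponential growth in $y$). By Young's inequality $N_z=n_z\eta_0$ is then a compactly supported $W^{k,\infty}$ function with $\|N_z\|_{W^{k,\infty}}\lesssim e^{c_2|y|}\|h\|_\infty$, hence $N_z\in W_2^{k}\subseteq W_2^{c(1-\eta_1)+d/2+\varepsilon_2}$ provided $k\ge c(1-\eta_1)+d/2+\varepsilon_2$. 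An admissible integer $k$, i.e.\ one with $c(1-\eta_1)+d/2+\varepsilon_2<k<s'$, exists as soon as $s's/(s+\varepsilon)+d/2<s'-1$, that is $s'>(d/2+1)(s+\varepsilon)/\varepsilon$, which is arranged at the outset (and then $\eta_1,\varepsilon_2$ are taken small enough). To run the three lines argument despite the exponential growth of $F$ in $\Ima z$, one replaces $F$ by $e^{\delta z^2}F(z)$ for a small $\delta>0$, which decays on both boundary lines and is bounded in the strip; the three lines lemma then gives $|F(\theta)|\le C\|h\|_\infty$, and, taking the supremum over all such $g$, $\|\Ha(M_\theta)\,(1+|x|)^{c(\theta-\eta_1)}\|_{L^2(X)}\le C\|h\|_\infty$. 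Since $M_\theta=m$ and $c(\theta-\eta_1)=s$, this is the assertion $\|\Ha(m)\,w^s\|_{L^2(X)}\le C(s,\varepsilon,d,\alpha)\|n\|_{\mathcal L^\infty_{s+\varepsilon}}$. I expect the main obstacle to be exactly the one that forces all this: the merely-$L^\infty$ datum rules out the symmetric strip and the clean $\|\widehat h\|_{L^2}$ bound available in Lemma~\ref{lem:linterr}, so one must work with the shifted strip and feed the smoothness in through the $L^1$-Bessel-potential kernels at the $\Real z=1$ end, carrying the $\Gamma$-function growth along throughout.
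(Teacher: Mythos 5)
Your proof is correct, and it diverges from the paper's argument in a substantive (and arguably tidier) way. The paper also runs a Stein/Phragm\'en--Lindel\"of interpolation between a crude support-based $L^2$ bound and an application of Lemma~\ref{lem:Lporr}, and it too must keep the Bessel-potential order bounded away from zero; it does so by writing $N_z=\eta_0\,(h*G_{s'z+\varepsilon/2})$ on $0\le\Real z\le1$, whereas you shift the strip to $[\eta_1,1]$ and use $G_{s'z}$ --- these are cosmetic variants of the same device. The genuine difference is at the $\Real z=1$ endpoint. The paper asserts, without detail, that compact support of $n$ forces the representing function $h$ to lie in $L^2(\mathbb R^d)$ with $\|h\|_{L^2}\lesssim\|n\|_{\mathcal L^\infty_{s+\varepsilon}}$, and then closes exactly as in Lemma~\ref{lem:linterr} via $\|N_z\|_{W_2^{s'}}\lesssim\|h\|_{L^2}$. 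You instead feed the smoothness in through the kernel: $G_{s'(1+iy)}\in W^{k,1}(\mathbb R^d)$ once $s'>k$, so $n_z=h*G_{s'(1+iy)}\in W^{k,\infty}$ with norm $\lesssim\|h\|_\infty\,|\Gamma(s'(1+iy)/2)|^{-1}$, and the cutoff $\eta_0$ then places $N_z$ in $W_2^{k}$ using only $\|h\|_\infty=\|n\|_{\mathcal L^\infty_{s+\varepsilon}}$. This buys independence from the terse $h\in L^2$ claim (whose justification the paper leaves to the reader), at the mild cost of tracking $\Gamma$-growth in $\Ima z$ on both boundary lines, which you correctly absorb with the $e^{\delta z^2}$ damping. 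Both routes are valid; yours is the more self-contained and makes explicit the mechanism by which the $L^\infty$-Bessel-potential datum supplies the Sobolev regularity needed at the favourable end of the strip.
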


\begin{proof} We argue similarly to the proof of Lemma \ref{lem:linterr}. Indeed, write $n=h* G_{s+\varepsilon}$, where $h\in L^\infty (\mathbb R^d)$.
Since $\supp \, n\subset A_{1\slash 2, 2}$, one can prove that $h\in L^2(\mathbb R^d)$ and $\| h\|_{L^2(\mathbb R^d)}\leq C_{s,\varepsilon,d} \| n\|_{\mathcal L^\infty_{s+\varepsilon}}$.

Set $s'=(2s+\varepsilon)(6+d)\slash 2\varepsilon$, $\theta=2\varepsilon\slash (6+d)$ and
define
$$
N_z(\lambda) =\eta_0(\lambda)\, h* G_{s'z+\varepsilon \slash 2}(\lambda), \ \ \lambda\in\mathbb R^d, \ \ 0\leq \text{Re}\, z\leq 1.$$
Then for every $z\in \bar \Sigma$ the function $N_z(\lambda) $ is continuous and supported in $A_{1\slash 4,4}$. Let $M_z(\lambda)=N_z(\lambda_1^2,...,\lambda_d^2)$.  Clearly, $M_\theta=m$. Moreover, by \eqref{chap:Ck,sec:HorHan,eq:potential},
\begin{equation*}\|M_z \|_{2}\lesssim \| M_z\|_{L^\infty}\lesssim\| N_z\|_{L^\infty(\mathbb R^d)}\lesssim  \| h\|_{L^\infty(\mathbb R^d)}=\|n\|_{\mathcal L^\infty_{s+\varepsilon}(\mathbb R^d)}.\end{equation*}

We now use the new functions $M_z$ to define a bounded  holomorphic  function $F(z)$ by the formula \eqref{chap:Ck,sec:HorHan,eq:functionF}.
Obviously $|F(z)|\lesssim \| n\|_{\mathcal L^\infty_{s+\varepsilon}}$ for $\text{Re}\, z=0$. To estimate $F(z)$ for $\text{Re}\, z=1$ we utilize  Lemma \ref{lem:Lporr} and obtain
\begin{equation*} |F(z)|\leq  \|\Ha(M_z)w^{s'-3-d\slash 2}\|_{2}\lesssim\|N_z\|_{W_2^{s'}(\mathbb R^d)} \lesssim C_{\eta_0} \| h\|_{L^2(\mathbb R^d)}
\lesssim\|n\|_{\mathcal L^\infty_{s+\varepsilon}(\mathbb R^d)}.
\end{equation*}
An application of the  Phragm\'{e}n-Lindel\"{o}f principle for $z=\theta$ finishes the proof. \end{proof}
\vskip 1em

 We will also need the following off-diagonal estimate (see \cite[Lemma 2.7]{dphanhar}).
\begin{lem}
\label{lem:Lfarr}
Assume that $\alpha\in \Rdp$ and let $\delta>0.$ Then there is $C_d>0$ such that  for every $y\in X$ and $R,t>0,$ we have
$$
\int_{|x-y|>R}|\tau^{y}(f_t)(x)|\,d\nu(x)\leq C_d(rt)^{-\delta}\|f\|_{L^1(X, w^{\delta}(x)d\nu(x))}.
$$
\end{lem}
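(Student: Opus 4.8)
The plan is to reduce to the undilated case $t=1$ by means of the scaling identities in \eqref{chap:Ck,sec:HorHan,eq:dil+tra}, and then to exploit the positivity of the generalized Hankel translation together with the ``triangle inequality'' support of its kernel (the finite speed of propagation in the Bessel setting). First I would write, using \eqref{chap:Ck,sec:HorHan,eq:dil+tra},
\[
\tau^y(f_t)(x)=(\tau^{ty}f)_t(x)=t^{Q}(\tau^{ty}f)(tx),
\]
and substitute $u=tx$. Since $d\nu_{\alpha}$ is homogeneous of degree $Q$ (so that $t^{Q}\,d\nu(x)=d\nu(u)$) and the constraint $|x-y|>R$ becomes $|u-ty|>Rt$, this yields
\[
\int_{|x-y|>R}|\tau^y(f_t)(x)|\,d\nu(x)=\int_{|u-ty|>Rt}|(\tau^{ty}f)(u)|\,d\nu(u).
\]
Hence it suffices to prove that $\int_{|u-v|>S}|(\tau^{v}f)(u)|\,d\nu(u)\le S^{-\delta}\|f\|_{L^1(X,w^{\delta}d\nu)}$ for every $v\in X$ and $S>0$, and then to apply this with $v=ty$, $S=Rt$.

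For the $t=1$ estimate I would use that for $\alpha\in\Rdp$ each one-dimensional factor $\tau^{v_r}$ is a positive operator given by an integral kernel $W_{\alpha_r}(u_r,v_r,\cdot)$ which is symmetric in its first and third arguments, integrates to $1$, and --- the key point --- is supported in $[|u_r-v_r|,\,u_r+v_r]$; see \cite{Ha}, \cite{T}, \cite{W}. Composing in the variables $r=1,\dots,d$ (which is legitimate for nonnegative integrands by Tonelli) shows that $\tau^{v}$ has a positive product kernel $K_v(u,z)=\prod_{r=1}^d W_{\alpha_r}(u_r,v_r,z_r)$ with $\int_X K_v(u,z)\,d\nu(u)\le 1$, which vanishes unless $|u_r-v_r|\le z_r$ for every $r$, and therefore unless $|u-v|^2=\sum_r(u_r-v_r)^2\le\sum_r z_r^2=|z|^2$. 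Then $|\tau^{v}f(u)|\le(\tau^{v}|f|)(u)$, and Tonelli's theorem gives
\[
\int_{|u-v|>S}(\tau^{v}|f|)(u)\,d\nu(u)=\int_X|f(z)|\Big(\int_{|u-v|>S}K_v(u,z)\,d\nu(u)\Big)d\nu(z)\le\int_{|z|>S}|f(z)|\,d\nu(z),
\]
since the inner integral is $0$ when $|z|\le S$ and at most $1$ otherwise. On $\{|z|>S\}$ we have $1\le S^{-\delta}|z|^{\delta}\le S^{-\delta}w^{\delta}(z)$, so the last integral is at most $S^{-\delta}\|f\|_{L^1(X,w^{\delta}d\nu)}$; in particular one may take $C_d=1$.

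The only genuinely non-mechanical ingredient is the support property of the Hankel translation invoked in the second paragraph; everything else is the homogeneity change of variables and the trivial pointwise bound $1\le S^{-\delta}w^{\delta}$ off the ball $\{|z|\le S\}$. I expect no real obstacle beyond correctly bookkeeping the product structure $\tau^y=\tau^{y_1}\cdots\tau^{y_d}$ and the way the one-variable triangle constraints combine into $|u-v|\le|z|$.
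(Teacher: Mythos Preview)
Your argument is correct and rests on the same two pillars as the paper's proof: the homogeneity reduction to $t=1$ via \eqref{chap:Ck,sec:HorHan,eq:dil+tra}, and the support property of the one-dimensional Hankel translation kernel (the measures $dW_{x_r,y_r}$ are supported in $[|x_r-y_r|,x_r+y_r]$). The paper proceeds by a coordinate decomposition: it covers $\{|x-y|>R\}$ by the strips $\{|x_r-y_r|>R/\sqrt d\}$, then on each strip inserts the factor $z_r^{\delta}z_r^{-\delta}$, uses $z_r\ge |x_r-y_r|>R/\sqrt d$, and finishes with the $L^1$-contractivity of $\tau^y$ applied to $|f(\cdot)|\,(\cdot)_r^{\delta}$. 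You instead combine the $d$ one-dimensional support constraints into the single implication $|u-v|\le|z|$ and apply Tonelli globally; this avoids the coordinate splitting and yields the estimate with the sharper constant $C_d=1$ (the paper's argument picks up a factor of order $d\cdot d^{\delta/2}$). Both routes use the $L^1$-contractivity of $\tau^y$ in the same place --- in your write-up it appears as $\int_X K_v(u,z)\,d\nu(u)\le 1$, which is exactly the statement that $\|\tau^v g\|_1\le\|g\|_1$ for nonnegative $g$.
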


\begin{proof} By homogeneity it suffices to prove the lemma for $t=1$. Let $B$ be the left-hand side of the inequality from the lemma. If $|x-y|>R$ then there is $k\in \{1,...,d\}$ such that  $|x_r-y_r|>R/\sqrt{d}$. Hence,
$$
B\leq \sum_{r=1}^d \int_{|x_r-y_r|>R/\sqrt d}|\tau^{y}(f)(x)|\,d\nu(x)
=\sum_{r=1}^dB_r.
$$

It is known that the generalized translations can be also expressed as
\begin{equation}
\label{chap:Ck,sec:HorHan,eq:translaform}
\tau^y f(x)= \int_{|x_1-y_1|}^{x_1+y_1}...\int_{|x_d-y_d|}^{x_d+y_d}\,f(z_1,...,z_d)
\,dW_{x_1,y_1}(z_1)...dW_{x_d,y_d}(z_d),
\end{equation}
with $W_{x_r,y_r}$ being a probability measure supported in $[|x_r-y_r|,x_r+y_r]$ (see \cite{Ha}).  Thus,
\begin{equation*}
B_r=\int_{|x_r-y_r|>R/\sqrt{d}}\Big|\,\int_{|x_1-y_1|}^{x_1+y_1}...
\int_{|x_d-y_d|}^{x_d+y_d}\,f(z_1,...,z_d)\,dW_{x_1,y_1}(z_1)...dW_{x_d,y_d}(z_d)\Big|\,d\nu(x).
\end{equation*}
Then, introducing the factor $z_r^{\delta}z_r^{-\delta}$ to  the inner integral in the above formula and denoting $g(x)=|f(x)|x_r^{\delta}$, we see that
\begin{align*}B_r&\leq C R^{-\delta} \int_{X}\int_{|x_1-y_1|}^{x_1+y_1}...
\int_{|x_d-y_d|}^{x_d+y_d}g(z)\,dW_{x_1,y_1}(z_1)...dW_{x_d,y_d}(z_d)\,d\nu(x)\\
&\leq C R^{-\delta} \|\tau^y g\|_{1}\leq C R^{-\delta} \|f\|_{L^1(X, w^{\delta}d\nu)},\end{align*}
where in the last inequality we have used the fact that $\tau^y$ is a contraction on $L^1.$ \end{proof}

Let $T_t(x,y)=\tau^{y}\Ha(e^{-t|\la|^2})(x)$ be the integral kernels of the heat semigroup corresponding to $L=\sum_{r=1}^d L_r$. Clearly,
$$T_t(x,y)=T_t^{(1)}(x_1,y_1)\cdots T_t^{(d)}(x_d,y_d), $$
where $T_t^{(r)}(x_r,y_r)$ is the one-dimensional heat kernel associated with the operator $L_r$.
\begin{lem}
\label{lem:Lneaa}
There is a constant $C >0$ such that
$$
\int_{X}|T_1(x,y)-T_1(x,y')|\,d\nu(x)\leq C_{d,\alpha}|y-y'|,\qquad y,y'\in X.
$$
\end{lem}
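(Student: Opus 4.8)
The statement is a Lipschitz-type $L^1$-estimate for the one-step heat kernel $T_1(x,y)$ of $L = \sum_{r=1}^d L_r$ in its second argument. Since the kernel factorizes as $T_1(x,y) = \prod_{r=1}^d T_1^{(r)}(x_r,y_r)$, the natural approach is to reduce the multi-dimensional estimate to its one-dimensional analogue by a telescoping argument. The plan is: (1) prove the one-dimensional version, namely that $\int_{(0,\infty)} |T_1^{(r)}(x_r,y_r) - T_1^{(r)}(x_r,y_r')|\,d\nu_{\alpha_r}(x_r) \le C_{\alpha_r}|y_r - y_r'|$; and (2) combine the one-dimensional bounds by writing the difference of products as a telescoping sum and using that $\int_{(0,\infty)} T_1^{(r)}(x_r,y_r)\,d\nu_{\alpha_r}(x_r) = 1$ (the heat semigroup is conservative, i.e.\ $e^{-L_r}\mathbf{1} = \mathbf{1}$, which follows from \eqref{chap:Intro,sec:Setting,eq:contra} together with positivity, or directly from $\Ha_r(e^{-t\la_r^2})$ evaluated appropriately).

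For step (2): writing $a_r = T_1^{(r)}(x_r, y_r)$ and $b_r = T_1^{(r)}(x_r, y_r')$, one has the identity
\begin{equation*}
\prod_{r=1}^d a_r - \prod_{r=1}^d b_r = \sum_{r=1}^d \Big(\prod_{s<r} b_s\Big)(a_r - b_r)\Big(\prod_{s>r} a_s\Big).
\end{equation*}
Integrating $d\nu(x) = \prod_r d\nu_{\alpha_r}(x_r)$ and using Fubini, each term factors: the $s<r$ factors integrate to $\int T_1^{(s)}(x_s,y_s')\,d\nu_{\alpha_s}(x_s) = 1$, the $s>r$ factors integrate to $\int T_1^{(s)}(x_s,y_s)\,d\nu_{\alpha_s}(x_s) = 1$ (both using conservativeness and positivity of the heat kernels, so no absolute values are lost), and the middle factor is controlled by the one-dimensional estimate. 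This yields $\int_X |T_1(x,y) - T_1(x,y')|\,d\nu(x) \le \sum_{r=1}^d C_{\alpha_r}|y_r - y_r'| \le C_{d,\alpha}|y - y'|$, as desired.

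For step (1), the one-dimensional Bessel heat kernel $T_1^{(r)}(x_r,y_r)$ has an explicit form in terms of the modified Bessel function $I_{\alpha_r - 1/2}$; alternatively, one can use the integral representation \eqref{chap:Ck,sec:HorHan,eq:translaform} for the translation together with $T_t^{(r)}(x_r,y_r) = \tau^{y_r}\Ha_r(e^{-t\la_r^2})(x_r)$ and the known fact that $\Ha_r(e^{-t\la_r^2})(x_r) = C_{\alpha_r} t^{-(2\alpha_r+1)/2} e^{-x_r^2/4t}$. Writing $T_1^{(r)}(x_r,y_r) = \tau^{y_r} G^{(r)}(x_r)$ where $G^{(r)}(u) = C_{\alpha_r} e^{-u^2/4}$ is a fixed Schwartz-class profile, the difference $\tau^{y_r}G^{(r)} - \tau^{y_r'}G^{(r)}$ is estimated by writing it as $\int_{y_r'}^{y_r} \partial_z (\tau^z G^{(r)})\,dz$ and bounding $\|\partial_z(\tau^z G^{(r)})\|_{L^1(d\nu_{\alpha_r})}$ uniformly in $z$; here one uses that $\tau^z$ is an $L^1$-contraction (valid since $\alpha_r > 0$ by hypothesis $\alpha \in \Rdp$) and that $\partial_z \tau^z$ applied to a Schwartz function produces again an $L^1(d\nu_{\alpha_r})$ function with controlled norm — this can be made precise via the product formula \eqref{chap:Ck,sec:HorHan,eq:translaform} and differentiation under the integral, or by passing to the Hankel transform side where differentiation in $z$ corresponds to multiplication by $\la_r E_{\cdot}(\la_r)$-type factors.

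**Main obstacle.** The delicate point is step (1): making rigorous the differentiation of the Hankel translation $\tau^{z}$ in the translation parameter and the attendant uniform $L^1(d\nu_{\alpha_r})$ bound, since the translation is given by a singular kernel $dW_{x_r,y_r}$ rather than by a smooth convolution. I expect this to be handled most cleanly by exploiting the explicit Poisson-type kernel of the Bessel heat semigroup and estimating $|\partial_{y_r} T_1^{(r)}(x_r,y_r)|$ pointwise, then integrating against $d\nu_{\alpha_r}$; the resulting integral is a standard Gaussian-type computation after the substitution $x_r \mapsto x_r$, uniform in $y_r$ on compact parameter ranges and in fact uniformly bounded for all $y_r > 0$ by scaling considerations. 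The conservativeness of the heat semigroups used in step (2) is immediate from \eqref{chap:Intro,sec:Setting,eq:contra} and positivity (or from $E_0 \equiv 1$ and $\Ha(e^{-t|\la|^2})$ being an approximate identity), so that part presents no difficulty.
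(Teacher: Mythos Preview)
Your proposal is correct and follows essentially the same strategy as the paper: reduce to the one-dimensional estimate via the product structure $T_1(x,y)=\prod_r T_1^{(r)}(x_r,y_r)$ together with the conservativeness $\int_0^\infty T_1^{(r)}(x_r,y_r)\,d\nu_{\alpha_r}(x_r)=1$, then telescope. The only difference is that the paper does not carry out your step~(1) at all but simply cites the one-dimensional Lipschitz bound from \cite[Theorem~2.1]{GosSt1}; your sketch of a direct proof via $\partial_{y_r}T_1^{(r)}$ is a reasonable (if slightly laborious) substitute for that citation.
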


\begin{proof} The proof is a direct consequence of the   one-dimensional result, see \cite[Theorem 2.1]{GosSt1}, together with the equality  $$\int_0^\infty\,T_1^{(r)}(x_r,y_r)d\nu_r(x_r)=1,\qquad r=1,...,d.$$ \end{proof}

We are now ready to prove Theorem \ref{thm:HorHan}. The scheme of the proof takes ideas from \cite{Horm1}.

Assume that \eqref{chap:Ck,sec:HorHan,eq:conmulto} holds for some $\beta> Q /2.$ Fix  $\psi\in C_c^{\infty}(A_{1/2,2})$ satisfying \eqref{chap:Ck,sec:HorHan,eq:suma}. Let $$K(x,y)=\sum_{j\in\mathbb{Z}}K_j(x,y)=\sum_{j\in\mathbb{Z}}\tau^{y}\Ha(m_{j})(x),$$ where $m_j(\la)=\psi(2^{-j}(\la_1^2,...,\la_d^2))m(\la)=(\psi(2^{-j}\cdot)n(\cdot))(\la_1^2,...,\la_d^2).$ To prove that $\mathcal T_m$ is indeed a Calder\'{o}n-Zygmund operator associated with the kernel $K(x,y)$ we need to verify that it satisfies the H\"{o}rmander integral condition, i.e.\,
\begin{equation}
\label{chap:Ck,sec:HorHan,eq:horm}
\sup_{y,y'\in X}\int_{|x-y|>2|y-y'|}|K(x,y)-K(x,y')|\,d\nu(x)<\infty,
\end{equation}
and the association condition
\begin{equation}
\label{chap:Ck,sec:HorHan,eq:asoc}
\mathcal T_m f(x)=\int_{X} K(x,y)f(y)\,d\nu(y)
\end{equation}
for compactly supported $f\in L^{\infty}(X)$ such that $x\notin \supp \, f$.

We start by proving \eqref{chap:Ck,sec:HorHan,eq:horm}.
It suffices to show that
\begin{equation*} D_j(y,y')=\int_{|x-y|>2|y-y'|}|K_j(x,y)-K_j(x,y')|\,d\nu(x)\leq C_j:=C(j,d,\alpha),\end{equation*}
with $\sum_{j\in\mathbb{Z}}C_j<\infty.$

Let $R=2|y-y'|$ and assume first $j>- 2\log_2 R.$
 Define
$$\tm_j(\la)=m_j(2^{j/2} \la)=(\psi(\cdot)n(2^{j}\cdot))(\la_1^2,...,\la_d^2).$$
Note that $\supp \,(\psi(\cdot)n(2^{j}\cdot))\subseteq A_{1/2,2}.$ From \eqref{chap:Ck,sec:HorHan,eq:dil+tra} we see that $$\Ha(m_{j})(x)=2^{jQ/2}\Ha (\tm_j)(2^{j/2} x)=(\Ha(\tm_j))_{2^{j/2}}(x).$$ From Schwarz's inequality, Lemma \ref{lem:linterr}, and the assumption \eqref{chap:Ck,sec:HorHan,eq:conmulto} we get
\begin{align}\nonumber\int_{X} |\Ha(\tm_j)|w^{\delta}\,d\nu &\leq \bigg(\int_{X}|\Ha(\tm_j)|^2 w^{Q+4\delta}\,d\nu\bigg)^{1/2}\bigg(\int_{X}w^{-Q-2\delta}\,d\nu\bigg)^{1/2}\\
\label{chap:Ck,sec:HorHan,eq:weight}&\lesssim C_{\delta}\|\psi(\cdot) n(2^{j}\cdot)\|_{W^{\beta}_2(\mathbb R^d)}\lesssim C_{\delta},
\end{align}
for sufficiently small $\delta>0.$ Consequently, from Lemma \ref{lem:Lfarr} it follows that
\begin{align*}
D_j(y,y')&\lesssim \int_{|x-y|>R} |\tau^y (\Ha(\tm_j))_{2^{j/2}}(x)|\,d\nu(x)+\int_{|x-y'|>R/2} |\tau^{y'} (\Ha(\tm_j))_{2^{j/2}}(x)|\,d\nu(x)\\
&\lesssim (2^{j/2}R)^{-\delta}\int_{X} |\Ha(\tm_j)|w^{\delta}\,d\nu\lesssim C_{\delta}(2^{j/2}R)^{-\delta},
\end{align*}
so that
$\sum_{j> -2\log_2 R} D_j(y,y')\leq C(\beta,d,\alpha).$

\newcommand{\mmm}{m^{[0]}}
\newcommand{\mmmm}{m^{[\infty]}}

Assume now $j\leq- 2\log_2 R.$ Decompose $\tm_j(\la)=\tilde{\theta}_j(\la)  e^{-|\la|^2},$ so that we have $\tilde{\theta}_j(\la)=(\psi(\cdot)\exp(\cdot_1+...+\cdot_d)n(2^{j}\cdot))(\la_1^2,...,\la_d^2).$ Clearly, $\psi(\la)e^{\la_1+...+\la_d}$ is a $C_c^{\infty}$ function supported in $A_{1/2,2}.$ Denote $\tilde{\Theta}_j(x)=\Ha(\tilde{\theta}_j)(x).$ Since $\Ha(m_j)=(\Ha(\tm_j))_{2^{j/2}}$ and $\Ha(\tm_j)=\tilde{\Theta}_j \natural \Ha(e^{-|\la|^2})$ (which is a consequence of \eqref{chap:Ck,sec:HorHan,eq:convo}), by using \eqref{chap:Ck,sec:HorHan,eq:dil+tra}, we get
\begin{align*}
K_j(x,y)-K_j(x,y')&=(\tau^{2^{j/2} y} \Ha(\tm_j))_{2^{j/2}}(x)-(\tau^{2^{j/2} y'} \Ha(\tm_j))_{2^{j/2}}(x)\\
&=(\tilde{\Theta}_j \natural (T_1(\cdot,2^{j/2}y)-T_1(\cdot,2^{j/2}y')))_{2^{j/2}}(x).
\end{align*}
Proving \eqref{chap:Ck,sec:HorHan,eq:weight} with $\tm_j$ replaced by
$\tilde{\theta}_j$ and $\delta=0$ poses no difficulty. Hence, from Lemma \ref{lem:Lneaa} and \eqref{chap:Ck,sec:HorHan,eq:young} we obtain
$$D_j(y,y')\leq \|\tilde{\Theta}_j\|_{1}\|T_1(\cdot,2^{j/2}y)-T_1(\cdot,2^{j/2}y')\|_{1}\lesssim 2^{j/2}|y-y'|.$$
Consequently, $\sum_{j\leq- 2\log_2 R}D_j(y,y')\leq C(\beta,d,\alpha)$ and the proof of \eqref{chap:Ck,sec:HorHan,eq:horm} is finished.\\

 Now we turn to the proof of \eqref{chap:Ck,sec:HorHan,eq:asoc}. Let $f\in L^\infty$ be a compactly supported function and $x\not\in \supp \, f$. Then, there are $R_2>R_1>0$ such that
 $$\int_{X} K_j(x,y)f(y)\,d\nu(y)=\int_{R_2>|x-y|>R_1}K_j(x,y)f(y)\,d\nu(y).$$

 Since $\tau^y (\Ha(m_j))(x)=\tau^x (\Ha(m_j))(y),$ proceeding as in the first part of the proof of \eqref{chap:Ck,sec:HorHan,eq:horm} we can easily check that $\sum_{j>- 2\log_2 R}|K_j(x,y)|$ is integrable over $\{y\in X\,:\,|x-y|>R\}.$
 Hence, using the dominated convergence theorem (recall that $f\in L^{\infty}$),  \begin{equation}\label{chap:Ck,sec:HorHan,eq:kern}\sum_{j>-2\log_2R}\,\int_{X} K_j(x,y)f(y)\,d\nu(y)=\int_{X} \sum_{j>-2\log_2R}\,K_j(x,y)f(y)\,d\nu(y).
 \end{equation}
 From \eqref{chap:Ck,sec:HorHan,eq:convo} it follows that
 \begin{equation}\label{chap:Ck,sec:HorHan,eq:multi}\mathcal T_{m_j}f(x)=H(m_j)\natural f(x)=\int_{X} K_j(x,y)f(y)\,d\nu(y),\end{equation}
 with $\mathcal T_{m_j}$ defined as in \eqref{chap:Ck,sec:HorHan,eq:spHan}.
 Since the Hankel transform is an $L^2$-isometry, from the dominated convergence theorem we conclude  that $\sum_{j>-2\log_2R} \mathcal T_{m_j}f=\mathcal T_{\mmmm} f,$ where the sum converges in $L^2$ and $\mmmm=\sum_{j>-2\log_2R}m_j.$ Hence, combining \eqref{chap:Ck,sec:HorHan,eq:kern} and \eqref{chap:Ck,sec:HorHan,eq:multi}, we obtain
 $$\mathcal T_{\mmmm}f(x)=\int_{X} \sum_{j>-2\log_2R}\,K_j(x,y)f(y)\,d\nu(y),$$
for $\textrm{a.e.}\,\,x$ outside
$\supp \, f.$

The function $\mmm=m-\mmmm$ is bounded and compactly supported. Consequently, from \eqref{chap:Ck,sec:HorHan,eq:convo} we get $\mathcal T_{\mmm}f(x)=\Ha(\mmm) \natural f (x).$ Moreover, we see that $\sum_{j\leq -2 \log_2 R}|m_j(\la)|\leq C |m(\la)|\leq C$. Hence, from \eqref{chap:Ck,sec:HorHan,eq:translaform} we conclude $$\tau^{y}(\Ha\mmm)(x)=\sum_{j\leq -2 \log_2 R}\tau^y(\Ha m_j)(x),$$
 so that
$$\mathcal T_{\mmm}f(x)=\int_{X} \sum_{j\leq-2\log_2R}\,K_j(x,y)f(y)\,d\nu(y).$$
Then $\mathcal T_{m}f(x)=\mathcal T_{\mmm}f(x)+\mathcal T_{\mmmm}f(x)=\int_{X} \,K(x,y)f(y)\,d\nu(y),$ as desired. The proof of Theorem \ref{thm:HorHan} is thus finished.

Let us finally comment that the proof of Theorem \ref{thm:HorHanw} goes in the same way as that of Theorem \ref{thm:HorHan}. The only difference is that we use Lemma \ref{lem:Linterrw} instead of Lemma \ref{lem:linterr}.

\section[Marcinkiewicz type multipliers for the Dunkl transform]{Marcinkiewicz type multipliers for the Dunkl transform}

        \label{chap:Ck,sec:MarDun}
We prove a Marcinkiewicz type multiplier theorem for the Dunkl transform in the setting of the reflection group $G$ isomorphic to $\mathbb{Z}^d_2.$ The results presented in this section are taken from \cite[Section 3]{jaDun}. As in the previous section we heavily rely here on the concrete translation structure associated with the Dunkl transform introduced below.

Consider the system of Dunkl operators, $T=(T_1,\ldots,T_d),$
\begin{equation*}
T_rf(x)=T_r^{\alpha_r}f(x)=\partial_{x_r} f(x)+\alpha_r \frac{f(x)-f(\sigma_rx)}{x_r},\qquad f\in C^1(\mathbb{R}^d),
\end{equation*}
associated with the reflection group $G$ isomorphic to $\mathbb{Z}^d_2.$ Here $\sigma_{r}x$ is the reflection of $x$ in the hyperplane orthogonal to $e_r$ (the $r$-th coordinate vector in $\mathbb{R}^d$), while the parameter $\alpha=(\alpha_1,\ldots,\alpha_d)\in[0,\infty)^d$. It is known, and in fact easily verifiable in our case, that $T_r,$ $r=1,\ldots,d,$ commute. Moreover, they are anti-symmetric in $L^2(\mathbb{R}^d,\nu_{\alpha}).$ The measure $\nu_{\alpha}$ is the one from the previous section, however this time regarded on $\mathbb{R}^d,$ specifically, $\nu_{\alpha}=\nu_{\alpha_1}\otimes\cdots\otimes\nu_{\alpha_d},$ with $d\nu_{\alpha_r}(x_r)=|x_r|^{2\alpha_r}dx_r,$ $r=1,\ldots,d.$

Throughout this section we write briefly $L^p,$ $\|\cdot\|_p,$ and $\|\cdot\|_{p\to p}$ instead of $L^p(\mathbb{R}^d,\nu_{\alpha}),$ $\|\cdot\|_{L^p(\mathbb{R}^d,\nu_{\alpha})},$ and $\|\cdot\|_{L^p(\mathbb{R}^d,\nu_{\alpha})\to L^p(\mathbb{R}^d,\nu_{\alpha})},$ $1\leq p\leq\infty,$ respectively. By $\lesssim$ we indicate that the constant in the inequality is independent of significant quantities, however, not necessarily of $\alpha$ or $d.$

The Dunkl transform is defined by
\begin{equation*}D_{\alpha} f(x)= \frac1{c_{\alpha}}\int_{\mathbb{R}^d}E_{\alpha}(-ix,y)f(y)\,d\nu_{\alpha}(y)\,dy,\qquad f\in\mathcal{S}(\mathbb{R}^d),\end{equation*}
where $$c_{\alpha}=\int_{\mathbb{R}^d}e^{-|x|^2/2}\,d\nu_{\alpha}(x)=2^{-|\alpha|-d/2}\prod_{r=1}^{d}\frac{1}{\Gamma(\alpha_r+1/2)},$$ while $E_{\alpha}(z,w),$ $z=(z_1,\ldots,z_d),$ $w=(w_1,\ldots,w_d),$ $z,w\in \mathbb{C}^d,$ is the so called Dunkl kernel. Note that in the general setting of Dunkl operators $E_{\alpha}$ does not have an explicit expression. In our case the Dunkl kernel has a product form $E_{\alpha}(z,w)=\prod_{r=1}^d E_{\alpha_r}(z_r,w_r),$ from which it follows that the Dunkl transform is a composition of one-dimensional Dunkl transforms acting on separate variables, i.e. $D_{\alpha}=D_{\alpha_1}\cdots D_{\alpha_d}.$ Moreover, each one-dimensional Dunkl kernel $E_{\alpha_r}$ does have an explicit expression in terms of Bessel functions, see \cite[p.\ 1604]{NoSt1}. The operator $D_{\alpha}$ extends to an $L^2(\nu_{\alpha})$ isometry such that \begin{equation}\label{chap:Ck,sec:MarDun,eq:invdun} D_{\alpha}^{-1}f=D_{\alpha}(f^{\vee}),\qquad f\in \mathcal{S}(\mathbb{R}^d),\end{equation}
where $f^{\vee}(x)=f(-x).$ We also have $D_{\alpha}(f(\sigma\cdot))(\cdot)=(D_{\alpha}f)(\sigma\cdot),$ $\sigma\in G,$ i.e., the Dunkl transform commutes with the action of $G.$

In addition, $D_{\alpha}$ diagonalizes simultaneously the Dunkl operators $T_r,$ i.e. \begin{equation*}T_r D_{\alpha} f=-D_{\alpha}(i y_r f),\qquad \mathcal D_\alpha T_r f =i x_r D_{\alpha}f.\end{equation*} Therefore multivariate spectral multipliers for the Dunkl transform
\begin{equation}
\label{chap:Ck,sec:MarDun,eq:dunklmult}
\T_m^D f=\T_m f=D_\alpha^{-1} m D_\alpha f
\end{equation}
 coincide with the multivariate spectral multipliers for the system $(-iT_1,\ldots,-iT_d).$ Note that the particular choices of $m,$ $m=m_r(x)=x_r/|x|,$ $r=1,\ldots,d,$ lead to the Riesz-Dunkl transforms \begin{equation}\label{chap:Ck,sec:MarDun,eq:Riesz-Dunkl}R_r^{D}f=D_\alpha^{-1} m_r D_\alpha f.\end{equation}

In this section the smoothness requirement will be stated in terms of appropriate Sobolev space norms. For a vector $s=(s_1,\ldots,s_d)\in\mathbb{R}^d$ define the $L^2$ Sobolev (Euclidean) space with dominating mixed smoothness of order $s,$ by
\begin{equation*}W_{s}=\{f\in L^2(\mathbb{R}^d, dx)\colon \mathcal{F}f\in L^2(\mathbb{R}^d, w_{2s}(y)\,dy)\},\end{equation*}
with the norm
\begin{equation*}\|f\|_{W_s}^2=\int_{\mathbb{R}^d}|\mathcal{F}f (y) w_s(y)|^2\, dy,\end{equation*}
where $\mathcal{F}$ is the Fourier transform and $w_s(y)=\prod_{r=1}^d(1+y_r^2)^{s_r/2}.$ The main theorem we prove in this section is Theorem \ref{thm:dunklmarmult}. In the statement of this theorem, by $\psi$ we denote a non-zero $C_c^{\infty}([-2,-1/2]\cup[1/2,2])$ function such that $\sum_{l\in\mathbb{Z}}|\psi(2^{-l}\xi)|^2=1,$ $\xi\neq0.$ We also set $\Psi(y_1,\ldots,y_d)=\psi(y_1)\cdots\psi(y_d).$
\begin{thm}
\label{thm:dunklmarmult}
Let $m$ be a bounded measurable function on $\mathbb{R}^d.$ If for some $s>\alpha+\bf{\frac{1}{2}}$
\begin{equation}
\label{chap:Ck,sec:MarDun,eq:conem}
\|m\|_{W_{s,loc}}\equiv\sup_{j\in\mathbb{Z}^d} \|\Psi\, m(2^{j_1}\cdot,\ldots,2^{j_d}\cdot)\|_{W_s}<\infty,
\end{equation}
then the multiplier operator $\T_m^{D}$ defined by \eqref{chap:Ck,sec:MarDun,eq:dunklmult} is bounded on all the $L^p$ spaces, $1<p<\infty.$
\end{thm}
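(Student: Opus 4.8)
The plan is to combine a Littlewood--Paley decomposition adapted to the product structure of the Dunkl transform with a Rademacher randomization, thereby reducing the theorem to a kernel estimate of Calder\'on--Zygmund type, in the spirit of the argument used for Theorem~\ref{thm:HorHan} but now organized in a multi-parameter fashion. First I would fix the resolution $\sum_{l\in\mathbb{Z}}|\psi(2^{-l}\xi)|^2=1$ and set $\Psi_j(\xi)=\Psi(2^{-j_1}\xi_1,\ldots,2^{-j_d}\xi_d)$ for $j\in\mathbb{Z}^d$, so that $\sum_{j}|\Psi_j|^2=1$ and each $\Psi_j$ is supported where $|\xi_r|\approx 2^{j_r}$. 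Since multipliers compose ($\mathcal{T}_a\mathcal{T}_b=\mathcal{T}_{ab}$) and $D_\alpha=D_{\alpha_1}\cdots D_{\alpha_d}$ acts on separate variables, one has $\mathcal{T}_m f=\sum_{j}\mathcal{T}_{\overline{\Psi_j}}\big(\mathcal{T}_{\Psi_j m}f\big)$. The one-dimensional Dunkl transform enjoys the usual Littlewood--Paley inequalities, and by iterating these over the $d$ variables (Fubini plus the vector-valued extension of one-parameter Littlewood--Paley theory) one obtains the multi-parameter square-function bounds
\begin{equation*}
\Big\|\Big(\sum_{j}|\mathcal{T}_{\Psi_j}g|^2\Big)^{1/2}\Big\|_p\lesssim\|g\|_p
\qquad\text{and}\qquad
\Big\|\sum_{j}\mathcal{T}_{\overline{\Psi_j}}h_j\Big\|_p\lesssim\Big\|\Big(\sum_{j}|h_j|^2\Big)^{1/2}\Big\|_p,
\end{equation*}
for $1<p<\infty$. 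Applying the second inequality with $h_j=\mathcal{T}_{\Psi_j m}f$ reduces the theorem to the square-function estimate
\begin{equation*}
\Big\|\Big(\sum_{j}|\mathcal{T}_{\Psi_j m}f|^2\Big)^{1/2}\Big\|_p\lesssim\|m\|_{W_{s,loc}}\,\|f\|_p .
\end{equation*}

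Next I would linearize via Khintchine's inequality: it suffices to bound, uniformly over sign sequences $\eps=(\eps_j)_{j\in\mathbb{Z}^d}\in\{\pm1\}^{\mathbb{Z}^d}$, the multiplier operator $\mathcal{T}_{m^\eps}$ with $m^\eps=\sum_{j}\eps_j\,\Psi_j m$, showing $\|\mathcal{T}_{m^\eps}\|_{p\to p}\lesssim\|m\|_{W_{s,loc}}$ for $1<p<\infty$. Since $\mathcal{D}_\alpha$ is an $L^2(\nu_\alpha)$-isometry and $\|m^\eps\|_\infty\lesssim\|m\|_\infty\lesssim\|m\|_{W_{s,loc}}$ (the last step because $W_{s_r}\hookrightarrow L^\infty$ on $\mathbb{R}$, as $s_r>\alpha_r+1/2\ge 1/2$), the $L^2$ bound $\|\mathcal{T}_{m^\eps}\|_{2\to2}\lesssim\|m\|_{W_{s,loc}}$ is immediate; the work is to prove that $\mathcal{T}_{m^\eps}$ is a Calder\'on--Zygmund operator on the product space of homogeneous type $(\mathbb{R}^d,\nu_\alpha,|\cdot|)$ with constants controlled by $\|m\|_{W_{s,loc}}$, uniformly in $\eps$. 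Writing $\mathcal{T}_{m^\eps}$ by means of the Dunkl translation $\tau^y=\tau^{y_1}\cdots\tau^{y_d}$, its kernel is $K^\eps(x,y)=\sum_{j}\eps_j\,\tau^y\big(D_\alpha^{-1}(\Psi_j m)\big)(x)$.

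The heart is the kernel analysis. Using the dilation covariance of $D_\alpha$, one rescales the $r$-th variable by $2^{j_r}$, so that the $j$-th piece is governed by $D_\alpha^{-1}\big(\Psi\cdot m(2^{j_1}\cdot,\ldots,2^{j_d}\cdot)\big)$, whose $W_s$-norm is at most $\|m\|_{W_{s,loc}}$ by hypothesis \eqref{chap:Ck,sec:MarDun,eq:conem}. For such a symbol I would prove a weighted $L^1$ estimate for the kernel --- the Dunkl analogue of Lemmas~\ref{lem:Lporr} and \ref{lem:linterr} --- by writing the symbol as a Gaussian times a factor expanded through the Euclidean Fourier inversion formula, using the explicit (Gaussian-type) form of the Dunkl transform of $e^{-z|\cdot|^2}$, and then a Sobolev embedding; the threshold $s>\alpha+\mathbf{1/2}$ is precisely what makes $w^\delta\,\tau^y D_\alpha^{-1}(\cdot)$ integrable against $\nu_\alpha$ with a little room to spare. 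The off-diagonal (product H\"ormander/Journ\'e) decay would follow from the explicit structure of the one-dimensional Dunkl translation --- whose even part is supported, as in the Hankel case, in $[|x_r-y_r|,x_r+y_r]$, its odd part being controlled similarly --- mirroring Lemmas~\ref{lem:Lfarr} and \ref{lem:Lneaa}; interpolating these weighted bounds against the $L^2$ orthogonality (up to bounded overlap) of the pieces $\Psi_j m$ yields geometric decay in $|j|$, making the sum over $j\in\mathbb{Z}^d$ absolutely convergent. Product (Journ\'e-type) Calder\'on--Zygmund theory on spaces of homogeneous type then gives $\|\mathcal{T}_{m^\eps}\|_{p\to p}\lesssim\|m\|_{W_{s,loc}}$ for all $1<p<\infty$, completing the proof; alternatively, one could reach the same conclusion by iterating the one-dimensional Dunkl multiplier theorem with vector-valued targets.

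The \emph{main obstacle} is the kernel analysis for $\mathcal{T}_{m^\eps}$. Unlike the Hankel translation used in Section~\ref{chap:Ck,sec:HorHan}, the Dunkl translation is not positivity preserving, so the off-diagonal estimates must be extracted from the precise even/odd description of the one-dimensional Dunkl translation rather than from a convolution-with-a-probability-measure structure. Moreover, because the hypothesis is of Marcinkiewicz (dominating-mixed-smoothness) type, one is forced into a genuinely multi-parameter argument: the product version of the H\"ormander integral condition and geometric decay in each of the $d$ coordinate directions of $j$, rather than the single-parameter bookkeeping that sufficed for Theorem~\ref{thm:HorHan}.
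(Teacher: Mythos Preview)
Your Littlewood--Paley reduction and the dual square-function bounds are exactly the starting point of the paper's proof (Theorem~\ref{thm:litpalmar}). But from there the paper takes a quite different --- and more direct --- route than your product Calder\'on--Zygmund/Journ\'e scheme.

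Instead of linearizing via Khintchine and attempting a product kernel analysis, the paper proves the square-function estimate $\|(\sum_j|S_j\mathcal{T}_m f|^2)^{1/2}\|_p\lesssim\|m\|_{W_{s,loc}}\|f\|_p$ by a \emph{pointwise weighted estimate} followed by domination by a \emph{product maximal function}. The key step (Lemma~\ref{lem:dilest}) is that for a symbol $m_0$ supported in $([-2,-1/2]\cup[1/2,2])^d$ and the dilated multiplier $T_\lambda$ one has
\[
|T_\lambda f(x)|^2\le C\|m_0\|_{W_s}^2\int_{\mathbb{R}^d}\sum_{\varepsilon\in\{0,1\}^d}|f_\varepsilon(y)|^2\,\tau^y\delta_\lambda(w_{-2s})(x)\,d\nu_\alpha(y),
\]
where $f_\varepsilon$ are the $\varepsilon$-symmetric parts of $f$. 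This comes from writing $T_\lambda$ as a Dunkl convolution, inserting the weight $w_s\cdot w_{-s}$, applying Cauchy--Schwarz, and then two ingredients: the Sobolev comparison $\|w_s D_\alpha^{-1}m_0\|_2\lesssim\|m_0\|_{W_s}$ (Lemma~\ref{lem:compadunsob}, proved via the Dunkl--Hankel link of Lemma~\ref{lem:dunhan1}), and the pointwise domination $|\tau^y f_\varepsilon|^2\lesssim\tau_\varepsilon^y(|f_\varepsilon|^2)$ (Lemma~\ref{lem:probacom}), which is exactly how the non-positivity of the Dunkl translation is neutralized. One then fattens the projections ($\tilde S_j$ with $S_j\tilde S_j=S_j$), dualizes for $p>2$ by pairing $\sum_j|S_j\mathcal{T}_m\tilde S_j f|^2$ against $h\in L^{(p/2)'}$, applies the pointwise bound, uses Fubini, and controls the resulting $\delta_\lambda(w_{-2s})\star|h|$ by the product maximal function $M_P|h|$ (Lemma~\ref{lem:maxboundlem}; here the threshold $s>\alpha+\mathbf{1/2}$ is exactly what makes $w_{-2s}\in L^1(\nu_\alpha)$). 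H\"older and the $L^{(p/2)'}$-boundedness of $M_P$ close the case $p>2$; the range $1<p<2$ follows by duality.

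Your route is not wrong in spirit, but the step you flag as the main obstacle really is one: product Calder\'on--Zygmund theory of Journ\'e type on $(\mathbb{R}^d,\nu_\alpha)$ with Dunkl translations is not available off the shelf, and verifying the mixed kernel smoothness conditions for $K^\varepsilon$ \emph{uniformly in the signs} is precisely the part you leave at the level of an outline. The paper's approach sidesteps all of this --- it never writes down a singular kernel and never invokes product CZ theory; the entire argument lives at the level of $L^2$-weighted pointwise bounds and a scalar maximal operator. What the paper's method buys is simplicity and a clean sharp threshold (recovering the classical Marcinkiewicz theorem with $s>\mathbf{1/2}$ when $\alpha=\mathbf{0}$); what your method would buy, if completed, is a closer parallel to the one-parameter Hankel argument of Section~\ref{chap:Ck,sec:HorHan}, at the price of substantially heavier multi-parameter machinery.
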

\begin{remark1} The theorem is true if we replace $\Psi$ by any non-zero function $\Phi\in C^{\infty}(\mathbb{R}^d)$ supported in $([-2,-1/2]\cup[1/2,2])^d$ and such that \begin{equation*}\sum_{j\in\mathbb{Z}^d}|\Phi(2^{-j_1}\xi_1,\ldots,2^{-j_d}\xi_d)|^2=C,\,\qquad \xi\neq 0.\end{equation*} We consider $\Psi$ having a tensor product structure because the proofs are simpler in this case.\end{remark1}
\begin{remark2} It is perhaps worth mentioning that if $\alpha={\bf 0},$ then Theorem \ref{thm:dunklmarmult} coincides with the Marcinkiewicz multiplier theorem for the Fourier transform, with the sharp smoothness threshold.
\end{remark2}

In \cite[Theorem 3.1]{Sifi1} the authors obtained a one-dimensional version of Theorem \ref{thm:dunklmarmult}, assuming $m$ satisfies certain H\"ormander's type condition of integer order greater than $\alpha+1/2.$ It is not hard to verify that, for $d=1,$ their condition implies condition \eqref{chap:Ck,sec:MarDun,eq:conem} of our Theorem \ref{thm:dunklmarmult}. Thus, even in dimension $d=1$ our result is a generalization of \cite[Theorem 3.1]{Sifi1}. In \cite[Theorem 3]{Sol1} the author obtained a one-dimensional version of the H\"ormander multiplier theorem for the Dunkl transform assuming Sobolev regularity of order $\alpha+1/2.$ The difference between this result and Theorem \ref{thm:dunklmarmult} in $d=1$ is that we allow the multiplier function $m$ to be more irregular near zero. Perhaps the simplest example of a function $m$ which fails to satisfy the assumption of \cite[Theorem 3]{Sol1} but (trivially) satisfies our assumption \eqref{chap:Ck,sec:MarDun,eq:conem} is $m(x)=\textrm{sgn}(x).$ We should also mention that some results for the (general) multipliers for the Dunkl transform were proved in \cite[Theorem 3.1]{Dai1}. These however are obtained for radial multiplier functions $m,$ whereas in our case $m$ does not need to be radial.

To prove Theorem \ref{thm:dunklmarmult} we need several lemmata. The lemma below is a variant of \cite[Lemma 1.1]{NoSt1}, relating the Dunkl transform to the (modified) Hankel transform.
\begin{lem}[cf. {\cite[Lemma 1.1]{NoSt1}}]
\label{lem:dunhan1}
Let $d=1$ and let $\Hla$ be the (modified) Hankel transform as defined by \eqref{chap:Ck,sec:HorHan,eq:HanTrandef} in the previous section. Then
\begin{equation}
\label{chap:Ck,sec:MarDun,eq:dunhan1}
D_{\alpha}f(x)=\Hla(f_e)(|x|)-ix\Hlap \left(\frac{f_o}{y}\right)(|x|), \qquad f\in \mathcal{S}(\mathbb{R}),
\end{equation}
where $f_e$ is the even part, while $f_o$ is the odd part of $f.$
\end{lem}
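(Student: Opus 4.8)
The plan is to deduce the identity from the explicit form of the one--dimensional Dunkl kernel, split into its even and odd parts in the second variable. Since both sides of \eqref{chap:Ck,sec:MarDun,eq:dunhan1} depend linearly on $f$, and the right--hand side sees $f$ only through $f_e$ (first term) and $f_o$ (second term), it suffices to verify the formula when $f$ is even and when $f$ is odd; equivalently, one writes $E_\alpha(-ix,y)=K_e(x,y)+K_o(x,y)$ with $K_e$ even and $K_o$ odd in $y$, and then pairs $K_e$ against the $\Hla$--term and $K_o$ against the $\Hlap$--term.

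First I would recall, from \cite[p.\ 1604]{NoSt1}, the explicit expression of the Dunkl kernel of $\mathbb{Z}_2$ with multiplicity $\alpha$ in terms of normalized Bessel functions: writing $\mathbf{j}_\nu(t)=2^\nu\Gamma(\nu+1)\,t^{-\nu}J_\nu(t)$ for the even entire function with $\mathbf{j}_\nu(0)=1$,
\[
E_\alpha(z,w)=\mathbf{j}_{\alpha-1/2}(izw)+\frac{zw}{2\alpha+1}\,\mathbf{j}_{\alpha+1/2}(izw).
\]
Setting $z=-ix$ gives $E_\alpha(-ix,y)=\mathbf{j}_{\alpha-1/2}(xy)-\frac{ixy}{2\alpha+1}\,\mathbf{j}_{\alpha+1/2}(xy)$, whose first summand is even in $y$ and whose second summand is odd in $y$. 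The key remark is that $\mathbf{j}_{\alpha-1/2}(xy)=2^{\alpha-1/2}\Gamma(\alpha+1/2)\,E_{|x|}(|y|)$, where $E_x(\lambda)=(x\lambda)^{-\alpha+1/2}J_{\alpha-1/2}(x\lambda)$ is the Hankel kernel of parameter $\alpha$ from the previous section, while $\mathbf{j}_{\alpha+1/2}(xy)=2^{\alpha+1/2}\Gamma(\alpha+3/2)$ times the Hankel kernel associated with the parameter $\alpha+1$ (whose Bessel order is $(\alpha+1)-1/2=\alpha+1/2$).

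Then I would insert this into $D_\alpha f(x)=c_\alpha^{-1}\int_{\mathbb{R}}E_\alpha(-ix,y)f(y)\,d\nu_\alpha(y)$, decompose $f=f_e+f_o$, and use parity: the even part of the kernel annihilates $f_o$, the odd part annihilates $f_e$, and the two surviving integrands are even in $y$, so each integral over $\mathbb{R}$ is twice the one over $\mathbb{R}_+$. In the odd term the extra factor $y$ produced by $xy$ is absorbed into the weight via $\lambda^{2\alpha}\,d\lambda=\lambda^{-1}\,d\nu_{\alpha+1}(\lambda)$, so that $\int_0^\infty y\,\mathbf{j}_{\alpha+1/2}(xy)f_o(y)\,y^{2\alpha}\,dy$ becomes a multiple of $\Hlap(f_o/y)(|x|)$. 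It then remains to check that all the accumulated constants collapse to the coefficients $1$ and $-ix$ appearing in \eqref{chap:Ck,sec:MarDun,eq:dunhan1}: here one uses $c_\alpha=\int_{\mathbb{R}}e^{-x^2/2}\,d\nu_\alpha(x)=2^{\alpha+1/2}\Gamma(\alpha+1/2)$, the Bessel normalizations above, the factor $2$ from symmetrizing to $\mathbb{R}_+$, the factor $(2\alpha+1)^{-1}$ in the kernel, and the identity $\Gamma(\alpha+3/2)=(\alpha+1/2)\Gamma(\alpha+1/2)$; one finds that in both the even and the odd term the net constant is exactly $c_\alpha/c_\alpha=1$.

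This constant bookkeeping is the only delicate point; everything else is parity and a relabelling of the power weight. It is also worth noting at the outset that $f_o(0)=0$ for $f\in\mathcal{S}(\mathbb{R})$, so that $f_o/y$ extends to an even Schwartz function and $\Hlap(f_o/y)$ is well defined, and that all the integrals converge absolutely because $E_x(\lambda)$ stays bounded for $\alpha\ge 0$. With these remarks in place, the statement is, as indicated, a mild variant of \cite[Lemma 1.1]{NoSt1}, obtained by keeping $D_\alpha$ acting on all of $\mathbb{R}$ instead of on even and odd functions separately.
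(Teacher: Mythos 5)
Your proof is correct, and since the paper states this lemma with a bare citation to \cite[Lemma 1.1]{NoSt1} rather than its own argument, the comparison is with the cited reference: your route (explicit rank-one Dunkl kernel, split into even and odd parts in $y$, identify the even part with the order-$\alpha$ Hankel kernel and the odd part with the order-$(\alpha+1)$ Hankel kernel after absorbing the extra factor $y$ into the weight, then check constants) is precisely the standard one that the cited lemma is built on. I verified the constant bookkeeping: in the even term $2\cdot 2^{\alpha-1/2}\Gamma(\alpha+1/2)/c_\alpha=1$, and in the odd term $2\cdot 2^{\alpha+1/2}\Gamma(\alpha+3/2)/\big(c_\alpha(2\alpha+1)\big)=1$ using $\Gamma(\alpha+3/2)=(\alpha+1/2)\Gamma(\alpha+1/2)$ and $c_\alpha=2^{\alpha+1/2}\Gamma(\alpha+1/2)$ (note the paper's displayed formula for $c_\alpha$ has an inverted power/gamma factor, but your value is the correct one). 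Your preliminary remark that $f_o/y$ extends to an even Schwartz function is the right thing to say to make $\Hlap(f_o/y)$ well defined. No gaps.
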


Using Lemma \ref{lem:dunhan1} we obtain a comparison of the norms on the $L^2$ Dunkl-Sobolev space with dominating mixed smoothness and $W_{s}.$
\begin{lem}
\label{lem:compadunsob}
Let $s\in \Rdp$ and let $m_0\in W_s$ be a bounded function supported in $([-2,-1/2]\cup[1/2,2])^d.$ Then
\begin{equation}
\label{chap:Ck,sec:MarDun,eq:ineqdunsob}
\|w_s D_{\alpha}^{-1} m_0\|_{2}\leq C_s \|m_0\|_{W_s}.
\end{equation}
\end{lem}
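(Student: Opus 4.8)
The plan is to reduce the $d$-dimensional estimate to a one-dimensional one via the tensor-product structure of the Dunkl transform, and then to invoke Lemma~\ref{lem:dunhan1} together with known mapping properties of the Hankel transform (in particular Lemma~\ref{lem:linterr} from the previous section, or its one-variable core). Recall that $D_{\alpha}^{-1}=D_{\alpha_1}^{-1}\cdots D_{\alpha_d}^{-1}$ acts as a composition of one-dimensional inverse Dunkl transforms on separate variables, that $w_s(y)=\prod_{r=1}^d(1+y_r^2)^{s_r/2}$ is a product weight, and that $(1+|x|^2)^{s_r/2}\approx \prod_r(1+x_r^2)^{s_r/2}$ up to constants depending on $d$ and $s$. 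Since $m_0$ is supported in $([-2,-1/2]\cup[1/2,2])^d$, on its support each coordinate $|x_r|$ is bounded below and above, so there is no singularity to worry about near the coordinate hyperplanes.

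First I would handle the case $d=1$. Write $m_0=(m_0)_e+(m_0)_o$ into even and odd parts; both are supported in $[-2,-1/2]\cup[1/2,2]$ and satisfy $\|(m_0)_e\|_{W_s}+\|(m_0)_o\|_{W_s}\lesssim\|m_0\|_{W_s}$. By Lemma~\ref{lem:dunhan1} (applied with the roles of the transform and its inverse interchanged, using \eqref{chap:Ck,sec:MarDun,eq:invdun}, i.e.\ $D_\alpha^{-1}g=D_\alpha(g^\vee)$), $D_{\alpha}^{-1}m_0(x)$ is a sum of $\Hla\big((m_0)_e\big)(|x|)$ and $ix\,\Hlap\big((m_0)_o/y\big)(|x|)$ (up to signs). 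Since $(m_0)_e$ is supported away from the origin, it equals $n(\lambda^2)$ for a suitable $W_s$-function $n$ supported in an annulus $A_{1/2,2}$ with $\|n\|_{W_{s+\varepsilon}(\mathbb R)}\lesssim\|(m_0)_e\|_{W_s}$; then Lemma~\ref{lem:linterr} (in $d=1$, where $\alpha_1\ge -1/2$ is automatic since $\alpha_r\ge 0$, so $\alpha_r\ge 1/2$ is not needed --- one may instead use the one-dimensional argument directly, which does not require $\alpha\ge 1/2$) gives $\|\Hla((m_0)_e)\,w^s\|_{L^2(\nu_{\alpha})}\lesssim\|(m_0)_e\|_{W_s}$, where $w^s(x)=(1+|x|)^s$ and on the annulus $w^s(|x|)\approx w_s(x)$. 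The odd term is handled analogously: $(m_0)_o/y$ is again $\tilde n(y^2)$ supported in the annulus with controlled $W_{s+\varepsilon}$-norm, the extra factor $|x|$ is bounded on the relevant set after noting $\Hlap$ has the same dilation/decay behaviour, and the weight $w^s$ absorbs it; one invokes Lemma~\ref{lem:linterr} with parameter $\alpha_1+1$ in place of $\alpha_1$. Summing the two contributions yields \eqref{chap:Ck,sec:MarDun,eq:ineqdunsob} for $d=1$.

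For general $d$, I would iterate: apply the one-dimensional estimate in the $x_1$-variable with the other variables frozen, then in the $x_2$-variable, and so on, using Fubini's theorem and the fact that the weight and the Sobolev norm both factor over the coordinates. More precisely, writing $D_\alpha^{-1}=D_{\alpha_1}^{-1}\otimes\cdots\otimes D_{\alpha_d}^{-1}$, one estimates $\|w_s\,D_\alpha^{-1}m_0\|_{L^2(\nu_\alpha)}^2$ by integrating out $x_1$ first via the $d=1$ result (gaining a factor $(1+y_1^2)^{s_1/2}$ on the Fourier side in the $y_1$-variable), then $x_2$, etc.; after $d$ steps one is left with $\|w_s\,\mathcal F m_0\|_{L^2(\mathbb R^d,dy)}^2=\|m_0\|_{W_s}^2$. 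A minor technical point: the annulus-support is not exactly preserved coordinatewise when one slices, but since $\supp m_0\subseteq([-2,-1/2]\cup[1/2,2])^d$ every one-dimensional slice is supported in $[-2,-1/2]\cup[1/2,2]$, so this is not an issue. The loss of $\varepsilon$ in Lemma~\ref{lem:linterr} at each step can be made harmless since $s>\alpha+\tfrac12$ strictly; in fact for the clean statement \eqref{chap:Ck,sec:MarDun,eq:ineqdunsob} (with $C_s$, no $\varepsilon$) one should run the one-dimensional argument directly rather than through Lemma~\ref{lem:linterr}, tracking the weighted $L^2$ bound for $\Hla(n(\cdot^2))$ against $\|n\|_{W_s}$ via the holomorphic-family / Stein interpolation scheme as in the proof of Lemma~\ref{lem:linterr}, which yields the endpoint norm directly.

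The main obstacle I anticipate is the bookkeeping in the odd-part term: controlling $\Hlap((m_0)_o/y)$ requires verifying that dividing by $y$ and shifting the Hankel parameter from $\alpha_r$ to $\alpha_r+1$ does not degrade the Sobolev estimate, and that the extra multiplicative $|x|$ is genuinely absorbed by the weight on the support of the transform's effective mass (it is, because $(m_0)_o$ is supported in an annulus, so by the dilation identity \eqref{chap:Ck,sec:HorHan,eq:dil+tra} the relevant scale is $O(1)$ and the off-diagonal decay handles the rest). The other place needing care is making the iteration over coordinates rigorous --- one must check that the one-dimensional constant is uniform in the frozen variables, which it is since the $d=1$ estimate has a constant depending only on $s_r,\alpha_r$ (and $\varepsilon$), not on the data.
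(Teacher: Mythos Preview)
Your approach is essentially the paper's: reduce to $d=1$ via the product structure of $D_\alpha$ and $w_s$, then split into even and odd parts via Lemma~\ref{lem:dunhan1} and invoke a weighted $L^2$ estimate for the one-dimensional Hankel transform. Two points where the paper is cleaner than your write-up deserve mention.

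First, the paper does not route through Lemma~\ref{lem:linterr} (with its $\varepsilon$-loss and the detour $m(\lambda)=n(\lambda^2)$); it simply cites \cite[Lemma~2.9]{dphanhar}, which directly gives $\|w_s\,\Hla g\|_{L^2(\mathbb{R}_+,\nu_\alpha)}\le C_s\|g\|_{W_s}$ for $g$ supported in an annulus, with no loss. Your suggested fix (``run the one-dimensional argument directly'') is correct in spirit but amounts to reproving that external lemma.

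Second, your handling of the odd part is muddled. The factor $|x|$ is \emph{not} bounded on the relevant set (the output variable ranges over all of $\mathbb{R}$), nor is it absorbed by the weight $w_s$. The correct mechanism, which the paper uses without comment, is that $|x|^2\,d\nu_\alpha(x)=d\nu_{\alpha+1}(x)$, so
\[
\big\|w_s(x)\,x\,\Hlap\big((m_0)_o/y\big)(|x|)\big\|_{L^2(\mathbb{R},\nu_\alpha)}
=\sqrt{2}\,\big\|w_s\,\Hlap\big((m_0)_o/y\big)\big\|_{L^2(\mathbb{R}_+,\nu_{\alpha+1})},
\]
after which the Hankel estimate with parameter $\alpha+1$ applies to $(m_0)_o/y$. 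Finally one notes $\|m_0/y\|_{W_s}\le C_s\|m_0\|_{W_s}$ since $m_0$ is supported away from $0$. You reach the right endpoint (invoke the estimate with $\alpha_r+1$), but the justification you give for the $|x|$ factor is not the actual reason.
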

\begin{proof} Recalling \eqref{chap:Ck,sec:MarDun,eq:invdun} we see that it is enough to prove a version of \eqref{chap:Ck,sec:MarDun,eq:ineqdunsob} with $D_{\alpha}^{-1}$ replaced by $D_{\alpha}.$ Note that in our case the $d$-dimensional Dunkl transform is a composition of the one-dimensional transforms acting on separate variables. Therefore, since the weight $w_s$ has a product form it suffices to prove Lemma \ref{lem:compadunsob} in the one-dimensional case. From \eqref{chap:Ck,sec:MarDun,eq:dunhan1} we obtain
\begin{align*}
&\|w_s D_{\alpha} m_0\|_{L^2(\mathbb{R},\nu_{\alpha})}\leq \|w_s D_{\alpha}\big((m_0)_e\big)\|_{L^2(\mathbb{R},\nu_{\alpha})}+\|w_s D_{\alpha}\big((m_0)_o\big)\|_{L^2(\mathbb{R},\nu_{\alpha})}\\
&=\sqrt{2}\left(\int_{\mathbb{R}_+}|w_s(x) \Hla \big((m_0)_e\big)(x)|^2\,d\nu_{\alpha}(x)\right)^{1/2}\\
&+ \sqrt{2}\left(\int_{\mathbb{R}_+}\left|w_s(x) x \Hlap \left(\frac{(m_0)_o}{y}\right)(x)\right|^2\,d\nu_{\alpha}(x)\right)^{1/2}\\
&=\sqrt{2}\left( \|w_s \Hla \left((m_0)_e\right)\|_{L^2(\mathbb{R}_+,\nu_{\alpha})}+\left\|w_s \Hlap \left(\frac{(m_0)_o}{y}\right)\right\|_{L^2(\mathbb{R}_+,\nu_{\alpha+1})}\right).
\end{align*}
By using, for instance, \cite[Lemma 2.9]{dphanhar} the latter quantity is bounded by \begin{equation*}C'_s(\|(m_0)_e\|_{W_s}+\|((m_0)_o)/y\|_{W_s})\leq C'_s(\|m_0\|_{W_s}+\|m_0/y\|_{W_s}).\end{equation*} Since $m$ is supported outside $0,$ we further have $\|m_0/y\|_{W_s}\leq C_s \|m_0\|_{W_s},$ as desired. \end{proof}

The proof of Theorem \ref{thm:dunklmarmult} requires also using the translation/dilation structure connected with Dunkl operators. Let $\tau^{y},$ $y\in\mathbb{R}^d$ be the $d$-dimensional generalized Dunkl translation\footnote{This notation coincides with the one for the Hankel translation from the previous section. Since neither of the two translations is used outside their respective sections, this collision of symbols should not cause any confusion.} given by $\tau^y f (x)=\tau^{y_1}\cdots\tau^{y_d}f (x),$ see \cite{Rosbes} or \cite[Theorem 7.1]{convmax}. Each one-dimensional component of the generalized translation, $\tau^{s}f(t),$ is in fact the translation $\delta_{-s}*\delta_t(f)$ in the associated signed hypergroup on $\mathbb{R}.$ Moreover,
\begin{equation}
\label{chap:Ck,sec:MarDun,eq:transexp}
\begin{split}
\tau^{s} f(t)= &\frac{1}{2}\int_{-1}^1 f(\sqrt{t^2+s^2-2stu})\left(1+\frac{t-s}{\sqrt{t^2+s^2-2stu}}\right)\Phi_{\alpha_r}(u)\,du\\
 & +\frac{1}{2}\int_{-1}^1 f(-\sqrt{t^2+s^2-2stu})\left(1-\frac{t-s}{\sqrt{t^2+s^2-2stu}}\right)\Phi_{\alpha_r}(u)\,du,
\end{split}
\end{equation}
where $\Phi_{\alpha_r}(u)=b_{\alpha_r}(1+u)(1-u^2)^{\alpha_r-1}.$ Here $b_{\alpha_r}$ is a certain normalizing constant. Note that the formula \eqref{chap:Ck,sec:MarDun,eq:transexp} is in fact valid only for $\alpha_r>0.$ If some $\alpha_r=0$ we end up with the usual translation, for which the properties stated in Lemmata \ref{lem:probacom} and \ref{lem:dilest} are trivial. Thus, in the proofs of those lemmata we assume $\alpha_r>0,$ $r=1,\ldots,d.$

It is now convenient to introduce the following terminology, cf. \cite[p.\ 6]{NoStIm}. We say that a function on $\mathbb{R}^d$ is $\varepsilon$-symmetric, $\varepsilon\in\{0,1\}^d,$ if for each $r=1,\ldots,d,$ $f$ is either even or odd with respect to the $r$-th  coordinate according to whether $\varepsilon_r=0$ or $\varepsilon_r=1,$ respectively. In short, $f$ is $\varepsilon$-symmetric if and only if $f(\sigma_r x)=(-1)^{\varepsilon_r}f(x),$ $r=1,\ldots,d.$ Any function $f$ on $\mathbb{R}^d$ can be split into a sum of $\varepsilon$-symmetric functions $\varepsilon,$ $f=\sum_{\varepsilon\in\{0,1\}^d}f_{\varepsilon}$ in such a way that
\begin{equation}
\label{chap:Ck,sec:MarDun,eq:compasym}
\max_{\varepsilon\in\{0,1\}^d}\|f_{\varepsilon}\|_{p}\leq \|f\|_{p} \leq \sum_{\varepsilon\in\{0,1\}^d} \|f_{\varepsilon}\|_{p},\qquad f\in L^p. \end{equation}
Note that in the case $d=1$ the above procedure simply reduces to considering the even and odd parts of a function on $\mathbb{R}.$

We will also need the following modification of $\tau^y.$ We set $\tau_{\varepsilon_r}^s =\tau^s,$ if $\varepsilon_r=0$ and
\begin{equation*}\tau_{\varepsilon_r}^{s} f(t)=b_{\alpha_r}\int_{-1}^1 f(\sqrt{t^2+s^2-2stu}) (1-u^2)^{\alpha_r-1}\,du,\end{equation*}
if $\varepsilon_r=1.$ Note that $\tau_{1}^s$ is positivity preserving, contrary to the translation $\tau_0^s=\tau^s$ (which preserves positivity only on even functions). Now we define $\tau_{\varepsilon}^yf(x)=\tau_{\varepsilon_1}^{y_1}\cdots\tau_{\varepsilon_d}^{y_d}f (x).$ Observe that if a function $f_{\varepsilon}$ on $\mathbb{R}^d$ is $\varepsilon$-symmetric, then $|f_{\varepsilon}|^2$ is ${\bf 0}$-symmetric and, consequently, $\tau^{y}_{\varepsilon}(|f_{\varepsilon}|^2)\geq 0$.
\begin{lem}
\label{lem:probacom}
Let $f_{\varepsilon}\in\mathcal{S}(\mathbb{R}^d)$ be an $\varepsilon$-symmetric function, $\varepsilon\in\{0,1\}^d.$ Then
\begin{equation}
\label{chap:Ck,sec:MarDun,eq:ineprob1}
|\tau^y f_{\varepsilon}(x)|^2\lesssim\tau_{\varepsilon}^y (|f_{\varepsilon}|^2)(x).
\end{equation}
\end{lem}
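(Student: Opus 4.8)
The plan is to reduce the $d$-dimensional estimate \eqref{chap:Ck,sec:MarDun,eq:ineprob1} to a one-dimensional statement, since both $\tau^y$ and $\tau_\varepsilon^y$ factor as products of one-dimensional operators acting on separate variables, and the $\varepsilon$-symmetry is coordinatewise. So first I would observe that it suffices to prove, for each fixed $r$, the one-dimensional inequality
\begin{equation*}
|\tau^{s} h(t)|^2 \lesssim \tau_{\delta}^{s}(|h|^2)(t),\qquad t,s>0,
\end{equation*}
for $h$ either even ($\delta=0$) or odd ($\delta=1$) on $\mathbb{R}$; iterating this variable by variable (and using that $\tau_{\varepsilon_r}^{y_r}$ is positivity preserving, so that it may be applied to the nonnegative right-hand sides produced at earlier steps) yields the full claim. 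The case $\alpha_r=0$ is the ordinary translation and is trivial, so one assumes $\alpha_r>0$ and works with the explicit formula \eqref{chap:Ck,sec:MarDun,eq:transexp}.

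Next I would treat the two parities. When $h$ is even, the two integrals in \eqref{chap:Ck,sec:MarDun,eq:transexp} combine: writing $\rho=\rho(u)=\sqrt{t^2+s^2-2stu}$ and using $h(-\rho)=h(\rho)$, the weights $1\pm (t-s)/\rho$ add up to $2$, so $\tau^s h(t)=\int_{-1}^1 h(\rho)\,\Phi_{\alpha_r}(u)\,du$ after absorbing constants; but $\Phi_{\alpha_r}(u)=b_{\alpha_r}(1+u)(1-u^2)^{\alpha_r-1}$ is a nonnegative density of finite (indeed unit, up to the constant) mass on $[-1,1]$, so Jensen's (or Cauchy--Schwarz's) inequality gives $|\tau^s h(t)|^2 \lesssim \int_{-1}^1 |h(\rho)|^2\,\Phi_{\alpha_r}(u)\,du$. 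Since $|h|^2$ is even, $\int_{-1}^1|h(\rho)|^2\Phi_{\alpha_r}(u)\,du = \tau^s(|h|^2)(t)=\tau_0^s(|h|^2)(t)$, which is exactly the right-hand side for $\delta=0$. When $h$ is odd, $h(-\rho)=-h(\rho)$ and the two integrals again combine, this time producing $\tau^s h(t)=\int_{-1}^1 h(\rho)\,\frac{t-s}{\rho}\,\Phi_{\alpha_r}(u)\,du$; now I bound $|(t-s)/\rho|\le 1$ pointwise (because $\rho^2=(t-s)^2+2st(1-u)\ge (t-s)^2$), so the kernel against which $h(\rho)$ is integrated is dominated in absolute value by $\Phi_{\alpha_r}(u)\,du$. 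Applying Cauchy--Schwarz with this dominating measure gives $|\tau^s h(t)|^2\lesssim \int_{-1}^1 |h(\rho)|^2\,\Phi_{\alpha_r}(u)\,du$. The point is that $\int_{-1}^1 |h(\rho)|^2\,\Phi_{\alpha_r}(u)\,du$ is (up to a constant) precisely $\tau_1^s(|h|^2)(t)=b_{\alpha_r}\int_{-1}^1|h(\rho)|^2(1-u^2)^{\alpha_r-1}\,du$, since $|h|^2$ is even and the odd-in-$u$ part $b_{\alpha_r} u (1-u^2)^{\alpha_r-1}$ of $\Phi_{\alpha_r}$ integrates against the even function $u\mapsto|h(\rho(u))|^2 (1-u^2)^{\alpha_r-1}$ to give a contribution of controlled sign — more simply, $\Phi_{\alpha_r}(u)\le 2 b_{\alpha_r}(1-u^2)^{\alpha_r-1}$, so the integral is dominated by $\tau_1^s(|h|^2)(t)$ directly.

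The main obstacle is bookkeeping rather than depth: one must be careful that in the odd case the natural dominating measure is $(1-u^2)^{\alpha_r-1}\,du$ (the density used by $\tau_1^s$) and not $\Phi_{\alpha_r}(u)\,du$, so the comparison $\Phi_{\alpha_r}(u)\le 2b_{\alpha_r}(1-u^2)^{\alpha_r-1}$ (valid since $0\le 1+u\le 2$) is what makes the definition of $\tau_1^s$ the right one; and in iterating over $r$ one needs that $\tau_{\varepsilon_r}^{y_r}$ preserves nonnegativity, which holds for $\tau_1^s$ trivially and for $\tau_0^s=\tau^s$ when it acts on an even (${\bf 0}$-symmetric in that variable) function — and the successive right-hand sides $\tau_{\varepsilon_1}^{y_1}\cdots\tau_{\varepsilon_k}^{y_k}(|f_\varepsilon|^2)$ are indeed nonnegative and even in the remaining variables, as noted just before the lemma. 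Assembling these observations yields \eqref{chap:Ck,sec:MarDun,eq:ineprob1}.
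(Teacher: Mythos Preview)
Your reduction to one dimension and the treatment of the even case match the paper's proof exactly. The gap is in the odd case: the lemma is stated for $x,y\in\mathbb{R}^d$, so in one dimension you need the inequality for \emph{all} real $t,s$, not only $t,s>0$. Your pointwise bound $|(t-s)/\rho|\le 1$, justified via $\rho^2=(t-s)^2+2st(1-u)\ge(t-s)^2$, requires $st\ge 0$ and is false otherwise; for instance $t=1$, $s=-1$, $u=0$ gives $\rho=\sqrt{2}$ and $|t-s|/\rho=\sqrt{2}>1$. No symmetry of $\tau^s$ reduces the general case to $st\ge 0$ (one has $|\tau^{-s}f(-t)|=|\tau^s f(t)|$, but this only lets you fix the sign of one of the variables).

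The paper handles the odd case with the sharper estimate
\[
\frac{|t-s|}{\sqrt{t^2+s^2-2stu}}\le \frac{2}{1+u},\qquad t,s\in\mathbb{R},\ u\in(-1,1),
\]
quoted from \cite[Theorem~7.7]{convmax}. This bound is valid for all real $t,s$, and the factor $(1+u)^{-1}$ exactly cancels the $(1+u)$ in $\Phi_{\alpha_r}(u)=b_{\alpha_r}(1+u)(1-u^2)^{\alpha_r-1}$, yielding $\frac{|t-s|}{\rho}\Phi_{\alpha_r}(u)\le 2b_{\alpha_r}(1-u^2)^{\alpha_r-1}$, which is the kernel of $\tau_1^s$. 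One then gets $|\tau^s h(t)|\le 2\,\tau_1^s|h|(t)$ and applies Cauchy--Schwarz against the finite measure $b_{\alpha_r}(1-u^2)^{\alpha_r-1}du$ to conclude. This is also the conceptual reason $\tau_1^s$ is defined with the weight $(1-u^2)^{\alpha_r-1}$: it is precisely what remains after the $(1+u)^{-1}$ cancellation, not merely a convenient upper bound for $\Phi_{\alpha_r}$. Replacing your bound by this one, the rest of your argument (including the iteration over coordinates) goes through.
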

 \begin{proof} It suffices to prove the one-dimensional version of \eqref{chap:Ck,sec:MarDun,eq:ineprob1}. Assume first that $f$ is even. Then from \eqref{chap:Ck,sec:MarDun,eq:transexp} it follows that
 $\tau^{s} f(t)=\int f(z)\Phi_{\alpha,s,t}(z)\,dz,$ where for each $s,t,$ $\Phi_{\alpha,s,t}$ is a probability density. Consequently, \begin{equation*}|\tau^{s} f(t)|^2=\left|\int f(z)\Phi_{\alpha,s,t}(z)\,dz\right|^2\leq \int |f(z)|^2\Phi_{\alpha,s,t}(z)\,dz=\tau^{s}(|f|^2)(t).\end{equation*}
 Assume now that $f$ is odd. Then, using \eqref{chap:Ck,sec:MarDun,eq:transexp} we have
 \begin{equation*}\tau^{s} f(t)=\int_{-1}^1 f\left(\sqrt{t^2+s^2-2stu}\right)\frac{t-s}{\sqrt{t^2+s^2-2stu}}\Phi_{\alpha}(u)\,du.\end{equation*} Observing that
\begin{equation*}\frac{|t-s|}{\sqrt{t^2+s^2-2stu}}\leq 2 (1+u)^{-1},\qquad t,s\in\mathbb{R},\quad u\in (-1,1),\end{equation*}
 see \cite[Theorem 7.7]{convmax}, we obtain
 $|\tau^{s} f(t)|\leq \tau_1^s |f|(t)= \int |f(z)|\tilde{\Phi}_{\alpha,s,t}.$ Here $\{\tilde{\Phi}_{\alpha,s,t}\}_{s,t}$ is a family of measures with uniformly bounded total variation. Consequently, $|\tau^{s} f(t)|^2\lesssim\tau_1^s |f|^2(t),$ as desired. \end{proof}
\indent It is known that $3^{-1}\tau^y$ is a contraction on all the $L^p$ spaces, $1\leq p\leq \infty.$ For any two suitable functions $f,g$ define the Dunkl convolution $f\star g$ by
\begin{equation*}f\star g\,(x)=\int_{\mathbb{R}^d}f(y)\tau^x g^{\vee}(y)\,d\nu_{\alpha}(y).\end{equation*}
Then we have $\|f\star g\|_p\leq 3^d\|f\|_1 \|g\|_p.$ Moreover, as in the Fourier or Hankel transform case, the Dunkl transform turns convolution into multiplication, i.e. $D_{\alpha}(f\star g)=D_{\alpha}f\,D_{\alpha}\,g.$ For $f\in L^1$ and $\la=(\la_1,\ldots,\la_d)\in \Rdp$ define the $L^1$-dilation of $f$ by \begin{equation*}(\delta_{\la} f)(x)={\la}^{-2\alpha-\bf{1}}f(\la_1^{-1}x_1,\ldots,\la_d^{-1}x_d).\end{equation*}
Then $D_{\alpha} (\delta_{\la} f)(x)=D_{\alpha} f(\la x).$

Combining Lemmata \ref{lem:compadunsob} and \ref{lem:probacom} gives the following key lemma.
\begin{lem}
\label{lem:dilest}
Assume that $\textrm{supp}\,m_0\subseteq([-1/2,2]\cup[1/2,2])^d$ and that $m_0\in W_s$ for some $s>\alpha+{\bf \frac{1}{2}}.$ Fix $\la=(\la_1,\ldots,\la_d)\in \Rdp$ and define \begin{equation*}T_{\lambda}f(x)=D_{\alpha}^{-1}(m_0(\la_1\xi_1,\ldots,\la_{d} \xi_d)D_{\alpha} f(\xi))(x), \qquad f\in\mathcal{S}(\mathbb{R}^d).\end{equation*}  Then
\begin{equation*}
|T_{\lambda}f(x)|^2\leq C_{s,\alpha}\| m_0 \|_{W_s}^2\int_{\mathbb{R}^d} \sum_{\varepsilon\in\{0,1\}^d}|f_{\varepsilon}(y)|^2 \ta^y \delta_{\la}(w_{-2s}) (x)\,d\nu_{\alpha}(y),
\end{equation*}
where $f=\sum_{\varepsilon\in\{0,1\}^d}f_{\varepsilon},$ with $f_{\varepsilon}$ being the $\varepsilon$-symmetric part of $f.$
\end{lem}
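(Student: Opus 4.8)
The plan is to realise $T_{\la}$ as a Dunkl convolution, split $f$ into its $\varepsilon$-symmetric parts so that Lemma~\ref{lem:probacom} applies, and then marry the pointwise domination of Lemma~\ref{lem:probacom} with the weighted $L^2$ bound of Lemma~\ref{lem:compadunsob} via a Cauchy--Schwarz estimate against a translated weight. First, since $D_{\alpha}(f\star g)=D_{\alpha}f\,D_{\alpha}g$ and $D_{\alpha}(\delta_{\la}h)(\xi)=D_{\alpha}h(\la\xi)$, taking $h=D_{\alpha}^{-1}m_0$ gives $D_{\alpha}(\delta_{\la}h)=m_0(\la\,\cdot\,)$, so that $T_{\la}f=f\star\delta_{\la}(D_{\alpha}^{-1}m_0)$ and hence
$$T_{\la}f(x)=\int_{\mathbb{R}^d}f(y)\,\tau^{x}\big(\delta_{\la}(D_{\alpha}^{-1}m_0)\big)^{\vee}(y)\,d\nu_{\alpha}(y).$$
Lemma~\ref{lem:compadunsob} provides the quantitative input: $\int_{\mathbb{R}^d}|D_{\alpha}^{-1}m_0|^2\,w_{2s}\,d\nu_{\alpha}=\|w_sD_{\alpha}^{-1}m_0\|_2^2\le C_s\|m_0\|_{W_s}^2$, and the same holds for each $\varepsilon$-symmetric component $\Phi^{(\varepsilon)}$ of $D_{\alpha}^{-1}m_0$. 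Since $s>\alpha+{\bf\frac{1}{2}}$, the weight $w_{-2s}$ is $\nu_{\alpha}$-integrable; by Cauchy--Schwarz this yields $\Phi^{(\varepsilon)}\in L^1(d\nu_{\alpha})$ with $\|\Phi^{(\varepsilon)}\|_{1}\lesssim_{s,\alpha}\|m_0\|_{W_s}$, which legitimises the convolution and will be used below. This is the only place where the hypothesis on $s$ enters.

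Next, decompose $f=\sum_{\varepsilon}f_{\varepsilon}$ and $D_{\alpha}^{-1}m_0=\sum_{\varepsilon}\Phi^{(\varepsilon)}$. Using $\big(\delta_{\la}\Phi^{(\varepsilon)}\big)^{\vee}=(-1)^{|\varepsilon|}\delta_{\la}\Phi^{(\varepsilon)}$ and linearity, it suffices to estimate, for each fixed $x$ and each pair $\varepsilon,\varepsilon'$, the integral $\int_{\mathbb{R}^d}f_{\varepsilon'}(y)\,\tau^{x}\big(\delta_{\la}\Phi^{(\varepsilon)}\big)(y)\,d\nu_{\alpha}(y)$, and then to sum the resulting bounds using $|T_{\la}f(x)|^2\le 2^d\sum_{\varepsilon'}|T_{\la}f_{\varepsilon'}(x)|^2$. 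Since $\delta_{\la}\Phi^{(\varepsilon)}$ is $\varepsilon$-symmetric, Lemma~\ref{lem:probacom} gives $|\tau^{x}(\delta_{\la}\Phi^{(\varepsilon)})(y)|^2\lesssim_{\alpha}\tau^{x}_{\varepsilon}\big(|\delta_{\la}\Phi^{(\varepsilon)}|^2\big)(y)$. Applying Cauchy--Schwarz in $y$ with the weight $\tau^{x}_{\varepsilon}\big(\delta_{\la}w_{-2s}\big)(y)$, and using the comparison $\tau^{x}_{\varepsilon}g\lesssim_{\alpha}\tau^{x}g$ valid for even $g\ge0$ that are non-increasing in each $|\xi_r|$ (applied to $g=\delta_{\la}w_{-2s}$), together with the symmetry $\tau^{x}g(y)=\tau^{y}g(x)$ for even $g$, the ``main'' factor is controlled by $C_{\alpha}\int_{\mathbb{R}^d}|f_{\varepsilon'}(y)|^2\,\tau^{y}\delta_{\la}(w_{-2s})(x)\,d\nu_{\alpha}(y)$, which is of the asserted shape.

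It then remains to bound the companion factor
$$\int_{\mathbb{R}^d}\frac{\tau^{x}_{\varepsilon}\big(|\delta_{\la}\Phi^{(\varepsilon)}|^2\big)(y)}{\tau^{x}_{\varepsilon}\big(\delta_{\la}w_{-2s}\big)(y)}\,d\nu_{\alpha}(y)\lesssim_{s,\alpha}\|m_0\|_{W_s}^2.$$
Homogeneity of the $L^1$-dilation gives $|\delta_{\la}\Phi^{(\varepsilon)}|^2=\delta_{\la}\big(|\Phi^{(\varepsilon)}|^2w_{2s}\big)\cdot\delta_{\la}(w_{-2s})$, so with $\rho^{(\varepsilon)}_{\la}:=\delta_{\la}\big(|\Phi^{(\varepsilon)}|^2w_{2s}\big)$ one has $\|\rho^{(\varepsilon)}_{\la}\|_{L^1(d\nu_{\alpha})}=\||\Phi^{(\varepsilon)}|^2w_{2s}\|_{L^1(d\nu_{\alpha})}\le C_s\|m_0\|_{W_s}^2$. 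Writing $\tau^{x}_{\varepsilon}$ through its positive kernel $k^{x}_{\varepsilon}(y,\cdot)$, the integrand above is the average $\int\rho^{(\varepsilon)}_{\la}(\eta)\,d\mu^{x}_{y}(\eta)$ with $\mu^{x}_{y}(d\eta)=\delta_{\la}w_{-2s}(\eta)\,k^{x}_{\varepsilon}(y,d\eta)\big/\tau^{x}_{\varepsilon}(\delta_{\la}w_{-2s})(y)$ a probability measure; interchanging the two integrations reduces the claim to the uniform bound $\int_{\mathbb{R}^d}\delta_{\la}w_{-2s}(\eta)\,k^{x}_{\varepsilon}(y,\eta)\big/\tau^{x}_{\varepsilon}(\delta_{\la}w_{-2s})(y)\,d\nu_{\alpha}(y)\lesssim_{\alpha}1$, which follows from the explicit product form of the (Bessel-type) translation kernel, the near mass-preservation $\int\tau^{x}_{\varepsilon}h\,d\nu_{\alpha}\lesssim_{\alpha}\int h\,d\nu_{\alpha}$ for $h\ge0$, and the slow variation of $w_{-2s}$. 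Collecting the estimates and summing over $\varepsilon$ and $\varepsilon'$ completes the proof.

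The main obstacle is this last estimate: showing that averaging $\rho^{(\varepsilon)}_{\la}$ against the $\delta_{\la}w_{-2s}$-tilted translation kernel and then integrating in $y$ costs no more than $\|\rho^{(\varepsilon)}_{\la}\|_{L^1(d\nu_{\alpha})}$. This forces one to use the concrete structure of the Dunkl translation rather than abstract properties, and it is where the decay of $w_{-2s}$ — i.e.\ the hypothesis $s>\alpha+{\bf\frac{1}{2}}$ — is genuinely needed; the auxiliary comparison $\tau^{x}_{\varepsilon}g\lesssim_{\alpha}\tau^{x}g$ for even non-increasing $g$, proved by splitting the angular integral at $u=0$ and using monotonicity of $g$, is the second point that requires care.
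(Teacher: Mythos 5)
Your proposal takes a genuinely different route from the paper, and unfortunately the difference creates a gap that you do not close. You put the Dunkl translation on the kernel side — you write $T_{\la}f(x)=\int f(y)\,\tau^{x}\big(\delta_{\la}(D_{\alpha}^{-1}m_0)\big)^{\vee}(y)\,d\nu_{\alpha}(y)$, decompose both $f$ and $D_{\alpha}^{-1}m_0$ into $\varepsilon$-symmetric parts, apply Lemma~\ref{lem:probacom} to $\tau^{x}(\delta_{\la}\Phi^{(\varepsilon)})$, and only then insert the weight $\tau_{\varepsilon}^{x}(\delta_{\la}w_{-2s})(y)$ via Cauchy--Schwarz. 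The paper instead keeps the translation on $f$, writes $T_{\la}f(x)=\int\delta_{\la}(D_{\alpha}^{-1}m_0)(y)\,\tau^{y}f(x)\,d\nu_{\alpha}(y)$, inserts the \emph{untranslated} factor $w_{s}(\la_1^{-1}y_1,\ldots,\la_d^{-1}y_d)\,w_{-s}(\la_1^{-1}y_1,\ldots,\la_d^{-1}y_d)$ under the integral, and applies Cauchy--Schwarz \emph{first}. This makes the "good" factor the clean, translation-free quantity $\|w_{s}D_{\alpha}^{-1}m_0\|_{2}^{2}$, directly controlled by Lemma~\ref{lem:compadunsob}. Only afterwards does the paper apply Lemma~\ref{lem:probacom} to $\tau^{y}f_{\varepsilon}(x)$, and it moves the translation from $|f_{\varepsilon}|^2$ to the weight by the self-adjointness of the (even/Hankel) translation together with the identity $\tau_1^{y}(|f_o|^2)=\tau^{y}(|f_o|^2)$ checked explicitly in the proof. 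No decomposition of the convolving kernel $D_{\alpha}^{-1}m_0$ is needed at all.

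The gap in your version is exactly the "companion factor" estimate
\begin{equation*}
\int_{\mathbb{R}^d}\frac{\tau^{x}_{\varepsilon}\big(|\delta_{\la}\Phi^{(\varepsilon)}|^2\big)(y)}{\tau^{x}_{\varepsilon}\big(\delta_{\la}w_{-2s}\big)(y)}\,d\nu_{\alpha}(y)\lesssim_{s,\alpha}\|m_0\|_{W_s}^2,
\end{equation*}
which you reduce, after a legitimate Fubini exchange, to the uniform bound
\begin{equation*}
\delta_{\la}w_{-2s}(\eta)\int_{\mathbb{R}^d}\frac{k^{x}_{\varepsilon}(y,\eta)}{\tau^{x}_{\varepsilon}(\delta_{\la}w_{-2s})(y)}\,d\nu_{\alpha}(y)\lesssim_{\alpha}1.
\end{equation*}
You justify this by appealing to "the explicit product form of the translation kernel, near mass-preservation, and the slow variation of $w_{-2s}$", but this is not a proof, and the bound is not evident. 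The denominator $\tau^{x}_{\varepsilon}(\delta_{\la}w_{-2s})(y)$ averages $\delta_{\la}w_{-2s}$ over a $y$-dependent interval of the form $[\,|x_r-y_r|,\,x_r+y_r\,]$ in each coordinate, and on that interval the weight can vary by an unbounded ratio as $x$ grows (e.g.\ between $\delta_{\la}w_{-2s}(|x-y|)$ and $\delta_{\la}w_{-2s}(x+y)$). There is no obvious way to compare $\delta_{\la}w_{-2s}(\eta)$ with $\tau^{x}_{\varepsilon}(\delta_{\la}w_{-2s})(y)$ uniformly over the $y$ whose translation interval contains $\eta$, particularly because the translation kernel is singular at the endpoints of that interval (the $(1-u^2)^{\alpha-1}$ factor), so the average is not a simple midpoint. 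You flag this as "the main obstacle" yourself, and you are right: without a detailed estimate of how the Dunkl/Hankel translation smears $w_{-2s}$, the argument does not close. The paper's ordering of Cauchy--Schwarz and Lemma~\ref{lem:probacom} is precisely what avoids having to prove such an estimate, and is the step you should adopt.
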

\begin{proof} We prove the lemma in the case $d=2.$ For $d>2$ the proof is analogous. The operator $T_{\lambda}f$ can be represented as a generalized convolution \begin{align*}&T_{\lambda}f(x)=D_{\alpha}^{-1}(m_0(\la_1\cdot,\la_2\cdot))\star f(x)=\int_{\mathbb{R}^2}D_{\alpha}^{-1}(m_0(\la_1\cdot,\la_2\cdot))(y)\,\tau^y f(x)d\nu_{\alpha}(y)\\&
=\int_{\mathbb{R}^2}\la^{-2\alpha-1}D_{\alpha}^{-1}(m_0)(\la_1^{-1}y_1,\la_2^{-1}y_2)\,\tau^y f(x)d\nu_{\alpha}(y).
\end{align*}
Introducing the factor $w_{s} (\la_1^{-1}y_1,\la_2^{-1}y_2)\times w_{-s} (\la_1^{-1}y_1,\la_2^{-1}y_2)$ under the integral and applying Schwarz's inequality together with the change of variable $(y_1,y_2)\rightarrow (\la_1^{-1}y_1,\la_2^{-1}y_2)$ we get \begin{equation*}|T_{\lambda}f(x)|^2\leq \|w_sD_{\alpha}^{-1} m_0 \|_{2}^2 \int_{\mathbb{R}^2} |\tau^y f(x)|^2 \delta_{\la}(w_{-2s}) (y)\,d\nu_{\alpha}(y).\end{equation*} Using Lemmata \ref{lem:compadunsob} and \ref{lem:probacom} we obtain.
\begin{align*}|T_{\lambda}f(x)|^2&\leq C_{s} \|m_0 \|_{W_s}^2 \int_{\mathbb{R}^2} |\tau^y f(x)|^2 \delta_{\la}(w_{-2s}) (y)\,d\nu_{\alpha}(y) \\
&\leq C_{s,\alpha} \|m_0 \|_{W_s}^2 \sum_{\varepsilon\in\{0,1\}^d}\int_{\mathbb{R}^2} \tau_{\varepsilon}^y (|f_{\varepsilon}|^2)(x) \delta_{\la}(w_{-2s}) (y)\,d\nu_{\alpha}(y).\end{align*}
At this point, the product structures of the (modified) Dunkl convolution and the measure $\nu_{\alpha}$ allow us to focus only on the one-dimensional situation.
First, for $f_e,$ the even part of $f,$ we have \begin{equation*}\int_{\mathbb{R}} \tau_{0}^y (|f_{e}|^2)(x) \delta_{\la}(w_{-2s})(y)\,d\nu_{\alpha}(y)=\int_{\mathbb{R}}  |f_{e}|^2(y) \tau^y\delta_{\la}(w_{-2s})(x)\,d\nu_{\alpha}(y),\end{equation*} as desired. Consider now $f_o,$ the odd part of $f.$ Let $g\in L^1(\mathbb{R},\nu_{\alpha})$ be a radial (in our case even) function. Then, the change of variable $y\to -y,$ gives
\begin{align*}
&b_{\alpha}\int_{\mathbb{R}}g(y)\int_{-1}^1 |f_o|^2(\sqrt{x^2+y^2-2xy u})\,u\,(1-u^2)^{\alpha-1}\,du\,dy=0.
\end{align*}
Consequently,
\begin{align*}
&\int_{\mathbb{R}}g(y)\tau_{1}^y (|f_o|^2)(x)\,d\nu_{\alpha}(y)=\int_{\mathbb{R}}g(y)\,b_{\alpha}\int_{-1}^1 |f_o|^2(\sqrt{x^2+y^2-2xy u})(1-u^2)^{\alpha-1}\,du\,dy\\
&= \int_{\mathbb{R}}g(y)\,b_{\alpha}\int_{-1}^1 |f_o|^2(\sqrt{x^2+y^2-2xy u})(1+u)(1-u^2)^{\alpha-1}\,du\,dy\\
&=\int_{\mathbb{R}}g(y)\tau^y (|f_o|^2)(x)\,d\nu_{\alpha}(y).
\end{align*}
Taking $g(y)=\delta_{\lambda}(w_{-2s})(y)$ we are done. \end{proof}
The next ingredient we need is a specific maximal function. For $r=1,\ldots,d,$ let
\begin{equation*}M_rf(x)=\sup_{t_r>0}\left|\frac{\int_{\mathbb{R}}f(x_1,\ldots, x_{r-1},y_r,x_{r+1}\ldots,x_d)\tau^{x_r}\chi_{[-t_r,t_r]}(y_r)\,d\nu_{\alpha_r}(y_r)}{\nu_{\alpha_r}([-t_r,t_r])}\right|,\end{equation*}
i.e., $M_r$ is the one-dimensional maximal function in the $\mathbb{Z}_2$ context (as studied in \cite{convmax}), applied to $f$ as a function of variable $y_r.$ Let $M_Pf(x)=M_1\circ\cdots\circ M_d (|f|)(x).$
 From the product structure of $M_P$ and \cite[Theorem 6.1]{convmax} (in the one-dimensional case) it follows that $M_P$ is bounded on $L^p,$ $1<p<\infty.$ Moreover, the following is true.
\begin{lem}
\label{lem:maxboundlem}
Let $s>\alpha+{\bf 1/2},$ so that $w_{-2s}\in L^1.$ Then, for a.e.\ $x\in\mathbb{R}^d,$
\begin{equation}
\label{chap:Ck,sec:MarDun,eq:maxbound}
|\delta_{\lambda}(w_{-2s})\star h(x)|\leq C_s M_P(|h|)(x),\qquad h\in L^p,\quad 1\leq p<\infty,
\end{equation}
with a constant $C_s$ independent of $\lambda\in\Rdp.$
\end{lem}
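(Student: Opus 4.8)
The plan is to reduce the estimate to a one‑dimensional, pointwise inequality for each tensor factor, using the product structure of the Dunkl convolution and translation together with the layer‑cake (distribution function) representation of a radially decreasing profile. First I would reduce to the case $h\ge 0$: since $w_{-2s}(y)=\prod_{r=1}^d(1+y_r^2)^{-s_r}$ is even, $(\delta_\lambda w_{-2s})^\vee=\delta_\lambda w_{-2s}$, and, as recorded in the proof of Lemma~\ref{lem:probacom}, $\tau^x$ applied to an even function is integration against a probability measure, hence carries nonnegative even functions to nonnegative functions. Thus $\tau^x\delta_\lambda(w_{-2s})\ge 0$ and
$$|\delta_\lambda(w_{-2s})\star h(x)|\le\int_{\mathbb{R}^d}|h(y)|\,\tau^x\delta_\lambda(w_{-2s})(y)\,d\nu_\alpha(y)=\delta_\lambda(w_{-2s})\star|h|(x),$$
so it suffices to prove the claimed bound for $h\ge0$. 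Using $\delta_\lambda(w_{-2s})(y)=\prod_r\delta_{\lambda_r}(w_{-2s_r})(y_r)$, $\tau^x=\tau^{x_1}\cdots\tau^{x_d}$ with $\tau^{x_r}$ acting in $y_r$ only, and $d\nu_\alpha=\bigotimes_r d\nu_{\alpha_r}$, the convolution becomes an iterated integral, which I will estimate one variable at a time.

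Next I would do the one‑variable step. Fix $r$. The function $u\mapsto\delta_{\lambda_r}(w_{-2s_r})(u)=\lambda_r^{-2\alpha_r-1}(1+(u/\lambda_r)^2)^{-s_r}$ is even and nonincreasing in $|u|$ with limit $0$ at infinity, so by the layer‑cake formula there is a nonnegative Borel measure $\mu_{\lambda_r}$ on $(0,\infty)$ with
$$\delta_{\lambda_r}(w_{-2s_r})(u)=\int_0^\infty\chi_{[-\tau,\tau]}(u)\,d\mu_{\lambda_r}(\tau),\qquad\int_0^\infty\nu_{\alpha_r}([-\tau,\tau])\,d\mu_{\lambda_r}(\tau)=\|\delta_{\lambda_r}(w_{-2s_r})\|_{L^1(\nu_{\alpha_r})}=\|w_{-2s_r}\|_{L^1(\nu_{\alpha_r})}=:c_{s_r},$$
the middle identity by Tonelli and the last by the dilation invariance of the $L^1(\nu_{\alpha_r})$‑norm; note $c_{s_r}<\infty$ exactly because $s_r>\alpha_r+1/2$, which is the only place the hypothesis on $s$ is used. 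Applying $\tau^{x_r}$ (positive on the even functions $\chi_{[-\tau,\tau]}$), using Tonelli once more, and then the definition of $M_r$, one gets for $h\ge0$
$$\int_{\mathbb{R}}h(\ldots,y_r,\ldots)\,\tau^{x_r}[\delta_{\lambda_r}(w_{-2s_r})](y_r)\,d\nu_{\alpha_r}(y_r)\le\int_0^\infty\nu_{\alpha_r}([-\tau,\tau])\,(M_r h)(\ldots,x_r,\ldots)\,d\mu_{\lambda_r}(\tau)=c_{s_r}\,(M_r h)(\ldots,x_r,\ldots).$$

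Finally I would iterate this bound over $r=1,\ldots,d$: each application peels off one integration, produces the $\lambda_r$‑independent constant $c_{s_r}$, and replaces $h$ by $M_r h$, which is again nonnegative so the next step applies. After $d$ steps,
$$\delta_\lambda(w_{-2s})\star h(x)\le\Big(\prod_{r=1}^d c_{s_r}\Big)\,M_1\cdots M_d h(x)=C_s\,M_P(|h|)(x),$$
where I used that the $M_r$ act on distinct variables and commute on nonnegative functions, together with $h\ge0$; the constant $C_s=\prod_r\|w_{-2s_r}\|_{L^1(\nu_{\alpha_r})}$ is independent of $\lambda$. Finiteness of every integral involved, for $h\in L^p$ with $1\le p<\infty$, follows from $\delta_\lambda(w_{-2s})\in L^1(\nu_\alpha)$ and the Young‑type inequality $\|f\star g\|_p\le 3^d\|f\|_1\|g\|_p$ noted earlier. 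The argument is essentially routine; the only points needing (mild) care are the positivity of $\tau^x$ on even functions — which is what lets one drop absolute values and invoke Tonelli — and the bookkeeping in the layer‑cake representation so that the constant emerges as the dilation‑invariant $L^1$‑norm, which is precisely what forces the threshold $s>\alpha+{\bf\frac12}$.
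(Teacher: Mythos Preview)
Your argument is correct and follows the same architecture as the paper's proof: both reduce to a one-dimensional estimate via the product structure of the Dunkl translation, convolution, and measure, and then iterate. The only difference is that the paper invokes \cite[Theorem~7.5]{convmax} (Thangavelu--Xu) for the one-dimensional bound, while you supply that bound directly via the layer-cake representation of the even, radially decreasing profile $\delta_{\lambda_r}(w_{-2s_r})$; your proof is thus a self-contained unpacking of what the cited result does, with the pleasant by-product that the constant comes out explicitly as $C_s=\prod_r\|w_{-2s_r}\|_{L^1(\nu_{\alpha_r})}$.
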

\begin{proof} Observe first that in the one-dimensional case \eqref{chap:Ck,sec:MarDun,eq:maxbound} is a consequence of \cite[Theorem 7.5]{convmax} (here $2\lambda_k+2=2\alpha+1$). Indeed, if $s>\alpha+1/2,$ then taking $\phi_0(|x|)=w_{-2s}(x)$ we see that all the assumptions of \cite[Theorem 7.5]{convmax} are satisfied (note that due to a slight misprint in the statement of \cite[Theorem 7.5]{convmax} we should actually assume that $\int_0^{\infty}R^{2\lambda_k+2} |\phi_0'(R)|\,dR<\infty,$ which is in fact precisely what we have).

In the multi-dimensional case we use the one-dimensional version together with the product structure of the measure $\nu_{\alpha},$ and the Dunkl translation and convolution. \end{proof}
The last component we need in order to prove Theorem \ref{thm:dunklmarmult} is a multivariate Littlewood-Paley theory for the Dunkl transform expressed in the following.


\begin{thm}
\label{thm:litpalmar}
Fix $1<p<\infty$ and let $\psi \in C_c^{\infty}(\mathbb{R})$ be a function supported in $[-4,-1/4]\cup[1/4,4].$ Define $S_{j},$ $j\in\mathbb{Z}^d,$ by \begin{equation*}D_{\alpha} (S_{j}f)(y)=\psi(2^{-j_1}y_1)\cdots\psi(2^{-j_d}y_d)D_{\alpha} f(y).\end{equation*}
Then
\begin{equation}
\label{chap:Ck,sec:MarDun,eq:ineje}
\|f\|_p\lesssim\bigg\|\bigg(\sum_{j\in\mathbb{Z}^d}|S_{j} f|^2\bigg)^{1/2}\bigg\|_{p}\lesssim \|f\|_{p},
\end{equation}
where the left hand side of the above inequality holds under the additional assumption $\sum_{j_r\in\mathbb{Z}}|\psi(2^{-j_r}\xi)|^2=C,$ $\xi\neq0.$
\end{thm}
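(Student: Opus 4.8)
The plan is to establish the two inequalities in \eqref{chap:Ck,sec:MarDun,eq:ineje} separately, deriving the lower bound from the upper bound by a standard duality/polarization argument, so that the real work is the upper bound. For the upper bound I would run the usual randomization scheme: by Lemma \ref{lem:randkhin} (applied coordinatewise in $\mathbb{Z}^d$, exactly as in the proof of Theorem \ref{thm:gfun}) it suffices to bound, uniformly over sign choices $|a_{j_r}|\le 1$, the operator norm on $L^p$ of $\sum_{j\in\mathbb{Z}^d} a_{j_1}\cdots a_{j_d}\, S_j = \prod_{r=1}^d\big(\sum_{j_r\in\mathbb{Z}} a_{j_r} S_{j_r}^{(r)}\big)$, where $S_{j_r}^{(r)}$ is the one-dimensional Dunkl Littlewood--Paley piece acting on the $r$-th variable and I have used the tensor-product structure of $D_\alpha=D_{\alpha_1}\cdots D_{\alpha_d}$. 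Each factor is the Dunkl multiplier operator $\mathcal T^D_{m_r}$ with symbol $m_r(\xi)=\sum_{j_r\in\mathbb{Z}} a_{j_r}\psi(2^{-j_r}\xi)$, which is a bounded function supported on $[-4,-1/4]\cup[1/4,4]$ dilated dyadically; one checks it satisfies the hypothesis \eqref{chap:Ck,sec:MarDun,eq:conem} of Theorem \ref{thm:dunklmarmult} with $W_{s,loc}$-norm bounded uniformly in the $a_{j_r}$ (this is just the one-dimensional observation that a dyadic sum of translated bumps has uniformly bounded local Sobolev norm of any order, and $s>\alpha_r+1/2$ is available). Hence each factor is bounded on $L^p(\mathbb{R},\nu_{\alpha_r})$ with a constant independent of the signs, and by Fubini (the $\|\cdot\|_{p\to p}$-norm of a tensor operator is the product of the one-dimensional norms, cf.\ \eqref{chap:Intro,sec:Notation,eq:tensnormeq}) the product operator is bounded on $L^p$ uniformly. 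This gives the right-hand inequality in \eqref{chap:Ck,sec:MarDun,eq:ineje}.

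An alternative, more self-contained route for the upper bound, which avoids even invoking Theorem \ref{thm:dunklmarmult}, is to use the vector-valued estimate supplied by Lemma \ref{lem:dilest} together with Lemma \ref{lem:maxboundlem}: writing $S_j f$ as a dilated Dunkl convolution with the kernel $D_\alpha^{-1}(\Psi)$ and splitting $f=\sum_\varepsilon f_\varepsilon$ into its $\varepsilon$-symmetric parts, Lemma \ref{lem:dilest} controls $|S_j f|^2$ pointwise by $\sum_\varepsilon \int |f_\varepsilon(y)|^2\, \tau^y_\varepsilon \delta_{2^{j}}(w_{-2s})(x)\,d\nu_\alpha(y)$; summing in $j\in\mathbb{Z}^d$ and using the almost-orthogonality $\sum_j |\psi(2^{-j_r}\xi_r)|^2 \lesssim 1$ together with the maximal bound \eqref{chap:Ck,sec:MarDun,eq:maxbound} reduces $\big\|(\sum_j |S_j f|^2)^{1/2}\big\|_p$ to $\big\|(\sum_\varepsilon M_P(|f_\varepsilon|^2))^{1/2}\big\|_p^{1/2}$-type expressions, which are handled by the $L^{p/2}$-boundedness of $M_P$ (for $p>2$) and, for $p\le 2$, by interpolation with the trivial $L^2$ identity $\|(\sum_j|S_jf|^2)^{1/2}\|_2 \approx \|f\|_2$ coming from Plancherel for $D_\alpha$ and $\sum_j|\psi(2^{-j_r}\xi_r)|^2=C$. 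I would present the first route as the main argument and mention the second as a remark, since the first is shorter given the tools already in place.

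For the lower bound in \eqref{chap:Ck,sec:MarDun,eq:ineje}, under the extra normalization $\sum_{j_r}|\psi(2^{-j_r}\xi_r)|^2=C$, I would argue by duality: choose $\tilde\psi\in C_c^\infty$ with $\tilde\psi\psi$ again a valid Littlewood--Paley family and $\sum_j \overline{\psi(2^{-j}\xi)}\tilde\psi(2^{-j}\xi) = C$, so that $f = C^{-d}\sum_{j} S_j^* \tilde S_j f$ (with $S_j^*$ the adjoint, which has symbol $\overline{\psi(2^{-j_1}\cdot)}\cdots$). Then pair with $g\in L^{p'}$, apply Cauchy--Schwarz in $j$, and invoke the already-proven upper square-function bound for $\tilde S_j$ on $L^p$ and for $S_j^*$ on $L^{p'}$ (the latter by the same proof applied to the dual exponent, which is again in $(1,\infty)$). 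Taking the supremum over $\|g\|_{p'}\le 1$ yields $\|f\|_p\lesssim \|(\sum_j |\tilde S_j f|^2)^{1/2}\|_p$, and since $\tilde S_j = \tilde S_j S_j'$ for a suitable fattened $S_j'$ one absorbs back to $\|(\sum_j|S_j f|^2)^{1/2}\|_p$.

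The step I expect to be the main obstacle is making the randomization/tensorization argument fully rigorous in the $\varepsilon$-symmetry bookkeeping and, more seriously, checking uniformity of the bounds: one must verify that the one-dimensional Dunkl multiplier norm of $\sum_{j_r} a_{j_r}\psi(2^{-j_r}\xi)$ is controlled by $\sup_{k}\|\Psi\, m(2^k\cdot)\|_{W_s}$-type quantities independently of the (infinitely many) signs $a_{j_r}$, which requires the local-Sobolev hypothesis \eqref{chap:Ck,sec:MarDun,eq:conem} to behave well under dyadic superposition --- essentially the observation that $\sup_{k\in\mathbb{Z}}\|\Psi(\cdot)\sum_{j} a_{j}\psi(2^{j-k}\cdot)\|_{W_s}<\infty$ uniformly in $\{a_j\}$ because only boundedly many terms in the inner sum meet $\supp\Psi$. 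A secondary technical point is the interpolation argument needed to cover $1<p<2$ in the second (maximal-function) route, and ensuring the adjoint operators $S_j^*$ are still legitimate Dunkl multipliers with symbols of the required support and regularity, which follows since $D_\alpha$ is an $L^2$-isometry with $D_\alpha^{-1}f = D_\alpha(f^\vee)$, so $S_j^*$ has symbol $\overline{\psi(2^{-j_1}\cdot)}\cdots\overline{\psi(2^{-j_d}\cdot)}$ and the same proof applies verbatim.
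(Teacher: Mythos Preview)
Your first route is essentially the paper's own argument: Khintchine randomization (Lemma \ref{lem:randkhin}), the product factorization $S_j=\prod_r S^{(r)}_{j_r}$, reduction to a one-dimensional Dunkl multiplier bound uniform in the sign choices, Fubini to assemble the factors, and polarization for the lower inequality.

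There is, however, a genuine circularity. You invoke Theorem \ref{thm:dunklmarmult} for the one-dimensional factor bounds, but in the paper Theorem \ref{thm:dunklmarmult} is proved \emph{using} Theorem \ref{thm:litpalmar} as its opening step (the left-hand inequality in \eqref{chap:Ck,sec:MarDun,eq:ineje} is the very first thing applied in that proof). The one-dimensional multiplier input must therefore come from outside; the paper cites \cite[Theorem 3.1]{Sifi1} for precisely this purpose. Your observation that only boundedly many terms of $\sum_j a_j\psi(2^{-j}\cdot)$ meet any fixed dyadic block, so that the relevant local regularity is uniform in $\{a_j\}$, is correct and is exactly what one feeds into that external one-dimensional result.

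Your second route is not a workable alternative. Summing the pointwise inequality of Lemma \ref{lem:dilest} over $j\in\mathbb{Z}^d$ leaves $\sum_j \tau^y_\varepsilon\delta_{2^{-j}}(w_{-2s})(x)$ inside the integral, which is not controlled by a single application of $M_P$; the Plancherel-side almost-orthogonality $\sum_j|\psi(2^{-j_r}\xi_r)|^2\lesssim 1$ you invoke does not survive the passage to that pointwise bound. In the proof of Theorem \ref{thm:dunklmarmult} this machinery is applied to a \emph{single} $j$ after first sandwiching by Littlewood--Paley pieces --- Theorem \ref{thm:litpalmar} is the prerequisite there, not a byproduct.
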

\begin{proof}[Proof (sketch)] For a moment let $n\in\{1,\ldots,d\}$ be fixed. We claim that for $f\in L^p(\mathbb{R},\nu_{\alpha_r}),$ and arbitrary $\varepsilon_r^{j_r}\in \{-1,1\}^{\mathbb{Z}}$
\begin{equation}
\label{chap:Ck,sec:MarDun,eq:epsa}
\bigg\|\sum_{j_r\in\mathbb{Z}}\varepsilon_r^{j_r} S_{j_r} f\bigg\|_{L^p(\mathbb{R},\nu_{\alpha_r})}\leq C_{p,\alpha_r} \|f\|_{L^p(\mathbb{R},\nu_{\alpha_r})},
\end{equation}
where $D_{\alpha_r} (S_{j_r} f)(y_r)=\psi(2^{-j_r}y_r)D_{\alpha_r} f (y_r)$ and the constant $C_{p,\alpha_r}$ is independent of $\{\varepsilon_r^{j_r}\}_{j_r\in\mathbb{Z}}.$ If \eqref{chap:Ck,sec:MarDun,eq:epsa} holds, then since $S_{j}=S_{j_1}\cdots S_{j_d},$ the standard Rademacher function (Khintchine's inequality) trick (see Lemma \ref{lem:randkhin}) together with Fubini's theorem easily imply the right hand side of \eqref{chap:Ck,sec:MarDun,eq:ineje}. Then, the left hand side follows by using the polarization method. Coming back to \eqref{chap:Ck,sec:MarDun,eq:epsa} we observe that it is a consequence of \cite[Theorem 3.1]{Sifi1}. \end{proof}

Note that in Theorem \ref{thm:litpalmar} we are interested in 'product' functions $\Psi=\psi\otimes\cdots\otimes \psi$ and multi-parameter dilations $\Psi(2^{-j_1}y_1,\cdots,2^{-j_d}y_d),$ $j=(j_1,\ldots,j_d)\in\mathbb{Z}^d.$ Therefore the results of \cite{meja}, which assume that $\Psi$ is radial and deal with one-parameter dilations $\Psi(2^{-l}y),$ $l\in\mathbb{Z},$ are not applicable in our case.

Having collected Lemmata \ref{lem:dilest}, \ref{lem:maxboundlem}, and Theorem \ref{thm:litpalmar} we proceed to the rather standard proof of Theorem \ref{thm:dunklmarmult}.

\begin{proof}[Proof of Theorem \ref{thm:dunklmarmult}] Once again, for simplicity, we take $d=2.$ In view of \eqref{chap:Ck,sec:MarDun,eq:compasym}, with no loss of generality we may assume that $f$ is $\varepsilon$-symmetric, for some $\varepsilon\in\{0,1\}^2.$ By a density argument we can further restrict to $f\in\mathcal{S}(\mathbb{R}^d).$ Let $\psi$ be the function from Theorem \ref{thm:dunklmarmult} and set $D_{\alpha}(S_{j,k}f)=\psi(2^{-j}y_1)\psi(2^{-k}y_2)D_{\alpha} f(y_1,y_2).$ Let $\widetilde{\psi}$ be a non-zero even $C_c^{\infty}(\mathbb{R})$ function supported in $[-4,-1/4]\cup[1/4,4]$ and equal to $1$ on $[-2,-1/2]\cup[1/2,2].$ Defining \begin{equation*}D_{\alpha} (\tilde{S}_{j,k}f)(y_1,y_2)=\widetilde{\psi}(2^{-j}y_1)\widetilde{\psi}(2^{-k}y_2)D_{\alpha} f(y_1,y_2)\end{equation*} we easily see that $S_{j,k}\tilde{S}_{j,k}=S_{j,k}.$ Indeed, the operator $S_{j,k}$ is defined by means of the given function $\psi,$ which is supported in $[-2,-1/2]\cup[1/2,2].$ Now, from the left hand side of \eqref{chap:Ck,sec:MarDun,eq:ineje} applied to the family $\{S_{j,k}\}$
\begin{equation*}\|\T_m f\|_{p}\lesssim \bigg\|\bigg(\sum_{j,k\in\mathbb{Z}}|S_{j,k} \T_m f|^2\bigg)^{1/2}\bigg\|_{p}=\bigg\|\bigg(\sum_{j,k\in\mathbb{Z}}|S_{j,k} \T_m \tilde{S}_{j,k}f|^2\bigg)^{1/2}\bigg\|_{p}.\end{equation*}
Let $g_{j,k}=\tilde{S}_{j,k}f.$ Observe that $g_{j,k}\in\mathcal{S}(\mathbb{R}^d),$ $j,k\in\mathbb{Z}.$ Moreover, $(g_{j,k})_{\varepsilon}=g_{j,k},$ i.e. $g_{j,k}$ is $\varepsilon$-symmetric. The latter is clear once we recall that $f$ is $\varepsilon$-symmetric, $\tilde{\psi}\otimes\tilde{\psi}$ is ${\bf 0}$-symmetric and the Dunkl transform commutes with the action of $G.$ Assume for a moment that $p>2$ and let $1/(p/2)'+2/p=1.$ Then there exists $h\in L^{(p/2)'}(\nu_{\alpha})$ with norm $1$ such that
\begin{equation*}\bigg\|\bigg(\sum_{j,k\in\mathbb{Z}}|S_{j,k} \T_m g_{j,k}|^2\bigg)^{1/2}\bigg\|_{p}=\bigg(\sum_{j,k\in\mathbb{Z}}\,\int_{\mathbb{R}^2}|S_{j,k} \T_m\, g_{j,k}(x)|^2 h(x)\,d\nu_{\alpha}(x)\bigg)^{1/2}.\end{equation*} Since the multiplier associated with $S_{j,k} \T_m$ is $\Psi(2^{-j}y_1,2^{-k}y_2)m(y)$ and $g_{j,k}$ is an $\varepsilon$-symmetric function from $\mathcal{S}(\mathbb{R}^d),$ using Lemma \ref{lem:dilest} we have
\begin{align*}&\int_{\mathbb{R}^2}|S_{j,k} \T_m\, g_{j,k}(x)|^2 |h(x)|\,d\nu_{\alpha}(x)\\& \leq \|m\|^2_{W_{s,loc}}\int_{\mathbb{R}}\int_{\mathbb{R}}|g_{j,k}(y)|^2 \tau^{y}(\delta_{2^{-j},2^{-k}}(w_{-2s}))(x)\,d\nu_{\alpha}(y)\, |h(x)|\,d\nu_{\alpha}(x).\end{align*}
From the above inequality, using Fubini's theorem and Lemma \ref{lem:maxboundlem}, we get \begin{align*}&\bigg(\sum_{j,k\in\mathbb{Z}}\,\int_{\mathbb{R}^2}|S_{j,k} \T_m\, g_{j,k}(x)|^2 h(x)\,d\nu_{\alpha}(x)\bigg)^{1/2}\\&\lesssim \bigg(\int_{\mathbb{R}^2}\,\sum_{j,k\in\mathbb{Z}}|g_{j,k}(y)|^2\, |h|\star \delta_{2^{-j},2^{-k}}(w_{-2s}) (y)\,d\nu_{\alpha}(y)\bigg)^{1/2}\\&\lesssim \bigg(\int_{\mathbb{R}^2}\,\sum_{j,k\in\mathbb{Z}}|g_{j,k}(y)|^2\, M_P(h) (y)\,d\nu_{\alpha}(y)\bigg)^{1/2}.\end{align*}
Hence, applying H\"{o}lder's inequality together with the $L^p$ boundedness of $M_P$ we arrive at
\begin{equation*}\|\T_m f\|_{p}\lesssim \bigg\|\bigg(\sum_{j,k\in\mathbb{Z}}|\tilde{S}_{j,k}f|^2\bigg)^{1/2}\bigg\|_{p}.\end{equation*} Observing that, by Theorem \ref{thm:litpalmar}, the family $\{\tilde{S}_{j,k}\}$ satisfies the right hand side inequality in \eqref{chap:Ck,sec:MarDun,eq:ineje} we get the desired conclusion. Now, for $1<p<2,$ a duality argument completes the proof. \end{proof}

Finally, as a corollary of Theorem \ref{thm:dunklmarmult} and Lemma \ref{lem:dunhan1} we obtain the following Marcinkiewicz type multiplier theorem for the Hankel transform from Section \ref{chap:Ck,sec:HorHan}. Note that Theorem \ref{thm:MarHan} was proved earlier in \cite[Appendix]{cit:ja} without using the link with the Dunkl transform setting. Throughout the statement and proof of Theorem \ref{thm:MarHan} by $\psi$ we denote a non-zero $C_c^{\infty}((0,\infty))$ function supported in $[1/2,2]$ and such that $\sum_{l\in\mathbb{Z}}|\psi(2^{-l}\xi)|^2=1,$ $\xi>0.$ We also set $\Psi(y_1,\ldots,y_d)=\psi(y_1)\cdots\psi(y_d).$
\begin{thm}
\label{thm:MarHan}
Let $m$ be a bounded measurable function on $\Rdp.$ Assume that
\begin{equation}
\label{chap:Ck,sec:MarDun,eq:conemHan}
\sup_{j\in\mathbb{Z}^d} \|\Psi\, m(2^{j_1}\cdot,\ldots,2^{j_d}\cdot)\|_{W_s}<\infty,
\end{equation}
for some $s>\alpha+\bf{\frac{1}{2}}.$ Then the multiplier operator for the Hankel transform $\T_m^{\Hi},$ given by \eqref{chap:Ck,sec:HorHan,eq:spHan}, is bounded on all the $L^p(\Rdp,\nu_{\alpha})$ spaces, $1<p<\infty.$
\end{thm}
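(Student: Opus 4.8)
The plan is to deduce Theorem~\ref{thm:MarHan} from Theorem~\ref{thm:dunklmarmult} by the same device used throughout Section~\ref{chap:Ck,sec:MarDun}: passing from the Hankel transform on $\Rdp$ to the Dunkl transform on $\mathbb{R}^d$ via Lemma~\ref{lem:dunhan1}. In dimension one, the idea is to embed a function $f$ on $\Rdp$ into $L^p(\mathbb{R},\nu_{\alpha})$ by taking its even extension $\tilde f$, noting that $D_\alpha$ acts on even functions exactly as $\Hla$ does (the odd-part term in \eqref{chap:Ck,sec:MarDun,eq:dunhan1} drops out). Then, given a Hankel multiplier $m$ on $\Rdp$, one builds a \emph{Dunkl} multiplier $\tm$ on $\mathbb{R}$ by setting $\tm(\xi)=m(|\xi|)$, i.e. the even extension of $m$. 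Since $\T^{\Hi}_m f$ on $\Rdp$ coincides (up to the even-extension identification) with $\T^D_{\tm}\tilde f$ restricted to $\Rdp$, and since $\|\tilde f\|_{L^p(\mathbb{R},\nu_\alpha)}\approx\|f\|_{L^p(\Rdp,\nu_\alpha)}$, a bound for $\T^D_{\tm}$ on $L^p(\mathbb{R},\nu_\alpha)$ immediately gives one for $\T^{\Hi}_m$ on $L^p(\Rdp,\nu_\alpha)$. In the multidimensional case one tensorizes: $D_\alpha=D_{\alpha_1}\cdots D_{\alpha_d}$ and $\Hla=\Hla[\alpha_1]\cdots\Hla[\alpha_d]$ act on separate variables, so extending $f$ evenly in each variable and extending $m(\la_1,\ldots,\la_d)$ to $\tm(\xi)=m(|\xi_1|,\ldots,|\xi_d|)$ on $\mathbb{R}^d$ does the job.

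The main steps, in order, would be: (i) record the even-extension isometry-up-to-constant between $L^p(\Rdp,\nu_\alpha)$ and the subspace of even functions in $L^p(\mathbb{R}^d,\nu_\alpha)$, and verify that under it $\Hla$ corresponds to $D_\alpha$ and $\T^{\Hi}_m$ corresponds to $\T^D_{\tm}$ with $\tm$ the even extension of $m$ (this uses Lemma~\ref{lem:dunhan1} in each variable, the odd parts vanishing because $\tilde f$ is even; and it uses \eqref{chap:Ck,sec:MarDun,eq:invdun} together with $D_\alpha$ commuting with the $G$-action to see that $\T^D_{\tm}$ preserves evenness since $\tm$ is even); (ii) check the hypothesis transfers, namely that the localized Sobolev condition \eqref{chap:Ck,sec:MarDun,eq:conemHan} on $m$ over $\Rdp$ implies condition \eqref{chap:Ck,sec:MarDun,eq:conem} on $\tm$ over $\mathbb{R}^d$ for the same $s>\alpha+\tfrac{1}{2}$ — here one uses that $\tm$ is even in each variable, the dyadic pieces on $\mathbb{R}^d$ supported in $([-2,-1/2]\cup[1/2,2])^d$ split into $2^d$ reflected copies of the corresponding dyadic piece of $m$ on $(1/2,2)^d$, and that reflection is an isometry of the Euclidean Sobolev space $W_s$ (with the caveat that one should use a cutoff $\psi$ which, after even reflection, still agrees with an admissible $\Psi$ for Theorem~\ref{thm:dunklmarmult}, or observe via Remark~1 after that theorem that any admissible $\Phi$ works); (iii) apply Theorem~\ref{thm:dunklmarmult} to conclude $\T^D_{\tm}$ is bounded on all $L^p(\mathbb{R}^d,\nu_\alpha)$, $1<p<\infty$; (iv) restrict back to $\Rdp$ via step~(i) to obtain the boundedness of $\T^{\Hi}_m$.

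The step I expect to require the most care is (ii), the transfer of the smoothness hypothesis: one must be slightly careful that the even reflection of a function in the Euclidean Sobolev space $W_s$ stays in $W_s$ with controlled norm — this is true because reflection $y_r\mapsto -y_r$ is a measure-preserving linear isometry of $\mathbb{R}^d$, hence unitary on every $W_s$, but the localization cutoffs must be handled so that the reflected cutoff still matches an admissible product cutoff on $\mathbb{R}^d$; multiplying by a smooth compactly supported bump that is even and equals $1$ on the relevant annular region fixes this, at the cost of the harmless factor $\|\text{bump}\|$ in the $W_s$ norm (here one uses that multiplication by a fixed $C_c^\infty$ function is bounded on $W_s$). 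Everything else is bookkeeping: the tensor-product structure reduces all identities to the one-dimensional statements of Lemma~\ref{lem:dunhan1} and the elementary even-extension isometry, and the final $L^p$ bound is quoted verbatim from Theorem~\ref{thm:dunklmarmult}.
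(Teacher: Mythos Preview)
Your proposal is correct and follows essentially the same route as the paper's proof: even extension of $f$, $m$, and the cutoff $\Psi$ to $\mathbb{R}^d$, identification of $\T_m^{\Hi}$ with the restriction of $\T_{\tilde m}^D$ via Lemma~\ref{lem:dunhan1}, verification that the localized Sobolev condition transfers (the paper just asserts $\|\Psi\, m(2^j\cdot)\|_{W_s}\approx\|\tilde\Psi\,\tilde m(2^j\cdot)\|_{W_s}$, which is exactly your reflection-isometry observation), and an appeal to Theorem~\ref{thm:dunklmarmult}. Your discussion of step~(ii) is in fact more careful than the paper's, which dispatches the norm comparability in one line.
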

\begin{proof}
For a given $1<p<\infty$ take a function $f\in L^2(\Rdp,d\nu_{\alpha})\cap L^p(\Rdp,d\nu_{\alpha}).$ Consider the ${\bf 0}$-symmetric extensions of $f,$ $m,$ and $\Psi,$ i.e.\ the functions $\tilde{f},$ $\tilde{m}$ and $\tilde{\Psi}$ on $\mathbb{R}^d$ satisfying the identities $\tilde{f}(x)=f(|x_1|,\ldots,|x_d|),$ $\tilde{m}(x)=m(|x_1|,\ldots,|x_d|)$ and $\tilde{\Psi}(x)=\psi(|x_1|)\cdots \psi(|x_d|),$ for $x\in\mathbb{R}^d.$

Then, the function $\T_{\tilde{m}}^D(\tilde{f})$ is also ${\bf 0}$-symmetric, and from Lemma \ref{lem:dunhan1} it follows that
$\T_m^{\Hi}(f)(x)=\T_{\tilde{m}}^D(\tilde{f})(x),$ $x\in\Rdp.$ Now, it is not hard to verify that
\begin{equation*}
C_{d,s}^{-1}\|\Psi\, m(2^{j_1}\cdot,\ldots,2^{j_d}\cdot)\|_{W_s} \leq \|\tilde{\Psi}\,\tilde{m}(2^{j_1}\cdot,\ldots,2^{j_d}\cdot)\|_{W_s}\leq C_{d,s}\|\Psi\, m(2^{j_1}\cdot,\ldots,2^{j_d}\cdot)\|_{W_s}.
\end{equation*}
Thus, using the assumption \eqref{chap:Ck,sec:MarDun,eq:conemHan} and Theorem \ref{thm:dunklmarmult} with $\tilde{m}$ and $\tilde{\Psi}$ in place of $m$ and $\Psi,$ we obtain the boundedness of the operator $\T_{\tilde{m}}^D$ on $L^p.$ Consequently,
\begin{align*}
2^{d/p}\|\T_m^{\Hi}(f)\|_{L^p(\Rdp,\nu_{\alpha})}=\|\T_{\tilde{m}}^D(\tilde{f})\|_{p}\leq C_{p,\tilde{m},\tilde{\Psi}}\|\tilde{f}\|_{p}\leq C_{p,m,\Psi}2^{d/p}\|f\|_{L^p(\Rdp,\nu_{\alpha})},
\end{align*}
and the proof is finished.
\end{proof}


    %
    %

    \newchapter{Systems of operators having $H^{\infty}$ functional calculus}{Systems of operators having $H^{\infty}$ functional calculus}{Systems of operators having $H^{\infty}$ functional calculus}
        \label{Chap:Hinf}
In the present chapter we turn our attention to systems of operators which only have $H^{\infty}$ functional calculus.

In Section \ref{chap:Hinf,sec:genMarHinf}, using Theorem \ref{thm:gen} and modifying the proof of Theorem \ref{thm:genMarCk}, we prove a Marcinkiewicz type multiplier theorem, see Theorem \ref{thm:genMarHinf}. Then, in Section \ref{chap:Hinf,sec:ExOp}, we focus on particular examples of the Ornstein-Uhlenbeck and the Laguerre operators (in the setting of Laguerre polynomial expansions). Finally, in Section \ref{chap:Hinf,sec:HolExt}, we prove a fairly general multivariate holomorphic extension theorem, see Theorem \ref{thm:HolExt}. As an application of this theorem, in Corollary \ref{cor:HolExtOULag} we obtain some multivariate holomorphic extension properties for joint multipliers of the Ornstein-Uhlenbeck and the Laguerre operators.



        \section[A Marcinkiewicz type multiplier theorem]{A Marcinkiewicz type multiplier theorem}
        \label{chap:Hinf,sec:genMarHinf}
Consider a general system of self-adjoint non-negative strongly commuting operators $L=(L_1,\ldots,L_d)$ on some space $L^2(X,\nu).$ The assumptions of Section \ref{chap:Intro,sec:Setting} are still in force. We also keep the short notation of Chapter \ref{Chap:General} and Section \ref{chap:Ck,sec:genMar}, for the $L^p$ space, $L^p$ norm and $L^p$ operator norm. Recall that, for a Borel measurable function $m$ on $\Rdp,$ the multiplier operator $m(L)$ is given by \eqref{chap:Intro,sec:Setting,eq:mdef}.

As a consequence of Cowling's result, \cite[Corollary 1]{Hanonsemi}, for each $1<p<\infty$ and $r=1,\ldots,d,$ we have
\begin{equation}
\label{chap:Hinf,sec:genMarHinf,eq:Cowest}
\|L_r^{iv}\|_{p\to p}\leq C_{p}(1+|v|)^{4|1/p-1/2|}\exp(\pi |1/p-1/2||v|),\qquad v\in\mathbb{R},
\end{equation}
where the constant $C_p$ is independent of the system $L.$ Recently Carbonaro and Dragi\v{c}evi\'{c} \cite[Proposition 11]{Carb-Drag} proved that, for every $1<p<\infty,$
\begin{equation}
\label{chap:Hinf,sec:genMarHinf,eq:Carb-Dragest0}
\|L_r^{iv}\|_{p\to p}\leq C_{p}(1+|v|)^{1/2}\exp(\pst|v|),\qquad v\in\mathbb{R},
\end{equation}
where $\pst=\arcsin|2/p-1|.$ By interpolation the above inequality leads to the bound
\begin{equation}
\label{chap:Hinf,sec:genMarHinf,eq:Carb-Dragest}
\|L_r^{iv}\|_{p\to p}\leq C_{p,\varepsilon}(1+|v|)^{(1+\varepsilon)|1/p-1/2|}\exp(\pst|v|),\qquad v\in\mathbb{R},
\end{equation}
valid for each $r=1,\ldots,d,$ every $1<p<\infty,$ and arbitrary $\varepsilon>0.$ Since clearly, $\pst<\pi |1/p-1/2|$ for $1<p<\infty,$ we see that \eqref{chap:Hinf,sec:genMarHinf,eq:Carb-Dragest} improves \eqref{chap:Hinf,sec:genMarHinf,eq:Cowest}.

On the other hand, if $\mL$ is the self-adjoint extension of the $d$-dimensional Ornstein-Uhlenbeck operator, then from \cite[Theorem 3.5]{hmm} and \cite[Theorem 1.2]{sharp} (combined with \cite[Theorem 3]{PrusSohr1}) it follows that, for every $1<p<\infty$
\begin{equation}\label{chap:Hinf,sec:genMarHinf,eq:OUimabound}C_{p,d}^{-1}\,e^{\pst|v|}\leq \|(\mL+I)^{iv}\|_{p\to p}\leq C_{p,d}\,e^{\pst|v|},\qquad v\in \mathbb{R}.\end{equation}
Note that the left hand side inequality above implies that the operator $\mL+I$ cannot have a Marcinkiewicz functional calculus. The right hand side inequality in \eqref{chap:Hinf,sec:genMarHinf,eq:OUimabound} shows that the polynomial factor in \eqref{chap:Hinf,sec:genMarHinf,eq:Carb-Dragest} can be dropped for particular operators satisfying the assumptions of Section \ref{chap:Intro,sec:Setting}, even when they do not have a Marcinkiewicz functional calculus.

Throughout this section we assume that there exist $\theta=(\theta_1,\ldots,\theta_d)\in [0,\infty)^d$ and, for each $1<p<\infty,$  $\phi_p=(\phi_p^1,\ldots,\phi_p^d)\in (0,\pi/2)^d,$ such that
\begin{equation}
\label{chap:Hinf,sec:genMarHinf,eq:polynomial}
\|L_r^{iv}\|_{p\to p}\leq \mC(p,L_r)(1+|v|)^{\theta_r|1/p-1/2|}\exp(\phi_p^r|v|),\qquad v\in\mathbb{R},\quad r=1,\ldots,d.
\end{equation}
In view of \cite[Section 5]{CowDouMcYa}, for each fixed $r=1,\ldots,d,$ the condition \eqref{chap:Hinf,sec:genMarHinf,eq:polynomial} is equivalent to the fact that the operator $L_r$ has an $H^{\infty}$ functional calculus in a sector slightly larger than $S_{\phi_p^r}.$ Moreover, \eqref{chap:Hinf,sec:genMarHinf,eq:polynomial} clearly implies that for each $1<p<\infty$
\begin{equation}
\label{chap:Hinf,sec:genMarHinf,eq:polynomialfull}
\|L^{iu}\|_{p\to p}\leq \mC(p,L) \prod_{r=1}^d(1+|u_r|)^{\theta_r|1/p-1/2|}\exp(\phi_p^r|u_r|),\qquad u=(u_1,\ldots,u_d)\in\mathbb{R}^d.
\end{equation}
As in Section \ref{chap:Ck,sec:genMar}, we write the constants $\mC(p,L_r)$ in \eqref{chap:Hinf,sec:genMarHinf,eq:polynomial} and $\mC(p,L)$ in \eqref{chap:Hinf,sec:genMarHinf,eq:polynomialfull} in the calligraphic font because we need to keep track of them in Theorem \ref{thm:genMarHinf}.

In order to state the main result of this section we need some more notation. For a function $m\colon \overline{{\bf S}_{\phi_p}}\to \mathbb{C},$ $\phi_p=(\phi_p^1,\ldots,\phi_p^d),$ and $\varepsilon\in \{-1,1\}^d,$ set $$m(e^{i\varepsilon\phi_p}\lambda)=m(e^{i\varepsilon_1\phi_p^1}\la_1,\ldots,e^{i\varepsilon_d\phi_p^d}\la_d),\qquad \la=(\la_1,\ldots,\la_d)\in \mathbb{R}^{d}_+.$$ Note that by (multivariate) Fatou's theorem, every $m\in H^{\infty}({\bf S}_{\phi_p})$ admits a non-tangential limit a.e.\ on the boundary of ${\bf S}_{\phi_p}.$ Slightly abusing the notation we continue writing $m$ for this extension.

        The main result of this section, Theorem \ref{thm:genMarHinf}, is close to a previous result of Albrecht et al.\ \cite[Theorem 5.4]{AlFrMc}. The difference is that we assume less on the multiplier function $m$ (it does not need to be holomorphic in a larger polysector), but more on the operators $L_r,$ $r=1,\ldots,d$ (they are contractions on all $L^p$ spaces). Theorem \ref{thm:genMarHinf} is also a multivariate generalization of a result of Garc\'ia-Cuerva's et al., \cite[Theorem 2.2]{funccalOu}. Recall that the operators $L_r,$ $r=1,\ldots,d,$ satisfy all the assumptions of Section \ref{chap:Intro,sec:Setting}, $m(L)$ is defined by \eqref{chap:Intro,sec:Setting,eq:mdef}, while $\theta=(\theta_1,\ldots,\theta_d)$ appears in \eqref{chap:Hinf,sec:genMarHinf,eq:polynomialfull}.

\begin{thm}
\label{thm:genMarHinf}
Let $1<p<\infty$ be given. Assume $m\in H^{\infty}({\bf S}_{\phi_p})$ is such that the boundary value functions $m(e^{i\varepsilon\phi_p}\lambda),$ $\varepsilon \in \{-1,1\}^d,$ satisfy the Marcinkiewicz condition \eqref{chap:Intro,sec:Notation,eq:Marcon} of some order $\rho>|1/p-1/2|\theta+{\bf 1}.$ Then the multiplier operator $m(L)$ is bounded on $L^p$ and
$$\|m(L)\|_{p\to p}\leq C_{p,d}\, \mC(p,L)\,\max_{\varepsilon \in\{-1,1\}^d}\|m(e^{i\varepsilon\phi_p}\cdot)\|_{Mar,\rho}.$$
\end{thm}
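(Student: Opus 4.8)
The plan is to reduce Theorem~\ref{thm:genMarHinf} to the general multiplier criterion of Theorem~\ref{thm:gen}, following closely the strategy used for Theorem~\ref{thm:genMarCk} but exploiting holomorphy to deform contours. Fix $1<p<\infty$ and pick $N\in\mathbb{N}^d$ with $N>\rho$. We must verify the hypothesis \eqref{chap:General,eq:thm:gen}, i.e.\ that
$$\int_{\mathbb{R}^d}\|L^{iu}\|_{p\to p}\,\sup_{t\in\Rdp}|\M(m_{N,t})(u)|\,du<\infty,$$
where $m_{N,t}(\la)=\la^N t^N\exp(-2^{-1}\langle t,\la\rangle)m(\la)$. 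In view of the bound \eqref{chap:Hinf,sec:genMarHinf,eq:polynomialfull}, namely $\|L^{iu}\|_{p\to p}\le \mC(p,L)\prod_r(1+|u_r|)^{\theta_r|1/p-1/2|}e^{\phi_p^r|u_r|}$, it suffices to show the pointwise Mellin estimate
\begin{equation*}
\sup_{t\in\Rdp}|\M(m_{N,t})(u)|\le C_{p,d,N}\,\max_{\varepsilon\in\{-1,1\}^d}\|m(e^{i\varepsilon\phi_p}\cdot)\|_{Mar,\rho}\,\prod_{r=1}^d(1+|u_r|)^{-\rho_r}e^{-\phi_p^r|u_r|},
\end{equation*}
since then the $u$-integral converges because $-\rho_r+\theta_r|1/p-1/2|<-1$ by the assumption $\rho>|1/p-1/2|\theta+{\bf 1}$, and the exponential factors $e^{\phi_p^r|u_r|}$ and $e^{-\phi_p^r|u_r|}$ cancel exactly. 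The extra exponential decay, compared with the purely polynomial gain in Theorem~\ref{thm:genMarCk}, is precisely what holomorphy must buy us.

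The key step is therefore the contour-deformation argument for the Mellin transform. Write, for fixed $t$,
$$\M(m_{N,t})(u)=\int_{\Rdp}\la^{N-iu}t^N e^{-2^{-1}\langle t,\la\rangle}m(\la)\,\frac{d\la}{\la},$$
and handle one variable at a time. For the $r$-th variable, the integrand $\la_r\mapsto \la_r^{N_r-iu_r}e^{-t_r\la_r/2}m(\cdots,\la_r,\cdots)$ extends holomorphically to the sector $S_{\phi_p^r}$ (using $m\in H^\infty({\bf S}_{\phi_p})$ and the holomorphy of $\la_r^{N_r-iu_r}$ and the exponential), with the exponential $e^{-t_r\la_r/2}$ controlling growth near infinity inside the right half-plane and $\Real(N_r-iu_r)=N_r\ge 1$ controlling behaviour near $0$. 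Hence we may rotate the ray of integration: if $u_r\ge 0$ rotate to $\{\la_r=\sigma e^{-i\phi_p^r}:\sigma>0\}$ (respectively $e^{+i\phi_p^r}$ if $u_r<0$), picking up no residue and vanishing arcs at $0$ and $\infty$. On the rotated ray one has $|\la_r^{-iu_r}|=\sigma^{0}e^{\phi_p^r|u_r|}$ — wait, the other sign: $|\la_r^{-iu_r}|=|e^{-iu_r\log\la_r}|$ with $\log\la_r=\log\sigma - i\phi_p^r\,\mathrm{sgn}(u_r)$, giving $|\la_r^{-iu_r}|=e^{-\phi_p^r|u_r|}$. This produces the desired exponential decay $e^{-\phi_p^r|u_r|}$, and on the rotated ray the multiplier becomes the boundary value $m(e^{i\varepsilon\phi_p}\cdot)$ with $\varepsilon_r=-\mathrm{sgn}(u_r)$.

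After the rotation, the remaining factor $\sigma^{N_r}e^{-t_r\sigma\cos\phi_p^r/2}$ times the boundary multiplier is, up to harmless constants depending on $\phi_p^r$, exactly of the form handled in the proof of Theorem~\ref{thm:genMarCk}: one decomposes $m(e^{i\varepsilon\phi_p}\cdot)$ via a dyadic partition of unity $\{\Psi_j\}$, integrates by parts $\rho_r$ times in each variable to extract the polynomial gain $\prod_r(1+|u_r|)^{-\rho_r}$, applies Leibniz's rule, and uses Schwarz's inequality together with the Marcinkiewicz condition \eqref{chap:Intro,sec:Notation,eq:Marcon} on the boundary value function to sum the resulting geometric-type series $\sum_j p_j$. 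The bookkeeping is essentially identical to \eqref{chap:Ck,sec:genMar,eq:claim}--\eqref{chap:Ck,sec:genMar,eq:estint}, now with $t_r$ replaced by $t_r\cos\phi_p^r$, which only changes constants; crucially all bounds are uniform in $t\in\Rdp$ because after the change of variables $2^j\la\to\la$ the dependence on $t$ sits inside $(\partial^\gamma m)(\cdot/(2^jt))$ which is controlled solely by $\|m(e^{i\varepsilon\phi_p}\cdot)\|_{Mar,\rho}$. Combining over all $2^d$ sign patterns $\varepsilon$ yields the pointwise Mellin estimate with the constant $\max_\varepsilon\|m(e^{i\varepsilon\phi_p}\cdot)\|_{Mar,\rho}$, and Theorem~\ref{thm:gen} then gives $\|m(L)\|_{p\to p}\le C_{p,d}\,\mC(p,L)\,\max_\varepsilon\|m(e^{i\varepsilon\phi_p}\cdot)\|_{Mar,\rho}$, as claimed.

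The main obstacle I expect is making the multivariate contour deformation fully rigorous: justifying the simultaneous rotation of all $d$ rays (or, more safely, rotating one variable at a time inside an iterated integral, which requires checking absolute convergence and the vanishing of the corner arcs at each stage), and verifying that the non-tangential boundary value of $m$ — which a priori exists only a.e.\ by Fatou — is what one actually integrates against after the limit, rather than some interior restriction. This is the place where one genuinely uses that $m\in H^\infty({\bf S}_{\phi_p})$ and not merely that the boundary functions satisfy a Marcinkiewicz condition; a clean way is to first prove the estimate for $m$ holomorphic in a slightly larger polysector ${\bf S}_{\phi_p+\delta}$ (where the rotation is unproblematic and the boundary is approached from inside), with constants independent of $\delta$, and then pass to the limit $\delta\to0^+$ using a normal-families / dominated-convergence argument together with the uniform Marcinkiewicz bounds on the approximating boundary data.
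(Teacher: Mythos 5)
Your proof is correct and follows essentially the same route as the paper: verify hypothesis \eqref{chap:General,eq:thm:gen} of Theorem~\ref{thm:gen} by establishing the pointwise Mellin bound, then deform the rays of integration into the polysector $\mathbf{S}_{\phi_p}$ (choosing $\varepsilon_r=-\mathrm{sgn}(u_r)$) to extract the exponential decay $e^{-\phi_p^r|u_r|}$ that cancels the exponential growth of $\|L^{iu}\|_{p\to p}$, after which the estimate reduces to the dyadic decomposition / integration-by-parts argument already carried out in the proof of Theorem~\ref{thm:genMarCk} applied to the boundary functions $m(e^{i\varepsilon\phi_p}\cdot)$. One small point in your favor: the exponent on $(1+|u_r|)$ that you write, namely $-\rho_r$, is the correct one (it is what the proof of Theorem~\ref{thm:genMarCk} actually produces, and what is needed for convergence against $(1+|u_r|)^{\theta_r|1/p-1/2|}$ under $\rho>|1/p-1/2|\theta+{\bf 1}$); the displayed bound \eqref{chap:Hinf,sec:genMarHinf,bound} in the paper has an extra $|1/p-1/2|$ factor which appears to be a misprint. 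Your closing remark about approximating from within ${\bf S}_{\phi_p+\delta}$ to handle the a.e.-defined boundary data is a legitimate technical precaution that the paper glosses over; it does not change the approach, only its level of rigor.
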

\begin{proof}
The proof is a multivariate modification of the proof of \cite[Theorem 2.2]{funccalOu} (cf. also the proof of \cite[Theorem 4.2]{jaOU}). We shall demonstrate that $m$ satisfies the assumption \eqref{chap:General,eq:thm:gen} from Theorem \ref{thm:gen}. Clearly, it is enough to obtain, for some $N\in\mathbb{N}^d,$ $N>\rho,$ the uniform in $u=(u_1,\ldots,u_d)\in\mathbb{R}^d$ bound \begin{align}\nonumber&\sup_{t\in \Rdp}|\M(m_{N,t})(u)|\\
&\leq C_{N,d}\, \prod_{r=1}^d(1+|u_r|)^{-\rho_r|1/p-1/2|}\exp(-\phi_p^r|u_r|)\,\max_{\varepsilon \in\{-1,1\}^d}\|m(e^{i\varepsilon\phi_p}\cdot)\|_{Mar,\rho}\label{chap:Hinf,sec:genMarHinf,bound}.\end{align}

We focus on proving \eqref{chap:Hinf,sec:genMarHinf,bound}. Defining $\Rde=\{x\in\mathbb{R}^d\colon \varepsilon_r x_r\geq 0,\, r=1,\ldots,d\},$ with $\varepsilon\in \{-1,1\}^d,$ we see that it suffices to obtain \eqref{chap:Hinf,sec:genMarHinf,bound} separately on each $\Rde.$ Observe that, for each fixed $N\in \mathbb{N}^d,$ $t\in \Rdp$ and $u\in\mathbb{R}^d,$  $${\bf S}_{\phi_p}\ni z \mapsto m_{N,t}(z)z^{-iu-{\bf 1}}=t^{N}\la^{N-iu-{\bf 1}}\exp(-2^{-1}\langle z, t\rangle)m(z)$$ is a bounded holomorphic function on ${\bf S}_{\phi_p},$ which is rapidly (exponentially) decreasing when $\Real(z_r)\to \infty,$ $r=1,\ldots,d.$ Thus, for each $\varepsilon\in\{-1,1\}^d,$ we can use (multivariate) Cauchy's integral formula to change the path of integration in the integral defining $\M(m_{N,t})(u)$ to the poly-ray $\{(e^{i\varepsilon_1\phi_p^1}\la_1,\ldots,e^{i\varepsilon_d\phi_p^d}\la_d)\colon \la \in \Rdp\}.$ Then we  obtain
\begin{align}\nonumber
&e^{-\langle u,(\varepsilon \phi_p)\rangle}\M(m_{N,t})(u)\\ \label{chap:Hinf,sec:genMarHinf,Mform}
&=e^{i\langle N,(\varepsilon \phi_p)\rangle}\int_{\Rdp}t^N\la^N\exp(-2^{-1}\langle (e^{i\varepsilon_1\phi_p^1}t_1,\ldots, e^{i\varepsilon_d\phi_p^d}t_d),\la\rangle)m(e^{i\varepsilon\phi_p}\lambda)\la^{-iu}\,\frac{d\la}{\la},
\end{align}
where $\varepsilon \phi_p=(\varepsilon_1 \phi_p^1,\ldots,\varepsilon_d \phi_p^d).$
Using \eqref{chap:Hinf,sec:genMarHinf,Mform}, we can essentially repeat the proof of Theorem \ref{thm:genMarCk} (cf.\ \eqref{chap:Ck,sec:genMar,eq:estiHinf}) with $m$ replaced by $m(e^{i\varepsilon\phi_p}\cdot)$, thus proving \eqref{chap:Hinf,sec:genMarHinf,bound} for $-u\in\Rde.$
\end{proof}
\begin{remark} From (multivariate) Cauchy's integral formula we see that if $m\in H^{\infty}({\bf S}_{\varphi_p}),$ for some $\varphi_p=(\varphi_p^1,\ldots,\varphi_p^d)>\phi_p,$ then $\|m(e^{i\varepsilon\phi_p}\cdot)\|_{Mar,\rho}\leq C(\rho,\varepsilon,\varphi_p)\|m\|_{H^{\infty}({\bf S}_{\varphi_p})},$ for arbitrary $\rho\in \mathbb{N}^d.$ Consequently, we have
$$\|m(L)\|_{p\to p}\leq C_{p,d}\,\mC(p,L)\,\|m\|_{H^{\infty}({\bf S}_{\varphi_p})}.$$\end{remark}

By referring to \eqref{chap:Hinf,sec:genMarHinf,eq:Carb-Dragest} instead of \eqref{chap:Hinf,sec:genMarHinf,eq:polynomial} we also obtain the following seemingly weaker corollary. The applicability of Corollary \ref{cor:genMarHinfdimindep} lies in the fact that it is independent of the particular choice of the system of operators $L=(L_1,\ldots,L_d),$ as long as it satisfies all the assumptions of Section \ref{chap:Intro,sec:Setting}. Recall that $\pst=\arcsin|2/p-1|.$
\begin{cor}
\label{cor:genMarHinfdimindep}
Let $1<p<\infty$ be given and take $\varphi_p>(\pst,\ldots,\pst).$ If $m\in H^{\infty}({\bf S}_{\varphi_p}),$ then the operator $m(L)$ is bounded on $L^p$ and
$$\|m(L)\|_{p\to p}\leq C_{p,d}\,\|m\|_{H^{\infty}({\bf S}_{\varphi_p})}.$$
Moreover, the constant $C_{p,d}$ is independent of the particular system $L$ we consider.
\end{cor}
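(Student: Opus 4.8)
The plan is to obtain Corollary~\ref{cor:genMarHinfdimindep} as a direct consequence of Theorem~\ref{thm:genMarHinf}, feeding in the Carbonaro--Dragi\v{c}evi\'{c} bound \eqref{chap:Hinf,sec:genMarHinf,eq:Carb-Dragest} in place of the abstract hypothesis \eqref{chap:Hinf,sec:genMarHinf,eq:polynomial}. The crucial point is that the constant in \eqref{chap:Hinf,sec:genMarHinf,eq:Carb-Dragest} depends only on $p,\varepsilon$ (and trivially on $d$ after taking products), not on the particular system $L$; this is exactly what propagates to the $L$-independence of the final bound.

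First I would dispose of $p=2$ separately: there $\pst=\arcsin|2/p-1|=0$, and since $(0,\infty)^d\subset{\bf S}_{\varphi_p}$, the spectral theorem already gives $\|m(L)\|_{p\to p}\le\sup_{\lambda\in(0,\infty)^d}|m(\lambda)|\le\|m\|_{H^{\infty}({\bf S}_{\varphi_p})}$, so the claim is trivial. Assume henceforth $1<p<\infty$ with $p\neq 2$, so that $|2/p-1|\in(0,1)$ and hence $\pst\in(0,\pi/2)$. Fix any $\varepsilon\in(0,1)$. By \eqref{chap:Hinf,sec:genMarHinf,eq:Carb-Dragest}, each operator $L_r$, $r=1,\ldots,d$, satisfies \eqref{chap:Hinf,sec:genMarHinf,eq:polynomial} with $\theta_r=1+\varepsilon$, with angle $\phi_p^r=\pst$, and with $\mC(p,L_r)=C_{p,\varepsilon}$ independent of $L$; consequently \eqref{chap:Hinf,sec:genMarHinf,eq:polynomialfull} holds with $\mC(p,L)=C_{p,\varepsilon,d}$, still independent of $L$. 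Set $\phi_p:=(\pst,\ldots,\pst)\in(0,\pi/2)^d$ and $\theta:=(1+\varepsilon){\bf 1}$.

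Next I would check the smoothness hypothesis of Theorem~\ref{thm:genMarHinf}. Since $\varphi_p>\phi_p$ componentwise and $m\in H^{\infty}({\bf S}_{\varphi_p})$, the Remark following Theorem~\ref{thm:genMarHinf} (an application of the multivariate Cauchy integral formula) shows that for every sign vector $\varepsilon'\in\{-1,1\}^d$ and every $\rho\in\mathbb{N}^d$ the boundary-value function $m(e^{i\varepsilon'\phi_p}\cdot)$ satisfies the Marcinkiewicz condition \eqref{chap:Intro,sec:Notation,eq:Marcon} of order $\rho$ with
$$\|m(e^{i\varepsilon'\phi_p}\cdot)\|_{Mar,\rho}\le C(\rho,\varepsilon',\varphi_p)\,\|m\|_{H^{\infty}({\bf S}_{\varphi_p})}.$$
Now choose $\rho={\bf 2}$. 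Since $|1/p-1/2|<1/2$ and $\varepsilon<1$, we have $|1/p-1/2|(1+\varepsilon)<1$, hence ${\bf 2}>|1/p-1/2|\,\theta+{\bf 1}$ componentwise, so all hypotheses of Theorem~\ref{thm:genMarHinf} are met. Applying it yields
$$\|m(L)\|_{p\to p}\le C_{p,d}\,\mC(p,L)\max_{\varepsilon'\in\{-1,1\}^d}\|m(e^{i\varepsilon'\phi_p}\cdot)\|_{Mar,{\bf 2}}\le C_{p,d}\,\|m\|_{H^{\infty}({\bf S}_{\varphi_p})},$$
where in the last step I absorbed $\mC(p,L)=C_{p,\varepsilon,d}$ together with the finitely many Cauchy-formula constants $C({\bf 2},\varepsilon',\varphi_p)$ into $C_{p,d}$ (under the variable-constant convention; strictly, this constant also depends on how close $\varphi_p$ is to $\phi_p$). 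Since $\mC(p,L)$ and all remaining constants are independent of $L$, so is the resulting one, which is the assertion.

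There is no genuinely hard step here: the corollary is a repackaging of Theorem~\ref{thm:genMarHinf} and its Remark. The only points demanding care are bookkeeping matters --- that the exponential angle in \eqref{chap:Hinf,sec:genMarHinf,eq:Carb-Dragest} is precisely the $\pst$ lying strictly inside $\varphi_p$, that $\rho={\bf 2}$ clears the threshold $|1/p-1/2|\theta+{\bf 1}$ uniformly over $p\in(1,\infty)$, and, most importantly, that every constant in the chain of estimates, the calligraphic $\mC(p,L)$ in particular, can genuinely be taken independent of the system $L$.
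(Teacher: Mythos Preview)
Your proposal is correct and follows essentially the same approach as the paper: invoke the Remark after Theorem~\ref{thm:genMarHinf} (Cauchy's integral formula controlling $\|m(e^{i\varepsilon'\phi_p}\cdot)\|_{Mar,\rho}$ by $\|m\|_{H^\infty({\bf S}_{\varphi_p})}$) together with the Carbonaro--Dragi\v{c}evi\'{c} bound \eqref{chap:Hinf,sec:genMarHinf,eq:Carb-Dragest} in place of \eqref{chap:Hinf,sec:genMarHinf,eq:polynomial}, the key point being that the constant $C_{p,\varepsilon}$ there is independent of $L$. You have simply made the bookkeeping (choice of $\rho={\bf 2}$, the threshold check, the $p=2$ case) explicit, but the argument is the paper's.
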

\begin{proof}
We just need to use the Remark following the proof of Theorem \ref{thm:genMarHinf} and \eqref{chap:Hinf,sec:genMarHinf,eq:Carb-Dragest} instead of \eqref{chap:Hinf,sec:genMarHinf,eq:polynomial}. The crucial fact is that the constant $C_{p,\varepsilon}$ in  \eqref{chap:Hinf,sec:genMarHinf,eq:Carb-Dragest} does not depend on the operator $L_r,$ $r=1,\ldots,d.$
\end{proof}
\begin{remark}
The corollary can be also deduced from a variant of \cite[Theorem 5.4]{AlFrMc} (with $d$ operators) combined with \cite[Theorem 1]{Carb-Drag}.
\end{remark}
        \section[Examples of systems with $H^{\infty}$ joint functional calculus]{Examples of systems with $H^{\infty}$ joint functional calculus}
         \label{chap:Hinf,sec:ExOp}
         We present two particular examples of systems of operators which are known to have only an $H^{\infty}$ functional calculus. The results from this section are contained in \cite{jaOU}.

         The first of these systems is that of the one-dimensional Ornstein-Uhlenbeck operators on $L^2(\mathbb{R}^d,\gamma),$ where $\gamma(x)=\pi^{-d/2}e^{-|x|^2}.$ The operators $\mL_r$ are given by
        $$\mL_r=-\frac{1}{2}\frac{\partial^2}{\partial{x_r}^2}+x_r\frac{\partial}{\partial x_r}, \qquad r=1,\ldots,d.$$ Then, for each $r=1,\ldots,d,$ the operator $\mL_r$ is symmetric and non-negative on $C_c^{\infty}(\mathbb{R}^d),$ with respect to the inner product on $L^2(\mathbb{R},\gamma_r);$ here $\gamma_r$ is the Gaussian probability measure $\gamma_r(x_r)=\pi^{-1/2}e^{-x_r^2}.$

        Moreover, the classical one-dimensional Hermite polynomials $H_{k_r}$ in the variable $x_r,$ see \cite{szego}, are eigenfunctions of $\mL_r$ corresponding to the eigenvalues $k_r\in\mathbb{N}_0,$ i.e.\ $\mL_r H_{k_r}=k_r H_{k_r}.$ Then, for each $r=1,\ldots,d,$ a standard procedure allows us to consider a self-adjoint extension of $\mL_r$ (still denoted by the same symbol) defined by $$\mL_rf(x_r)=\sum_{k_r=0}^{\infty}k_r\langle f,\nH_{k_r}\rangle_{L^2(\mathbb{R},\gamma_r)} \nH_{k_r}(x_r),$$ on the usual domain $$\Dom(\mL_r)=\{f\in L^2(\mathbb{R},\gamma_r)\colon \sum_{k_r=0}^{\infty}k_r^2 |\langle f,\nH_{k_r}\rangle_{L^2(\mathbb{R},\gamma_r)}|^2<\infty\}.$$
Here $\nH_{k_r}=\|H_{k_r}\|_{L^2(\mathbb{R},\gamma_r)}^{-1}H_{k_r}$ is the $L^2(\mathbb{R},\gamma_r)$ normalized one--dimensional Hermite polynomial of degree $k_r$ in the $x_r$ variable.

Using tensor products the operators $\mL_r$ are then lifted to $L^2(\mathbb{R}^d,\gamma)$ via \eqref{chap:Intro,sec:Notation,eq:tensnot}. Observe that $\mL_r$ thus defined admits the following spectral resolution
\begin{equation}\label{chap:Hinf,sec:ExOp,eq:system} \mL_rf=\sum_{k\in\mathbb{N}_0^d}k_r \langle f,\bnH_{k}\rangle_{L^2(\mathbb{R}^d,\gamma)} \bnH_{k},\qquad f\in L^2(\mathbb{R}^d,\gamma),\end{equation}
where $\bnH_k=\nH_{k_1}\otimes\cdots\otimes \nH_{k_d}.$  The operator given by \eqref{chap:Hinf,sec:ExOp,eq:system} is self-adjoint on the domain $$\Dom(\mL_r)=\{f\in L^2(\mathbb{R}^d,\gamma)\colon \sum_{k\in\mathbb{N}^d_0}k_r^2 |\langle f,\bnH_{k}\rangle_{L^2(\mathbb{R}^d,\gamma)}|^2<\infty\}.$$  Moreover, $(\mL_1,\ldots,\mL_d)$ is a system of strongly commuting self-adjoint operators on $L^2(\mathbb{R}^d,\gamma)$.

In this case, for $m\colon \mathbb{N}_0^d\to \mathbb{C},$ the joint spectral multiplier operator defined by \eqref{chap:Intro,sec:Setting,eq:mdef} is
\begin{equation}
\label{chap:Hinf,sec:ExOp,eq:def}
m(\mL_1,\ldots,\mL_d)f=\sum_{k\in\mathbb{N}^d_0}m(k_1,\ldots,k_d)\langle f, \bnH_k\rangle_{L^2(\mathbb{R}^d,\gamma)} \bnH_k,\qquad f\in L^2(\mathbb{R}^d,\gamma).
\end{equation}

It is known that the operators $\mL_r,$ $r=1,\ldots,d,$ do not have a Marcinkiewicz functional calculus. This can be seen as a consequence of a holomorphic extension theorem, see \cite[Theorem 3.5]{hmm}. Another way to justify this fact is to refer to the lower bound
\begin{equation}
\label{chap:Hinf,sec:ExOp,eq:lowBOU}
e^{\pst|u|}\leq C_p \|(\mL_{r,+})^{iu_r}\|_{L^p(\mathbb{R},\gamma_r)\to L^p(\mathbb{R},\gamma_r)},\qquad u_r \in \mathbb{R},\end{equation}
where \begin{equation}\label{chap:Hinf,sec:ExOp,eq:pst}\pst=\arcsin|2/p-1|,\end{equation} while $\mL_{r,+}$ is given by \eqref{chap:Hinf,sec:ExOp,eq:def} with $m(k)=m_r(k)=k_r\chi_{\{k_r\in\mathbb{N}\}},$ $r=1,\ldots,d.$ The bound \eqref{chap:Hinf,sec:ExOp,eq:lowBOU} is implicit in \cite{funccalOu} and restated in \cite[p.\ 448]{sharp}.

For the system $(\mL_1,\ldots,\mL_d)$ we have the following multivariate multiplier theorem, which is a generalization of \cite[Theorem 1]{funccalOu} and, to some extent, \cite[Theorem 1.2]{sharp}.
\begin{thm}
\label{thm:multOU} Assume that $m\in H^{\infty}({\bf S}_{\pst}),$ for some $p\in(1,\infty)\setminus\{2\}.$ Assume also that the boundary value functions $\Rdp\ni\la\mapsto m(e^{i\varepsilon\pst}\la),$ $\varepsilon=(\varepsilon_1,\ldots,\varepsilon_d)\in\{-1,1\}^d,$ satisfy the $d$-dimensional Marcinkiewicz condition \eqref{chap:Intro,sec:Notation,eq:Marcon} of some order $\rho> \bf{1},$ and that all the lower dimensional operators $m(\omega_1\mL_1,\ldots,\omega_d\mL_d)$ with $\omega=(\omega_1,\ldots,\omega_d)\in\{0,1\}^d,$ $\omega\neq {\bf 1},$
are bounded on $L^p(\mathbb{R}^d,\gamma).$ Then $m(\mL_1,\ldots,\mL_d)$ is bounded on $L^p(\mathbb{R}^d,\gamma).$
\end{thm}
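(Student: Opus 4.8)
The plan is to reduce Theorem~\ref{thm:multOU} to the general holomorphic Marcinkiewicz theorem, Theorem~\ref{thm:genMarHinf}, applied to the system $(\mL_1,\ldots,\mL_d)$ of one-dimensional Ornstein--Uhlenbeck operators. The key structural input is the sharp bound \eqref{chap:Hinf,sec:genMarHinf,eq:OUimabound}, which for the one-dimensional factors reads $\|(\mL_r+I)^{iv}\|_{p\to p}\approx e^{\pst|v|}$; since the atomlessness condition \eqref{chap:Intro,sec:Setting,eq:noatomatzero} fails for $\mL_r$ (there is an atom at $0$ coming from the constant Hermite polynomial), one works with $\mL_r$ restricted to the orthogonal complement of the constants, or equivalently decomposes the multiplier according to which subset of the variables $k_r$ equals zero. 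This is exactly why the hypothesis of the theorem includes the boundedness of all the lower-dimensional operators $m(\omega_1\mL_1,\ldots,\omega_d\mL_d)$ with $\omega\neq{\bf 1}$: those pieces are handled by induction on the dimension, and only the ''top'' piece, supported where all $k_r\geq 1$, is treated directly.

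First I would fix $p\in(1,\infty)\setminus\{2\}$ and perform the splitting $m=\sum_{\omega\in\{0,1\}^d} m\,\chi_{\{k_r=0 \text{ iff }\omega_r=0\}}$ on the joint spectrum $\mathbb{N}_0^d$. For $\omega\neq{\bf 1}$ the corresponding summand is, up to relabelling, the operator $m(\omega_1\mL_1,\ldots,\omega_d\mL_d)$ composed with the orthogonal projection onto the span of those $\bnH_k$ with $k_r=0$ for $r\notin\supp\omega$; these are bounded on $L^p(\mathbb{R}^d,\gamma)$ by hypothesis (the projections onto ''the $j$-th variable equals $0$'' subspace are conditional expectations, hence $L^p$-contractions). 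It remains to treat the summand $m^{[{\bf 1}]}$ supported on $\mathbb{N}^d$. Here I would view $\mL=(\mL_1,\ldots,\mL_d)$ as acting on the closed subspace $H_0\subset L^2(\mathbb{R}^d,\gamma)$ spanned by $\{\bnH_k : k\in\mathbb{N}^d\}$; on $H_0$ each $\mL_r$ satisfies the atomlessness condition, still generates an $L^p$-contraction semigroup (the Mehler/Ornstein--Uhlenbeck semigroup is sub-Markovian, so \eqref{chap:Intro,sec:Setting,eq:contra} holds), and by the right-hand side of \eqref{chap:Hinf,sec:genMarHinf,eq:OUimabound} satisfies \eqref{chap:Hinf,sec:genMarHinf,eq:polynomial} with exponent $\theta_r=0$ and angle $\phi_p^r=\pst$.

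Next I would apply Theorem~\ref{thm:genMarHinf} to this restricted system, with the vector of angles $\phi_p=(\pst,\ldots,\pst)$ and any $\rho>{\bf 1}$ (since $\theta={\bf 0}$, the requirement $\rho>|1/p-1/2|\theta+{\bf 1}$ becomes simply $\rho>{\bf 1}$). The function $m^{[{\bf 1}]}$, a priori defined only on $\mathbb{N}^d$, must be interpreted as the restriction to the spectrum of a function $\tilde m\in H^\infty({\bf S}_{\pst})$ whose boundary traces $\tilde m(e^{i\varepsilon\pst}\cdot)$ satisfy the $d$-dimensional Marcinkiewicz condition of order $\rho$; this is precisely what the hypothesis ''$m\in H^\infty({\bf S}_{\pst})$ with Marcinkiewicz boundary values'' supplies (and one may freely multiply $\tilde m$ by a fixed smooth cutoff to kill the contribution near the coordinate hyperplanes without affecting its values on $\mathbb{N}^d$, since the spectrum is discrete and bounded away from those hyperplanes in the relevant region). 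Theorem~\ref{thm:genMarHinf} then yields boundedness of $m^{[{\bf 1}]}(\mL)$ on $L^p(H_0)$, and composing with the orthogonal projection $L^p(\mathbb{R}^d,\gamma)\to L^p(H_0)$ (again a contraction, being a conditional expectation) gives boundedness on the full space. Summing the finitely many pieces over $\omega\in\{0,1\}^d$ completes the proof.

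The main obstacle I anticipate is the bookkeeping around the atom at zero: one must check carefully that the projections onto the various ''some coordinates vanish'' subspaces are $L^p$-bounded (this follows because they are given by integrating out the corresponding variables against the Gaussian, i.e.\ conditional expectations with respect to sub-$\sigma$-algebras, hence $L^p$-contractions for all $1\le p\le\infty$), and that on the top piece the reduced system genuinely satisfies the atomlessness hypothesis \eqref{chap:Intro,sec:Setting,eq:noatomatzero} required by Theorem~\ref{thm:genMarHinf}. A secondary technical point is the passage from a function defined on the discrete spectrum $\mathbb{N}^d$ to a holomorphic function on the polysector with controlled boundary behaviour; but since we are \emph{given} $m\in H^\infty({\bf S}_{\pst})$ with the stated Marcinkiewicz boundary regularity, this is assumed rather than proved, and the only care needed is that modifying $m$ by a cutoff supported away from the hyperplanes leaves its restriction to $\mathbb{N}^d$ (intersected with the relevant orthant of indices) unchanged.
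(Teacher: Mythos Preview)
Your overall architecture matches the paper's: decompose $m(\mL_1,\ldots,\mL_d)$ according to the projections $\mP^r_{\omega_r}$, absorb the pieces with $\omega\neq{\bf 1}$ into the hypothesis, and handle the top piece $\mP_1^1\cdots\mP_1^d$ via Theorem~\ref{thm:genMarHinf} and the sharp imaginary-power bound with angle $\pst$ and $\theta={\bf 0}$. The discrepancy is in how the atomlessness condition \eqref{chap:Intro,sec:Setting,eq:noatomatzero} is restored for the top piece.

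Your proposal to ``restrict to the closed subspace $H_0$'' does not fit the abstract framework: Theorem~\ref{thm:genMarHinf} (via Theorems~\ref{thm:gen} and~\ref{thm:gfun}) is stated for operators on a measure space $L^2(X,\nu)$ with $L^p(X,\nu)$-contractive semigroups, and the range of $\mP_1^1\cdots\mP_1^d$ is not of this form; your phrase ``$L^p(H_0)$'' has no meaning in the paper's setup. A second, smaller issue: your suggestion to multiply $m$ by a smooth cutoff near the coordinate hyperplanes would destroy membership in $H^\infty({\bf S}_{\pst})$, so Theorem~\ref{thm:genMarHinf} would no longer apply (and, since the joint spectrum sits in $\mathbb{N}^d\subset\Rdp$, no such cutoff is needed anyway).

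The paper sidesteps both problems with a $\delta$-shift. One first shows, using \cite[Theorem 4.3]{sharp} together with \cite[Theorem 3]{PrusSohr1} and the decomposition $(\mL_r+\delta I)^{iu_r}=(\mL_{r,+}+\delta I)^{iu_r}\mP_1^r+\delta^{iu_r}\mP_0^r$, that $\|(\mL_r+\delta I)^{iu_r}\|_{p\to p}\le C_p\,e^{\pst|u_r|}$ uniformly in $\delta>0$. The shifted operators $\mL_r+\delta I$ act on the full space $(\mathbb{R}^d,\gamma)$, satisfy \eqref{chap:Intro,sec:Setting,eq:contra} (since $e^{-t(\mL_r+\delta)}=e^{-t\delta}e^{-t\mL_r}$) and \eqref{chap:Intro,sec:Setting,eq:noatomatzero} (their spectra are $\{\delta,1+\delta,\ldots\}$), so Theorem~\ref{thm:genMarHinf} applies directly and yields $\|m(\mL_1+\delta,\ldots,\mL_d+\delta)\|_{p\to p}\le C(p,d,m)$ independently of $\delta$. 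Composing with the bounded projection $\mP_1^1\cdots\mP_1^d$ and letting $\delta\to 0^+$ (using continuity of $m$ on $\Rdp$, which follows from holomorphy) gives the bound for $m(\mL_1,\ldots,\mL_d)\mP_1^1\cdots\mP_1^d$, and your decomposition over $\omega$ finishes the argument.
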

\begin{remark} There are several ways to ensure the $L^p$ boundedness of the lower dimensional operators. One is to assume that $m$ is continuous on $[0,\infty)^d.$ Another is to assume that it satisfies appropriate lower dimensional Marcinkiewicz conditions of some orders greater than $\bf{1}.$ The sufficiency of the continuity condition is indicated in the proof of Theorem \ref{thm:multOU}. To see that satisfying lower dimensional Marcinkiewicz conditions is enough, one needs to use an easy inductive argument.\end{remark}
\begin{proof}[Proof of Theorem \ref{thm:multOU}]
Let $1<p<\infty$ be fixed. We start by showing that the bound
\begin{equation}\label{chap:Hinf,sec:ExOp,eq:imah}\|(\mL_{r}+\delta I)^{iu_r}\|_{L^p(\mathbb{R},\gamma_r)\to L^p(\mathbb{R},\gamma_r)}\leq C_p\, e^{\pst |u_r|},\qquad u_r\in\mathbb{R},\end{equation}
holds uniformly in $\delta>0.$

Applying the case $d=1$ of \cite[Theorem 4.3]{sharp} with $b=0$ we see that the lower bound in \eqref{chap:Hinf,sec:ExOp,eq:lowBOU} is in fact sharp, i.e.\ we have
\begin{equation}
\label{chap:Hinf,sec:ExOp,eq:upBOU}
\|(\mL_{r,+})^{iu_r}\|_{L^p(\mathbb{R},\gamma_r)\to L^p(\mathbb{R},\gamma_r)}\leq C_p\, e^{\pst|u|},\qquad u_r \in \mathbb{R}.\end{equation}
For each $r=1,\ldots,d,$ let $\mP_0^r f(x) = \langle f, \nH_0\rangle_{L^2(\mathbb{R},\gamma_r)} \nH_0(x_r)$ (that is $\mP_0^r$ is the projection onto the subspace spanned by $\nH_0$ applied to $f$ as a function of the variable $x_r$) and let $\mP_1^r=I-\mP_0^r$ (that is $\mathcal{P}_1^r$ is the projection onto the subspace spanned by $\{\nH_{k_r}\}_{k_r\in\mathbb{N}}$). Then, from the definition of $\mL_{r,+}$ it follows that $\mL_{r,+}=\mL_r\mP_1^r,$ and consequently,
$$(\mL_{r}+\delta I)^{iu_r}=(\mL_{r}+\delta I)^{iu_r}\,\mP_1^r+\delta^{iu_r}\,\mP_0^r=(\mL_{r,+}+\delta I)^{iu_r}\mP_1^r+ \delta^{iu_r}\mP_0^r.$$
Hence, recalling \eqref{chap:Hinf,sec:ExOp,eq:upBOU} and applying \cite[Theorem 3]{PrusSohr1}, together with the boundedness on $L^p(\mathbb{R},\gamma_r)$ of the projections $\mP_{\omega_r}^r,$ $\omega_r\in\{0,1\},$ we obtain the desired bound \eqref{chap:Hinf,sec:ExOp,eq:imah}.

Now, from Theorem \ref{thm:genMarHinf} together with \eqref{chap:Hinf,sec:ExOp,eq:imah}, we see that under our assumptions on $m$ the operator $m(\mL_1+\delta,\ldots,\mL_d+\delta)$ is bounded on $L^p(\mathbb{R}^d,\gamma).$ Moreover,
\begin{equation} \label{chap:Hinf,sec:ExOp,eq:inepsi}\|m(\mL_1+\delta,\ldots,\mL_d+\delta)\|_{L^p(\mathbb{R}^d,\gamma)\to L^p(\mathbb{R}^d,\gamma)}\leq C(p,d,m),\end{equation} independently of $\delta>0.$

Note that, if we additionally assume that $m$ is continuous on $[0,\infty)^d,$ then
$$\lim_{\delta\to 0^+}m(\mL_1+\delta,\ldots,\mL_d+\delta) f=m(\mL_1,\ldots,\mL_d)f,\qquad f\in L^2(\mathbb{R}^d,\gamma),$$
strongly in $L^2(\mathbb{R}^d,\gamma).$ Hence, \eqref{chap:Hinf,sec:ExOp,eq:inepsi} together with a density argument give the boundedness of $m(\mL_1,\ldots,\mL_d).$ In particular, using the fact that each $\mL_r,$ $r=1,\ldots,d,$ vanishes on functions not depending on the $r$-th variable, we can also obtain the boundedness of all the lower dimensional operators.

In the general case we proceed as follows. Since the projections $\mP_0^r,$ $r=1,\ldots,d,$ are bounded on $L^p(\mathbb{R}^d,\gamma),$  the same is true for the operator $\mP_1^1\cdots\mP_1^d.$ Consequently, from \eqref{chap:Hinf,sec:ExOp,eq:inepsi} it follows that
\begin{equation}
 \label{chap:Hinf,sec:ExOp,eq:inepsiP}
 \|m(\mL_1+\delta,\ldots,\mL_d+\delta)\mP_1^1\cdots\mP_1^d\|_{L^p(\mathbb{R}^d,\gamma)\to L^p(\mathbb{R}^d,\gamma)}\leq C(p,d,m).
\end{equation}

Observe now that
\begin{align*}
m(\mL_1+\delta,\ldots,\mL_d+\delta)\mP_1^1\cdots\mP_1^d f&=
\sum_{k\in\mathbb{N}^d}m(k+\delta {\bf 1})\langle f, \bnH_k\rangle_{L^2(\mathbb{R}^d,\gamma)} \bnH_k,\\
m(\mL_1,\ldots,\mL_d)\mP_1^1\cdots\mP_1^d f&= \sum_{k\in\mathbb{N}^d}m(k)\langle f, \bnH_k\rangle_{L^2(\mathbb{R}^d,\gamma)} \bnH_k.
\end{align*}
Since $m$ is continuous and bounded on $\Rdp,$ we have $$\lim _{\delta\rightarrow 0^+}m(\mL_1+\delta,\ldots,\mL_d+\delta)\mP_1^1\cdots\mP_1^d f=m(\mL)\mP_1^1\cdots\mP_1^d f,\qquad f\in L^2(\mathbb{R}^d,\gamma),$$ strongly in $L^2(\mathbb{R}^d,\gamma).$ Now, from the above equality and \eqref{chap:Hinf,sec:ExOp,eq:inepsiP} we conclude that
$$
\|m(\mL_1,\ldots,\mL_d)\mP_1^1\cdots\mP_1^d f\|_{L^p(\mathbb{R}^d,\gamma)\to L^p(\mathbb{R}^d,\gamma)}\leq C(p,d,m) \|f\|_{L^p(\mathbb{R}^d,\gamma)}.
$$
Using the decomposition $I=\mP_1^1\cdots\mP_1^d+\sum_{\omega\in \{0,1\}^d,\, \omega\neq {\bf 1}}\mP_{\omega_1}^1\cdots\mP_{\omega_d}^d,$ we see that
\begin{align*}
m(\mL_1,\ldots,\mL_d)&=m(\mL_1,\ldots,\mL_d)\mP_1^1\cdots\mP_1^d+\sum_{\omega\in \{0,1\}^d,\, \omega\neq {\bf 1}}m(\omega_1\mL_1,\ldots,\omega_d\mL_d)\mP_{\omega_1}^1\cdots\mP_{\omega_d}^d\\
&=m(\mL_1,\ldots,\mL_d)\mP_1^1\cdots\mP_1^d+\textrm{ lower dimensional operators}.
\end{align*}
Since the projections $\mP_{\omega_1}^1\cdots\mP_{\omega_d}^d$ are bounded on all $L^p(\mathbb{R}^d,\gamma),$ $1<p<\infty,$ it follows that $m(\mL_1,\ldots,\mL_d)$ is indeed bounded under the imposed assumptions.
\end{proof}

The other system of operators considered in this section consists of the one-dimensional Laguerre operators. These are defined by $$\mL_r^{\alpha_r}=-x_r\frac{\partial^2}{\partial{x_r}^2}+(\alpha_r+1-x_r)\frac{\partial}{\partial x_r}, \qquad r=1,\ldots,d.$$ The Laguerre operators are symmetric on $C_c^{\infty}(\Rdp),$ with respect to the inner product in $L^2(\mathbb{R}_+,\mu_r^{\alpha_r});$ where, for $\alpha_r>-1$ \begin{equation*}d\mu_r^{\alpha_r}(x_r)=\frac{x_r^{\alpha_r}e^{-x_r}}{\Gamma(\alpha_r+1)}\,dx_r,\qquad x_r>0,\quad r=1,\ldots,d.\end{equation*}

 Additionally, the classical one-dimensional Laguerre polynomials $L_{k_r}^{\alpha_r}$ in the variable $x_r,$ see \cite{szego}, are eigenfunctions of $\mL_r^{\alpha_r}$ corresponding to the eigenvalues $k_r\in\mathbb{N}_0,$ i.e.\ $\mL_r^{\alpha_r} L_{k_r}^{\alpha_r}=k_r L_{k_r}^{\alpha_r}.$ Then, as in the case of the system of Ornstein-Uhlenbeck operators, for each $r=1,\ldots,d,$ a standard argument allows us to consider a self-adjoint extension of $\mL_r^{\alpha_r}$ given by \begin{equation}\label{chap:Hinf,sec:ExOp,eq:systemLag}\mL_r^{\alpha_r}f(x_r)=\sum_{k_r=0}^{\infty} k_r \langle f,\nL_{k_r}\rangle_{L^2(\mathbb{R}_+,\,\mu^{\alpha_r}_r)} \nL_{k_r}^{\alpha_r}(x_r),\end{equation} on the domain $$\textrm{Dom}(\mL_r^{\alpha_r})=\{f\in L^2(\mathbb{R}_+,\mu_r^{\alpha_r})\colon \sum_{k_r=0}^{\infty}k_r^2 |\langle f,\nL_{k_r}^{\alpha_r}\rangle_{L^2(\mathbb{R}_+,\,\mu_r^{\alpha_r})}|^2<\infty\}.$$ Here by $\nL_{k_r}^{\alpha_r}=\|L_{k_r}^{\alpha_r}\|_{L^2(\mathbb{R}_+,\mu_r^{\alpha_r})}^{-1}L_{k_r}^{\alpha_r}$ we denote the $L^2(\mathbb{R}_+,\mu_r^{\alpha_r})$ normalized one--dimensional Laguerre polynomial of degree $k_r$ and order $\alpha_r$ in the $x_r$ variable.

A tensor product reasoning, similar to the one presented in the first part of this section for the system of Ornstein-Uhlenbeck operators $\mL_r,$ leads us to define the joint spectral multipliers $m(\mL^{\alpha})$ of the system $\mL^{\alpha}=(\mL_1^{\alpha_1},\ldots,\mL^{\alpha_d}_d),$ $\alpha=(\alpha_1,\ldots,\alpha_d),$ as
\begin{align}
\label{chap:Hinf,sec:ExOp,eq:defL}
m(\mL^{\alpha})f=\sum_{k\in\mathbb{N}^d_0}m(k_1,\ldots,k_d)\langle f, \bnL_k^{\alpha}\rangle_{L^2(\mathbb{R}_+^d,\,\mu_{\alpha})} \bnL_k^{\alpha},\quad f\in L^2(\mathbb{R}_+^d,\mu_{\alpha}).
\end{align}
Here $m$ is a function on $\mathbb{N}^d_0$ and $\mu_{\alpha}=\mu_1^{\alpha_1}\otimes\cdots\otimes \mu_d^{\alpha_d},$ $\bnL_{k}^{\alpha}=\nL_{k_1}^{\alpha_1}\otimes\cdots\otimes \nL_{k_d}^{\alpha_d}.$

Just as the Ornstein-Uhlenbeck operators, the Laguerre operators $\mL_r^{\alpha_r},$ $r=1,\ldots,d,$ also do not have a Marcinkiewicz functional calculus. This can be easily deduced from \cite[Theorem 2]{sas}.

The counterpart of Theorem \ref{thm:multOU} for the Laguerre system is Theorem \ref{thm:multLag}. This theorem is to some extent a multivariate generalization of \cite[Theorem 1]{sas}.
\begin{thm}
\label{thm:multLag} Fix a parameter $\alpha\in[0,\infty)^d.$ Assume that $m\in H^{\infty}({\bf S}_{\pst}),$ for some $p\in(1,\infty)\setminus\{2\}.$ Assume also that the boundary value functions $\Rdp\ni \la \mapsto m(e^{i\varepsilon\pst}\la),$ $\varepsilon=(\varepsilon_1,\ldots,\varepsilon_d)\in\{-1,1\}^d,$ satisfy the $d$-dimensional Marcinkiewicz condition \eqref{chap:Intro,sec:Notation,eq:Marcon} of some order $\rho> {\bf 3/2},$ and that all the lower dimensional operators $m(\omega_1\mL_1^{\alpha_1},\ldots,\omega_d\mL_d^{\alpha_d})$ with $\omega=(\omega_1,\ldots,\omega_d)\in\{0,1\}^d,$ $\omega\neq {\bf 1},$ are bounded on $L^p(\mathbb{R}^d_+,\,\mu_{\alpha}).$ Then $m(\mL^{\alpha})$ is bounded on $L^p(\mathbb{R}^d_+,\,\mu_{\alpha}).$
\end{thm}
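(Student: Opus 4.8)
The plan is to follow the scheme of the proof of Theorem~\ref{thm:multOU} almost verbatim, replacing the Ornstein--Uhlenbeck operators $\mL_r$ by the Laguerre operators $\mL_r^{\alpha_r}$ and replacing the projection $\mP_0^r$ onto the span of $\nH_0$ by the projection onto the span of $\nL_0^{\alpha_r}$ (the constant function), with $\mP_1^r=I-\mP_0^r$. The key point, just as for the Ornstein--Uhlenbeck case, is to establish the uniform-in-$\delta$ imaginary power bound
\begin{equation*}
\|(\mL_r^{\alpha_r}+\delta I)^{iu_r}\|_{L^p(\mathbb{R}_+,\mu_r^{\alpha_r})\to L^p(\mathbb{R}_+,\mu_r^{\alpha_r})}\leq C_p\,(1+|u_r|)^{1/2}e^{\pst|u_r|},\qquad u_r\in\mathbb{R},
\end{equation*}
which is why the smoothness threshold is $\rho>{\bf 3/2}$ rather than $\rho>{\bf 1}$: the extra polynomial factor $(1+|u_r|)^{1/2}$ forces $\theta_r=1/2$ in \eqref{chap:Hinf,sec:genMarHinf,eq:polynomial}, hence $|1/p-1/2|\theta_r+1<3/2$. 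First I would invoke the sharp one-dimensional result for Laguerre imaginary powers from \cite[Theorem~1]{sas} (or the relevant sharp estimate it rests on), which gives $\|(\mL_{r,+}^{\alpha_r})^{iu_r}\|_{p\to p}\leq C_p(1+|u_r|)^{1/2}e^{\pst|u_r|}$, where $\mL_{r,+}^{\alpha_r}=\mL_r^{\alpha_r}\mP_1^r$. Then, writing
$$(\mL_r^{\alpha_r}+\delta I)^{iu_r}=(\mL_{r,+}^{\alpha_r}+\delta I)^{iu_r}\mP_1^r+\delta^{iu_r}\mP_0^r,$$
I would apply \cite[Theorem~3]{PrusSohr1} to transfer the estimate for $(\mL_{r,+}^{\alpha_r})^{iu_r}$ to $(\mL_{r,+}^{\alpha_r}+\delta I)^{iu_r}$ uniformly in $\delta>0$, and use the $L^p(\mathbb{R}_+,\mu_r^{\alpha_r})$-boundedness of the projections $\mP_{\omega_r}^r$ (which holds since $\nL_0^{\alpha_r}$ is a bounded function and the associated kernel is integrable). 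Lifting to $L^2(\mathbb{R}_+^d,\mu_\alpha)$ via tensor products as in Section~\ref{chap:Intro,sec:Notation}, this yields \eqref{chap:Hinf,sec:genMarHinf,eq:polynomial} for the system $\mL^\alpha$ with $\theta={\bf 1/2}$ and $\phi_p=(\pst,\ldots,\pst)$.

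Next, with this bound in hand, Theorem~\ref{thm:genMarHinf} applied to the system $(\mL_1^{\alpha_1}+\delta,\ldots,\mL_d^{\alpha_d}+\delta)$ — whose imaginary powers obey \eqref{chap:Hinf,sec:genMarHinf,eq:polynomial} uniformly in $\delta$ — shows that under the hypothesis that the boundary functions $m(e^{i\varepsilon\pst}\cdot)$ satisfy the Marcinkiewicz condition of order $\rho>{\bf 3/2}$, the operator $m(\mL_1^{\alpha_1}+\delta,\ldots,\mL_d^{\alpha_d}+\delta)$ is bounded on $L^p(\mathbb{R}_+^d,\mu_\alpha)$ with norm bounded independently of $\delta>0$. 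I would then run the exact limiting argument from the end of the proof of Theorem~\ref{thm:multOU}: multiply by $\mP_1^1\cdots\mP_1^d$, observe that
$$\lim_{\delta\to 0^+}m(\mL_1^{\alpha_1}+\delta,\ldots,\mL_d^{\alpha_d}+\delta)\mP_1^1\cdots\mP_1^d f=m(\mL^\alpha)\mP_1^1\cdots\mP_1^d f$$
strongly in $L^2(\mathbb{R}_+^d,\mu_\alpha)$ for $f\in L^2$, use the uniform bound plus density to get $L^p$-boundedness of $m(\mL^\alpha)\mP_1^1\cdots\mP_1^d$, and finally expand $I=\mP_1^1\cdots\mP_1^d+\sum_{\omega\neq{\bf 1}}\mP_{\omega_1}^1\cdots\mP_{\omega_d}^d$ to write $m(\mL^\alpha)$ as $m(\mL^\alpha)\mP_1^1\cdots\mP_1^d$ plus lower dimensional operators $m(\omega_1\mL_1^{\alpha_1},\ldots,\omega_d\mL_d^{\alpha_d})\mP_{\omega_1}^1\cdots\mP_{\omega_d}^d$, which are bounded by hypothesis together with the $L^p$-boundedness of the projections.

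The main obstacle I anticipate is verifying the sharp one-dimensional imaginary power bound for $\mL_{r,+}^{\alpha_r}$ with the exact exponential rate $\pst=\arcsin|2/p-1|$ and the polynomial factor $(1+|u_r|)^{1/2}$ — this is the precise content that $\rho>{\bf 3/2}$ relies on, and it is exactly what \cite[Theorem~1]{sas} is quoted for; one must check that the cited theorem indeed gives this rate (and not merely $e^{\pst|u_r|}$ without the subunitary polynomial gain, which would still work but would need $\rho>{\bf 1}$ after an interpolation step analogous to \eqref{chap:Hinf,sec:genMarHinf,eq:Carb-Dragest}, or conversely a worse threshold). A secondary technical point is confirming that $\mP_0^r$, the projection onto constants, is bounded on $L^p(\mathbb{R}_+,\mu_r^{\alpha_r})$ for all $1<p<\infty$ — this is immediate since $\mu_r^{\alpha_r}$ is a probability measure and $\mP_0^r f=\int f\,d\mu_r^{\alpha_r}$, so $\|\mP_0^r f\|_p=|\int f\,d\mu_r^{\alpha_r}|\leq\|f\|_p$ by Jensen. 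Everything else is a routine transcription of the Ornstein--Uhlenbeck argument, so the proof can be kept brief, essentially pointing to the proof of Theorem~\ref{thm:multOU} and indicating the two substitutions.
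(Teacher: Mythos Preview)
Your overall scheme is correct and matches the paper's proof in structure: apply Theorem~\ref{thm:genMarHinf} to the shifted system $(\mL_1^{\alpha_1}+\delta,\ldots,\mL_d^{\alpha_d}+\delta)$ with a uniform-in-$\delta$ imaginary power bound carrying the polynomial factor $(1+|u_r|)^{1/2}$, then run the limiting-and-projection argument exactly as in Theorem~\ref{thm:multOU}. The difference is only in how the key bound
\[
\|(\mL_r^{\alpha_r}+\delta I)^{iu_r}\|_{L^p(\mathbb{R}_+,\mu_r^{\alpha_r})\to L^p(\mathbb{R}_+,\mu_r^{\alpha_r})}\leq C_p(1+|u_r|)^{1/2}e^{\pst|u_r|}
\]
is obtained. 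You propose to derive it as in the OU case: start from a sharp Laguerre-specific estimate for $(\mL_{r,+}^{\alpha_r})^{iu_r}$ taken from \cite{sas}, transfer it to the shifted operator via \cite[Theorem~3]{PrusSohr1}, and add back the projection onto constants. The paper instead short-circuits all of this by invoking the general Carbonaro--Dragi\v{c}evi\'{c} bound \eqref{chap:Hinf,sec:genMarHinf,eq:Carb-Dragest0} directly on $\mL_r^{\alpha_r}+\delta I$: since $e^{-t(\mL_r^{\alpha_r}+\delta I)}=e^{-t\delta}e^{-t\mL_r^{\alpha_r}}$ is again an $L^p$-contraction semigroup, \eqref{chap:Hinf,sec:genMarHinf,eq:Carb-Dragest0} applies with a constant independent of $\delta$, and no Laguerre-specific input, no Pr\"uss--Sohr step, and no projection splitting are needed at this stage. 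This also eliminates precisely the obstacle you flagged---whether \cite{sas} actually supplies the exponent $1/2$ with rate $\pst$---so the paper's route is both shorter and more robust. Your path would work if that citation delivers, but the paper's choice makes the proof a one-line reference to \eqref{chap:Hinf,sec:genMarHinf,eq:Carb-Dragest0}.
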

\begin{proof}[Proof (sketch)] The proof uses Theorem \ref{thm:genMarHinf} and is almost the same as the proof of Theorem \ref{thm:multOU}. The only difference is that instead of \eqref{chap:Hinf,sec:ExOp,eq:imah} we use the general bounds
$$\|(\mL_{r}^{\alpha_r}+\delta I)^{iu_r}\|_{L^p(\mathbb{R}_+,\,\mu_r^{\alpha_r})\to L^p(\mathbb{R}_+,\,\mu_r^{\alpha_r})}\leq C_p (1+|u_r|)^{1/2}e^{\pst |u_r|},\qquad u_r\in\mathbb{R},$$
following from \eqref{chap:Hinf,sec:genMarHinf,eq:Carb-Dragest0}.
We omit the details.

\end{proof}
        \section[A multivariate holomorphic extension theorem]{A multivariate holomorphic extension theorem}
         \label{chap:Hinf,sec:HolExt}
         \numberwithin{equation}{section}
In this section we consider a system of operators $L=(L_1,\ldots,L_d),$ such that each $L_r$ is self-adjoint and non-negative on $L^2(X_r,\nu_r),$  $r=1,\ldots,d.$ We impose neither \eqref{chap:Intro,sec:Setting,eq:contra} nor \eqref{chap:Intro,sec:Setting,eq:noatomatzero}, however we assume that the spectra of $L_r,$ $r=1,\ldots,d,$ are discrete. The results of this section are also contained in \cite{jaOU}.

Let $\{e_k\}_{k\in\mathbb{N}^d_0},$ $e_k=e^1_{k_1}\otimes\ldots\otimes e^d_{k_d}$ be an orthonormal basis of some space $L^2(X,\nu),$ which is linearly dense in all the $L^p(X,\nu),$ $1<p<\infty,$ spaces. Here $X=X_1\times\cdots\times X_d,$ $\nu=\nu_1\otimes\cdots\otimes\nu_d,$ while $k=(k_1,\ldots,k_d)$ is a multi-index. We assume that for each $r=1,\ldots,d,$ $\{e^r_{k_r}\}_{k_r=0,1,\ldots}$ is an eigenfunction decomposition of $L_r$ in $L^2(X_r,\nu_r)$ with eigenvalues $0\leq\la^r_{0}<\la^r_{1}<\ldots,$ i.e.\ $$ L_rf=\sum_{k_r=0}^{\infty}\la^r_{k_r}\langle f,e^r_{k_r}\rangle_{L^2(X_r,\nu_r)} e^r_{k_r}$$ on the domain $$\textrm{Dom}(L_r)=\{f\in L^2(X_r,\nu_r)\,:\, \sum_{k_r=0}^{\infty}(\la^r_{k_r})^2|\langle f,e^r_{k_r}\rangle_{L^2(X_r,\nu_r)}|^2<\infty\}.$$ In this setting, for a Borel measurable function $m_r\colon[0,\infty)\rightarrow \mathbb{C}$ the spectral multipliers of the operators $L_r$ are
\begin{equation*}
m_r(L_r)f=\sum_{k_r=0}^{\infty}m_r(\la^r_{k_r})\langle f, e_{k_r}^r\rangle_{L^2(X_r,\nu_r)} e_{k_r}^r,\qquad f\in L^2(X_r,\nu_r), \qquad r=1,\ldots,d.
\end{equation*}

Similarly to the previous section, a tensor product reasoning allows us to regard $L_r,$ $r=1,\ldots,d,$ as strongly commuting operators acting on the full space $L^2(X,\nu).$ Then, for a Borel measurable function $m\colon[0,\infty)^d\rightarrow \mathbb{C},$ the joint spectral multipliers of the system $L$ defined with accordance to \eqref{chap:Intro,sec:Setting,eq:mdef} are given by
\begin{equation*}
m(L)=m(L_1,\ldots,L_d)=\sum_{k\in\mathbb{N}^d_0}m(\la^1_{k_1},\ldots,\la^d_{k_d})\langle f, e_k\rangle_{L^2(X,\nu)} e_k,\qquad f\in L^2.
\end{equation*}

In the main theorem of this section we show that, if, for each $L_r,$ $r=1,\ldots,d,$ their $L^p,$ $p>1,$ uniform spectral multipliers have a certain holomorphic extension property, then the same is true for the $L^p,$ $p>1,$ uniform joint spectral multipliers of the system $L$.

\begin{thm}
\label{thm:HolExt}
Fix $p>1.$ Assume that for each $r=1,\ldots,d,$ there is a sector $S_{\phi_p^r},$ $0<\phi_p^r<\pi/2,$ with the following property: if $m_r\colon [0,\infty)\rightarrow \mathbb{C}$ is a function which is bounded on $[0,\infty),$ continuous on $\mathbb{R}_+,$ and such that $\sup_{t_r>0}\|m_r(t_r L_r)\|_{L^p(X_r,\nu_r)\to L^p(X_r,\nu_r)}<\infty,$ then $m_r$ extends to a bounded holomorphic function in $S_{\phi_p^r},$ and
\begin{equation}
\label{chap:Hinf,sec:HolExt,eq:thm:HolExt:sep}
\|m_r\|_{H^{\infty}(S_{\phi_p^r})}\leq \sup_{t_r>0}\|m_r(t_r L_r)\|_{L^p(X_r,\nu_r)\to L^p(X_r,\nu_r)}.
\end{equation}
Then the following is true: every function $m\colon[0,\infty)^d\rightarrow \mathbb{C}$ which is bounded on $[0,\infty)^d,$ continuous on $\Rdp$ and such that \begin{equation}\label{chap:Hinf,sec:HolExt,eq:thm:HolExt:asumm}A_p^L:= \sup_{t\in\Rdp}\|m(t_1L_1,\ldots,t_d L_d)\|_{L^p(X,\nu)\to L^p(X,\nu)}<\infty,\end{equation} extends to a bounded holomorphic function of several variables in ${\bf S}_{\phi_p},$ $\phi_p=(\phi_p^1,\ldots,\phi_p^d);$ moreover
\begin{equation}
\label{chap:Hinf,sec:HolExt,eq:thm:HolExt:jointuni}
\|m\|_{H^{\infty}({\bf S}_{\phi_p})}\leq\sup_{t\in\Rdp}\|m(t_1L_1,\ldots,t_d L_d)\|_{L^p(X,\nu)\to L^p(X,\nu)}.
\end{equation}
\end{thm}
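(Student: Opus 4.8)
\textbf{Proof plan for Theorem~\ref{thm:HolExt}.}

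The plan is to reduce the multivariate statement to the one-dimensional hypotheses by freezing all but one variable at a time, exploiting the product structure of the basis $\{e_k\}$ and of the dilations $(t_1,\ldots,t_d)\mapsto m(t_1L_1,\ldots,t_dL_d)$. First I would fix a point $\la'=(\la'_2,\ldots,\la'_d)\in\Rdpm$ of the form $\la'_r=\la^r_{k_r}$ for some multi-index $(k_2,\ldots,k_d)$, and consider the ``slice'' function $m_1^{\la'}(\cdot)=m(\cdot,\la'_2,\ldots,\la'_d)$ on $[0,\infty)$. The key observation is that, for $f_1\in L^2(X_1,\nu_1)$, one has $m_1^{\la'}(t_1L_1)f_1\otimes e^2_{k_2}\otimes\cdots\otimes e^d_{k_d}=m(t_1L_1,t_2L_2,\ldots,t_dL_d)(f_1\otimes e^2_{k_2}\otimes\cdots)$ whenever $t_r\la^r_{k_r}$ runs over the right values; more carefully, choosing $t_r$ so that $t_r\la^r_{j}$ hits the point $\la'_r$ only at $j=k_r$ is not possible since the multiplier sees the whole sequence, so instead I would use the standard device: apply the hypothesis \eqref{chap:Hinf,sec:HolExt,eq:thm:HolExt:asumm} to the function $m$ itself but test it against elementary tensors, and use \eqref{chap:Intro,sec:Notation,eq:tensnormeq} (Fubini for tensor products) to see that for each fixed choice of the ``other'' variables the operator $m(t_1L_1,\cdot)$ acting in the first slot is bounded on $L^p(X_1,\nu_1)$ with norm at most $A_p^L$, uniformly in $t_1>0$ and in the frozen data. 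Then the one-dimensional hypothesis for $L_1$ gives that $m_1^{\la'}$ extends holomorphically to $S_{\phi_p^1}$ with $\|m_1^{\la'}\|_{H^\infty(S_{\phi_p^1})}\le A_p^L$.

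Having done this for every slice, I would iterate: holomorphy and the bound $\le A_p^L$ in the variable $z_1$ for every frozen $(\la'_2,\ldots,\la'_d)$, then for each fixed $z_1\in S_{\phi_p^1}$ repeat the argument in $z_2$, and so on. The technical point to be careful about is that the one-dimensional theorem is stated for functions continuous on $\mathbb{R}_+$ and bounded on $[0,\infty)$; at intermediate stages we must check that the partially-extended function, viewed as a function of the remaining real variables, still satisfies these regularity hypotheses — continuity of $z\mapsto m(z,\la'')$ on $\Rdp$ in the remaining variables follows from continuity of $m$ on $\Rdp$ together with the fact that holomorphic extension in one variable, being given by a Cauchy-type formula / reflection across the real axis, preserves joint continuity; boundedness by $A_p^L$ is maintained at each step by the iterated uniform bound. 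After $d$ steps one obtains a function holomorphic separately in each variable $z_r\in S_{\phi_p^r}$, bounded by $A_p^L$ on the polysector ${\bf S}_{\phi_p}$; by Hartogs' theorem on separate holomorphy (or simply Osgood's lemma, since the function is locally bounded) it is jointly holomorphic there, and \eqref{chap:Hinf,sec:HolExt,eq:thm:HolExt:jointuni} follows.

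The main obstacle I anticipate is the bookkeeping in the reduction step: one must verify precisely that testing $m(t_1L_1,\ldots,t_dL_d)$ against elementary tensors $f_1\otimes e^2_{k_2}\otimes\cdots\otimes e^d_{k_d}$ yields, as an operator in the first variable, exactly a dilated one-dimensional multiplier $m_1^{(\cdot)}(t_1L_1)$ with the other entries evaluated at the eigenvalues $\la^r_{k_r}$, and that \eqref{chap:Intro,sec:Notation,eq:tensnormeq} then transfers the $L^p(X,\nu)$ bound to an $L^p(X_1,\nu_1)$ bound uniform over all admissible data. This requires using the product structure of the joint spectral measure (each $L_r$ acts on its own variable, so $m(t_1L_1,\ldots,t_dL_d)$ restricted to the subspace $L^2(X_1,\nu_1)\otimes e^2_{k_2}\otimes\cdots$ is precisely $m(\cdot,t_2\la^2_{k_2},\ldots,t_d\la^d_{k_d})(t_1L_1)\otimes I$), and then letting $t_2,\ldots,t_d$ range over $\Rdp$ to realize every point of the joint spectrum $\{\la^2_{k_2}\}\times\cdots\times\{\la^d_{k_d}\}$ as a frozen value. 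Once this identification is clean, the rest is a routine induction on $d$ combined with the cited facts about separately holomorphic, locally bounded functions.
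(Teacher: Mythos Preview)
Your overall architecture (reduce to one variable by freezing the others, iterate, then invoke Hartogs) matches the paper's, but there is a genuine gap in the iteration step. Testing against $f_1\otimes e^2_{k_2}\otimes\cdots\otimes e^d_{k_d}$ does give you the $L^p(X_1,\nu_1)$ bound on $\la_1\mapsto m(\la_1,t_2\la^2_{k_2},\ldots,t_d\la^d_{k_d})$, and hence, by the one-variable hypothesis, a holomorphic extension in $z_1$ together with the \emph{pointwise} bound $|m(z_1,\la'_2,\ldots,\la'_d)|\le A_p^L$. But when you then fix a complex $z_1\in S_{\phi_p^1}$ and try to ``repeat the argument in $z_2$'', you need
\[
\sup_{t_2>0}\big\|m(z_1,t_2L_2,\la'_3,\ldots,\la'_d)\big\|_{L^p(X_2,\nu_2)\to L^p(X_2,\nu_2)}\le A_p^L,
\]
and this is an $L^p$ \emph{operator norm} bound, not a pointwise bound on the multiplier. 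Knowing $|m(z_1,t_2\la^2_{k_2},\ldots)|\le A_p^L$ for every $k_2$ only gives the $L^2(X_2)$ bound; it says nothing about $L^p(X_2)$ for $p\neq 2$. So your iteration stalls after the first step.

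The paper closes this gap by a duality trick: instead of freezing the other variables at eigenvalues, it freezes them at \emph{general} $f,g$ in the span of the $e^2_{k_2}\otimes\cdots\otimes e^d_{k_d}$ and considers
\[
\tilde m_{t^{(1)},f,g}(\la_1)=\big\langle m(\la_1,t_2L_2,\ldots,t_dL_d)f,\,g\big\rangle_{L^2(X^{(1)},\nu^{(1)})}.
\]
One checks that $\tilde m_{t^{(1)},f,g}(t_1L_1)$ is bounded on $L^p(X_1,\nu_1)$ with norm $\le A_p^L\|f\|_{L^p}\|g\|_{L^{p'}}$, so the one-variable hypothesis extends $\tilde m_{t^{(1)},f,g}$ holomorphically with the same bound. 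By uniqueness of analytic continuation this extension equals $\langle m(z_1,t_2L_2,\ldots,t_dL_d)f,g\rangle$, and now duality over $f,g$ yields $\|m(z_1,t_2L_2,\ldots,t_dL_d)\|_{L^p(X^{(1)})\to L^p(X^{(1)})}\le A_p^L$ uniformly in $z_1\in S_{\phi_p^1}$ and $t^{(1)}\in\mathbb{R}^{d-1}_+$. Only then can the inductive hypothesis be applied to the $(d-1)$-variable system. There is also a second, more technical point you gloss over: after the induction you know holomorphy of $m(z_1,\cdot)$ on ${\bf S}^{(1)}$ for each $z_1$, and holomorphy of $m(\cdot,t^{(1)})$ on $S_{\phi_p^1}$ for each \emph{real} $t^{(1)}$, but not yet holomorphy in $z_1$ for \emph{complex} $z^{(1)}$; Hartogs needs separate holomorphy at every point of the polysector. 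The paper handles this by showing (via Cauchy estimates and uniform convergence) that the Taylor coefficients $\partial^{k}_{(1)}m(z_1,t^{(1)})$ are holomorphic in $z_1$, and then summing the power series.
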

\begin{remark1}
The theorem is also true under the slightly weaker assumption that each $S_{\phi_p^r}$ is a not necessarily symmetric sector around $\mathbb{R}^+.$ Note that in the original paper, see \cite[Theorem 3.1]{jaOU}, we did not impose any constraints on the set $S_{\phi_p^r}.$ However, as the assumption on $m_r$ is dilation invariant, the restriction to non-symmetric sectors is in fact no loss of generality.
\end{remark1}
\begin{remark2}
If we additionally assume that each of the operators $L_r$ preserves the class of real-valued functions, then we lose nothing by considering only symmetric sectors. This follows from a version of the Schwarz reflection principle. Moreover, under this assumption, using the self-adjointness of $L_r$ it can be shown that we may take ${\bf S}_{\phi_p}={\bf S}_{\phi_q},$ whenever $1/p+1/q=1.$
\end{remark2}
\begin{proof}[Proof of Theorem \ref{thm:HolExt}]Recall that by Hartog's theorem (see \cite{krantz}) a function $f:U\rightarrow \mathbb{C},$ where $U$ is an open subset of $\mathbb{C}^d,$ is holomorphic in several variables if and only if it is holomorphic in each variable $z_r,$ $r=1,\ldots,d,$ while the other variables are held constant. The key ingredient of the proof here is the bound \eqref{chap:Hinf,sec:HolExt,eq:thm:HolExt:sep}. The reasoning we present has been indicated to us by prof.\ Fulvio Ricci. We use induction on $d.$ When $d=1,$ there is nothing to do. Assume that we proved the theorem for some $d-1$ and let $m\colon[0,\infty)^d\rightarrow \mathbb{C}$ be a bounded continuous function on $[0,\infty)^d,$ such that \eqref{chap:Hinf,sec:HolExt,eq:thm:HolExt:asumm} holds.

Fix $t^{(1)}=(t_2,\ldots,t_d)\in\mathbb{R}^{d-1}_+$ and $$f,g\in \textrm{span}\{e_{k_2}^2\otimes\cdots\otimes e_{k_{d}}^d\colon k_r\in\mathbb{N},\, r=2,\ldots,d\}.$$ Note that by our assumptions the latter set is dense in every $L^q(X^{(1)},\nu^{(1)}),$ $1< q<\infty,$ where $X^{(1)}=X_2\times\cdots\times X_d,$ $\nu^{(1)}=\nu_2\otimes\cdots\otimes\nu_d.$ Indeed, it can be easily verified that $\{e_{k_2}^2\otimes\cdots\otimes e_{k_{d}}^d\}$ satisfies the condition from \cite[Corollary 2.5]{basis}. Set
\begin{equation}
\label{chap:Hinf,sec:HolExt,eq:mwi}
\tilde{m}_{t^{(1)},f,g}(t_1)=\langle m(t_1,t_2L_2,\ldots,t_d L_d)f, g\rangle_{L^2(X^{(1)},\nu^{(1)})},\qquad t_1\in [0,\infty).
\end{equation}
Then, it is not hard to see that for $f_1,g_1\in L^p(\nu_1)\cap L^{p'}(\nu_1)$ we have
\begin{align*}
\langle\tilde{m}_{t^{(1)},f,g}(t_1L_1)f_1,g_1\rangle_{L^2(X_1,\nu_1)}= \langle m(t_1L_1,t_2L_2,\ldots, t_{d} L_{d})(f_1\otimes f),g_1\otimes g\rangle_{L^2(X,\nu)}.
\end{align*}

Consequently, from the assumption that $m$ satisfies \eqref{chap:Hinf,sec:HolExt,eq:thm:HolExt:asumm}, we obtain $$\|\tilde{m}_{t^{(1)},f,g}(t_1L_1)\|_{L^p(X_1,\nu_1)\to L^p(X_1,\nu_1)}\leq A_p^L  \|f\|_{L^p(X^{(1)},\nu^{(1)}))}\|g\|_{L^{p'}(X^{(1)},\nu^{(1)})}.$$ Clearly, $\tilde{m}_{t^{(1)},f,g}(\cdot)$ is a bounded continuous function on $[0,\infty),$ hence from \eqref{chap:Hinf,sec:HolExt,eq:thm:HolExt:sep} it follows that $\tilde{m}_{t^{(1)},f,g}(t_1)$ extends to $H^{\infty}(S_{\phi_p^1})$ and (denoting this extension by the same symbol)
\begin{equation}
\label{chap:Hinf,sec:HolExt,eq:ext}
\|\tilde{m}_{t^{(1)},f,g}(\cdot)\|_{H^{\infty}(S_{\phi_p^1})}\leq A_p^L \|f\|_{L^p(X^{(1)},\nu^{(1)}))}\|g\|_{L^{p'}(X^{(1)},\nu^{(1)})}.
\end{equation}

Since in particular $\tilde{m}_{(t_2\la_2(\la_1^2)^{-1},\ldots,t_d\la_d(\la_1^d)^{-1}),e^{(1)}_1,e^{(1)}_1}(t_1)=m(t_1,t_2\la_2,\ldots,t_d\la_d),$ where $e^{(1)}_1=e_1^2\otimes\cdots\otimes e_1^{d},$ we have the bounded holomorphic extension $m(z_1,t_2\la_2,\ldots,t_d\la_d).$ Moreover, for $f,g\in \textrm{span}\{e^2_{k_2}\otimes\cdots\otimes e^d_{k_{d}}\colon k_r\in\mathbb{N}_0,\,r=2,\ldots,d\}$ we see that $$z_1\mapsto \tilde{m}_{t^{(1)},f,g}(z_1)\qquad \textrm{and}\qquad z_1 \mapsto \langle m(z_1,t_2L_2,\ldots,t_dL_d)f, g\rangle_{L^2(X^{(1)},\nu^{(1)})}$$ are two holomorphic functions which agree on the positive real half-line. By uniqueness of the analytic continuation $$\tilde{m}_{t^{(1)},f,g}(z_1)=\langle m(z_1,t_2L_2,\ldots,t_dL_d)f, g\rangle_{L^2(X^{(1)},\nu^{(1)})},$$ i.e.\ \eqref{chap:Hinf,sec:HolExt,eq:mwi} still holds for the extension of $\tilde{m}_{t^{(1)},f,g}.$ Hence, from \eqref{chap:Hinf,sec:HolExt,eq:ext} we infer that \begin{equation*}\sup_{t_2,\ldots,t_d>0}\|m(z_1,t_2L_2,\ldots, t_d L_d)\|_{L^p(X^{(1)},\nu^{(1)})\to L^p(X^{(1)},\nu^{(1)})}\leq A_p^L,\end{equation*} uniformly in $z_1\in S_{\phi_p^1}.$

Denote ${\bf S}^{(1)}=S_{\phi_p^2}\times\cdots\times S_{\phi_p^d}.$ Then, from the inductive hypothesis applied separately for each $z_1\in S_{\phi_p^1},$ we obtain the holomorphic function ${\bf S}^{(1)}\ni (z_2,\ldots,z_d)\mapsto m(z_1,z_2,\ldots,z_d),$ which satisfies \begin{equation}\label{chap:Hinf,sec:HolExt,eq:thm:HolExt:uniboundd}\|m(z_1,\cdot)\|_{H^{\infty}({\bf S}^{(1)})}\leq A_p^L,\qquad z_1\in S_{\phi_p^1}.\end{equation} In summary, for each $z_1\in S_{\phi_p^1},$ the function $m(z_1,z_2,\ldots,z_d)$ is holomorphic in ${\bf S}^{(1)},$ and satisfies the desired bound \eqref{chap:Hinf,sec:HolExt,eq:thm:HolExt:jointuni}. We also know that for each $t^{(1)}=(t_2,\ldots,t_d)\in\mathbb{R}^{d-1}_+,$ the function $m(z_1,t^{(1)})$ is holomorphic on $S_{\phi_p^1}$.

It remains to prove that for each $z^{(1)}=(z_2,\ldots,z_d)\in {\bf S}^{(1)},$ the function $m(z_1,z^{(1)})$ is holomorphic in $S_{\phi_p^1}.$ The rest of the proof of Theorem \ref{thm:HolExt} is devoted to justifying this statement.

We begin by showing that for each $t^{(1)}\in \Rdpm$ and $k=(k_2,\ldots,k_d)\in \mathbb{N}^{d-1}_0,$ the derivative $\partial^{k}_{(1)}m(z_1,t^{(1)})=\partial^{k_2}_2\cdots \partial^{k_d}_dm(z_1,t^{(1)})$ is holomorphic on $S_{\phi_p^1}.$ It is enough to focus on the case $k=(0,\ldots,1,\ldots,0),$ with $1$ on the $(i-1)$-th component, $i=2,\ldots,d,$ as the general argument is an iteration of the one used in this particular case.

Let $t^{(1)}\in \Rdpm$ be fixed. Then, from Taylor's theorem we have
\begin{align*}& m_{n,t^{(1)}}(z_1):=n\bigg(m(z_1,t_2,\ldots,t_i+\frac{1}{n},\ldots,t_d)-m(z_1,t^{(1)})\bigg)\\
&=\partial_{i}m(z_1,t^{(1)})+\frac{1}{n}\partial^{2}_{i}m(z_1,t_2,\ldots,\xi_i(n),\ldots,t_d):=\partial_{i}m(z_1,t^{(1)})+R_{n,t^{(1)}}(z_1),\end{align*}
where $\xi_i(n)\in [t_i,t_i+1/n].$ Fix $z_1\in S_{\phi_p^1}$ for a moment and recall that $m(z_1,\cdot)$ is a holomorphic function of $d-1$ variables on ${\bf S}^{(1)}.$ Thus, using the uniform bound \eqref{chap:Hinf,sec:HolExt,eq:thm:HolExt:uniboundd} together with multivariate Cauchy's integral formula (applied to $\partial^{2}_{i}m(z_1,\cdot)$), we see that the sequence $R_{n,t^{(1)}}(z_1)$ converges to zero, uniformly in $z_1\in S_{\phi_p^1}.$ Consequently, $\{m_{n,t^{(1)}}(z_1)\}_{n}$ is a Cauchy sequence in $H^{\infty}(S_{\phi_p^1}),$ and its limit $\partial_{i}m(z_1,t^{(1)})$ is a holomorphic function on $S_{\phi_p^1}.$

Since $m(z_1,\cdot)$ is holomorphic on ${\bf S}^{(1)}$, for each fixed $z_1\in S_{\phi_p^1}$ and $t^{(1)}\in \Rdpm,$ it can be expanded into a power series (in several variables) in a poly-disc arround $t^{(1)}.$ Specifically we have
\begin{equation}\label{chap:Hinf,sec:HolExt,eq:thm:HolExt:unipower}m(z_1,z^{(1)})=\sum_{k\in \mathbb{N}^{d-1}_0}\frac{\partial^{k}_{(1)}m(z_1,t^{(1)})}{k_2!\cdots k_d!}(z^{(1)}-t^{(1)})^{k},\end{equation}
as long as $$z^{(1)}\in D(t_2,t_2\sin \phi_p^2)\times\cdots \times D(t_d,t_d\sin \phi_p^d):=D(t^{(1)}),$$
where, for $w_r\in \mathbb{C}$ and $R_r>0$ we denote $D(w_r,R_r)=\{z_r\in \mathbb{C}\colon |z_r-w_r|< R_r\}.$
Moreover, from \eqref{chap:Hinf,sec:HolExt,eq:thm:HolExt:uniboundd} and (multivariate) Cauchy's integral formula, it follows that for each fixed $z^{(1)}\in D(t^{(1)}),$ the series given by \eqref{chap:Hinf,sec:HolExt,eq:thm:HolExt:unipower} converges uniformly in $z_1\in S_{\phi_p^1}.$ Thus, we conclude that for each $t^{(1)}\in \Rdpm$ and $z^{(1)}\in D(t^{(1)}),$ the function $m(\cdot,z^{(1)})$ is holomorphic in $S_{\varphi_p^1}.$ Noting that $\bigcup_{t^{(1)}\in \Rdpm}D(t^{(1)})={\bf S}^{(1)}$ we are done.
\end{proof}

As an immediate corollary from Theorem \ref{thm:HolExt}, we obtain the following holomorphic extension theorem for multipliers connected with the systems of Ornstein-Uhlenbeck and Laguerre operators considered in Section \ref{chap:Hinf,sec:ExOp}. In the statement of Corollary \ref{cor:HolExtOULag} the symbol $(X,\nu)$ denotes $(\mathbb{R}^d,\gamma)$ in the case of the system of Ornstein-Uhlenbeck operators, or $(\Rdp,\mu_{\alpha}),$ in the case of the system of Laguerre operators. Recall that $\pst$ is given by \eqref{chap:Hinf,sec:ExOp,eq:pst}.
\begin{cor}
\label{cor:HolExtOULag}
Let $L=(L_1,\ldots,L_d)$ be either the system of Ornstein-Uhlenbeck operators or the system of Laguerre operators from Section \ref{chap:Hinf,sec:ExOp}. Fix $p\in (1,\infty)\setminus\{2\}$ and assume that $m\colon[0,\infty)^d\to \mathbb{C}$ is bounded on $[0,\infty)^d$ continuous on $\mathbb{R}_{+}^{d}$ and
$$\sup_{t\in\Rdp}\|m(t_1L_1,\ldots,t_dL_d)\|_{L^p(X,\nu)\to L^p(X,\nu)}<\infty.$$
Then $m$ extends to a bounded holomorphic function in ${\bf S}_{\pst}$ and
$$\|m\|_{H^{\infty}({\bf S}_{\pst})}\leq \sup_{t\in\Rdp}\|m(t_1L_1,\ldots,t_dL_d)\|_{L^p(X,\nu)\to L^p(X,\nu)}<\infty.$$
\end{cor}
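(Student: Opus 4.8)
The plan is to apply Theorem \ref{thm:HolExt} with $\phi_p^r = \pst$ for every $r=1,\ldots,d$, so that ${\bf S}_{\phi_p} = {\bf S}_{\pst}$. The only thing that must be checked is the hypothesis of Theorem \ref{thm:HolExt}: for each $r=1,\ldots,d$, the one-dimensional operator $L_r$ (either the $r$-th Ornstein-Uhlenbeck operator $\mL_r$ on $L^2(\mathbb{R},\gamma_r)$, or the $r$-th Laguerre operator $\mL_r^{\alpha_r}$ on $L^2(\mathbb{R}_+,\mu_r^{\alpha_r})$) must have the stated holomorphic extension property with sector $S_{\pst}$: namely, every $m_r$ bounded on $[0,\infty)$, continuous on $\mathbb{R}_+$, with $\sup_{t_r>0}\|m_r(t_rL_r)\|_{L^p\to L^p}<\infty$, extends to $H^{\infty}(S_{\pst})$ with $\|m_r\|_{H^{\infty}(S_{\pst})}\leq \sup_{t_r>0}\|m_r(t_rL_r)\|_{L^p\to L^p}$. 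Once this is in place, Theorem \ref{thm:HolExt} yields directly that $m$ extends to a bounded holomorphic function on ${\bf S}_{\pst}$ with $\|m\|_{H^{\infty}({\bf S}_{\pst})}\leq \sup_{t\in\Rdp}\|m(t_1L_1,\ldots,t_dL_d)\|_{L^p(X,\nu)\to L^p(X,\nu)}$, which is the assertion.

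So the real content is the one-dimensional extension statement. For the Ornstein-Uhlenbeck operator this is exactly (a dilation-invariant reformulation of) the result of Garc\'ia-Cuerva et al.\ \cite{funccalOu}; indeed, the lower bound \eqref{chap:Hinf,sec:ExOp,eq:lowBOU}, i.e.\ $e^{\pst|u_r|}\lesssim \|(\mL_{r,+})^{iu_r}\|_{L^p\to L^p}$, combined with the holomorphic extension machinery of \cite{funccalOu} (see also \cite[Theorem 3.5]{hmm}) gives precisely that uniform $L^p$-boundedness of $m_r(t_rL_r)$ forces holomorphy in $S_{\pst}$ with the norm control \eqref{chap:Hinf,sec:HolExt,eq:thm:HolExt:sep}. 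For the Laguerre operator the analogous one-dimensional statement is \cite[Theorem 2]{sas} (cf.\ the remark preceding Theorem \ref{thm:multLag} that $\mL_r^{\alpha_r}$ has no Marcinkiewicz functional calculus, which is proved there via exactly such an extension phenomenon); one uses the lower bound for the imaginary powers of $\mL_r^{\alpha_r}$ analogous to \eqref{chap:Hinf,sec:ExOp,eq:lowBOU}. In both cases the sector angle that comes out is $\pst = \arcsin|2/p-1|$, which matches what we want.

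One small point to address along the way: Theorem \ref{thm:HolExt} is stated for systems with discrete spectra and with an orthonormal basis $\{e_k\}$ linearly dense in every $L^p$; both the Ornstein-Uhlenbeck and the Laguerre systems satisfy this, with $e_k = \bnH_k$ (resp.\ $\bnL_k^{\alpha}$) and eigenvalues $\la^r_{k_r}=k_r$, as recalled in Section \ref{chap:Hinf,sec:ExOp}. Also, since each $\mL_r$ (resp.\ $\mL_r^{\alpha_r}$) preserves real-valued functions, Remark 2 following Theorem \ref{thm:HolExt} applies, so symmetric sectors are the natural choice and there is no loss in taking $\phi_p^r=\pst$ symmetric. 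I expect the main (really the only) obstacle to be correctly invoking the one-dimensional results of \cite{funccalOu} and \cite{sas} in the dilation-invariant, $H^\infty$-extension form needed to feed Theorem \ref{thm:HolExt}; the multivariate passage is then automatic. The argument can therefore be written in a couple of lines: verify the hypotheses of Theorem \ref{thm:HolExt} using \cite{funccalOu} (OU case) or \cite{sas} (Laguerre case), and conclude.
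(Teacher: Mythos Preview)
Your proposal is correct and follows essentially the same route as the paper: apply Theorem \ref{thm:HolExt} with $\phi_p^r=\pst$ for each $r$, verifying the one-dimensional hypothesis via \cite[Theorem 3.5 (i)]{hmm} for the Ornstein--Uhlenbeck operators and \cite[Theorem 2 (i)]{sas} for the Laguerre operators. The paper's proof is in fact even terser, simply citing those two results without the additional remarks on discrete spectra or symmetric sectors.
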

\begin{proof}
We use Theorem \ref{thm:HolExt} together with \cite[Theorem 3.5 (i)]{hmm} (in the case of the system $(\mL_1,\ldots,\mL_d)$) or \cite[Theorem 2 (i)]{sas} (in case of the system $\mL^{\alpha}$). Note that in the both cases $\phi_p^r=\pst,$ $r=1,\ldots,d.$
\end{proof}

It is worth to remark that, in the case of both systems there are results for $p=1,$ see \cite[Theorem 3.8 (ii)]{jaOU} and \cite[Theorem 5.2 (ii)]{jaOU}. Moreover, at least in the case of the system $\mL,$ it is possible to obtain various other holomorphic extension theorems, see \cite[Corolarry 3.24]{jaOU}. However, these results require using some specific properties of the Ornstein-Uhlenbeck or Laguerre operators and thus we decided not to include them in the thesis.


    %
    %

   \newchapter{An application of Corollary \ref{cor:genMarHinfdimindep} to Riesz transforms}{An application of Corollary \ref{cor:genMarHinfdimindep} to Riesz transforms}{An application of Corollary \ref{cor:genMarHinfdimindep} to Riesz transforms}

     \label{chap:Riesz}
     \numberwithin{equation}{chapter}
        In the present chapter we demonstrate how the $L^p$ boundedness of certain one-dimensional Riesz transforms implies the $L^p$ boundedness of appropriate $d$-dimensional Riesz transforms. Moreover, the $L^p$ bound we obtain is independent of the dimension. The key trick here is to use the $H^{\infty}$ joint functional calculus for commuting operators from Corollary \ref{cor:genMarHinfdimindep}.

        The crucial result in this chapter is Theorem \ref{thm:genRiesz}. In the statement of this theorem and throughout its proof we keep the setting and notation of Sections \ref{chap:Intro,sec:Setting} and \ref{chap:Hinf,sec:genMarHinf}. Recall that various operators $m(L)$ built on $L$ are defined by the multivariate spectral theorem via \eqref{chap:Intro,sec:Setting,eq:mdef}. In particular, each $m(L)$ is defined on the domain given by \eqref{chap:Intro,sec:Setting,eq:mdom}; for example, the domain of the operator $L_1+\ldots +L_d$ is the subspace
         $$\{f\in L^2(X,\nu)\colon \int_{[0,\infty)^d}(\la_1+\ldots+\la_d)^2 dE_{f,f}(\la)\}.$$
         All the formalities that are not properly explained in the proof of Theorem \ref{thm:genRiesz} can be easily derived from Proposition \ref{prop:funpro}. We decided not to write them explicitly in the proof in order not to obscure its idea.

        \begin{thmc}
         \label{thm:genRiesz} Let $L=(L_1,\ldots,L_d)$ be a general system of non-negative self-adjoint strongly commuting operators on $L^2(X,\nu)$ satisfying all the assumptions of Section \ref{chap:Intro,sec:Setting}. Fix  $\sigma\in(0,\infty).$ Then the operators $L_r^{\sigma}(\sum_{r=1}^d L_r)^{-\sigma},$ $r=1,\ldots,d,$ are bounded on all $L^p(X,\nu)$ spaces, $1<p<\infty,$  and
         \begin{equation}
         \label{chap:Riesz,eq:thm:genRiesz}
         \|L_r^{\sigma}(L_1+\cdots+L_d)^{-\sigma}\|_{p\to p}\leq C_{p,\sigma},
         \end{equation}
         where the constant $C_{p,\sigma}$ is independent of both $r=1,\ldots,d$ and the ('dimension') $d.$
         \end{thmc}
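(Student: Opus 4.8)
The plan is to realize $L_r^{\sigma}(L_1+\cdots+L_d)^{-\sigma}$ as $F(L)$ for a function $F$ that is bounded and holomorphic on a polysector ${\bf S}_{\varphi_p}$ with $\varphi_p>(\phi_p^*,\ldots,\phi_p^*)$, and then invoke Corollary \ref{cor:genMarHinfdimindep} to get the $L^p$ bound with a constant depending only on $p$ and $\sigma$, and crucially \emph{not} on $d$. Concretely, set
\begin{equation*}
F(z)=\frac{z_r^{\sigma}}{(z_1+\cdots+z_d)^{\sigma}},\qquad z\in {\bf S}_{\varphi},
\end{equation*}
where the powers are defined via the principal branch. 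The first step is to check that $F$ is holomorphic on ${\bf S}_{\varphi}$ for every $\varphi<\pi/2$: each $z_r\mapsto z_r^{\sigma}$ is holomorphic on the right half-plane, and the denominator $z_1+\cdots+z_d$ never vanishes and stays in the right half-plane when all $z_r\in S_{\pi/2}$, so $(z_1+\cdots+z_d)^{-\sigma}$ is well-defined and holomorphic there.

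The main point — and the only genuinely quantitative step — is to bound $\|F\|_{H^{\infty}({\bf S}_{\varphi})}$ by a constant independent of $d$, for a fixed $\varphi\in(\phi_p^*,\pi/2)$. For $z\in {\bf S}_{\varphi}$ write $z_s=\rho_s e^{i\theta_s}$ with $\rho_s>0$ and $|\theta_s|<\varphi$. Then $|z_r^{\sigma}|=\rho_r^{\sigma}$, while $z_1+\cdots+z_d=\sum_s \rho_s e^{i\theta_s}$ lies in the sector $|\Arg(\cdot)|\le \varphi$, and moreover
\begin{equation*}
\Real\big(e^{-i\varphi\,\sgn(\Imm)}(z_1+\cdots+z_d)\big)=\sum_{s=1}^d \rho_s\cos(\varphi-|\theta_s|\,\sgn)\;\ge\;(\cos\varphi)\sum_{s=1}^d \rho_s\ge (\cos\varphi)\,\rho_r.
\end{equation*}
Hence $|z_1+\cdots+z_d|\ge (\cos\varphi)\,\rho_r$, and therefore $|F(z)|\le (\cos\varphi)^{-\sigma}$, a bound depending only on $\sigma$ and $\varphi$ but not on $r$ or $d$. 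The hard part is really just getting this lower bound on $|z_1+\cdots+z_d|$ in terms of the single term $\rho_r$ uniformly in $d$; the displayed projection onto the direction $e^{i\varphi}$ (or $e^{-i\varphi}$, depending on the sign of the imaginary part of the sum) does exactly that, since every $\rho_s e^{i\theta_s}$ has positive real part after rotating by $-\varphi$ or $+\varphi$, so all the cross terms help rather than hurt.

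With $\|F\|_{H^{\infty}({\bf S}_{\varphi})}\le (\cos\varphi)^{-\sigma}$ in hand, fix $1<p<\infty$, choose $\varphi\in(\phi_p^*,\pi/2)$ (where $\phi_p^*=\arcsin|2/p-1|$), and apply Corollary \ref{cor:genMarHinfdimindep} to the system $L$, which satisfies \eqref{chap:Intro,sec:Setting,eq:contra} and \eqref{chap:Intro,sec:Setting,eq:noatomatzero} by hypothesis. This yields
\begin{equation*}
\|F(L)\|_{p\to p}\le C_{p,d}\,\|F\|_{H^{\infty}({\bf S}_{\varphi})}\le C_{p,d}\,(\cos\varphi)^{-\sigma},
\end{equation*}
and since the constant $C_{p,d}$ in Corollary \ref{cor:genMarHinfdimindep} is explicitly independent of the particular system $L$ (hence of $d$), and since $\varphi$ can be chosen depending only on $p$, we obtain $\|F(L)\|_{p\to p}\le C_{p,\sigma}$ with $C_{p,\sigma}$ independent of $r$ and $d$. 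It remains only to identify $F(L)$ with $L_r^{\sigma}(L_1+\cdots+L_d)^{-\sigma}$ on a suitable dense domain: by the multivariate spectral theorem and the multiplicativity of the joint functional calculus (Proposition \ref{prop:funpro}), $F(L)=L_r^{\sigma}\,(L_1+\cdots+L_d)^{-\sigma}$ as operators, with $(L_1+\cdots+L_d)^{-\sigma}$ defined spectrally on its natural domain; here the assumption \eqref{chap:Intro,sec:Setting,eq:noatomatzero} guarantees that $E(\{\la:\la_1+\cdots+\la_d=0\})=0$, so that $(L_1+\cdots+L_d)^{-\sigma}$ is densely defined and $F$ is $E$-a.e.\ finite. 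This completes the proof of \eqref{chap:Riesz,eq:thm:genRiesz}.
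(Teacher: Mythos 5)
Your computation of the $H^{\infty}$ bound is correct, and the spectral-theoretic identification of $F(L)$ with $L_r^{\sigma}(L_1+\cdots+L_d)^{-\sigma}$ is fine, but there is a genuine gap in the last step. You apply Corollary \ref{cor:genMarHinfdimindep} directly to the $d$-component system $L=(L_1,\ldots,L_d)$ and then assert that the constant $C_{p,d}$ ``is explicitly independent of the particular system $L$ (hence of $d$).'' Those two properties are not the same. The corollary states that, \emph{for fixed $d$}, the same constant works for every $d$-component system satisfying the hypotheses of Section \ref{chap:Intro,sec:Setting}; the subscript $d$ is there precisely because the proof passes through Theorems \ref{thm:gen} and \ref{thm:gfun}, whose constants (factors of $(2\pi)^{-d}$, sums over $\mathbb{Z}^d$ in the $g$-function argument, and so on) are allowed to grow with $d$. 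So your final inequality $\|F(L)\|_{p\to p}\leq C_{p,d}(\cos\varphi)^{-\sigma}$ does not yield a bound independent of $d$.

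The paper avoids this by reducing to a fixed number of operators. It groups $L_{(r)}=\sum_{s\neq r}L_s$, checks that the \emph{pair} $(L_r,L_{(r)})$ is again a strongly commuting system satisfying \eqref{chap:Intro,sec:Setting,eq:contra} (via $e^{-tL_{(r)}}=\prod_{s\neq r}e^{-tL_s}$) and \eqref{chap:Intro,sec:Setting,eq:noatomatzero} (via $0\leq E_{L_{(r)}}(\{0\})\leq E_{L_s}(\{0\})=0$), and then applies Corollary \ref{cor:genMarHinfdimindep} to this two-variable system with $m_{\sigma}(z_1,z_2)=z_1^{\sigma}(z_1+z_2)^{-\sigma}$. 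The relevant constant is then $C_{p,2}$, which is independent of the original $d$ because the corollary's constant does not depend on which $2$-component system is considered. To repair your argument you would need exactly this regrouping; the $H^{\infty}$ estimate you already carried out specializes to the two-variable case and is all that is needed once the pair $(L_r,L_{(r)})$ is in place.
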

        \begin{proof}
        Clearly, by the multivariate spectral theorem, the operator $L_r^{\sigma}(\sum_{r=1}^d L_r)^{-\sigma}$ is bounded on $L^2(X,\nu).$

        Denote $L_{(r)}=\sum_{s\neq r} L_s,$ so that $\sum_{r=1}^d L_r=L_r+L_{(r)}.$ Then $(L_r,L_{(r)})$ is a pair of strongly commuting self-adjoint non-negative operators on $L^2(X,\nu),$ which satisfy all the assumptions of Section \ref{chap:Intro,sec:Setting}. Indeed, the $L^p$ contractivity property \eqref{chap:Intro,sec:Setting,eq:contra} of $\exp(-tL_{(r)})$ follows from the identity $e^{-tL_{(r)}}=\prod_{s\neq r}e^{-t L_{s}},$ $t>0;$ while \eqref{chap:Intro,sec:Setting,eq:noatomatzero} is immediate once we note that $0\leq E_{L_{(r)}}(\{0\})\leq E_{L_s}(\{0\})=0,$ $s=1,\ldots,d,$ $s\neq r.$

        Consequently, from Corollary \ref{cor:genMarHinfdimindep}, it follows that if $$m\in H^{\infty}({\bf S}_{(\varphi_1,\varphi_2)})\qquad \textrm{for some}\qquad \varphi_r>\pst=\arcsin|2/p-1|,\quad r=1,2,$$ then
        \begin{equation}
        \label{chap:Riesz,eq:Hinfbound}
        \|m(L_r,L_{(r)})\|_{p\to p}\leq C_{p}\|m\|_{H^{\infty}({\bf S}_{(\varphi_1,\varphi_2)})}.
        \end{equation}
        Moreover, the constant $C_p=C_{p,2}$ in \eqref{chap:Riesz,eq:Hinfbound} is independent of $(L_r,L_{(r)}).$ Taking $m_{\sigma}(z_1,z_2)=z_1^{\sigma}(z_1+z_2)^{-\sigma}$ (here we consider the principal branch of the complex power function) it is not hard to see that $m_{\sigma}$ is holomorphic in ${\bf S}_{\pi/2}$ and uniformly bounded in every ${\bf S}_{\phi},$ $\phi=(\phi_1,\phi_2)\in(0,\pi/2)^2.$ Thus, putting $m_{\sigma}$ and $\varphi=(\pst+\varepsilon_p,\pst+\varepsilon_p)$ in \eqref{chap:Riesz,eq:Hinfbound} (here we take $\varepsilon_p$ small enough so that $\pst+\varepsilon_p<\pi /2$), we obtain \eqref{chap:Riesz,eq:thm:genRiesz}, finishing the proof.
        \end{proof}

        Now we consider systems of operators $L=(L_1,\ldots,L_d)$ such that each $L_r$ is non-negative and self-adjoint on some $L^2(X_r,\nu_r),$ where $(X_r,\nu_r),$ $r=1,\ldots,d,$ is a $\sigma$-finite measure space. We assume that each $L_r,$ $r=1,\ldots,d$ satisfies the contractivity condition \eqref{chap:Intro,sec:Setting,eq:contra} (with respect to $L^p(X_r,\nu_r)$) and the atomlessness condition \eqref{chap:Intro,sec:Setting,eq:noatomatzero}. Denote $X=X_1\times\cdots\times X_d,$ $\nu=\nu_1\otimes\cdots\otimes \nu_d$ and, for $1\leq p\leq \infty,$ $L^p=L^p(X,\nu),$ $\|\cdot\|_p=\|\cdot\|_{L^p}$ and $\|\cdot\|_{p\to p}=\|\cdot\|_{L^p\to L^p}.$
        Recalling the discussion in the last paragraphs of Section \ref{chap:Intro,sec:Notation}, we know that the operators $L_r,$ $r=1,\ldots,d,$ can be also regarded as non-negative self-adjoint strongly commuting operators on $L^2,$ that satisfy \eqref{chap:Intro,sec:Setting,eq:contra} (with respect to $L^p$) and \eqref{chap:Intro,sec:Setting,eq:noatomatzero}.

      Let us now see how Theorem \ref{thm:genRiesz} formally implies dimension free bounds of certain Riesz transforms. A multi-dimensional Riesz transform of order $j_r\in\mathbb{N}$ associated to the system $L=(L_1,\ldots,L_d)$ is an operator of the form $\delta_r^{j_r}(\sum_{s=1}^d L_s)^{-j_r/2}.$ Here $\delta_r$ is a certain operator acting on a dense subspace of $L^2(X_r,\nu_r)$ (hence, by a tensorization argument also on a dense subspace of $L^2$). Often $\delta_r$ and $L_r$ are related by $L_r=\delta_r^*\delta_r+a_r,$ with $a_r\geq 0;$ in particular this is the case in Theorem \ref{thm:RieszOrt} and Corollary \ref{cor:RieszgenLu_Piqu}. The following corollary of Theorem \ref{thm:genRiesz} is rather informal, though, as we shall soon see, it can be easily formalized in many concrete cases.

     \begin{corc}
     \label{cor:Scheme}
     Let $r=1,\ldots,d,$ be fixed. Assume that, for some $j_r \in \mathbb{N},$ the one-dimensional Riesz transform $\delta_r^{j_r} L_r^{-j_r/2}$ of order $j_r$ is bounded on $L^p(X_r,\nu_r).$ Then the multi-dimensional Riesz transform of order $j_r$ is bounded on $L^p$ and
      \begin{equation}
      \label{chap:Riesz,eq:corschembound}
      \|\delta_r^{j_r}(L_1+\cdots+L_d)^{-j_r/2}\|_{p\to p}\leq C_{p,j_r}\|\delta_r^{j_r}L_r^{-j_r/2}\|_{L^p(X_r,\nu_r)\to L^p(X_r,\nu_r)}.\end{equation}
       \end{corc}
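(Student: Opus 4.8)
The plan is to factor the multi-dimensional Riesz transform $\delta_r^{j_r}(L_1+\cdots+L_d)^{-j_r/2}$ as a composition of the one-dimensional Riesz transform $\delta_r^{j_r}L_r^{-j_r/2}$ (acting only on the $x_r$ variable) with the operator $L_r^{j_r/2}(L_1+\cdots+L_d)^{-j_r/2}$ supplied by Theorem \ref{thm:genRiesz}. Formally,
\begin{equation*}
\delta_r^{j_r}(L_1+\cdots+L_d)^{-j_r/2}=\bigl(\delta_r^{j_r}L_r^{-j_r/2}\bigr)\circ \bigl(L_r^{j_r/2}(L_1+\cdots+L_d)^{-j_r/2}\bigr),
\end{equation*}
where the first factor is understood via \eqref{chap:Intro,sec:Notation,eq:tensnot} as $\bigl(\delta_r^{j_r}L_r^{-j_r/2}\bigr)\otimes I_{(r)}$, and the second factor is the operator $m_\sigma(L_r,L_{(r)})$ with $\sigma=j_r/2$ from the proof of Theorem \ref{thm:genRiesz}. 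First I would recall from the last paragraph of Section \ref{chap:Intro,sec:Notation} that, since the operators $L_r$ act on separate variables and each satisfies \eqref{chap:Intro,sec:Setting,eq:contra} and \eqref{chap:Intro,sec:Setting,eq:noatomatzero}, the lifted operators $L_r\otimes I_{(r)}$ on $L^2(X,\nu)$ form a system of strongly commuting non-negative self-adjoint operators satisfying the assumptions of Section \ref{chap:Intro,sec:Setting}; thus Theorem \ref{thm:genRiesz} applies and gives
\begin{equation*}
\bigl\|L_r^{j_r/2}(L_1+\cdots+L_d)^{-j_r/2}\bigr\|_{p\to p}\leq C_{p,j_r},
\end{equation*}
with a constant independent of both $r$ and $d$.

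Next I would bound the first factor. By the hypothesis $\delta_r^{j_r}L_r^{-j_r/2}$ is bounded on $L^p(X_r,\nu_r)$, so by Fubini's theorem and \eqref{chap:Intro,sec:Notation,eq:tensnormeq} its tensor lift $\bigl(\delta_r^{j_r}L_r^{-j_r/2}\bigr)\otimes I_{(r)}$ is bounded on $L^p=L^p(X,\nu)$ with the same norm $\|\delta_r^{j_r}L_r^{-j_r/2}\|_{L^p(X_r,\nu_r)\to L^p(X_r,\nu_r)}$. Composing the two bounds yields exactly \eqref{chap:Riesz,eq:corschembound}. The $L^2$ identity that justifies the factorization is immediate from the multivariate spectral theorem applied to $L_r$ and $L_{(r)}$: on the joint spectral side one simply has $\la_r^{j_r/2}\cdot\la_r^{-j_r/2}(\la_r+\la_{(r)})^{-j_r/2}=(\la_r+\cdots)^{-j_r/2}$ after also applying $\delta_r^{j_r}$, and the atomlessness \eqref{chap:Intro,sec:Setting,eq:noatomatzero} guarantees that $L_r^{-j_r/2}$ and $(L_1+\cdots+L_d)^{-j_r/2}$ are densely defined with no obstruction at the origin.

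The main obstacle — and the reason the corollary is stated as "rather informal" in the excerpt — is the domain bookkeeping: $\delta_r$ is only defined on a dense subspace, and $L_r^{-j_r/2}$, $(L_1+\cdots+L_d)^{-j_r/2}$ are unbounded, so one must check that the composition is well-defined on a common dense domain and that the formal identity holds there before extending by boundedness. I would handle this exactly as the paper suggests: restrict to a dense core (e.g.\ finite linear combinations of joint spectral-subspace vectors, or $C_c^\infty$-type functions in the concrete examples), verify the identity there using Proposition \ref{prop:funpro} and the commutation of $\delta_r$ with the functional calculus of $L_{(r)}$ (which acts on the other variables), and then invoke the density/boundedness extension principle from Section \ref{chap:Intro,sec:Notation}. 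In any concrete instance (as in Theorem \ref{thm:RieszOrt} and Corollary \ref{cor:RieszgenLu_Piqu}) this is routine; in the stated generality one accepts the informal reading, which is why the corollary is phrased the way it is.
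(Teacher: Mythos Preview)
Your proposal is correct and follows essentially the same approach as the paper: factor $\delta_r^{j_r}(L_1+\cdots+L_d)^{-j_r/2}=(\delta_r^{j_r}L_r^{-j_r/2})(L_r^{j_r/2}(L_1+\cdots+L_d)^{-j_r/2})$, apply Theorem~\ref{thm:genRiesz} with $\sigma=j_r/2$ to the second factor, and use \eqref{chap:Intro,sec:Notation,eq:tensnormeq} for the first. Your additional remarks on domain bookkeeping and the informal nature of the statement accurately reflect the paper's own caveats.
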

     \begin{proof}
     We decompose
      \begin{equation*}
      \delta_r^{j_r}(L_1+\cdots+L_d)^{-j_r/2}=(\delta_r^{j_r}L_r^{-j_r/2})(L_r^{j_r/2}(L_1+\cdots+L_d)^{-j_r/2}).
      \end{equation*}
      Since the system $L$ satisfies the assumptions of Section \ref{chap:Intro,sec:Setting}, using Theorem \ref{thm:genRiesz} with $\sigma=j_r/2$ and the fact that $\|\delta_r^{j_r} L_r^{-j_r/2}\|_{L^p(X_r,\nu_r)\to L^p(X_r,\nu_r)}=\|\delta_r^{j_r} L_r^{-j_r/2}\|_{p \to p}$ (cf.\ \eqref{chap:Intro,sec:Notation,eq:tensnormeq}), we obtain the desired bound \eqref{chap:Riesz,eq:corschembound}.
     \end{proof}
\begin{remark1}
In some cases we need a variant of the corollary that allows the operators $L_r$ to violate the atomlessness condition \eqref{chap:Intro,sec:Setting,eq:noatomatzero}. Then, we need to add appropriate projections in the definitions of Riesz transforms. More details are provided in the specific cases when we use such a variant.
\end{remark1}
\begin{remark2}
The argument used in the proof of the corollary bears a resemblance with the method of rotations by Calder\'on and Zygmund, see \cite{Cal-Zyg1} or \cite[Corollary 4.8]{Duo}. Indeed, when applied to the classical (multi-dimensional) Riesz transforms on $\mathbb{R}^d,$ this method allows us to deduce their $L^p$ boundedness from the $L^p$ boundedness of the (one-dimensional) directional Hilbert transforms. However, the method of rotations does not give a dimension free bound for the classical Riesz transforms.
\end{remark2}

Till the end of this chapter we focus on rigorous applications in particular cases of Corollary \ref{cor:Scheme} or its variations.

First observe that Corollary \ref{cor:Scheme} implies a dimension free estimate for the norms on $L^p(\mathbb{R}^d,dx),$ $1<p<\infty,$ of the classical Riesz transforms $R_r,$ $r=1,\ldots,d,$ cf.\ \cite{SteinRiesz}. In this case $L_r$ coincides with the self-adjoint extensions on $L^2(\mathbb{R}^d,dx)$ of the operators $-\partial_r^2,$ $r=1,\ldots,d,$ initially defined on $C_c^{\infty}(\mathbb{R}^d).$ Then $L=(L_1,\ldots,L_d)$ is a system of strongly commuting operators on $L^2(\mathbb{R}^d,dx),$ satisfying all the assumptions of Section \ref{chap:Intro,sec:Setting}. The one-dimensional Riesz transforms $\delta_r L_r^{-1/2}=\partial_r(-\partial_r^2)^{-1/2}$ of order $1$ and the operators $L_r^{1/2}(\sum_{r=1}^d L_r)^{-1/2},$ $r=1,\ldots,d,$  are defined as the Fourier multipliers $$\mF(\delta_r L_r^{-1/2} f)(\xi)={\rm sgn}\, \xi_r\,\mF f(\xi)=\frac{\xi_r}{|\xi_r|}\mF f(\xi),\qquad f\in L^2(\mathbb{R}^d,dx)$$ and $$\mF(L_r^{1/2}(L_1+\cdots+ L_d)^{-1/2}f)(\xi)=\frac{|\xi_r|}{|\xi|}\mF f(\xi),\qquad f\in L^2(\mathbb{R}^d,dx),$$ respectively. Since the multi-dimensional Riesz transform $R_r$ is given by the Fourier multiplier $$\mF(R_r f)(\xi)=\frac{\xi_r}{|\xi|}\mF f,\qquad f\in L^2,$$
we see that, for $r=1,\ldots,d,$
$$R_r f=(\delta_r L_r^{-1/2}) (L_r^{1/2}(L_1+\cdots+ L_d)^{-1/2})f,\qquad f\in L^2(\mathbb{R}^d,dx). $$
In order to obtain the desired dimension free bounds, it suffices to note that $\delta_r L_r^{-1/2}$ is a constant times the Hilbert transform in the $x_r$ variable and to apply \eqref{chap:Riesz,eq:corschembound} with $j_r=1$.

An argument completely analogous to the one from the preceding paragraph can be also used for the Riesz-Dunkl transforms defined by \eqref{chap:Ck,sec:MarDun,eq:Riesz-Dunkl} in Section \ref{chap:Ck,sec:MarDun}. Observe that Theorem \ref{thm:RieszDunkl} below improves \cite[Theorem 3.1]{Sifi2} in the case $G\cong\mathbb{Z}_2^d$. Indeed, the bound from \cite[Theorem 3.1]{Sifi2} is clearly dependent on the dimension, as it uses the underlying structure of the space of homogenous type.

\begin{thmc}
\label{thm:RieszDunkl}
Fix $r\in\mathbb{N}$ and let $R_r^D$ be the $r$-th $d$-dimensional, $d\geq r,$ Riesz-Dunkl transform defined by \eqref{chap:Ck,sec:MarDun,eq:Riesz-Dunkl}. Then,
$$\|R_r^D\|_{L^p(\mathbb{R}^d,\nu_{\alpha})\to L^p(\mathbb{R}^d,\nu_{\alpha})}\leq C(p,\alpha_r),\qquad 1<p<\infty,$$
where the constant $C_{p,\alpha_r}$ is independent of the dimension $d.$
\end{thmc}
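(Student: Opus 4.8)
The plan is to follow the scheme of Corollary~\ref{cor:Scheme}: factor $R_r^D$ as a one-dimensional Riesz--Dunkl transform in the $x_r$ variable composed with an operator of the form $L_r^{1/2}(L_1+\cdots+L_d)^{-1/2}$, which is controlled dimension-freely by Theorem~\ref{thm:genRiesz}. To this end, set $L_s=-T_s^2$ for $s=1,\dots,d$. Since $T_s$ is anti-symmetric, $L_s=(-iT_s)^*(-iT_s)$ is non-negative and self-adjoint; as the $T_s$ commute and each $T_s$ acts only on the $x_s$ variable, $L=(L_1,\dots,L_d)$ is a system of strongly commuting operators acting on separate variables. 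The first task is to check that $L$ satisfies the hypotheses of Section~\ref{chap:Intro,sec:Setting}. Condition \eqref{chap:Intro,sec:Setting,eq:noatomatzero} is immediate: the Dunkl transform, being an $L^2(\nu_\alpha)$-isometry, conjugates $L_s$ to multiplication by $x_s^2$, so $E_{L_s}(\{0\})$ is multiplication by $\chi_{\{x_s=0\}}$, and the hyperplane $\{x_s=0\}$ is $\nu_\alpha$-null. For \eqref{chap:Intro,sec:Setting,eq:contra} I would use that the one-dimensional Dunkl heat semigroup $e^{-tL_s}$ is a Markov semigroup (non-negative kernel, and it fixes constants since $T_s$ annihilates constants), hence contractive on every $L^p(\mathbb{R},\nu_{\alpha_s})$, $1\le p\le\infty$; the tensorization remarks at the end of Section~\ref{chap:Intro,sec:Notation} then yield \eqref{chap:Intro,sec:Setting,eq:contra} on $L^p(\mathbb{R}^d,\nu_\alpha)$.

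With the abstract framework in place, I would carry out the factorization. Because $D_\alpha=D_{\alpha_1}\cdots D_{\alpha_d}$ is a product of one-dimensional transforms and $D_\alpha$-multiplier operators compose by multiplying their symbols, the identity
\[
m_r(x)=\frac{x_r}{|x|}=\operatorname{sgn}(x_r)\cdot\frac{(x_r^2)^{1/2}}{(|x|^2)^{1/2}}
\]
gives, on $L^2(\mathbb{R}^d,\nu_\alpha)$,
\[
R_r^D=\bigl(D_\alpha^{-1}\,\operatorname{sgn}(x_r)\,D_\alpha\bigr)\circ\bigl(L_r^{1/2}(L_1+\cdots+L_d)^{-1/2}\bigr),
\]
since $L_r$ has Dunkl symbol $x_r^2$ and $L_1+\cdots+L_d$ has Dunkl symbol $|x|^2$ (so the second factor has symbol $|x_r|/|x|\le 1$ and is a well-defined bounded operator). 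The operator $D_\alpha^{-1}\operatorname{sgn}(x_r)D_\alpha$ is, up to the identity in the remaining variables, the $d=1$ Riesz--Dunkl transform \eqref{chap:Ck,sec:MarDun,eq:Riesz-Dunkl} of parameter $\alpha_r$ acting on the $x_r$ variable; by the one-dimensional case of \cite[Theorem~3.1]{Sifi2} it is bounded on $L^p(\mathbb{R},\nu_{\alpha_r})$ with norm $C(p,\alpha_r)$, and by \eqref{chap:Intro,sec:Notation,eq:tensnormeq} its norm on $L^p(\mathbb{R}^d,\nu_\alpha)$ equals that one-dimensional number, in particular it does not depend on $d$. The second factor $L_r^{1/2}(L_1+\cdots+L_d)^{-1/2}$ is bounded on $L^p(\mathbb{R}^d,\nu_\alpha)$ with constant $C_{p,1/2}$, depending only on $p$, by Theorem~\ref{thm:genRiesz} applied with $\sigma=1/2$ to the system $L$. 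Composing the two estimates yields $\|R_r^D\|_{L^p\to L^p}\le C(p,\alpha_r)\,C_{p,1/2}$, which is the claimed dimension-free bound.

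I expect the only genuinely delicate step to be the verification that $(-T_1^2,\dots,-T_d^2)$ obeys the contractivity condition \eqref{chap:Intro,sec:Setting,eq:contra}; everything else is symbol bookkeeping together with the already established Theorem~\ref{thm:genRiesz} and the tensor-product identity \eqref{chap:Intro,sec:Notation,eq:tensnormeq}. The $L^p$-contractivity of the Dunkl heat semigroup associated with $\mathbb{Z}_2$ is standard, but for a self-contained argument one can invoke Lemma~\ref{lem:dunhan1}, which identifies $e^{-tL_s}$ on even and odd functions with Bessel heat semigroups of parameters $\alpha_s$ and $\alpha_s+1$ --- these are Markovian, hence contractive on all $L^p(\mathbb{R},\nu_{\alpha_s})$ --- while the case $\alpha_s=0$ reduces to the ordinary heat semigroup on $\mathbb{R}$. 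In particular, when $\alpha_s=0$ the factor $D_\alpha^{-1}\operatorname{sgn}(x_s)D_\alpha$ is simply a constant multiple of the Hilbert transform in $x_s$, so the whole argument proceeds uniformly in the parameters $\alpha$.
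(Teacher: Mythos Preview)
Your proposal is correct and follows essentially the same route as the paper: both factor $R_r^D$ as the one-dimensional Dunkl--Hilbert transform in $x_r$ composed with $L_r^{1/2}(L_1+\cdots+L_d)^{-1/2}$, then invoke Theorem~\ref{thm:genRiesz} (via Corollary~\ref{cor:Scheme}) for the second factor. The only cosmetic differences are in the auxiliary references: the paper obtains the $L^p$-boundedness of the one-dimensional Riesz--Dunkl transform from its own Theorem~\ref{thm:dunklmarmult} (applied to $m=\operatorname{sgn}$) rather than from \cite{Sifi2}, and it cites \cite[Section~4]{Rosherm} directly for \eqref{chap:Intro,sec:Setting,eq:contra} and \eqref{chap:Intro,sec:Setting,eq:noatomatzero} rather than arguing via Lemma~\ref{lem:dunhan1}.
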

\begin{proof}
We apply Corollary \ref{cor:Scheme}. The proof is a repetition of the argument given for the classical Riesz transforms. We just need to replace the Fourier transform by the Dunkl transform in the context of $G\cong\mathbb{Z}_2^d.$ The boundedness of the one-dimensional Riesz-Dunkl (Hilbert-Dunkl) transform follows from Theorem \ref{thm:dunklmarmult}. To see that the one-dimensional Dunkl heat semigroups $\{e^{-t_r T_r^2}\}_{t_r>0},$ $r=1,\ldots,d,$ satisfy \eqref{chap:Intro,sec:Setting,eq:contra} and \eqref{chap:Intro,sec:Setting,eq:noatomatzero} we refer to \cite[Section 4]{Rosherm}
\end{proof}

\section[Riesz transforms for classical orthogonal expansions]{Riesz transforms for classical orthogonal expansions}
\numberwithin{equation}{section}
The next corollary of Theorem \ref{thm:genRiesz} focuses on Riesz transforms connected with various classical orthogonal expansions. A perfect setting for the present section is the one given by Nowak and Stempak in \cite{NSRiesz}.

In order not to introduce a separate notation for each case of the orthogonal expansions, for precise definitions we kindly refer the reader to the list of examples in \cite[Section 7]{NSRiesz}. In Theorem \ref{thm:RieszOrt} we consider first order multi-dimensional Riesz transforms $R_r$ in one of the following settings from \cite{NSRiesz}:
\begin{enumerate}[a)]
\item Hermite polynomial expansions $\{H_{k}\}_{k\in \mathbb{N}_0^d}$, see \cite[Section 7.1]{NSRiesz};
\item Laguerre polynomial expansions $\{L_{k}^{\alpha}\}_{k\in \mathbb{N}_0^d},$ $\alpha\in(-1,\infty)^d$, see \cite[Section 7.2]{NSRiesz};
\item Jacobi polynomial expansions $\{P_{k}^{\alpha,\beta}\}_{k\in \mathbb{N}_0^d},$ $\alpha,\beta\in(-1,\infty)^d$, see \cite[Section 7.3]{NSRiesz};
\item Hermite function expansions $\{h_{k}\}_{k\in \mathbb{N}_0^d}$, see \cite[Section 7.4]{NSRiesz};
\item Laguerre expansions of Hermite type $\{\varphi_{k}^{\alpha}\}_{k\in \mathbb{N}_0^d},$ $\alpha\in(-1,\infty)^d$, see \cite[Section 7.5]{NSRiesz};
\item Laguerre expansions of convolution type $\{\ell_{k}^{\alpha}\}_{k\in \mathbb{N}_0^d},$ $\alpha\in(-1,\infty)^d$, see \cite[Section 7.6]{NSRiesz};
\item Jacobi function expansions $\{\phi_{k}^{\alpha,\beta}\}_{k\in \mathbb{N}_0^d},$ $\alpha,\beta \in(-1,\infty)^d$, see \cite[Section 7.7]{NSRiesz};
\item Fourier-Bessel expansions $\{\psi_{k}^{v}\}_{k\in \mathbb{N}^d},$ $v\in(-1,\infty)^d,$ with the Lebesgue measure, see \cite[Section 7.8]{NSRiesz} (there $\nu$ is used in place of $v$).
\end{enumerate}
As showed in \cite{NSRiesz}, each of the operators $R_r,$ $r=1,\ldots,d,$ is bounded on an appropriate $L^2(X,\nu)$ (in the terminology of \cite{NSRiesz} $\mu$ is used instead of $\nu$).

Moreover, in all the settings a)-h) the operators $R_r,$ $r=1,\ldots,d,$ are in fact bounded on $L^p,$ for $1<p<\infty.$ Additionally, in the settings a) - e), it is known that their norms as operators on $L^p$ are independent of the dimension $d,$ and on their respective parameters (appropriately restricted in some cases). In the setting a) this is a consequence of \cite{Mey1}, see also \cite{Gut1} and \cite{Pis1}. For the dimension free bounds in the settings b) and c), see \cite{No1}, and \cite{NSj2}, respectively. In the setting d) the dimension free boundedness is proved in \cite{HRST}, while in the setting e) in \cite{StWr}.

Theorem \ref{thm:RieszOrt} below is a fairly general result giving dimension free boundedness for the norms on $L^p$ of first order Riesz transforms connected with the orthogonal expansions listed in items a)-h)\footnote{Recently Forzani et al.\ obtained dimension free $L^p$ estimates for Riesz transforms connected with polynomial expansions, see the presentation \cite{SasForSc}. Their results overlap with our Theorem \ref{thm:RieszOrt} in the settings a)-c).}. In fact the theorem is a tool for reducing the problem of proving dimensionless and parameterless bounds to proving parameterless bounds in one dimension. Note that, in the cases f) - h), it seems that the obtained results are new. Additionally, in the cases b) and e), we improve the range of the admitted parameters. In the statement of Theorem \ref{thm:RieszOrt} by $\diamond_{r}$ we denote the $r$-th parameter in one of the cases b)-c), e)-h). By convention, in the cases a) and d), the symbol $C(p,\diamond_r)$ denotes a constant depending only on $p.$
\begin{thm}
\label{thm:RieszOrt} Fix $r\in\mathbb{N},$ and let $R_r$ be the $r$-th first order $d$-dimensional, $d \geq r,$ Riesz transform in one of the settings a)-h), as defined in \cite[Sections 7.1-7.8]{NSRiesz}. Then, $R_r$ is bounded on $L^p$ and
\begin{equation}
\label{chap:Riesz,eq:RieszOrtbound}
\|R_r\|_{p\to p}\leq C(p,\diamond_r),\qquad 1<p<\infty,
\end{equation}
where the constant $C(p,\diamond_r)$ is independent of both the dimension $d$ and all the other parameters $\diamond_{r'},$ $r'\neq r.$ The bound \eqref{chap:Riesz,eq:RieszOrtbound} in the following settings below holds for:
\begin{enumerate}[a)]
\setcounter{enumi}{1}
\item parameter $\alpha\in(-1,\infty)^d,$ the constant $C(p,\alpha_r)$ is independent of $\alpha_r;$
\item parameters $\alpha,\beta\in[-1/2,\infty)^d,$ the constant $C(p,(\alpha_r,\beta_r))$ is independent of $\alpha_r,\beta_r;$
\setcounter{enumi}{4}
\item parameter $\alpha\in (\{-1/2\}\cup [1/2,\infty))^d,$ the constant $C(p,\alpha_r)$ is independent of $\alpha_r;$
\item parameter $\alpha\in (-1,\infty)^d;$
\item parameters $\alpha,\beta\in(\{-1/2\}\cup [1/2,\infty))^d;$
\item parameter $\nu\in (\{-1/2\}\cup [1/2,\infty))^d.$
\end{enumerate}
\end{thm}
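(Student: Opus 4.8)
The plan is to put each $R_r$ into the factorized form that Corollary~\ref{cor:Scheme} is built for, and then feed in the (essentially known) one-dimensional bounds. Recall from \cite{NSRiesz} that in every setting a)--h) the first order Riesz transform is $R_r=\delta_r\,(L_1+\cdots+L_d)^{-1/2}$, where the $L_s$ are one-dimensional operators acting on the separate variables $x_s$ with $L_s=\delta_s^{\ast}\delta_s+a_s$, $a_s\ge 0$; in the polynomial settings a)--c) a projection off the (finite-dimensional) kernel is tacitly present. Since $\delta_r$ annihilates $\ker L_r$, one has, on a dense subspace and with the operators on the right interpreted by the multivariate spectral theorem (Proposition~\ref{prop:funpro}),
$$R_r=\bigl(\delta_r L_r^{-1/2}\bigr)\bigl(L_r^{1/2}(L_1+\cdots+L_d)^{-1/2}\bigr).$$
After the usual tensorization the first factor acts only on the variable $x_r$, so by \eqref{chap:Intro,sec:Notation,eq:tensnormeq} its norm on $L^p$ equals $\|\delta_rL_r^{-1/2}\|_{L^p(X_r,\nu_r)\to L^p(X_r,\nu_r)}$; the second factor is $L_r^{1/2}(\sum_sL_s)^{-1/2}$, which Theorem~\ref{thm:genRiesz} with $\sigma=1/2$ bounds on $L^p$ by a constant $C_{p,1/2}$ independent of $r$ and of $d$, hence of all the parameters $\diamond_{r'}$. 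Thus the whole argument reduces to checking, setting by setting, that the hypotheses of Theorem~\ref{thm:genRiesz} (or of its Remark-1 variant in Corollary~\ref{cor:Scheme}) are met, and that the one-dimensional Riesz transform is $L^p$-bounded.

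For the structural hypotheses I would first note that in all of a)--h) the semigroups $\{e^{-tL_s}\}_{t>0}$ are symmetric sub-Markovian — positivity preserving and $L^{\infty}$-contractive — this being classical (nonnegativity of the Mehler, Hille--Hardy, Jacobi--Poisson, Fourier--Bessel heat kernels, together with the normalization of the underlying measures). By self-adjointness on $L^2$ and interpolation they satisfy \eqref{chap:Intro,sec:Setting,eq:contra} for every $p\in[1,\infty]$. In settings d)--h) the $L_s$ have spectrum bounded away from zero (explicit positive eigenvalues, e.g.\ $2k+1$ for Hermite functions, $((\alpha_s+\beta_s+1)/2)^2$ for Jacobi functions, squares of Bessel zeros for Fourier--Bessel), so \eqref{chap:Intro,sec:Setting,eq:noatomatzero} holds and Theorem~\ref{thm:genRiesz} applies directly to the pair $(L_r,L_{(r)})$, $L_{(r)}=\sum_{s\ne r}L_s$ (in the few limiting choices of parameters where a zero eigenvalue survives, one uses the kernel-projection fix below). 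In the polynomial settings a)--c) the operator $L_s$ has $0$ as an eigenvalue (the constant function), so \eqref{chap:Intro,sec:Setting,eq:noatomatzero} fails; here I would invoke the variant of Corollary~\ref{cor:Scheme} announced in its first remark: insert the spectral projections $\Pi_r$ off $\ker L_r$ and $\Pi$ off $\ker(\sum_sL_s)$ (which change nothing, as $\delta_r$ already kills $\ker L_r$, and $\ker(\sum_sL_s)\subseteq\ker L_r$), apply Theorem~\ref{thm:genRiesz} to the shifted pair $(L_r+\varepsilon,L_{(r)}+\varepsilon)$ — which satisfy \eqref{chap:Intro,sec:Setting,eq:contra} and \eqref{chap:Intro,sec:Setting,eq:noatomatzero} uniformly in $\varepsilon>0$ — with the bounded holomorphic symbol $z_1^{1/2}(z_1+z_2)^{-1/2}$, and let $\varepsilon\to0^{+}$, using strong convergence on $\Ran(\Pi_r)$ and a density argument to keep the uniform $L^p$ bound.

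It remains to collect the one-dimensional inputs. For a) and d) there is no parameter and $\delta_rL_r^{-1/2}$ is $L^p$-bounded with constant depending only on $p$, which is classical (see \cite{NSRiesz} and the references therein). For f), g), h) it is enough that $\delta_rL_r^{-1/2}$ be $L^p$-bounded with a constant depending only on $p$ and the single parameter $\diamond_r$ — again classical one-dimensional Riesz transform theory — which, combined with the $d$-independent constant $C_{p,1/2}$, already yields \eqref{chap:Riesz,eq:RieszOrtbound}. The improvement in b), c), e) is located entirely in one dimension: one needs the norm of $\delta_rL_r^{-1/2}$ on $L^p(X_r,\nu_r)$ to be uniform over $\alpha_r\in(-1,\infty)$ (Laguerre polynomials), over $\alpha_r,\beta_r\in[-1/2,\infty)$ (Jacobi polynomials), respectively over $\alpha_r\in\{-1/2\}\cup[1/2,\infty)$ (Laguerre--Hermite type), which follows from the sharp one-dimensional kernel estimates available in \cite{NSRiesz} and the references there. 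In each case the one-dimensional norm is manifestly independent of $d$ and of the parameters $\diamond_{r'}$ with $r'\ne r$, so multiplying by $C_{p,1/2}$ finishes the proof.

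The main obstacle, I expect, is the uniformity in the radial parameter of the one-dimensional bounds in the enlarged ranges for b), c), e) — precisely what lets one go beyond the parameter restrictions of \cite{No1}, \cite{NSj2}, \cite{StWr} — and, on the abstract side, making rigorous the Remark-1 variant (the $\varepsilon\to0$ limiting, and the fact that the factorization together with Theorem~\ref{thm:genRiesz} tolerates the failure of \eqref{chap:Intro,sec:Setting,eq:noatomatzero}). Matching the operator-theoretic definitions of $R_r$ in \cite{NSRiesz} (domains, the shifts $a_s$, the kernel projections) with the factorized form is routine bookkeeping, carried out with the help of Proposition~\ref{prop:funpro}.
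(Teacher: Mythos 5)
Your proposal is correct and follows essentially the same route as the paper: factor $R_r=(\delta_rL_r^{-1/2}\Pi_{0,r})(L_r^{1/2}(\sum_sL_s)^{-1/2}\Pi_{0,r})$, control the second factor by Theorem~\ref{thm:genRiesz} (passing through the shifted systems $L_\varepsilon$ and an $\varepsilon\to0^+$ strong-$L^2$ limit exactly where \eqref{chap:Intro,sec:Setting,eq:noatomatzero} fails, i.e.\ the polynomial settings a)--c)), and reduce the parameter uniformity entirely to one-dimensional Riesz transform bounds. The paper carries this out in detail only for case b), citing \cite[Theorem 3(b)]{Muck1} for the $\alpha_r$-uniform one-dimensional bound and then listing, setting by setting, the references for the remaining one-dimensional estimates and for the $L^p$ contractivity of the semigroups (\cite{Muck2}, \cite{NSj2}, \cite{ThanRieszHerm}, \cite{NSz1}, \cite{Muck3}, \cite{CiSt1}, and the combination of \cite{NSRieszLagHerm} and \cite{StWr} for e); \cite{funccalOu}, \cite{NScontr}, \cite{NSj1}, \cite{HRST}, Feynman--Kac for contractivity); your appeal to sub-Markovianity plus interpolation for \eqref{chap:Intro,sec:Setting,eq:contra} and your reference to the sharp one-dimensional estimates in \cite{NSRiesz} and its bibliography play the same role at a slightly coarser level of specificity. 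One minor point: in settings d)--h) the one-dimensional spectra are genuinely bounded away from zero for all admitted parameters, so the caveat about ``limiting choices of parameters where a zero eigenvalue survives'' is not actually needed there.
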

\begin{proof}
We focus on proving one particular model case of the theorem, namely the case b). The proofs in the other settings are similar to the one presented below. Whenever we are in one of the settings a)-h), the symbol $L=(L_1,\ldots,L_d)$ denotes the system of operators considered in this setting. Note that each $L_r,$ $r=1,\ldots,d,$ is self-adjoint and non-negative on $L^2(X_r,\nu_r).$ It may happen however that some of the operators $L_r$ does not satisfy one or both of the conditions \eqref{chap:Intro,sec:Setting,eq:contra} or \eqref{chap:Intro,sec:Setting,eq:noatomatzero}. Throughout the proof we do not distinguish between $L_r$ and $L_r\otimes I_{(r)}=I\otimes\cdots \otimes L_r\otimes \cdots \otimes I.$

Since the operators $L_r,$ $r=1,\ldots,d,$ act on separate variables, they (or rather their tensor product versions) commute strongly. Hence, by the multivariate spectral theorem, we can define the projections $\Pi_{0}=\chi_{\{\la_1+\cdots+\la_d>0\}}(L)$ and $\Pi_{0,r}=\chi_{\{\la_r>0\}}(L),$ $r=1,\ldots,d,$ via \eqref{chap:Intro,sec:Setting,eq:mdef}. Note that in some of the cases a)-h) or for some values of the parameters the operators $\Pi_0$ or $\Pi_{0,r}$ are trivial, i.e.\ equal to the identity operator.

We prove formally the case b) of Theorem \ref{thm:RieszOrt}. The proof is a minor variation on Corollary \ref{cor:Scheme}. In this case we show that the constant $C(p,\alpha_r)$ in \eqref{chap:Riesz,eq:RieszOrtbound} is also independent of $\alpha_r\in(-1,\infty).$ The notation used here is the one introduced in the second part of Section \ref{chap:Hinf,sec:ExOp}. In particular the operators $L_r=\mL_{r}^{\alpha_r}$ are given by \eqref{chap:Hinf,sec:ExOp,eq:systemLag} and $L^p=L^p(\Rdp,\mu_{\alpha}).$ In this case the operator $\delta_r=\sqrt{x_r}\partial_r$ from \cite[p.\ 690]{NSRiesz} acts on the Laguerre polynomial $\bnL_k^{\alpha},$ $k\in\mathbb{N}^d_0$ by $$\delta_r(\bnL_k^{\alpha})=-\sqrt{k_r}\sqrt{x_r}\bnL_{k-e_r}^{\alpha+e_r};$$
 here $e_r$ is the $r$-th coordinate vector and, by convention, $\bnL_{k-e_r}^{\alpha+e_r}=0,$ if $k_r=0.$ The ($d$-dimensional) Riesz transform $R_r$ and the one-dimensional Riesz transform are then well defined on finite linear combinations of Laguerre polynomials by $R_r=\delta_r (\sum_{r=1}^d L_r)^{-1/2}\Pi_{0}$ and $\delta_r L_r^{-1/2}\Pi_{0,r},$ respectively. Here the operators $(\sum_{r=1}^d L_r)^{-1/2}\Pi_{0}$ and $L_r^{-1/2}\Pi_{0,r}$ are given by \eqref{chap:Hinf,sec:ExOp,eq:defL}, with the appropriate functions $m.$

Observe that the Riesz transform $R_r$ can be written as
\begin{equation}
\label{chap:Riesz,eq:RieszLaPoldecom}
R_r f=(\delta_rL_r^{-1/2}\Pi_{0,r})(L_r^{1/2}(L_1+\cdots+ L_d)^{-1/2}\Pi_{0,r})f,
\end{equation}
whenever $f$ is a finite linear combination of Laguerre polynomials. To show \eqref{chap:Riesz,eq:RieszLaPoldecom}, first note that for such $f,$
\begin{equation*}
R_rf=(\delta_rL_r^{-1/2}\Pi_{0,r})(L_r^{1/2}(L_1+\cdots+ L_d)^{-1/2}\Pi_{0})f+\delta_r(I-\Pi_{0,r})((L_1+\cdots+ L_d)^{-1/2}\Pi_0).
\end{equation*}
Then, since $\delta_r$ vanishes on $$\Ran(I-\Pi_{0,r})=\{(I-\Pi_{0,r})g\colon g\in L^2(X,\nu)\}=\{g\in L^2(X,\nu)\colon g\textrm{ is independent of $x_r$}\}$$ and $\Pi_{0,r}\Pi_0=\Pi_{0,r},$ we obtain \eqref{chap:Riesz,eq:RieszLaPoldecom}.

In the case b) the boundedness on $L^p(X_r,\nu_r)$ of the one-dimensional Riesz transform $\delta_rL_r^{-1/2}\Pi_{0,r}$ follows from \cite[Theorem 3 (b)]{Muck1}. Moreover, the bound is also independent of the parameter $\alpha_r\in(-1,\infty).$ Additionally, the operator $\Pi_{0,r}$ is also bounded on all $L^p(X_r,\nu_r),$ $1<p<\infty,$ with a bound independent of $\alpha_r.$ Thus, since finite combinations of Laguerre polynomials are dense in $L^p,$ $1<p<\infty,$ coming back to \eqref{chap:Riesz,eq:RieszLaPoldecom} we see that it suffices to prove the dimension and parameter free $L^p$ boundedness of the operator $(L_r^{1/2}(\sum_{r=1}^d L_r)^{-1/2}\Pi_{0,r}).$ Note that the operators $L_r,$ $r=1,\ldots,d,$ satisfy the assumption \eqref{chap:Intro,sec:Setting,eq:contra} from Section \ref{chap:Intro,sec:Setting}, see \cite[Section 2]{NScontr}.  However, we cannot directly apply Theorem \ref{thm:genRiesz}, since none of these operators satisfies the assumption \eqref{chap:Intro,sec:Setting,eq:noatomatzero}.

To overcome this obstacle we consider the auxiliary systems $$L_{\varepsilon}=(L_1+\varepsilon,\ldots,L_d+\varepsilon),\qquad \varepsilon>0.$$ Using Theorem \ref{thm:genRiesz} for the systems $L_{\varepsilon}$ we obtain
\begin{equation}
\label{chap:Riesz,eq:RieszLaPolaux}
\|(L_r+\varepsilon)^{1/2}(L_1+\cdots+L_d+d\varepsilon)^{-1/2}\|_{p\to p}\leq C_{p}, \qquad 1<p<\infty,
\end{equation}
with a constant $C_p$ independent both of the dimension $d$ and $\varepsilon>0.$ Now, since $L_r$ has discrete spectrum, the set $\sigma(L_r)\setminus\{0\}$ is separated from $0.$ Consequently, from the multivariate spectral theorem,
\begin{equation}
\label{chap:Riesz,eq:RieszLalimit}
L_r^{1/2}(L_1+\cdots +L_d)^{-1/2}\Pi_{0,r}=\lim_{\varepsilon\to 0^+}(L_r+\varepsilon)^{1/2}(L_1+\cdots+L_d+d\varepsilon)^{-1/2}\Pi_{0,r},\end{equation}
in the strong $L^2$ sense. Using the above equality together with \eqref{chap:Riesz,eq:RieszLaPolaux} and the fact that the operator $\Pi_{0,r}$ is bounded on $L^p(X_r,\nu_r),$ $1<p<\infty,$ we obtain the desired dimension free bound for $L_r^{1/2}(\sum_{r=1}^d L_r)^{-1/2}\Pi_{0,r}.$

We do not give proofs for the cases a) and c)-h). Note that in some of these settings the proofs are simpler then in the setting b). That is because we can apply directly Corollary \ref{cor:Scheme}, without using the auxiliary systems $L_{\varepsilon}.$

We finish the proof by pointing out exemplary references for the boundedness of appropriate one-dimensional Riesz transforms and for the $L^p$ contractivity of appropriate semigroups.

For the first of these topics the references are e.g.:\ \cite{Muck2} for the case a), \cite{NSj2} for the case c) (the bound being independent of the parameters), \cite{ThanRieszHerm} for the case d), \cite{NSz1} for the case f), \cite[Theorem 1.14, Corollary 17.11]{Muck3} for the case g), and \cite{CiSt1} for the case h). In the case e), a combination of the proof of \cite[Theorem 3.3]{NSRieszLagHerm} from \cite[Sections 5,7]{NSRieszLagHerm} (for small $\alpha_r$), and \cite[Theorem 3.1 and Theorem 5.1]{StWr} (for large $\alpha_r$), gives a bound independent of $\alpha_r\in \{-1/2\}\cup[1/2,\infty).$

For the $L^p$ contractivity property of appropriate semigroups the reader can consult e.g.: \cite[Proposition 1.1. i)]{funccalOu} for the case a); \cite{NScontr} for the cases b), e), and f); \cite[Section 2]{NSj1} for the cases c) and g); \cite{HRST} for the case d). In the case h) the $L^p$ contractivity follows from the Feynman-Kac formula.

In those references in which multi-dimensional expansions are considered, we only need to use the one-dimensional case $d=1.$

\end{proof}

\section[Riesz transforms on products of discrete groups]{Riesz transforms on products of discrete groups with polynomial volume growth}
\label{chap:Riesz,sec:disc}
We shall now see how Corollary \ref{cor:Scheme} applies to the context introduced in the title of the present section.

Let $G$ be a discrete group. Assume that there is a finite set $U$ containing the identity and generating $G.$ We impose that $G$ has polynomial volume growth, i.e., there is $\alpha\in \mathbb{N}_0,$ such that
$
|U^n|\leq C n^{\alpha},
$ where $|F|$ denotes the counting measure (Haar measure) of $F\subset G.$
Throughout this section by $l^p,$ $\|\cdot\|_p,$ and $\|\cdot\|_{p\to p},$ we mean $l^p(G),$ $\|\cdot\|_{l^p(G)},$ and $\|\cdot\|_{l^p(G)\to l^p(G)},$ respectively.

Fix a finitely supported symmetric probability measure $\mu$ on $G,$ such that $\supp \mu$ generates $G.$ Then the operator $$P f(x)=P_{\mu} f(x)=f*\mu(x)=\sum_{y\in G}\mu(x^{-1}y) f(y)=\sum_{y\in G}\mu(y) f(xy),$$
is a contraction on all $l^p,$ $1\leq p\leq \infty.$ Since $\mu$ is symmetric, $P$ is self adjoint on $l^2$, thus $L=(I-P)$ is self-adjoint and non-negative on $l^2.$ Moreover, $L$ satisfies \eqref{chap:Intro,sec:Setting,eq:contra}. Indeed, we have
$e^{-tL}=e^{-t}\sum_{n=0}^{\infty}\frac{P^n }{n!},$ so that
$$\|e^{-tL}\|_{p\to p}\leq e^{-t}\sum_{n=0}^{\infty}\frac{\|P\|^n_{p\to p}}{n!}\leq 1, \qquad 1\leq p\leq \infty.$$

Additionally, since $\supp \mu$ generates $G,$ it can be shown that, if $Pf=f$ for some $f\in l^2,$ then $f$ is a constant. For the sake of completeness we give a short prove of this observation\footnote{We thank Gian Maria Dall'Ara for showing us this argument.}. Since the measure $\mu$ is real-valued it suffices to focus on real-valued $f\in l^2.$ Then, either $f$ or $-f$ attains a maximum on $G.$ Assume the former holds (the proof in the latter case is analogous) and let $x\in G$ be such that $\sup_{y\in G}f(y)\leq f(x).$ Since $f(x)=Pf(x)=\sum_{y\in \supp \mu}\mu(y)f(xy),$ we thus have $f(x)=f(xy),$ for all $y\in \supp \mu.$ Now, an inductive argument shows that $f(x)=f(xy^n),$ for all $y\in \supp \mu$ and every $n\in \mathbb{N}.$ Thus, using the assumption that $\supp \mu$ generates $G,$ we obtain that $f$ is constant on $G.$

From the previous paragraph it follows that, in the case $|G|=\infty,$ if $Pf=f$ and $f\in l^2,$ then $f=0.$ Hence, if $G$ is infinite, then $L$ satisfies \eqref{chap:Intro,sec:Setting,eq:noatomatzero}, i.e.\ $E_{L}(\{0\})=0.$

For fixed $g_0\in G$ denote $\partial_{g_0} f(x)=f(xg_0)-f(x).$ Then $\partial_{g_0}$ is a bounded operator on all $l^p,$ $1\leq p\leq \infty.$ Let $R$ be the discrete Riesz transform considered in \cite{AlexdisRiesz} (see also \cite{HebSC}). From \cite[Theorem 2.4]{AlexdisRiesz} (or \cite[Section 8]{HebSC}) it follows that $R$ is bounded on all $l^p,$ $1<p<\infty$ and of weak type $(1,1).$ Note that, if $G$ is infinite, then $E_{L}(\{0\})=0$ and the formula $R=\partial_{g_0}L^{-1/2}=\partial_{g_0}(I-P)^{-1/2}$ holds on a dense subset of $l^2.$ If $|G|=K,$ for some $K\in\mathbb{N}$, then, for all $f\in l^2$ we have $R=\partial_{g_0}L^{-1/2}\pi_0,$ where $\pi_0$ is the projection onto the orthogonal complement of the constants given by\ $\pi_0f=f-|G|^{-1}\sum_{y\in G}f(y).$

Now we consider multi-dimensional Riesz transforms on direct products $G^d.$ Let $P_r=P\otimes I_{(r)}$ be given by \eqref{chap:Intro,sec:Notation,eq:tensnot}. Set $L_r=L\otimes I_{(r)}=(I-P_r)$ and, in the case of finite $G,$ denote \begin{equation}\label{chap:Riesz,sec:disc,eq:Pi0def}\Pi_0f=f-\frac{1}{|G|^d}\sum_{y=(y_1,\ldots,y_d)\in G^d}f(y).\end{equation} The $d$-dimensional Riesz transform $R_r$ is then defined by \begin{equation}\label{chap:Riesz,sec:disc,eq:Rieszmultdef}
R_r=\left\{ \begin{array}{lr}
 (\partial_{g_0}\otimes I_{(r)})(L_1+\cdots +L_d)^{-1/2}, &\mbox{if $|G|=\infty$}, \\
 (\partial_{g_0}\otimes I_{(r)})(L_1+\cdots +L_d)^{-1/2}\Pi_0, &\mbox{if $|G|<\infty.$}
       \end{array} \right.
\end{equation}
Recall that the measure $\mu$ in the definition of $P_r=P\otimes I_{(r)}=P_{\mu}\otimes I_{(r)},$ $r=1,\ldots,d,$ is a symmetric probability measure, such that $\supp \mu$ is finite and generates $G$.

By using Corollary \ref{cor:Scheme} (or its variation) we prove the following.
\begin{thm}
\label{thm:Rieszdiscgen}
Let $G$ be a discrete group of polynomial volume growth. Then the $d$-dimensional Riesz transforms $R_r,$ given by \eqref{chap:Riesz,sec:disc,eq:Rieszmultdef}, are bounded on all $l^p(G^d),$ $1<p<\infty.$ Moreover, for each $r=1,\ldots,d,$
\begin{equation}
\|R_r \|_{l^p(G^d)\to l^p(G^d)}\leq C_p ,\qquad 1<p<\infty,
\label{chap:Riesz,sec:disc,eq:Rieszdiscgen}
\end{equation}
where the constant $C_p$ is independent of $d.$ Consequently,
\begin{equation}\label{chap:Riesz,sec:disc,eq:Rieszdiscgenvectp<2}
\bigg\|\left(\sum_{r=1}^d|R_r f|^2\right)^{1/2}\bigg\|_{l^p(G^d)}\leq C_p\, d\,\|f\|_{l^p(G^d)},\qquad 1<p<2,\end{equation}
\begin{equation}\bigg\|\left(\sum_{r=1}^d|R_r f|^2\right)^{1/2}\bigg\|_{l^p(G^d)}\leq C_p\, \sqrt{d}\, \|f\|_{l^p(G^d)},\qquad 2<p<\infty. \label{chap:Riesz,sec:disc,eq:Rieszdiscgenvectp>2}
\end{equation}
\end{thm}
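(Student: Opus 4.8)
The plan is to apply Corollary~\ref{cor:Scheme} (in the variant allowing $L_r$ to violate \eqref{chap:Intro,sec:Setting,eq:noatomatzero} when $|G|<\infty$) to the operators $L_r=L\otimes I_{(r)}=(I-P_r)$ on $l^2(G^d)$. First I would verify the hypotheses of Section~\ref{chap:Intro,sec:Setting}: the one-dimensional operator $L=I-P$ is self-adjoint and non-negative on $l^2(G)$, it satisfies \eqref{chap:Intro,sec:Setting,eq:contra} because $e^{-tL}=e^{-t}\sum_{n\geq0}P^n/n!$ is an average of contractions, and when $|G|=\infty$ it satisfies \eqref{chap:Intro,sec:Setting,eq:noatomatzero} since $\supp\mu$ generating $G$ forces $P$-invariant $l^2$ functions to vanish (the argument is already recalled in Section~\ref{chap:Riesz,sec:disc}). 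As observed in Section~\ref{chap:Intro,sec:Notation}, the tensor-product operators $L_r$ then inherit both properties on $l^p(G^d)$.

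Next I would identify $\delta_r$ with $\partial_{g_0}\otimes I_{(r)}$, which is bounded on every $l^p$, and note that the one-dimensional Riesz transform $\partial_{g_0}L^{-1/2}$ (respectively $\partial_{g_0}L^{-1/2}\pi_0$ when $G$ is finite) is bounded on $l^p(G)$, $1<p<\infty$, by \cite[Theorem 2.4]{AlexdisRiesz} (or \cite[Section 8]{HebSC}). In the infinite case Corollary~\ref{cor:Scheme} with $j_r=1$ applies verbatim, giving \eqref{chap:Riesz,sec:disc,eq:Rieszdiscgen} with a constant independent of $d$. In the finite case one must insert the projections: writing $\Pi_0$ as in \eqref{chap:Riesz,sec:disc,eq:Pi0def} and $\Pi_{0,r}=\pi_0\otimes I_{(r)}$, the decomposition
\begin{equation*}
R_r=(\partial_{g_0}\otimes I_{(r)})L_r^{-1/2}\Pi_{0,r}\cdot\big(L_r^{1/2}(L_1+\cdots+L_d)^{-1/2}\Pi_{0,r}\big)
\end{equation*}
holds on a dense subspace, using that $\partial_{g_0}\otimes I_{(r)}$ annihilates functions independent of the $r$-th coordinate and that $\Pi_{0,r}\Pi_0=\Pi_{0,r}$ (exactly as in the Laguerre model case b) of Theorem~\ref{thm:RieszOrt}). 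Since each $L_r$ has $0$ as an isolated point of its spectrum, the auxiliary-system trick $L_\varepsilon=(L_1+\varepsilon,\ldots,L_d+\varepsilon)$ together with Theorem~\ref{thm:genRiesz} and a strong-$L^2$ limit yields the dimension-free bound for $L_r^{1/2}(\sum_s L_s)^{-1/2}\Pi_{0,r}$; boundedness of $\pi_0$ (hence $\Pi_{0,r}$) on each $l^p(G)$ is elementary. This proves \eqref{chap:Riesz,sec:disc,eq:Rieszdiscgen}.

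For the vector-valued estimates \eqref{chap:Riesz,sec:disc,eq:Rieszdiscgenvectp<2} and \eqref{chap:Riesz,sec:disc,eq:Rieszdiscgenvectp>2}, the idea is to pass from the scalar bound \eqref{chap:Riesz,sec:disc,eq:Rieszdiscgen} to the square function by a soft argument. For $2<p<\infty$, by duality and the inequality $\|(\sum_r|g_r|^2)^{1/2}\|_p^2\leq\|\sum_r|g_r|^2\|_{p/2}\leq\sum_r\|g_r\|_p^2\cdot$ (testing against $h\in l^{(p/2)'}$ with $\|h\|=1$ and using $\|g_r^2\cdot h\|_1\leq\|g_r\|_p^2$), one gets $\|(\sum_r|R_rf|^2)^{1/2}\|_p^2\leq\sum_r\|R_rf\|_p^2\leq d\,C_p^2\|f\|_p^2$, hence the $\sqrt d$ bound. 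For $1<p<2$ one instead bounds pointwise $(\sum_r|R_rf|^2)^{1/2}\leq\sum_r|R_rf|$ and applies the triangle inequality in $l^p$, giving the factor $d$. The main obstacle is not any single step but the bookkeeping in the finite-$G$ case — correctly placing $\Pi_0$, $\Pi_{0,r}$ and justifying the $\varepsilon\to0^+$ limit — which however is a direct transcription of the Laguerre argument in Theorem~\ref{thm:RieszOrt}~b); the vector-valued part is then entirely routine and loses nothing beyond the stated dimensional factors.
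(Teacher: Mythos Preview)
Your proposal is correct and follows essentially the same route as the paper: the scalar bound \eqref{chap:Riesz,sec:disc,eq:Rieszdiscgen} is obtained via Corollary~\ref{cor:Scheme} in the infinite case and via the projection/auxiliary-system argument (exactly the Laguerre model b) of Theorem~\ref{thm:RieszOrt}) in the finite case, and the vector estimates come from the pointwise $\ell^2\le\ell^1$ bound for $1<p<2$ and Minkowski's inequality in $L^{p/2}$ for $p>2$. The only cosmetic difference is that the paper phrases the $p>2$ step directly as Minkowski's integral inequality rather than via duality, but the content is identical.
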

\begin{remark}
In \cite{BaRus1} and \cite{Rus1} Badr and Russ studied discrete Riesz transforms on graphs. Applying Corollary \ref{cor:Scheme}, Theorem \ref{thm:Rieszdiscgen} can be generalized to products of graphs that were studied in \cite{BaRus1} and \cite{Rus1}.
\end{remark}
\begin{proof}[Proof of Theorem \ref{thm:Rieszdiscgen}]
The proof of \eqref{chap:Riesz,sec:disc,eq:Rieszdiscgen} is just another application of Corollary \ref{cor:Scheme} (or its variants).

We start with showing \eqref{chap:Riesz,sec:disc,eq:Rieszdiscgen} in the case $|G|=\infty.$ After decomposing
\begin{equation*}R_r=\big((\partial_{g_0}\otimes I_{(r)})L_r^{-1/2}\big)\big(L_r^{1/2}(L_1+\cdots+ L_d)^{-1/2}\big)\end{equation*}
it suffices to use the boundedness of the one-dimensional Riesz transform, see \cite[Theorem 2.4]{AlexdisRiesz}, together with Corollary \ref{cor:Scheme}. Note that $L$ satisfies \eqref{chap:Intro,sec:Setting,eq:contra} and \eqref{chap:Intro,sec:Setting,eq:noatomatzero}, hence the same is true for the operators $L_r,$ $r=1,\ldots,d.$

To obtain \eqref{chap:Riesz,sec:disc,eq:Rieszdiscgen} in the case $|G|<\infty,$ we proceed similarly as in the proof of case b) of Theorem \ref{thm:RieszOrt}. Denoting $$\Pi_{0,r}f(x)=(\pi_0\otimes I_{(r)})f(x)=f-\frac{1}{|G|}\sum_{y_r\in G}f(x_1,\ldots,x_{r-1},y_r,x_{r+1},\ldots,x_d),$$ we see that $\Pi_{0,r}\Pi_0=\Pi_0\Pi_{0,r}=\Pi_{0,r}.$ Since $(\partial_{g_0}\otimes I_{(r)})(I-\Pi_{0,r})=0,$ we rewrite $R_r$ as
\begin{align*}R_r&=\big((\partial_{g_0}\otimes I_{(r)})L_r^{-1/2}\Pi_{0,r}\big)\big(L_r^{1/2}(L_1+\cdots + L_d)^{-1/2}\Pi_{0,r}\big)\\&=(R\otimes I_{(r)})(L_r^{1/2}(L_1+\cdots + L_d)^{-1/2}\Pi_{0,r}).\end{align*}
Using the boundedness of  $R\otimes I_{(r)}$ on $l^p(G^d)$ we are left with showing that the operator $L_r^{1/2}(L_1+\cdots + L_d)^{-1/2}\Pi_{0,r}$ is bounded on $l^p(G^d),$ uniformly in $d.$ This can be done exactly as in in the proof of case b) of Theorem \ref{thm:RieszOrt}. Namely, we apply Theorem \ref{thm:genRiesz} to the auxiliary systems $L_{\varepsilon}=(L_1+\varepsilon,\ldots,L_d+\varepsilon),$ to get a uniform in $\varepsilon>0$ and $d$ bound for the $l^p(G^d)$ norms of the operators $(L_r+\varepsilon)^{1/2}(L_1+\cdots + L_d+d\varepsilon)^{-1/2}.$ Using the identity
$$L_r^{1/2}(L_1+\cdots +L_d)^{-1/2}\Pi_{0,r}=\lim_{\varepsilon\to 0^+}(L_r+\varepsilon)^{1/2}(L_1+\cdots+L_d+d\varepsilon)^{-1/2}\Pi_{0,r},$$
cf.\ \eqref{chap:Riesz,eq:RieszLalimit},
together with the dimension free $l^p(G^d)$ boundedness of $\Pi_{0,r},$ we thus obtain the desired dimension free boundedness of $L_r^{1/2}(L_1+\cdots + L_d)^{-1/2}\Pi_{0,r}.$

Now we focus on proving \eqref{chap:Riesz,sec:disc,eq:Rieszdiscgenvectp<2} and \eqref{chap:Riesz,sec:disc,eq:Rieszdiscgenvectp>2} . The former inequality is a simple consequence of the fact that the $l^2$ norm of $(R_1,\ldots,R_d)$ is smaller than its $l^1$ norm. To prove \eqref{chap:Riesz,sec:disc,eq:Rieszdiscgenvectp>2} we use Minkowski's integral inequality (first inequality below, note that here we need $p/2>1$) together with \eqref{chap:Riesz,sec:disc,eq:Rieszdiscgen} (second inequality below), obtaining
\begin{align*}
&\bigg\|\left(\sum_{r=1}^d|R_r f|^2\right)^{1/2}\bigg\|_{l^p(G^d)}=\bigg\|\sum_{r=1}^d|R_r f|^2\bigg\|^{1/2}_{l^{p/2}(G^d)}\\
&\leq \left(\sum_{r=1}^d \|R_r f\|^2_{l^p(G^d)}\right)^{1/2}\leq C_p\, \sqrt{d}\, \|f\|_{l^p(G^d)}.
\end{align*}
\end{proof}

We finish this section by showing how Theorem \ref{thm:Rieszdiscgen} can be used to prove a version of \cite[Theorem 2.8]{Lu_Piqu1} by Lust-Piquard, that applies to all cyclic groups. Till the end of this section we assume that $G$ is a cyclic group, that is, there exists $g_0\in G$ such that $\{g_0,g_0^{-1}\}$ generates $G.$
In this case $G$ is abelian and isomorphic to either $(\mathbb{Z},+)$ or $\mathbb{Z}_K=(\{0,\ldots,K-1\},+_{K}),$ where $+_{K}$ denotes addition modulo $K.$ Note that the results of \cite{Lu_Piqu1} include only the cases $G\cong \mathbb{Z}_3,$ $G\cong \mathbb{Z}_4,$ $G\cong \mathbb{Z},$ whereas the case $G\cong \mathbb{Z}_2$ is studied in \cite{Lu_Piqu2}.

Defining $\mu=\mu_{g_0}=(\delta_{g_0}+\delta_{g_0^{-1}})/2,$ we see that $\mu$ is a symmetric probability measure with $\supp \mu=\{g_0,g_0^{-1}\}$ generating $G.$ Moreover, \begin{equation*} \partial_{g_0}\partial_{g_0}^{*}=\partial_{g_0}^*\partial_{g_0}=2(I-P),\end{equation*} where, as we recall, $Pf=P_{\mu}f=\mu*f.$ Consequently, we have
\begin{equation}\label{chap:Riesz,sec:disc,eq:connect Gen-Lus}
\frac{1}{\sqrt{2}}R_r=\left\{ \begin{array}{lr}
 (\partial_{g_0}\otimes I_{(r)})\big(\sum_{r=1}^d (\partial_{g_0}\otimes I_{(r)})(\partial_{g_0}\otimes I_{(r)})^{*}\big)^{-1/2}, &\mbox{if $|G|=\infty$}, \\
(\partial_{g_0}\otimes I_{(r)})\big(\sum_{r=1}^d (\partial_{g_0}\otimes I_{(r)})(\partial_{g_0}\otimes I_{(r)})^{*}\big)^{-1/2}\Pi_0, &\mbox{if $|G|<\infty,$}
       \end{array} \right.
\end{equation}
with $\Pi_0$ given by \eqref{chap:Riesz,sec:disc,eq:Pi0def}. Thus, for the specific choice of $\mu=\mu_{g_0},$ our Riesz transform $R_r$ coincides with $\sqrt{2}$ times the Riesz transform considered in \cite[p.\ 307]{Lu_Piqu1}. Since, the group $G$ clearly has polynomial volume growth, we obtain the following Corollary of Theorem \ref{thm:Rieszdiscgen}, which is a generalization of \cite[Theorem 2.8]{Lu_Piqu1} to all cyclic groups.
\begin{cor}
\label{cor:RieszgenLu_Piqu}
Let $G$ be a cyclic group. Then the discrete Riesz transforms $2^{-1/2}R_r$ considered in \cite{Lu_Piqu1} and given by \eqref{chap:Riesz,sec:disc,eq:connect Gen-Lus}, satisfy \eqref{chap:Riesz,sec:disc,eq:Rieszdiscgen}, \eqref{chap:Riesz,sec:disc,eq:Rieszdiscgenvectp<2}, and \eqref{chap:Riesz,sec:disc,eq:Rieszdiscgenvectp>2}.
\end{cor}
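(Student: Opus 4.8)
The plan is to deduce the corollary directly from Theorem~\ref{thm:Rieszdiscgen}, by checking that the hypotheses of that theorem hold for the particular measure $\mu=\mu_{g_0}$ and that the resulting $d$-dimensional Riesz transforms coincide (up to the factor $\sqrt 2$) with those of \cite{Lu_Piqu1}. Once this identification is made, all three estimates \eqref{chap:Riesz,sec:disc,eq:Rieszdiscgen}, \eqref{chap:Riesz,sec:disc,eq:Rieszdiscgenvectp<2}, and \eqref{chap:Riesz,sec:disc,eq:Rieszdiscgenvectp>2} are immediate consequences of Theorem~\ref{thm:Rieszdiscgen}.

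First I would verify that a cyclic group $G$ has polynomial volume growth, which is the only structural assumption needed in Theorem~\ref{thm:Rieszdiscgen}. If $G\cong(\mathbb{Z},+)$, then taking $U=\{-1,0,1\}$ gives $|U^n|=2n+1$, while if $G\cong\mathbb{Z}_K$ the group is finite; in either case $|U^n|\lesssim n$, so the growth condition holds with $\alpha=1$. Hence Theorem~\ref{thm:Rieszdiscgen} is applicable to $G$ (equipped, of course, with its product structure $G^d$) for any finitely supported symmetric probability measure whose support generates $G$.

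Next I would specialize to $\mu=\mu_{g_0}=(\delta_{g_0}+\delta_{g_0^{-1}})/2$. Since $G$ is cyclic, $\{g_0,g_0^{-1}\}$ generates $G$, so $\mu$ is an admissible choice: it is symmetric, finitely supported, and $\supp\mu$ generates $G$. As recalled in the text preceding the corollary, for this $\mu$ one has the operator identity $\partial_{g_0}\partial_{g_0}^{*}=\partial_{g_0}^{*}\partial_{g_0}=2(I-P_\mu)$, where $P_\mu f=\mu*f$; this uses that $G$ is abelian. Tensoring, $\sum_{r=1}^d(\partial_{g_0}\otimes I_{(r)})(\partial_{g_0}\otimes I_{(r)})^{*}=2\sum_{r=1}^d(I-P_\mu)\otimes I_{(r)}=2(L_1+\cdots+L_d)$ with $L_r=(I-P_\mu)\otimes I_{(r)}$. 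Plugging this into \eqref{chap:Riesz,sec:disc,eq:Rieszmultdef} shows that $2^{-1/2}R_r$ equals exactly the operator written in \eqref{chap:Riesz,sec:disc,eq:connect Gen-Lus}, which in turn is precisely the Riesz transform considered in \cite[p.~307]{Lu_Piqu1}; in the finite case one also checks that the projection $\Pi_0$ from \eqref{chap:Riesz,sec:disc,eq:Pi0def} is the projection onto the orthogonal complement of the constants used in \cite{Lu_Piqu1}.

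Finally, applying Theorem~\ref{thm:Rieszdiscgen} to $G$ with the measure $\mu_{g_0}$ yields \eqref{chap:Riesz,sec:disc,eq:Rieszdiscgen} for $R_r$, hence for $2^{-1/2}R_r$, and then \eqref{chap:Riesz,sec:disc,eq:Rieszdiscgenvectp<2} and \eqref{chap:Riesz,sec:disc,eq:Rieszdiscgenvectp>2} follow by the same reasoning as in that theorem's proof (the $l^2$ norm of a vector being dominated by its $l^1$ norm for $1<p<2$, and Minkowski's integral inequality for $2<p<\infty$). I do not expect a serious obstacle here; the only point requiring care — and thus the main, rather bookkeeping-level, difficulty — is making sure that the normalization $2^{-1/2}$, the identity $\partial_{g_0}\partial_{g_0}^{*}=2(I-P_\mu)$, and the two projections $\Pi_0$ and $\Pi_{0,r}$ all match the conventions of \cite{Lu_Piqu1}, so that $2^{-1/2}R_r$ is literally the operator studied there, and not merely a bounded perturbation of it.
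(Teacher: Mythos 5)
Your proposal is correct and matches the paper's (largely implicit) argument: verify that a cyclic group has polynomial volume growth, observe that $\mu_{g_0}$ is an admissible symmetric probability measure with generating support so that Theorem~\ref{thm:Rieszdiscgen} applies, and use the identity $\partial_{g_0}\partial_{g_0}^{*}=2(I-P_{\mu})$ together with \eqref{chap:Riesz,sec:disc,eq:connect Gen-Lus} to identify $2^{-1/2}R_r$ with Lust-Piquard's operator. The paper presents these identifications in the paragraph preceding the corollary and then states it as an immediate consequence of Theorem~\ref{thm:Rieszdiscgen}, exactly as you do.
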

\begin{remark1}
Our method, contrary to the one used in \cite{Lu_Piqu1}, does not prove dimension free estimates for the vector of Riesz transforms. Note that, under the assumptions that $G$ is a locally compact abelian group, such that $g_0$ spans an infinite subgroup, by \cite[Theorem 2.8]{Lu_Piqu1} the inequality \eqref{chap:Riesz,sec:disc,eq:Rieszdiscgenvectp>2} holds with a constant independent of $d.$ However, the counterexample given in \cite[Proposition 2.9]{Lu_Piqu1}, shows that, even for $G=\mathbb{Z},$ the constant in \eqref{chap:Riesz,sec:disc,eq:Rieszdiscgenvectp<2} is dependent on $d.$
\end{remark1}
\begin{remark2}
The constant $C_p$ in \eqref{chap:Riesz,sec:disc,eq:Rieszdiscgen}, \eqref{chap:Riesz,sec:disc,eq:Rieszdiscgenvectp<2}, and \eqref{chap:Riesz,sec:disc,eq:Rieszdiscgenvectp>2}, is also independent on the considered cyclic group $G.$ To see this we apply transference methods due to Coifman and Weiss. Namely, from \cite[Corollary 3.16]{CWtr}, it is not hard to deduce that the norms of the one-dimensional Riesz transform on $l^p(\mathbb{Z}_K)$ and $l^p(\mathbb{Z})$ are related by $\|R\|_{l^p(\mathbb{Z}_K) \to l^p(\mathbb{Z}_K)}\leq 2\|R\|_{l^p(\mathbb{Z})\to l^p(\mathbb{Z})}.$ Since every cyclic group $G$ is isomorphic to either $\mathbb{Z}$ or $\mathbb{Z}_K$ we thus have $\|R\|_{l^p(G) \to l^p(G)}\leq 2\|R\|_{l^p(\mathbb{Z})\to l^p(\mathbb{Z})}$ Hence, recalling that $C_p$ is of the form $C'_{p}\|R\|_{l^p(G) \to l^p(G)},$ where $C'_p$ depends only on $p$ and not on $G,$ we obtain the desired $G$ independence of $C_p.$
\end{remark2}

    %
    %

    \newchapter{Combinations of Marcinkiewicz and $H^{\infty}$ functional calculi}{Combinations of Marcinkiewicz and $H^{\infty}$ functional calculi}{Combinations of Marcinkiewicz and $H^{\infty}$ functional calculi}
    \label{Chap:CkHinf}
\numberwithin{equation}{section}
Here we investigate 'mixed' systems of operators $(L,A),$ with $L$ consisting of operators having $H^{\infty}$ functional calculi and $A$ consisting of operators having Marcinkiewicz functional calculi. In Section \ref{chap:CkHinf,sec:genMarHinfCk} we provide a general Marcinkiewicz type multiplier theorem in this setting. Then, in Section \ref{chap:CkHinf,sec:OA} we focus on a specific situation of the system $(\mL,A),$ with $\mL$ being the $d$-dimensional Ornstein-Uhlenbeck operator and $A$ being an operator whose heat semigroup has a kernel satisfying certain Gaussian bounds.



        \section[A Marcinkiewicz type multiplier theorem]{A Marcinkiewicz type multiplier theorem}
        \label{chap:CkHinf,sec:genMarHinfCk}

         Throughout this section we consider a system of $d=n+l$ operators $$(L,A):=((L_1,\ldots,L_n),(A_1\ldots,A_l)),$$ which are positive, self-adjoint and strongly commuting on some space $L^2(X,\nu).$ We also impose that the system $(L,A)$ satisfies the assumptions of Section \ref{chap:Intro,sec:Setting}, i.e.\ all the operators $L_r,$ $r=1,\ldots,n,$ and $A_r,$ $r=1,\ldots,l,$ satisfy the conditions \eqref{chap:Intro,sec:Setting,eq:contra} and \eqref{chap:Intro,sec:Setting,eq:noatomatzero}. Then, if $m$ is a Borel measurable function on $\Rdp,$  the multivariate spectral theorem allows us to define $m(L,A)$ via \eqref{chap:Intro,sec:Setting,eq:mdef}. As earlier, for the sake of brevity, we write $L^p$ instead of $L^p(X,\nu)$ and $\|\cdot\|_p$ instead of $\|\cdot\|_{L^p(X,\nu)}.$ The symbol $\|T\|_{p\to p}$ denotes the operator norm of $T$ acting on $L^p.$

        We impose that, for each fixed $1<p<\infty,$ the imaginary powers $L_r^{iu_r},$ $u_r\in\mathbb{R},$ $r=1,\ldots,n,$ satisfy \eqref{chap:Hinf,sec:genMarHinf,eq:polynomialfull}. Strictly speaking, we assume that
        there exist $\theta=(\theta_1,\ldots,\theta_n)\in [0,\infty)^n$ and $\phi_p=(\phi_p^1,\ldots,\phi_p^n)\in (0,\pi/2)^n,$ such that
\begin{equation}
\label{chap:CkHinf,sec:genMarHinfCk,eq:exponential}
\|L^{i(u_1,\ldots,u_n)}\|_{p\to p}\leq \mC(p,L)\,\prod_{r=1}^{n}(1+|u_r|)^{\theta_r|1/p-1/2|}\exp(\phi_p^r|u_r|),\qquad (u_1,\ldots,u_n)\in \mathbb{R}^n_+.
\end{equation}
Using Theorem \ref{thm:genMarHinf} it is not hard to see that \eqref{chap:CkHinf,sec:genMarHinfCk,eq:exponential} is equivalent to the fact that the system $L$ has an $H^{\infty}$ joint functional calculus.

For the operators $A_r,$ $r=1,\ldots,l,$ we assume \eqref{chap:Ck,sec:genMar,eq:polynomialfull}, i.e.\ that there is a vector of positive real numbers $\sigma=(\sigma_1,\ldots,\sigma_l),$ such that for every $1<p<\infty$ and $r=1,\ldots,l$
\begin{equation}
\label{chap:CkHinf,sec:genMarHinfCk,eq:polynomial}
\|A_r^{i(u_{n+1},\ldots,u_d)}\|_{p\to p}\leq \mC(p,A)\, \prod_{r=n+1}^{d}(1+|u_r|)^{\sigma_r|1/p-1/2|},\qquad (u_{n+1},\ldots,u_d)\in\Rlp.
\end{equation}
Recall that, by Corollary \ref{cor:genMarCkequiv}, the condition \eqref{chap:CkHinf,sec:genMarHinfCk,eq:polynomial} is equivalent to the fact that the system $A$ has a Marcinkiewicz joint functional calculus.

Note that, since the constants in \eqref{chap:CkHinf,sec:genMarHinfCk,eq:exponential} and \eqref{chap:CkHinf,sec:genMarHinfCk,eq:polynomial} appear in Theorem \ref{thm:genMarCkHinf}, similarly to Sections \ref{chap:Ck,sec:genMar} and \ref{chap:Hinf,sec:genMarHinf}, they are written in the calligraphic font.

Modifying slightly the notation from Section \ref{chap:Hinf,sec:genMarHinf}, for a function $m\colon \overline{{\bf S}_{\phi_p}}\times \Rlp\to \mathbb{C},$ $\phi_p=(\phi_p^1,\ldots,\phi_p^n),$ and $\varepsilon\in \{-1,1\}^n$ we set $$m(e^{i\varepsilon\phi_p}\underline{\lambda},\overline{\la})=m(e^{i\varepsilon_1\phi_p^1}\la_1,\ldots,e^{i\varepsilon_d\phi_p^n}\la_n,\la_{n+1},\ldots,\la_d),$$ with $\la=((\la_1,\ldots,\la_n),(\la_{n+1}\ldots,\la_{d}))=(\underline{\la},\overline{\la})\in \mathbb{R}^{n}_+\times \mathbb{R}^{l}_+.$

The following theorem is a generalization of Theorems \ref{thm:genMarCk} and \ref{thm:genMarHinf}.
\begin{thm}
\label{thm:genMarCkHinf}
Fix $p>1$ and let $m\colon {\bf S}_{\phi_p}\times \Rlp \to \mathbb{C}$ be a bounded function with the following property: for each fixed $\overline{\la}\in \mathbb{R}_+^l,$ $m(\cdot,\overline{\la})\in H^{\infty}({\bf S}_{\phi_p})$ and the boundary value functions\footnote{They exist by (multivariate) Fatou's theorem} $m(e^{i\varepsilon\phi_p}\underline{\lambda},\overline{\la}),$ $\varepsilon\in \{-1,1\}^{n},$ satisfy the $d$-dimensional Marcinkiewicz condition \eqref{chap:Intro,sec:Notation,eq:Marcon} of some order $\rho>|1/p-1/2|(\theta,\sigma)+{\bf 1}.$ Then the multiplier operator $m(L,A)$ is bounded on $L^p$ and
$$\|m(L,A)\|_{p\to p}\leq C_{p,d}\, \mC(p,L)\,\mC(p,A)\, \sup_{\varepsilon \in\{-1,1\}^n}\|m(e^{i\varepsilon\phi_p}\underline{\lambda},\overline{\la})\|_{Mar,\rho}.$$
\end{thm}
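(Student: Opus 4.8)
The plan is to deduce the theorem from Theorem \ref{thm:gen} applied to the combined system $(L,A)$, by merging the contour–rotation argument of Theorem \ref{thm:genMarHinf} with the integration–by–parts/dyadic–decomposition argument of Theorem \ref{thm:genMarCk}. Write $u=(\underline u,\overline u)\in\mathbb{R}^{n}\times\mathbb{R}^{l}$ and $\la=(\underline{\la},\overline{\la})\in\mathbb{R}^{n}_{+}\times\mathbb{R}^{l}_{+}$, and fix $N\in\mathbb{N}^{d}$ with $N>\rho$. It suffices to establish the pointwise bound
\[
\sup_{t\in\Rdp}|\M(m_{N,t})(u)|\le C_{N,d}\,\Big(\prod_{r=1}^{n}(1+|u_{r}|)^{-\rho_{r}}e^{-\phi_{p}^{r}|u_{r}|}\Big)\Big(\prod_{r=n+1}^{d}(1+|u_{r}|)^{-\rho_{r}}\Big)\sup_{\varepsilon\in\{-1,1\}^{n}}\|m(e^{i\varepsilon\phi_{p}}\underline{\la},\overline{\la})\|_{Mar,\rho}.
\]
Indeed, multiplying this inequality by $\|(L,A)^{iu}\|_{p\to p}$ and using the factorization $\|(L,A)^{iu}\|_{p\to p}=\|L^{i\underline u}A^{i\overline u}\|_{p\to p}\le\mC(p,L)\,\mC(p,A)\prod_{r=1}^{n}(1+|u_{r}|)^{\theta_{r}|1/p-1/2|}e^{\phi_{p}^{r}|u_{r}|}\prod_{r=n+1}^{d}(1+|u_{r}|)^{\sigma_{r}|1/p-1/2|}$ coming from \eqref{chap:CkHinf,sec:genMarHinfCk,eq:exponential} and \eqref{chap:CkHinf,sec:genMarHinfCk,eq:polynomial} (legitimate because the whole system strongly commutes), the exponential factors $e^{-\phi_{p}^{r}|u_{r}|}$ cancel the $e^{\phi_{p}^{r}|u_{r}|}$ for $r=1,\dots,n$, and the surviving product $\prod_{r=1}^{n}(1+|u_{r}|)^{-\rho_{r}+\theta_{r}|1/p-1/2|}\prod_{r=n+1}^{d}(1+|u_{r}|)^{-\rho_{r}+\sigma_{r}|1/p-1/2|}$ is integrable over $\mathbb{R}^{d}$ precisely because $\rho>|1/p-1/2|(\theta,\sigma)+{\bf 1}$. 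Hence $m$ satisfies the hypothesis \eqref{chap:General,eq:thm:gen} for $(L,A)$, and Theorem \ref{thm:gen} delivers the claimed $L^{p}$ bound with the stated constant. (As in Remark 1 after Theorem \ref{thm:gen}, the measurability of $u\mapsto\sup_{t}|\M(m_{N,t})(u)|$ and the strong continuity of $u\mapsto(L,A)^{iu}$ are furnished by Appendix \ref{chap:App,sec:Mel,sub:rem} and Appendix \ref{chap:App,sec:joint,subsec:strongmeasLiu}.)

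To prove the pointwise bound I would argue separately on each of the $2^{n}$ regions $\{\underline u\colon\varepsilon_{r}u_{r}\le 0,\ r=1,\dots,n\}\times\mathbb{R}^{l}$, $\varepsilon\in\{-1,1\}^{n}$. On such a region, for fixed $\overline{\la}\in\mathbb{R}^{l}_{+}$ the map $\underline z\mapsto m_{N,t}(\underline z,\overline{\la})\,\underline z^{-i\underline u-{\bf 1}}$ is holomorphic and exponentially decreasing in ${\bf S}_{\phi_{p}}$ (the decay supplied by $\exp(-2^{-1}\langle z,t\rangle)$), so the multivariate Cauchy integral formula lets me rotate the first $n$ variables of the Mellin integral onto the poly-ray $\{(e^{i\varepsilon_{1}\phi_{p}^{1}}\la_{1},\dots,e^{i\varepsilon_{n}\phi_{p}^{n}}\la_{n})\colon\underline{\la}\in\mathbb{R}^{n}_{+}\}$, producing a formula of the type \eqref{chap:Hinf,sec:genMarHinf,Mform} with the last $l$ variables untouched and with $m$ replaced by $m(e^{i\varepsilon\phi_{p}}\underline{\la},\overline{\la})$; note that after this rotation the real part of the exponent of $\la_{r}$ in the exponential is $\tfrac12\cos(\phi_{p}^{r})t_{r}\la_{r}$ for $r\le n$, still giving exponential decay with constants depending only on $\phi_{p}^{r}<\pi/2$. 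Then I would run the computation of the proof of Theorem \ref{thm:genMarCk} in all $d$ variables at once — integrating by parts $\rho_{r}$ times in each variable $r=1,\dots,d$, inserting the dyadic partition of unity $\sum_{j}\Psi_{j}$, and expanding by Leibniz's rule — noting that the estimates \eqref{chap:Ck,sec:genMar,eq:claim}–\eqref{chap:Ck,sec:genMar,eq:estint} hold verbatim with $m$ replaced by $m(e^{i\varepsilon\phi_{p}}\underline{\la},\overline{\la})$ (the extra factor $\cos(\phi_{p}^{r})$ only perturbs the geometric weights $p_{j_{r}}$, which still sum). Summing the series as in \eqref{chap:Ck,sec:genMar,eq:estiHinf} yields exactly $\prod_{r=1}^{d}(1+|u_{r}|)^{-\rho_{r}}$ times $\|m(e^{i\varepsilon\phi_{p}}\underline{\la},\overline{\la})\|_{Mar,\rho}$, to which the gain $e^{-\phi_{p}^{r}|u_{r}|}$ ($r\le n$) from the rotation must be appended; the estimate is uniform in $\overline u\in\mathbb{R}^{l}$. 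This is precisely the $d$-dimensional analogue of \eqref{chap:Hinf,sec:genMarHinf,bound}, and it gives the pointwise bound on the region at hand.

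The one genuinely delicate point is that $m$ is holomorphic only in the first $n$ variables (for fixed real $\overline{\la}$), so the contour rotation in $\underline{\la}$ and the integration by parts in $\overline{\la}$ cannot be carried out sequentially in a naive way but must be interleaved: one differentiates $\rho_{r}$ times in the last $l$ real variables while the first $n$ variables already sit on a rotated ray, and one must check that the Marcinkiewicz quantity controlling the result is exactly $\|m(e^{i\varepsilon\phi_{p}}\underline{\la},\overline{\la})\|_{Mar,\rho}$, i.e. that rotating the first $n$ arguments and then applying independent dilations in all $d$ variables is compatible. This is bookkeeping rather than a new obstacle — every estimate invoked is one of those already appearing in the proofs of Theorems \ref{thm:genMarCk} and \ref{thm:genMarHinf}, with the harmless substitution of $m(e^{i\varepsilon\phi_{p}}\,\cdot\,,\,\cdot\,)$ for $m$ and a cosine factor in the exponents — so no ingredient beyond Theorems \ref{thm:gen}, \ref{thm:genMarCk} and \ref{thm:genMarHinf} is needed, and the theorem follows.
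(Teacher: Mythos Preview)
Your proposal is correct and follows essentially the same approach as the paper: reduce to Theorem~\ref{thm:gen} by establishing the pointwise bound \eqref{chap:CkHinf,sec:genMarHinfCk,eq:claim} on $\sup_{t}|\M(m_{N,t})(u)|$, obtained by rotating the first $n$ Mellin integrals onto poly-rays as in Theorem~\ref{thm:genMarHinf} and then running the integration-by-parts/dyadic decomposition of Theorem~\ref{thm:genMarCk} in all $d$ variables with $m(e^{i\varepsilon\phi_p}\underline{\lambda},\overline{\la})$ in place of $m$. Your write-up is in fact more detailed than the paper's own outline, which omits the bookkeeping you describe.
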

\begin{proof}[Outline of the proof]
The proof is a combination of the techniques used in the proofs of Theorems \ref{thm:genMarCk} and \ref{thm:genMarHinf}. Once again we show that $m$ satisfies \eqref{chap:General,eq:thm:gen} from Theorem \ref{thm:gen}. As previously, it is enough to obtain, for some $N\in\mathbb{N}^d,$ $N>\rho,$ the bound
\begin{align}\nonumber
&\sup_{t\in\Rdp}|\M(m_{N,t})(u)|\leq C_{N,\rho}\sup_{\varepsilon \in\{-1,1\}^n}\|m(e^{i\varepsilon\phi_p}\underline{\lambda},\overline{\la})\|_{Mar,\rho}\\
&\times\prod_{r=1}^n(1+|u_r|)^{-\rho_r|1/p-1/2|}\exp(-\phi_p^r|u_r|)\prod_{r=n+1}^d (1+|u_r|)^{-\rho_r|1/p-1/2|},\label{chap:CkHinf,sec:genMarHinfCk,eq:claim}
\end{align}
uniformly in $u\in \mathbb{R}^d;$ recall that $$\M(m_{N,t})(u)=\int_{\mathbb{R}^n_+}\int_{\mathbb{R}^l_+}t^N \la^N \exp (-\langle t, \la\rangle)m(\la)\la^{-iu}\,\frac{d\la}{\la}.$$ The proof of  \eqref{chap:CkHinf,sec:genMarHinfCk,eq:claim} is similar to the proof of Theorem \ref{thm:genMarHinf} when we consider the first $n$ integrals defining $\M(m_{N,t})$ and similar to the proof of Theorem \ref{thm:genMarCk} when we consider the last $l$ integrals defining $\M(m_{N,t}).$ We omit the details.
\end{proof}

        \section[The system (OU, $A$)]{The system (OU, $A$) with $A$ having a Marcinkiewicz functional calculus}
        \label{chap:CkHinf,sec:OA}
Here we consider a pair of operators $(\mL,A),$ where $\mL$ is the $d$-dimensional Ornstein-Uhlenbeck (OU) operator, while $A$ is an operator having certain Gaussian bounds on its heat kernel (which implies that $A$ has a Marcinkiewicz functional calculus). We also assume that $A$ acts on a space of homogenous type $(Y,\zeta,\mu).$  The main theorem of this section is Theorem \ref{thm:OA}. It states that Laplace transform type multipliers of $(\mL,A)$ are bounded from the $H^1(Y,\mu)$-valued $L^1(\mathbb{R}^d,\gamma)$ to $L^{1,\infty}(\gamma \otimes \mu).$ Here $H^1(Y,\mu)$ is the atomic Hardy space in the sense of Coifman and Weiss \cite{CW}, while $\gamma$ is the Gaussian measure on $\mathbb{R}^d$ given by $d\gamma(x)=\pi^{-d/2}e^{-|x|^2}dx.$ We finish this section by showing that the considered weak type $(1,1)$ property interpolates well with the boundedness on $L^2,$ see Theorem \ref{thm:interH1}.

In what follows we denote by $\mL$ the $d$-dimensional Ornstein-Uhlenbeck operator
$$
-\frac{1}{2}\Delta+\langle x,\nabla\rangle.
$$
Note that $\mL$ is then a sum $\sum_{r=1}^d \mL_r$ of the one-dimensional Ornstein-Uhlebneck operators $\mL_r$ considered in Section \ref{chap:Hinf,sec:ExOp}. It is easily verifiable that $\mL$ is symmetric on $C_c^{\infty}(\mathbb{R}^d)$ with respect to the inner product on $L^2(\mathbb{R}^d,\gamma).$ The operator $\mL$ is also essentially self-adjoint on $C_c^{\infty}(\mathbb{R}^d),$ and we continue writing $\mL$ for its unique self-adjoint extension.

It is well known that $\mL$ can be expressed in terms of Hermite polynomials by
$$\mL f=\sum_{k\in\mathbb{N}^d_0} |k| \langle f,\bnH_k\rangle_{L^2(\mathbb{R}^d,\gamma)}\bnH_k=\sum_{j=0}^{\infty}jP_jf,$$
on the natural domain
$$\Dom(\mL)=\{f\in L^2(\mathbb{R}^d,\gamma)\colon \sum_{k\in\mathbb{N}^d_0}|k|^2 \langle f,\bnH_k\rangle_{L^2(\mathbb{R}^d,\gamma)}<\infty\}.$$
Here $|k|=k_1+\ldots+k_d$ is the length of a multi-index $k\in \mathbb{N}^d_0,$ $\bnH_k$ denotes the $L^2(\mathbb{R}^d,\gamma)$ normalized $d$-dimensional Hermite polynomial of order $k,$ while
$$P_jf=\sum_{|k|=j}\langle f,\bnH_k\rangle_{L^2(\mathbb{R}^d,\gamma)} \bnH_k,\qquad j\in\mathbb{N}_0,$$ is the projection onto the eigenspace of $\mL$ with eigenvalue $j.$

For a bounded function $m\colon\mathbb{N}\to \mathbb{C},$ the spectral multipliers of $\mL$ are defined by $m(\mL)=m(\mL_1+\cdots +\mL_d)=\tilde{m}(\mL_1,\ldots,\mL_d),$ where $\tilde{m}(k)=m(|k|).$ Thus, using \eqref{chap:Hinf,sec:ExOp,eq:def}, for $f\in L^2(\mathbb{R}^d,\gamma)$ we have
\begin{equation*}
m(\mL)f=\sum_{k\in\mathbb{N}^d_0} m(|k|) \langle f,\bnH_k\rangle_{L^2(\mathbb{R}^d,\gamma)}\bnH_k=\sum_{j=0}^{\infty}m(j)P_jf.
\end{equation*}

Let $m$ be a function, which is bounded on $[0,\infty)$ and continuous on $\mathbb{R}_+.$ We say that $m$ is an $L^p(\mathbb{R}^d,\gamma)$-uniform multiplier of $\mL,$ whenever
$$\sup_{t>0}\|m(t \mL)\|_{L^p(\mathbb{R}^d,\gamma)\to L^p(\mathbb{R}^d,\gamma)}<\infty.$$
Observe that by the spectral theorem the above bound clearly holds for $p=2.$ Using \cite[Theorem 3.5 (i)]{hmm} it follows that, if $m$ is an $L^p(\mathbb{R}^d,\gamma)$-uniform multiplier of $\mL$ for some $1<p<\infty,$ $p\neq 2,$ then $m$ necessarily extends to a holomorphic function in the sector $S_{\phi_p^{*}}$ (recall that $\pst=\arcsin|2/p-1|$). Assume now that $m(t\mL)$ is of weak type $(1,1)$ with respect to $\gamma,$ with a weak type constant which is uniform in $t>0.$ Then, since the sector $S_{\phi_p^{*}}$ approaches the right half-plane $S_{\pi/2}$ when $p\to 1^+,$ using the Marcinkiewicz interpolation theorem we see that the function $m$ is holomorphic (but not necessarily bounded) in $S_{\pi/2}$. An example of such an $m$ is a function of Laplace transform type in the sense of Stein \cite[pp. 58, 121]{topics}, i.e.\ $m(z)=z\int_0^{\infty}e^{-zt}\kappa(t)\,dt,$ with $\kappa\in L^{\infty}(\mathbb{R}_+,dt).$\footnote{Taking $\kappa(t)=e^{-it},$ so that $m(z)=z/(z+i),$ we see that these multipliers may be unbounded on $S_{\pi/2}.$}

Let now $A$ be a non-negative, self-adjoint operator defined on a space $L^2(Y,\mu),$ where $Y$ is equipped with a metric $\zeta$ such that $(Y,\zeta,\mu)$ is a space of homogenous type, i.e.\ $\mu$ is a doubling measure. For simplicity we assume that $\mu(Y)=\infty,$ and that for all $x_2\in Y,$ the function $(0,\infty)\ni R\mapsto \mu(B_{\zeta}(x_2,R))$ is continuous and $\lim_{R\to 0}\mu(B_{\zeta}(x_2,R))=0.$ We further impose on $A$ the assumptions  \eqref{chap:Intro,sec:Setting,eq:contra} and \eqref{chap:Intro,sec:Setting,eq:noatomatzero} of Section \ref{chap:Intro,sec:Setting}. Throughout this section we also assume that the heat semigroup $e^{-tA}$ has a kernel $e^{-tA}(x_2,y_2),$ $x_2,y_2\in Y,$ which is continuous on $\mathbb{R}^+\times Y\times Y,$ and satisfies the following Gaussian bounds.
\begin{equation}
\label{chap:CkHinf,sec:OA,eq:gausbound} 0\le e^{-tA}(x_2,y_2)\leq \frac{C}{\mu (B(x_2,\sqrt{t}))}\exp(-c\zeta(x_2,y_2)^2\slash t),
\end{equation}
We also impose that for some $\delta>0,$ if $2\zeta(y_2,y'_2)\leq \zeta(x_2,y_2),$ then
\begin{equation}
\label{chap:CkHinf,sec:OA,eq:heatlipsch}|e^{-tA}(x_2,y_2)-e^{-tA}(x_2,y'_2)|\leq \left(\frac{\zeta(y_2,y'_2)}{\sqrt{t}}\right)^{\delta}\frac{C}{\mu(B(x,\sqrt{t}))}\exp(-c \zeta(x_2,y_2)^2\slash t),
\end{equation}
while in general,
\begin{equation}
\label{chap:CkHinf,sec:OA,eq:heatlipschngauss}|e^{-tA}(x_2,y_2)-e^{-tA}(x_2,y'_2)|\leq  \left(\frac{\zeta(y_2,y'_2)}{\sqrt{t}}\right)^{\delta}\frac{C}{\mu(B(x,\sqrt{t}))}.
\end{equation}

From \cite[Theorem 2.1]{Sik} (or rather its version for a single operator), it follows that, under \eqref{chap:CkHinf,sec:OA,eq:gausbound}, the operator $A$ has a finite order Marcinkiewicz functional calculus on $L^p(Y,\mu),$ $1<p<\infty$. Examples of operators $A$ satisfying \eqref{chap:CkHinf,sec:OA,eq:gausbound}, \eqref{chap:CkHinf,sec:OA,eq:heatlipsch}, and \eqref{chap:CkHinf,sec:OA,eq:heatlipschngauss} include, among others, the Laplacian $-\Delta$ and the harmonic oscillator $-\Delta+|x|^2$ on $L^2(\mathbb{R}^d,dx),$ or the Bessel operator $L_{B}$ considered in Section \ref{chap:Ck,sec:HorHan} (see \cite[Lemma 4.2]{dpw}).

Denote by $H^1=H^1(Y,\zeta,\mu)$ the atomic Hardy space in the sense of Coifman-Weiss \cite{CW}. More precisely, we say that a measurable function $b$ is an $H^1$-atom, if there exists a ball $B=B_{\zeta}\subseteq Y$, such that $\supp\, b \subset B,$ $\|b\|_{L^{\infty}(Y,\mu)}\leq 1/ \mu(B),$ and $\int_{Y}b(x_2)d\mu(x_2) =0.$ The space $H^1$ is defined as the set of all $g\in L^1(Y,\mu),$ which can be written as $g= \sum_{j=1}^{\infty} c_j b_j,$ where $b_j$ are atoms and $\sum_{j=1}^{\infty} |c_j|<\infty,$ $c_j\in\mathbb{C}.$ We equip $H^1$ with the norm
$
\|f\|_{H^1}=\inf \sum_{j=1}^{\infty} |c_j|,$
where the infimum runs over all absolutely summable $\{c_j\}_{j\in\mathbb{N}},$ for which $g= \sum_{j=1}^{\infty} c_j b_j,$ with $b_j$ being $H^1$-atoms. Note that from the very definition of $H^1$ we have $\|g\|_{L^1(Y,\mu)}\leq \|g\|_{H^1}.$

It can be shown that under \eqref{chap:CkHinf,sec:OA,eq:gausbound}, \eqref{chap:CkHinf,sec:OA,eq:heatlipsch}, and \eqref{chap:CkHinf,sec:OA,eq:heatlipschngauss}, the space $$H^1_{max}=\{g\in L^1(Y,\mu)\colon \sup_{t>0}|e^{-tA}g|\in L^1(Y,\mu)\}$$ coincides with the atomic $H^1,$ i.e., there is a constant $C_{\mu}$ such that
\begin{equation}
\label{chap:CkHinf,sec:OA,eq:maxchar}
C_{\mu}^{-1}\|g\|_{H^1} \leq \big\|\sup_{t>0}|e^{-tA}g|\big\|_{L^1(Y,\mu)}\leq C_{\mu} \|g\|_{H^1},\qquad g\in H^1(Y).
\end{equation}
The proof of \eqref{chap:CkHinf,sec:OA,eq:maxchar} is similar to the proof of \cite[Proposition 4.1 and Lemma 4.3]{dpw}. The main trick is to replace the metric $\zeta$ with the measure distance (see \cite{CW})
$$\tilde{\zeta}(x_2,y_2)=\inf\{\mu(B)\colon B\textrm{ is a ball in Y},\, x_2,y_2\in B\},$$
 change the time $t$ via $$\mu(B(y,\sqrt t))=s,\qquad y\in Y,\quad t,\,s>0,$$ and apply Uchiyama's Theorem, see \cite[Corollary 1']{Uchi}. We omit the details. Note that taking $r=e^{-t},$ we can restate \eqref{chap:CkHinf,sec:OA,eq:maxchar} as
\begin{equation}
\label{chap:CkHinf,sec:OA,eq:maxcharr}
C_{\mu}^{-1}\|g\|_{H^1} \leq \big\|\sup_{0<r<1}|r^Ag|\big\|_{L^1(Y,\mu)}\leq C_{\mu} \|g\|_{H^1},\qquad g\in H^1(Y).
\end{equation}

For fixed $0<\varepsilon<1/2,$ define $M_{A,\varepsilon}(g)(x)=\int_Y\sup_{\varepsilon<r<1-\varepsilon}|r^A(x_2,y_2)||g(y_2)|\,d\mu(y_2).$ Then, a short reasoning using the Gaussian bound \eqref{chap:CkHinf,sec:OA,eq:gausbound} and the doubling property of $\mu$ gives
\begin{equation}
\label{chap:CkHinf,sec:OA,eq:maxcharrL1}
\left\|M_{A,\varepsilon}(g)\right\|_{L^1(Y,\mu)}\leq C_{\mu,\varepsilon} \|g\|_{L^1(Y,\mu)},\qquad g\in L^1(Y,\mu).
\end{equation}

Denote by $L^1_{\gamma}(H^1)$ the Banach space of those Borel measurable functions $f$ on $\mathbb{R}^d\times Y$ such that the norm
\begin{equation}\label{chap:CkHinf,sec:OA,eq:L1H1norm}\|f\|_{L^1_{\gamma}(H^1)}=\int_{\mathbb{R}^d}\|f(x_1,\cdot)\|_{H^1}\,d\gamma(x),\end{equation}
is finite. In other words $L^1_{\gamma}(H^1)$ is the $L^1(\gamma)$ space of $H^1$-valued functions. Moreover, it is the closure of $$L^1_{\gamma}(\mathbb{R}^d)\odot H^1:=\bigg\{f\in L^1_{\gamma}(H^1)\colon f=\sum_{j}f_j^1\otimes f_j^2,\quad f_j^1 \in L^1_{\gamma}(\mathbb{R}^d),\, f_j^2\in H^1\bigg\}$$ in the norm given by \eqref{chap:CkHinf,sec:OA,eq:L1H1norm}.

In place of $\mL$ and $A$ we now consider the tensor product operators $\mL \otimes I$ and $I \otimes A.$ For the sake of brevity we write $L^p,$ $\|\cdot\|_{p}$ and $\|\cdot\|_{p \to p},$ instead of $L^p(\mathbb{R}^d \otimes Y, \gamma \otimes \mu),$ $\|\cdot\|_{L^p},$ and $\|\cdot\|_{L^p \to L^p},$ respectively. We shall also use the space $L^{1,\infty}:=L^{1,\infty}(\mathbb{R}^d\times Y,\gamma \otimes \mu),$ equipped with the quasinorm
 \begin{equation}\label{chap:CkHinf,sec:OA,eq:weaknormgauss}\|f\|_{L^{1,\infty}}=\sup_{s>0}s (\gamma \otimes \mu)(\mathbb{R}^d\times Y\colon |f(x)|>s).\end{equation}
 Let $S$ be an operator which is of weak type $(1,1)$ with respect to $\gamma \otimes \mu.$ Then, $\|S\|_{L^1\to L^{1,\infty}}=\sup_{\|f\|_{1}=1}\|Sf\|_{L^{1,\infty}}$ is the best constant in its weak type $(1,1)$ inequality.

Let $m$ be a bounded function defined on $[0,\infty)\times \sigma(A),$ and let $m(\mL,A)$ be a joint spectral multiplier of $(\mL,A),$ as in \eqref{chap:Intro,sec:Setting,eq:mdef}. Assume that for each $t>0,$ the operator $m(t\mL,A)$ is of weak type $(1,1)$ with respect to $\gamma\otimes \mu,$ with a weak type $(1,1)$ constant uniformly bounded with respect to $t.$ Then, from what was said before, we may conclude\footnote{At least in the case when $A$ has a discrete spectrum.} that for each fixed $a\in \sigma(A)$ the function $m(\cdot,a)$ has a holomorphic extension to the right half-plane. We limit ourselves to $m$ being of the following Laplace transform type:
\begin{equation} \label{chap:CkHinf,sec:OA,eq:mult1} m(\la,a)=m_{\kappa}(\la,a):=\la\int_0^{\infty}e^{-\la t}e^{-a t}\kappa(t)\,dt,\qquad (\la,a)\in [0,\infty)\times \mathbb{R}_+,\end{equation}
with $\kappa\in L^{\infty}(\mathbb{R}_+,dt).$ In what follows we denote $\|\kappa\|_{\infty}=\|\kappa\|_{L^{\infty}(\mathbb{R}_+,dt)}.$

Observe that under the assumptions we made on $A,$ the function $m_{\kappa}$ gives a well defined bounded operator $m_{\kappa}(\mL,A)$ on $L^2.$ Indeed, since $\chi_{\{a=0\}}(\mL,A)=0,$ we have
$$m_{\kappa}(\mL,A)=m_{\kappa}(\mL,A)\chi_{\{a>0\}}(\mL,A).$$ Moreover, $m_{\kappa}(0,a)=0$ for $a>0,$ and, consequently, the function $m_{\kappa}(\la,a)\chi_{\{a>0\}}$ is bounded on $[0,\infty)\times \mathbb{R}_+.$ Now, using the multivariate spectral theorem we see that $m_{\kappa}(\mL,A)$ is bounded on $L^2.$

The operator $m_{\kappa}(\mL,A)$ is also bounded on all $L^p$ spaces, $1<p<\infty.$ Indeed, after decomposing $$m_{\kappa}(L,A)=\frac{L}{L+A}(L+A)\int_0^{\infty}e^{- t(L+A)}\kappa(t)\,dt,$$ we may apply Theorem \ref{thm:genRiesz} (with $\sigma=1$) and the general Stein's multiplier theorem \cite[Corollary 3, p.121]{topics}. Moreover, we have $\|m\|_{p\to p}\leq C_p,$ with universal constants $C_p,$ $1<p<\infty.$ The boundedness of $m_{\kappa}(\mL,A)$ on $L^p$ follows also directly from Theorem \ref{thm:genMarCkHinf}.

However, the following question is left open: is $m_{\kappa}(\mL,A)$ also of weak type $(1,1)?$ The main theorem of this section is a positive result in this direction.
\begin{thm}
\label{thm:OA}
Let $\mL$ be the Ornstein-Uhlenbeck operator on $L^2(\mathbb{R}^d,\gamma)$ and let $A$ be a non-negative self-adjoint operator on $L^2(Y,\zeta,\mu),$ satisfying all the assumptions of Section \ref{chap:Intro,sec:Setting} and such that its heat kernel satisfies \eqref{chap:CkHinf,sec:OA,eq:gausbound}, \eqref{chap:CkHinf,sec:OA,eq:heatlipsch} and  \eqref{chap:CkHinf,sec:OA,eq:heatlipschngauss}, as described in this section. Let $\kappa$ be a bounded function on $\mathbb{R}_+$ and let $m_{\kappa}$ be given by \eqref{chap:CkHinf,sec:OA,eq:mult1}. Then the multiplier operator $m_{\kappa}(\mL,A)$ is bounded from $L^1_{\gamma}(H^1)$ to $L^{1,\infty}(\gamma\otimes \mu),$ i.e.\
\begin{equation}
\label{chap:CkHinf,sec:OA,eq:weakH1type}
(\gamma\otimes \mu)(\{x\in \mathbb{R}^d\times Y\colon |m_{\kappa}(\mL,A)f(x)|>s\})\leq \frac{C_{d,\mu}\ki}{s}\|f\|_{L^1_{\gamma}(H^1)},\qquad s>0.
\end{equation}
\end{thm}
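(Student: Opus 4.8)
The plan is to prove the weak type $(1,1)$ bound by decomposing the multiplier operator according to the "time" parameter $t$ in the Laplace transform, treating small $t$ (local part) and large $t$ (global part) separately, and in each regime exploiting that $m_\kappa(\mathcal{L},A)$ factors as an operator in the $x_1$-variable ($\mathbb{R}^d$, Ornstein--Uhlenbeck) tensored with an operator in the $x_2$-variable ($Y$, the heat semigroup of $A$). First I would write, using the subordination-type identity already exploited in the text,
\begin{equation*}
m_\kappa(\mathcal{L},A)f = \int_0^\infty \mathcal{L}\,e^{-t\mathcal{L}}\otimes e^{-tA}\, f\;\kappa(t)\,dt,
\end{equation*}
so that the kernel of $m_\kappa(\mathcal L, A)$ in the $Y$-variable is a superposition of the heat kernels $e^{-tA}(x_2,y_2)$ weighted by the OU-type operators $\mathcal L e^{-t\mathcal L}$ acting in $x_1$. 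Since $f\in L^1_\gamma(H^1)$ and $L^1_\gamma(\mathbb{R}^d)\odot H^1$ is dense, by linearity and the definition of the norm \eqref{chap:CkHinf,sec:OA,eq:L1H1norm} it suffices to prove \eqref{chap:CkHinf,sec:OA,eq:weakH1type} for $f(x_1,x_2) = f_1(x_1)\, b(x_2)$ with $\|f_1\|_{L^1(\mathbb{R}^d,\gamma)} = 1$ and $b$ a single $H^1$-atom supported in a ball $B=B_\zeta(y_B,R)\subset Y$; the general case follows by summing over the atomic decomposition and using that weak type $(1,1)$ bounds are stable under the standard $\ell^1$-summation of atoms (here one must be slightly careful, using that the estimates below are uniform in the atom).

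Next I would split $m_\kappa(\mathcal L,A)f = m_\kappa^{loc}f + m_\kappa^{glob}f$ according to whether $t < R^2$ (or more precisely $\mu(B(y_B,\sqrt t))<\mu(B)$, using the measure distance $\tilde\zeta$ as in the sketch of \eqref{chap:CkHinf,sec:OA,eq:maxchar}) or $t\ge R^2$. For the \emph{global part} $t\ge R^2$, I would use the cancellation $\int_Y b\,d\mu = 0$ together with the Lipschitz-in-$y_2$ estimates \eqref{chap:CkHinf,sec:OA,eq:heatlipsch}–\eqref{chap:CkHinf,sec:OA,eq:heatlipschngauss}: writing
\begin{equation*}
e^{-tA}b(x_2) = \int_B \bigl(e^{-tA}(x_2,y_2) - e^{-tA}(x_2,y_B)\bigr) b(y_2)\,d\mu(y_2),
\end{equation*}
the factor $(\zeta(y_2,y_B)/\sqrt t)^\delta \le (R/\sqrt t)^\delta$ produces $t^{-\delta/2}$-decay that is integrable against $\kappa(t)\,dt$ over $t\ge R^2$; combining with $\|\mathcal L e^{-t\mathcal L}\|_{L^1(\gamma)\to L^1(\gamma)}\lesssim 1/t$ (a standard estimate for the OU heat semigroup, or one can use $e^{-t\mathcal L/2}\mathcal L e^{-t\mathcal L/2}$ and the $L^1$-contractivity \eqref{chap:Intro,sec:Setting,eq:contra} together with the $L^1$-boundedness of $t\mathcal L e^{-t\mathcal L}$ which is false in general on $L^1$—so here I would instead pair with the maximal function and lose no $L^1$-bound, handling $\mathcal L e^{-t\mathcal L}$ via the identity $\mathcal L e^{-t\mathcal L} = -\partial_t e^{-t\mathcal L}$ after an integration by parts in $t$ to move the $t$-derivative onto $\kappa$ in a distributional/Fubini sense) one integrates in $x$ and gets the $L^1\to L^1$ bound $\lesssim \|\kappa\|_\infty$, which a fortiori gives weak type $(1,1)$ for the global part. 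The \emph{local part} $t<R^2$ requires more care because the OU-operator $\mathcal L e^{-t\mathcal L}$ is genuinely singular near $t=0$; here I would take advantage of the fact that for $t$ small the OU heat kernel is comparable to the Gaussian (Mehler) kernel on a region near the diagonal, reduce — on the relevant "local" part of $\mathbb{R}^d\times \mathbb{R}^d$ — to the Euclidean Laplace-transform-type multiplier $(-\tfrac12\Delta)\int_0^\infty e^{t\Delta/2}e^{-tA}\kappa(t)\,dt$, which by Stein's classical theory combined with \cite[Theorem 2.1]{Sik} (Gaussian-bounds multiplier theorem for the pair $(-\Delta, A)$ on the product space of homogeneous type $\mathbb{R}^d\times Y$) is of weak type $(1,1)$ with respect to $dx\otimes d\mu$; one then transfers this to $\gamma\otimes\mu$ on the local region using that $\gamma$ and Lebesgue measure are mutually comparable (with constants growing with $|x_1|$, but the local region is where $|x_1-y_1|$ is controlled). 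The error terms — the "global-in-$\mathbb R^d$" part of the OU kernel for $t$ small, i.e.\ where $x_1,y_1$ are far apart — decay exponentially in $|x_1-y_1|^2$ and are handled by crude $L^1\to L^1$ estimates together with the $L^1(Y,\mu)$-boundedness \eqref{chap:CkHinf,sec:OA,eq:maxcharrL1} of the truncated maximal operator $M_{A,\varepsilon}$, which controls $\int_Y \sup_t |e^{-tA}(x_2,y_2)|\,|b(y_2)|\,d\mu(y_2)$ (this is where the $H^1$-atom is merely used as an $L^1$-function with $\|b\|_1\le 1$).

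The main obstacle, as the authors themselves flag by invoking the $H^1$-valued space rather than plain $L^1$, is that the Ornstein--Uhlenbeck operator is \emph{not} of weak type $(1,1)$ for general Laplace-transform multipliers — in fact $(\mathcal L + I)^{iv}$ grows like $e^{\phi_p^*|v|}$, and Stein-type multipliers of $\mathcal L$ need not be bounded even on $S_{\pi/2}$ (cf.\ the footnote example $\kappa(t)=e^{-it}$). So one cannot simply decouple the two variables and quote a scalar weak type $(1,1)$ result for $\mathcal L$. The resolution — and the technical heart of the argument — is that the composition with $e^{-tA}$, which is genuinely smoothing in $Y$ and whose kernel satisfies Gaussian bounds, supplies exactly the regularity in the $Y$-variable needed to run a Calderón--Zygmund argument on the product, while the Gaussian measure $\gamma$ is handled by the standard local/global splitting of Gaussian harmonic analysis (Muckenhoupt, García-Cuerva et al.). Concretely, the hard step is showing that the local part, after the reduction to the Euclidean Laplacian, fits the hypotheses of \cite[Theorem 2.1]{Sik} uniformly, and that the passage from $dx\otimes d\mu$ to $\gamma\otimes\mu$ on the local region does not destroy the weak type $(1,1)$ constant — this is where one must carefully quantify the "locality" (e.g.\ $|x_1 - y_1| \le C\min(1, 1/|x_1|)$, the admissible geometry for Gaussian local estimates) and check that the OU-kernel minus its local Gaussian model is an $L^1\to L^1$-bounded error with norm $\lesssim \|\kappa\|_\infty$. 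I expect the rest (the global part, the summation over atoms, the density argument reducing to $L^1_\gamma(\mathbb R^d)\odot H^1$) to be routine given the tools already assembled in the excerpt, in particular Theorem~\ref{thm:genRiesz}, \eqref{chap:CkHinf,sec:OA,eq:maxcharrL1}, and the maximal characterization \eqref{chap:CkHinf,sec:OA,eq:maxcharr}.
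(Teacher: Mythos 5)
Your proposal identifies the right cast of characters (local/global decomposition, comparison of the Mehler kernel to the Euclidean heat kernel, measure transfer from $\Lambda$ to $\gamma$, the $H^1$ maximal characterization \eqref{chap:CkHinf,sec:OA,eq:maxcharr}, the tools of Gaussian harmonic analysis), and you correctly diagnose why plain $L^1(\gamma\otimes\mu)$ cannot work. But the organizing decomposition is wrong, and the large-$t$ piece contains a genuine gap that you flag yourself but do not repair.

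The gap: you split according to $t<R^2$ versus $t\ge R^2$, and for $t\ge R^2$ you rely on some $L^1(\gamma)$-norm control of $\mathcal L e^{-t\mathcal L}$ of size $\lesssim 1/t$. That estimate is false (precisely because $\mathcal L$ has only an $H^{\infty}$ functional calculus, cf.\ \eqref{chap:Hinf,sec:genMarHinf,eq:OUimabound}), and the proposed workaround via integration by parts in $t$ transfers the derivative to $\kappa$; since $\kappa$ is only assumed bounded, the resulting $\kappa'$-term is not controlled by $\|\kappa\|_\infty$. The paper's device for this is not integration by parts but the finite sign change argument (inequality (2.3) of \cite{laptype}): since $r\mapsto\mathcal M_r(x_1,y_1)$ has at most boundedly many sign changes of its derivative, $\int_0^1|\partial_r\mathcal M_r(x_1,y_1)|\,dr\lesssim\sup_{0<r<1}\mathcal M_r(x_1,y_1)$ pointwise, which reduces the global operator to the Mehler maximal function. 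The weak $(1,1)$ of that maximal function restricted to $N_2^c$ (Sjögren's result) in $x_1$ is then combined with \eqref{chap:CkHinf,sec:OA,eq:maxcharr} in $x_2$ via Fubini. This is structurally different from your atom-cancellation argument in the $Y$-variable.

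The decomposition itself differs in a way that matters. The paper's ``global'' set is not a set of $t$'s but the spatial region $N_2^c\subset\mathbb R^d\times\mathbb R^d$; the local region $N_2=\{|x_1-y_1|\le 2/(1+|x_1|+|y_1|)\}$ is exactly where $\gamma$ is comparable to Lebesgue measure (Lemma~\ref{lem:Lem31laptype}(vi)) and where the Mehler kernel is comparable to the Weierstrass kernel (Lemma~\ref{lem:diffest}). Truncating in $t$ alone does not localize $(x_1,y_1)$: for $t$ small the kernel $\partial_r\mathcal M_r$ is concentrated near $y_1\approx x_1$ but still exhibits the large-$|x_1|$ degeneracy of the Gaussian measure, so the reduction you propose to $(-\tfrac12\Delta,A)$ and \cite[Theorem 2.1]{Sik} cannot be run without an additional spatial truncation. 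In effect you would still need to introduce $N_2$, at which point the $t$-split is superfluous; the paper goes straight to the spatial split, writes $T=D+\tilde T$ with $\tilde T$ the Weierstrass replacement (a genuine Calderón--Zygmund operator on $(\mathbb R^d\times Y,\eta,\Lambda\otimes\mu)$, Lemma~\ref{lem:calzyg}), shows $D^{loc}$ is bounded on every $L^p(\Lambda\otimes\mu)$ (Lemma~\ref{prop:dloc}), and transfers both back to $\gamma\otimes\mu$ via Lemmata \ref{lem:Lem31laptype}, \ref{lem:Lem33laptype}. Note also that the paper never decomposes $f$ into $H^1$-atoms: the $H^1$-structure enters only through the maximal characterization \eqref{chap:CkHinf,sec:OA,eq:maxcharr} in the global estimate, and through $\|g\|_{L^1(Y,\mu)}\le\|g\|_{H^1}$ in the local estimate.
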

\begin{remark}
Observe that $L^2 \cap L^1_{\gamma}(H^1)$ is dense in $L^1_{\gamma}(H^1).$ Thus, it is enough to prove \eqref{chap:CkHinf,sec:OA,eq:weakH1type} for $f\in L^2 \cap L^1_{\gamma}(H^1).$
\end{remark}
Altogether, the proof of Theorem \ref{thm:OA} is rather long and technical, thus for the sake of the clarity of the presentation we do not provide all details. We use a decomposition of the kernel of the operator $T:=m_{\kappa}(\mL,A)$ into the global and local parts with respect to the Gaussian measure in the first variable. The local part will turn out to be of weak type $(1,1)$ (with respect to $\gamma \otimes \mu$) in the ordinary sense. For both the local and global parts we use ideas and some estimates from Garc\'ia-Cuerva's et al.\ \cite{high} and \cite{laptype}.

Set $\kappa^{\varepsilon}=\kappa\chi_{[\varepsilon,1/\varepsilon]},$ $0<\varepsilon<1.$ Then, using the multivariate spectral theorem together with the fact that $A$ satisfies \eqref{chap:Intro,sec:Setting,eq:noatomatzero}, we see that $\lim_{\varepsilon\to 0^+}m_{\kappa^{\varepsilon}}((\mL,A))=m_{\kappa}((\mL,A)),$ strongly in $L^2.$ Consequently, we also have convergence in the measure $\gamma\otimes \mu$. Since, clearly $\|\kappa^{\varepsilon}\|_{L^{\infty}(\mathbb{R}^+)}\leq \ki,$ it suffices to prove \eqref{chap:CkHinf,sec:OA,eq:weakH1type} for $\kappa$ such that $\supp \kappa\subseteq[\varepsilon,1/\varepsilon].$ Thus, throughout the proof of Theorem \ref{thm:OA} we assume (often without further mention) that $\kappa$ is supported away from $0$ and $\infty.$ Additionally, the symbol $\lesssim$ denotes that the estimate is independent of $\kappa.$

In the proof of Theorem \ref{thm:OA} the variables with subscript $1,$ e.g.\ $x_1,y_1,$ are elements of $\mathbb{R}^d,$ while the variables with subscript $2,$ e.g.\ $x_2,y_2,$ are taken from $Y.$

We start with introducing some notation and terminology. Define $$L^{\infty}_c=\{f\in L^{\infty}\colon \supp f \textrm{ is compact}\}=\{f\in L^{\infty}(\mathbb{R}^d\times Y, \Lambda \otimes \mu)\colon \supp f \textrm{ is compact}\},$$
where $\Lambda$ is Lebesgue measure on $\mathbb{R}^d.$ Denoting $L^p(\mathbb{R}^d\times Y, \Lambda\otimes \mu):=L^p(\Lambda\otimes \mu),$ we see that for each $1\leq p<\infty,$ $L^{\infty}_c$ is a dense subspace of both $L^p$ and $L^p(\Lambda\otimes \mu).$ In particular, any operator which is bounded on $L^2$ or $L^2(\Lambda\otimes \mu)$ is well defined on $L^{\infty}_c.$ We also need the weak space $L^{1,\infty}(\Lambda\otimes \mu):=L^{1,\infty}(\mathbb{R}^d\times Y,\Lambda\otimes \mu)$ equipped with the quasinorm given by \eqref{chap:CkHinf,sec:OA,eq:weaknormgauss} with $\gamma$ replaced by $\Lambda.$ An operator $S$ is of weak type $(1,1)$ precisely when
$$\|S\|_{L^1(\Lambda\otimes \mu) \to L^{1,\infty}(\Lambda\otimes \mu)}=\sup_{\|f\|_{L^1(\Lambda\otimes \mu)=1}}\|Sf\|_{L^{1,\infty}(\Lambda\otimes \mu)}<\infty.$$

Let $\eta$ be the product metric on $\mathbb{R}^d\times Y,$
\begin{equation}\label{chap:CkHinf,sec:OA,eq:eta}\eta(x,y)=\max(|x_1-y_1|,\zeta(x_2,y_2)),\qquad x,y\in \mathbb{R}^d\times Y.
\end{equation}
Then it is not hard to see that the triple $(\mathbb{R}^d\times Y,\eta,\Lambda\otimes \mu)$ is a space of homogenous type.
 \begin{defi}
 \label{defi:kernel}
 We say that a function $S(x,y)$ defined on the product $(\mathbb{R}^{d}\times Y)\times (\mathbb{R}^d\times Y)$ is a kernel of a linear operator $S$ defined on $L^{\infty}_c$ if, for every $f\in L^{\infty}_c$ and a.e.\ $x\in \mathbb{R}^d\times Y,$
$$Sf(x)=\int_{\mathbb{R}^d}\int_Y S(x,y)f(y)\,d\mu(y_2)\,dy_1.$$
\end{defi}
\begin{remark1}
We do not restrict to $x\not\in \supp f;$ the operators we consider later on are well defined in terms of their kernels for all $x.$ This is true because of the assumption that $\kappa$ is supported away from $0$ and $\infty.$
\end{remark1}
\begin{remark2}
The reader should keep in mind that the inner integral defining $Sf(x)$ is taken with respect to the Lebesgue measure $dy_1$ rather than the Gaussian measure $d\gamma(y_1).$ The reason for this convention is the form of Mehler's formula we use, see \eqref{chap:CkHinf,sec:OA,eq:MehlformOUint}.
\end{remark2}

Let $\mM_r(x_1,y_1),$ $x_1,y_1 \in \mathbb{R}^d,$ $0<r<1,$  denote Mehler's kernel in $\mathbb{R}^d,$ i.e.\ the kernel of the operator $r^{\mL}=e^{-t\mL},$ with $r=e^{-t}.$ It is well known that, for $0<r<1,$
\begin{equation}
\label{chap:CkHinf,sec:OA,eq:MehlformOU}
\mM_r(x_1,y_1)=\pi^{-d/2}(1-r^2)^{-d/2}\exp\bigg(-\frac{|rx_1-y_1|^2}{1-r^2}\bigg),\qquad x_1,y_1\in\mathbb{R}^d.
\end{equation}
and that, for all $g\in L^p(\mathbb{R}^d,\gamma)$ with $1\leq p\leq \infty,$
\begin{equation}\label{chap:CkHinf,sec:OA,eq:MehlformOUint} r^{\mL}g(x_1)=\int_{\mathbb{R}^d}\mM_r(x_1,y_1)g(y_1)\,dy_1,\qquad x_1\in \mathbb{R}^d.\end{equation}
In particular, using \eqref{chap:CkHinf,sec:OA,eq:MehlformOUint} it can be deduced that $\{e^{-t\mL}\}_{t>0}$ satisfies the contractivity condition \eqref{chap:Intro,sec:Setting,eq:contra}. Additionally, a short computation using \eqref{chap:CkHinf,sec:OA,eq:MehlformOU} gives
\begin{equation}
\label{chap:CkHinf,sec:OA,eq:comM}
\begin{split}
\partial_r\,\mM_r(x_1,y_1)= &\pi^{-d/2}\left(dr-2r\frac{|rx_1-y_1|^2}{1-r^2}-\langle rx_1-y_1,x_1\rangle\right)(1-r^2)^{-d/2-1}\\
&\times \exp\bigg(-\frac{|rx_1-y_1|^2}{1-r^2}\bigg).
\end{split}
\end{equation}
From the above we see that, if $\varepsilon<r<1-\varepsilon,$ for some $0<\varepsilon<1/2,$ then
\begin{equation}
\label{chap:CkHinf,sec:OA,eq:comMestfar01}
|\partial_r\,\mM_r(x_1,y_1)|\lesssim C_{\varepsilon}(1+|x_1|).
\end{equation}

Note that, since $\kappa$ is supported away from $0$ and infinity, the function $\kappa_{\log}(r)=\kappa(-\log r),$ $0<r<1,$ is supported away from $0$ and $1,$ say in an interval $[\varepsilon,1-\varepsilon],$ $0<\varepsilon<1/2.$ In what follows, slightly abusing the notation, we keep the symbol $\kappa$ for the function $\kappa_{\log}.$

The change of variable $r=e^{-t}$ leads to the formal equality
$$T=\int_{0}^{1}\kappa(r)\mL r^{\mL}r^A \,\frac{dr}{r}=\int_{0}^{1}\kappa(r)\partial_r r^{\mL}r^A \,dr.$$ Suggested by the above we define the kernel
\begin{equation*}
K(x,y)=\int_0^{1}\partial_r \mM_r(x_1,y_1)\,r^A(x_2,y_2)\kappa(r)\,dr,\qquad x_1,y_1\in\mathbb{R}^d,\quad x_2,y_2\in Y,
\end{equation*}
with $r^A(x_2,y_2)=e^{(\log r) A}(x_2,y_2).$ Then we have.

\begin{lem}
\label{lem:kerKopT}
The function $K$ is a kernel of $T$ in the sense of Definition \ref{defi:kernel}.
\end{lem}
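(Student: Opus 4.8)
The plan is to verify Lemma~\ref{lem:kerKopT} by reducing the claimed kernel identity to the spectral-theoretic definition of $T=m_\kappa(\mL,A)$ and then justifying the interchange of the $r$-integration with the operator action on a dense class of functions. First I would fix $f\in L^\infty_c$; note that since $\kappa$ is supported in a compact subinterval $[\varepsilon,1-\varepsilon]\subset(0,1)$ (after the reduction explained right before the lemma), all the integrals below are over a compact range, which will make every Fubini-type argument routine. The starting point is the formal computation already displayed in the excerpt,
\begin{equation*}
T=\int_0^1\kappa(r)\,\mL\, r^{\mL}r^A\,\frac{dr}{r}=\int_0^1\kappa(r)\,\partial_r\big(r^{\mL}r^A\big)\cdot\frac{1}{1}\cdot\text{(appropriate factor)}\,dr,
\end{equation*}
which I would make rigorous as follows: by the multivariate spectral theorem, for $f\in L^2$ one has $m_\kappa(\mL,A)f=\int_0^1\kappa(r)\,\partial_r\!\left(r^{\mL}r^A\right)f\,dr$ as a Bochner integral in $L^2$, because $(\la,a)\mapsto \la\int_0^1 r^{\la+a-1}\kappa(r)\,dr$ is exactly $m_\kappa(\la,a)$ after the change of variables $r=e^{-t}$, and the integrand $r\mapsto \partial_r(r^{\mL}r^A)f$ is continuous and bounded in $L^2$-norm on $[\varepsilon,1-\varepsilon]$ (differentiate the spectral representation, using $\chi_{\{a>0\}}(\mL,A)f$ and the atomlessness \eqref{chap:Intro,sec:Setting,eq:noatomatzero} of $A$ to kill the boundary contribution at $a=0$).

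Next I would bring in the integral kernels. For each fixed $r\in[\varepsilon,1-\varepsilon]$, the operator $r^{\mL}\otimes r^A$ acting on $L^\infty_c$ has the kernel $\mM_r(x_1,y_1)\,r^A(x_2,y_2)$: this uses Mehler's formula \eqref{chap:CkHinf,sec:OA,eq:MehlformOUint} in the first variable (note the inner integral is against $dy_1$, Lebesgue measure, by the very form of \eqref{chap:CkHinf,sec:OA,eq:MehlformOUint}) and the assumed continuity of the heat kernel $e^{-tA}(x_2,y_2)$ in the second. Differentiating in $r$ and using \eqref{chap:CkHinf,sec:OA,eq:comM} together with the estimate \eqref{chap:CkHinf,sec:OA,eq:comMestfar01} for $|\partial_r\mM_r|$, plus the Gaussian bound \eqref{chap:CkHinf,sec:OA,eq:gausbound} for $r^A(x_2,y_2)$ (which is bounded uniformly for $r$ in a compact subinterval of $(0,1)$ by $C_\varepsilon/\mu(B(x_2,\sqrt{|\log r|}))$), I get that $\partial_r\big(\mM_r(x_1,y_1)r^A(x_2,y_2)\big)$ is jointly continuous and, on the compact $r$-range, dominated by an $L^1_{loc}$ function of $(y_1,y_2)$ times a polynomial-in-$|x_1|$ factor — uniformly for $x$ in compact sets. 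Hence for fixed $x$ (recalling Remark~1 after Definition~\ref{defi:kernel}, we may take \emph{all} $x$, not just $x\notin\supp f$), Fubini's theorem lets me exchange the $dr$ integral with the $dy_1\,d\mu(y_2)$ integral:
\begin{equation*}
\left(\int_0^1\kappa(r)\,\partial_r\big(r^{\mL}r^A\big)f\,dr\right)(x)=\int_{\mathbb{R}^d}\int_Y\left(\int_0^1\partial_r\mM_r(x_1,y_1)\,r^A(x_2,y_2)\,\kappa(r)\,dr\right)f(y)\,d\mu(y_2)\,dy_1,
\end{equation*}
and the inner bracket is precisely $K(x,y)$. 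Combining this with the identification of the left-hand side with $Tf$ (which holds in $L^2$, hence pointwise a.e.\ along a subsequence, and then everywhere by the continuity just established) gives the assertion of the lemma for $f\in L^\infty_c$, which is what Definition~\ref{defi:kernel} requires.

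The main obstacle I anticipate is the careful justification of the dominating function used for Fubini: one must check that $\int_0^1|\partial_r\mM_r(x_1,y_1)|\,|r^A(x_2,y_2)|\,|\kappa(r)|\,dr$ is genuinely finite and locally integrable in $y$ uniformly for $x$ in compacta. Because $\kappa$ is supported away from $0$ and $1$, $|\partial_r\mM_r(x_1,y_1)|$ stays bounded by $C_\varepsilon(1+|x_1|)$ via \eqref{chap:CkHinf,sec:OA,eq:comMestfar01} (no singularity at $r\to1^-$), and $r^A(x_2,y_2)$ is a bona fide heat kernel at time $|\log r|$ bounded in $[|\log(1-\varepsilon)|,|\log\varepsilon|]$, so by \eqref{chap:CkHinf,sec:OA,eq:gausbound} and the doubling of $\mu$ it is bounded by a constant (depending on $\varepsilon$, $x_2$ through a fixed compact set) times $\mu(B(y_2,1))^{-1}\in L^1_{loc}(d\mu)$. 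This makes the domination elementary precisely \emph{because} of the support reduction on $\kappa$; without it the lemma would be genuinely delicate near $r=0$ and $r=1$, but in the stated setting it is essentially bookkeeping. I would therefore present the argument in the order: (i) spectral-theoretic Bochner-integral representation of $T$; (ii) kernel of $r^{\mL}\otimes r^A$ and its $r$-derivative via Mehler plus the heat-kernel hypotheses; (iii) uniform domination and Fubini; (iv) matching pointwise a.e.\ and concluding.
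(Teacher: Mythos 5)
Your proposal is correct and follows essentially the same route as the paper's: both reduce the claim to writing $T$ as an $r$-integral (over the compact interval $\supp\kappa\subset(0,1)$) of $\partial_r(r^{\mL}r^A)$, identify the kernel of that integrand via Mehler's formula and the assumed heat-kernel continuity, and then justify the interchange of the $r$-integral with the spatial integral using the domination afforded by \eqref{chap:CkHinf,sec:OA,eq:comMestfar01} and the Gaussian bound \eqref{chap:CkHinf,sec:OA,eq:gausbound}, all hinging on $\kappa$ being supported away from $0$ and $1$. The one cosmetic difference is that the paper works with the duality pairing $\langle Tf,h\rangle$ for $f,h\in L^\infty_c$ and passes explicitly through the difference quotient defining the Fréchet derivative before applying dominated convergence, whereas you work directly with the pointwise value of the Bochner integral $Tf(x)$; the paper's bilinear formulation sidesteps the slight delicacy in your step "holds in $L^2$, hence pointwise a.e.\ along a subsequence, and then everywhere by continuity," but the substance and the required estimates are identical.
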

\begin{proof}[Proof (sketch)]
It is enough to show that for $f,h \in L_c^{\infty}$ we have
\begin{equation}
\label{chap:CkHinf,sec:OA,eq:itisenough}
\langle Tf, h\rangle=\int_{\mathbb{R}^d\times Y}\int_{\mathbb{R}^d\times Y}K(x,y)f(y)h(x)\,d(\Lambda\otimes \mu)(y)\,d(\gamma\otimes \mu)(x).\end{equation}

From the multivariate spectral theorem together with Fubini's theorem we see that
\begin{equation}
\label{chap:CkHinf,sec:OA,eq:multFubin}
\langle m(\mL,A)f, g\rangle_{L^2}=\int_0^1\kappa(r) \langle \mL r^{\mL-1} r^A f, h\rangle_{L^2}\,dr, \qquad f,\, h\in L^2.\end{equation}
Now, by the multivariate spectral theorem $\mL r^{\mL-1} (r^A f)=(\partial_r r^{\mL}) (r^A f),$ where on right hand side we have the Fr\'echet derivative in $L^2.$ Thus, $\langle \mL r^{\mL-1} r^A f, h\rangle_{L^2}$ is the limit (as $\delta\to 0$) of \begin{align}\label{chap:CkHinf,sec:OA,eq:limandint}
&\delta^{-1}\langle((r+\delta)^{\mL}-r^{\mL}) r^A f, h\rangle_{L^2}\\
&\nonumber=
\int_{\mathbb{R}^d\times Y}\int_{\mathbb{R}^d\times Y}\frac{\mM_{r+\delta}(x_1,y_1)-\mM_{r}(x_1,y_1)}{\delta}r^{A}(x_2,y_2)f(y)\, h(x)\,d(\Lambda\otimes \mu)(y) \,d(\gamma\otimes\mu)(x).\end{align}
We kindly refer the reader to Proposition \ref{prop:funpro} for details on the above applications of the multivariate spectral theorem.

Since $f,g\in L_c^{\infty},$ using \eqref{chap:CkHinf,sec:OA,eq:maxcharrL1}, \eqref{chap:CkHinf,sec:OA,eq:comMestfar01}, and the dominated convergence theorem we justify taking the limit inside the integral in \eqref{chap:CkHinf,sec:OA,eq:limandint} and obtain
$$\langle \mL r^{\mL-1} r^A f, h\rangle_{L^2}=\int_{\mathbb{R}^d\times Y}\int_{\mathbb{R}^d\times Y}\partial_r\mM_{r}(x_1,y_1)r^{A}(x_2,y_2)f(y)\, h(x)\,d(\Lambda\otimes \mu)(y) \,d(\gamma\otimes\mu)(x).$$
Plugging the above formula into \eqref{chap:CkHinf,sec:OA,eq:multFubin}, and using Fubini's theorem (which is allowed by \eqref{chap:CkHinf,sec:OA,eq:maxcharrL1}, \eqref{chap:CkHinf,sec:OA,eq:comMestfar01} and the fact that $\supp \kappa \subseteq [\varepsilon, 1-\varepsilon]$), we arrive at \eqref{chap:CkHinf,sec:OA,eq:itisenough}, as desired.
\end{proof}
Let $N_s,$ $s>0,$ be given by
\begin{equation*}
N_s=\big\{(x_1,y_1)\in\mathbb{R}^d\times \mathbb{R}^d\colon |x_1-y_1|\leq \frac{s}{1+|x_1|+|y_1|}\big\}.
\end{equation*}
We call $N_s$ the local region with respect to the Gaussian measure $\gamma$ on $\mathbb{R}^d.$ This set (or its close variant) is very useful when studying maximal operators or multipliers for $\mL.$ After being applied by Sj\"ogren in \cite{Sj1}, it was used in \cite{funccalOu}, \cite{high}, \cite{laptype}, and \cite{sharp}, among others.

The local and global parts of the operator $T$ are defined, for $f\in L_c^{\infty},$ by
\begin{equation}
\label{chap:CkHinf,sec:OA,eq:glob1}
T^{glob}f(x)=\int_{\mathbb{R}^d}\int_Y (1-\chi_{N_2}(x_1,y_1))K(x,y)f(y)\,d\mu(y_2)\,dy_1,
\end{equation}
and
\begin{equation*}
T^{loc}f(x)=Tf(x)-T^{glob}f(x),
\end{equation*}
respectively. The estimates from Proposition \ref{prop:proglob} demonstrate that the integral \eqref{chap:CkHinf,sec:OA,eq:glob1} defining $T^{glob}$ is absolutely convergent for a.e.\ $x,$ whenever $f\in L^1.$

Note that the cut-off considered in \eqref{chap:CkHinf,sec:OA,eq:glob1} is the rough one from \cite[p.\ 385]{high} (though only with respect to $x_1,y_1$) rather than the smooth one from \cite[p.\ 288]{laptype}. In our case, using a smooth cut-off with respect to $\mathbb{R}^d$ does not simplify the proofs. That is because, even a smooth cut-off with respect to $x_1,y_1$ may not preserve a Calder\'on-Zygmund kernel in the full variables $(x,y).$ Consequently, a scheme based on \cite{laptype} may not work in our setting, thus to prove Theorem \ref{thm:OA} we use methods similar to those from \cite{high}.

We begin with proving the desired weak type $(1,1)$ property for $T^{glob}.$ Since
$$T^{glob}f(x)=\int_{0}^{1} \int_{\mathbb{R}^d}\partial_r \mM_r(x_1,y_1)\chi_{N_2^c}(x_1,y_1)\,r^A(f(y_1,\cdot))(x_2)\,dy_1\,\kappa(r)\,dr$$
and $\supp \kappa \subseteq [\varepsilon,1-\varepsilon]$ we have
\begin{equation}
\label{chap:CkHinf,sec:OA,eq:glob2}
\begin{split}
&|T^{glob}f(x)|\leq \|\kappa\|_{\infty}\int_{\varepsilon}^{1-\varepsilon} \int_{\mathbb{R}^d}|\partial_r \mM_r(x_1,y_1)|\chi_{N_2^c}(x_1,y_1)|r^A(f(y_1,\cdot))(x_2)|\,dy_1\,dr\\
&\leq \|\kappa\|_{\infty} \int_0^{1} \int_{\mathbb{R}^d}|\partial_r \mM_r(x_1,y_1)|\chi_{N_2^c}(x_1,y_1)\sup_{\varepsilon<r<1-\varepsilon}|r^A(f(y_1,\cdot))(x_2)|\,dy_1\,dr\\
&:=\|\kappa\|_{\infty}T_*^{glob}f(x).
\end{split}
\end{equation}
Moreover, the following proposition holds.
\begin{pro}
\label{prop:proglob}
The operator $T_*^{glob}$ is well defined on $L^1$ and bounded from $L^1_{\gamma}(H^1)$ to $L^{1,\infty}(\gamma\otimes \mu),$ with a bound independent of $0<\varepsilon<1/2.$ Thus, $T^{glob}$ is also well defined on $L^1$ and we have
\begin{equation*}
(\gamma\otimes \mu)(\{x\in \mathbb{R}^d\times Y\colon |T^{glob}f(x)|>s\})\leq \frac{C_{d,\mu}\ki}{s}\|f\|_{L^1_{\gamma}(H^1)},\qquad s>0.
\end{equation*}
\end{pro}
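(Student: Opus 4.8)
The plan is to follow the scheme of Garc\'ia-Cuerva et al.\ \cite{high}, adapted to the product setting and taking advantage of the tensor product structure of the operators. The key observation is that the global kernel splits as a product, roughly $\partial_r \mM_r(x_1,y_1)\chi_{N_2^c}(x_1,y_1)$ acting in the Gaussian variable, times $r^A(x_2,y_2)$ acting in the $Y$ variable; so the estimates in the two variables can be handled essentially separately and then combined by Fubini. First I would reduce, via \eqref{chap:CkHinf,sec:OA,eq:glob2}, to proving the weak type bound for $T_*^{glob}$. Then I would bring the supremum over $\varepsilon<r<1-\varepsilon$ past the $Y$-integration: since $\sup_{\varepsilon<r<1-\varepsilon}|r^A(f(y_1,\cdot))(x_2)|\leq M_{A,\varepsilon}(f(y_1,\cdot))(x_2)$, and $M_{A,\varepsilon}$ is bounded on $L^1(Y,\mu)$ with a constant depending on $\varepsilon$ (by \eqref{chap:CkHinf,sec:OA,eq:maxcharrL1}), I would obtain the crude bound
$$T_*^{glob}f(x)\lesssim \int_{\mathbb{R}^d}\Big(\int_0^1|\partial_r\mM_r(x_1,y_1)|\,dr\Big)\chi_{N_2^c}(x_1,y_1)\,M_{A,\varepsilon}(f(y_1,\cdot))(x_2)\,dy_1.$$
This already shows $T_*^{glob}$ is well defined on $L^1$ (the $dr$ integral of $|\partial_r\mM_r|$ is finite on $N_2^c$, as computed in \cite{high}); the point of the proposition is the $\varepsilon$-uniform weak type estimate, which cannot use the crude $M_{A,\varepsilon}$ bound and instead requires the maximal characterization \eqref{chap:CkHinf,sec:OA,eq:maxchar}--\eqref{chap:CkHinf,sec:OA,eq:maxcharr} of $H^1$.

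The core of the argument is the kernel estimate in the Gaussian variable. Define $G(x_1,y_1)=\int_0^1|\partial_r\mM_r(x_1,y_1)|\,dr$ on $N_2^c$. Using the explicit formula \eqref{chap:CkHinf,sec:OA,eq:comM} for $\partial_r\mM_r$ and the standard change of variables $u=\tfrac{1-r}{1+r}$, one shows (this is precisely Lemma 3.2/3.3 type estimates in \cite{high}) that for $(x_1,y_1)\in N_2^c$,
$$G(x_1,y_1)\lesssim \frac{1}{|x_1-y_1|^d}\,e^{-c|x_1-y_1|^2\wedge(\ldots)}\quad\text{plus a term }\lesssim (1+|x_1|)\,e^{-c(|x_1|^2-|y_1|^2)_+}\ldots,$$
and that moreover $\int_{N_2^c}G(x_1,y_1)\,d\gamma(x_1)\lesssim 1$ uniformly in $y_1$, while also $\int_{N_2^c}G(x_1,y_1)\,dy_1\lesssim (1+|x_1|)$. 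These are exactly the one-variable Gaussian estimates proved in \cite{high}, so I would cite them rather than reprove them. Combining the $Y$-variable maximal bound with the Gaussian estimates via Fubini, one decomposes $T_*^{glob}$ into a part that maps $L^1_\gamma(L^1(Y))\to L^1(\gamma\otimes\mu)$ boundedly (the ``far from the boundary'' and ``$|x_1|$ bounded'' contributions) and a genuinely singular part near $\{|x_1|\to\infty\}$ which is only of weak type $(1,1)$; here the cancellation of $H^1$-atoms in the $Y$ variable is what upgrades the estimate, exactly as the mean-value zero is used in \cite{high}.

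I would then assemble: for $f\in L^2\cap L^1_\gamma(H^1)$ write $f(x_1,\cdot)=\sum_j c_j(x_1)b_{j}(x_1,\cdot)$ atomically in $Y$ with $\int_{\mathbb{R}^d}\sum_j|c_j(x_1)|\,d\gamma(x_1)\approx\|f\|_{L^1_\gamma(H^1)}$, apply the maximal characterization \eqref{chap:CkHinf,sec:OA,eq:maxchar} to replace $\sup_{\varepsilon<r<1-\varepsilon}|r^A b_j|$ by something controlled in $L^1(Y)$ uniformly in $\varepsilon$, and integrate against $G(x_1,y_1)\chi_{N_2^c}$; the Gaussian integrability of $G$ then yields the claimed weak type $(1,1)$ inequality with a constant of the form $C_{d,\mu}\|\kappa\|_\infty$, independent of $\varepsilon$. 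Letting $\varepsilon\to 0^+$ through the convergence in measure already established gives the result for $T^{glob}$. The main obstacle I anticipate is the bookkeeping in the Gaussian kernel estimates for $G(x_1,y_1)$ on $N_2^c$ — getting the two competing Gaussian factors (one in $|x_1-y_1|$, one in $|x_1|^2-|y_1|^2$) to cooperate so that the $d\gamma(x_1)$-integral is uniformly bounded — but since this is done in \cite{high} for precisely the Ornstein--Uhlenbeck semigroup, I would import those estimates wholesale and only carry out the (routine) Fubini/tensor-product argument that interfaces them with the $Y$-variable Hardy space machinery.
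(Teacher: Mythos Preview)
Your overall strategy---separate the Gaussian and $Y$ variables, handle each with the appropriate one-variable tool, and combine by Fubini---is correct, and in the final paragraph you do land on the right use of the maximal characterization of $H^1$. But the route you sketch is considerably more complicated than the paper's, and it contains one misconception that could lead you astray.

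The paper's proof is much shorter. Instead of estimating $G(x_1,y_1)=\int_0^1|\partial_r\mM_r(x_1,y_1)|\,dr$ directly on $N_2^c$, it invokes the \emph{finite sign change argument} (inequality (2.3) in the proof of \cite[Lemma~2.1]{laptype}): for fixed $(x_1,y_1)$ the function $r\mapsto\partial_r\mM_r(x_1,y_1)$ changes sign at most a bounded number of times, so $\int_0^1|\partial_r\mM_r|\,dr\le C\sup_{0<r<1}\mM_r(x_1,y_1)$. This immediately reduces $T_*^{glob}$ to the global part of the Mehler \emph{maximal} operator acting on $f_2^*(y_1,x_2):=\sup_{\varepsilon<r<1-\varepsilon}|r^A(f(y_1,\cdot))(x_2)|$. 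The weak type $(1,1)$ in $\gamma$ of that global Mehler maximal operator is already known (\cite[Theorem~3.8]{laptype} together with \cite[Lemma~2]{Sj1}); a single Fubini step then gives
\[
(\gamma\otimes\mu)\{|T_*^{glob}f|>s\}\le\frac{C_d}{s}\int_{\mathbb{R}^d}\big\|f_2^*(x_1,\cdot)\big\|_{L^1(Y,\mu)}\,d\gamma(x_1),
\]
and the inner $L^1(Y)$ norm is bounded by $\|f(x_1,\cdot)\|_{H^1}$ via \eqref{chap:CkHinf,sec:OA,eq:maxcharr}, uniformly in $\varepsilon$. No kernel decomposition, no atomic decomposition of $f$, no competing Gaussian factors to reconcile.

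The misconception in your sketch is the sentence ``here the cancellation of $H^1$-atoms in the $Y$ variable is what upgrades the estimate'' for the Gaussian singular part. It does not: the weak type $(1,1)$ in the $x_1$ variable is a purely Gaussian fact and has nothing to do with cancellation in $Y$. The \emph{only} role of $H^1$ here is through its maximal characterization, guaranteeing that $\|\sup_r|r^A f(x_1,\cdot)|\|_{L^1(Y)}$ is finite and controlled by $\|f(x_1,\cdot)\|_{H^1}$ independently of $\varepsilon$. If you actually tried to use the mean-zero property of $Y$-atoms to tame the $|x_1|\to\infty$ behaviour of $G$, you would find no mechanism for that cancellation to interact with the Gaussian kernel, since the two variables are completely decoupled in the global part. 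Your plan would still go through if you drop that paragraph and simply cite the weak type $(1,1)$ of the global Mehler-type operator; but then you have essentially rediscovered the paper's argument, minus the clean finite-sign-change shortcut.
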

\begin{proof}
Clearly, from \eqref{chap:CkHinf,sec:OA,eq:glob2} it follows that  it suffices to focus on $T_*^{glob}.$

Using the finite sign change argument, i.e.\ the inequality (2.3) from the proof of \cite[Lemma 2.1]{laptype}, we see that
$$T_*^{glob}f(x)\leq C \int_{\mathbb{R}^d}\sup_{0<r<1}\mM_r(x_1,y_1)\chi_{N_2^c}(x_1,y_1)f^*_2(y_1,x_2)\,dy_1, $$
where $f^*_2(x_1,x_2)=\sup_{\varepsilon<r<1-\varepsilon}|r^A(f(x_1,\cdot))(x_2)|.$ Moreover, from \cite[Theorem 3.8]{laptype} and \cite[Lemma 2]{Sj1} it follows that the operator $$L^1(\mathbb{R}^d,\gamma)\ni g\mapsto \int_{\mathbb{R}^d}\sup_{0<r<1}\mM_r(x_1,y_1)\chi_{N_2^c}(x_1,y_1)|g|(y_1)\,dy_1:=T^{*}_1 g(x_1),$$ is of weak type $(1,1)$ with respect to $\gamma.$
Hence, using Fubini's theorem we have
\begin{align}
&\nonumber(\gamma\otimes \mu)(\{x\in \mathbb{R}^d\times Y\colon |T_*^{glob}f(x)|>s\})= \int_{Y}\gamma(\{x_1\in\mathbb{R}^d\colon |T_*^{glob}f(x)|>s\})\,d\mu(x_2)\\ \nonumber
&\leq \int_{Y}\gamma(\{x_1\in\mathbb{R}^d\colon |T^{*}_1(f^*_2(\cdot,x_2))(x_1)|>s\})\,d\mu(x_2)\leq  \int_{Y}\frac{C_d}{s}\int_{\mathbb{R}^d}f^*_2(x_1,x_2)\,d\gamma(x_1)\,d\mu(x_2)\\
&=\frac{C_d}{s} \int_{\mathbb{R}^d}\int_{Y}\sup_{\varepsilon<r<1-\varepsilon}|r^A(f(x_1,\cdot))(x_2)|\,d\mu(x_2)\,d\gamma(x_1). \label{chap:CkHinf,sec:OA,eq:FubestTglob}
\end{align}
Now, from \eqref{chap:CkHinf,sec:OA,eq:maxcharrL1} we see that, for each fixed $0<\varepsilon<1/2,$ the operator $T_*^{glob}$ is of weak type $(1,1)$ with respect to $\gamma\otimes \mu;$ in particular, it is well defined for $f\in L^1.$ Finally, using \eqref{chap:CkHinf,sec:OA,eq:FubestTglob} and \eqref{chap:CkHinf,sec:OA,eq:maxcharr}, we obtain the (independent of $\varepsilon$) boundedness of $T_*^{glob}$ from $L^1_{\gamma}(H^1)$ to $L^{1,\infty}(\gamma\otimes \mu).$
\end{proof}

Now we turn to the local part $T^{loc}.$ As we already mentioned, $T^{loc}$ turns out to be of (classical) weak type $(1,1)$ with respect to $\gamma\otimes \mu.$
\begin{pro}
\label{thm:tlocalpart}
The operator $T^{loc}$ is of weak type $(1,1)$ with respect to $\gamma\otimes \mu,$ and $\|T^{loc}\|_{L^1\to L^{1,\infty}}\lesssim \ki.$ Thus, $T^{loc}$ is also bounded from $L^1_{\gamma}(H^1)$ to $L^{1,\infty}(\gamma\otimes \mu),$ and
\begin{equation*}
(\gamma\otimes \mu)(\{x\in \mathbb{R}^d\times Y\colon |T^{loc}f(x)|>s\})\leq \frac{C_{d,\mu}\ki}{s}\|f\|_{L^1_{\gamma}(H^1)},\qquad s>0.
\end{equation*}
\end{pro}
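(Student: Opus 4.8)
The plan is to reduce the weak type $(1,1)$ bound for $T^{loc}$ to a Calderón--Zygmund argument in the product space of homogeneous type $(\mathbb{R}^d\times Y,\eta,\Lambda\otimes\mu)$, where $\eta$ is the product metric defined in \eqref{chap:CkHinf,sec:OA,eq:eta}. First I would observe that, since $T$ is bounded on $L^2$ and by Proposition \ref{prop:proglob} the operator $T^{glob}$ is well defined on $L^1$, the operator $T^{loc}=T-T^{glob}$ is well defined on $L^\infty_c$ and bounded on $L^2$. Its kernel is $K^{loc}(x,y)=\chi_{N_2}(x_1,y_1)K(x,y)$, with $K$ as in Lemma \ref{lem:kerKopT}. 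The weak type $(1,1)$ of $T^{loc}$ with respect to $\Lambda\otimes\mu$ (which, on the local region $N_2$, is comparable to $\gamma\otimes\mu$ up to constants, since $e^{-|x_1|^2}\approx e^{-|y_1|^2}$ when $(x_1,y_1)\in N_2$) will then follow from the Calderón--Zygmund theorem once we verify the Hörmander integral condition
\begin{equation*}
\sup_{y,y'}\int_{\eta(x,y)>2\eta(y,y')}|K^{loc}(x,y)-K^{loc}(x,y')|\,d(\Lambda\otimes\mu)(x)\lesssim\|\kappa\|_\infty.
\end{equation*}
Passing from weak type with respect to $\Lambda\otimes\mu$ to weak type with respect to $\gamma\otimes\mu$ is standard on the local region; the final $L^1_\gamma(H^1)$ estimate is then immediate from $\|g\|_{L^1(Y,\mu)}\leq\|g\|_{H^1}$ and Fubini.

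The heart of the matter is estimating $K^{loc}$ and its differences. I would split the two perturbed variables: write $K^{loc}(x,y)-K^{loc}(x,y')$ using the triangle inequality into a term where only $x_1$-variable moves and a term where only $x_2$-variable moves, treating $y=(y_1,y_2)$ and $y'=(y'_1,y'_2)$. For the size estimate, on $N_2$ one has $|\partial_r\mathcal{M}_r(x_1,y_1)|$ controlled by the standard Mehler-kernel bounds (cf.\ \eqref{chap:CkHinf,sec:OA,eq:comM}), which on the local region behave like a Calderón--Zygmund kernel in $x_1,y_1$ with respect to Lebesgue measure on $\mathbb{R}^d$; for the $Y$-variable we use the Gaussian bound \eqref{chap:CkHinf,sec:OA,eq:gausbound} for $r^A(x_2,y_2)$. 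For the smoothness in $x_1$ one differentiates $\partial_r\mathcal{M}_r$ once more in $y_1$ and checks the gradient bound on $N_2$; for the smoothness in $x_2$ one invokes the Hölder regularity \eqref{chap:CkHinf,sec:OA,eq:heatlipsch} of the heat kernel $r^A$, or \eqref{chap:CkHinf,sec:OA,eq:heatlipschngauss} when the Gaussian factor is not available. Throughout, the key point is that $\int_\varepsilon^{1-\varepsilon}\frac{dr}{\,\cdot\,}$ of these kernels, integrated against the bounded function $\kappa$, converges with a bound uniform in $\varepsilon$, because $\kappa$ is supported away from $0$ and $1$ and because the local region kills the problematic region $r\to1$ where $\mathcal{M}_r$ concentrates; this is exactly the mechanism exploited in \cite{high} and \cite{laptype}.

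The main obstacle I expect is checking that a kernel which is Calderón--Zygmund separately in the $\mathbb{R}^d$-variables and separately in the $Y$-variables is genuinely a product Calderón--Zygmund kernel for $\eta=\max(|x_1-y_1|,\zeta(x_2,y_2))$ and $\Lambda\otimes\mu$, with the integrals over the annular regions $\{\eta(x,y)>2\eta(y,y')\}$ summing correctly. Concretely, when $\eta(y,y')$ is realized by the $\mathbb{R}^d$-component one must still integrate over all of $Y$ in $x_2$ using only the size bound, and dually; the doubling of $\mu$ and the decay of the Gaussian bound handle this, but the bookkeeping across the four sub-cases (which variable moves, which component realizes $\eta(y,y')$) is delicate. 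A secondary subtlety, already flagged in the remark preceding the statement, is that the rough cutoff $\chi_{N_2}$ may destroy the Calderón--Zygmund structure near the boundary of $N_2$; this is circumvented by the now-classical observation (from \cite{high}) that the difference between the rough and a smooth cutoff of $N_2$ contributes an operator that is directly seen to be of weak type $(1,1)$ by the Mehler-kernel size estimates on the transition annulus, so one may as well work with $\chi_{N_2}$ at the cost of this harmless error term. I would therefore only sketch these verifications, citing \cite{high} and \cite{laptype} for the one-variable Mehler estimates and the boundary-cutoff trick, and present the product Hörmander condition as the one genuinely new computation.
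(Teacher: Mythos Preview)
Your plan runs into an obstruction that the paper explicitly identifies. You propose to verify a H\"ormander condition directly for $K^{loc}=\chi_{N_2}K$, handling the boundary of $N_2$ via the smooth-versus-rough cutoff device from the one-variable Gaussian literature. But here the cutoff lives only in the $(x_1,y_1)$ variables, while the Calder\'on--Zygmund structure must be with respect to the full product metric $\eta=\max(|x_1-y_1|,\zeta(x_2,y_2))$. When $\eta(x,y)$ is realized by $\zeta(x_2,y_2)\gg|x_1-y_1|$, the gradient of any smooth cutoff of $N_2$ scales like $(1+|x_1|)\approx|x_1-y_1|^{-1}$, so the boundary term gains only $|y_1-y'_1|/|x_1-y_1|$, not the required $\eta(y,y')/\eta(x,y)$. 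The paper says this in so many words just after defining $T^{glob}$: ``even a smooth cut-off with respect to $x_1,y_1$ may not preserve a Calder\'on--Zygmund kernel in the full variables $(x,y)$. Consequently, a scheme based on \cite{laptype} may not work in our setting.'' Your boundary-cutoff trick is precisely the one-variable \cite{laptype} device, and it does not obviously survive the product structure; calling it ``harmless'' and sketching it is not enough. A smaller point: your remark that ``the local region kills the problematic region $r\to1$'' is a misunderstanding. The support condition $\supp\kappa\subseteq[\varepsilon,1-\varepsilon]$ is a device to justify computations; all bounds must be uniform in $\varepsilon$. The singularity at $r\to1$ is absorbed by the Gaussian factor $\exp(-c|x_1-y_1|^2/(1-r))$ upon integrating in $r$, not by any spatial localization.

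The paper's route is structurally different. It writes $T=\tilde T+D$, where $\tilde T$ replaces Mehler's kernel $\mathcal M_r$ by the Euclidean heat kernel $\mathcal W_r$, which is translation invariant in $x_1-y_1$. For $\tilde T$ one proves genuine pointwise Calder\'on--Zygmund estimates on $(\mathbb{R}^d\times Y,\eta,\Lambda\otimes\mu)$ (Lemma~\ref{lem:calzyg}); this is where your four-subcase product computation actually lives, but for $\tilde K$, not for $K$. For the difference one shows (Lemma~\ref{lem:diffest}) that on $N_2$
\[
\int_0^1|\partial_r\mathcal M_r(x_1,y_1)-\partial_r\mathcal W_r(x_1-y_1)|\,dr\lesssim(1+|x_1|)\,|x_1-y_1|^{-(d-1)},
\]
an integrable singularity, so that $D^{loc}$ is bounded on every $L^p(\Lambda\otimes\mu)$, $1\le p\le\infty$ (Lemma~\ref{prop:dloc}). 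Crucially, the passage from $\tilde T$ to $\tilde T^{loc}$ is not via a cutoff-on-the-kernel argument at all, but via the ball decomposition of Lemmata~\ref{lem:Lem31laptype}, \ref{lem:Lem33laptype} and \ref{lem:locinher}: one writes $S^{loc}$ as $S_1+S_2$ where $S_1$ inherits bounds from $S$ by Lemma~\ref{lem:Lem33laptype} and $S_2$ is controlled by the size bound \eqref{chap:CkHinf,sec:OA,eq:SL1norm} alone. This sidesteps the cutoff problem entirely, since one never asks the localized kernel to satisfy a smoothness condition.
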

From now on we focus on the proof of Proposition \ref{thm:tlocalpart}. The key ingredient is a comparison (in the local region) of the kernel $K$ with a certain convolution kernel $\tilde{K}$ in the variables $(x_1,y_1),$ i.e.\ depending on $(x_1-y_1,x_2,y_2).$ We also heavily exploit the fact that in the local region $N_2$ the measure $\gamma\otimes \mu$ is comparable with $\Leb\otimes \mu.$

For further reference we restate \cite[Lemma 3.1]{laptype}.
\begin{lem}
\label{lem:Lem31laptype}
There exists a family of balls on $\mathbb{R}^d$
$$B_j=B\left(x_1^j,\frac{1}{20(1+|x_1^j|)}\right),$$
such that:
\begin{enumerate}[i)]
\item the family $\{B_j\colon j\in \mathbb{N}\}$ covers $\mathbb{R}^d$;
\item the balls $\{\frac14 B_j \colon j\in \mathbb{N}\}$ are pairwise disjoint;
\item for any $\beta > 0$, the family $\{\beta B_j \colon j\in \mathbb{N}\}$ has bounded overlap, i.e.;
$\sup \sum_j \chi_{\beta B_j}(x_1)\leq C$;
\item $B_j \times 4B_j \subseteq N_1$ for all $j\in \mathbb{N}$;
\item if $x_1 \in B_j,$ then $B(x_1,\frac{1}{20(1+|x_1|)}) \subseteq 4B_j$;
\item for any measurable $V\subseteq 4B_j,$ we have $\gamma(V)\approx e^{-|x_1^j|^2}\Lambda(V).$
\end{enumerate}
\end{lem}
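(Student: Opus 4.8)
The plan is to prove Lemma~\ref{lem:Lem31laptype} by a Whitney--Vitali selection adapted to the Gaussian ``admissible'' radius $r(x_1)=\tfrac{1}{20(1+|x_1|)}$, exactly in the spirit of \cite[Lemma 3.1]{laptype}. The whole argument rests on one elementary comparability estimate: if $|x_1-y_1|\le\tfrac14\big(r(x_1)+r(y_1)\big)$ then $r(x_1)\approx r(y_1)$ with absolute constants (a factor $\tfrac{41}{40}$ suffices). This follows from $\big|\,|x_1|-|y_1|\,\big|\le|x_1-y_1|\le\tfrac12\max\big(r(x_1),r(y_1)\big)\le\tfrac1{40}$, which forces $1+|x_1|$ and $1+|y_1|$ to differ by at most a factor $\tfrac{41}{40}$; in particular the ball $B(y_1,r(y_1))$ is then contained in a fixed dilate of $B(x_1,r(x_1))$.

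First I would select the centres. By Zorn's lemma (equivalently, a greedy exhaustion) take a maximal family $\{x_1^j\}_j\subseteq\mathbb{R}^d$ such that the balls $\tfrac14 B_j:=B\big(x_1^j,\tfrac14 r(x_1^j)\big)$ are pairwise disjoint; this is (ii). For the covering property (i), suppose $x_1\notin B_j$ for every $j$, i.e.\ $|x_1-x_1^j|\ge r(x_1^j)$ for all $j$. If $B\big(x_1,\tfrac14 r(x_1)\big)$ met some $\tfrac14 B_j$, then $|x_1-x_1^j|<\tfrac14\big(r(x_1)+r(x_1^j)\big)$; the comparability estimate then gives $r(x_1)\le\tfrac{41}{40}\,r(x_1^j)$, whence $|x_1-x_1^j|<\tfrac14\cdot\tfrac{81}{40}\,r(x_1^j)<r(x_1^j)$, a contradiction. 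Hence $B\big(x_1,\tfrac14 r(x_1)\big)$ would be disjoint from the whole selected family, contradicting maximality; so every $x_1$ lies in some $B_j$.

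Next I would verify the bounded overlap (iii). Fix $\beta>0$ and let $J=\{j:x_1\in\beta B_j\}$. This is handled in two regimes according to whether $|x_1|\le 2\beta$ or $|x_1|>2\beta$: in the first, all the $x_1^j$, $j\in J$, satisfy $1+|x_1^j|\le 1+3\beta$, so the disjoint balls $\tfrac14 B_j$, $j\in J$, have radius bounded below by a positive constant depending only on $\beta$ and sit inside a fixed ball; in the second, $|x_1^j|\approx|x_1|$ uniformly in $j\in J$, so by the comparability estimate $r(x_1^j)\approx r(x_1)$, and again the disjoint balls $\tfrac14 B_j$ have comparable radii and lie in a ball of radius $\lesssim\beta\, r(x_1)$. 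In either case a volume count of pairwise disjoint balls of comparable size inside a fixed ball bounds $\#J$ by a constant $C(\beta,d)$.

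Finally, (iv), (v), (vi) are direct computations on $4B_j$: there $|x_1-x_1^j|\le 4r(x_1^j)=\tfrac1{5(1+|x_1^j|)}\le\tfrac15$, which yields simultaneously $1+|x_1|\approx 1+|x_1^j|$ (within $\pm\tfrac15$) and $\big|\,|x_1|^2-|x_1^j|^2\,\big|\le|x_1-x_1^j|\,(|x_1|+|x_1^j|)\le\tfrac25$. The first estimate, plugged into the definition of $N_1$, gives (iv): for $x_1\in B_j$ and $y_1\in 4B_j$ one has $|x_1-y_1|\le 5r(x_1^j)=\tfrac1{4(1+|x_1^j|)}$ while $1+|x_1|+|y_1|<3(1+|x_1^j|)$, so $(x_1,y_1)\in N_1$; and it gives (v) after comparing $\tfrac1{20(1+|x_1|)}$ with $r(x_1^j)$. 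The estimate on $\big|\,|x_1|^2-|x_1^j|^2\,\big|$ shows $e^{-|x_1|^2}\approx e^{-|x_1^j|^2}$ throughout $4B_j$, and integrating over $V\subseteq 4B_j$ (absorbing the constant $\pi^{-d/2}$ from $d\gamma=\pi^{-d/2}e^{-|x|^2}dx$) yields (vi). The only point demanding genuine care is the two-regime bookkeeping behind (iii); everything else is routine once the comparability estimate is in hand.
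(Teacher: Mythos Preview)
Your argument is correct and complete. The paper itself supplies no proof of this lemma: it is simply restated verbatim from \cite[Lemma~3.1]{laptype}, so there is nothing to compare your approach against beyond the cited reference, whose construction is precisely the Whitney--Vitali selection adapted to the admissible radius $r(x_1)=\tfrac{1}{20(1+|x_1|)}$ that you carry out. Your key comparability estimate, the maximality argument for (i)--(ii), the two-regime volume count for (iii), and the direct checks of (iv)--(vi) are all sound; in particular your verification of (iv) via $|x_1-y_1|\le 5r(x_1^j)=\tfrac{1}{4(1+|x_1^j|)}$ together with $1+|x_1|+|y_1|<3(1+|x_1^j|)$ is exactly what is needed.
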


The next lemma we need is a two variable version of \cite[Lemma 3.3]{laptype} (see also the following remark). The proof is based on Lemma \ref{lem:Lem31laptype} and proceeds as in \cite{laptype}. The only ingredient we need to add is an appropriate use of Fubini's theorem. In Lemma \ref{lem:Lem33laptype} by $\nu$ we denote one of the measures $\gamma$ or $\Lambda.$
\begin{lem}
\label{lem:Lem33laptype}
Let $S$ be a linear operator defined on $L_c^{\infty}$ and
set
$$S_1f(x)=\sum_j \chi_{B_j}(x_1)S(\chi_{4B_j}(y_1)f)(x),$$
where $B_j$ is the family of balls from Lemma \ref{lem:Lem31laptype}. We have the following:
\begin{enumerate}[i)]
\item If $S$ is of weak type $(1,1)$ with respect to the measure $\nu\otimes \mu,$ then $S_1$ is of weak type $(1,1)$ with respect to both $\gamma \otimes \mu$ and $\Lambda \otimes \mu;$ moreover, $$\|S_1\|_{L^1\to L^{1,\infty}}+\|S_1\|_{L^1(\Lambda\otimes \mu)\to L^{1,\infty}(\Lambda\otimes \mu)}\lesssim \|S\|_{L^1(\nu\otimes \mu)\to L^{1,\infty}(\nu\otimes \mu)}.$$
\item If $S$ is bounded on $L^p(\mathbb{R}^d\times Y,\nu\otimes \mu),$ for some $1<p<\infty,$ then $S_1$ is bounded on both $L^p$ and $L^p(\Lambda \otimes\mu);$ moreover,
    $$\|S_1\|_{p\to p}+\|S_1\|_{L^p(\Lambda\otimes \mu)\to L^p(\Lambda\otimes \mu)}\lesssim \|S\|_{L^p(\nu\otimes \mu)\to L^p(\nu\otimes \mu)}.$$
\end{enumerate}
\end{lem}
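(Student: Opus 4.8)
The plan is to prove Lemma \ref{lem:Lem33laptype} by following the scheme of \cite[Lemma 3.3]{laptype}, adapting it to incorporate the extra variable $x_2\in Y$ via Fubini's theorem. The starting observation is that both parts are purely about transferring a weak type (resp.\ $L^p$) bound for $S$ to the ``localized'' operator $S_1$, and the only interaction between the two factor spaces is through the product measures; the operator $S_1$ freezes the $\mathbb{R}^d$-variable to the balls $B_j$ but does nothing special in the $Y$-variable, so the argument should decouple cleanly.

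First I would record the key geometric facts from Lemma \ref{lem:Lem31laptype}: the bounded overlap of $\{\beta B_j\}$, the disjointness of $\{\frac14 B_j\}$, and, crucially, item vi), which says $\gamma(V)\approx e^{-|x_1^j|^2}\Lambda(V)$ for $V\subseteq 4B_j$. This last comparison means that, when restricted to the slab $B_j\times Y$ (with the $\mathbb{R}^d$-support of the input confined to $4B_j\times Y$), the measures $\gamma\otimes\mu$ and $\Lambda\otimes\mu$ differ only by the constant factor $e^{-|x_1^j|^2}$, which cancels between the level-set measure and the $L^1$ norm. Hence it suffices to prove the two assertions with $\nu\otimes\mu=\Lambda\otimes\mu$ throughout, and then the $\gamma\otimes\mu$ versions follow by the patch-by-patch comparison together with the bounded overlap. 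For part i), I would fix $s>0$ and write, using the disjointness of the $B_j$,
\begin{align*}
(\Lambda\otimes\mu)(\{x\colon |S_1 f(x)|>s\})&\leq \sum_j (\Lambda\otimes\mu)(\{x\in B_j\times Y\colon |S(\chi_{4B_j} f)(x)|>s\})\\
&\leq \sum_j \|S\|_{L^1(\nu\otimes\mu)\to L^{1,\infty}(\nu\otimes\mu)}\,\frac{1}{s}\,\|\chi_{4B_j}f\|_{L^1(\Lambda\otimes\mu)},
\end{align*}
and then the bounded overlap of $\{4B_j\}$ turns $\sum_j \|\chi_{4B_j} f\|_{L^1(\Lambda\otimes\mu)}$ into $\lesssim \|f\|_{L^1(\Lambda\otimes\mu)}$, giving the $\Lambda\otimes\mu$ bound. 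For part ii), the same decomposition together with the disjointness of the $B_j$ (to sum the $p$-th powers on the image side) and the bounded overlap (to sum on the input side, using $\|f\|_{L^p}^p\geq c\sum_j\|\chi_{4B_j}f\|_{L^p}^p$ up to the overlap constant) yields $\|S_1 f\|_{L^p(\Lambda\otimes\mu)}\lesssim \|S\|_{L^p(\nu\otimes\mu)\to L^p(\nu\otimes\mu)}\|f\|_{L^p(\Lambda\otimes\mu)}$.

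The place where the new variable $Y$ enters is exactly where I invoke the weak type or $L^p$ bound for $S$ on $B_j\times Y$: here I would apply Fubini's theorem to write the relevant norm over $B_j\times Y$ as an iterated integral, and observe that restricting the $\mathbb{R}^d$-variable to a set of finite Lebesgue measure contained in $4B_j$ makes the relevant $L^1(\Lambda\otimes\mu)$ or $L^p(\Lambda\otimes\mu)$ quantities finite and manageable; the $Y$-integration simply rides along and the doubling structure of $(Y,\zeta,\mu)$ is not even needed at this stage. I expect the main obstacle to be a bookkeeping one rather than a conceptual one: one has to be careful that $S(\chi_{4B_j}f)$ is well defined (this is where membership of $\chi_{4B_j}f$ in $L^\infty_c$, and hence in the domain of $S$, is used), that the passage between $\gamma\otimes\mu$ and $\Lambda\otimes\mu$ is performed only on sets where Lemma \ref{lem:Lem31laptype} vi) applies (i.e.\ after the cut-off to $4B_j$), and that the constants coming from the overlap bounds and from the comparison of measures are tracked uniformly in $j$. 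Since each of these points is handled exactly as in \cite[Lemma 3.3]{laptype} with the $Y$-variable carried passively through Fubini, I would present the proof concisely, referring to \cite{laptype} for the one-variable computations and spelling out only the additional Fubini step.
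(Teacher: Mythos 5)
Your proposal follows precisely the route the paper itself indicates: localize on the balls $B_j$, use item vi) of Lemma~\ref{lem:Lem31laptype} to trade $\gamma$ for $\Lambda$ patch by patch (the factor $e^{-|x_1^j|^2}$ cancels between the level-set measure and the input norm), and let the $Y$-variable ride along through Fubini. This is exactly what the paper does --- it simply cites \cite[Lemma 3.3]{laptype} and says ``the only ingredient we need to add is an appropriate use of Fubini's theorem.''

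One small slip to flag: twice you invoke ``the disjointness of the $B_j$,'' but Lemma~\ref{lem:Lem31laptype} only gives that $\{\tfrac14 B_j\}$ are disjoint and that $\{\beta B_j\}$ has bounded overlap; the $B_j$ themselves cover $\mathbb{R}^d$ and overlap. Consequently, the first inclusion in your chain for part i), namely $\{|S_1 f|>s\}\subseteq \bigcup_j \{x\in B_j\times Y\colon |S(\chi_{4B_j}f)(x)|>s\}$, is not valid as written. What the bounded overlap $\sup_{x_1}\sum_j\chi_{B_j}(x_1)\leq C$ actually gives is
\begin{equation*}
\{x\colon |S_1 f(x)|>s\}\subseteq \bigcup_j \{x\in B_j\times Y\colon |S(\chi_{4B_j}f)(x)|>s/C\},
\end{equation*}
after which your estimate goes through with an innocuous extra constant. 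The same correction is needed on the output side in part ii): instead of summing $p$-th powers by disjointness, use the overlap bound together with convexity of $t\mapsto t^p$ to get $|S_1 f(x)|^p\lesssim \sum_j \chi_{B_j}(x_1)\,|S(\chi_{4B_j}f)(x)|^p$, and proceed as you describe. With these adjustments your argument is complete and matches the paper's.
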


We proceed with the proof of Proposition \ref{thm:tlocalpart}. Decompose $T=D+\tilde{T},$ where,
\begin{align*}
Df&=\int_{0}^{1}\kappa(r)\,\partial_r [r^{\mathcal{L}}-e^{\frac{1}{4}(1-r^2)\Delta}]\,r^{A}f\,dr,\\
\tilde{T}f&=\int_{0}^{1}\kappa(r)\,\partial_r e^{\frac{1}{4}(1-r^2)\Delta}\, r^{A}f\,dr,
\end{align*}
with $\Delta$ being the self-adjoint extension of the Laplacian on $L^2(\mathbb{R}^d,\Lambda).$ Observe that, by the multivariate spectral theorem applied to the system $(-\Delta,A),$ the operator $\tilde{T}$ is bounded on $L^2(\Lambda\otimes\mu).$ Consequently, $\tilde{T}$ and thus also $D=T-\tilde{T},$ are both well defined on $L_c^{\infty}.$

We start with considering the operator $\tilde{T}.$ First we demonstrate that $$\tilde{T}=\int_{0}^{1}\kappa(r)\,\partial_r e^{\frac{1}{4}(1-r^2)\Delta}\, r^{A}\,dr$$ is a Calder\'on-Zygmund operator on the space of homogenous type $(\mathbb{R}^d\times Y, \eta, \Leb\otimes \mu);$ recall that $\eta$ is defined by \eqref{chap:CkHinf,sec:OA,eq:eta}. In what follows $\tilde{K}$ is given by
\begin{equation*}
\tilde{K}(x,y)=\int_{0}^{1}\kappa(r)\,\partial_r\mW_r(x_1-y_1)\, r^{A}(x_2,y_2)\,dr,
\end{equation*}
with
\begin{equation}
\label{chap:CkHinf,sec:OA,eq:mWrdef}
\mW_r(x_1,y_1)=\pi^{-d/2}(1-r^2)^{-d/2}\exp\bigg(-\frac{|x_1-y_1|^2}{1-r^2}\bigg).\end{equation}
In the proof of Lemma \ref{lem:calzyg} we often use the following simple bound
$$\int_0^{\infty}t^{-\alpha}\exp(-\beta t^{-1})\,dt\lesssim \beta^{-\alpha+1},\qquad \alpha>1,\quad\beta >0,$$
cf.\ \cite[Lemma 1.1]{StTor}, without further mention.

\begin{lem}
\label{lem:calzyg}
The operator $\tilde{T}$ is a Calder\'on-Zygmund operator associated with the kernel $\tilde{K}.$ More precisely, $\tilde{T}$ is bounded on $L^2(\mathbb{R}^d\times Y, \eta, \Leb\otimes \mu),$ with
\begin{equation}
\label{chap:CkHinf,sec:OA,eq:L2tilT}
\|\tilde{T}\|_{L^2( \Leb\otimes \mu)\to L^2( \Leb\otimes \mu)}\lesssim \ki,
\end{equation}
and its kernel satisfies standard Calder\'on-Zygmund estimates, i.e.\ the growth estimate
\begin{equation}
\label{chap:CkHinf,sec:OA,eq:growth}
|\tilde{K}(x,y)|\lesssim \frac{\ki}{(\Leb\otimes \mu)(B(x,\eta(x,y)))},\qquad x\neq y,
\end{equation}
and, for some $\delta>0,$ the smoothness estimate
\begin{align}
\label{chap:CkHinf,sec:OA,eq:smooth}
|\tilde{K}(x,y)-\tilde{K}(x,y')|\lesssim \left(\frac{\eta(y,y')}{\eta(x,y)}\right)^{\delta}\frac{\ki}{(\Leb\otimes \mu)(B(x,\eta(x,y)))},\qquad 2\eta(y,y')\leq \eta(x,y).
\end{align}
Consequently $\tilde{T}$ is of weak type $(1,1)$ with respect to $\Leb\otimes \mu,$ and
$$(\Leb\otimes \mu)(x\in \mathbb{R}^d\times Y\colon |\tilde{T}f(x)|>s)\leq \frac{C_{d,\mu}\ki}{s}\|f\|_{L^1(Y,\mu)},\qquad s >0.$$
\end{lem}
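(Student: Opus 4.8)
\textbf{Proof proposal for Lemma \ref{lem:calzyg}.}

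The plan is to establish the four assertions of the lemma in turn: the $L^2(\Lambda\otimes\mu)$ boundedness \eqref{chap:CkHinf,sec:OA,eq:L2tilT}, the growth estimate \eqref{chap:CkHinf,sec:OA,eq:growth}, the smoothness estimate \eqref{chap:CkHinf,sec:OA,eq:smooth}, and finally the weak type $(1,1)$ conclusion, which will follow automatically from the first three by the standard Calder\'on--Zygmund theory on the space of homogeneous type $(\mathbb{R}^d\times Y,\eta,\Lambda\otimes\mu)$. For the $L^2$ bound, I would invoke the multivariate spectral theorem for the strongly commuting pair $(-\Delta, A)$ on $L^2(\Lambda\otimes\mu)$ (note that $-\Delta$ is self-adjoint and non-negative on $L^2(\mathbb{R}^d,\Lambda)$, and $A$ on $L^2(Y,\mu)$, so they commute strongly after tensoring): since $\widetilde T$ is the joint multiplier operator associated with the bounded symbol $(\xi, a)\mapsto \int_0^1 \kappa(r)\,\partial_r\big(e^{-\frac14(1-r^2)|\xi|^2}\big)\,r^a\,dr$, one checks that this symbol is bounded in modulus by a constant times $\|\kappa\|_\infty$, using the substitution $s=\tfrac14(1-r^2)$ and the elementary bound $\int_0^\infty |\xi|^2 e^{-s|\xi|^2}e^{-a\cdot(\ldots)}\,ds \lesssim 1$ (together with $r^a \le 1$ and the support of $\kappa$ away from $0$ and $1$). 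Then \eqref{chap:CkHinf,sec:OA,eq:L2tilT} is immediate from spectral calculus.

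For the growth estimate \eqref{chap:CkHinf,sec:OA,eq:growth}, I would start from
$$
|\tilde K(x,y)| \le \|\kappa\|_\infty \int_\varepsilon^{1-\varepsilon} |\partial_r \mW_r(x_1-y_1)|\; r^A(x_2,y_2)\,dr,
$$
and split according to whether $\zeta(x_2,y_2) \ge |x_1-y_1|$ or not, i.e.\ according to which term realizes $\eta(x,y)$ in \eqref{chap:CkHinf,sec:OA,eq:eta}. In the first regime the Gaussian bound \eqref{chap:CkHinf,sec:OA,eq:gausbound} for $r^A(x_2,y_2)$ contributes the decay $\exp(-c\zeta(x_2,y_2)^2/(-\log r))$ and the factor $\mu(B(x_2,\sqrt{-\log r}))^{-1}$, while in the second regime one uses the explicit formula \eqref{chap:CkHinf,sec:OA,eq:mWrdef} and the computation of $\partial_r\mW_r$ (analogous to \eqref{chap:CkHinf,sec:OA,eq:comM} but for the translation-invariant kernel) to extract the Gaussian decay $\exp(-|x_1-y_1|^2/(1-r^2))$ and the volume factor $(1-r^2)^{-d/2-1}$. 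In both cases, after the change of variable $t = -\log r$ (so $1-r^2 \approx t$ for $t$ in a bounded interval, as $\kappa$ is supported away from $0$ and $\infty$), one performs the $t$-integral using the standard bound $\int_0^\infty t^{-\alpha}\exp(-\beta t^{-1})\,dt \lesssim \beta^{-\alpha+1}$ cited after \eqref{chap:CkHinf,sec:OA,eq:mWrdef}, and the doubling property of $\mu$ to pass from $\mu(B(x_2,\sqrt t))$ with the critical $t\approx\eta(x,y)^2$ to $\mu(B(x_2,\eta(x,y)))$; combined with the Lebesgue volume factor $\eta(x,y)^{-d}$ this yields $(\Lambda\otimes\mu)(B(x,\eta(x,y)))^{-1}$. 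The smoothness estimate \eqref{chap:CkHinf,sec:OA,eq:smooth} is proved the same way, now invoking either the Lipschitz-type Gaussian bound \eqref{chap:CkHinf,sec:OA,eq:heatlipsch} or the general H\"older bound \eqref{chap:CkHinf,sec:OA,eq:heatlipschngauss} for the difference $r^A(x_2,y_2) - r^A(x_2,y'_2)$ (depending on whether $\eta(x,y)$ is realized by the $Y$-component), together with the mean-value theorem applied to $y_1 \mapsto \partial_r \mW_r(x_1-y_1)$ and its second $y_1$-derivative when $\eta(x,y)$ is realized by the $\mathbb{R}^d$-component; each differentiation produces an extra factor $(1-r^2)^{-1/2}$, which after the $t$-integration turns into the gain $(\eta(y,y')/\eta(x,y))^\delta$.

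The main obstacle I anticipate is the bookkeeping in the case analysis for \eqref{chap:CkHinf,sec:OA,eq:growth} and \eqref{chap:CkHinf,sec:OA,eq:smooth}: one must carefully track which of $|x_1-y_1|$ and $\zeta(x_2,y_2)$ dominates, and in the mixed terms (e.g.\ $\zeta(x_2,y_2)$ small but $|x_1-y_1|$ large, or vice versa) ensure that the decay coming from whichever Gaussian factor is "active" suffices to absorb the singular volume factor from the other variable after integrating in $t$ over the bounded support of $\kappa$. The restriction $\supp\kappa \subseteq [\varepsilon,1-\varepsilon]$ is essential here — it keeps $1-r^2$ bounded above and below away from the endpoints — so all the $t$-integrals are effectively over a compact interval and the elementary estimates go through uniformly; I would emphasize that the constants do not depend on $\varepsilon$ because the relevant integrands are dominated by $\kappa$-free functions that are integrable over all of $(0,1)$. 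Once \eqref{chap:CkHinf,sec:OA,eq:L2tilT}, \eqref{chap:CkHinf,sec:OA,eq:growth}, \eqref{chap:CkHinf,sec:OA,eq:smooth} are in hand, the weak type $(1,1)$ assertion with respect to $\Lambda\otimes\mu$ is the classical Calder\'on--Zygmund theorem on spaces of homogeneous type, and the displayed inequality follows since $\|f\|_{L^1(\Lambda\otimes\mu)} \le \|f\|_{L^1(Y,\mu)}$ is not quite what is written — rather one reads the final bound with the $L^1(\Lambda\otimes\mu)$ norm, as dictated by the hypothesis $f\in L^\infty_c$.
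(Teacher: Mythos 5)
Your proposal follows the same route the paper takes: verify $L^2(\Lambda\otimes\mu)$ boundedness by spectral calculus for the commuting pair $(-\Delta,A)$, compute $\partial_r\mW_r$ explicitly, pass to the variable $t=-\log r$, insert the Gaussian/Lipschitz heat-kernel bounds \eqref{chap:CkHinf,sec:OA,eq:gausbound}, \eqref{chap:CkHinf,sec:OA,eq:heatlipsch}, \eqref{chap:CkHinf,sec:OA,eq:heatlipschngauss} together with the mean-value theorem for the $\mathbb{R}^d$-part, evaluate the $t$-integral via $\int_0^\infty t^{-\alpha}e^{-\beta/t}\,dt\lesssim\beta^{-\alpha+1}$ and the doubling of $\mu$, and then invoke Calder\'on--Zygmund theory on the homogeneous-type space $(\mathbb{R}^d\times Y,\eta,\Lambda\otimes\mu)$. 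The only cosmetic difference is that for the growth estimate you split into cases according to which component realizes $\eta(x,y)$, whereas the paper keeps both Gaussian factors and combines them in a single integral; and you correctly flag that the $\|f\|_{L^1(Y,\mu)}$ in the statement's final display should read $\|f\|_{L^1(\Lambda\otimes\mu)}$.
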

\begin{proof}
As we have already remarked, by spectral theory $\tilde{T}$ is bounded on $L^2(\Leb\otimes \mu),$ and we easily see that \eqref{chap:CkHinf,sec:OA,eq:L2tilT} holds. Additionally, an argument similar to the one used in the proof of Lemma \ref{lem:kerKopT} shows that $\tilde{T}$ is associated with the kernel $\tilde{K}$ even in the sense of Definition \ref{defi:kernel}.

We now pass to the proofs of the growth and smoothness estimates and start with demonstrating \eqref{chap:CkHinf,sec:OA,eq:growth}. An easy calculation shows that
\begin{equation}
\partial_r\mW_r(x_1-y_1)=\pi^{-d/2}r(1-r^2)^{-d/2-1}\exp\bigg(-\frac{|x_1-y_1|^2}{1-r^2}\bigg)\left[d-2\frac{|x_1-y_1|^2} {1-r^2}\right]. \label{chap:CkHinf,sec:OA,eq:comker}
\end{equation}
Hence, we have for $x_1,y_1 \in\mathbb{R}^d$
\begin{equation}
\label{chap:CkHinf,sec:OA,eq:comestt}
|\partial_r\mW_r(x_1-y_1)|\lesssim r(1-r)^{-d/2-1}\exp\bigg(-\frac{|x_1-y_1|^2}{4(1-r)}\bigg),\qquad 0<r<1.
\end{equation}
For further use we remark that the above bound implies
\begin{equation}
\label{chap:CkHinf,sec:OA,eq:comesttint}
\int_0^{1}|\partial_r\mW_r(x_1-y_1)|\,dr \lesssim |x_1-y_1|^{-d},\qquad x_1,y_1\in\mathbb{R}^d,\quad x_1\neq y_1.
\end{equation}

From \eqref{chap:CkHinf,sec:OA,eq:comestt} we see that
\begin{equation}
\label{chap:CkHinf,sec:OA,eq:comesttt}
|[\partial_r\mW_r(x_1-y_1)]_{r=e^{-t}}|
\leq
\left\{
\begin{array}{rl}
 &C t^{-d/2-1}\exp\bigg(-\frac{|x_1-y_1|^2}{ct}\bigg),\qquad t\leq 1, \\
  & C  e^{-t}\exp\bigg(-c|x_1-y_1|^2\bigg),\qquad t>1.
 \end{array} \right.
\end{equation}
Thus, coming back to the variable $t=-\log r$ and then using \eqref{chap:CkHinf,sec:OA,eq:gausbound}, we arrive at
\begin{align*}
&|\tilde{K}(x,y)|\lesssim \ki \int_0^{\infty}t^{-d/2-1}\exp\bigg(-\frac{|x_1-y_1|^2}{ct}\bigg)\frac{1}{\mu(B(x_2,\sqrt{t}))}\exp\bigg(-\frac{\zeta^2(x_2,y_2)}{ct}\bigg)\,dt.
\end{align*}
A standard argument using the doubling property of $\mu$ (cf. \eqref{chap:CkHinf,sec:OA,eq:double}) shows that we can further estimate
\begin{align*}
&|\tilde{K}(x,y)|\lesssim \frac{\ki}{\mu(B(x_2,\eta(x,y)))} \int_0^{\infty}t^{-d/2-1}\exp\bigg(-\frac{\eta^2(x,y)}{2ct}\bigg)\,dt.
\end{align*}
The last integral is bounded by a constant times $\eta^{d}(x,y),$ which equals $C_d \Leb(B_{|\cdot|}(x_1,\eta(x,y))).$ Thus, \eqref{chap:CkHinf,sec:OA,eq:growth} follows once we note that
$$ \frac{1}{\Leb(B_{|\cdot|}(x_1,\eta(x,y))\mu(B_{\zeta}(x_2,\eta(x,y)))}=\frac{1}{(\Leb\otimes \mu) (B(x,\eta(x,y)))}.$$

We now focus on the smoothness estimate \eqref{chap:CkHinf,sec:OA,eq:smooth}, which is enough to obtain the desired weak type $(1,1)$ property of $\tilde{T}.$ We decompose the difference in \eqref{chap:CkHinf,sec:OA,eq:smooth} as
\begin{equation*}
\tilde{K}(x,y)-\tilde{K}(x,y')=[\tilde{K}(x,y)-\tilde{K}(x,y'_1,y_2)]+[\tilde{K}(x,y'_1,y_2)-\tilde{K}(x,y')]\equiv I_1+I_2.
\end{equation*}
Till the end of the proof of \eqref{chap:CkHinf,sec:OA,eq:smooth} we assume $\eta(x,y)\geq 2 \eta(y,y'),$ so that $\eta(x,y)\approx \eta(x,y').$

We start with estimating $I_2$ and consider two cases. First, let $|x_1-y_1|\leq \zeta(x_2,y_2).$ Then, $\eta(x,y)=\zeta(x_2,y_2)\geq 2\eta(y,y')\geq 2\zeta(y_2,y'_2)$ and consequently, $\zeta(x_2,y'_2)\approx \zeta(x_2,y_2).$ Now, coming back to the variable $t=-\log r$ and using \eqref{chap:CkHinf,sec:OA,eq:comesttt} we have
\begin{align*}
|I_2|\lesssim \ki \int_0^{\infty} t^{-d/2-1}\exp\bigg(-\frac{|x_1-y'_1|^2}{ct}\bigg)\big|e^{-tA}(x_2,y_2)-e^{-tA}(x_2,y'_2)\big|\,dt.
\end{align*}
Hence, from \eqref{chap:CkHinf,sec:OA,eq:heatlipsch} it follows that
\begin{align}
\label{chap:CkHinf,sec:OA,eq:quan}
|I_2|\lesssim \ki \zeta(y_2,y'_2)^{\delta}\int_0^{\infty}t^{-d/2-1-\delta/2}\frac{1}{\mu(B(x_2,\sqrt {t})}\exp\bigg(-\frac{\eta^2(x,y)}{ct}\bigg)\,dt
\end{align}
Using the doubling property of $\mu$ it is not hard to see that
\begin{equation}
\label{chap:CkHinf,sec:OA,eq:double}
\frac{1}{\mu(B(x_2,\sqrt t)}\exp\bigg(-\frac{\eta^2(x,y)}{ct}\bigg)\lesssim \frac{1}{\mu(B(x_2,\eta(x,y)))}\exp\bigg(-\frac{\eta^2(x,y)}{2ct}\bigg),
\end{equation}
and consequently,
\begin{align*}
|I_2|&\lesssim \ki \zeta(y_2,y'_2)^{\delta}\frac{1}{\mu(B(x_2,\eta(x,y)))}\int_0^{\infty}t^{-d/2-1-\delta/2}\exp\bigg(-\frac{\eta^2(x,y)}{3ct}\bigg)\,dt
\\&\lesssim \ki\zeta(y_2,y'_2)^{\delta}\eta(x,y)^{-\delta}\frac{1}{\mu(B(x_2,\eta(x,y)))}(\eta^2(x,y))^{-d/2},
\end{align*}
thus proving that
\begin{equation}
\label{chap:CkHinf,sec:OA,eq:I2}
|I_2|\lesssim \bigg(\frac{\zeta(y_2,y'_2)}{\eta(x,y)}\bigg)^{\delta}\frac{\ki}{(\Leb\otimes\mu)(B(x,\eta(x,y)))}.
\end{equation}

Assume now that $\zeta(x_2,y_2)\leq |x_1-y_1|.$ In this case $\eta(x,y)=|x_1-y_1|>2\eta(y,y')\geq 2|y_1-y'_1|,$ so that $|x_1-y_1|\approx |x_1-y'_1|.$ Hence, proceeding similarly as in the previous case (this time we use \eqref{chap:CkHinf,sec:OA,eq:heatlipschngauss} instead of \eqref{chap:CkHinf,sec:OA,eq:heatlipsch}), we obtain
\begin{align*}
|I_2|\lesssim \ki \zeta(y_2,y'_2)^{\delta} \int_0^{\infty}t^{-d/2-1-\delta/2}\frac{1}{\mu(B(x_2,\sqrt{t}))}\exp\bigg(-\frac{\eta^2(x,y)}{ct}\bigg)\,dt.
\end{align*}
The latter quantity has already appeared in \eqref{chap:CkHinf,sec:OA,eq:quan} and has been estimated by the right hand side of \eqref{chap:CkHinf,sec:OA,eq:I2}.

Now we pass to $I_1.$ A short computation based on \eqref{chap:CkHinf,sec:OA,eq:comker} gives
\begin{equation*}
\pi^{d/2}\partial_{z_j}\partial_r \mW_r (z)=-2r(1-r^2)^{-d/2-2}z_j\left(d+2-2\frac{|z|^2}{1-r^2}\right)\exp\bigg(\frac{-|z|^2}{1-r^2}\bigg),\qquad z\in \mathbb{R}^d.
\end{equation*}
From the above inequality it is easy to see that
$$|\partial_{z_j}\partial_r\mW_{r}(z)|\lesssim r(1-r)^{-d/2-3/2}\exp\bigg(\frac{-|z|^2}{2(1-r^2)}\bigg),$$
and consequently, after the change of variable $e^{-t}=r,$
$$|\partial_{z_j}\partial_r\mW_{r}(z)\big|_{r=e^{-t}}|\lesssim t^{-d/2-3/2}\exp\bigg(\frac{-|z|^2}{ct}\bigg),\qquad 0<t<\infty.$$
Hence, from the mean value theorem it follows that for $|x_1-y_1|\geq 2 |y_1-y'_1|,$
\begin{align}
\label{chap:CkHinf,sec:OA,eq:lipmW}
&|\left[\partial_r\mW_{r}(x_1-y_1)-\partial_r\mW_{r}(x_1-y'_1)\right]_{r=e^{-t}}|\lesssim \frac{|y_1-y'_1|}{\sqrt{t}} \, t^{-d/2-1}\exp\bigg(\frac{-|x_1-y_1|^2}{ct}\bigg)\,
\end{align}
while for arbitrary $x_1,y_1,$
\begin{align}
\label{chap:CkHinf,sec:OA,eq:lipngaussmW}
&|\left[\partial_r\mW_{r}(x_1-y_1)-\partial_r\mW_{r}(x_1-y'_1)\right]_{r=e^{-t}}|\lesssim \frac{|y_1-y'_1|}{\sqrt{t}}\, t^{-d/2-1}.
\end{align}
Moreover, at the cost of a constant in the exponent, the expression $|y_1-y'_1|/\sqrt{t}$ from the right hand sides of \eqref{chap:CkHinf,sec:OA,eq:lipmW} and \eqref{chap:CkHinf,sec:OA,eq:lipngaussmW} can be replaced by $(|y_1-y'_1|t^{-1/2})^{\delta},$ for arbitrary $0<\delta \leq 1.$
If $|y_1-y'_1|\leq \sqrt{t},$ this is a consequence of \eqref{chap:CkHinf,sec:OA,eq:lipmW} and \eqref{chap:CkHinf,sec:OA,eq:lipngaussmW}, while if $|y_1-y'_1|\geq \sqrt{t}$ it can be deduced from \eqref{chap:CkHinf,sec:OA,eq:lipmW} and \eqref{chap:CkHinf,sec:OA,eq:comesttt}. Similarly as it was done for $I_2,$ to estimate $I_1$ we consider two cases.

Assume first $|x_1-y_1|\geq\zeta(x_2,y_2),$ so that $\eta(x,y)=|x_1-y_1|> 2\eta(y,y')\geq 2|y_1-y'_1|$ and $|x_1-y_1|\approx |x_1-y'_1|.$ Therefore, using \eqref{chap:CkHinf,sec:OA,eq:gausbound} and the version of \eqref{chap:CkHinf,sec:OA,eq:lipmW} with $(|y_1-y_1'|t^{-1/2})^{\delta}$ in place of $|y_1-y_1'|/\sqrt{t},$ we obtain
\begin{align*}
|I_1|\lesssim \ki |y_1-y'_1|^{\delta}\int_0^{\infty} t^{-d/2-1-\delta/2}\frac{1}{\mu(B(x_2,\sqrt{t}))}\exp\bigg(-\frac{\eta^2(x,y)}{ct}\bigg)\,dt.
\end{align*}
Almost the same quantity appeared already in \eqref{chap:CkHinf,sec:OA,eq:quan}, thus employing once again previous techniques, we end up with
\begin{equation}
\label{chap:CkHinf,sec:OA,eq:I1}
|I_1|\lesssim \bigg(\frac{|y_1-y'_1|}{\eta(x,y)}\bigg)^{\delta}\frac{\ki}{(\Leb\otimes \mu)(B(x,\eta(x,y)))}.
\end{equation}

Assume now that $|x_1-y_1|<\zeta(x_2,y_2),$ so that $\eta(x,y)=\zeta(x_2,y_2)>2\eta(y,y')\geq 2\zeta(y_2,y'_2)$ and $\zeta(x_2,y_2)\approx \zeta(x_2,y'_2).$ This time, from \eqref{chap:CkHinf,sec:OA,eq:gausbound} and the $\delta$ version of \eqref{chap:CkHinf,sec:OA,eq:lipngaussmW} we have
\begin{align*}
|I_1|\lesssim \ki |y_1-y'_1|^{\delta}\int_0^{\infty} t^{-d/2-1-\delta/2}\frac{1}{\mu(B(x_2,\sqrt{t}))}\exp\bigg(-\frac{\eta^2(x,y)}{ct}\bigg)\,dt,
\end{align*}
which has been already estimated by the right hand side of \eqref{chap:CkHinf,sec:OA,eq:I1}.

Finally, \eqref{chap:CkHinf,sec:OA,eq:smooth} follows after collecting the bounds \eqref{chap:CkHinf,sec:OA,eq:I2} and \eqref{chap:CkHinf,sec:OA,eq:I1}, thus finishing the proof of Lemma \ref{lem:calzyg}.
\end{proof}

Now we focus on the operator $D=T-\tilde{T}.$ Since $T$ and $\tilde{T}$ are associated with the kernels $K$ and $\tilde{K},$ respectively, $D$ is associated with
$$D(x,y)=\int_{0}^1\int_{0}^{1}\kappa(r)\,\partial_r [r^{\mathcal{L}}(x_1,y_1)-e^{\frac{1}{4}(1-r^2)\Delta}(x_1-y_1)]\,r^{A}(x_2,y_2)\,dr.$$
Using \eqref{chap:CkHinf,sec:OA,eq:maxcharrL1}, \eqref{chap:CkHinf,sec:OA,eq:comestt}, and the fact that $\supp \kappa\subseteq [\varepsilon,1-\varepsilon],$  it is not hard to see that
$$\tilde{T}^{glob}f(x)=\int_{\mathbb{R}^d}\int_Y \chi_{N_2^c}\tilde{K}(x,y)f(y)\,d\mu(y_2)\,dy_1,$$
is a well defined and bounded operator on $L^1(\Lambda\otimes \mu).$ Thus,
$$D^{glob}f(x):=T^{glob}f(x)-\tilde{T}^{glob}f(x)=\int_{\mathbb{R}^d}\int_Y \chi_{N_2^c}D(x,y)f(y)\,d\mu(y_2)\,dy_1,$$
is a well defined operator on $L_c^{\infty}.$ Consequently, $D^{loc}f(x):=Df(x)-D^{glob}f(x)$ is also a.e.\ well defined for $f\in L_c^{\infty}.$ Moreover, we have $D^{loc}=T^{loc}-\tilde{T}^{loc},$ where $\tilde{T}^{loc}:=\tilde{T}-\tilde{T}^{glob}.$

We shall need two auxiliary lemmata. Recall that $\M_r$ and $\mW_r$ are given by \eqref{chap:CkHinf,sec:OA,eq:MehlformOU} and \eqref{chap:CkHinf,sec:OA,eq:mWrdef}, respectively.
\begin{lem}
\label{lem:diffest}
If $(x_1,y_1)\in N_2,$ then we have
\begin{align}\nonumber
D_I(x_1,y_1)&:=\int_0^1 |\partial_r\mathcal{M}_r(x_1,y_1)-\partial_r\mW_r(x_1-y_1)|\,dr\\
&\leq
\left\{
\begin{array}{rl}
 &C \frac{1+|x_1|}{|x_1-y_1|^{d-1}},  \\
  & C (1+|x_1|)\log \frac{C}{|x_1||x_1-y_1|},
 \end{array} \right.
 {\rm  if  }
\begin{array}{rl}
 & d>1, \\
  & d=1.
 \end{array}
 \label{chap:CkHinf,sec:OA,eq:bounddiffest}
\end{align}
\end{lem}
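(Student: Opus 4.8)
\textbf{Proof plan for Lemma \ref{lem:diffest}.}

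The plan is to estimate the difference $\partial_r \mathcal{M}_r(x_1,y_1) - \partial_r \mathcal{W}_r(x_1-y_1)$ by comparing the explicit formulas \eqref{chap:CkHinf,sec:OA,eq:comM} and \eqref{chap:CkHinf,sec:OA,eq:comker}, exploiting the fact that in the local region $N_2$ one has $|rx_1 - y_1|$ comparable to $|x_1-y_1|$ up to an error controlled by $(1-r)(1+|x_1|)$. More precisely, write $rx_1 - y_1 = (x_1-y_1) - (1-r)x_1$, so that $|rx_1-y_1|^2 = |x_1-y_1|^2 - 2(1-r)\langle x_1-y_1,x_1\rangle + (1-r)^2|x_1|^2$. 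Since $(x_1,y_1)\in N_2$ forces $|x_1-y_1|(1+|x_1|)\lesssim 1$, the cross term and the quadratic term are both $\lesssim (1-r)$ after using $|x_1-y_1|\le 2/(1+|x_1|)$, and this is the mechanism that produces the extra factor $1+|x_1|$ in \eqref{chap:CkHinf,sec:OA,eq:bounddiffest}. The first step is therefore to record these elementary comparisons: $|rx_1-y_1|^2 = |x_1-y_1|^2 + O((1-r)(1+|x_1||x_1-y_1|)) = |x_1-y_1|^2 + O((1-r))$, and consequently $e^{-|rx_1-y_1|^2/(1-r^2)}$ and $e^{-|x_1-y_1|^2/(1-r^2)}$ differ by a factor $1+O((1-r)^{-1}\cdot(1-r)\cdot(1+|x_1|)|x_1-y_1|) $ type quantity — which I will want to organize carefully so the final bound survives the $t$-integration.

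Next I would split the difference of the two $r$-derivatives into the part coming from the difference of the Gaussian exponentials (with the prefactors $(1-r^2)^{-d/2-1}$ times the bracket $[d-2|z|^2/(1-r^2)]$ roughly matching) and the part coming from the genuinely different prefactors in \eqref{chap:CkHinf,sec:OA,eq:comM}, namely the extra term $-\langle rx_1-y_1,x_1\rangle(1-r^2)^{-d/2-1}$ present in Mehler's kernel but absent in the heat kernel. The second of these is estimated directly: $|\langle rx_1-y_1,x_1\rangle|\le |rx_1-y_1||x_1| \lesssim (|x_1-y_1|+(1-r)|x_1|)|x_1|$, and after multiplying by $(1-r^2)^{-d/2-1}e^{-c|x_1-y_1|^2/(1-r)}$ and integrating in $r$ (passing to $t=-\log r$, using $1-r\approx t$ for $t$ bounded), I expect a bound of the shape $(1+|x_1|)\int_0^\infty t^{-d/2-1}(|x_1-y_1|+ t(1+|x_1|))e^{-c|x_1-y_1|^2/t}\,dt$. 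The term with $|x_1-y_1|$ gives, via $\int_0^\infty t^{-d/2-1/2}e^{-c|x_1-y_1|^2/t}\,dt \lesssim |x_1-y_1|^{-(d-1)}$, the bound $(1+|x_1|)|x_1-y_1|^{-(d-1)}$ when $d>1$; for $d=1$ the same integral is logarithmically divergent at $t=0$ but the divergence is cut off — here the fact that $\kappa$ is supported in $[\varepsilon,1-\varepsilon]$, equivalently that we only integrate $r$ over a compact subinterval of $(0,1)$, equivalently $t$ over a bounded interval bounded below, together with the local constraint $|x_1|\,|x_1-y_1|\lesssim 1$, converts this into the logarithmic factor $\log(C/(|x_1|\,|x_1-y_1|))$. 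For the first part (difference of exponentials with nearly matching prefactors) I would use $|e^{-a}-e^{-b}|\le |a-b|e^{-\min(a,b)}$ with $a=|rx_1-y_1|^2/(1-r^2)$, $b=|x_1-y_1|^2/(1-r^2)$, so $|a-b|\lesssim (1-r)^{-1}(1-r)(1+|x_1|)|x_1-y_1| \lesssim (1+|x_1|)|x_1-y_1| \cdot (1-r)^0$ (again using the $N_2$ bound on the cross term and $(1-r)|x_1|^2/(1-r)\lesssim (1+|x_1|)$ only after re-invoking $|x_1-y_1|(1+|x_1|)\lesssim 1$), and then the same $t$-integral gives the same final bounds; I would likewise bound the difference of bracket factors $[d - 2|rx_1-y_1|^2/(1-r^2)] - [d-2|x_1-y_1|^2/(1-r^2)]$ the same way since it equals $-2(|rx_1-y_1|^2-|x_1-y_1|^2)/(1-r^2)$.

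The main obstacle I anticipate is bookkeeping rather than conceptual: one must be careful that every time the estimate $|x_1-y_1|\le 2/(1+|x_1|)$ is used to absorb a power of $1+|x_1|$ or to trade $(1-r)$ against $|x_1-y_1|$, the remaining integral in $t$ still converges (for $d>1$) or produces exactly the stated logarithm (for $d=1$) — and in the $d=1$ case one must track the upper and lower cutoffs on $t$ coming from $\supp\kappa$ and verify that the endpoint $t\approx\varepsilon$ contributes the $\log(1/(|x_1|\,|x_1-y_1|))$ and nothing worse, while the large-$t$ tail is harmless because there $|\partial_r\mathcal{M}_r|$ and $|\partial_r\mathcal{W}_r|$ each decay like $e^{-ct}$ by \eqref{chap:CkHinf,sec:OA,eq:comMestfar01} and \eqref{chap:CkHinf,sec:OA,eq:comesttt}. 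Since this lemma mirrors estimates already appearing in \cite{high} and \cite{laptype} (as the excerpt itself notes), I would model the $d=1$ logarithmic estimate on the corresponding argument there and otherwise present only the new feature, namely keeping the $A$-variable inert throughout because $r^A(x_2,y_2)$ factors out of $D_I$ entirely — indeed $D_I$ as defined involves only the $x_1,y_1$ kernels, so the Gaussian-bound machinery for $A$ is not needed here and enters only when this lemma is later combined with \eqref{chap:CkHinf,sec:OA,eq:gausbound} to control $D^{loc}$.
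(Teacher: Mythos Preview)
Your overall strategy matches the paper's --- compare \eqref{chap:CkHinf,sec:OA,eq:comM} with \eqref{chap:CkHinf,sec:OA,eq:comker}, isolate the cross term $\langle rx_1-y_1,x_1\rangle$ peculiar to Mehler's kernel, and use the local-region algebra $rx_1-y_1=(x_1-y_1)-(1-r)x_1$ --- but there is a genuine gap in how you produce the logarithm for $d=1$. You invoke $\supp\kappa\subseteq[\varepsilon,1-\varepsilon]$ to cut off the divergent integral; however $D_I$ as stated in the lemma contains no $\kappa$ at all: it is $\int_0^1|\partial_r\mathcal M_r-\partial_r\mathcal W_r|\,dr$ over the full interval, and the bound \eqref{chap:CkHinf,sec:OA,eq:bounddiffest} must be uniform in $\varepsilon$ (this is exactly how it is applied in Lemma~\ref{prop:dloc}). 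Your mechanism would at best yield $\log(1/\varepsilon)$, not $\log\big(C/(|x_1|\,|x_1-y_1|)\big)$.

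The factor $|x_1|$ inside the logarithm comes from an ingredient your plan omits. Following \cite[Lemma~3.9]{high}, the paper splits the $r$-integral at $r(x_1)=\max(1/2,\,1-|x_1|^2)$: on $(1/2,r(x_1))$ no cancellation is used and each term is bounded separately via \eqref{chap:CkHinf,sec:OA,eq:comMestt}, \eqref{chap:CkHinf,sec:OA,eq:comestt}; on $(r(x_1),1)$ the cancellation is exploited. (Equivalently, one may keep the factor $e^{-c(1-r)|x_1|^2}$ coming from the lower bound in \eqref{chap:CkHinf,sec:OA,eq:localreg}, which plays the role of this cutoff implicitly; you drop it.) For the range $(r(x_1),1)$ there is a second point your estimate misses: instead of the crude $|\langle rx_1-y_1,x_1\rangle|\le(|x_1-y_1|+(1-r)|x_1|)\,|x_1|$, one absorbs $|rx_1-y_1|$ into the Gaussian via $|rx_1-y_1|\,e^{-|rx_1-y_1|^2/(2(1-r^2))}\lesssim(1-r)^{1/2}$ (cf.\ \eqref{chap:CkHinf,sec:OA,eq:estmix}), lowering the singularity to $(1-r)^{-d/2-1/2}$ in $J_2$. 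Without this half-power gain your ``second term'' $(1-r)|x_1|^2\cdot(1-r)^{-d/2-1}$ already fails to integrate to the claimed bound when $d=2$ (take $|x_1|$ large and $|x_1-y_1|\sim 1/|x_1|$). The remaining piece $J_1$ is then reduced to $J_2$ by a mean-value argument applied to $\varphi(s)=\big[d-2|sx_1-y_1|^2/(1-r^2)\big]\,e^{-|sx_1-y_1|^2/(1-r^2)}$ on $s\in[r,1]$, which treats the bracket and the exponential simultaneously and recovers the same half-power gain; your separate $|e^{-a}-e^{-b}|$ and bracket-difference estimates would need extra care to achieve this.
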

\begin{proof}
We proceed similarly to the proof of \cite[Lemma 3.9]{high}.
Since for $(x_1,y_1)$ from the local region $N_2$ we have
\begin{equation}
\label{chap:CkHinf,sec:OA,eq:localreg}
|x_1-y_1|^2+(r-1)^2|x_1|^2-C(1-r)\leq |rx_1-y_1|^2\leq (r-1)^2|x_1|^2+|x_1-y_1|^2+C(1-r),
\end{equation}
therefore
$$\frac{|rx_1-y_1|^2}{1-r^2}\exp\bigg(-\frac{|rx_1-y_1|^2}{1-r^2}\bigg)\lesssim \exp\bigg(-\frac{|x_1-y_1|^2}{2(1-r^2)}\bigg),$$
and
\begin{align}
\label{chap:CkHinf,sec:OA,eq:estmix}&|\langle rx_1-y_1,x_1\rangle|\exp\bigg(-\frac{|rx_1-y_1|^2}{2(1-r^2)}\bigg)\\ \nonumber
&\lesssim |rx_1-y_1|\exp\bigg(-\frac{|x_1-y_1|^2}{4(1-r)}\bigg)|x_1|\exp\bigg(-c(1-r)|x_1|^2\bigg)\lesssim 1.
\end{align}
Thus, using \eqref{chap:CkHinf,sec:OA,eq:comM} we obtain for $(x_1,y_1)\in N_2,$
\begin{equation}\label{chap:CkHinf,sec:OA,eq:comMestt} |\partial_r\,\mM_r(x_1,y_1)|\lesssim (1-r)^{-d/2-1}\exp\bigg(-\frac{|x_1-y_1|^2}{4(1-r)}\bigg),\qquad 0<r<1.\end{equation}
Note that the above inequality implies
\begin{equation}\label{chap:CkHinf,sec:OA,eq:comesttintM} \int_0^1|\partial_r\,\mM_r(x_1,y_1)|\lesssim |x_1-y_1|^{-d},\qquad (x_1,y_1)\in N_2.\end{equation}

Using \eqref{chap:CkHinf,sec:OA,eq:comMestt} and \eqref{chap:CkHinf,sec:OA,eq:comestt} we easily see that
$$\int_0^{1/2}|\partial_r\mathcal{M}_r(x_1,y_1)-\partial_r\mW_r(x_1-y_1)|\,dr\lesssim 1,\qquad (x_1,y_1)\in N_2$$
which is even better then the estimate we want to prove.

Now we consider the integral over $(1/2,1).$ Denoting $r(x_1)=\max (1/2, 1-|x_1|^2)$ and using once again \eqref{chap:CkHinf,sec:OA,eq:comMestt} and \eqref{chap:CkHinf,sec:OA,eq:comestt} we obtain
$$\int_{1/2}^{r(x_1)}|\partial_r\mathcal{M}_r(x_1,y_1)-\partial_r\mW_r(x_1-y_1)|\,dr\lesssim \int_{1/2}^{r(x_1)}(1-r)^{-d/2-1}e^{-\frac{|x_1-y_1|^2}{4(1-r)}}\,dr.$$
The above quantity is exactly the one estimated by the right hand side of \eqref{chap:CkHinf,sec:OA,eq:bounddiffest} in the second paragraph of the proof of \cite[Lemma 3.9]{high}. It remains to estimate the integral taken over $(r(x_1),1).$ Using the formulae \eqref{chap:CkHinf,sec:OA,eq:comM} and \eqref{chap:CkHinf,sec:OA,eq:comker} together with \eqref{chap:CkHinf,sec:OA,eq:estmix} we write
\begin{equation*}
\int_{r(x_1)}^1|\partial_r\mM_r(x_1,y_1)-\partial_r\mW_r(x_1-y_1)|\,dr\lesssim J_1+J_2,
\end{equation*}
with
\begin{align*}
J_2=&\int_{r(x_1)}^1 (1-r)^{-d/2-1}\left|\big[d-\frac{2|rx_1-y_1|^2}{1-r^2}\big]\exp\bigg(-\frac{|rx_1-y_1|^2}{(1-r^2)}\bigg)\right.
\\&-\left.\big[d-\frac{2|x_1-y_1|^2}{1-r^2}\big]\exp\bigg(-\frac{|x_1-y_1|^2}{1-r^2}\bigg)\right|\,dr,\\
J_2=&|x_1|\int_{r(x_1)}^1(1-r)^{-d/2-1/2}\exp\bigg(-\frac{|x_1-y_1|^2}{8(1-r)}\bigg)\,dr.
\end{align*}
The quantity $J_2$ has been already estimated in the proof of \cite[Lemma 3.9, p.12]{high}, thus we focus on $J_1.$ For fixed $r, x_1,y_1$ denote
$$\ph(s)=\phi_{r,x_1,y_1}(s)=\left(d-\frac{2|sx_1-y_1|^2}{1-r^2}\right)\exp\bigg(-\frac{|sx_1-y_1|^2}{1-r^2}\bigg),$$
so that
$$J_1=\int_{r(x_1)}^1 (1-r)^{-d/2-1}|\phi_{r,x_1,y_1}(1)-\phi_{r,x_1,y_1}(r)|\,dr.$$
Since
$$\ph'(s)=2\left(-\frac{(d+2)\langle sx_1-y_1,x_1\rangle}{1-r^2}+2\frac{\langle sx_1-y_1,x_1\rangle|sx_1-y_1|^2}{(1-r^2)^2}\right)\exp\bigg(-\frac{|sx_1-y_1|^2}{1-r^2}\bigg),$$
by using \eqref{chap:CkHinf,sec:OA,eq:localreg} and \eqref{chap:CkHinf,sec:OA,eq:estmix} with $r$ replaced by $s,$ we obtain
\begin{equation*}
|\ph'(s)|\lesssim |x_1|(1-r)^{-1/2}\exp\bigg(-\frac{|x_1-y_1|^2}{8(1-r)}\bigg).
\end{equation*}
Thus, by the mean value theorem
\begin{align*}
|J_1|\lesssim |x_1|\int_{r(x_1)}^1(1-r)^{-d/2-1/2}\exp\bigg(-\frac{|x_1-y_1|^2}{8(1-r)}\bigg)\,dr=J_2.
\end{align*}
Recalling that $J_2$ was estimated before, we conclude the proof.
\end{proof}

The following shows that the local parts of $T$ and $\tilde{T}$ inherit their boundedness properties.
\begin{lem}
\label{lem:locinher}
Let $S$ denote one of the operators $T$ or $\tilde{T},$ and let $\nu$ be any of the measures $\gamma$ or $\Lambda.$ Then $S^{loc}$ is bounded on $L^2(\nu\otimes \mu):=L^2(\mathbb{R}^d\times Y,\nu\otimes \mu),$ and
\begin{equation}
\label{chap:CkHinf,sec:OA,eq:locinherL2}
\|S^{loc}\|_{L^2(\nu\otimes \mu)\to L^2(\nu\otimes \mu)}\lesssim \ki.\end{equation}
Moreover, $\tilde{T}^{loc}$ is of weak type $(1,1)$ with respect to $\nu\otimes \mu,$ with $\nu=\gamma$ or $\nu=\Lambda,$ and
\begin{equation}
\label{chap:CkHinf,sec:OA,eq:locinherweakTtil}
(\nu\otimes\mu)(x\in \mathbb{R}^d\times Y\colon |\tilde{T}^{loc}f(x)|>s)\leq\frac{C_{d,\mu}\ki}{s}\|f\|_{L^1(\nu\otimes \mu)}.
\end{equation}
\end{lem}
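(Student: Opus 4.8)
The statement of Lemma \ref{lem:locinher} asserts the $L^2$ boundedness of $S^{loc}$ (for $S=T$ or $S=\tilde T$) and the weak type $(1,1)$ of $\tilde T^{loc}$, with respect to both $\gamma\otimes\mu$ and $\Lambda\otimes\mu$. The whole point of the local region $N_2$ is that inside it $\gamma$ and $\Lambda$ are comparable \emph{locally} (cf.\ Lemma \ref{lem:Lem31laptype} vi)), so everything can be bootstrapped from known boundedness of the full operators $T,\tilde T$ via the splitting-into-balls device of Lemma \ref{lem:Lem33laptype}. First I would record the elementary geometric fact that $(x_1,y_1)\in N_2$ forces $x_1,y_1$ into a bounded dilate of a common ball $B_j$ from Lemma \ref{lem:Lem31laptype}; more precisely, using $B_j\times 4B_j\subseteq N_1$ and property v), one checks that $T^{loc}$ and $\tilde T^{loc}$ can be realized, up to operators already controlled, in the form $S_1 f(x)=\sum_j\chi_{B_j}(x_1)S(\chi_{4B_j}(y_1)f)(x)$ to which Lemma \ref{lem:Lem33laptype} applies. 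The cut-off $\chi_{N_2}$ and $\sum_j\chi_{B_j}(x_1)\chi_{4B_j}(y_1)$ differ only by a kernel supported in $N_2\setminus N_1$ near the boundary, which by the pointwise bounds \eqref{chap:CkHinf,sec:OA,eq:comesttintM} and \eqref{chap:CkHinf,sec:OA,eq:comesttint} together with \eqref{chap:CkHinf,sec:OA,eq:maxcharrL1} is an absolutely convergent (hence bounded on $L^2$ and $L^1$, both measures) integral operator; this is a routine estimate I would not grind through.

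\textbf{Key steps in order.} (1) Establish $L^2(\Lambda\otimes\mu)$ boundedness of $\tilde T$ with norm $\lesssim\ki$ — already done in Lemma \ref{lem:calzyg}, equation \eqref{chap:CkHinf,sec:OA,eq:L2tilT} — and of $T$ on $L^2(\gamma\otimes\mu)$ with norm $\lesssim\ki$, which follows from the multivariate spectral theorem applied to $(\mL,A)$ exactly as in the discussion preceding Theorem \ref{thm:OA} (the function $m_\kappa(\la,a)\chi_{\{a>0\}}$ is bounded by $\ki$). (2) Apply Lemma \ref{lem:Lem33laptype} ii) with $\nu=\Lambda$ to $\tilde T$ and with $\nu=\gamma$ to $T$, obtaining that the associated $S_1$ operators are bounded on both $L^2$ and $L^2(\Lambda\otimes\mu)$ with norm $\lesssim\ki$. (3) Show $S^{loc}-S_1$ is a bounded integral operator on $L^2(\nu\otimes\mu)$ for both $\nu$, using the kernel bounds \eqref{chap:CkHinf,sec:OA,eq:comesttintM}, \eqref{chap:CkHinf,sec:OA,eq:comesttint}, \eqref{chap:CkHinf,sec:OA,eq:gausbound} and Schur's test; combine with (2) to conclude \eqref{chap:CkHinf,sec:OA,eq:locinherL2}. (4) For the weak type $(1,1)$ of $\tilde T^{loc}$: by Lemma \ref{lem:calzyg}, $\tilde T$ is of weak type $(1,1)$ with respect to $\Lambda\otimes\mu$ with constant $\lesssim\ki$; feed this into Lemma \ref{lem:Lem33laptype} i) with $\nu=\Lambda$ to get that the corresponding $S_1$ is weak type $(1,1)$ with respect to \emph{both} $\gamma\otimes\mu$ and $\Lambda\otimes\mu$; finally dispose of the boundary correction $\tilde T^{loc}-S_1$, which is bounded on $L^1(\nu\otimes\mu)$ (hence a fortiori weak type $(1,1)$) by the same absolutely-convergent-kernel estimate. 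This yields \eqref{chap:CkHinf,sec:OA,eq:locinherweakTtil}.

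\textbf{Main obstacle.} The genuinely delicate point is the reduction in step (3)–(4): verifying that replacing the sharp cut-off $\chi_{N_2}$ by the ball-localized sum $\sum_j\chi_{B_j}(x_1)\chi_{4B_j}(y_1)$ changes the operator only by something harmless. One must check that the symmetric difference of $N_2$ and $\bigcup_j B_j\times 4B_j$ is contained in a thin "collar" $N_2\setminus N_1$, that on this collar the bounded-overlap property iii) of the balls keeps the sum under control, and that the kernel of $T$ (resp.\ $\tilde T$) restricted there is integrable in each variable uniformly — for which one needs the $\varepsilon$-dependent bound \eqref{chap:CkHinf,sec:OA,eq:comMestfar01} on $\mathbb{R}^d$ (recall $\kappa$ is supported in $[\varepsilon,1-\varepsilon]$) paired with the $L^1(Y,\mu)$-contractivity of $\sup_{\varepsilon<r<1-\varepsilon}|r^A\cdot|$ from \eqref{chap:CkHinf,sec:OA,eq:maxcharrL1}, and then Schur's test. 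I expect the bookkeeping here — especially keeping every constant independent of $\varepsilon$ and of the dimension $d$ — to be where the real work lies, whereas the invocation of Lemmata \ref{lem:calzyg} and \ref{lem:Lem33laptype} is mechanical. The details of this collar estimate I would write out carefully but, being essentially the argument of \cite[Lemma 3.3 and its proof]{laptype} adapted to the product setting, it is the kind of step one would present tersely.
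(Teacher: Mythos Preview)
Your overall strategy matches the paper's exactly: write $S^{loc}$ as the main piece $S_1f(x)=\sum_j\chi_{B_j}(x_1)S(\chi_{4B_j}(y_1)f)(x)$ plus a ``collar'' remainder $S_2$, invoke Lemma~\ref{lem:Lem33laptype} for $S_1$ (fed by the known $L^2(\gamma\otimes\mu)$-boundedness of $T$ and the $L^2(\Lambda\otimes\mu)$-boundedness and weak $(1,1)$ of $\tilde T$ from Lemma~\ref{lem:calzyg}), and handle $S_2$ by direct kernel integrability. This is precisely the paper's argument.

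There is, however, a real inconsistency in your ``Main obstacle'' paragraph. You propose controlling the collar piece via the $\varepsilon$-dependent pointwise bound \eqref{chap:CkHinf,sec:OA,eq:comMestfar01} together with the $\varepsilon$-dependent maximal $L^1$ bound \eqref{chap:CkHinf,sec:OA,eq:maxcharrL1}, and then say the real work is keeping constants independent of $\varepsilon$. These goals are incompatible: both of the estimates you invoke carry constants $C_\varepsilon$, $C_{\mu,\varepsilon}$ that blow up as $\varepsilon\to0$, so this route cannot yield an $\varepsilon$-free bound. (Independence of $d$, by contrast, is not needed --- the statement explicitly allows $C_{d,\mu}$.)

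The correct tools are the ones you already listed in step (3). The paper integrates in $r$ \emph{first} to obtain the $\varepsilon$-free bounds \eqref{chap:CkHinf,sec:OA,eq:comesttint} and \eqref{chap:CkHinf,sec:OA,eq:comesttintM} (both giving $\lesssim |x_1-y_1|^{-d}$ on $N_2$), and in the $Y$ variable uses only that each single $r^A$ is an $L^1(Y,\mu)$-contraction --- \emph{not} the $\varepsilon$-dependent maximal inequality. This is packaged as estimate \eqref{chap:CkHinf,sec:OA,eq:SL1norm}. On the collar one then has $x_1\in B_j$, $(x_1,y_1)\in N_2$, $y_1\notin 4B_j$, whence by Lemma~\ref{lem:Lem31laptype}~v) $|x_1-y_1|\approx(1+|x_1|)^{-1}$; thus $|x_1-y_1|^{-d}$ restricted to the collar is uniformly integrable in each of $x_1,y_1$, giving $L^1$ and $L^\infty$ (hence $L^2$ and weak $(1,1)$) bounds for $S_2$, all $\varepsilon$-free. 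In short: your step (3) had the right ingredients; your ``Main obstacle'' paragraph swapped them for the wrong ones.
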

\begin{proof}
In what follows $S(x,y)$ denotes either the kernel $K(x,y)$ of $T,$ or the kernel $\tilde{K}(x,y)$ of $\tilde{T}.$ Recall that in both the cases the integral defining $S^{glob}f(x)$ is absolutely convergent.

The proof is analogous to the proof of \cite[Proposition 3.4]{laptype}. Let $B_j$ be the family of balls in $\mathbb{R}^d$ from Lemma \ref{lem:Lem31laptype}. Take $f\in L_c^{\infty}$ and, for $x_1\in B_j,$ decompose
\begin{align*}
&S^{loc}f(x)=Sf(x)-S^{glob}f(x)\\
&=S(f\chi_{4B_j}(y_1))(x)+S(f\chi_{(4B_j)^c}(y_1))-\int_{\mathbb{R}^d}\int_Y \chi_{N_2^c}S(x,y)(x_1,y_1)f(y)\, d\mu(y_2)\,dy_1\\
&=S(f\chi_{4B_j}(y_1))(x)+\int_{\mathbb{R}^d}\int_Y \chi_{(4B_j)^c}(y_1)-\chi_{N_2^c}(x_1,y_1))S(x,y)f(y)\, d\mu(y_2)\,dy_1.
\end{align*}

Multiplying by $\chi_{B_j}(x_1)$ and summing over $j,$ we arrive at the inequality
\begin{align*}
S^{loc}f(x)&\leq \int_{\mathbb{R}^d}\sum_j \chi_{B_j}(x_1)|\chi_{N_2}(x_1,y_1)-\chi_{4B_j}(y_1)|\int_{Y}|S(x,y)||f(y)|\,d\mu(y_2)\,dy_1 \\
&+\sum_j \chi_{B_j}(x_1)|S(f(y)\chi_{4B_j}(y_1))(x)|
:= S_2(f)+S_1(f),
\end{align*}
Recall that $T$ is bounded on $L^2,$ while $\tilde{T}$ is bounded on $L^2(\Lambda\otimes \mu).$ Hence, both for $S=T$ and $S=\tilde{T}$, using Lemma \ref{lem:Lem33laptype} we see that $S_1$ is bounded on $L^2(\nu\otimes \mu),$ and $\|S_1\|_{L^2(\nu\otimes \mu)\to L^2(\nu\otimes \mu)}\lesssim \ki.$ Moreover, from Lemma \ref{lem:calzyg} we know that $\tilde{T}$ is of weak type $(1,1)$ with respect to $\Lambda \otimes \mu,$ and $\|\tilde{T}\|_{L^1(\Lambda\otimes \mu)\to L^{1,\infty}(\Lambda\otimes \mu)}\lesssim \ki.$ Hence, using once again Lemma \ref{lem:Lem33laptype}, we see that in the case $S=\tilde{T},$ we have $\|S_1\|_{L^1(\nu\otimes \mu)\to L^{1,\infty}(\nu\otimes \mu)}\lesssim \ki.$

It remains to consider $S_2,$ for which we show boundedness on both $L^1(\nu\otimes \mu)$ and $L^{\infty}(\nu\otimes \mu).$ Here we need the following estimate, valid for $(x_1,y_1)\in N_2,$ and $f\in L_c^{\infty},$
\begin{equation}
\label{chap:CkHinf,sec:OA,eq:SL1norm}
\int_{Y}\int_Y|S(x,y)f(y_1,y_2)|\,d\mu(y_2)\,d\mu(x_2)\lesssim \ki |x_1-y_1|^{-d}\|f(y_1,\cdot)\|_{L^1(Y,\mu)}.
\end{equation}
Recall that $r^A$ is a contraction on $L^1(Y,\mu).$ Thus, for $S=T$ the above follows from \eqref{chap:CkHinf,sec:OA,eq:comesttintM}, while, for $S=\tilde{T}$ it is a consequence of \eqref{chap:CkHinf,sec:OA,eq:comesttint}.

We start with the boundedness on $L^1(\nu\otimes \mu)$ and denote $g(y_1)=\|f(y_1,\cdot)\|_{L^1(Y,\mu)}.$ From Lemma \ref{lem:Lem31laptype} iv) it follows that $\sum_j \chi_{B_j}(x_1)|\chi_{N_2}(x_1,y_1)-\chi_{4B_j}(y_1)|$ is supported in $N_2.$
Hence, using Fubini's theorem and \eqref{chap:CkHinf,sec:OA,eq:SL1norm}, we obtain
\begin{align*}&\|S_2(f)(x_1,\cdot)\|_{L^1(Y,\mu)}\\
&\lesssim
\ki  \int_{\mathbb{R}^d}\sum_j \chi_{B_j}(x_1)|\chi_{N_2}(x_1,y_1)-\chi_{4B_j}(y_1)||x_1-y_1|^{-d}\,|g(y_1)|\,d\nu(y_1).
\end{align*}
From that point we proceed exactly as in the $T^2$ part of the proof of \cite[Proposition 3.4]{laptype}, arriving at
$
\|S_2(f)\|_{L^1(\nu\otimes \mu)}\lesssim\int_{\mathbb{R}^d}g(y_1)\,d\nu(y_1)= \|f\|_{L^1(\nu\otimes \mu)}.
$
The proof of the $L^{\infty}(\nu\otimes \mu)$ boundedness of $S_2$ is similar.
\end{proof}

Applying Lemmata \ref{lem:diffest} and \ref{lem:locinher} we now prove the following.
\begin{lem}
\label{prop:dloc}
The operator $D^{loc}$ is bounded on all the spaces $L^p(\Leb\otimes \mu).$  Moreover,
\begin{equation}
\label{chap:CkHinf,sec:OA,eq:DlocLpbound}
\|D^{loc}\|_{L^p(\Leb\otimes \mu) \to L^p(\Leb\otimes \mu)}\lesssim \ki, \qquad 1\leq p\leq \infty.\end{equation}
\end{lem}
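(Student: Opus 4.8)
The plan is to deduce the $L^p(\Leb\otimes\mu)$ boundedness of $D^{loc}$ directly from the kernel decomposition $D^{loc}=T^{loc}-\tilde T^{loc}$ together with the already-established boundedness facts, by reducing matters — via Lemma~\ref{lem:Lem33laptype} applied to the family of balls $B_j$ from Lemma~\ref{lem:Lem31laptype} — to an estimate on the ``truly local'' piece $\sum_j\chi_{B_j}(x_1)D(\,\cdot\,\chi_{4B_j}(y_1))$. Concretely, I would first recall from the proof of Lemma~\ref{lem:locinher} the splitting $S^{loc}f(x)\le S_2(f)(x)+S_1(f)(x)$, valid for $S\in\{T,\tilde T\}$, where $S_1(f)=\sum_j\chi_{B_j}(x_1)S(f\chi_{4B_j}(y_1))(x)$ and $S_2$ is controlled by the off-diagonal bound \eqref{chap:CkHinf,sec:OA,eq:SL1norm}; subtracting the two instances $S=T$ and $S=\tilde T$ gives $D^{loc}f\le D_1(f)+D_2(f)$ with the obvious meaning. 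The $D_2$ term is handled exactly as the $S_2$ term in Lemma~\ref{lem:locinher}: one only needs the analogue of \eqref{chap:CkHinf,sec:OA,eq:SL1norm} for the kernel $D(x,y)=K(x,y)-\tilde K(x,y)$, and this follows by integrating the pointwise bound \eqref{chap:CkHinf,sec:OA,eq:bounddiffest} of Lemma~\ref{lem:diffest} in $x_2$ (using that $r^A$ is an $L^1(Y,\mu)$ contraction), producing on the local region $N_2$ a bound of the form $\int_Y\int_Y|D(x,y)f(y)|\,d\mu(y_2)\,d\mu(x_2)\lesssim \ki\,\omega(x_1,y_1)\,\|f(y_1,\cdot)\|_{L^1(Y,\mu)}$, where $\omega(x_1,y_1)=(1+|x_1|)|x_1-y_1|^{-(d-1)}$ for $d>1$ and $\omega(x_1,y_1)=(1+|x_1|)\log\frac{C}{|x_1||x_1-y_1|}$ for $d=1$. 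Since on $N_2$ one has $(1+|x_1|)|x_1-y_1|\lesssim1$, the kernel $\omega$ is, uniformly over the balls $4B_j$, integrable in $y_1$ over $4B_j$ and in $x_1$ over $B_j$ with bound $O(1)$ (this is precisely the computation in the $T^2$-part of the proof of \cite[Proposition~3.4]{laptype}, now with the slightly larger but still integrable weight $\omega$ in place of $|x_1-y_1|^{-d}$), so $D_2$ is bounded on $L^1(\Leb\otimes\mu)$ and on $L^\infty(\Leb\otimes\mu)$ with norm $\lesssim\ki$; interpolation gives all $L^p(\Leb\otimes\mu)$, $1\le p\le\infty$.

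For the $D_1$ term I would argue by interpolation from the two endpoints $p=2$ and $p=\infty$ (the case $p=1$ then following, or alternatively one runs the same endpoint argument at $p=1$). The $L^2(\Leb\otimes\mu)$ bound is immediate: $D^{loc}=T^{loc}-\tilde T^{loc}$ and both $T^{loc}$ and $\tilde T^{loc}$ are bounded on $L^2(\Leb\otimes\mu)$ with norm $\lesssim\ki$ by \eqref{chap:CkHinf,sec:OA,eq:locinherL2} of Lemma~\ref{lem:locinher} (take $\nu=\Leb$ there). For $p=\infty$, one uses that $D_1(f)(x)=\sum_j\chi_{B_j}(x_1)D(f\chi_{4B_j})(x)$ and, by property iii) of Lemma~\ref{lem:Lem31laptype}, the balls $B_j$ have bounded overlap, so it suffices to bound $\|D(f\chi_{4B_j})\|_{L^\infty}$ by $\|f\|_\infty$ times a constant; this in turn follows from the $L^1(Y,\mu)\times L^\infty$ version of the estimate on $D$ obtained in the previous paragraph together with $\Lambda(4B_j)(1+|x_1^j|)^d\lesssim1$. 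Assembling $D^{loc}\le D_1+D_2$ and combining the bounds yields \eqref{chap:CkHinf,sec:OA,eq:DlocLpbound}.

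The main obstacle I anticipate is bookkeeping rather than conceptual: one must check that the extra factor $(1+|x_1|)$ and the mild logarithmic blow-up (for $d=1$) in Lemma~\ref{lem:diffest}'s bound do not destroy the uniform-in-$j$ integrability of the resulting kernel over $B_j\times4B_j$, which is where the crucial geometric facts of Lemma~\ref{lem:Lem31laptype} (namely $B_j\times4B_j\subseteq N_1$, the comparability $\gamma(V)\approx e^{-|x_1^j|^2}\Lambda(V)$ on $4B_j$, and the radii $\sim(1+|x_1^j|)^{-1}$) must be used exactly as in \cite{high} and \cite{laptype}. I would therefore present the $D_2$ estimate carefully and merely indicate that $D_1$ is handled by the already-proved Lemma~\ref{lem:locinher} at $p=2$ and a bounded-overlap argument at $p=\infty$, omitting, as the surrounding text does, the fully detailed repetition of the arguments of \cite[Proposition~3.4]{laptype}.
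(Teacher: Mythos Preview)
Your approach works in principle but is considerably more roundabout than the paper's, and the detour through the $B_j$-decomposition is unnecessary for this lemma. The paper observes directly that $D^{loc}$ has the explicit integral representation
\[
D^{loc}f(x)=\int_{\mathbb{R}^d}\chi_{N_2}(x_1,y_1)\int_0^1\kappa(r)\bigl[\partial_r\mathcal{M}_r(x_1,y_1)-\partial_r\mW_r(x_1-y_1)\bigr](r^Af)(y_1,x_2)\,dr\,dy_1,
\]
since $D^{loc}=D-D^{glob}$ simply has kernel $\chi_{N_2}(x_1,y_1)D(x,y)$. One then bounds $|D^{loc}f(x)|$ by $\ki\int_{\mathbb{R}^d}\chi_{N_2}D_I(x_1,y_1)\,(r^A|f|)(y_1,x_2)\,dy_1$ (with $D_I$ as in Lemma~\ref{lem:diffest}), uses the $L^1(Y,\mu)$ and $L^\infty(Y,\mu)$ contractivity of $r^A$, and checks via Lemma~\ref{lem:diffest} that $\chi_{N_2}D_I(x_1,y_1)$ is uniformly integrable in each of $x_1$ and $y_1$. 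This is a plain Schur-type bound giving $L^1(\Lambda\otimes\mu)$ and $L^\infty(\Lambda\otimes\mu)$ directly, and interpolation finishes.

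What you do instead is re-run the machinery of Lemma~\ref{lem:locinher} for the operator $D$, splitting into $D_1+D_2$. Your $D_2$ argument is essentially the paper's Schur bound (but only on the off-diagonal blocks), while your $D_1$ argument pulls in the $L^2$ boundedness from Lemma~\ref{lem:locinher} and a bounded-overlap $L^\infty$ estimate. This is heavier and introduces avoidable issues: you need $D_1$ (not $D^{loc}$) bounded on $L^2(\Lambda\otimes\mu)$, which requires invoking Lemma~\ref{lem:Lem33laptype} for $T$ and $\tilde T$ separately rather than citing \eqref{chap:CkHinf,sec:OA,eq:locinherL2}; and your ``$p=1$ then following'' from $p=2,\infty$ is of course false without the parenthetical fix. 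The paper's route avoids all of this by never splitting at all: the whole of $D^{loc}$ already has a kernel whose singularity in the $\mathbb{R}^d$ variables is integrable on $N_2$, so a single Schur estimate at $p=1$ and $p=\infty$ suffices.
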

\begin{proof} Observe that $D^{loc}$ may be expressed as
$$D^{loc}f(x)=\int_{\mathbb{R}^d}\chi_{N_2}(x_1,y_1)\int_0^1\, \kappa(r) \left[\partial_r\mathcal{M}_r(x_1,y_1)-\partial_r\mW_r(x_1-y_1)\right] (r^A f)(y_1,x_2)\,dr\,dy_1,$$
at least for $f\in L_c^{\infty}.$ Moreover, the estimates below imply that the integral defining $D^{loc}$ is actually absolutely convergent, whenever $f\in L^p(\Lambda\otimes\mu),$ for some $1\leq p\leq \infty.$

Using Fubini's theorem, and the $L^1(Y,\mu)$ contractivity of $r^A,$
\begin{align*}
&\|\kappa\|_{\infty} ^{-1}\|D^{loc}f\|_{L^{1}(\Leb\otimes \mu)}\\
&\leq \int_{\mathbb{R}^d}\int_{\mathbb{R}^d}\chi_{N_2}\int_0^1 |\partial_r\mathcal{M}_r(x_1,y_1)-\partial_r\mW_r(x_1-y_1)|\int_{Y}|(r^A|f|)(y_1,x_2)|\, d\mu(x_2)\,dr\,dy_1dx_1\\
&\leq  \int_{Y}\int_{\mathbb{R}^d}\int_{\mathbb{R}^d}\chi_{N_2}D_I(x_1,y_1)|f(y_1,x_2)| \,dy_1dx_1\,d\mu(x_2).
\end{align*}
Now, using Lemma \ref{lem:diffest} it can be shown that the singularity of $\chi_{N_2}D_I(x_1,y_1)$ is integrable in $x_1.$ Moreover, $\int_{\mathbb{R}^d}\chi_{N_2}D_I(x_1,y_1)\,dx_1\leq C,$ where $C$ is independent of $y_1.$ Thus, applying Fubini's theorem we obtain $\|D^{loc}\|_{L^{1}(\Leb\otimes \mu)\to L^{1}(\Leb\otimes \mu)}\leq C\|\kappa\|_{\infty}$. Since in the local region $ |x_1|\leq 2+ |y_1|\leq 4+|x_2|$ and $\chi_{N_2}(x_1,y_1)=\chi_{N_2}(y_1,x_1),$ the singularity of $\chi_{N_2}D_I(x_1,y_1)$ is also integrable in $y_1.$ Hence, using Fubini's theorem and the $L^{\infty}(Y,\mu)$ contractivity of $r^A,$ we have $\|D^{loc}\|_{L^{\infty}(\Leb\otimes \mu)\to L^{\infty}(\Leb\otimes \mu)}\leq C\|\kappa\|_{\infty}.$ Interpolating between the $L^1(\Leb\otimes \mu)$ and $L^{\infty}(\Leb\otimes \mu)$ bounds for $D^{loc}$ we finish the proof of \eqref{chap:CkHinf,sec:OA,eq:DlocLpbound}.
\end{proof}

Recalling that $T^{loc}=\tilde{T}^{loc}+D^{loc},$ and using Lemmata \ref{lem:locinher} and \ref{prop:dloc}, we see that the local part $T^{loc}$ is of weak type $(1,1)$ with respect to $\Lambda \otimes \mu.$ Moreover, the weak type $(1,1)$ constant is less than or equal to $C_{d,\mu}\ki.$ Thus, repeating the proof of \eqref{chap:CkHinf,sec:OA,eq:locinherweakTtil} in Lemma \ref{lem:locinher} with $\tilde{T}$ replaced by $T,$ we finish the proof of Proposition \ref{thm:tlocalpart}. After combining Propositions \ref{prop:proglob} and \ref{thm:tlocalpart}, the proof of Theorem \ref{thm:OA} is completed.

As we observed before, besides being bounded from $L^1_{\gamma}(H^1)$ to $L^{1,\infty}(\gamma \otimes \mu)$ and on $L^2,$ the operator $m_{\kappa}(\mL,A)$ is also bounded on all the $L^p$ spaces, $1<p<2.$ Till the end of this section we focus on showing that this interpolation property remains true for general operators.
\begin{thm}
\label{thm:interH1}
Let $S$ be an operator which is bounded from $L^1_{\gamma}(H^1)$ to $L^{1,\infty}(\gamma\otimes \mu),$ and from $L^2$ to $L^2.$ Then $S$ is bounded on all $L^p$ spaces, $1<p<2.$
\end{thm}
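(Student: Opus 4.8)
The plan is to run a Marcinkiewicz-type interpolation argument adapted to the vector-valued Hardy space on the $\mathbb{R}^d$-factor. First I would recall the abstract setup: we have $S$ bounded from $L^1_{\gamma}(H^1)$ to $L^{1,\infty}(\gamma\otimes\mu)$ and bounded on $L^2=L^2(\gamma\otimes\mu)$, and we want $L^p$-boundedness for $1<p<2$. The key point is that the scale $\{L^p(\gamma\otimes\mu)\}_{1<p<2}$ sits ``between'' $L^1_\gamma(H^1)$ and $L^2$ in a way that is accessible to real interpolation. More precisely, one should exhibit, for each $1<p<2$, a decomposition of an arbitrary $f\in L^2\cap L^p$ (a dense class) into a ``good'' part in $L^2$ and a ``bad'' part in $L^1_\gamma(H^1)$, both controlled by the $L^p$ norm of $f$, and then invoke the standard distributional-inequality bookkeeping that underlies the Marcinkiewicz theorem.

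The decomposition I would use is the one coming from the atomic (Calder\'on--Zygmund-type) decomposition of $H^1$-valued functions on $\mathbb{R}^d$ relative to the Gaussian measure, applied fibrewise. For $f$ and a height $s>0$, one decomposes $f=g_s+b_s$ where $g_s$ collects the part where a suitable maximal function of $\|f(x_1,\cdot)\|_{H^1}$ (or, more simply, of $f$ viewed as an $H^1$-valued function) is $\le s$, and $b_s$ is a sum of atoms supported on the bad Calder\'on--Zygmund cubes. By the standard estimates one gets $\|g_s\|_{L^2}^2 \lesssim s^{2-p}\|f\|_p^p$ and $\|b_s\|_{L^1_\gamma(H^1)} \lesssim s^{1-p}\|f\|_p^p$, with the extra feature that $\|b_s\|_{L^1_\gamma(H^1)}$ rather than just $\|b_s\|_{L^1_\gamma(L^1)}$ is controlled — this is exactly why the hypothesis is phrased with $H^1$ on the target side rather than $L^1$. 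Then
\[
(\gamma\otimes\mu)(|Sf|>s) \le (\gamma\otimes\mu)(|Sg_s|>s/2) + (\gamma\otimes\mu)(|Sb_s|>s/2)
\lesssim s^{-2}\|g_s\|_2^2 + s^{-1}\|b_s\|_{L^1_\gamma(H^1)} \lesssim s^{-p}\|f\|_p^p,
\]
using the $L^2$ bound on the first term and the $L^1_\gamma(H^1)\to L^{1,\infty}$ bound on the second. Integrating $p s^{p-1}(\gamma\otimes\mu)(|Sf|>s)$ over $s\in(0,\infty)$ yields $\|Sf\|_p \lesssim \|f\|_p$, and a density argument finishes the proof.

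The main obstacle, and the step I would spend the most care on, is constructing the fibrewise atomic decomposition so that the ``bad'' part is genuinely controlled in the $L^1_\gamma(H^1)$ norm and not merely in $L^1_\gamma(L^1)$. This requires a Calder\'on--Zygmund decomposition on $\mathbb{R}^d$ performed with respect to the Gaussian measure (in the spirit of the local/global splitting and the admissible cubes used throughout Section~\ref{chap:CkHinf,sec:OA}, cf.\ the balls $B_j$ of Lemma~\ref{lem:Lem31laptype}), combined with the atomic structure of $H^1(Y,\mu)$ on the other factor: on each bad cube $Q$ of $\mathbb{R}^d$ one writes $f\chi_Q - (\text{average over }Q)$ and observes that, since the average over $Q$ of an $H^1(Y,\mu)$-valued function is again in $H^1(Y,\mu)$ with the right norm bound, the resulting object is a fixed multiple of an $L^1_\gamma(H^1)$-atom-like block. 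One must check the cancellation in the $\mathbb{R}^d$-variable and the size bounds simultaneously; the doubling of $\mu$ and the (locally) doubling character of $\gamma$ on admissible cubes make the constants uniform. Alternatively — and this may be cleaner to write — one can phrase the whole argument as: $L^1_\gamma(H^1)$ and $L^2$ form a compatible couple, real interpolation gives $(L^1_\gamma(H^1),L^2)_{\theta,q}$, and one identifies this interpolation space as (or as a space continuously embedding $L^p$ into) $L^p(\gamma\otimes\mu)$ for the appropriate $\theta$; the embedding $L^p \hookrightarrow (L^1_\gamma(H^1),L^2)_{\theta,p}$ again reduces to the Calder\'on--Zygmund-type decomposition above. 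Either way, once the decomposition lemma is in place the interpolation bookkeeping is entirely routine, so I would state the decomposition as a lemma, prove it carefully, and then compress the distributional estimate into a few lines.
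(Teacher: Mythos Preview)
Your interpolation skeleton is right, but the Calder\'on--Zygmund decomposition is performed in the wrong variable, and this is a genuine gap. You propose to run CZ on $\mathbb{R}^d$ (the Gaussian factor), treating $f$ as an $H^1(Y,\mu)$-valued function, and to control the bad part in $L^1_\gamma(H^1)$ by averaging $H^1$-valued functions over cubes $Q\subset\mathbb{R}^d$. The problem is that for a generic $f\in L^p(\gamma\otimes\mu)$ there is no reason whatsoever for the slices $f(x_1,\cdot)$ to lie in $H^1(Y,\mu)$: take $f(x_1,x_2)=h(x_1)\chi_B(x_2)$ with $B\subset Y$ a ball and $h\in L^p(\gamma)$; then $f\in L^p(\gamma\otimes\mu)$ but $\|f(x_1,\cdot)\|_{H^1}=+\infty$ for every $x_1$ with $h(x_1)\neq 0$. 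So neither your maximal function of $\|f(x_1,\cdot)\|_{H^1}$ nor the averages $\mathrm{avg}_Q f(\cdot,\cdot)\in H^1$ make sense, and the cancellation you obtain (which is $\int_Q b_s(x_1,x_2)\,d\gamma(x_1)=0$ for each $x_2$) is orthogonal to the cancellation $\int_Y b(x_1,x_2)\,d\mu(x_2)=0$ that membership in $H^1(Y,\mu)$ demands. A secondary issue is that $\gamma$ is not globally doubling, so CZ on $(\mathbb{R}^d,\gamma)$ is delicate; but this is moot given the first obstruction.

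The paper resolves this by decomposing in the $Y$-variable instead. One applies a dyadic Calder\'on--Zygmund decomposition on $(Y,\zeta,\mu)$ to $f^q(x_1,\cdot)$ at height $s^q$ (for some fixed $1<q<p$), measurably in $x_1$; the bad pieces $b_j(x_1,\cdot)$ are supported in dyadic cubes $Q_j\subset Y$, have vanishing $\mu$-mean by construction, and satisfy $\bigl(\mu(Q_j)^{-1}\int_{Q_j}|b_j|^q\,d\mu\bigr)^{1/q}\lesssim s$. This makes each $b_j(x_1,\cdot)/(Cs\,\mu(Q_j))$ an $H^{1,q}$-atom in the sense of Coifman--Weiss, whence $\|b(x_1,\cdot)\|_{H^1}\lesssim s\sum_j\mu(S_j(x_1))$ and, after integrating in $x_1$, $\|b\|_{L^1_\gamma(H^1)}\lesssim s\,(\gamma\otimes\mu)(\Theta^s)$ with $\Theta^s$ the super-level set of a dyadic maximal function in $x_2$. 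The role of $\gamma$ is then purely passive---one just integrates over $\mathbb{R}^d$---so no doubling of $\gamma$ is needed. (A minor point: your displayed bound $(\gamma\otimes\mu)(|Sf|>s)\lesssim s^{-p}\|f\|_p^p$ would only give weak type and does not integrate; as in the standard Marcinkiewicz argument one must integrate $s^{p-3}\|g_s\|_2^2$ and $s^{p-2}\|b_s\|_{L^1_\gamma(H^1)}$ separately and use Fubini together with the layer-cake formula for the maximal function.)
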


The main ingredient of the proof is a Calder\'on-Zygmund decomposition of a function $f(x_1,x_2),$ with respect to the variable $x_2,$ when $x_1$ is fixed, see Lemma \ref{lem:calzygdec}. For the decomposition we present it does not matter that we consider $\mathbb{R}^d$ with the measure $\gamma.$ The important assumption is that $(Y,\zeta,\mu)$ is a space of homogenous type. Therefore till the end of the proof of Lemma \ref{lem:calzygdec} we consider a more general space $L^1:=L^1(X\times Y, \nu \otimes \mu).$ Here $\nu$ is an arbitrary $\sigma$-finite Borel measure on $X.$ Recall that, by convention, elements of $X$ are denoted by $x_1,y_1,$ while elements of $Y$ are denoted by $x_2,y_2.$

It is known that in every space of homogenous type in the sense of Coifman-Weiss there exists a family of {\it disjoint} 'dyadic' cubes, see \cite[Theorem 2.2]{dyad}. Here we use \cite[Theorem 2.2]{dyad} to $(Y,\zeta,\mu).$ Let $\mathcal{Q}_l$ be the set of all dyadic cubes of generation $l$ in the space $(Y,\zeta,\mu).$ Note that $l\to\infty$ corresponds to 'small' cubes, while $l\to-\infty$ to 'big' cubes. We define the $l$-th generation dyadic average and the dyadic maximal function with respect to the second variable, by
\begin{equation*}
E_{l}f(x)=\sum_{Q\in\mathcal{Q}_l}\frac{1}{\mu(Q)}\int_{Q}f(x_1,y_2)\,d\mu(y_2)\,\chi_{Q}(x_2),\end{equation*}
and
\begin{equation}\label{chap:CkHinf,sec:OA,eq:dyadmaxdef}\mathcal{D}f(x)=\sup_{l} E_{l}|f|(x),\end{equation}
respectively.

We prove the following Calder\'on-Zygmund type lemma.
\begin{lem}
\label{lem:calzygdec}
Fix $s>0$ and let $f\in L^1$ be a continuous non-negative function on $X\times Y.$  Then there exist Borel measurable functions $g$ ('good') and $\{b_j\}$ ('bad') such that  $f=g+b:=g+\sum_{j} {b_j},$  and:
\begin{enumerate}[(i)]
\item $\|g\|_{L^1}+\sum_j \|b_j\|_{L^1}\leq 4\|f\|_{L^1};$
\item $|g(x)|\leq C_{\mu} s,$ for $x=(x_1,x_2)\in X\times Y;$
\item each function $b_j$ is associated with unique dyadic cube $Q_j.$ Moreover, the functions $b_j$ are supported in disjoint measurable sets $S_j=F_j\times Q_j$ such that for each fixed $x_1\in X,$ we have $\sum_{j}\mu (S_j(x_1))\leq s^{-1}\int_Y f(x)\,d\mu(x_2),$ where $S_j(x_1)=\{x_2\colon x\in S_j\}.$ Additionally, for each fixed $j\in\mathbb{Z}$ and $x_1\in X,$ $\int_{Y}b_j(x)\,d\mu(x_2)=0,$ and either, there exists a 'cube' such that $Q_{j(x_1)}=S_j(x_1)$ and $\supp(b_j(x_1,\cdot))\subset Q_{j(x_1)},$ or $S_j(x_1)=\emptyset$ and $b_j(x_1,\cdot)\equiv 0;$
\item If, for fixed $x_1\in X$ the set $S_j(x_1)$ is non empty (hence in view of (iii) $S_j(x_1)=Q_{j(x_1)}$), then
 $$C_{\mu}^{-1}s\leq \frac{1}{\mu(Q_{j(x_1)})}\int_{Q_{j(x_1)}}f(x)\,d\mu(x_2)\leq C_{\mu} s;$$
\item $$\{x\in X\times Y\colon \mathcal{D}(f)(x)>s\}=\bigcup_{j}F_j\times Q_j=\bigcup S_j.$$
\end{enumerate}
\end{lem}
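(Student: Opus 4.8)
The plan is to prove Lemma~\ref{lem:calzygdec} by performing, for each fixed $x_1\in X$, the classical Calder\'on--Zygmund stopping-time decomposition of the function $f(x_1,\cdot)\in L^1(Y,\mu)$ at height $s$, and then checking that the resulting pieces can be assembled into genuinely \emph{measurable} functions of the full variable $x=(x_1,x_2)$. First I would recall from \cite[Theorem 2.2]{dyad} that $(Y,\zeta,\mu)$ admits a system of disjoint dyadic cubes $\bigcup_l \mathcal Q_l$ with the usual nesting and measure-doubling properties, and that the associated dyadic maximal operator $\mathcal D$ in the second variable, defined by \eqref{chap:CkHinf,sec:OA,eq:dyadmaxdef}, is of weak type $(1,1)$ with respect to $\mu$ uniformly in $x_1$. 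For fixed $x_1$ with $f(x_1,\cdot)\not\equiv 0$, consider the open set $\Omega(x_1)=\{x_2\in Y\colon \mathcal D(f)(x_1,x_2)>s\}$; since $\mu(Y)=\infty$ and $f(x_1,\cdot)\in L^1$, a standard argument (choosing, for each $x_2\in\Omega(x_1)$, the maximal dyadic cube containing $x_2$ over which the average of $f(x_1,\cdot)$ still exceeds $s$) produces a collection $\{Q_{j(x_1)}\}$ of disjoint maximal dyadic cubes with $\Omega(x_1)=\bigcup_j Q_{j(x_1)}$, with $s<\frac{1}{\mu(Q_{j(x_1)})}\int_{Q_{j(x_1)}}f(x_1,\cdot)\,d\mu\le C_\mu s$ (the upper bound coming from the doubling property applied to the dyadic parent, which is contained in $\Omega(x_1)^c$ up to measure zero), and $\sum_j\mu(Q_{j(x_1)})\le s^{-1}\int_{Q_{j(x_1)}}f(x_1,\cdot)\,d\mu\le s^{-1}\int_Y f(x_1,\cdot)\,d\mu$. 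Then set $b(x_1,x_2)=\sum_j\big(f(x_1,x_2)-\frac{1}{\mu(Q_{j(x_1)})}\int_{Q_{j(x_1)}}f(x_1,\cdot)\,d\mu\big)\chi_{Q_{j(x_1)}}(x_2)$ and $g=f-b$; the mean-zero property of each $b_j(x_1,\cdot)$ on $Q_{j(x_1)}$ and the pointwise bound $|g|\le C_\mu s$ (Lebesgue differentiation for the dyadic filtration off $\Omega(x_1)$, together with the averaged bound on $\Omega(x_1)$) are then immediate from this construction, which yields (ii), (iv), and the $x_1$-fiberwise statements in (iii).

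The genuine issue, and the step I expect to be the main obstacle, is \textbf{indexing and measurability}: the decomposition above is defined separately for each $x_1$, and one must organize the family $\{Q_{j(x_1)}\}_{x_1}$ into a \emph{countable} list of sets $S_j=F_j\times Q_j\subseteq X\times Y$ with $Q_j$ a fixed dyadic cube and $F_j$ a Borel subset of $X$, such that $\{(x_1,x_2)\colon x_2\in Q_{j(x_1)}\}=\bigcup_j S_j$. The natural way to do this is to enumerate \emph{all} dyadic cubes $\{Q_j\}_{j\in\mathbb N}$ of $(Y,\zeta,\mu)$ (the dyadic family is countable) and, for each fixed cube $Q=Q_j$, let $F_j=\{x_1\in X\colon Q \text{ is one of the selected maximal cubes } Q_{j'(x_1)} \text{ for } f(x_1,\cdot)\}$, i.e. $F_j=\{x_1\colon \tfrac{1}{\mu(Q)}\int_Q f(x_1,\cdot)\,d\mu>s\}\cap\bigcap_{Q'\supsetneq Q \text{ dyadic}}\{x_1\colon \tfrac{1}{\mu(Q')}\int_{Q'}f(x_1,\cdot)\,d\mu\le s\}$. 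Here the continuity of $f$ on $X\times Y$ together with Fubini/dominated convergence shows that $x_1\mapsto\int_{Q'}f(x_1,\cdot)\,d\mu$ is Borel measurable for each dyadic $Q'$, so each $F_j$ is a countable Boolean combination of Borel sets, hence Borel, and the sets $S_j=F_j\times Q_j$ are pairwise disjoint by maximality. Defining $b_j(x)=\big(f(x)-\tfrac{1}{\mu(Q_j)}\int_{Q_j}f(x_1,\cdot)\,d\mu\big)\chi_{S_j}(x)$ and $g=f-\sum_j b_j$ then gives Borel measurable functions realizing the required decomposition.

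Finally I would dispatch the two remaining global claims. For (i): $\|b_j\|_{L^1}\le\int_{F_j}\big(\int_{Q_j}f(x_1,\cdot)\,d\mu+\mu(Q_j)\cdot\tfrac{1}{\mu(Q_j)}\int_{Q_j}f(x_1,\cdot)\,d\mu\big)dx_1\le 2\int_{F_j}\int_{Q_j}f\,d\mu\,dx_1$, and summing over $j$ (using disjointness of the $S_j$) yields $\sum_j\|b_j\|_{L^1}\le 2\|f\|_{L^1}$; then $\|g\|_{L^1}\le\|f\|_{L^1}+\sum_j\|b_j\|_{L^1}\le 3\|f\|_{L^1}$, giving $\|g\|_{L^1}+\sum_j\|b_j\|_{L^1}\le 4\|f\|_{L^1}$. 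For (v): by construction, for fixed $x_1$ we have $\bigcup_j S_j(x_1)=\bigcup_j Q_{j(x_1)}=\Omega(x_1)=\{x_2\colon\mathcal D(f)(x_1,x_2)>s\}$, and slicing in $x_1$ shows $\bigcup_j F_j\times Q_j=\bigcup_j S_j=\{x\in X\times Y\colon\mathcal D(f)(x)>s\}$. Assembling these pieces completes the proof; the only subtlety beyond routine Calder\'on--Zygmund bookkeeping is the measurability verification for the $F_j$, which rests on the joint continuity of $f$ and Fubini's theorem, and on the countability of the dyadic family from \cite{dyad}.
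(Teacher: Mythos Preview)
Your proposal is correct and follows essentially the same route as the paper. Both proofs perform the standard Calder\'on--Zygmund stopping-time argument in the $x_2$-variable for each fixed $x_1$, identify measurability as the only genuine issue, and resolve it in the same way: enumerate all dyadic cubes $Q_j$ and define $F_j$ as the set of $x_1$ for which $Q_j$ is a selected maximal cube, i.e.\ exactly your formula $F_j=\{x_1:\mu(Q_j)^{-1}\int_{Q_j}f(x_1,\cdot)\,d\mu>s\}\cap\bigcap_{Q'\supsetneq Q_j}\{x_1:\mu(Q')^{-1}\int_{Q'}f(x_1,\cdot)\,d\mu\le s\}$, which is Borel by continuity of $f$ and Fubini.

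One minor slip: your bound $\|g\|_{L^1}\le\|f\|_{L^1}+\sum_j\|b_j\|_{L^1}\le 3\|f\|_{L^1}$ combined with $\sum_j\|b_j\|_{L^1}\le 2\|f\|_{L^1}$ gives $5\|f\|_{L^1}$, not the stated $4\|f\|_{L^1}$. To get the constant $4$ you should bound $\|g\|_{L^1}$ directly (as the paper does): on $\Omega^c$ one has $g=f$, and on $\Omega$ the averaged piece satisfies $\int_X\sum_j\int_{Q_j}\mu(Q_j)^{-1}\int_{Q_j}f(x_1,\cdot)\,d\mu\,\chi_{S_j}\,d\mu\,d\nu\le\|f\|_{L^1}$ by Fubini and disjointness of the $S_j(x_1)$, giving $\|g\|_{L^1}\le 2\|f\|_{L^1}$.
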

\begin{proof}
In part the lemma is intuitively quite clear. The fact we do need to prove is that the decomposition can be done in a 'measurable' way.

Since $f$ is continuous $E_{l}f$ is measurable on $X\times Y.$ Therefore
$$\Omega_l=\left\{x\in X\times Y\colon E_{l}f(x)> s,\quad E_{l'}f(x)\leq s \textrm{ for } l'<l\right\}$$
are measurable subsets of $X\times Y.$ Moreover, the sets $\Omega_l$ are pairwise disjoint and satisfy
\begin{equation}\label{chap:CkHinf,sec:OA,eq:sumOmel} \Omega:=\bigcup_l \Omega_l=\{x\in X\times Y\colon \mathcal{D}(f)(x)>s\}\end{equation}. Setting $\Omega=\bigcup_l\Omega_l$ we see that if $x\in \Omega^c,$ then $f(x)\leq s.$

Observe now that for each fixed $x_1\in X,$ if $z_{Q_{\alpha}^l}$ denotes the center of the cube $Q_{\alpha}^l,$ then $E_{l}f(x)=E_{l}f(x_1,z_{Q_{\alpha}^l}),$ for all $x_2\in Q_{\alpha}^l.$ Therefore, a short reasoning shows that $\Omega_l=\bigcup_{\alpha} F_{\alpha,l} \times Q_{\alpha}^l\equiv \bigcup_{\alpha} S_{\alpha,l},$ where
\begin{equation*}
F_{\alpha,l}=\left\{x_1\in X\colon E_{l}f(x_1,z_{Q_{\alpha}^l})> s,\quad E_{l'}f(x_1,z_{Q})\leq s \textrm{ for } Q\supset Q_{\alpha}^l,\,Q \in \mathcal{Q}_{l'},\,l'<l\right\}.
\end{equation*}
From the continuity of $f$ it follows that the sets $S_{\alpha,l}$ are $\nu \otimes \mu$ measurable. Moreover, $\Omega=\bigcup_{\alpha,l}S_{\alpha,l},$ where the sum runs over $(\alpha,l)$ corresponding to all cubes and the sets $S_{\alpha,l}$ are pairwise disjoint. Hence, recalling \eqref{chap:CkHinf,sec:OA,eq:sumOmel}, we obtain (v).

Note that some of the sets $S_{\alpha,l}$ may be empty, as well as the sets $S_{\alpha,l}(x_1)=\{x_2\colon x\in S_{\alpha,l}\}.$ However, if for some $x_1\in X$ the set $S_{\alpha,l}(x_1)$ is not empty, then $S_{\alpha,l}(x_1)$ coincides with a cube $Q^{\alpha}_l(x_1).$ In fact the just presented construction may be phrased as follows: $x_1\in F_{\alpha,l}$ is and only if the cube $Q^{\alpha}_{l}$ has been chosen as one of the cubes for the Calder\'on-Zygmund decomposition of the function $f(x_1,\cdot).$

Since the set of pairs $(\alpha,l)$ is countable from now on we associate with each $j$ a pair $(\alpha,l)$ and a cube $Q_{\alpha,l}.$ Then $S_j= F_j \times Q_j$ are the sets from (iii). Next we set
\begin{align*}
&g(x)=f(x)\chi_{\Omega^c}+\sum_j\frac{1}{\mu(Q_{j})}\int_{Q_{j}}f(x_1,y_2)\,d\mu(y_2)\,\chi_{S_j}(x),\\
&b(x)=\sum_{j}b_{j}(x)=\sum_{j}\left(f(x)-\frac{1}{\mu(Q_{j})}\int_{Q_{j}}f(x_1,y_2)\,d\mu(y_2)\right)\chi_{S_j}(x),
\end{align*}
so that $f=g+\sum_{j}b_{j}.$ Also, since each set $S_j$ is uniquely associated with the dyadic cube $Q_j,$ the same holds for the functions $b_j.$ Let $x_1\in X$ be fixed. Then either $S_j(x_1)$ is or is not empty. In the second case $S_j(x_1)=Q_{j}(x_1),$ for some cube $Q_{j}(x_1).$ Moreover, the cubes $Q_j(x_1)$ are pairwise disjoint. Hence,
\begin{align*}
&\sum_j\int_{S_j(x_1)}\frac{1}{\mu(Q_{j})}\int_{Q_{j}}f(x_1,y_2)\,d\mu(y_2)\,\chi_{S_j}(x)\,d\mu(x_2)\\
&=\sum_{j\colon S_j(x_1)\neq \emptyset}\int_{Q_j(x_1)}\frac{1}{\mu(Q_{j}(x_1))}\int_{Q_{j}(x_1)}f(x_1,y_2)\,d\mu(y_2)\,\,d\mu(x_2)\leq \int_Y f(x)\,d\mu(x_2)
\end{align*}
and consequently, since $\chi_{S_j}(x)=\chi_{S_j(x_1)}(x_2),$ using Fubini's theorem we obtain
\begin{align*}
&\iint\limits_{X\times Y}\sum_j\frac{1}{\mu(Q_{j})}\int_{Q_{j}}f(x_1,y_2)\,d\mu(y_2)\,\chi_{S_j}(x)\, d\nu(x_1)\,d\mu(x_2)\\
&=\int_X \bigg(\sum_j\int_{S_j(x_1)}\frac{1}{\mu(Q_{j})}\int_{Q_{j}}f(x_1,y_2)\,d\mu(y_2)\,\chi_{S_j}(x)\,d\mu(x_2)\bigg)\,d\nu(x_1)\\
&\leq \int_{X\times Y}f(x_1,y_2)\,d\mu(y_2)\,d\nu(x_1)=\|f\|_{L^1}. \label{chap:CkHinf,sec:OA,eq:estint}
\end{align*}
From the above we obtain $\|g\|_{L^1}\leq 2\|f\|_{L^1}$ and $\sum_j \|b_j\|_{L^1}\leq  2\|f\|_{L^1},$ thus proving (i).

Now we pass to (ii). Since $|f(x)|\leq s,$ for $x\in \Omega^c$ and the sets $S_j$ are disjoint it suffices to show that,
\begin{equation}
\label{chap:CkHinf,sec:OA,eq:toshow}
\frac{1}{\mu(Q_{j})}\int_{Q_{j}}f(x_1,y_2)\,d\mu(y_2)\chi_{S_j}(x)\leq Cs,\qquad \textrm{ for } x_2\in S_{j}(x_1).
\end{equation} If $x_2\in S_j(x_1),$ then $S_{j}(x_1)=Q_{j}(x_1),$ for some $Q_{j}(x_1)\in\mathcal{Q}_l.$ Moreover, there exists $\tilde{Q}_{j}(x_1)\supset Q_{j}(x_1),$ with $\tilde{Q}_{j}(x_1)\in \mathcal{Q}_{l-1}.$ Then, since $x_2\in S_j(x_1),$  $$\frac{1}{\mu(\tilde{Q}_{j}(x_1))}\int_{\tilde{Q}_{j}(x_1)}f(x_1,y_2)\,d\mu(y_2)=E_{l'}f(x)\leq s.$$ Therefore, a standard argument, based on the doubling property of $\mu,$ gives
$$\frac{1}{\mu(\tilde{Q}_{j}(x_1))}\int_{\tilde{Q}_{j}(x_1)}f(x)\,d\mu(y_2)\leq \frac{C_{\mu}}{\mu(\tilde{Q}_{j}(x_1))}\int_{\tilde{Q}_{j}(x_1)}f(x_1,y_2)\,d\mu(y_2)\leq C_{\mu} s.$$ Hence, \eqref{chap:CkHinf,sec:OA,eq:toshow} and thus also (ii) is proved.

Observe that from the very definition of the sets $S_j$ we have
\begin{equation}
\label{chap:CkHinf,sec:OA,eq:comless}
\frac{1}{\mu(Q_{j})}\int_{Q_{j}}f(x_1,y_2)\,d\mu(y_2)\chi_{S_j}(x)> s,\qquad \textrm{ for } x_2\in S_{j}(x_1).
\end{equation}
Combining the above with \eqref{chap:CkHinf,sec:OA,eq:toshow} we obtain item (iv).

It remains to prove the property (iii). The inequality $\sum_{j}\mu(S_j(x_1))\leq s^{-1}\int_{Y}f(x)\,d\mu(x_2)$ follows from \eqref{chap:CkHinf,sec:OA,eq:comless}. If $S_j(x_1)=\emptyset$ then obviously, $b_j(x_1,\cdot)=0.$ If $S_j(x_1)$ is not empty, then $S_j(x_1)=Q_{j}(x_1),$ for some $j(x_1),$ so that $\supp b_j(x_1,\cdot)\subset Q_{j}(x_1).$ In either case $\int_{Y}b_j(x)\,d\mu(x_2)=\int_{S_j(x_1)}b_j(x)\,d\mu(x_2)=0.$
\end{proof}
Using Lemma \ref{lem:calzygdec} we now prove Theorem \ref{thm:interH1}. The proof follows the scheme from \cite[Theorem D, pp.\ 596, 635--637]{CW} by Coifman and Weiss.
\begin{proof}[Proof of Theorem \ref{thm:interH1}]
Fix $1<q<p$ and set $\mD^q(f)=(\mD(|f|^q))^{1/q},$ with $\mD$ given by \eqref{chap:CkHinf,sec:OA,eq:dyadmaxdef}. Then, since $\mD$ is bounded on $L^p$ and $1<q<p,$ the same is true for $\mD^q.$

Fix a continuous function $0\leq f\in L^p$ and let
\begin{equation}\label{chap:CkHinf,sec:OA,eq:levelset}\Theta^{s}=\{x\colon \mD^{q}(f)>s\}=\{x\colon \mD(f^q)>s^q\}.\end{equation} From item (v) of Lemma \ref{lem:calzygdec} it follows that $$\Theta^{s}=\bigcup_j F_j \times Q_j=\bigcup S_j,$$ where the sets $S_j$ satisfy properties (i)-(iv) from Lemma \ref{lem:calzygdec} with $s^{q}$ in place of $s$ and $f^q$ in place of $f.$ In particular
\begin{equation}
\label{chap:CkHinf,sec:OA,eq:meanprop}
\frac{1}{\mu(Q_j)}\int_{Q_j}f^q\,d\mu(x_2)\approx s^{q},\qquad x_1\in F_j.
\end{equation}

Decompose $f=g_{s}+b_{s}=g_{s}+\sum_j b_{j,s}$ with
\begin{align*}
g_{s}&=g=f(1-\chi_{\Theta^{s}})+\sum_j\frac{1}{\mu(Q_j)}\int_{Q_j}f(x)\,d\mu(x_2)\chi_{S_j}\\
b_{j,s}&=b_j=\left(f(x)-\frac{1}{\mu(Q_j)}\int_{Q_j}f(x_1,y_2)\,d\mu(y_2)\right)\chi_{S_j}.
\end{align*}
If we fix $x_1\in F_j,$ then because $|b_j|\leq |f|+\frac{1}{\mu(Q_j)}\int_{Q_j}f(x_1,y_2)\,d\mu(y_2)\chi_{S_j},$ using \eqref{chap:CkHinf,sec:OA,eq:meanprop} and H\"older's inequality, we obtain
\begin{equation}
\label{chap:CkHinf,sec:OA,eq:dyad-ball}
\begin{split}&\left(\int_{Q_j}|b_j|^{q}\,d\mu(x_2)\right)^{1/q}\\&\leq \left(\int_{Q_j}|f|^{q}\,d\mu(x_2)\right)^{1/q}+\left(\int_{Q_j}\left|\frac{1}{\mu(Q_j)}\int_{Q_j}f(x_1,y_2)\,d\mu(y_2)\right|^{q}\chi_{Q_j}\,d\mu(x_2)\right)^{1/q}\\
&\lesssim s\mu(Q_j)^{1/q}+\left(\int_{Q_j}\frac{1}{\mu(Q_j)}\int_{Q_j}f^q(x_1,y_2)\,d\mu(y_2)\chi_{Q_j}\,d\mu(x_2)\right)^{1/q}\lesssim s \mu(Q_j)^{1/q}.
\end{split}
\end{equation}
Let $\underline{B}(Q_j)$ be the ball included in $Q_j$ from \cite[Theorem 2.2 (2.8)]{dyad}, i.e.\ satisfying
$$\underline{B}(Q_j)\subset Q_j ,\qquad \mu(Q_j)\leq C_{\mu}\,\mu (\underline{B}(Q_j)).$$
Then, from \eqref{chap:CkHinf,sec:OA,eq:dyad-ball} it follows that
$$\left(\frac{1}{\mu(\underline{B}(Q_j))}\int_{\underline{B}(Q_j)}|b_j|^{q}\,d\mu(x_2)\right)^{1/q}\leq C_{\mu} s.$$
Consequently, for each fixed $x_1\in F_j,$ the function $$c_j(x_1,\cdot)=\frac{b_j(x_1,\cdot)}{C_{\mu}s \mu(\underline{B}(Q_j))}$$ is supported in $\underline{B}(Q_j)$ and satisfies \begin{equation}\label{chap:CkHinf,sec:OA,eq:atom}\|c_j(x_1,\cdot)\|_{L^q(Y,\frac{1}{\mu(\underline{B}(Q_j))}d\mu)}\leq \frac{1}{\mu(\underline{B}(Q_j))}.\end{equation}
The above inequality is also trivially satisfied if $x_1\not\in F_j,$ since then $c_j(x_1,\cdot)\equiv 0.$

From \eqref{chap:CkHinf,sec:OA,eq:atom} it follows that for each fixed $x_1\in \mathbb{R}^d$ the functions $c_j(x_1,\cdot)$ are $H^{1,q}(Y,\mu)$-atoms in the sense of Coifman-Weiss \cite[p.\ 591]{CW}, and thus $\|c_j\|_{H^{1,q}(Y,\mu)}=1$. Moreover, from the decomposition $b=\sum_j C_{\mu}s \mu(\underline{B}(Q_j))c_j$ we obtain
$$\|b(x_1,\cdot)\|_{H^{1,q}(Y,\mu)}\leq C_{\mu} s \sum_{j\colon x_1\in F_j}\mu(Q_j)=c_{\mu}s \sum_{j}\mu(S_j(x_1)).$$
Since the spaces $H^{1,q}(Y,\mu)$ and $H^{1}(Y,\mu)=H^{1,\infty}(Y,\mu)$ coincide, cf. \cite[Theorem A]{CW}, using Fubini's theorem and the disjointness of $S_j$ we obtain
\begin{equation}
\label{chap:CkHinf,sec:OA,eq:H1part}
\|b\|_{L^1_{\gamma}(H^1)}=\int_{\mathbb{R}^d}\|b(x_1,\cdot)\|_{H^{1}(Y,\mu)}\,d\gamma(x_1)\lesssim s \sum_{j}(\gamma\otimes \mu)(S_j)=C s (\gamma\otimes \mu)(\Theta^{s}).
\end{equation}

By the layer-cake formula we have
\begin{equation*}
p^{-1}\|Sf\|_{L^p}=\int_0^{\infty}s^{p-1}(\gamma\otimes \mu)(x\colon |Sf(x)|>s)\,ds,
\end{equation*}
and, consequently,
\begin{align*} &\|Sf\|_{L^p}\lesssim\int_0^{\infty}s^{p-1}(\gamma\otimes \mu)(x\colon |Sb_{s}(x)|>s/2)\,ds+
\int_0^{\infty}s^{p-1}(\gamma\otimes \mu)(x\colon |Sg_{s}(x)|>s/2)\,ds\\
&:= E_1+E_2.
\end{align*}
To estimate $E_1$ we use the weak type property of $S$ and \eqref{chap:CkHinf,sec:OA,eq:H1part}, obtaining
\begin{equation}
\label{chap:CkHinf,sec:OA,eq:thtaest}
E_1\lesssim\int_0^{\infty}s^{p-2}\|b_{s}\|_{L^1_{\gamma}(H^1)}\,ds\lesssim\int_0^{\infty}s^{p-1} (\gamma\otimes \mu)(\Theta^{s})\,ds=\|\mD^q(f)\|_p^p\lesssim\|f\|_{p}^p.
\end{equation}

Passing to $E_2,$ the layer-cake formula together with the $L^2$ boundedness of $S$ and Chebyshev's inequality produce
\begin{align*}
&p^{-1}E_2\lesssim C \int_0^{\infty}s^{p-3}\|g_{s}\|_2^2\,ds\\
&=\int_0^{\infty}s^{p-3}\int_{\Theta^{s}}|g_{s}|^2\,d\gamma\,d\mu\,ds+\int_0^{\infty}s^{p-3}\int_{(\Theta^{s})^c}|g_{s}|^2\,d\gamma\,d\mu\,ds:= E_{2,1}+E_{2,2}.
\end{align*}
From \eqref{chap:CkHinf,sec:OA,eq:levelset}, \eqref{chap:CkHinf,sec:OA,eq:meanprop} and the definition of $g_{s}$ we see that
$|g_{s}|\leq C s,$ and consequently,
\begin{align*}
E_{2,1}\leq \lesssim\int_0^{\infty}s^{p-1}(\gamma\otimes \mu)(\Theta^{s})\,ds.
\end{align*}
The above quantity has already been estimated, see \eqref{chap:CkHinf,sec:OA,eq:thtaest}. Now we focus on $E_{2,2}.$ Since $g_{s}=f$ outside of $\Theta^{s}$ and $f\leq\mD^q(f),$ using Fubini's theorem we have
\begin{align*}
E_{2,2}\lesssim\int_{\mathbb{R}^d\times Y}|f(x)|^2\int_{f}^{\infty}s^{p-3}\,ds \,d(\gamma\otimes \mu)\lesssim\int_{\mathbb{R}^d\times Y}|f(x)|^p\,d(\gamma\otimes \mu),
\end{align*}
thus obtaining the desired estimate for $E_2$ and hence, finishing the proof of Theorem \ref{thm:interH1}.
\end{proof}


    %
    %

   \begin{appendices}

    \label{chap:App}

    \chapter{}

    \section{Joint spectral measure and integration}
    \numberwithin{equation}{section}
    \label{chap:App,sec:joint}
    We provide the definitions and briefly list the properties of the joint spectral measure and integration. In particular, in Section \ref{chap:App,sec:joint,subsec:mst}, we state the multivariate spectral theorem. We finish this appendix by proving the strong $L^p,$ $1<p<\infty,$ continuity of $u\mapsto L^{iu}.$ The exposition in Sections \eqref{chap:App,sec:joint,subsec:resid} - \ref{chap:App,sec:joint,subsec:mst} is based on the neat and comprehensive presentation in Martini's thesis \cite[Appendix A.3]{Martini_Phd}, to which we refer the reader for more details and further references.
    \subsection{Resolutions of the identity}
    \label{chap:App,sec:joint,subsec:resid}
    Let $(\Hi,\langle \cdot , \cdot\rangle_{\Hi})$ be a Hilbert space and let $\Omega$ be a locally compact second-countable Hausdorff topological space (in fact we are interested in $\Omega=\mathbb{R}^d,$ $d\geq 1$). A \textit{resolution of the identity} of $\Hi$ on $\Omega$ is a mapping $E$ which sends Borel subsets $\omega$ of $\Omega$ to orthogonal projections $E(\omega)$ on $\Hi$ (called \textit{spectral projections}), and which satisfies the following properties
    \begin{itemize}
     \setlength{\itemsep}{-20pt}
    \item $E(\Omega)=I;$\\
    \item $E(\omega_1\cup \omega_2)=E(\omega_1)+E(\omega_2)$, whenever $\omega_1\cap \omega_2=\emptyset;$\\
    \item $E(\omega)=\lim_n E(\omega_n)$ (strong limit), whenever $\omega=\bigcup\omega_n$ (increasing union);\\
    \item $E(\omega_1\cap\omega_2)=E(\omega_1)E(\omega_2)=E(\omega_2)E(\omega_1).$
    \end{itemize}

    If $E$ is a resolution of the identity of $\Hi$ on $\Omega,$ then for every $x,y\in \Hi$ the equality $$E_{x,y}(\omega)=\langle E(\omega)x,y\rangle_{\Hi}$$ defines a complex-valued Borel measure $E_{x,y}$ on $\Omega,$ which satisfies $$|E_{x,y}(\Omega)|\leq \|x\|\|y\|.$$ If $x=y,$ then $E_{x,x}$ is a positive Borel measure such that
    $$E_{x,x}(\Omega)=\|x\|^2.$$

    The \textit{support} $\supp E$ of $E$ on $\Omega$ is defined by
    $$\supp E= \Omega \setminus \bigcup\{\omega\subseteq \Omega\colon \omega \textrm{ is open and } E(\omega)=0\}.$$
    Since we assume that $\Omega$ is second-countable we always have $E(\supp E)=I.$

    \subsection{Spectral integration}
    Let $B(\Omega)$ be the vector space of complex-valued bounded Borel functions on a locally compact second-countable Hausdorff topological space $\Omega$ equipped with the norm $\|f\|=\sup_{\la \in \Omega}|f(\la)|.$ The space $B(\Omega)$ is a $C^*$ algebra with pointwise multiplication. If $E$ is a resolution of the identity of $\Hi$ on $\Omega$, then there is a unique continuous linear map $B(\Omega)\to \mathcal{B}(\Hi)$ which, for every Borel set $\omega\subseteq \Omega,$ maps $\chi_{\omega}$ to $E(\omega).$ The image of a function $f\in B(\Omega)$ via this map is denoted by $$\int_{\Omega}f dE\quad\textrm{or}\quad E[f]$$ and is called the \textit{spectral integral} of the function of $f$ with respect to $E.$ Then, for every $f\in B(\Omega)$ we have
    $$\left\langle E[f]\, x,y\right\rangle_{\Hi}=\int_{\Omega}f dE_{x,y},\qquad x,y\in \Hi.$$

    The definition of $E[f]$ can be extended to general Borel functions $f$ on $\Omega,$ leading to (possibly unbounded) operators $E[f]=\int_{\Omega}f dE.$ These are given for $x\in \Dom(E[f]),$
    $$\Dom\left(E[f]\right)=\left\{x\in \Hi\colon \int_{\Omega}|f|^2\,dE_{x,x}<\infty\right\},$$
    by the condition
    $$\left\langle E[f] x,y\right\rangle_{\Hi}=\int_{\Omega}f dE_{x,y},\qquad y\in \Hi.$$
    In fact it can be shown that for every Borel function $f:\Omega\to \mathbb{C},$ the set $\Dom(E[f])$ is a dense subspace of $\Hi$ and
    $$\|E[f]x\|^2=\int_{\Omega}|f|^2dE_{x,x},\qquad x\in \Dom(E[f]).$$
    Moreover, if $f$ is real-valued, then $E[f]$ is self-adjoint on $\Dom(E[f]).$
    \subsection{Multivariate spectral theorem}
    \label{chap:App,sec:joint,subsec:mst}
    Consider a system $T=(T_1,\ldots,T_d)$ of self-adjoint operators on a Hilbert space $\Hi$ (the most interesting instance for us being $\Hi=L^2(X,\nu)$). By the spectral theorem each $T_r,$ $r=1,\ldots,d,$ has an associated unique resolution of the identity $E_{T_r}$ of $\Hi$ on $\mathbb{R}.$ For each $r=1,\ldots,d,$ $E_{T_r}$ is also called \textit{the spectral measure} or \textit{the spectral resolution} of $T_r.$

    We say that the operators $T_r,$ $r=1,\ldots,d,$ commute strongly if their spectral projections commute pairwise. In this case the \textit{multivariate spectral theorem} states that there exists a unique resolution of the identity $E$ on $\mathbb{R}^d$ such that
    $$E(\omega_1\times\cdots\times \omega_d)=E_{T_1}(\omega_1)\cdots E_{T_d}(\omega_d)$$
    and
    $$T_r=\int_{\mathbb{R}^d}\la_r\, dE(\la)=\int_{\mathbb{R}}\la_r\, dE_{T_r}(\la_r),\qquad \la=(\la_1,\ldots,\la_d),$$
    see \cite[Appendix A.4.4]{Martini_Phd} and \cite[Theorem 4.10 and Theorems 5.21, 5.23]{schmu:dgen}. The resolution of the identity $E(\la)$ is called the \textit{joint spectral measure} for the system $T.$ Additionally, we have
    \begin{equation}\label{chap:App,sec:joint,sub:mst,eq:support}\supp E\subseteq \supp E_1\times\cdots\times \supp E_d,\end{equation} see \cite[Proposition 5.24 (ii)]{schmu:dgen}, however it may happen that the inclusion is sharp. The set $\supp E$ is also denoted by $\sigma(L)$ and called the \textit{joint spectrum} of the system $T.$ Since $\supp E_r=\sigma(T_r),$ where $\sigma(T_r)$ denotes the spectrum of $T_r,$ $r=1,\ldots,d,$ the inclusion \eqref{chap:App,sec:joint,sub:mst,eq:support} can be also restated as
    $$\sigma(T)\subset \sigma(T_1)\times\cdots\times \sigma(T_d).$$

    For a Borel measurable complex-valued function $m$ on $\mathbb{R}^d$ we define the spectral multiplier for the system $T$ by
        $$
        m(T)=m(T_1,\ldots,T_d)=\int_{\mathbb{R}^d} m(\la)dE(\omega)=\int_{\sigma(L)}m(\la)dE(\la),
        $$
    on the domain
    $$\Dom(m(T))=\bigg\{x\in \Hi\colon \int_{\mathbb{R}^d}|m(\la)|^2dE_{x,x}(\la)<\infty\bigg\}.$$

    In particular, if $T=L=(L_1,\ldots,L_d),$ where $L_r,$ $r=1,\ldots,d,$ are self-adjoint, non-negative operators operators on $L^2(X,\nu)$ from Section \ref{chap:Intro,sec:Setting}, then $m(L)$ is given by
     \begin{equation}
     \label{chap:App,sec:joint,sub:mst,eq:m(L)def}
        m(L)=m(L_1,\ldots,L_d)=\int_{[0,\infty)^d} m(\la)dE(\la)=\int_{\sigma(L)}m(\la)dE(\la),
     \end{equation}
    on the domain
    \begin{equation} \label{chap:App,sec:joint,sub:mst,eq:m(L)dom}\Dom(m(L))=\bigg\{f\in L^2(X,\nu)\colon \int_{[0,\infty]^d}|m(\la)|^2dE_{f,f}(\la)<\infty\bigg\}.\end{equation}
    If we assume additionally \eqref{chap:Intro,sec:Setting,eq:noatomatzero}, then the first integral in \eqref{chap:App,sec:joint,sub:mst,eq:m(L)def} may be taken over $\Rdp$ instead of $[0,\infty)^d.$

    We finish this appendix by summarizing properties of the functional calculus given by \eqref{chap:App,sec:joint,sub:mst,eq:m(L)def}. Here $\overline{m}$ denotes the complex conjugate of a function $m.$
    \begin{pro}[cf.\ {\cite[Theorem 4.16]{schmu:dgen}}]
    \label{prop:funpro}
    For Borel measurable functions $m,m_1,m_2$ on $[0,\infty)^d$ and $z_1,z_2\in\mathbb{C}$ we have:
    \begin{enumerate}[i)]
    \item The adjoint of the (possibly unbounded) operator $m(L)$ is the operator $\overline{m}(L),$ consequently, if $m$ is real, then $m(L)$ is self-adjoint;
    \item If $m$ is non-negative, then $m(L)$ is non-negative with the spectral measure given by $E_{m(L)}(\omega)=\chi_{m^{-1}(\omega)}(L),$ where $\omega$ is a Borel subset of  $[0,\infty);$
    \item $m(L)$ is a closed normal operator on $L^2(X,\nu);$
    \item If $f\in\Dom(m(L)),$ then $\|m(L)f\|_{L^2(X,\nu)}=\int_{[0,\infty)^d}|m(\la)|^2 dE_{f,f}(\la);$
    \item If $m$ is bounded, then $\langle m(L)f ,g \rangle_{L^2(X,\nu)} =\int_{[0,\infty)^d} m(\la) dE_{f,g}(\la),$ for $f,g \in L^2(X,\nu);$
    \item The operator $(z_1m_1+z_2m_2)(L)$ is the closure of $z_1m_1(L)+z_2m_2(L);$
    \item If $m_1,m_2$ are non-negative, then $m_1(L)+m_2(L)=(m_1+m_2)(L);$
    \item The operator $(m_1m_2)(L)$ is the closure of $m_1(L)m_2(L),$ consequently, if $m_1,m_2$ are bounded, then the operators $m_1(L)$ and $m_2(L)$ commute.
    \end{enumerate}
    \end{pro}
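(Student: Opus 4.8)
\textbf{Proof plan for Proposition \ref{prop:funpro}.}

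The statement is essentially a translation of the classical multi-dimensional spectral calculus to our concrete setting, and the plan is to reduce everything to the corresponding facts for the general multivariate spectral integral, which are collected in \cite[Chapters 4--5]{schmu:dgen}. First I would fix the joint spectral measure $E=E(\la)$ on $[0,\infty)^d$ furnished by the multivariate spectral theorem (Section \ref{chap:App,sec:joint,subsec:mst}), so that for a Borel function $m$ on $[0,\infty)^d$ the operator $m(L)$ is precisely the spectral integral $E[m]$ on the natural domain \eqref{chap:App,sec:joint,sub:mst,eq:m(L)dom}. Since $E$ is genuinely a resolution of the identity of the Hilbert space $L^2(X,\nu)$ on the locally compact second-countable space $[0,\infty)^d$, every assertion about $m(L)$ is a special case of an assertion about $E[\cdot]$, and I would simply invoke the relevant item of \cite[Theorem 4.16]{schmu:dgen} (and the surrounding results on normality and closedness) for each of i)--viii).

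Concretely: item iv) and item v) are the defining properties of the spectral integral recalled in Section \ref{chap:App,sec:joint}, namely $\|E[m]f\|^2=\int|m|^2\,dE_{f,f}$ for $f\in\Dom(E[m])$ and $\langle E[m]f,g\rangle=\int m\,dE_{f,g}$ for bounded $m$; here nothing is to be proved beyond citing the construction. For item i), the identity $E[m]^*=E[\overline m]$ is the standard adjoint formula for spectral integrals, and self-adjointness for real $m$ follows immediately; I would just check that the domains match, which is automatic since $|\overline m|=|m|$. Item ii) is the change-of-variables / push-forward property: for $m\ge 0$ the operator $E[m]$ is non-negative and its own spectral resolution is the push-forward $\omega\mapsto E(m^{-1}(\omega))=\chi_{m^{-1}(\omega)}(L)$; this is \cite[Theorem 4.16 (or Proposition 5.15)]{schmu:dgen} applied to the Borel map $m\colon[0,\infty)^d\to[0,\infty)$. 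Item iii), that $m(L)=E[m]$ is closed and normal, is again a general structural fact about spectral integrals of arbitrary Borel functions. For vi), vii), viii) I would recall that the algebraic operations only hold ``up to closure'' in general: $z_1m_1(L)+z_2m_2(L)$ and $m_1(L)m_2(L)$ need not be closed, but their closures equal $(z_1m_1+z_2m_2)(L)$ and $(m_1m_2)(L)$ respectively; this is the content of the corresponding items of \cite[Theorem 4.16]{schmu:dgen}. The refinements vii) (sum of two \emph{non-negative} spectral integrals is already closed, hence equal on the nose) and the last clause of viii) (bounded $m_i$ give everywhere-defined bounded operators, which therefore commute because $m_1m_2=m_2m_1$ pointwise) then follow, in vii) from the fact that a sum of non-negative self-adjoint operators obtained from a joint resolution of the identity is self-adjoint on the intersection of domains and agrees there with $(m_1+m_2)(L)$, and in viii) from boundedness making all the ``closures'' trivial.

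There is really no serious obstacle here; the one point requiring a little care is the precise description of domains in vi)--viii), since the equalities are only valid after taking closures and one must not assert more than that. I would therefore phrase the proof as: ``All the claims are instances of the general properties of the spectral integral $E[\cdot]$ associated with the joint resolution of the identity $E$ of $L^2(X,\nu)$ on $[0,\infty)^d$; see \cite[Theorem 4.16]{schmu:dgen} (together with \cite[Theorems 5.21, 5.23]{schmu:dgen} for the multivariate setting). We only note that in vi) and viii) the stated operators are the \emph{closures} of the indicated algebraic combinations, these combinations being generally non-closed, while in vii) and the bounded case of viii) the combinations are already closed (respectively everywhere defined and bounded) so the closure is superfluous.'' That is the whole argument.
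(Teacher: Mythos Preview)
Your proposal is correct and matches the paper's treatment exactly: the proposition is stated with the citation ``cf.\ \cite[Theorem 4.16]{schmu:dgen}'' and no proof is given, since all the items are standard properties of the spectral integral with respect to a resolution of the identity. Your plan to reduce each item to the corresponding statement in \cite{schmu:dgen} is precisely what the paper intends by that citation.
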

    \subsection{Strong $L^p$ continuity of $u\mapsto L^{iu}$}
     \label{chap:App,sec:joint,subsec:strongmeasLiu}
    In this subsection we prove the strong $L^p,$ $1<p<\infty,$ continuity of $u\mapsto L^{iu},$ postulated on p. \pageref{chap:General,pag:strong}. Recall that the operator $L^{iu},$ $u\in\mathbb{R}^d,$ is given by \eqref{chap:App,sec:joint,sub:mst,eq:m(L)def} with $m(\la)=m_u(\la)=\la^{iu},$ $\la \in \Rdp.$

    We show that under \eqref{chap:Intro,sec:Setting,eq:contra} and \eqref{chap:Intro,sec:Setting,eq:noatomatzero}, for every $1<p<\infty$ and $u_0\in\mathbb{R}^d,$
    \begin{equation}
    \label{chap:App,sec:joint,subsec:strongmeasLiu,eq:toshow}
   \lim_{u\to u_0} \|L^{iu}f-L^{iu_0}f\|_{p}=0,\qquad f\in L^p.
    \end{equation}
    Since the operators $L^{iu_r},$ $r=1,\ldots,d,$ commute and $\|L_1^{iu_1}\cdots L_d^{iu_d}\|_{p\to p}$ is locally bounded, see \eqref{chap:Hinf,sec:genMarHinf,eq:Cowest} (cf.\ \cite[Corollary 1 and Theorem 3]{Hanonsemi}), it is enough to prove \eqref{chap:App,sec:joint,subsec:strongmeasLiu,eq:toshow} for $d=1$ and a single operator $L.$

    Let $d=1$ and fix $1<p<\infty.$ Since $\{L^{iu}\}_{u\in\mathbb{R}}$ is a group of bounded operators on $L^p,$ by \cite[Theorem 1.6]{EnNa1} the notions of weak and strong convergence coincide for $\{L^{iu}\}_{u\in\mathbb{R}}.$ Thus, it suffices to prove that $\lim_{u\to u_0}L^{iu}f=L^{iu_0}f,$ weakly in $L^p.$ The weak convergence is a consequence of the strong $L^2$ continuity of $L^{iu}$ (which follows from the spectral theorem), the local boundedness of $\|L^{iu}\|_{p\to p},$ and the density of $L^2\cap L^{p'},$ $1=1/p+1/p',$ in $L^{p'}.$

    \section{Tensor products of operators}
     \label{chap:App,sec:tens}
     Throughout this section the $\sigma$-finite measure space $(X,\nu)$ has a product form $(X_1\times\cdots \times X_d,\nu_1\otimes \cdots\otimes \nu_d).$ We study properties of tensor products of operators on $L^2(X,\nu)$ or, in some cases, $L^p(X,\nu),$ $1\leq p<\infty.$ For functions $f_r\in L^2(X_r,\nu_r),$ $r=1,\ldots,d,$ we always identify $f_1\otimes \cdots \otimes f_d$ with the pointwise product $f(x_1)\cdots f(x_d),$ $x=(x_1,\ldots,x_d)\in X.$ We say that $f$ is a tensor product function, whenever $f=f_1\otimes\cdots\otimes f_d.$ By simple functions we mean finite sums of characteristic functions of measurable sets.

     Assume that for each $r=1,\ldots,d,$ we have an operator $T_r$ which is linear and defined on a dense domain $\Dom(T_r)$ in the Hilbert space $L^2(X_r,\nu_r).$ Let $T_1\odot\cdots\odot T_d$ be the operator given by
     $(T_1\odot\cdots\odot T_d)(f)=\sum_{j}T_1(f_{j}^1)\cdots T_d(f_{j}^d)$ on
        \begin{align*}&\Dom(T_1\odot\cdots\odot T_d)=\bigg\{f\in L^2(X,\nu)\colon f=\sum_{j}f_{j}^1\otimes\cdots\otimes f_{j}^d\textrm{ (finite sum)},\, f_{j}^r\in \Dom(T_r)\bigg\}.\end{align*}
      Then, a short Hilbert space argument shows that $T_1\odot\cdots\odot T_d$ is a well defined linear operator on $\Dom(T_1\odot\cdots\odot T_d),$ cf.\ \cite[Proposition 7.20(i)]{schmu:dgen}. In the case when all but one of the operators $T_r$ are identities we also set
    \begin{equation}
     \label{chap:App,sec:tens,eq:Itensinit}
        T\odot I_{(r)}=I_{L^2(X_1,\nu_1)}\odot\cdots \odot I_{L^2(X_{r-1},\nu_{r-1})}\odot T \odot I_{L^2(X_{r+1},\nu_{r+1})}\cdots \odot I_{L^2(X_d,\nu_d)}.
        \end{equation}

    The following proposition contains \eqref{chap:Intro,sec:Notation,eq:tensnormeq} as a special case.
    \begin{pro}
    \label{prop:Fubtensprop}
    Fix $1\leq p<\infty,$ and assume that each $T_r$ is bounded on $L^2(X_r,\nu_r)$ and $L^p(X_r,\nu_r).$ Then $T_1\odot\cdots\odot T_d$ extends (uniquely) to a bounded operator $T_1\otimes\cdots\otimes T_d$ on both $L^2(X,\nu)$ and $L^p(X,\nu).$ Moreover, we have
     \begin{equation}
     \label{chap:App,sec:tens,eq:Fubtens}
        \prod_{r=1}^d\|T_r\|_{L^p(X_r,\nu_r)\to L^p(X_r,\nu_r)}=\|T_1\otimes\cdots\otimes T_d\|_{L^p(X,\nu)\to L^p(X,\nu)}.
        \end{equation}
    \end{pro}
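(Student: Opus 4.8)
\textbf{Proof proposal for Proposition \ref{prop:Fubtensprop}.}

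The plan is to reduce everything to the one-variable case by building the tensor product operator one factor at a time, and then to obtain the norm identity \eqref{chap:App,sec:tens,eq:Fubtens} by combining an easy lower bound (testing on tensor product functions) with an upper bound that comes from an iterated application of Fubini's theorem. First I would treat the basic building block: for a single bounded operator $S$ on $L^2(X_r,\nu_r)\cap L^p(X_r,\nu_r)$, I claim that the operator $S\odot I_{(r)}$ of \eqref{chap:App,sec:tens,eq:Itensinit} extends to a bounded operator $S\otimes I_{(r)}$ on $L^p(X,\nu)$ with $\|S\otimes I_{(r)}\|_{L^p(X,\nu)\to L^p(X,\nu)}=\|S\|_{L^p(X_r,\nu_r)\to L^p(X_r,\nu_r)}$. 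The key point here is that for $f\in L^p(X,\nu)$, Fubini's theorem tells us that for $\nu^{(\hat r)}$-a.e.\ choice of the remaining variables $\hat x=(x_1,\ldots,x_{r-1},x_{r+1},\ldots,x_d)$ the slice $f(\hat x,\cdot)$ belongs to $L^p(X_r,\nu_r)$, one may apply $S$ in the $x_r$-variable with the other variables frozen, and
\begin{align*}
\|S\otimes I_{(r)}f\|_{L^p(X,\nu)}^p&=\int_{X^{(\hat r)}}\int_{X_r}|S(f(\hat x,\cdot))(x_r)|^p\,d\nu_r(x_r)\,d\nu^{(\hat r)}(\hat x)\\
&\leq \|S\|_{L^p(X_r,\nu_r)\to L^p(X_r,\nu_r)}^p\int_{X^{(\hat r)}}\|f(\hat x,\cdot)\|_{L^p(X_r,\nu_r)}^p\,d\nu^{(\hat r)}(\hat x)=\|S\|_{L^p(X_r,\nu_r)\to L^p(X_r,\nu_r)}^p\|f\|_{L^p(X,\nu)}^p.
\end{align*}
Some care is needed to define $S\otimes I_{(r)}f$ for general $f$ and to check it agrees with $S\odot I_{(r)}$ on tensor products: one first verifies the slice-wise formula on simple functions (where both sides are literally the same finite sum) and on tensor product functions, and then extends by density of simple functions in $L^p$, using the just-displayed norm bound to guarantee the extension is well defined and independent of the approximating sequence. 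The same argument with $p$ replaced by $2$ gives boundedness on $L^2(X,\nu)$, and since simple functions are dense in $L^2\cap L^p$ in both norms the two extensions coincide on $L^2\cap L^p$; thus $S\otimes I_{(r)}$ is unambiguous.

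Next I would assemble the full operator by composition: since $T_1\odot\cdots\odot T_d$ acts on a tensor product $f_1\otimes\cdots\otimes f_d$ the same way as the composition $(T_1\otimes I_{(1)})\circ(T_2\otimes I_{(2)})\circ\cdots\circ(T_d\otimes I_{(d)})$ does, and these operators are each bounded on $L^2(X,\nu)$ and on $L^p(X,\nu)$ by the previous paragraph, the composition is a bounded operator on each of those spaces that agrees with $T_1\odot\cdots\odot T_d$ on the dense subspace $\Dom(T_1\odot\cdots\odot T_d)\cap(\text{finite sums of tensor products})$ — in particular on simple functions — so it is the desired bounded extension $T_1\otimes\cdots\otimes T_d$, and uniqueness follows from density. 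This already gives the inequality $\|T_1\otimes\cdots\otimes T_d\|_{L^p(X,\nu)\to L^p(X,\nu)}\leq\prod_{r=1}^d\|T_r\|_{L^p(X_r,\nu_r)\to L^p(X_r,\nu_r)}$, since the composition of factors each of operator norm $\|T_r\|_{L^p\to L^p}$ has norm at most the product. For the reverse inequality, fix $\varepsilon>0$ and choose, for each $r$, a function $g_r\in L^p(X_r,\nu_r)$ with $\|g_r\|_{L^p(X_r,\nu_r)}=1$ and $\|T_rg_r\|_{L^p(X_r,\nu_r)}\geq\|T_r\|_{L^p(X_r,\nu_r)\to L^p(X_r,\nu_r)}-\varepsilon$; after a further approximation I may assume each $g_r\in L^2(X_r,\nu_r)\cap L^p(X_r,\nu_r)$ so that $g:=g_1\otimes\cdots\otimes g_d$ lies in the domain where $T_1\otimes\cdots\otimes T_d$ acts as $T_1g_1\otimes\cdots\otimes T_dg_d$. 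Then $\|g\|_{L^p(X,\nu)}=\prod_r\|g_r\|_{L^p(X_r,\nu_r)}=1$ while, by Fubini's theorem, $\|(T_1\otimes\cdots\otimes T_d)g\|_{L^p(X,\nu)}=\prod_r\|T_rg_r\|_{L^p(X_r,\nu_r)}\geq\prod_r(\|T_r\|_{L^p(X_r,\nu_r)\to L^p(X_r,\nu_r)}-\varepsilon)$; letting $\varepsilon\to0$ gives the matching lower bound, and \eqref{chap:App,sec:tens,eq:Fubtens} follows.

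The main obstacle I anticipate is not the norm computation — which is a routine iteration of Fubini's theorem — but the measure-theoretic bookkeeping in the first step: making precise that "apply $T_r$ to the slice with the other variables frozen" produces a jointly measurable function on $X$, that this prescription is consistent with the algebraically defined operator $S\odot I_{(r)}$ on tensor products and simple functions, and that the density extension genuinely defines a single operator (the same on $L^2$ and $L^p$, up to agreement on $L^2\cap L^p$). I would handle joint measurability by first establishing it for simple functions (immediate) and tensor products, then passing to a general $f\in L^p$ via an $L^p$-convergent sequence of simple functions, extracting an a.e.-convergent subsequence, and using the slice-wise norm bound to control the limit; the $\sigma$-finiteness of the $(X_r,\nu_r)$ is what makes Fubini's theorem and these approximations legitimate. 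Once this foundational point is in place, the rest is the two-sided norm estimate sketched above.
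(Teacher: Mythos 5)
Your proof is correct and takes essentially the same approach as the paper: iterated Fubini for the upper bound, density of simple tensor combinations for the extension, and tensor product test functions for the matching lower bound. The only cosmetic difference is that you assemble the operator one factor $T_r\otimes I_{(r)}$ at a time and compose, while the paper estimates $\|(T_1\odot\cdots\odot T_d)f\|_{L^q}$ directly by applying Fubini several times in a single display.
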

    \begin{proof}
    In what follows $q$ denotes either $p$ or $2.$

    Observe that $T_1\odot\cdots\odot T_d$ acts on a function $f\in \Dom(T_1\odot\cdots\odot T_d)$ by applying each of the operators $T_r$ only to the $r$-th variable of $f.$ Thus, using Fubini's theorem several times we obtain for such an $f,$
    $$\|(T_1\odot\cdots\odot T_d)(f)\|_{L^q(X,\nu)\to L^q(X,\nu)}\leq \prod_{r=1}^d\|T_r\|_{L^q(X_r,\nu_r)\to L^q(X_r,\nu_r)}\|f\|_{L^q(X,\nu)}.$$
   Additionally, the above bound can be extended to all of $L^q(X,\nu).$ Indeed, since the operators $T_r$ are defined on $L^q(X_r,\nu_r),$ the set $\Dom(T_1\odot\cdots\odot T_d)$ is dense in $L^q(X,\nu),$ as it contains finite sums of tensor product functions of measurable sets.

    Thus, we have the desired extension $T_1\otimes\cdots\otimes T_d,$ and we proved that the right hand side of \eqref{chap:App,sec:tens,eq:Fubtens} is less than or equal to its left hand side. The opposite inequality follows once we apply $T_1\otimes\cdots\otimes T_d$ to $f=f_1\otimes\cdots\otimes f_d,$ such that $\|f_r\|_{L^q(X_r,\nu_r)}=1,$ for $r=1,\ldots,d,$ and $\|T_r(f_r)\|_{L^q(X_r,\nu_r)}$ is almost $\|T_r\|_{L^q(X_r,\nu_r)}.$

    \end{proof}

    Assume now that for each $r=1,\ldots,d,$ we have a non-negative self-adjoint operator $L_r$ defined on a dense domain $\Dom(L_r)\subseteq L^2(X_r,\nu_r).$ The next proposition allows us to lift the operators $L_r$ to the space $L^2(X,\nu)$.
     \begin{pro}
    \label{prop:tensclosself}
    The operators $L_r\odot I_{(r)},$ $r=1,\ldots,d,$ given by \eqref{chap:App,sec:tens,eq:Itensinit} are essentially self-adjoint on $\Dom(L_r\odot I_{(r)}).$ Moreover, their closures $L_r\otimes I_{(r)}$ are non-negative.
    \end{pro}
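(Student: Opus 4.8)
The plan is to reduce the multivariate statement to the classical one-variable spectral theorem by exploiting the product structure. First I would recall that each $L_r$ is, by hypothesis, non-negative and self-adjoint on its own Hilbert space $L^2(X_r,\nu_r)$, hence by the (one-variable) spectral theorem it admits a spectral resolution $E_{L_r}$ supported in $[0,\infty)$. The key technical point is to show that the core $\Dom(L_r\odot I_{(r)})$, consisting of finite sums of tensor product functions whose $r$-th factor lies in $\Dom(L_r)$, is a \emph{core} for the natural self-adjoint lift of $L_r$. To this end I would introduce the candidate self-adjoint operator $\widetilde{L_r} := E_{L_r}[\la_r]\otimes I_{(r)}$, defined via the multivariate spectral theorem of Section \ref{chap:App,sec:joint,subsec:mst} applied to the (trivially strongly commuting) system consisting of $L_r$ on the $r$-th variable and the zero operators on the remaining variables; equivalently, $\widetilde{L_r}$ is the operator $g(L_1,\ldots,L_d)$ for $g(\la)=\la_r$ in the sense of \eqref{chap:App,sec:joint,sub:mst,eq:m(L)def}. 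By construction $\widetilde{L_r}$ is self-adjoint and non-negative, and it clearly extends $L_r\odot I_{(r)}$ on the tensor product core.

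The main step is then to verify that $L_r\odot I_{(r)}$ is \emph{essentially} self-adjoint, i.e.\ that its closure equals $\widetilde{L_r}$. I would do this by the standard criterion: it suffices to check that the deficiency spaces $\ker\big((L_r\odot I_{(r)})^* \pm iI\big)$ are trivial. Using Fubini's theorem and the tensor decomposition $L^2(X,\nu)\cong L^2(X_r,\nu_r)\,\widehat\otimes\, L^2(X^{(r)},\nu^{(r)})$ (Hilbert space tensor product), a function $f$ in such a deficiency space would, after pairing against tensor products $h\otimes u$ with $h\in\Dom(L_r)$ and $u$ arbitrary, force the section $f(\cdot,x^{(r)})$ (for a.e.\ fixed $x^{(r)}$) to lie in the deficiency space of $L_r$ itself; since $L_r$ is already self-adjoint, that section must vanish a.e., so $f=0$. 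Alternatively, and perhaps more cleanly, I would expand $f$ in an orthonormal basis of $L^2(X^{(r)},\nu^{(r)})$, reducing the computation of $(L_r\odot I_{(r)})^*$ to a countable orthogonal sum of copies of $L_r^*=L_r$, and read off essential self-adjointness directly. Either way the tensor product core is dense in the graph norm of $\widetilde{L_r}$, which is exactly the assertion.

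Finally, non-negativity of the closure $L_r\otimes I_{(r)} = \widetilde{L_r}$ is immediate from the spectral picture: since $E_{L_r}$ is supported in $[0,\infty)$, the joint spectral measure $E$ of the system satisfies $\supp E\subseteq \supp E_{L_r}\times\prod_{s\neq r}\supp E_{L_s}\subseteq [0,\infty)^d$ by \eqref{chap:App,sec:joint,sub:mst,eq:support}, so for $f\in\Dom(\widetilde{L_r})$ one has $\langle \widetilde{L_r}f,f\rangle = \int_{[0,\infty)^d}\la_r\,dE_{f,f}(\la)\geq 0$; this also follows from item (ii) of Proposition \ref{prop:funpro} applied to $g(\la)=\la_r$. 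I expect the main obstacle to be the bookkeeping in the essential self-adjointness argument — specifically, justifying that the adjoint of the tensor product operator on the algebraic core decomposes as the orthogonal sum (or "fibered integral") of copies of $L_r$, which is where one must be careful with domains and with the passage between the algebraic and Hilbert space tensor products; the non-negativity is then essentially a corollary.
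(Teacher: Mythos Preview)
Your argument is correct but follows a genuinely different route from the paper on both halves.

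For essential self-adjointness, the paper appeals to Nelson's theorem (a symmetric operator with a dense set of analytic vectors is essentially self-adjoint, cf.\ \cite[Theorem~7.23]{schmu:dgen}); the point, though the details are omitted there, is that tensors $f^1\otimes\cdots\otimes f^d$ with $f^r$ analytic for $L_r$ are analytic for $L_r\odot I_{(r)}$. Your deficiency-index computation (pair a hypothetical $f$ in the deficiency space against $h\otimes u$, forcing each $u$-section into the trivial deficiency space of $L_r$) is equally valid and arguably more elementary; the orthonormal-basis variant you mention is the standard ``direct sum of copies of $L_r$'' picture.

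For non-negativity, the paper does \emph{not} identify the closure with any spectrally constructed operator. It gives instead a self-contained finite-dimensional computation on the algebraic domain: for $f=\sum_j f_j^1\otimes\cdots\otimes f_j^d$, diagonalize the Hermitian form $(h,h')\mapsto\langle L_r h,h'\rangle$ on $\mathrm{span}\{f_j^r\}$, rewrite $f$ in the resulting orthonormal basis, and read off $\langle (L_r\odot I_{(r)})f,f\rangle\geq 0$ directly. Your spectral route is cleaner once the candidate $\widetilde{L_r}$ is in hand, but watch one circularity in your citations: the multivariate spectral theorem of Section~\ref{chap:App,sec:joint,subsec:mst}, formulae~\eqref{chap:App,sec:joint,sub:mst,eq:m(L)def} and~\eqref{chap:App,sec:joint,sub:mst,eq:support}, and Proposition~\ref{prop:funpro} are all stated for a strongly commuting system of self-adjoint operators on the \emph{single} space $L^2(X,\nu)$ --- precisely what this proposition is meant to supply. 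The easy fix is to define $\widetilde{L_r}$ directly as $\int\lambda_r\,d(E_{L_r}\otimes I_{(r)})(\lambda_r)$, using only that $\omega\mapsto E_{L_r}(\omega)\otimes I_{(r)}$ is a resolution of the identity on $L^2(X,\nu)$ supported in $[0,\infty)$; self-adjointness and non-negativity of $\widetilde{L_r}$ then come from one-variable spectral integration alone. In effect your approach reverses the logical order of Propositions~\ref{prop:tensclosself} and~\ref{prop:tensspectmeas}.
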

    \begin{proof}[Proof (sketch)]
    We proceed similarly as in the proof of \cite[Theorem 7.23]{schmu:dgen}. Throughout the proof we fix $r=1,\ldots,d.$

    The fact that the operator $L_r\odot I_{(r)}$ is symmetric follows from an easy computation. The proof of its essential self-adjointness is based on an application of Nelson's theorem, which says that a symmetric operator having dense set of analytic vectors is essentially self-adjoint, see \cite[Theorem 7.16]{schmu:dgen}. We omit the details here.

    Since $L_r\odot I_{(r)}$ is symmetric and essentially self-adjoint, its closure, which we denote by $L_r\otimes I_{(r)},$ is a self-adjoint operator defined on a dense domain in $L^2(X,\nu).$ It remains to prove that it is non-negative.

    Clearly, by a density argument it is enough to show that $L_r\odot I_{(r)}$ is non-negative on its domain. Take $f=\sum_j f_j^1\otimes\cdots \otimes f_j^d,$  $f\in \Dom(L_r\odot I_{(r)}),$ and note that $L_r$ is self adjoint on the finite-dimensional vector space $G$ spanned by $\{f_{j}^r\}.$  Hence, there exists an orthonormal basis $\{g_1^r,\ldots,g_s^r\}$ of $G$ such that $\langle L_r g_i^r, g_{i'}^r\rangle_{L^2(X_r,\nu_r)}=0,$ for $i\neq i',$  and
    \begin{align*}f=\sum_{i,j}\langle f_j^r, g_i^r\rangle_{L^2(X_r,\nu_r)}g_i^r\otimes f_j^{(r)}=\sum_{i}g_i^r\otimes\left(\sum_{j}\langle f_j^r, g_i^r\rangle_{L^2(X_r,\nu_r)}f_j^{(r)}\right):= \sum_{i}g_i^r\otimes h_i,\end{align*} with $f_j^{(r)}$ denoting the tensor product function of $f_j^1$ up to $f_j^d,$ excluding $f_j^r.$

    Thus, setting $X^{(r)}=X_1\times \cdots \times X_{r-1}\times X_{r+1}\times \cdots \times X_d$ and $\nu^{(r)}=\nu_1\otimes \cdots \otimes \nu_{r-1}\otimes \nu_{r+1}\otimes \cdots \otimes \nu_d,$ and using the fact that $L_r\geq 0,$ we obtain
    \begin{align*}&\langle (L_r\odot I_{(r)}) f,f\rangle_{L^2(X,\nu)}=\sum_{i,i'}\langle L_r g_i^r, g_{i'}^r\rangle_{L^2(X_r,\nu_r)}\langle h_i, h_{i'}\rangle_{L^2(X^{(r)},\nu^{(r)})}\\
    &=\sum_{i}\langle L_r g_i^r, g_{i}^r\rangle_{L^2(X_r,\nu_r)}\langle h_i, h_{i}\rangle_{L^2(X^{(r)},\nu^{(r)})}\geq 0.\end{align*}
    \end{proof}

 We now proceed somewhat differently by first considering the spectral measure $E_{L_r}$ of $L_r.$ Observe that the map $E_{L_r}\otimes I_{(r)},$ sending $\omega_r\subseteq [0,\infty)$ to $E_{L_r}(\omega_r)\otimes I_{(r)},$ is a resolution of the identity of $L^2(X,\nu)$ on $[0,\infty).$ This is easily seen when $f$ is a sum of tensor product functions, while for general $f$ we use a density argument. The following holds.
\begin{pro}
\label{prop:tensspectmeas}
For each $r=1,\ldots,d,$ the spectral measure $E_{L_r\otimes I_{(r)}}$ of the operator $L_r\otimes I_{(r)}$ coincides with $E_{L_r}\otimes I_{(r)}.$ Moreover, the operators $L_r\otimes I_{(r)}$ commute strongly, and, for a bounded function $m$ on $[0,\infty)$ we have \begin{equation} \label{chap:App,sec:tens,eq:equam} m(L_r\otimes I_{(r)})f=(m(L_r)\otimes I_{(r)})f,\qquad f\in L^2(X,\nu).\end{equation}
\end{pro}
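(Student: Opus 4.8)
\textbf{Proof proposal for Proposition \ref{prop:tensspectmeas}.}

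The plan is to verify that $E_{L_r}\otimes I_{(r)}$ satisfies the defining property of a spectral measure of $L_r\otimes I_{(r)}$, and then invoke uniqueness in the spectral theorem. First I would record the fact, established just before the statement, that $\omega_r\mapsto E_{L_r}(\omega_r)\otimes I_{(r)}$ is a resolution of the identity of $L^2(X,\nu)$ on $[0,\infty)$; this uses Proposition \ref{prop:Fubtensprop} to make sense of the bounded tensor products $E_{L_r}(\omega_r)\otimes I_{(r)}$ and to check the four axioms from Section \ref{chap:App,sec:joint,subsec:resid} (orthogonality of the projections, finite additivity, $\sigma$-additivity in the strong sense, and the multiplicativity $E(\omega_1\cap\omega_2)=E(\omega_1)E(\omega_2)$). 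Then I would compute the spectral integral $\int_{[0,\infty)}\lambda_r\,d(E_{L_r}\otimes I_{(r)})(\lambda_r)$ and show that it equals $L_r\otimes I_{(r)}$, at least as an essentially self-adjoint operator on the dense domain $\Dom(L_r\odot I_{(r)})$: on a tensor product function $f=f^1\otimes\cdots\otimes f^d$ with $f^r\in\Dom(L_r)$ one has, by Fubini and the scalar spectral theorem for $L_r$, that $\bigl(\int\lambda_r\,d(E_{L_r}\otimes I_{(r)})\bigr)f = (L_rf^r)\otimes f^{(r)} = (L_r\odot I_{(r)})f$, and both sides extend by closure. By the uniqueness part of the spectral theorem (the one-operator case of Section \ref{chap:App,sec:joint,subsec:mst}), the resolution of the identity associated with the self-adjoint operator $L_r\otimes I_{(r)}$ — which exists and is closed and non-negative by Proposition \ref{prop:tensclosself} — must coincide with $E_{L_r}\otimes I_{(r)}$, i.e.\ $E_{L_r\otimes I_{(r)}}=E_{L_r}\otimes I_{(r)}$.

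Granting this identification, strong commutativity of the family $\{L_r\otimes I_{(r)}\}_{r=1}^d$ follows immediately: for $r\neq s$ and Borel sets $\omega_r,\omega_s\subseteq[0,\infty)$, the projections $E_{L_r}(\omega_r)\otimes I_{(r)}$ and $E_{L_s}(\omega_s)\otimes I_{(s)}$ act on disjoint tensor slots, so they commute — again this is clear on finite sums of tensor product functions and passes to all of $L^2(X,\nu)$ by density and boundedness. Since the spectral projections of $L_r\otimes I_{(r)}$ are precisely these tensor projections, the operators commute strongly in the sense of Section \ref{chap:Intro,sec:Setting}. For the last assertion \eqref{chap:App,sec:tens,eq:equam}, one unwinds the definition of the functional calculus: for bounded $m$, both $m(L_r\otimes I_{(r)})$ and $m(L_r)\otimes I_{(r)}$ are bounded operators, and for $f,g\in L^2(X,\nu)$ one has $\langle m(L_r\otimes I_{(r)})f,g\rangle = \int_{[0,\infty)} m(\lambda_r)\,d(E_{L_r\otimes I_{(r)}})_{f,g}(\lambda_r)$ by Proposition \ref{prop:funpro}(v); using $E_{L_r\otimes I_{(r)}}=E_{L_r}\otimes I_{(r)}$ and testing against tensor products $f=f^1\otimes\cdots\otimes f^d$, $g=g^1\otimes\cdots\otimes g^d$ reduces the scalar measure $(E_{L_r}\otimes I_{(r)})_{f,g}$ to $(E_{L_r})_{f^r,g^r}$ times the constant $\prod_{s\neq r}\langle f^s,g^s\rangle$, so the integral evaluates to $\langle m(L_r)f^r,g^r\rangle\prod_{s\neq r}\langle f^s,g^s\rangle = \langle (m(L_r)\otimes I_{(r)})f,g\rangle$. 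Since finite sums of tensor products are dense in $L^2(X,\nu)$ and both operators are bounded, \eqref{chap:App,sec:tens,eq:equam} follows.

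The main obstacle I anticipate is the careful justification that $\int_{[0,\infty)}\lambda_r\,d(E_{L_r}\otimes I_{(r)})(\lambda_r)$ is exactly the self-adjoint operator $L_r\otimes I_{(r)}$ rather than merely an extension of $L_r\odot I_{(r)}$ — i.e.\ matching the (possibly unbounded) domains. The clean way around this is to avoid domain bookkeeping altogether: instead of integrating the unbounded symbol $\lambda_r$, one checks directly that for every bounded Borel $\omega_r$ the projection $(E_{L_r}\otimes I_{(r)})(\omega_r)$ equals $\chi_{\omega_r}(L_r\otimes I_{(r)})$, by first verifying the corresponding identity for bounded continuous $m$ via approximation of $e^{it\lambda_r}$ (using $e^{it(L_r\otimes I_{(r)})} = e^{itL_r}\otimes I_{(r)}$, which follows from Proposition \ref{prop:tensclosself} since both one-parameter unitary groups have the same generator on the core $\Dom(L_r\odot I_{(r)})$ by Stone's theorem) and then passing to indicator functions by a monotone class / Borel functional calculus argument. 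This circumvents unbounded operators entirely and makes the uniqueness invocation straightforward; the rest is the routine density-plus-boundedness bootstrapping already used repeatedly in Section \ref{chap:App,sec:tens}.
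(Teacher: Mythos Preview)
Your overall strategy matches the paper's: identify $E_{L_r}\otimes I_{(r)}$ with the spectral measure of $L_r\otimes I_{(r)}$ via uniqueness, deduce strong commutativity from commutativity of bounded tensor factors on disjoint slots, and then obtain \eqref{chap:App,sec:tens,eq:equam} by reducing to characteristic/simple functions and passing to the limit via density. The structure and the ingredients are essentially the same.

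The one point worth flagging is your handling of the domain obstacle. You correctly isolate the issue --- showing that $(E_{L_r}\otimes I_{(r)})[\lambda_r]$ equals $L_r\otimes I_{(r)}$ rather than merely extending $L_r\odot I_{(r)}$ --- but your proposed cure via Stone's theorem and unitary groups is more elaborate than necessary. The paper disposes of this in two lines: once you know that $(E_{L_r}\otimes I_{(r)})[\lambda_r]$ agrees with $L_r\odot I_{(r)}$ on the core $\Dom(L_r\odot I_{(r)})$, closedness of both operators gives that $(E_{L_r}\otimes I_{(r)})[\lambda_r]$ is a self-adjoint extension of the self-adjoint operator $L_r\otimes I_{(r)}$; but a self-adjoint operator admits no proper symmetric extension, so the two coincide. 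This maximality-of-self-adjointness argument is the standard device here and avoids the detour through $e^{itL_r}$ and monotone-class machinery entirely. Your Stone's theorem route would also work, but it trades a one-line observation for a longer approximation argument.
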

\begin{proof}[Proof (sketch)]
We start with proving the first part of the proposition. Let $r=1,\ldots,d,$ be fixed. By uniqueness of the spectral representation of a self-adjoint operator it is enough to show that
\begin{equation}
\label{chap:App,sec:tens,eq:Lrmeasrepr}
L_r\otimes I_{(r)}=\int \la_r\, d(E_{L_r}\otimes I_{(r)})(\la_r):=(E_{L_r}\otimes I_{(r)})[\la_r].
\end{equation}
Approximating $\la_r$ by simple functions, it can be verified that \eqref{chap:App,sec:tens,eq:Lrmeasrepr} holds for tensor product functions, hence also on $\Dom(L_r\odot I_{(r)}).$ Note that both  $L_r\otimes I_{(r)}$ and $(E_{L_r}\otimes I_{(r)})[\la_r]$ are closed (because they are self-adjoint) and $\Dom(L_r\odot I_{(r)})$ is a core of $L_r\otimes I_{(r)}.$ Consequently, the operator $(E_{L_r}\otimes I_{(r)})[\la_r]$ is an extension $L_r\otimes I_{(r)}.$ Since both of these operators are self-adjoint also $L_r\otimes I_{(r)}$ is an extension of $(E_{L_r}\otimes I_{(r)})[\la_r].$ The proof of \eqref{chap:App,sec:tens,eq:Lrmeasrepr} is thus finished.

Observe that if $T_r$ are bounded operators on $L^2(X_r,\nu_r),$ $r=1,\ldots,d,$ then $T_r\otimes I_{(r)}$ commute. Hence, using the first part of the proposition we obtain the desired strong commutativity of the operators $L_r\otimes I_{(r)}.$

Now we focus on \eqref{chap:App,sec:tens,eq:equam}. Assume first that $f=\sum_{j}f_j^1\otimes \cdots\otimes f_j^d.$ Then, if $m$ is a characteristic function, \eqref{chap:App,sec:tens,eq:equam} follows directly form the first part of the proposition. Since every bounded function can be approximated by simple functions, passing to the limit we obtain \eqref{chap:App,sec:tens,eq:equam} for all bounded $m.$ Finally, for general $f$ we use a density argument.
\end{proof}

Using Proposition \ref{prop:tensspectmeas} we can easily prove the statements from the second to the last paragraph of Section \ref{chap:Intro,sec:Notation}.

Indeed, directly from the identity $E_{L_r\otimes I_{(r)}}=E_{L_r}\otimes I_{(r)}$ it follows that if $L_r$ satisfies the atomlessness condition \eqref{chap:Intro,sec:Setting,eq:noatomatzero}, then the same is true for $L_r\otimes I_{(r)}.$ Moreover, using Proposition \ref{prop:Fubtensprop} and \eqref{chap:App,sec:tens,eq:equam}, we see that if $L_r$ satisfies the contractivity condition \eqref{chap:Intro,sec:Setting,eq:contra} (with respect to $L^p(X_r,\nu_r)$), then $L_r\otimes I_{(r)}$ satisfies the same condition (with respect to $L^p(X,\nu)$).
    \section{The Mellin transform}
     \label{chap:App,sec:Mel}
    \subsection{Definition and properties}
    \label{chap:App,sec:Mel,sub:dap}
    Recall that for a function $m\in L^1(\Rdp,\frac{d\la}{\la})$ the $d$-dimensional Mellin transform of $m$ is defined by
    \begin{equation}
    \label{chap:App,sec:Mel,eq:def}
    \M(m)(u)=\int_{\Rdp}\la^{-iu}\,m(\la)\,\frac{d\la}{\la},\qquad u\in\mathbb{R}^d.
    \end{equation}

    The Mellin transform is precisely the Fourier transform on the product multiplicative group $(\Rdp,\cdot),$ which is in fact isomorphic with $(\mathbb{R}^d,+)$ via the map $$\Rdp \ni\la=(\la_1,\ldots,\la_d)\mapsto h(\la)=(\log \la_1,\ldots, \log \la_d)\in \mathbb{R}^d.$$ Therefore \eqref{chap:App,sec:Mel,eq:def} can be reexpressed as
    \begin{equation}
    \label{chap:App,sec:Mel,eq:connection}
    \M(m)(u)=\F(m\circ h^{-1})(u);
    \end{equation}
    here $\F$ denotes the classical Fourier Transform, which is defined for $f\in L^1(\mathbb{R}^d, dx)$ by $$\F(f)(\xi)=\int_{\mathbb{R}^d}f(x)e^{-i\langle x, \xi\rangle}\,dx,\qquad \xi \in \mathbb{R}^d.$$

    Formula \eqref{chap:App,sec:Mel,eq:connection} allows to transfer all properties of the Fourier transform to the new context; e.g.\ we have Plancherel's formula,
    \begin{equation}
    \label{chap:App,sec:Mel,eq:Planch}
    \int_{\Rdp}|m(\la)|^2\,\frac{d\la}{\la}=\frac{1}{(2\pi)^{d}}\int_{\mathbb{R}^d}|\M(m)(u)|^2\,du,
    \end{equation}
    and the inversion formula for the Mellin transform,
    \begin{equation}
    \label{chap:App,sec:Mel,eq:inv}
    m(\la)=\frac{1}{(2\pi)^{d}}\int_{\mathbb{R}^d}\M(m)(u)\la^{iu}\,du,\qquad \la=(\la_1,\ldots,\la_d)\in \Rdp.
    \end{equation}
    These formulae are valid pointwise if $m$ is nice enough, e.g.\ in \eqref{chap:App,sec:Mel,eq:Planch} it suffices to take $m\in L^1(\Rdp,\frac{d\la}{\la})\cap L^2(\Rdp,\frac{d\la}{\la}),$ while in \eqref{chap:App,sec:Mel,eq:inv} any $m$ such that $m\in L^1(\Rdp,\frac{d\la}{\la})$ and $\M(m)\in L^1(\mathbb{R}^d,du).$
 \subsection{Measurability of $u\mapsto \sup_{\TT\in \Rdp}|\M(m_{N,\TT})(u)|$}
    \label{chap:App,sec:Mel,sub:rem}
We shall now justify the first statement in Remark 1 after Theorem \ref{thm:gen}. Namely, we show that, if $m$ is a bounded Borel measurable function on $\Rdp,$ then, for each fixed $N\in \mathbb{N}^d,$ the function
\begin{equation}
\label{chap:App,sec:Mel,sub:rem,eq:just1}
\mathbb{R}^d \ni u\mapsto \sup_{\TT\in\Rdp}|\M(m_{N,\TT})(u)|
\end{equation} is Borel measurable, where, we recall $m_{N,\TT}(\la)=t^N\la^N \exp(-\langle t ,\la\rangle)m(\la),$ $\la \in \Rdp.$ In fact we prove that, for each fixed $N\in\mathbb{N}^d$ and $u\in\mathbb{R}^d,$ the function $\Rdp\ni \TT\mapsto h_{N,u}(\TT):=\M(m_{N,\TT})(u)$ is continuous on $\Rdp.$ Thus, the supremum in \eqref{chap:App,sec:Mel,sub:rem,eq:just1} can be taken over a countable set, and since, clearly, $\mathbb{R}^d\ni u \mapsto \M(m_{N,\TT})(u)$ is continuous, we obtain that $\mathbb{R}^d\ni u \mapsto \sup_{\TT\in\Rdp}|\M(m_{N,\TT})(u)|$ is measurable.

We focus on showing the continuity of $h_{N,u}(\TT).$ Since $t\to t_0\in\Rdp$ we may assume that $\underline{t}<t<\overline{t},$ for some $\underline{t},\overline{t}\in \Rdp,$ so that
$$|t^N\la^N\exp(-\langle t,\la\rangle)|\leq |(\overline{t})^{N}\exp(-\langle \underline{t}, \la\rangle)|,\qquad \la\in\Rdp.$$
Since $m$ is bounded, using the dominated convergence theorem we obtain
\begin{align*}
\lim_{t\to t_0}h_{N,u}(\TT)&=\lim_{t\to t_0}\int_{\Rdp}\la^{-iu}t^N\la^N\exp(-\langle t,\la\rangle)m(\la)\,\frac{d\la}{\la}\\
&=\int_{\Rdp}\la^{-iu}\lim_{t\to t_0}\big[t^N\la^N\exp(-\langle t,\la\rangle)\big]m(\la)\,\frac{d\la}{\la}\\
&=h_{N,u}(\TT_0),
\end{align*}
as desired.
\end{appendices}
    %
    %

\end{document}